\newtheorem{theo}{The\'or\`eme}[section]
\def\ind{{\mathop{\mathrm{ind}}\nolimits}}
\def\Ind{{\mathop{\mathrm{ind}}\nolimits}}
\def\Id{{\mathop{\mathrm{Id}}\nolimits}}
\def\A{\mathbf{A}}
\def\C{\mathbb{C}}
\def\lC{\mathscr{C}}
\def\D{\mathscr{D}}
\def\F{\mathcal{F}}
\def\G{\mathcal{G}}
\def\N{\mathbb{N}}
\def\P{\mathbb{P}}
\def\Q{\mathbb{Q}}
\def\R{\mathbb{R}}
\def\Z{\mathbb{Z}}
\def\a{\mathbf{a}}
\def\l{\ell}
\def\f{\mathbf{f}}
\def\c{\mathbf{c}}
\def\b{\mathbf{b}}
\def\K{\mathbf{K}}
\def\d{\mathbf{d}}
\def\e{\mathbf{e}}
\def\O{\mathcal{O}}
\def\line{\overline}
\def\tilde{\widetilde}
\def\Hom{\mathop{\mathrm{Hom}}\nolimits}
\def\Im{\mathop{\mathrm{Im}}\nolimits}
\def\hom{\mathop{\mathrm{Hom}}\nolimits}
\def\End{\mathop{\mathrm{End}}\nolimits}
\def\ker{\mathop{\mathrm{ker}}\nolimits}
\def\dim{\mathop{\mathrm{dim}}\nolimits}
\def\rank{\mathop{\mathrm{rank}}\nolimits}
\def\Supp{\mathop{\mathrm{Supp}}\nolimits}
\def\Lie{\mathop{\mathrm{Lie}}\nolimits}
\def\supp{\mathop{\mathrm{supp}}\nolimits}
\def\wt{\mathop{\mathrm{wt}}\nolimits}
\def\sym{\mathop{\mathrm{sym}}\nolimits}
\def\hat{\widehat}
\def\n{\underline{n}}
\def\remk{\noindent\textit{Remark:~}}
\newtheorem{prop}[theo]{Proposition}
\newtheorem*{prop*}{Proposition}
\newtheorem{def-prop}[theo]{Definition-Proposition}
\newtheorem{cor}[theo]{Corollary}
\newtheorem{lemma}[theo]{Lemma}
\newtheorem{teo}[theo]{Theorem}
\newtheorem*{teo*}{Theorem}
\newtheorem{definition}[theo]{Definition}
\newtheorem{definitions}[theo]{Definitions}
\newtheorem*{conj*}{Conjecture}
\newtheorem{notation}[theo]{Notation}
\newtheorem{example}[theo]{Example}
\title{Parabolic Induction and Geometry of Orbital Varieties for $GL(n)$}
\author{DENG Taiwang}
\begin{document}

\maketitle
%
%\tableofcontents

%\chapter[KL-polynomials]{Kazdhan-Lusztig polynomials and induced representations of $GL_{n}$}

\part*{Introduction}
\addcontentsline{toc}{part}{Introduction}

\clearpage

\pagestyle{plain}
\frontmatter
\pagenumbering{roman}

This thesis deals with the computation of the Jordan-H\"{o}lder decomposition
of a parabolic induced representation of $GL_n$ over a $p$-adic field $F$. 
Starting with irreducible cuspidal representations, Zelevinsky classified 
the irreducible representations in terms of multisegments 
\[
\a\mapsto L_{\a},
\]
where $L_{\a}$ is the irreducible representation of $GL_{n}(F)$ associated to the multiset $\a$, which is a set with multiplicities, of segments 
\[
\Delta_{\rho, r}=\{\rho, \rho\nu, \cdots, \rho\nu^{r-1}\},
\] 
where $\rho$ is an irreducible cuspidal representation of $GL_g(F)$, $n=rg$ and 
$\nu: GL_g(F)\rightarrow \C$ is the character given 
\[
x\mapsto |\det(x)|^{1/2}.
\]  
For example, $L_{\Delta_{\nu^{1-r}, r}}$ is the trivial representation of $GL_r(F)$. Given a
multisegment $\a=\{\Delta_1, \cdots, \Delta_s\}$ the total parabolic associated induced representation
is 
\[
\pi(\a)=L_{\Delta_1}\times L_{\Delta_2}\times \cdots \times L_{\Delta_s}
\]
 and one wants to compute the multiplicity $m(\b, \a)$ of $L_{\b}$ in $\pi(\a)$.

Zelevinsky introduced the geometry of nilpotent orbits and 
conjectured that the coefficients $m(\b, \a)$ is the value at $q=1$
of the Poincar\'e series $P_{\sigma(\a), \sigma(\b)}(q)$
where $\sigma(\a)$ and $\sigma(\b)$ are the associated orbits.
Moreover, he proved that these orbital varieties 
admit an open immersion into some Schubert varieties of type $A$.
This conjecture was proved by Chriss-Ginzburg and 
Ariki, see \cite{CG}, \cite{A}. 

\medskip

In the first part of this thesis, we are interested in another conjecture of Zelevinsky
stated in the last sentence of \S 8 of \cite{Z2}.

\begin{conj*}
The $m(\b,\a)$ depend only on 
natural relationships between segments of $\a$ and $\b$. 
\end{conj*}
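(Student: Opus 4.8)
The plan is to turn Zelevinsky's assertion into a precise statement and then prove it on the geometric side. First, by the standard reduction of \cite{Z2}, it is enough to treat one connected component of the cuspidal support at a time, so that each segment $\Delta$ becomes an integer interval (recorded by its beginning and its end), a multisegment a finite multiset of such intervals, and a pair $(\a,\b)$ a multiset of intervals each labelled $\a$ or $\b$. I would then attach to $(\a,\b)$ a combinatorial invariant recording the ``relationships'' among its segments: for every ordered, labelled pair $\Delta,\Delta'$, the weak order of their four endpoints, equivalently whether $\Delta$ and $\Delta'$ are equal, nested, linked, juxtaposed or unrelated, and with which orientation. Part of the work is to calibrate this invariant so that the assertion ``$m(\b,\a)$ depends only on it'' is neither vacuous nor false; granting a suitable choice, the statement to prove becomes: if $(\a,\b)$ and $(\a',\b')$ have the same invariant under a label-preserving bijection of their segments, then $m(\b,\a)=m(\b',\a')$.

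The method is to reduce, by elementary moves on representatives, to a stability statement on the geometric side. By Chriss--Ginzburg and Ariki (\cite{CG},\cite{A}), $m(\b,\a)$ is the value at $q=1$ of a Kazhdan--Lusztig polynomial $P_{\sigma(\a),\sigma(\b)}(q)$ for a symmetric group $S_N$ ($N$ the total multiplicity of the cuspidal support), realized --- via Zelevinsky's open immersion of orbital varieties into Schubert varieties --- as the dimension of a stalk of the intersection cohomology of $\overline{\sigma(\a)}$ along $\sigma(\b)$. There are essentially two moves joining representatives of a given invariant. The first, \emph{gap insertion}, enlarges the cuspidal support by one point of multiplicity $0$, placed inside an existing gap of the support or at an extremity: this changes no relationship and is harmless, since $N$ is unaltered and the combinatorics of $\sigma(\a)$ and of $\sigma(\b)$ are literally unchanged (on the representation side, transitivity of parabolic induction through a Levi with a trivial $GL_0$ factor). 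The second, \emph{stretching}, enlarges one segment at an end that is neither shared with nor crossed by another segment, inserting there a new column of the support --- now covered by the enlarged segment --- and shifting the segments lying beyond it, with the matching operation carried out on $\b$. One then checks that these two moves, together with their inverses, carry every representative of an invariant to a single canonical ``tightly packed'' form.

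What remains, and is the heart of the matter, is that stretching preserves the intersection-cohomology stalk. Since stretching raises $N$ by one, this is a stability statement: the stretched orbital variety should be, in a neighbourhood of the relevant stratum, a trivial affine-space bundle over the original, equivalently the pair $(\sigma(\a'),\sigma(\b'))$ should be obtained from $(\sigma(\a),\sigma(\b))$ by an embedding $S_N\hookrightarrow S_{N+1}$ of ``fixed-point insertion'' type, under which Kazhdan--Lusztig polynomials do not change. Zelevinsky's explicit description of the Schubert varieties in play should make the required permutation-level bookkeeping tractable. An alternative, geometry-free route would be to extract from a M{\oe}glin--Waldspurger-style analysis of Zelevinsky derivatives a recursion computing $m(\b,\a)$ all of whose case distinctions are phrased purely in terms of linkings and relative orders of endpoints, from which the whole statement is immediate.

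I expect the main obstacle to be exactly the stretching step. Because stretching one segment forces compensating translations of others, one must both verify that the induced transformation of $(\sigma(\a),\sigma(\b))$ is indeed the benign one and check, on the $\b$-side, that the identification of Jordan--H\"older constituents it produces matches the one used in the statement --- that is, reconcile the combinatorics of the statement with the combinatorics of the geometry. Once the local-triviality (equivalently, fixed-point-insertion) claim is in place, gap insertion, the choice of a sufficient family of moves, and the reduction to a canonical representative should all be routine.
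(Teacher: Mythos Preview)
Your core idea is right and is the engine of the paper's proof: the ``stretching'' step you isolate is exactly the truncation $\a\mapsto\a^{(k)}$ of Chapter~3, and the stability you want is Corollary~\ref{cor: 4.6.16}, proved not via fixed-point insertion in $S_N$ but by exhibiting an open immersion of a fibre of the orbital variety for $\a$ into a product (orbital variety for $\a^{(k)}$) $\times$ (affine space). There is, however, a genuine gap in your move calculus: stretching \emph{one} segment at an unshared end cannot handle multisegments with repeated endpoints. For instance $[1,3]+[2,3]$ and $[1,4]+[2,4]$ have the same relation type, but every right end is shared, so your stretching move never applies and gap insertion is inert; the paper's truncation removes the column at $k$ from \emph{all} segments ending there simultaneously (under a hypothesis $H_k(\a)$ weaker than your ``neither shared nor crossed''), and this simultaneous version is what makes the argument go through in general.

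Strategically the paper also takes a different route from yours. Rather than connect $(\a,\b)$ and $(\a',\b')$ directly by elementary moves, it lifts each general $\a$ to a \emph{symmetric} multisegment $\a^{\sym}$ via iterated truncations (Chapter~4), shows $m(\b,\a)=m(\b^{\sym},\a^{\sym})$, and observes that in the symmetric regime the relation type pins down a pair of permutations in $S_n$ (with $n$ the number of segments, not the degree $N$), so that $m=P_{v,w}(1)$ is manifestly invariant. This detour through the symmetric case buys a clean sufficiency argument: one never needs to certify that an ad hoc family of moves connects all representatives. Your proposed permutation-level bookkeeping, tracking $w(\a)\in S_N$ under stretching, is bypassed entirely; given how $w(\a)$ is defined (the longest element in a double coset determined by the weight profile), the induced map $S_N\hookrightarrow S_{N+\ell_k}$ is unlikely to be a clean fixed-point insertion, so working directly on the orbital varieties, as the paper does, is the safer route.
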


Note:
\begin{itemize}
\item first that using types theory, the $m(\b,\a)$ are independent of the Zelevinsky lines
considered , cf. \cite{MV} for example, so that one is reduced to the case where
the cuspidal support of all the segment considered are contained in the Zelevinsky line
of the trivial representation.

\item Using this reduction, this conjecture can now be viewed as a special case of a 
conjecture of Lusztig about \emph{combinatorial invariance of Kazhdan-Lusztig polynomials}
which can be stated in these terms: let $x \leq y$ two elements of the symmetric group $S_n$,
the  Kazhdan-Lusztig polynomial $P_{x,y}(q)$ depends only on the poset structure of
$[x,y]:=\{ z \in S_n: x \leq z \leq y \}$.
\end{itemize}
The main application of the results of the this part of this thesis is then \emph{the proof of
the above conjecture of Zelevinsky}, cf. theorem \ref{teo: 4.4.5}: the results is already interesting
in the symmetric case, cf. the corollary \ref{coro-sym1}.

\medskip

Our approach rests on the use of some truncation functors
$$\a\mapsto \a^{(k)},$$
and the notion of partial derivativation
$$\D^k \hbox{ indexed by integers } k\in \Z,$$
which allows us, starting from  general multisegments $\a$ and $\b$,
to reduce to a symmetric situation where 
$\a$ and $\b$ are parametrized by $\sigma, \tau\in S_n$
for some $n$ usually less than the degree of $\a$. In this
symmetric case we obtain, using the result of Chriss-Ginzburg and Ariki, the equality 
\[
 m(\a_{\tau}, \a_{\sigma})=P_{\tau, \sigma}(1),
\]
where $P_{\tau, \sigma}$ is the Kazhdan Lusztig polynomial
associated to the permutations $\tau, \sigma\in S_n$.

Let us recall that these $m(\a_{\tau}, \a_{\sigma})$ are given, using Chriss-Ginzburg and Ariki, by Kazhdan-Lusztig
polynomials for the symmetric group $S_m$ where $m$ is the degree of $a$. So our formula can be also viewed as
equalities between Kazhdan-Lusztig polynomials for different symmetric groups: these equalities where also obtained
by Henderson \cite{H07}, but instead of using the Billey-Warrington cancellation for the symmetric
group, we investigate the 
geometry of nilpotent symmetric orbits. 

\remk using our truncation method, it should be possible to find a new algorithm for computing the general $m(\b, \a)$. 

\bigskip

\emph{In the second part} we give some applications of our method, the main aim is to give 
\emph{a formula for the computation of an induced representation} 
\[
L_{\a}\times L_{\b}=\sum m(\c, \b, \a)L_{\c}.
\]
in terms of the coefficients of the
''highest degree term'' of some explicit Kazhdan-Lusztig polynomials. For the moment 
we treat the case where $\b$ is a segment and leave the general case for future work. To give an impression, 
the most simple formula in the case where $\b=[k+1]$ from proposition \ref{prop: 8.1.5}, looks like
\[
L_{\a}\times L_{\b}=L_{\a+\b}+\sum_{\c\in \Gamma^{\l_k-1}(\a, k)}(\theta_k(\c, \a)-\theta_k(\c^{[k+1]_1}, \a+\b))L_{\c^{[k+1]_1[ k]_{\l_k-1}}}.
\]
where the $\theta_k(\c, \a)$ are defined thanks to partial derivative, cf. notation \ref{nota7-8-12}.
It would be interesting to compare our results with the known criteria of the irreducibility  for parabolic induced representations, cf.
\cite{ming1}, \cite{ming2} and \cite{jant}.

Moreover,
\begin{itemize}
\item in chapter 5, we obtain a geometric interpretation of the $5$ 
relations defining Kazhdan-Lusztig polynomials. 

\item \emph{In view of the conjecture of Lusztig}, 
which can be viewed as an generalization of Zelevinsky's conjecture,
in chapter 6, we give a classification of the posets $S(\a)=\{\b: \b\leq \a\}$,
in the sense of notation \ref{nota: 1.3.2}. We prove that they can be identified with
either an interval in the symmetric group $S_n$ or an interval in an double quotient
of $S_n$, which corresponds to parabolic orbits in a generalized flag variety.

\item Concerning partial derivation, in Chapter 7, using the Lusztig product of perverse sheaves (cf. \cite{Lu}), 
we give a geometric meaning
of the multiplicities appearing in the partial derivatives. In the general case we then obtain an explicit
formula for the derivative $\D^{k}(L_{\a})$, cf. corollary \ref{coro-fornula-derivative}.  
The main application is to calculate the coefficient $m(\c, \b, \a)$ in chapter 8.

%it should help to obtain 
%a explicit formula for an induced representation $L_{\a}\times L_{\b}$.
%We present one simple example with $L_{\b}$ cuspidal. 
%
%\item Finally, in Chapter 8, we prove a special case 
%of a conjecture of Lusztig, which asserts that the Kazhdan Lusztig
%polynomial $P_{x,y}(q)$ depends only on the Poset structure 
%of $\{z: x\leq z\leq y\}$, where $x, y\in S_n$ and 
%$\leq $ is the Bruhat order.

\end{itemize}

\vspace{1pt}

\begin{center}
\rule{10cm}{.1pt}
\end{center}

\vspace{1pt}

\emph{Let us now give more details.} 
For a $p$-adic field $F$ and $g>1$, an irreducible admissible representation $\rho$
of $GL_{g}(F)$ is called cuspidal if for all proper parabolic
subgroup $P$, the corresponding Jacquet functor $J^G_{P}$ sends $\rho$ to $0$. We 
write 
$$\nu: GL_g(F)\rightarrow \C, \qquad\nu(x)=|\det(x)|^{1/2}$$ 
and for $k\geq 1$ and $\rho$ a cuspidal irreducible representation of $GL_g(F)$, 
we call the set $$\Delta_{\rho, k}=\{\rho, \rho\nu, \cdots, \rho\nu^{k-1}\}$$
a segment. For such a segment, the normalized induction functor
$$\ind_{P_{g,\cdots,g}}^{GL_{kg}(F)} (\rho \otimes \cdots \otimes \rho\nu^{k-1})$$
contains an unique irreducible sub-representation denoted by $L_{[\rho,\nu^{k-1}\rho]}$, where
$P_{g,\cdots,g}$ is the standard parabolic subgroup with Levi subgroup isomorphic to $k$ blocks of $GL_g$.
Then a multisegment is a multiset of segments that is a set with multiplicities.
For $i=1, \cdots, r$, let $\rho_i$ be an irreducible cuspidal representation of $GL_{n_i}(F)$ and for $k_i\in \N$, 
by definition, the multisegment
\[
 \a=\{\Delta_{\rho_i, k_i}: i=1, \cdots, r\}, 
\]
is of degree $\deg(\a)=\sum n_ik_i$.
In \cite{Z2}, the authors gave a parametrization $\a\mapsto L_{\a}$
of irreducible admissible representations of $GL_{n}(F)$
in terms of multisegments of degree $n$, where for a well ordered multisegment $\a$(cf. definition \ref{def: 6}),
the representation
$L_{\a}$
is the unique irreducible submodule of the parabolic induced representation
\[
 \pi(\a)=\Ind_{P}^{GL_{n}(F)}(L_{\Delta_{\rho_1, k_1}}\otimes \cdots \otimes L_{\Delta_{\rho_r, k_r}}).
\]
Now given two multisegments $\a$
and $\b$, one wants to determine the multiplicity 
$m(b, \a)$ of $L_{\b}$ in $\pi(\a)$.

Thanks to the Bernstein central decomposition, one is  reduced to the case 
where the cuspidal representation $\rho_i$ of $\a$ and $\b$ belongs to the 
same Zelevinsky line $\{\rho_0\nu^k: k \in \Z\}$. Zelevinsky also conjectured that 
$m(b, \a)$ is independent of $\rho_0$ and 
depends only on the relative position of $\a$ and $\b$: this conjecture now follows from the theory of types,  
cf. \cite{MV}. So one is reduced to the simplest case where $\rho_0$ is the trivial representation.

\bigskip

Let us now explain what is known about these coefficients $m(\b,\a)$ where the cuspidal support of
$\a,\b$ belongs the Zelevinsky line of the trivial representation.
First of all, it is proved in \cite{Z2} that there exists a poset structure on 
the set of multisegments such that 
$m_{\b, \a}>0$ if and only if $\b\leq \a$. And we let 
\[
  S(\a)=\{\b: \b\leq \a\}.
 \]
In \cite{Z3},  Zelevinsky introduced the nilpotent
orbit associated to a multisegment $\a$. 
More precisely, to a multisegment $\a$, one can 
associate $\varphi_{\a}: \Z\rightarrow \N$ 
with $\varphi_{\a}(k)$ the multiplicities
of $\nu^k$ appearing in $\a$. For each $\varphi$, 
$V_{\varphi}$ is a $\C$-vector spaces of dimension 
$\deg \varphi: =\sum_{k\in \Z}\varphi(k)$ with 
graded $k$-part of dimension $\varphi(k)$. 
Then $E_{\varphi}$ is the set of endomorphisms 
$T$ of degree $+1$, which admits 
a natural action of the group $G_{\varphi}=\prod_{k}GL(V_{\varphi, k})$.
Then the orbits of $E_{\varphi}$ under $G_{\varphi}$
are parametrized by multisegments $\a=\sum_{i\leq j}a_{ij}\Delta_{\nu^i, j-i+1}$ such that 
$\varphi=\varphi_{\a}$ consisting 
of $T$ with $a_{ij}$ Jordan cells starting from $V_{\varphi, i}$ to $V_{\varphi,j}$. 
We note $O_{\a}$ this orbit and we have the nice following property
\[
 \line{O}_{\a}=\bigsqcup_{\b\geq \a}O_{\b}.
\]
Now given a local system $\mathcal{L}_{\a}$
on $O_{\a}$, we can consider its 
intermediate extension $IC(\mathcal{L}_{\a})$ 
on $\line{O}_{\a}$ and its fiber at 
a geometric point $z_{\b}$ of $O_{\b}$ and form the 
Kazhdan-Lusztig polynomial 
\[
 P_{\a, \b}(q)=\sum_{i}q^{i/2}\dim_{\C}\mathcal{H}^i(IC(\mathcal{L}_{\a}))_{z_{\b}}.
\]
Zelevinsky then conjectured that $m_{\b, \a}=P_{\a, \b}(1)$ and 
call it the $p$-adic analogue of Kazhdan Lusztig Conjecture. 
This conjecture is a special case of a more general 
multiplicities formula proved by Chriss and Ginzburg in \cite{CG}, 
chapter 8.

\bigskip

\emph{In this work}, we first introduce the notion of a symmetric multisegment (cf. definition \ref{def: 2.1.5}), which is, 
roughly speaking,  a multisegment such that the 
beginnings and the ends of its segments are distinct and its segments admit non-empty intersections. 
We show that for a well chosen\footnote{Thanks to corollary \ref{coro-sym1} which is a particular case
of the Zelevinsky's conjecture, the results are independent of the choice of $\a_{\Id}$.}
symmetric multisegment $\a_{\Id}$, there is a natural bijection 
between the symmetric group $S_n$ to the set of symmetric multisegments $S(\a_{\Id})$, cf.  proposition \ref{teo: 2.3.2},
where 
$n$ is the number of  segments contained in  $\a_{\Id}$.

When we restrict to the geometry of the nilpotent orbits 
to the symmetric locus, 
%
%we construct a fibration over 
%a smooth variety so that in order to calculate 
%intersection complexes, we are reduced to study the fibers
%of this fibration. Inside these fibers 
%
we recover the  geometric situation of the Schubert varieties associated 
to $S_n$ and obtain that for two symmetric multisegment 
$\a_{\sigma}, \a_{\tau}$ associated to $\sigma, \tau\in S_n$, 
the coefficient $m_{\a_{\sigma}, \a_{\tau}}=P_{\sigma, \tau}(1)$. 

\bigskip

The next step \emph{in chapter 3} is to try to reach non symmetric 
cases, starting with a symmetric one. 
For example for $\a\geq \b$ be two multisegments and $\nu^k$ in 
the supercuspidal support of $\a$,  
one can eliminate every $\nu^k$ which appears at the end of 
some segments in $\a$ and $\b$ to obtain 
respectively a new pair of multisegments $\a^{(k)}, ~\b^{(k)}$ and 
try to prove that that $m(\b, \a)=m(\b^{(k)}, \a^{(k)})$. This 
result is almost true if we demand that $\b$ belongs to 
some subset $S(\a)_k$ of $S(\a)$, cf. Prop.\ref{cor: 3.2.3}.
The proof relies on the study of the geometry of nilpotent orbits and their links
with the Grassmannian, cf. the introduction of chapter 3.

%
%To prove this, we 
%consider another subset $\tilde{S}(\a)_k$ of $S(\a)$
%containing $S(\a)_k$ to which the geometry of nilpotent orbits attached  
%behaves better(i.e., it is an open sub-variety of $E_{\varphi_{\a}}$). 
%Then we consider the fibration of the orbital variety $X_{\a}^{k}$ attached to $\tilde{S}(\a)_k$
%over some Grassmannian and study the fibers. 
%It turns out that the fiber admits an open immersion into some 
%affine space over $E_{\varphi_{\a^{(k)}}}$. 
%Now by such an open immersion, we get two stratifications of 
%the fibers, one being induced from 
%that of $X_{\a}^{k}$ hence indexed by elements of $\tilde{S}(\a)_k$, the other 
%being induced from that of $E_{\varphi_{\a^{(k)}}}$ hence indexed by 
%elements of $S(\a^{(k)})$, the former is finner than the latter.
%Finally we prove that for $\b\in \tilde{S}(\a)_k$, the restriction of $O_{\b}$ 
%to the fiber is open in the restriction of some $O_{\b'}$ for $\b'\in S(\a^{(k)})$
%if and only if $\b\in S(\a)_k$ and $\b'=\b^{(k)}$(cf. Prop.\ref{prop: 4.6.15}).
%Now the fact $m(\b, \a)=m(\b^{(k), \a^{(k)}})$ follows directly
%from the geometry.

\bigskip

I\emph{n chapter 4}, we iterate the process in chapter 3.
In fact, for  a multisegment $\a$ and $k_1, \cdots, k_r$
integers such that $\nu^{k_i}$ appears in the 
supercuspidal support of $\a$, let 
\[
 \a^{(k_1, \cdots, k_r)}=(((\a^{(k_{1})})\cdots)^{ (k_{r})}), 
\]
and 
\[
S(\a)_{k_{1}, \cdots, k_{r}}=
\{\c\in S(\a):
\c^{( k_{1}, \cdots , k_{i})}
\in  S(\a^{( k_{1}, \cdots , k_{i})})_{k_{i+1}}, \text{ for }i=1, \cdots, r\}. 
\]
Then we show that for $\b\in S(\a)_{k_{1}, \cdots, k_{r}}$,
we always have 
\[
 m(\b, \a)=m(\a^{(k_1, \cdots, k_r)}, \b^{(k_1, \cdots, k_r)}),
\]
Reciprocally, we show,  cf. proposition \ref{prop: 4.2.4}, that for any pair of multisegments
$\a>\b$, we can find $\a^{\sym}$ and 
$\b^{\sym}<\a^{\sym}$ such that 
\[
m(\b, \a)=m(\b^{\sym}, \a^{\sym}).
\]
In the end of chapter 4, following an example, we present an algorithm to find 
$(\a^{\sym}, \b^{\sym})$ . Finally the main application of the first part of this thesis, is, cf. theorem
\ref{teo: 4.4.5}, the proof of the Zelevinsky's conjecture stated before.

\vspace{1cm}

\emph{In the second part}, we consider the 
application of our result from the first four chapters.
\emph{In chapter 5}, as a first application, 
using the relation between symmetric groups and symmetric multisegments
we try to give a new proof of the fact that the Poincar\'e polynomial
$P_{\a_{\tau}, \a_{\sigma}}(q)$ of the intersection cohomology groups 
$\mathcal{H}^i(IC(\line{O}_{\a_{\tau}}))_{\a_{\sigma}}$ for 
\begin{itemize}
\item $\a_{\sigma}>\a_{\tau}$ a pair of symmetric multisegments with $\sigma, \tau\in S_n$ , 

\item where the index $\a_{\sigma}$ indicates that we localize at a point in $O_{\a_{\sigma}}$, 
\end{itemize}
satisfies the axioms defining the Kazhdan Lusztig polynomials for a Hecke algebra.

\bigskip

\emph{In Chapter 6}, we classify the poset $S(\a)$.
First of all, we single out the case where  the multisegment 
$\a$ contains segments with different beginnings and endings
and call it ordinary multisegment, cf. definition \ref{def: 2.1.1}.
In this case we prove that, as a poset,  
\[
 S(\a)\simeq S(\a^{\sym}, \a_{\min}^{\sym}):=\{\d\in S(\a^{\sym}): \d\geq \a_{\min}^{\sym}\},
\]
where $\a_{\min}$ is the minimal element in $S(\a)$ and $\a^{\sym}$(resp. $\a_{\min}^{\sym}$) is the symmetric multisegment associated to $\a$ (resp. $\a_{\min}$)
constructed in Chapter 4. Recall that in Chapter 2, we showed 
that $S(\a^{\sym})\subseteq S(\a_{\Id})$, 
for some $\a_{\Id}$, and $S(\a_{\Id})$
as a poset is isomorphic to $S_{n}$ with 
$n$ equal to the number of segments contained in $\a_{\Id}$. 
In this way, we identify the poset $S(\a)$ with some Bruhat 
interval in $S_n$, where $n$ is the number of segments contained in $\a$.

In the general case, as the ordinary case,
we can reduce to parabolic multisegments where 
a multisegment $\a$ is called parabolic if all of 
its segments contain a common point, cf. definition \ref{def: 6.2.5} and \ref{def: 6.2.22}.
Then all our construction for symmetric multisegments 
can be carried out with parabolic multisegments. Finally, we show that 
the poset $S(\a)$ is isomorphic to 
a Bruhat interval in $S_{J_2}\backslash S_n/S_{J_1}$,
where $J_i(i=1, 2)$ is a subset of generators and $S_{J_i}$ is 
the subgroup generated by $J_i$, see proposition
\ref{prop: 6.3.6} for details. 

\bigskip

\emph{In chapter 7}, if one is interested in calculating 
the multiplicities in $L_{\a}\times L_{\b}$, 
it might be interesting to first compute 
$\D^k(L_{\a})$. Using 
the formula of $\pi(\a)=\sum_{\b}m(\b, \a)L_{\b}$,
one is reduced to compute 
$$
 \D^k(\pi(\a))=\sum_{\b}n(\b, \a)L_{\b}
$$
for some coefficients $n(\b, \a)\geq 0$. 
As expected we can introduce a poset structure $\preceq_k$
on the set of multisegments so that 
$n(\b, \a)\geq 0\Leftrightarrow \b\preceq_k \a$, 
cf. proposition \ref{prop: 7.4.2}. Then using
the notion of Lusztig's product of two perverse sheaves
we prove, cf. proposition \ref{prop: 7.3.8}, that 
$n(\b, \a)$ is the value at $q=1$ of the Poincar\'e 
series of Lusztig product of two explicit perverse 
sheaves. In the parabolic case, 
we give an explicit description of this Lusztig product.
As a consequence, for case $\deg(\b)<\deg(\a)$, we show that the coefficient $n(\b, \a)$
is related to some $\mu(x, y)$, which is the coefficient of degree $\frac{1}{2}(\ell(y)-\ell(x)-1)$ in $P_{x, y}(q)$ defined to be zero if 
$\ell(y)-\ell(x)$ is even), where $x, y$ are elements in certain symmetric group and are related to $\a, \b$.

%
%which should lead us to a formula relating 
%the coefficient $n_{\b, \a}$ to the $m_{\b, \a}$. 
%In some future work, I expect to be able 
%to give a formula describing the multiplicities 
%in $L_{\a}\times L_{\b}$: for the moment I have 
%made the calculation when $L_{\b}$ is cuspidal. 

\bigskip

In the last chapter we use the computation of the partial derivatives in chapter 7 to give a recursive formula 
for the coefficients in the induced representation 
\[
L_{\a}\times L_{\b}=\sum m(\c, \b, \a)L_{\c}.
\]
It should be possible to treat the general case, but here we only consider the case where $\b$ is a segment.
The idea is to pass to lower degree by applying the partial derivatives.
The formulas are complicated, cf. proposition \ref{prop: 8.1.7}, even in the simplest case where $\b$ is a point.
It should be interesting to implement  the algorithm on a computer.

\tableofcontents

\mainmatter

\pagenumbering{arabic}

\renewcommand{\theequation}{\arabic{chapter}.\arabic{section}.\arabic{theo}}

\part{Multiplicities in 
Induced Representations}
%\part{New insights on \texorpdfstring{$m(\b, \a)$}{Lg}}

\chapter{Induced Representations of \texorpdfstring{$GL_{n}$}{Lg}}

The aim of this section is to present our main object of study which are some 
integral coeffcients introduced by Zelevinsky, and defined by the formula \ref{eq: m(b, a)}, 
relating to some multisegments $\a, \b$ with cupsidal support contained in the  
Zelevinsky line associated to a cuspidal representation $\rho$. 

Recall that the 
set of irreducible representations of $GL_{n}$ breaks into pieces
according to the super-cuspidal support (Bernstein Center), and,  
thanks to the theory of types, we are reduced to study
the unipotent block, cf. \cite{MV}, that is induced representations with 
super-cuspidal support contained in the Zelevinsky line attach to the 
representation $\rho=1$.

Every unipotent irreducible representation is parametrized by a 
multisegment $\a$, that can be viewed as a function from the set 
of segments $\mathcal{C}$ to $\N$. For a multisegment $\a$, we 
denote by $L_{\a}$ the corresponding irreducible representation and 
$\pi(\a)$ the induced representation, cf. notations \ref{nota: 1.1.15}. 
The question is then to calculate the image of such an induced representation
in the associated Grothendieck group, that is to compute the multiplicity
$m(\b,\a)$ of $L_\b$ in $\pi(\a)$.

\medskip

To begin, let us fix some notations. 
Let $p$ be a prime number, $F/\Q_{p}$ be a finite extension. We fix an absolute
value $|.|$ on $F$ such that $|\varpi_{F}|=1/q$, where $\varpi_{F}$ is a uniformizer of $F$, and 
$q$ is the order of its residue field. For an integer $n \geq 1$, we denote by  $\nu$  the character of $GL_n(F)$ 
defined by $\nu(g)=|det(g)|$.

\section{Zelevinsky Classification}

\begin{notation}
We denote a partition of $n$ by
$\n=\{r_{1}, \cdots, r_{\alpha}\}$ with $\sum_{i=1}^{\alpha} r_{i}=n$. 
For a divisor $m$ of $n$, the partition $(m,\cdots,m)$ will be denoted $\underline{n_m}$.
We will also use the notation $\underline{n+m}=(n, m)$.
\end{notation}

\begin{definition}
For a partition $\n$, let 
$$P_{\n}=P_{\n}(F)=M_{\n}U_{\n}$$ 
be the corresponding  parabolic subgroup of $GL_{n}(F)$ with its decomposition into 
the product of its Levi subgroup $M_{\n}=GL_{r_{1}}(F)\times \cdots GL_{r_{\alpha}}(F)$ and 
its unipotent radical $U_{\n}$. Let $\delta_{P_{\n}}$ be the modular character of $P_{\n}$, given by
\[
 \delta_{P_{\n}}(-)=|det(ad(-)|_{\Lie U_{\n}})|^{-1}
\]

\end{definition}

For a topological group $G$, we recall that a representation $(\pi, V)$ of $G$ is 
\begin{itemize}
 \item \emph{smooth} if
for any vectors $v$, the stabilizer of $v$ in G is an open subgroup,  
\item
\emph{admissible} if for any open compact subgroup $K$ of $G$, 
$V^K=\{v:k.v=v, \forall k\in K\}$ is of finite dimension.
\end{itemize}

According to \cite{BZ1} theorem 4.1, a smooth representation of $GL_n(F)$ is of finite
length if and only if it is admissible and finitely generated.

\begin{definition}
For $\n=\{r_{1}, \cdots, r_{\alpha}\}$ and $\rho=\rho_1\otimes \cdots\otimes \rho_{r_{\alpha}}$
a smooth representation of $M_{\n}$, where the $\rho_{i}$ are representations 
of $GL_{r_{i}}(F)$, trivially extended to $P_{\n}$, we define
the normalized induction functor which associates to $ \rho$ the representation
$\pi=\Ind_{P_{\n}}^{GL_{n}(F)}(\rho)$
 of $G$ such that
\begin{displaymath}
\pi =\left\{ 
f: G\rightarrow V|\begin{array}{cc} 
                  &f(pg)=\delta_{P_{\n}}(p)^{-1/2}\rho(p)f(g),
 \forall p\in P_{\n}, f(gk)=f(g)\\ 
 &\text{ for all $k\in K$, with $K$ a certain open subgroup.}
 \end{array}\right\},
\end{displaymath}
here G acts on $f$ by $\pi(g)f(x)=f(xg)$.
\end{definition}

\begin{definition}
Let $(\pi, V)$ be a representation of $GL_{n}(F)$ and $P_{\n}$ a parabolic subgroup.
Let  $J_{P_{\n}}^{GL_{n}(F)}(\pi)$ be the Jacquet functor of $\pi$
defined by
\[
 J_{P_{\n}}^{GL_{n}(F)}(\pi)=V/V(U_{\n}),
\]
where $V(U_{\n})=\{u.v-u| u\in U_{\n}, v\in V\}$.
\end{definition}

\remk Both parabolic induction and Jacquet functor are additive exact functors between the category
 of smooth representations of $M_{\n}$ and $GL_{n}(F)$. Moreover, they preserve 
admissible representations and finitely generated representations.

\begin{prop} (cf.  \cite{DA} theorem 2.7, 4.1 and 5.3.)\label{prop: 1.1.5}
 For $\pi$ a smooth representation of $GL_{n}(F)$, and $\sigma$ a smooth
representation of $M_{\n}$, we have the following Frobenius reciprocity,
 \[
 \hom_{G}(\pi, \Ind_{P_{\n}}^{GL_{n}(F)}(\sigma))=\hom_{M_{\n}}(J_{P_{\n}}^{GL_{n}(F)}(\pi), \sigma\delta_{P_{\n}}^{-1/2}).
 \]
\end{prop}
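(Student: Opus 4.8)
The plan is to derive this Frobenius reciprocity from two elementary ingredients: the tautological adjunction for (unnormalized, full) smooth induction from a closed subgroup, and the universal property of the Jacquet module as a space of $U_{\n}$-coinvariants. The various half-powers of $\delta_{P_{\n}}$ are pure bookkeeping, once one keeps the paper's convention $\delta_{P_{\n}}(-)=|\det(\mathrm{ad}(-)|_{\Lie U_{\n}})|^{-1}$ fixed. Write $\rho$ for $\sigma$ inflated to $P_{\n}$ along the projection $P_{\n}\twoheadrightarrow M_{\n}$, so that $\rho$ is trivial on $U_{\n}$, and let $\mathrm{Ind}^{\mathrm{sm}}_{P_{\n}}$ denote the unnormalized smooth induction functor.

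First I would recall the basic isomorphism: for any smooth representation $(\tau,W)$ of $P_{\n}$ and any smooth representation $(\pi,V)$ of $GL_n(F)$,
\[
\hom_{GL_n(F)}\bigl(\pi,\,\mathrm{Ind}^{\mathrm{sm}}_{P_{\n}}(\tau)\bigr)\;\xrightarrow{\ \sim\ }\;\hom_{P_{\n}}\bigl(\pi|_{P_{\n}},\,\tau\bigr),\qquad \Phi\longmapsto\bigl(v\mapsto\Phi(v)(1)\bigr),
\]
with inverse sending an intertwiner $\phi$ to $v\mapsto\bigl(g\mapsto\phi(\pi(g)v)\bigr)$; smoothness of $\pi$ makes each such function right-invariant under a compact open subgroup, hence a genuine element of $\mathrm{Ind}^{\mathrm{sm}}_{P_{\n}}(\tau)$. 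Because this uses the \emph{full} smooth induction rather than compact induction, no modular character of the ambient group $GL_n(F)$ intervenes.

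Next I would apply this with $\tau=\delta_{P_{\n}}^{-1/2}\rho$. Unwinding the transformation rule $f(pg)=\delta_{P_{\n}}(p)^{-1/2}\rho(p)f(g)$ shows that $\mathrm{Ind}^{\mathrm{sm}}_{P_{\n}}(\delta_{P_{\n}}^{-1/2}\rho)$ is precisely the normalized induced representation $\Ind_{P_{\n}}^{GL_n(F)}(\sigma)$; hence
\[
\hom_{GL_n(F)}\bigl(\pi,\,\Ind_{P_{\n}}^{GL_n(F)}(\sigma)\bigr)\;\cong\;\hom_{P_{\n}}\bigl(\pi|_{P_{\n}},\,\delta_{P_{\n}}^{-1/2}\rho\bigr).
\]
Since $\delta_{P_{\n}}^{-1/2}\rho$ is trivial on $U_{\n}$, any $P_{\n}$-homomorphism into it kills $V(U_{\n})=\langle u\cdot v-v:u\in U_{\n},\,v\in V\rangle$ and so factors uniquely through the canonical surjection $\pi|_{P_{\n}}\twoheadrightarrow V/V(U_{\n})=J_{P_{\n}}^{GL_n(F)}(\pi)$; the residual $P_{\n}$-action on this quotient, like the character $\delta_{P_{\n}}$, descends to $M_{\n}=P_{\n}/U_{\n}$. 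Therefore $\hom_{P_{\n}}(\pi|_{P_{\n}},\delta_{P_{\n}}^{-1/2}\rho)\cong\hom_{M_{\n}}(J_{P_{\n}}^{GL_n(F)}(\pi),\sigma\delta_{P_{\n}}^{-1/2})$, and composing with the previous isomorphism yields the stated identity; naturality in $\pi$ and $\sigma$ is evident from the explicit formulas.

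I do not expect a genuine obstacle here: this is the standard \emph{first adjointness} (Frobenius reciprocity) for Jacquet functors, and the only points needing care are the functional-analytic verification that the two assignments in the first step are mutually inverse and land in smooth functions, and the consistent tracking of the $\delta_{P_{\n}}^{\pm1/2}$ factors under the paper's sign convention. Alternatively, one may simply invoke the cited references \cite{DA}.
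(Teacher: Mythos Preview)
Your argument is correct and is the standard proof of first adjointness: reduce to the unnormalized adjunction between restriction and smooth induction, then pass to $U_{\n}$-coinvariants, tracking the $\delta_{P_{\n}}^{-1/2}$ twist. The paper does not supply its own proof of this proposition but simply cites the reference \cite{DA}; your write-up is essentially what one finds there, so there is nothing further to compare.
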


\begin{definition}
 A smooth representation of $GL_{n}(F)$ is called cuspidal if for all nontrivial parabolic subgroup
$P_{\n}$ of $GL_n(F)$,
$$J_{P_{\n}}^{GL_{n}(F)}(\pi)=0.$$
We denote by $\lC_n$ the set of irreducible cuspidal representations of $GL_{n}(F)$, and
$$\lC=\coprod_{n \geq 1} \lC_n.$$
\end{definition}

\begin{prop} (cf. \cite{Z1} 4.1)\label{prop: 1.1.6}
Let $\pi$ be an irreducible representation of $GL_{n}(F)$, then there exists a partition 
$\n=\{r_{1}, \cdots, r_{\alpha}\}$
and a cuspidal representation $\rho=\rho_{1}\otimes \cdots \otimes \rho_{\alpha}$, 
of $M_{\n}$, 
such that $\pi$ can be embedded into $\Ind_{P_{\n}}^{GL_{n}(F)}(\rho)$. 
The set $ \{\rho_{1}, \cdots, \rho_{r}\}$ is determined by $\pi$ up to permutation, 
we call it \textit{the cuspidal support of $\pi$}.
\end{prop}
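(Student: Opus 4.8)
\emph{Proof proposal.} The plan is to prove existence by induction on $n$ using transitivity of parabolic induction, and uniqueness via the Bernstein--Zelevinsky description of the Jacquet module of a parabolically induced representation.

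\emph{Existence.} I would induct on $n$. If $\pi$ is cuspidal, take $\n=(n)$ and $\rho=\pi$; this covers $n=1$, where $GL_1(F)=F^\times$ has no proper parabolic. If $\pi$ is not cuspidal, choose a proper parabolic $P_{\n}$ with $J_{P_{\n}}^{GL_n(F)}(\pi)\neq 0$, so that every part $r_i$ of $\n$ is $<n$. Since $\pi$ is irreducible, hence admissible and finitely generated, and the Jacquet functor preserves these properties (the remark after Definition 1.1.4 together with \cite{BZ1} Theorem 4.1), $J_{P_{\n}}^{GL_n(F)}(\pi)$ has finite length and therefore an irreducible quotient, which I write as $\sigma\,\delta_{P_{\n}}^{-1/2}$ with $\sigma=\sigma_1\otimes\cdots\otimes\sigma_\alpha$ irreducible on $M_{\n}$. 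By Frobenius reciprocity (Proposition \ref{prop: 1.1.5}) the corresponding nonzero morphism $\pi\to\Ind_{P_{\n}}^{GL_n(F)}(\sigma)$ is injective, $\pi$ being irreducible. Each factor $\sigma_i$ is irreducible on $GL_{r_i}(F)$ with $r_i<n$, so by the induction hypothesis $\sigma_i$ embeds into $\Ind_{Q_i}^{GL_{r_i}(F)}(\rho^{(i)})$ for some cuspidal $\rho^{(i)}$; tensoring, $\sigma$ embeds into $\Ind_{Q}^{M_{\n}}(\rho)$ with $Q=\prod_i Q_i$ and $\rho=\bigotimes_i\rho^{(i)}$ cuspidal. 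Applying the exact functor $\Ind_{P_{\n}}^{GL_n(F)}(-)$ and using transitivity of parabolic induction, $\pi$ embeds into $\Ind_{\widetilde{P}}^{GL_n(F)}(\rho)$, where $\widetilde{P}$ is the parabolic of $GL_n(F)$ with Levi the common refinement; this gives existence.

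\emph{Uniqueness.} Suppose $\pi\hookrightarrow\Ind_{P_{\n}}^{GL_n(F)}(\rho)$ and also $\pi\hookrightarrow\Ind_{Q}^{GL_n(F)}(\rho')$, with $\rho,\rho'$ cuspidal on the Levi subgroups $M_{\n}$ and $M'$. Applying the exact functor $J_{P_{\n}}^{GL_n(F)}$ to the second embedding gives $J_{P_{\n}}^{GL_n(F)}(\pi)\hookrightarrow J_{P_{\n}}^{GL_n(F)}\big(\Ind_{Q}^{GL_n(F)}(\rho')\big)$. By the geometric lemma of \cite{BZ1}, the target has a finite filtration whose graded pieces are indexed by double cosets $w$ in the relevant Weyl group and are parabolically induced, from subgroups of $M_{\n}$, of $w$-conjugates of Jacquet modules of $\rho'$. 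Since $\rho'$ is cuspidal, all these Jacquet modules vanish unless $M'\subseteq w^{-1}M_{\n}w$, and since a cuspidal representation of $M_{\n}$ never occurs in a representation properly induced from a smaller Levi, the surviving graded piece can contribute to $\rho$ only when $w^{-1}M_{\n}w=M'$, i.e. $M_{\n}$ and $M'$ are associate and the piece is a modulus twist of $w(\rho')$. On the other hand, the first embedding and Frobenius reciprocity show that $\rho\,\delta_{P_{\n}}^{-1/2}$ is an irreducible quotient of $J_{P_{\n}}^{GL_n(F)}(\pi)$, hence a subquotient of $J_{P_{\n}}^{GL_n(F)}\big(\Ind_{Q}^{GL_n(F)}(\rho')\big)$. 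Combining, $\rho$ is a modulus twist of $w(\rho')$ for some Weyl element $w$ with $w^{-1}M_{\n}w=M'$, and since $w$ merely permutes the $GL$-blocks this says exactly that $\{\rho_1,\dots,\rho_r\}$ equals $\{\rho'_1,\dots\}$ as multisets, once the modulus characters are checked to cancel.

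The main obstacle is precisely this last point: one must run the geometric lemma in its normalized form and keep careful track of the modular characters $\delta_{P}^{\pm1/2}$, so that the surviving graded piece is genuinely $w(\rho')$ rather than only an unramified twist of it — otherwise one recovers the cuspidal support only up to unramified twist. I would either quote \cite{BZ1} (or \cite{Z1}, §4) for the normalized geometric lemma, or, to remain self-contained, isolate and prove the single statement actually used: for cuspidal $\rho$ on $M_{\n}$ and $\rho'$ on $M'$, the nonvanishing of $\Hom_{M_{\n}}\!\big(J_{P_{\n}}^{GL_n(F)}\big(\Ind_{Q}^{GL_n(F)}(\rho')\big),\,\rho\,\delta_{P_{\n}}^{-1/2}\big)$ forces $M_{\n}$ and $M'$ to be associate and $\rho,\rho'$ to be conjugate under the Weyl group.
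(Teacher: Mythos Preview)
The paper does not prove this proposition; it simply cites \cite{Z1}~4.1 and moves on. Your proposal is the standard Bernstein--Zelevinsky argument (induction plus Frobenius reciprocity for existence, the geometric lemma for uniqueness), and it is correct. One small comment: your worry about modulus characters is well placed but easily resolved, since the normalized Jacquet and induction functors are defined precisely so that the geometric lemma produces genuine Weyl-conjugates of $\rho'$ with no residual twist; quoting \cite{Z1} (or \cite{BZ1}) for the normalized form is entirely appropriate here and is exactly what the paper does by citation.
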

 
According to Harish Chandra, the study of irreducible representations of $GL_{n}$
is thus divided into two parts, the cuspidal representations and the parabolically induced representations.
We will not discuss here the classification of cuspidal representations of $GL_{n}(F)$, which rests on the theory of types 
for which the 
reader can refer to for example \cite{BK}. 

\begin{definition}\label{def: 1.1.8}
 By a multiset, we mean a pair $(S, r)$ where $S$ is a set 
 and $r: S\rightarrow \N$ is a map. We say $(S_{1}, r_{1})\subseteq (S_{2}, r_{2})$
 if $S_{1}\subseteq S_{2}$ and $r_{1}(s)\leq r_{2}(s)$ for all $s\in S_{1}$.  
 We define a bijection of multisets from $(S_1, r_1)$ to $(S_2, r_2)$ to be a bijection $\xi: S_1\rightarrow S_2$ satisfying 
 \[
 r_2(\xi(x))=r_1(x).
 \]
\end{definition}

\textbf{Convention: } Naturally, we write a multiset as a set with repetition. For example, 
for $S=\{a, b\}$ and $r(a)=2, r(b)=1$, then we write the multiset $(S, r)$ by
 $\{a, a, b\}$.

\begin{definitions} \label{def: 6}
\begin{itemize}
\item By a segment, we mean a subset $\Delta$ of $\lC$ of the form 
$\Delta=\{\rho, \nu\rho, \cdots, \nu\rho^{k}=\rho'\}$.
We denote it by $\Delta=[\rho, \rho']$ where $b(\Delta):=\rho$ is called its beginning  and 
$e(\Delta):=\rho'$ its end. Let $\Sigma^{univ}$ be the set of segments.

\item We say that two segments $\Delta_{1}$ and $\Delta_{2}$ are linked if
none of them is contained in the other and the union is again a segment.

\item For $\Delta_{1}=[\rho_{1},\rho_{1}']$
and $\Delta_{2}=[\rho_{2},\rho_{2}']$, we say $\Delta_{1}$ proceeds $\Delta_{2}$
if they are linked and $\rho_{2}=\nu^{k}\rho_{1}$ with $k>0$.

\item By a multisegment, we mean a finite multiset
$\a=\{\Delta_{1}, \cdots, \Delta_{r}\}$.  
Let  $\O^{univ}$ be the set of multisegments. 

\item We say a multisegment $\a=\{\Delta_{1}, \cdots, \Delta_{r}\}$ is
well ordered if for each pair of 
indices $i,j$ such that $i<j$, $\Delta_{i}$ does not proceeds $\Delta_{j}$.
\end{itemize}
\end{definitions}

\remk for a given multisegment, we may have several ways to arrange it to be 
a well ordered multisegment. 

\begin{notation} \label{nota-be}
 Let $\a=\{\Delta_{1}, \cdots, \Delta_{r}\}$. We call
 \[
  e(\a)=\bigl \{e(\Delta_{1}), \cdots, e(\Delta_{r}) \bigr \} \quad \hbox{ and } \quad
  b(\a)=\bigl \{b(\Delta_{1}), \cdots, b(\Delta_{r}) \bigr \}
 \]
respectively the end and the beginning of $\a$ as a multiset.
\end{notation}

\begin{def-prop} (\cite{Z2}3.1)
 Let $\rho$ be a cuspidal representation of $GL_{m}(F)$ then for $n=rm$
 $$
 \Ind_{P_{\underline{n_m}}}^{GL_{rm}(F)}(\rho\otimes \nu \rho\otimes \cdots \otimes \nu^{r-1}\rho )
 $$
 contains a unique irreducible sub-representation, denoted by $L_{[\rho, \nu^{m-1}\rho]}$.
\end{def-prop}

\begin{notation}
Let $\n=(r_{1}, \cdots ,r_{\alpha})$ be a partition. 
Let $\pi_i$ be a representation of $GL_{r_{i}}(F)$ for $i=1, \cdots, \alpha$.
Then we denote
\[
 \pi_{1}\times \cdots\times \pi_{\alpha}=\Ind_{P_{\underline{n}}}^{GL_{n}(F)}(\pi_{1}\otimes \cdots\otimes \pi_{\alpha}).
\] 
\end{notation}

\begin{prop} (\cite{Z2} Theorem 4.2)\label{prop: 1.1.11}
 Let $\Delta_{1}, \cdots, \Delta_{r}$ be segments, then the following two
 conditions are equivalent:
 \begin{description}
  \item [(1)]The representation $L_{\Delta_{1}}\times \cdots \times L_{\Delta_{r}}$ is irreducible.
  \item [(2)] For each $1\leq i,j\leq r$, $\Delta_{i}$ and $\Delta_{j}$ are
  not linked. 
 \end{description}
\end{prop}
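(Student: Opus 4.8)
\emph{Plan.} I would prove the two implications separately; in both I would reduce to the case $r=2$ and use the standard fact that the Grothendieck ring $\mathcal{R}$ of finite length smooth representations of the groups $GL_m(F)$ is commutative (cf.\ \cite{Z1}), so that $[\pi_1\times\pi_2]=[\pi_2\times\pi_1]$ in $\mathcal{R}$ for all finite length $\pi_1,\pi_2$.

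\emph{Direction $(2)\Rightarrow(1)$.} Assume no two of the $\Delta_i$ are linked. Then no segment proceeds another, so \emph{every} ordering of the multiset $\a=\{\Delta_1,\dots,\Delta_r\}$ is well ordered in the sense of Definition~\ref{def: 6}; hence $L_{\a}$ is the unique irreducible submodule of $\pi:=\pi(\a)=L_{\Delta_1}\times\cdots\times L_{\Delta_r}$ --- the defining property of $L_{\a}$ for a well ordered multisegment --- so $\pi$ has simple socle $L_{\a}$. Passing to contragredients and using that $(L_{\b})^{\vee}=L_{\b^{\vee}}$ for the contragredient multisegment $\b^{\vee}$, whose segments are again pairwise unlinked (the map $\Delta\mapsto\Delta^{\vee}$ preserving linkedness), the same argument applied to $\pi^{\vee}$ shows that $\pi$ has simple cosocle $L_{\a}$ as well. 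Since moreover $L_{\a}$ occurs in $\pi(\a)$ with multiplicity one (part of the Zelevinsky classification, \cite{Z2}), reducibility of $\pi$ would force its unique maximal submodule $N$ to contain the socle $L_{\a}$ while $\pi/N\cong L_{\a}$, giving $[\pi:L_{\a}]\geq 2$, a contradiction. Hence $\pi$ is irreducible.

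\emph{Direction $(1)\Rightarrow(2)$.} I would argue by contraposition: assuming two of the segments linked, I would show $\pi$ reducible. When $r=2$ and $\Delta_1$ proceeds $\Delta_2$, the Bernstein--Zelevinsky computation of Jacquet modules along maximal parabolics (\cite{Z1},\cite{Z2}) shows that $L_{\Delta_1}\times L_{\Delta_2}$ has length exactly $2$, with composition factors $L_{\{\Delta_1,\Delta_2\}}$ and $L_{\{\Delta_1\cup\Delta_2,\,\Delta_1\cap\Delta_2\}}$ (the second segment omitted when $\Delta_1\cap\Delta_2=\varnothing$); in particular it is reducible. For general $r$: were $\pi$ irreducible, then since $[\pi_w]=[\pi]$ in $\mathcal{R}$ for every permutation $w$ of the factors and $[\pi]$ is a single irreducible class, every reordered product $\pi_w:=L_{\Delta_{w(1)}}\times\cdots\times L_{\Delta_{w(r)}}$ would be irreducible too. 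Choosing $w$ so that two linked segments occupy adjacent positions, the first proceeding the second, and writing --- by transitivity of induction --- $\pi_w\cong A\times(L_{\Delta'}\times L_{\Delta''})\times B$ with $\Delta'$ proceeding $\Delta''$ and $A,B$ the products of the factors preceding and following the pair, a proper nonzero submodule of $L_{\Delta'}\times L_{\Delta''}$ (which exists by the case $r=2$) would yield, by exactness and faithfulness of parabolic induction, a proper nonzero submodule of $\pi_w$, contradicting its irreducibility. Hence $\pi$ is reducible.

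\emph{Main obstacle.} The only genuinely non-formal ingredient is the linked case for $r=2$ --- the composition series of $L_{\Delta_1}\times L_{\Delta_2}$ when $\Delta_1$ proceeds $\Delta_2$. This is where one must run the geometric lemma to compute Jacquet modules along maximal parabolics, exhibit the constituent $L_{\{\Delta_1\cup\Delta_2,\,\Delta_1\cap\Delta_2\}}$, and check that it differs from $L_{\{\Delta_1,\Delta_2\}}$ even though the two share the same cuspidal support. Everything else --- the reduction from general $r$ to $r=2$ and the socle/cosocle argument for irreducibility --- is formal once one grants commutativity of $\mathcal{R}$ and the multiplicity one statement $m(\a,\a)=1$.
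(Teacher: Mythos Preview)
The paper does not prove this proposition; it is quoted from \cite{Z2}, Theorem~4.2, without argument. Your outline is a correct and standard route. One point to watch: for $(2)\Rightarrow(1)$ you invoke Theorem~\ref{teo:1} (unique irreducible submodule, multiplicity one), which in this paper appears \emph{after} the present proposition and is likewise imported from \cite{Z2}; since in Zelevinsky's original development Theorem~4.2 precedes the classification theorem, you should verify that the specific ingredients you borrow are established there without relying on the irreducibility criterion, or else the argument is circular. The $r=2$ linked computation you flag as the core difficulty is recorded in this paper as Proposition~\ref{prop: 1.1.15}, also cited without proof.
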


 The following theorem gives a complete classification of the induced irreducible
representations of $GL_{n}(F)$ in terms of multisegments.

\begin{teo}\label{teo:1}
Let $\a=\{\Delta_{1}, \cdots, \Delta_{r}\}$ be a well ordered multisegment.
\begin{description}
\item[(1)] Then
the representation 
$$L_{\Delta_{1}}\times \cdots \times L_{\Delta_{r}}$$
contains an unique sub-representation, which we denote by $L_{\a}$.

\item[(2)] The representations $L_{\a}$ and $L_{\a'}$ are isomorphic if and only if $\a=\a'$ 
as well ordered multisegments, which means that there is a way to well order $\a'$ to obtain $\a$.

\item[(3)] Any irreducible representation of $GL_{n}(F)$ is isomorphic to some representation
of the form $L_{\a}$.
\end{description}
\end{teo}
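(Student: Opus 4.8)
The plan is to follow Zelevinsky's original argument, building on the Langlands-type classification for induced representations and on the results stated above (Propositions \ref{prop: 1.1.6}, \ref{prop: 1.1.11} and the Definition-Proposition introducing $L_{[\rho,\rho']}$). First I would establish part \textbf{(3)}. Starting with an arbitrary irreducible representation $\pi$ of $GL_n(F)$, Proposition \ref{prop: 1.1.6} embeds $\pi$ into some $\Ind_{P_{\n}}^{GL_n(F)}(\rho_1\otimes\cdots\otimes\rho_\alpha)$ with the $\rho_i$ cuspidal. The collection $\{\rho_1,\dots,\rho_\alpha\}$ is a multiset of points on Zelevinsky lines; one groups these points into a multisegment $\a=\{\Delta_1,\dots,\Delta_r\}$ by the standard ``linking'' recipe (greedily assembling maximal chains $\rho,\nu\rho,\dots$), and after reordering one may assume $\a$ is well ordered. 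Using transitivity of induction one rewrites the cuspidal induction as $L_{\Delta_1}\times\cdots\times L_{\Delta_r}$ up to extra steps, and then one shows $\pi$ must be the irreducible sub-representation $L_{\a}$ produced in part \textbf{(1)}; this uses the fact that the cuspidal support determines the Jacquet module at the minimal parabolic, hence the multiset of cuspidals, hence (after the combinatorial grouping) the multisegment.

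For part \textbf{(1)}, the key input is an analysis of the socle of $L_{\Delta_1}\times\cdots\times L_{\Delta_r}$ for a well ordered multisegment. I would proceed by induction on $r$. The base case $r=1$ is the Definition-Proposition already cited. For the inductive step, one uses Frobenius reciprocity (Proposition \ref{prop: 1.1.5}) together with the geometric lemma / Mackey theory computing the Jacquet module of a product $L_{\Delta_1}\times(L_{\Delta_2}\times\cdots\times L_{\Delta_r})$; the well-ordering hypothesis (no $\Delta_i$ precedes $\Delta_j$ for $i<j$) is exactly what forces the relevant $\Hom$ space between the Jacquet module and the appropriate tensor product to be one-dimensional, so that $\Hom_{GL_n(F)}\bigl(\sigma, L_{\Delta_1}\times\cdots\times L_{\Delta_r}\bigr)$ is at most one-dimensional for any irreducible $\sigma$, giving uniqueness of the sub-representation. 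One also checks it is nonzero, e.g. by exhibiting the submodule $L_{\a}$ as the image of an explicit intertwining map or as the unique irreducible quotient of the ``dual'' product.

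For part \textbf{(2)}, given $L_{\a}\simeq L_{\a'}$ one compares cuspidal supports: by Proposition \ref{prop: 1.1.6} the multiset $\{\rho_i\}$ underlying $\a$ equals that underlying $\a'$. This alone does not recover the multisegment, so the argument must be refined: one looks at the Jacquet module of $L_{\a}$ along a carefully chosen parabolic and reads off, from the leading term (the ``support of minimal/maximal degree'' in the geometric lemma expansion), the beginnings $b(\a)$ or ends $e(\a)$ as a multiset; peeling off one segment and inducting on $\deg(\a)$ then shows $\a=\a'$ as multisegments. Conversely if $\a=\a'$ as well ordered multisegments they give literally the same product, so the representations agree.

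I expect the main obstacle to be part \textbf{(1)}: proving the socle is irreducible requires the precise combinatorial control of Jacquet modules of products of the $L_{\Delta_i}$, i.e. a clean statement of the geometric lemma in this setting and a careful bookkeeping of which ``shuffles'' of the segments can contribute, with the well-ordering condition invoked at the right point to kill all but one contribution. The multiplicity-one statement for $\Hom(\sigma,-)$ is the technical heart; once it is in place, both uniqueness in \textbf{(1)} and the injectivity of $\a\mapsto L_{\a}$ in \textbf{(2)}, as well as surjectivity in \textbf{(3)}, follow fairly formally by induction on degree. Since the excerpt cites \cite{Z2} for the whole theorem, I would ultimately defer the detailed Jacquet-module computation to that reference and present here only the skeleton of the induction.
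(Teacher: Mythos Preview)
The paper does not prove this theorem at all: it is stated as background, attributed to Zelevinsky, and used without proof (the only comment following it is the remark that $L_{\a}$ is independent of the chosen well ordering). So there is no ``paper's own proof'' to compare your proposal against; deferring to \cite{Z2}, as you do in your last sentence, is precisely what the paper does.

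Your sketch is broadly in the spirit of Zelevinsky's argument, but one step in your outline of \textbf{(3)} is not right as stated: you cannot recover the multisegment $\a$ from the cuspidal support $\{\rho_1,\dots,\rho_\alpha\}$ alone by a ``greedy linking recipe'', since many distinct multisegments share the same cuspidal support (e.g.\ $\{[1,2]\}$ and $\{[1],[2]\}$). The actual argument in \cite{Z2} is more delicate: one shows that among all multisegments $\a$ with $\pi\hookrightarrow\pi(\a)$ there is one that is maximal for the partial order, and for that maximal $\a$ one has $\pi\simeq L_{\a}$; the segments are extracted from the structure of the Jacquet modules of $\pi$, not from the bare cuspidal support. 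Apart from this, the skeleton you describe for \textbf{(1)} (socle irreducibility via Frobenius reciprocity and the geometric lemma, with the well-ordering hypothesis killing all shuffles but one) and for \textbf{(2)} (reading off the multisegment from a suitable Jacquet module term and inducting) matches Zelevinsky's approach.
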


\remk according to (2), the irreducible representation $L_\a$ does not depend on the well ordered form
of $\a$.

\begin{notation}\label{nota: 1.1.15}
From now on, for $\a=\{\Delta_{1}, \cdots, \Delta_{r}\}$ being well ordered,  we denote
\[
 \pi(\a)=L_{\Delta_{1}}\times \cdots \times L_{\Delta_{r}}.
\]

\end{notation}

\section{ Coefficients \texorpdfstring{$m(\b, \a)$}{Lg}}

\begin{notation}
We denote by $\mathcal{R}_{n}$ the Grothendieck group of the category
of finite length representations of $GL_{n}(F)$ and
$$\mathcal{R}^{univ}=\oplus_{n\geq 1} \mathcal{R}_{n}.$$
\end{notation}

\begin{prop}
The set $\mathcal{R}^{univ}$ is a bi-algebra with the multiplication $\mu$ and co-multiplication $c$ given by
  \[
   \mu(\pi_{1}\otimes \pi_{2})=\pi_{1}\times \pi_{2},\qquad 
   c(\pi)=\sum_{r=0}^{n} J^{GL_{n}(F)}_{P_{r,n-r}}(\pi).
  \]

\end{prop}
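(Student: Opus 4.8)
The plan is to verify that $\mathcal{R}^{univ}$, equipped with the product $\mu$ coming from parabolic induction and the coproduct $c$ coming from the sum of Jacquet functors, satisfies the bialgebra axioms: associativity of $\mu$, coassociativity of $c$, and compatibility of $\mu$ with $c$ (i.e. $c$ is an algebra homomorphism for the induced product on $\mathcal{R}^{univ}\otimes \mathcal{R}^{univ}$). First I would observe that $\mathcal{R}^{univ}$ is a graded group, graded by $n$, and that $\mu$ and $c$ are graded maps; exactness of parabolic induction and of the Jacquet functor (noted in the remark after the definition of the Jacquet functor) guarantees that both operations are well defined on the level of Grothendieck groups. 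Associativity of $\mu$ is the statement that $(\pi_1\times\pi_2)\times\pi_3 \cong \pi_1\times(\pi_2\times\pi_3)$ in the Grothendieck group, which follows from transitivity of normalized parabolic induction: inducing in stages from $P_{r_1,r_2,r_3}$ through an intermediate parabolic agrees, up to the normalization factors $\delta_P^{1/2}$ which are multiplicative in towers of parabolics. Coassociativity of $c$ is the dual statement for Jacquet functors: the iterated Jacquet functor $J_{P_{r,s,t}}$ computed in stages is independent of the order, again by transitivity (and exactness) of the Jacquet functor, and summing over all ways to split $n$ into three ordered pieces gives $(c\otimes \Id)\circ c = (\Id\otimes c)\circ c$.

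The substantive point, and the one I expect to be the main obstacle, is the bialgebra compatibility: that for representations $\pi_1$ of $GL_{r}(F)$ and $\pi_2$ of $GL_{s}(F)$ one has
\[
c(\pi_1\times \pi_2)=c(\pi_1)\times c(\pi_2)
\]
in $\mathcal{R}^{univ}\otimes\mathcal{R}^{univ}$, where the product on the right is the one induced componentwise by $\mu$ (with the usual Koszul-type bookkeeping but over $\mathbb{Z}$ so no signs). Concretely, this amounts to the \emph{geometric lemma} of Bernstein–Zelevinsky (\cite{BZ1}, also \cite{Z1} 1.6): the restriction to $P_{k,n-k}$ of a representation induced from $P_{r,s}$ has a filtration whose successive quotients are themselves induced from the intersections $P_{r,s}\cap w P_{k,n-k} w^{-1}$ as $w$ runs over a set of double coset representatives in $P_{r,s}\backslash GL_n(F)/P_{k,n-k}$ (equivalently the relevant permutations), and on the Grothendieck group this filtration becomes the asserted sum. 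The key steps here are: (i) enumerate the double cosets $W_{r,s}\backslash S_n / W_{k,n-k}$ by $2\times 2$ matrices of non-negative integers with prescribed row and column sums; (ii) identify the Levi of $P_{r,s}\cap w^{-1}P_{k,n-k}w$ and check the Jacquet-functor and induction-functor computation on each piece, tracking the modulus characters $\delta^{1/2}$ to confirm the normalization is consistent; and (iii) sum over $w$ and reorganize the resulting double sum into $\bigl(\sum_i J(\pi_1)\bigr)\times\bigl(\sum_j J(\pi_2)\bigr)$.

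Finally I would note, for completeness, that one may also record the unit (the class of the trivial representation of $GL_0(F)$, i.e. $1\in\mathcal{R}_0$ after adjoining degree $0$) and counit (projection to the degree-$0$ component), so that $\mathcal{R}^{univ}$ is in fact a genuine bialgebra with unit and counit, but the proposition as stated only asks for the multiplication/comultiplication compatibility. The cleanest exposition is to cite the Bernstein–Zelevinsky geometric lemma in the precise form of \cite{BZ1} (or \cite{Z1}, Corollary 1.6), reducing associativity and coassociativity to transitivity of $\Ind$ and $J$, and the compatibility axiom to that one structural result; the only genuine work is the normalization bookkeeping, which is routine but must be done carefully since the $\delta_P^{-1/2}$ twists are exactly what make the two sides match without extra characters appearing.
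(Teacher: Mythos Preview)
Your proposal is correct and follows the standard route to this result. The paper itself does not give a proof: the proposition is stated without argument, as a known structural fact from Bernstein--Zelevinsky theory (it is essentially the Hopf algebra structure established in \cite{Z1}). Your outline---transitivity of $\Ind$ and $J$ for associativity and coassociativity, and the geometric lemma (Mackey-type filtration) for the compatibility $c(\pi_1\times\pi_2)=c(\pi_1)\times c(\pi_2)$---is exactly how one proves it, so there is nothing to compare against here beyond noting that you have supplied what the paper merely cites.
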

 
A consequence of theorem \ref{teo:1} is: 
 \begin{cor}\label{cor: 1.2.3}
 The algebra $\mathcal{R}^{univ}$ is a polynomial ring with indeterminates $\{L_{\Delta}: \Delta\in \Sigma^{univ}\}$.
 Moreover, as a $\Z$-module,
  the set $\{L_{\a}: \a\in \O^{univ}\}$ form a basis for $\mathcal{R}^{univ}$.
 \end{cor}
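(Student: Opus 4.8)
The plan is to deduce the second ("moreover") assertion directly from the general structure of Grothendieck groups together with Theorem~\ref{teo:1}, and then to leverage it to identify $\mathcal{R}^{univ}$ with a polynomial ring via a change-of-basis argument controlled by the Zelevinsky poset. For each $n$, the group $\mathcal{R}_{n}$ is by construction the free $\Z$-module on the isomorphism classes of irreducible finite length smooth representations of $GL_{n}(F)$; by Theorem~\ref{teo:1}(2),(3) these classes are exactly the $L_{\a}$ with $\deg(\a)=n$, each occurring once. Summing over $n$, the family $\{L_{\a}:\a\in\O^{univ}\}$ is a $\Z$-basis of $\mathcal{R}^{univ}=\oplus_{n}\mathcal{R}_{n}$.

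For the ring statement, recall that $\mathcal{R}^{univ}$ is commutative (the product $\pi_{1}\times\pi_{2}$ and $\pi_{2}\times\pi_{1}$ have the same semisimplification, by the geometric lemma / Mackey theory, cf.\ \cite{Z1}), so there is a well-defined ring homomorphism
\[
\Phi\colon \Z[X_{\Delta}:\Delta\in\Sigma^{univ}]\longrightarrow \mathcal{R}^{univ},\qquad X_{\Delta}\mapsto L_{\Delta},
\]
which becomes a homomorphism of graded rings once $X_{\Delta}$ is given the degree $\deg(\Delta)$ of the representation $L_{\Delta}$. I would first compute $\Phi$ on the monomial $\Z$-basis of the source: a monomial $\prod_{\Delta}X_{\Delta}^{a(\Delta)}$ corresponds to the multisegment $\a$ in which $\Delta$ has multiplicity $a(\Delta)$, and, choosing a well ordered arrangement $\a=\{\Delta_{1},\dots,\Delta_{r}\}$, commutativity gives $\Phi\bigl(\prod_{\Delta}X_{\Delta}^{a(\Delta)}\bigr)=L_{\Delta_{1}}\times\cdots\times L_{\Delta_{r}}=\pi(\a)$.

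It then suffices to show that $\{\pi(\a):\a\in\O^{univ}\}$ is again a $\Z$-basis of $\mathcal{R}^{univ}$: granting this, $\Phi$ sends the monomial $\Z$-basis bijectively onto a $\Z$-basis, hence is an isomorphism of rings, which is the first assertion. For the basis claim I would expand $\pi(\a)=\sum_{\b}m(\b,\a)L_{\b}$ and invoke Zelevinsky's results: $m(\b,\a)\neq 0$ forces $\deg(\b)=\deg(\a)$ and $\b\le\a$ for the poset structure of \cite{Z2} recalled in the introduction, while $m(\a,\a)=1$ since $L_{\a}$ occurs in $\pi(\a)$ with multiplicity one (cf.\ \cite{Z2}). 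Since there are finitely many multisegments of each degree $n$, listing them in an order refining $\le$ exhibits the matrix $(m(\b,\a))_{\deg\b=\deg\a=n}$ as triangular with $1$'s on the diagonal, hence invertible over $\Z$; so $\{\pi(\a)\}$ is obtained from $\{L_{\a}\}$ by a degreewise $\Z$-invertible change of basis and is itself a $\Z$-basis.

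The only inputs that are not purely formal are Zelevinsky's classification (Theorem~\ref{teo:1}), the commutativity of $\mathcal{R}^{univ}$, and the unitriangularity $m(\a,\a)=1$, $m(\b,\a)\neq 0\Rightarrow\b\le\a$ of the transition matrix. I expect the only point requiring a little care is organizing this last item so as to obtain invertibility over $\Z$ rather than merely over $\Q$ — which is precisely where the normalization $m(\a,\a)=1$ and the finiteness of $\O^{univ}$ in each fixed degree are used.
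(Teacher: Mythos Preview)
Your argument is correct and is the standard route; the paper itself gives no proof beyond declaring the corollary ``a consequence of Theorem~\ref{teo:1}''. One small slip worth fixing: the claim that ``there are finitely many multisegments of each degree $n$'' is false in $\O^{univ}$ --- already for $n=1$ there are infinitely many characters of $F^{\times}$, hence infinitely many multisegments. What you actually need for the unitriangular inversion to go through over $\Z$ is that $\{\b:\b\le\a\}$ is finite for each fixed $\a$; this holds because an elementary operation preserves the cuspidal support, and there are only finitely many multisegments with a given cuspidal support. Equivalently, the transition matrix $(m(\b,\a))$ is block-diagonal by cuspidal support with finite unitriangular blocks, and so inverts over $\Z$ block by block. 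With that correction your proof is complete.
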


 \remk
 Note that this implies the Bernstein Center theorem, i.e, we have a decomposition
 \[
  \mathcal{R}^{univ}=\prod_{\rho}\mathcal{R}(\rho),
 \]
where $\rho$ runs through the equivalent classes of irreducible supercuspidal representations,
here we say two irreducible supercuspidal
representations are equivalent if they lie in the same Zelevinsky line, and $\mathcal{R}(\rho)$
is the subalgebra with support contained in the Zelevinsky line $\Pi_{\rho}$ generate by $\rho$.

 Using theorem \ref{teo:1}, let $\a=\{\Delta_{1}, \cdots, \Delta_{r}\} $ be a multisegment
 with support contained in some Zelevinsky line $\Pi_{\rho}$, then  we can write
 \addtocounter{theo}{1}
 \begin{equation}\label{eq: m(b, a)}
  \pi(\a)=\sum_{\b\in \O(\rho)} m(\b,\a)L_{\b}
 \end{equation}
where $\pi(\a)=\Delta_{1}\times \cdots \times \Delta_{r},~ m(\b,\a) \in \mathbb N$. One of the aims of this thesis is to give some new insights on these
$m(\b,\a)$.

\remk 
For our purpose, note that we can also rewrite the equation \ref{eq: m(b, a)} in the following form
\addtocounter{theo}{1}
 \begin{equation}\label{eq: m(b, a)1}
  L_{\a}=\sum_{\b\in \O(\rho)} \tilde{m}(\b,\a)\pi(\b).
 \end{equation}

The  simplest example is given by

\begin{prop}(cf. \cite{Z3} section 4.6 )\label{prop: 1.1.15}
 Let $\Delta_{1}$ and $\Delta_{2}$ be two linked segments, then 
 \[
  \Delta_{1}\times \Delta_{2}=L_{\a_{1}}+L_{\a_{2}}
 \]
with $\a_{1}=\{\Delta_{1}, ~\Delta_{2}\}$, $\a_{2}=\{\Delta_{1}\cup \Delta_{2}, ~\Delta_{1}\cap \Delta_{2}\}$.
\end{prop}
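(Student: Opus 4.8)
The plan is to show, after a harmless normalisation, that $L_{\Delta_1}\times L_{\Delta_2}$ has composition length two, its constituents being $L_{\a_1}$ and $L_{\a_2}$, each once. Since $\mathcal{R}^{univ}$ is commutative (Corollary \ref{cor: 1.2.3}), the asserted identity lives in $\mathcal{R}^{univ}$ and is symmetric in $\Delta_1,\Delta_2$; as the two segments are linked, exactly one precedes the other, so I relabel them so that $\Delta_1$ precedes $\Delta_2$. Put $\Delta_\cup=\Delta_1\cup\Delta_2$ and $\Delta_\cap=\Delta_1\cap\Delta_2$, the latter possibly empty. Then $\Delta_\cup$ and $\Delta_\cap$ are nested (or one is empty), hence not linked, so $L_{\Delta_\cup}\times L_{\Delta_\cap}$ is irreducible by Proposition \ref{prop: 1.1.11}, and since $\{\Delta_\cup,\Delta_\cap\}$ is (trivially) well ordered it equals $L_{\a_2}$. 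On the other hand $\Delta_1,\Delta_2$ are linked, so the same Proposition gives that $L_{\Delta_1}\times L_{\Delta_2}$ is reducible, of length $\ge 2$; and since $\pi(\a_1)=L_{\Delta_1}\times L_{\Delta_2}$ in $\mathcal{R}^{univ}$, Theorem \ref{teo:1} exhibits $L_{\a_1}$ as a subrepresentation, hence a constituent, of (the well ordered form of) this product.

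It remains to see that $L_{\a_2}$ is also a constituent and that the length is exactly $2$. For the first point I use Frobenius reciprocity (Proposition \ref{prop: 1.1.5}): writing $r_i=\deg\Delta_i$ and $P=P_{(r_1,\,r_2)}$, it suffices to show that $L_{\Delta_1}\otimes L_{\Delta_2}$ occurs, up to the modulus twist, as a quotient of the Jacquet module $J_{P}(L_{\a_2})$, and $J_{P}(L_{\a_2})=J_P(L_{\Delta_\cup}\times L_{\Delta_\cap})$ is computed by the geometric lemma of Bernstein--Zelevinsky (\cite{BZ1}), using that the Jacquet module of $L_\Delta$ along a maximal parabolic has a unique irreducible quotient (a segment being ``generic''). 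For the length, the cleanest route is the combinatorics of multisegments: the only multisegments below $\a_1$ are $\a_1$ and $\a_2$ (a pair of linked segments admits exactly one elementary operation, producing $\a_2$, whose two segments are unlinked), so by equation \ref{eq: m(b, a)} we have $\pi(\a_1)=m(\a_1,\a_1)L_{\a_1}+m(\a_2,\a_1)L_{\a_2}$; computing the multiplicities in $J_{P}(L_{\Delta_1}\times L_{\Delta_2})$ of the two distinguished irreducibles ``$L_{\Delta_1}\otimes L_{\Delta_2}$'' and its ``anti-ordered'' counterpart (using the geometric lemma on $L_{\Delta_1}\times L_{\Delta_2}$ and on $L_{\a_1},L_{\a_2}$) gives two linear relations forcing $m(\a_1,\a_1)=m(\a_2,\a_1)=1$. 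In the degenerate case $\Delta_\cap=\emptyset$ everything is easier: then $\a_2=\{\Delta_\cup\}$ and $L_{\Delta_\cup}\hookrightarrow L_{\Delta_1}\times L_{\Delta_2}$ follows directly from the Definition-Proposition stated above by transitivity of induction.

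The step I expect to be the main obstacle is pinning down the length: showing that no third constituent occurs and that the multiplicities are one requires either invoking Zelevinsky's description of the order on multisegments together with a short Jacquet-module computation, or carrying out the full Bernstein--Zelevinsky geometric-lemma bookkeeping; either way one must keep track of the modulus characters and treat the overlapping case ($\Delta_\cap\ne\emptyset$) separately from the adjacent case.
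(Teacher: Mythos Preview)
The paper does not supply its own proof here; it merely cites Zelevinsky. Your outline is essentially Zelevinsky's original argument in \cite{Z2}: the irreducibility criterion (Proposition~\ref{prop: 1.1.11}) identifies $L_{\a_2}$, Theorem~\ref{teo:1} gives $L_{\a_1}$ as a constituent with multiplicity one, and the remaining content is extracted from a Jacquet-module computation via the Bernstein--Zelevinsky geometric lemma.

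One caution about your shortcut. When you write ``the only multisegments below $\a_1$ are $\a_1$ and $\a_2$, so by equation~(\ref{eq: m(b, a)}) we have $\pi(\a_1)=m(\a_1,\a_1)L_{\a_1}+m(\a_2,\a_1)L_{\a_2}$,'' you are tacitly invoking the proposition (quoted later in the paper from \cite{Z2}~7.1) that $m(\b,\a)\neq 0$ only if $\b\leq\a$. In Zelevinsky's development that result comes \emph{after} the two-segment case and uses it as an ingredient, so appealing to it here is circular. There are in general more than two multisegments of the same weight as $\a_1$ (for $\Delta_1=[1,2]$, $\Delta_2=[2,3]$ there are five), and the others must be excluded directly. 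The Jacquet-module route you also sketch does exactly this and is the non-circular argument: compute the semisimplification of the relevant Jacquet module of $L_{\Delta_2}\times L_{\Delta_1}$ via the geometric lemma, compare with the (known) Jacquet modules of the candidate $L_\b$, and read off both the list of constituents and their multiplicities. So your plan is sound, provided you commit to the Jacquet-module bookkeeping rather than the poset shortcut.
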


\remk
it is conjectured in \cite{Z2} 8.7 that the coefficient $m(\b,\a)$ depends only on the combinatorial relations of 
$\b$ and $\a$, and not on the specific cuspidal representation $\rho$. The 
independence of specific cuspidal representation can be showed by type theory,
see for example \cite{MV}. \textbf{In 
other words, as far as we are concerned with the coefficient $m(\b,\a)$, we can restrict
ourselves to the special case $\rho=1$, the trivial representation of $GL_{1}(F)$.}

\begin{definitions}\label{def: 1.2.5}
Let 
$$\Pi=\{\nu^k : k\in \Z\}$$ 
denote the Zelevinsky line of $\rho=1$.
We note 
\begin{itemize}
 \item 
$\Sigma$ the set of segments associated to $\Pi$,
\item
$\O$ the set of 
multisegments associated to $\Sigma$, 
\item
$\mathcal{R}$ the subalgebra of $\mathcal{R}^{univ}$ generate by the elements
 in $L_\a$ with $\a \in \O$,
 \item
$
\mathcal{C}=\{f:\Sigma\rightarrow \N \text{ with finite support}\},
$
\item
$\mathcal{S}=\{\varphi: \Z\rightarrow \N\}$.
\end{itemize}
\end{definitions}

\begin{notation}
For $i \leq j$, we will identify $L_{[\nu^i, \nu^j]} \in \mathcal R$ with $[i, j]$( for simplicity we let $[i]=[i, i]$). More generally 
we denote a multisegment $\a$ by $\sum_{i\leq j}a_{ij}[i,j]$.
\end{notation}

\begin{prop}\label{prop: 1.2.5}
By associating to $f\in \mathcal{C}$ the multisegment 
$$\sum_{\Delta\in \Sigma} f(\Delta)\Delta,$$
we can identify $\mathcal{C}$ with $\O$.
For every element $\b\in \O$, 
we set $f_{\b}$ for the associated function in $\mathcal{C}$.
\end{prop}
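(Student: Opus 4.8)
The plan is to unwind the definition of a multiset (Definition \ref{def: 1.1.8}) and exhibit an explicit inverse to the stated assignment, so that the ``identification'' is literally a bijection of sets.

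First I would make the two maps precise. Given $f\in\mathcal{C}$, its support $\supp f=\{\Delta\in\Sigma: f(\Delta)\neq 0\}$ is finite by the defining condition on $\mathcal{C}$, so one may form the finite multiset $\Phi(f):=(\supp f,\, f|_{\supp f})$; written with repetitions according to the convention following Definition \ref{def: 1.1.8}, this is exactly $\sum_{\Delta\in\Sigma}f(\Delta)\Delta$, which is a finite multiset of segments, i.e. an element of $\O$. Conversely, by Definitions \ref{def: 6} combined with Definition \ref{def: 1.1.8}, a multisegment $\b\in\O$ is a finite multiset $(S,r)$ with $S\subseteq\Sigma$ finite and $r\colon S\to\N$; define $f_{\b}\colon\Sigma\to\N$ by $f_{\b}(\Delta)=r(\Delta)$ for $\Delta\in S$ and $f_{\b}(\Delta)=0$ otherwise. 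Then $\supp f_{\b}\subseteq S$ is finite, so $f_{\b}\in\mathcal{C}$; set $\Psi(\b)=f_{\b}$.

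The remaining step is the routine check that $\Psi\circ\Phi=\mathrm{id}_{\mathcal{C}}$ and $\Phi\circ\Psi=\mathrm{id}_{\O}$. For the first, $\Psi(\Phi(f))(\Delta)$ equals $f(\Delta)$ on $\supp f$ and $0$ elsewhere, hence equals $f(\Delta)$ everywhere. For the second, $\Phi(f_{\b})$ is the multiset supported on $S_{0}:=\{\Delta\in S:r(\Delta)\neq 0\}$ with multiplicity function $r|_{S_{0}}$; the inclusion $S_{0}\hookrightarrow S$ is a bijection of multisets in the sense of Definition \ref{def: 1.1.8} once segments of multiplicity $0$ are discarded from $(S,r)$, which is harmless by the ``set with repetition'' convention, and it preserves multiplicities, so $\Phi(f_{\b})=\b$ in $\O$. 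Therefore $\Phi$ is a bijection with inverse $\Psi$, and we write $f_{\b}:=\Psi(\b)$ for $\b\in\O$.

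The argument is purely formal and there is no genuine obstacle; the only points that need a word of care are matching the finiteness conditions (``finite multiset'' versus ``function of finite support'') and fixing the interpretation of the formal sum $\sum_{\Delta}f(\Delta)\Delta$ as the multiset containing $f(\Delta)$ copies of each $\Delta$, as dictated by the convention after Definition \ref{def: 1.1.8}. With those conventions in place the bijection is immediate.
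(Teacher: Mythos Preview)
Your proof is correct. The paper gives no proof of this proposition at all, treating the identification as immediate from the definitions; your explicit construction of the inverse and verification of both compositions is exactly the routine argument the paper leaves implicit.
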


\begin{definition}\label{def: 2.2.2}
For a multisegment 
$$\a=\sum_{i\leq j}a_{ij}[i,j]$$ 
with $f_{\a}$ associated function in $\mathcal{C}$,
let 
\[
 \varphi_{\a}=\sum_{\Delta\in \a}f_{\a}(\Delta)\chi_{\Delta}\in \mathcal{S}.
\]
We call $\varphi_{\a}$
the weight of $\a$, and 
we call $\deg(\a)=\sum_{k\in \N} \varphi_{\a}(k)$ the degree of $\a$(or, the 
degree of $L_{\a}$).
\end{definition}

\begin{definition}\label{def: 1.2.9}
For $\varphi\in \mathcal{S}$, let $S(\varphi)$ be the 
set of multisegments with weight $\varphi$. 
\end{definition}

% \begin{definition}\label{def: 3.1.1}
% \begin{description}
%  \item [(i)]For a multisegment 
% $$\a=\sum_{i\leq j}a_{ij}[i,j]$$ 
% with $f_{\a}$ associated in $\mathcal{C}$(cf. Prop. \ref{prop: 1.2.5}),
% and
% \[
%  \varphi=\sum_{\Delta\in \a}f_{\a}(\Delta)\chi_{\Delta}.
% \]
% the weight of $\a$,  
% 
% \item[(ii)]
% If $S=\{L_{\a_{i}}: i=1,\cdots, r\}$ is a family of irreducible representations, we put
% \[
%  \mindeg S=\min\{\deg(\a_{i}): i=1,\cdots, r\}.
% \]
% \end{description}
% \end{definition}

% \begin{prop} (cf.\cite{Z2} corollary 7.5)
% The algebra $\mathcal{R} (\rho)$ is stable under the co-multiplication, hence is a sub-bialgebra of 
% $\mathcal{R}^{univ}$; it is a polynomial ring in indeterminates $\{L_{\Delta}:\Delta\in \mathcal{R}(\rho)\}$.
% Furthermore,
%  \[
%   \mathcal{R}^{univ}=\oplus_{\rho\in \lC}\mathcal{R} (\rho). 
%  \]

% \end{prop}

\section{A partial order on \texorpdfstring{$\O$}{Lg}}

% From now on, as remarked before, we fix the cuspidal representation to be the trivial
% representation of $F^\times$. We note
% \[
%  \Sigma=\Sigma(1), ~ \O=\O(1), ~\mathcal{C}=\mathcal{C}(1), ~\mathcal{R}=\mathcal{R}(1), ~
%  \mathcal{R}^+=\mathcal{R}(1)^+.
% \]

\begin{definition}
For $\a$ a multisegment, by an elementary operation, we mean replacing two linked segments 
$\{\Delta_{1}, \Delta_{2}\}$
by $\{\Delta_{1}\cup \Delta_{2}, \Delta_{1}\cap \Delta_{2}\}$ in $\a$. 
\end{definition}

\begin{notation}\label{nota: 1.3.2}
Let $\b$ be a multisegment such that
it can be obtained from $\a$ by a series of elementary operations, then we say $\b\leq \a$. 
We denote
$$S(\a)=\{\b: \b\leq \a\}.$$
\end{notation}

\begin{definition}\label{def: 1.2.10}
 We define for $\b\leq \a$,
 \[
  \ell(\b,\a)=\max_{n}\{n: \a=\b_{0}\geq \b_{1}\cdots \geq\b_{n}=\b\},
 \]
and $\ell(a)=\ell(\a_{\min}, \a)$.
\end{definition}

%\remk This is a complicate order relation despite of its elementary definition, as we shall
%see, and we shall study in more details in later part of the article.

%It is also useful to introduce the following totally order relation on $\O(\rho)$, for 
%better understanding of the partial order relation.

\begin{definition}\label{def: 11}
We define the following total order relations on $\Sigma$:

\begin{displaymath} \left\{ \begin{array}{cc}
&[j, k]\prec  [m,n], \text{ if } k< n,\\
&[j,k]\prec  [m,n], \text{ if } j>m, n=k.
\end{array}\right. 
\end{displaymath}

\end{definition}

%\remk Now every multisegment $\a=\{\Delta_{1}, \cdots, \Delta_{r}\}$, there is 
%a unique way to write $\a$ such that $\Delta_{1}\succeq \cdots \succeq \Delta_{r}$.
%It is well ordered in the sense of definition \ref{def: 6}. From now on, when
%we say well ordered multisegment, we always mean such an arrangement.

%\remk We can define the partial order relation $\prec$ on $\O(\rho)$: let 
% $\a=\{\Delta_{1}, \cdots, \Delta_{r}\}$ and $\b=\{\Delta_{1}', \cdots, \Delta_{s}'\}$, 
% $\a\prec \b$ if 
 %\[
 % \Delta_{1}=\Delta_{1}', \cdots, \Delta_{m-1}=\Delta_{m-1}', \Delta_{m}\prec \Delta_{m}'.
 %\]
 %But the fact that this partial order relation does not respect the elementary operation makes
 %it not so interesting. 

\begin{lemma}\label{lem: 3.0.8}
 Let $\b\in S(\a)$, then $\pi(\a)-\pi(\b)\geq 0$ in $\mathcal{R}$.
\end{lemma}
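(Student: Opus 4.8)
The statement to prove is Lemma~\ref{lem: 3.0.8}: if $\b\in S(\a)$ then $\pi(\a)-\pi(\b)\ge 0$ in $\mathcal R$, i.e. every irreducible $L_\c$ occurs in $\pi(\a)$ with multiplicity at least as large as in $\pi(\b)$. The natural strategy is to reduce to the single-step case. By Notation~\ref{nota: 1.3.2}, $\b\le \a$ means there is a chain $\a=\b_0\ge \b_1\ge\cdots\ge\b_n=\b$ in which each $\b_{t+1}$ is obtained from $\b_t$ by one elementary operation. Since $\ge$ in $\mathcal R$ is a transitive relation (a sum of effective differences is effective), it suffices to prove the claim when $\b$ is obtained from $\a$ by a \emph{single} elementary operation: replacing two linked segments $\{\Delta_1,\Delta_2\}$ in $\a$ by $\{\Delta_1\cup\Delta_2,\ \Delta_1\cap\Delta_2\}$.

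\textbf{The single-step case.} Write $\a=\{\Delta_1,\Delta_2\}\sqcup\gamma$ and $\b=\{\Delta_1\cup\Delta_2,\ \Delta_1\cap\Delta_2\}\sqcup\gamma$ for a common sub-multiset $\gamma$, where $\Delta_1,\Delta_2$ are linked. The first point is that, since the multiplication $\mu$ on $\mathcal R^{univ}$ (Proposition before Corollary~\ref{cor: 1.2.3}) is commutative and associative, we may compute $\pi(\a)$ and $\pi(\b)$ by first forming the product $L_{\Delta_1}\times L_{\Delta_2}$, resp. $L_{\Delta_1\cup\Delta_2}\times L_{\Delta_1\cap\Delta_2}$, and then multiplying both by the common factor $\prod_{\Delta\in\gamma}L_\Delta$. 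Then I would invoke Proposition~\ref{prop: 1.1.15}: since $\Delta_1,\Delta_2$ are linked,
\[
L_{\Delta_1}\times L_{\Delta_2}=L_{\{\Delta_1,\Delta_2\}}+L_{\{\Delta_1\cup\Delta_2,\ \Delta_1\cap\Delta_2\}},
\]
whereas $L_{\Delta_1\cup\Delta_2}$ and $L_{\Delta_1\cap\Delta_2}$ correspond to the two segments $\Delta_1\cup\Delta_2$ and $\Delta_1\cap\Delta_2$, which are \emph{not} linked (one does not contain the other fails — in fact their union $\Delta_1\cup\Delta_2$ equals the larger of them, so they are nested), hence by Proposition~\ref{prop: 1.1.11} the product $L_{\Delta_1\cup\Delta_2}\times L_{\Delta_1\cap\Delta_2}$ is irreducible and equals $L_{\{\Delta_1\cup\Delta_2,\ \Delta_1\cap\Delta_2\}}$. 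Therefore
\[
L_{\Delta_1}\times L_{\Delta_2}-L_{\Delta_1\cup\Delta_2}\times L_{\Delta_1\cap\Delta_2}=L_{\{\Delta_1,\Delta_2\}}\ge 0
\]
in $\mathcal R$. Multiplying this effective element by the effective element $\prod_{\Delta\in\gamma}L_\Delta$ (effectivity is preserved because $\mu$ sends products of genuine representations to genuine representations, so the image of an effective class times an effective class is effective) yields $\pi(\a)-\pi(\b)\ge 0$, completing the single-step case and, by the reduction above, the lemma.

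\textbf{Expected main obstacle.} The conceptual content is light; the only delicate points are bookkeeping. First, I must make sure the ``effective $\times$ effective is effective'' claim in $\mathcal R^{univ}$ is cleanly justified — this is immediate from the fact that $\mu$ is induced by the exact parabolic induction functor, so it carries the cone of actual representations into itself, but it is worth stating. Second, I should double-check the edge cases of Proposition~\ref{prop: 1.1.15} and the ``not linked'' conclusion: when $\Delta_1\subseteq\Delta_2$ the pair is not linked and no elementary operation applies, so in the relevant case $\Delta_1,\Delta_2$ are genuinely linked and $\Delta_1\cap\Delta_2,\Delta_1\cup\Delta_2$ are comparable (nested) segments, hence unlinked — feeding correctly into Proposition~\ref{prop: 1.1.11}. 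Beyond that, transitivity of $\ge$ along the chain $\b_0\ge\cdots\ge\b_n$ is a telescoping sum of effective differences, which is routine. So the ``hard part'' is really just assembling these pieces carefully rather than any genuine difficulty.
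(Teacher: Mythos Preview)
Your proposal is correct and follows essentially the same route as the paper: reduce by a chain to a single elementary operation, then use Proposition~\ref{prop: 1.1.15} to write $L_{\Delta_j}\times L_{\Delta_k}=L_{\{\Delta_j,\Delta_k\}}+L_{\{\Delta_j\cup\Delta_k,\Delta_j\cap\Delta_k\}}$ and factor out the remaining segments. The only difference is cosmetic: you explicitly invoke Proposition~\ref{prop: 1.1.11} to identify $L_{\{\Delta_j\cup\Delta_k,\Delta_j\cap\Delta_k\}}$ with $L_{\Delta_j\cup\Delta_k}\times L_{\Delta_j\cap\Delta_k}$, whereas the paper uses this silently when it writes $\pi(\a)=\pi(\b)+(\text{other factors})\times L_{\{\Delta_j,\Delta_k\}}$.
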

 
\begin{proof}
 By choosing a maximal chain of multisegments between $\a$ and 
 $\b$, we can assume that 
\[
 \a=\{\Delta_{1}, \cdots, \Delta_{r}\},
 \]
 \[
 \b=(\a\backslash\{\Delta_{j}, \Delta_{k}\})\cup \{\Delta_{j}\cap \Delta_{k}, \Delta_{j}\cup \Delta_{k}\}.
\]
Then by proposition \ref{prop: 1.1.15},
\[ 
 \pi(\a)=\pi(\b)+L_{\Delta_{1}}\times \cdots \times \hat{L}_{\Delta_{j}}
 \times\cdots \times \hat{L}_{\Delta_{k}}\times \cdots\times  L_{\Delta_{r}}
\times L_{\{\Delta_{j}, \Delta_{k}\}}
 \]
\end{proof}

\begin{prop}
 The set $S(\a)$ is a partially ordered finite set with unique minimal element $\a_{\min}$.
 Furthermore, $\a_{\min}$ is
 the unique multisegment in $S(\a)$ in which no segment is linked to
 the others.
\end{prop}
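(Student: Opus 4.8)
The plan is to reduce the whole statement to a single combinatorial fact — that among all multisegments of a fixed weight there is exactly one in which no two segments are linked — and then to identify that distinguished multisegment with $\a_{\min}$. First I would record the easy points. An elementary operation replaces $\{\Delta_1,\Delta_2\}$ by $\{\Delta_1\cup\Delta_2,\Delta_1\cap\Delta_2\}$ (omitting the second segment when $\Delta_1\cap\Delta_2=\emptyset$), and since $\chi_{\Delta_1}+\chi_{\Delta_2}=\chi_{\Delta_1\cup\Delta_2}+\chi_{\Delta_1\cap\Delta_2}$ as functions $\Z\to\N$, it preserves the weight; hence $S(\a)\subseteq S(\varphi_\a)$, which is finite. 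Reflexivity and transitivity of $\leq$ are immediate, and for antisymmetry I would use the monovariant $N(\b)=\sum_{\Delta\in\b}|\Delta|^2$, where $|\Delta|$ is the number of cuspidals in $\Delta$: writing $p=|\Delta_1|$, $q=|\Delta_2|$, $r=|\Delta_1\cap\Delta_2|$ for a linked pair, one has $|\Delta_1\cup\Delta_2|=p+q-r$ and $0\le r<\min(p,q)$, so
\[
 \bigl(|\Delta_1\cup\Delta_2|^2+|\Delta_1\cap\Delta_2|^2\bigr)-\bigl(|\Delta_1|^2+|\Delta_2|^2\bigr)=2(p-r)(q-r)>0 .
\]
Thus every elementary operation strictly increases $N$, so $\leq$ has no nontrivial cycles (whence antisymmetry) and every chain of elementary operations terminates. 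It then follows at once that $\b\in S(\a)$ is minimal in $S(\a)$ if and only if no two of its segments are linked — if some pair is linked, one operation yields a strictly smaller element of $S(\a)$; if none is, no operation applies to $\b$, so nothing lies strictly below it — and finiteness gives at least one minimal element. The real content is the uniqueness of this minimal element.

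For that I would exhibit a canonical minimal multisegment depending only on the weight: for $\varphi\in\mathcal{S}$ let $\a_\varphi$ consist, for each integer $t\ge 1$, of one copy of every connected component of $\{k:\varphi(k)\ge t\}$. Counting the segments through each point shows $\a_\varphi$ has weight $\varphi$, and a comparison of levels shows no two of its segments are linked (a component at a higher level lies inside a single component at a lower level, hence is either nested in it or disjoint from it with their union not a segment). The crux is then the Key Lemma: $\a_\varphi$ is the \emph{only} multisegment of weight $\varphi$ with no two linked segments. Granting it, every $\b$ of weight $\varphi$ reduces, by elementary operations — which terminate by the monovariant — to a multisegment with no linked pair, necessarily $\a_\varphi$; so $\a_\varphi\le\b$. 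Taking $\b=\a$ gives $\a_\varphi\in S(\a)$, and then $\a_\varphi$ lies below every element of $S(\a)$, so it is the least element of $S(\a)$, in particular its unique minimal element; combined with the characterization above, $\a_{\min}:=\a_\varphi$ is the unique multisegment in $S(\a)$ in which no segment is linked to the others. (This also shows $\a_{\min}$ depends only on $\varphi_\a$.)

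The main obstacle is the Key Lemma, which I would prove by induction on $\deg\varphi$, the case $\varphi=0$ being trivial. Given a no-linked $\b$ of weight $\varphi$ and a connected component $C=[A,B]$ of $\{k:\varphi(k)\ge 1\}$, the segments of $\b$ through $\nu^A$ pairwise intersect, hence — no pair being linked — are totally ordered by inclusion; let $\Delta$ be the largest. The key point is that $\Delta=C$: its left endpoint cannot lie strictly to the left of $A$ (that would force $\varphi(A-1)>0$, contrary to the choice of $C$), and if its right endpoint $B'$ satisfied $B'<B$ then a segment of $\b$ through $\nu^{B'+1}$ — which exists since $\varphi(B'+1)>0$, and which cannot contain $\nu^A$, else it would strictly contain $\Delta$ — would be linked to $\Delta$, a contradiction. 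Hence every component of $\{\varphi\ge 1\}$ occurs in $\b$; deleting one copy of each yields a no-linked multisegment of weight $\varphi-\mathbf{1}_{\supp\varphi}$, which by induction equals $\a_{\varphi-\mathbf{1}_{\supp\varphi}}$, and since $\a_\varphi$ is obtained from the latter by adjoining one copy of each component of $\{\varphi\ge 1\}$, we conclude $\b=\a_\varphi$. It is precisely this step — showing the maximal segment of $\b$ through $\nu^A$ is exactly $C$ — that uses the no-linked hypothesis essentially, and it is where I expect the care to be needed (including a small amount of bookkeeping around the degenerate case $\Delta_1\cap\Delta_2=\emptyset$ of adjacent-but-linked segments).
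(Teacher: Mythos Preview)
Your argument is correct, and it takes a genuinely different route from the paper's. Both proofs reduce to the same key fact---that there is a unique multisegment of a given weight in which no two segments are linked---but they construct and characterise that multisegment differently. The paper builds $\a_{\min}$ by a greedy algorithm (repeatedly take the $\prec$-maximal segment whose characteristic function fits in the remaining weight) and then proves uniqueness by lining up two unlinked multisegments of the same weight and comparing them segment by segment, deriving a contradiction from the first place they differ. You instead give the level-set description $\a_\varphi=\bigcup_{t\ge 1}\{\text{components of }\{\varphi\ge t\}\}$ and prove uniqueness by induction on the degree, peeling off the bottom level; along the way your sum-of-squares monovariant $N(\b)=\sum_{\Delta}|\Delta|^2$ gives a clean, self-contained proof of antisymmetry and termination, which the paper defers to \cite{Z2}. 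Your level-set picture is more structural and makes the dependence of $\a_{\min}$ on $\varphi_\a$ alone immediately visible; the paper's greedy construction is more in keeping with the sequential, algorithmic flavour of the truncation maps $\a\mapsto\a^{(k)}$ used later.
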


\remk in particular by proposition \ref{prop: 1.1.11} a multisegment $\a$ is minimal if and only if $\pi(\a)$ is irreducible.

\begin{proof}
For a proof of the partial orderness, we refer to \cite{Z2} 7.1. 
Let $X_{\a}:=\cup_{\Delta\in \a} \Delta$ be a subset 
of the Zelevinsky line $\Pi$.
Let $\varphi_\a$ be the weight function of $\a$.
Let $\Sigma(\a)$ be the set of segments with support in $X_{\a}$: this is
a finite set. For every $\Delta \in \Sigma(\a)$, we note
$\chi_{\Delta}$ the characteristic function of the set $\Delta$. Now 
we consider the set 
\[
 \Gamma(\a)=\{f\in \mathcal{C}:~ \varphi_\a= \sum_{\Delta\in \Sigma}f(\Delta)\chi_{\Delta} 
 \}.
\]
Then $\Gamma(\a)$ is a finite set. Clearly, for any $\b\in S(\a)$, we have 
$f_{\b}\in \mathcal{C}$ since the elementary operation does not change the weight function, note that $\b$ is uniquely determined by $f_{\b}$, 
so $S(\a)$ is finite since $\Gamma(\a)$ is finite.

We define $\a_{\min}=\{\Delta_{1}, \Delta_{2}, \cdots, \Delta_{r}\}$
with $\Delta_{1}\succeq \cdots \succeq \Delta_{r}$, where
for $\Delta_{0}=\emptyset$, we set $\Delta_{i}$ be the maximal segment with respect to the 
total order~$\prec$, such that $\chi_{\Delta_{i}}$ is supported in 
$\Supp(\varphi_\a-\chi_{\Delta_{0}}-\cdots - \chi_{\Delta_{i-1}})$.

%We need to show that $\a_{\min}\in  S(\a)$ and are not linked to each other

We only need to show 
that for all $\b\in S(\a)$, we have $\a_{\min}\leq \b$. To see this,
we look at a maximal segment $\Delta'$ in $\b$, if it is linked to some
segments $\Delta''$, then we apply the elementary operation to them and
get $\b_{1}$. Now repeat the same procedure, in finite steps we get a 
multisegment $\b'\leq \b$ in which no segments are linked to the others.
It remains to show that $\b'=\a_{\min}$.

In fact, we have
\addtocounter{theo}{1}
\begin{equation}\label{equ: (1)}
 \varphi_\a=\sum_{\Delta\in \Sigma(\a)}f_{\a_{\min}}(\Delta)\chi_{\Delta} 
 =\sum_{\Delta\in \Sigma(\a)}f_{\b'}(\Delta)\chi_{\Delta}. 
\end{equation}

Let $\b'=\{\Delta_{1}', \cdots, \Delta_{t}'\}$ with 
$\Delta_{1}'\succeq \cdots \succeq \Delta_{t}'$.  Put $\Delta'_{0}=\emptyset $ and
suppose by induction that there is an $s$ with $1\leq s\leq \min\{r,~t\}$ such that 
for all $0\leq i<s$, $\Delta_{i}'= \Delta_{i}$. By construction, we  have $\Delta_{s}'\preceq  \Delta_{s}$
and we assume that $\Delta_{s}'\prec \Delta_{s}$. By the equality (\ref{equ: (1)}),
$e(\Delta_{s})=e(\Delta_{s}')$, 
then $\chi_{\Delta_{s}'}-\chi_{\Delta_{s}}$
is negative. Let $\Delta=\Delta_{s}\setminus \Delta_{s}'$.
Now by the equality (\ref{equ: (1)}),  there exists a minimal $i>s$ such that
the segment $\Delta_{i}'$ 
satisfies the property that $b(\Delta_{i}')\leq b(\Delta)\leq e(\Delta)\leq e(\Delta_{i}')$. 

But this implies that $\Delta_{s}'$ is linked
to $\Delta_{i}'$, contradiction. 
Therefore $\Delta_{s}'=\Delta_{s}$. We conclude by the same argument that
\[
 r=s,~\Delta_{i}'=\Delta_{i}, 1\leq i\leq r.
\]
\end{proof}

Concerning the coefficient $m(\b, \a)$, we have 
 
\begin{prop}(cf.\cite{Z2} 7.1)
 The coefficient $m(\b,\a)$ is
 \begin{itemize}
  \item nonzero if and only if $\b\leq \a$, and
  \item equal to $1$ if $\b=\a$. 
  \end{itemize}
\end{prop}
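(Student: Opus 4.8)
The statement to prove is that $m(\b,\a)\neq 0$ iff $\b\leq\a$, and $m(\a,\a)=1$. The plan is to separate the two assertions and to handle the ``only if'' direction by a dimension/support argument on multisegments, the ``if'' direction by induction along elementary operations, and the normalization $m(\a,\a)=1$ essentially for free from the structure of $\pi(\a)$.

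First I would establish that whenever $m(\b,\a)\neq 0$ one has $\varphi_\b=\varphi_\a$, i.e.\ $\b$ and $\a$ have the same weight. This is because parabolic induction does not change the cuspidal support: every Jordan--H\"older constituent $L_\b$ of $\pi(\a)=L_{\Delta_1}\times\cdots\times L_{\Delta_r}$ has the same cuspidal support as $\pi(\a)$, and by Definition \ref{def: 2.2.2} this cuspidal support is exactly $\varphi_\a$. Hence $\b\in S(\varphi_\a)$, the finite set of Definition \ref{def: 1.2.9}. Next, to get $\b\leq\a$, I would argue by induction on $\ell(\a)=\ell(\a_{\min},\a)$. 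If $\a=\a_{\min}$ then $\pi(\a)$ is irreducible (remark after the Proposition, via Proposition \ref{prop: 1.1.11}), so $\pi(\a)=L_{\a_{\min}}$ and the only $\b$ with $m(\b,\a)\neq0$ is $\a$ itself, which satisfies $\b\leq\a$ trivially. If $\a$ is not minimal, pick two linked segments $\Delta_j,\Delta_k$ and form $\b_0=(\a\setminus\{\Delta_j,\Delta_k\})\cup\{\Delta_j\cap\Delta_k,\Delta_j\cup\Delta_k\}$, so $\b_0<\a$; by Proposition \ref{prop: 1.1.15} applied inside the product (exactly as in the proof of Lemma \ref{lem: 3.0.8}),
\[
\pi(\a)=\pi(\b_0)+L_{\Delta_1}\times\cdots\times\hat L_{\Delta_j}\times\cdots\times\hat L_{\Delta_k}\times\cdots\times L_{\Delta_r}\times L_{\{\Delta_j,\Delta_k\}}.
\]
The second summand is a product of the form $\pi(\c)$ for a multisegment $\c$ with $\c<\a$ (one of the two ``new'' pieces is $L_{\{\Delta_j,\Delta_k\}}$ which, being irreducible, is itself $L_{\c'}$ for $\c'\leq\{\Delta_j,\Delta_k\}$, hence equals a product $\pi$ of segments all $\leq\a$ in the order). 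Applying the induction hypothesis to $\pi(\b_0)$ and to that second term, every constituent $L_\b$ occurring has $\b\leq\b_0<\a$ or $\b\leq\c<\a$, hence $\b\leq\a$. The one point requiring care is to check $\c<\a$ strictly in the poset $S(\a)$, i.e.\ that $\ell(\c)<\ell(\a)$, so that the induction is well-founded; this follows because $\c\in S(\b_0)$ and $\b_0$ is obtained from $\a$ by one elementary operation.

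For the ``if'' direction I would again induct along a maximal chain $\a=\b_0>\b_1>\cdots>\b_n=\b$ of elementary operations and show $m(\b_{i+1},\b_i)\neq0$ implies $m(\b_{i+1},\a)\neq0$; by Lemma \ref{lem: 3.0.8}, $\pi(\b_i)-\pi(\b_{i+1})\geq0$ in $\mathcal R$, and one reads off from the displayed identity above (with $\a$ replaced by $\b_i$) that $L_{\b_{i+1}}$ appears in $\pi(\b_{i+1})$ with coefficient $1$ and hence in $\pi(\b_i)$; then transitivity along the chain, using that all $m$'s are $\geq0$, gives $m(\b,\a)\geq m(\b,\b_1)\geq\cdots>0$. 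Concretely: $L_\b$ occurs in $\pi(\b_{n-1})$, and since $\pi(\b_{n-2})-\pi(\b_{n-1})\geq0$ it occurs in $\pi(\b_{n-2})$, etc. Finally, $m(\a,\a)=1$: writing $\a$ in well-ordered form, $\pi(\a)=L_{\Delta_1}\times\cdots\times L_{\Delta_r}$ contains $L_\a$ as its unique irreducible submodule with multiplicity one by Theorem \ref{teo:1}(1) together with the fact that all other constituents are strictly smaller (by the ``only if'' part just proved, applied to a maximal chain, $L_\a$ cannot reappear among the $L_\b$ with $\b<\a$).

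The main obstacle I anticipate is the bookkeeping in the inductive step of the ``only if'' direction: one must verify that the ``leftover'' term $L_{\Delta_1}\times\cdots\times\hat L_{\Delta_j}\times\cdots\times\hat L_{\Delta_k}\times\cdots\times L_{\{\Delta_j,\Delta_k\}}$ is $\pi(\c)$ for a multisegment $\c$ that is \emph{strictly} below $\a$ in the order $\leq$, so that the induction on $\ell(\a)$ terminates. This is intuitively clear (we have merged and split a linked pair) but needs the observation that $L_{\{\Delta_j,\Delta_k\}}$ either equals $L_{\{\Delta_j\cap\Delta_k,\Delta_j\cup\Delta_k\}}$-type data or decomposes further, all of whose multisegments lie in $S(\{\Delta_j,\Delta_k\})$ hence in $S(\a)$; once that is granted, everything reduces to Lemma \ref{lem: 3.0.8}, Proposition \ref{prop: 1.1.15}, and the structure theorem. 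The actual reference for the poset statement is \cite{Z2} 7.1, so an alternative is simply to cite it and present the $m(\a,\a)=1$ normalization as the only new content.
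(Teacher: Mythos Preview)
The paper gives no proof of this proposition; it simply records the citation \cite{Z2} 7.1. So there is nothing in the paper to compare against, and your proposal is an attempt to supply an argument.

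Your ``if'' direction is correct, and in fact simpler than you make it: once $\b\leq\a$, Lemma \ref{lem: 3.0.8} gives $\pi(\a)-\pi(\b)\geq 0$ in $\mathcal R$, and since $L_\b$ is a subquotient of $\pi(\b)$ by Theorem \ref{teo:1}(1), it is a subquotient of $\pi(\a)$.

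The ``only if'' direction has a genuine gap. In the inductive step you write
\[
\pi(\a)=\pi(\b_0)+\Bigl(\prod_{i\neq j,k}L_{\Delta_i}\Bigr)\times L_{\{\Delta_j,\Delta_k\}}
\]
and assert that the second summand is $\pi(\c)$ for some multisegment $\c<\a$. It is not: $L_{\{\Delta_j,\Delta_k\}}$ is irreducible, but since $\Delta_j$ and $\Delta_k$ are linked it is \emph{not} a product of segment representations, so that summand is not $\pi(\c)$ for any $\c$ and your induction hypothesis does not apply to it. Rewriting $L_{\{\Delta_j,\Delta_k\}}=\pi(\{\Delta_j,\Delta_k\})-\pi(\{\Delta_j\cup\Delta_k,\Delta_j\cap\Delta_k\})$ and substituting back simply returns $\pi(\a)-\pi(\b_0)$, which is circular. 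What is actually needed is the stronger assertion that every constituent of a general product $L_{\c_1}\times\cdots\times L_{\c_s}$ lies below $\c_1+\cdots+\c_s$; Zelevinsky obtains this in \cite{Z2} \S7 via analysis of Jacquet modules (the geometric lemma), showing that any constituent $L_\b$ of $\pi(\a)$ satisfies $r_{ij}(\b)\geq r_{ij}(\a)$ for all $i\leq j$, equivalent to $\b\leq\a$ by what is here Proposition \ref{prop: 4.2.3}. This is not a bookkeeping issue but a different mechanism.

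Your argument for $m(\a,\a)=1$ is also incomplete: Theorem \ref{teo:1}(1) says $L_\a$ is the unique irreducible \emph{submodule} of $\pi(\a)$, giving $m(\a,\a)\geq 1$, but socle multiplicity one does not by itself give composition multiplicity one. Once the ``only if'' direction is established, a clean way to finish is to invoke Corollary \ref{cor: 1.2.3}: both $\{L_\b\}$ and $\{\pi(\b)\}$ are $\Z$-bases of $\mathcal R$, so the transition matrix, which is upper-triangular with positive integer diagonal entries $m(\a,\a)$, lies in $GL(\Z)$, forcing each $m(\a,\a)=1$.
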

 
% Finally we recall a result of Zelevinsky on comparing two multisegments.
% \vspace{0.5cm}

% Let $\a\in \O$, we know that we can associate a non-negative function $f_{\a}\in \mathcal{C}$. 
% We construct $d_{\a}: \Sigma\rightarrow \N$, 
% by letting $d_{\a}(\Delta)=\sum_{\Delta'\supseteq \Delta}f_{\a}(\Delta')$. 
% 
% \begin{prop}(cf. \cite{Z3} 2.5)\label{prop: 1.3.7}
% Let $\a, \b\in \O$, the 
% following conditons are equivalent:
% 
% \begin{description}
%  \item [(1)] $\b\leq \a$;
%  \item [(2)] $d_{\a}(\Delta)\leq d_{\b}(\Delta)$ for all $\Delta\in \Sigma$,
%  moreover, $d_{\a}(\{i\})=d_{\b}(\{i\})$ for all $i\in \Z$.
%  
% \end{description}
% 
% 
%  
% \end{prop}

\section{Partial Derivatives}
In this section we show how to define some analogue of the Zelevinsky derivation. 
This section will not be used until Chapter 7 but some of the properties of partial derivation
will appear all along the text. 

\begin{definition}
 We define a left partial derivation with respect to index $i$ to be a morphism of algebras
 \begin{align*}
 &^i\D: \mathcal{R}\rightarrow \mathcal{R},\\
 &^i\D(L_{[j,k]})=L_{[j,k]}+\delta_{i,j}L_{[j+1,k]} \text{ if }(k>j),\\
 &^i\D(L_{[j]})=L_{[j]}+\delta_{[i],[j]}.
 \end{align*}
 Also we define a right partial derivation with respect to index $i$ to be a morphism of algebras
 \begin{align*}
 &\D^i: \mathcal{R}\rightarrow \mathcal{R}\\
 &\D^i(L_{[j,k]})=L_{[j,k]}+\delta_{i,k}L_{[j,k-1]} \text{ if }(j<k)\\
 &\D^i(L_{[j]})=L_{[j]}+\delta_{[i],[j]}.
 \end{align*}
\end{definition}

\begin{definition}
 We define 
 \[
  \D^{[i,j]}=\D^{j}\circ \cdots \circ \D^{i}
  \]
  \[
  ^{[i,j]}\D=(^{i}\D)\circ \cdots \circ ( ^{j}\D)
 \]
 And for $\c=\{\Delta_{1}, \cdots, \Delta_{s}\}$ with
 \[
  \Delta_{1}\preceq \cdots \preceq \Delta_{s},
 \]
we define 
\[
 \D^{\c}=\D^{\Delta_{1}}\circ \cdots \circ \D^{\Delta_{s}}
\]
and
\[
  ^{\c}\D=(^{\Delta_{s}}\D)\circ \cdots \circ (^{\Delta_{1}}\D).
\]
\end{definition}

\remk we recall that in \cite{Z1} 4.5, Zelevinsky defines a derivative $\mathscr{D}$ to be 
 an algebraic morphism 
 \[
  \mathscr{D}: \mathcal{R}\rightarrow \mathcal{R},
 \]
which plays a crucial role in Zelevinsky's classification theorem.

The relation between Jacquet functor and derivative is given by
\begin{prop}(cf. \cite{Z2}3.8)
 Let $\delta$ be the algebraic morphism such that $\delta(\rho)=1$ for all
 $\rho\in \lC$ and $\delta(L_\Delta)=0$ for all non cuspidal representations $L_\Delta$. Then
 \[
  \mathscr{D}=(1\otimes \delta)\circ c,
 \]
where $c$ is the co-multiplication.
\end{prop}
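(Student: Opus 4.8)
Both $\mathscr{D}$ and $(1\otimes\delta)\circ c$ are homomorphisms of rings from $\mathcal{R}$ to $\mathcal{R}$: for $\mathscr{D}$ this is exactly Zelevinsky's Leibniz rule $(\pi_1\times\pi_2)^{(k)}=\sum_{i+j=k}\pi_1^{(i)}\times\pi_2^{(j)}$ recalled in \cite{Z1} 4.5, while $c$ is a co-multiplication, hence an algebra map $\mathcal{R}\to\mathcal{R}\otimes\mathcal{R}$, and $1\otimes\delta$ is an algebra map $\mathcal{R}\otimes\mathcal{R}\to\mathcal{R}\otimes\Z=\mathcal{R}$ because $\delta$ is (it is well defined precisely because, by corollary \ref{cor: 1.2.3}, $\mathcal{R}^{univ}$ is a polynomial ring on the generators $L_\Delta$, $\Delta\in\Sigma^{univ}$, so it may be prescribed freely on these generators). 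Therefore it suffices to check the identity $\mathscr{D}(L_\Delta)=(1\otimes\delta)(c(L_\Delta))$ for each single segment $\Delta=[i,j]$.

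\textbf{The two computations on a segment.} First I would recall Zelevinsky's description of the Jacquet modules of a segment representation (\cite{Z2} 3.4): the normalized Jacquet functor gives $J_{P_{r,n-r}}(L_{[i,j]})=L_{[i,i+r-1]}\otimes L_{[i+r,j]}$ for $0\le r\le n=j-i+1$, with the convention that an empty segment denotes the unit. Summing over $r$ yields
\[
c(L_{[i,j]})=\sum_{m=i-1}^{j} L_{[i,m]}\otimes L_{[m+1,j]},
\]
and then applying $1\otimes\delta$ kills every term except the one with $[m+1,j]$ a single point ($m=j-1$) and the one with $[m+1,j]$ empty ($m=j$), so that $(1\otimes\delta)(c(L_{[i,j]}))=L_{[i,j]}+L_{[i,j-1]}$ when $i<j$, and $=L_{[i]}+1$ when $i=j$. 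On the other side, the value $\mathscr{D}(L_{[i,j]})$ is computed from the derivatives of a segment representation (the ``Steinberg type'' case in \cite{Z1} 4.5): the only nonzero derivatives are the $0$-th, giving $L_{[i,j]}$, and the top one, giving $L_{[i,j-1]}$ (respectively $1$ when $j=i$). Comparing the two expressions gives the identity on generators, and by the first paragraph this propagates to all of $\mathcal{R}$.

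\textbf{Main obstacle.} The routine part is the bookkeeping; the genuinely load-bearing ingredient is the exact form of $J_{P_{r,n-r}}(L_\Delta)$ and of the top derivative of $L_\Delta$, i.e.\ matching Zelevinsky's conventions so that it is the \emph{end} $\nu^j$ (and not the beginning) that is removed on both sides. Once the normalizations are fixed consistently, everything else is a direct comparison; I would therefore spend the care precisely on that normalization check and on verifying that $\delta$ indeed annihilates $L_\Delta$ for every non‑cuspidal (i.e.\ length $\ge 2$) segment $\Delta$, which is immediate from its definition on the polynomial generators.
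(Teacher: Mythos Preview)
The paper does not give its own proof of this proposition; it is stated with a reference to \cite{Z2} 3.8 and nothing more. Your argument is correct, and it is in fact exactly the template the paper uses a few lines later to prove the analogous identity $\D^i=(1\otimes\phi_i)\circ c$ for the partial derivatives: observe that both sides are algebra homomorphisms, invoke the coproduct formula $c(L_{[j,k]})=1\otimes L_{[j,k]}+\sum_{r=j}^{k-1}L_{[j,r]}\otimes L_{[r+1,k]}+L_{[j,k]}\otimes 1$ from \cite{Z2} proposition 3.4, and compare on generators. So there is nothing to add; your ``main obstacle'' about normalizations is real but is precisely what the citation to \cite{Z2} 3.4 and \cite{Z1} 4.5 is meant to settle.
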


 The main advantage to work with partial derivatives instead of the derivative defined by
 Zelevinsky is that they are much more simpler but share the following positivity properties:
 
 \begin{teo}\label{teo: 3}
  Let $\a$ be any multisegment, then we have 
  $$\D^i(L_{\a})=\sum_{\b\in \O}n(\b,\a)L_{\b},$$
  such that $n(\b,\a)\geq 0$, for all $\b$.
 \end{teo}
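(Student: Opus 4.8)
\textbf{Proof plan for Theorem \ref{teo: 3}.}

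The plan is to reduce the positivity of $\D^i(L_{\a})$ to a known positivity statement at the level of induced representations, exploiting the fact that $\D^i$ is a ring homomorphism. First I would observe that the inverse transition matrix $(\tilde m(\b,\a))$ of equation \ref{eq: m(b, a)1} is unitriangular with respect to the order $\leq$, so it suffices to understand $\D^i$ applied to the $\pi(\a)$ and then control the sign changes under the base change $\{\pi(\b)\}\leftrightarrow\{L_{\b}\}$. Since $\D^i$ is an algebra morphism and $\pi(\a)=L_{\Delta_1}\times\cdots\times L_{\Delta_r}$, we have
\[
\D^i(\pi(\a))=\D^i(L_{\Delta_1})\times\cdots\times\D^i(L_{\Delta_r}),
\]
and each factor $\D^i(L_{[j,k]})=L_{[j,k]}+\delta_{i,j}L_{[j+1,k]}$ (or the analogous cuspidal formula) is a nonnegative combination of the form $L_{\Delta}$ or $L_{\Delta}+L_{\Delta'}$ where $\Delta'$ is $\Delta$ with its end truncated. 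Hence $\D^i(\pi(\a))$ is a nonnegative integer combination of products $L_{\Delta_1'}\times\cdots\times L_{\Delta_r'}=\pi(\a')$ for various multisegments $\a'$, and therefore of the form $\sum_{\a'} c(\a',\a)\pi(\a')$ with $c(\a',\a)\geq 0$.

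The key structural input I would then use is a compatibility between $\D^i$ and the Zelevinsky derivative $\mathscr D=(1\otimes\delta)\circ c$: namely that on each graded piece $\D^i$ interpolates between the identity and a "partial" version of $\mathscr D$ that only records the contribution of $\nu^i$ at the end of a segment. Concretely, I expect that $\D^i$ can be realized as $(1\otimes\delta_i)\circ c$ for a suitable multiplicative functional $\delta_i$ picking out $\rho=\nu^i$, so that $\D^i$ inherits the positivity of the Jacquet/co-multiplication: $c(L_{\a})$ is an honest (semisimplified) representation, hence a nonnegative combination of irreducibles, and applying $1\otimes\delta_i$ preserves nonnegativity because $\delta_i$ sends an irreducible either to $0$, to $1$, or to an irreducible. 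Combined with the first paragraph this gives $\D^i(L_{\a})=\sum_{\b}n(\b,\a)L_{\b}$ with $n(\b,\a)\geq 0$ directly, without needing to fight the unitriangular base change at all.

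The main obstacle is establishing that $\D^i$ really is of the form $(1\otimes\delta_i)\circ c$ — i.e.\ that the combinatorially defined algebra map on generators $L_{[j,k]}\mapsto L_{[j,k]}+\delta_{i,k}L_{[j,k-1]}$ coincides with a geometric/representation-theoretic operation that is manifestly positive on irreducibles. I would verify this by checking it on the polynomial generators $L_{\Delta}$ (where it reduces to the Leibniz-type formula for $c(L_{\Delta})$ in \cite{Z1}, \cite{Z2}) and then invoking that both sides are algebra homomorphisms on $\mathcal R$, which is a polynomial ring by corollary \ref{cor: 1.2.3}; the only subtlety is matching the character $\delta_i$ with the index bookkeeping in the definition of $\D^i$, in particular handling the cuspidal case $L_{[j]}$ separately and tracking the half-integral twists in the modular character $\delta_{P}$ so that the end $\nu^i$ (rather than some shift of it) is the one being extracted. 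Once this identification is in place, positivity is immediate from exactness of the Jacquet functor.
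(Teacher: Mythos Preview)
Your proposal is correct and matches the paper's approach essentially line for line: the paper defines your $\delta_i$ explicitly as the algebra morphism $\phi_i$ with $\phi_i(L_{[j,k]})=\delta_{[i],[j,k]}$, proves (by induction on $|S(\a)|$) that $\phi_i(L_{\a})\in\{0,1\}$ for every irreducible, and then verifies $\D^i=(1\otimes\phi_i)\circ c$ by checking both sides on the polynomial generators $L_{[j,k]}$ using Zelevinsky's formula for $c(L_{[j,k]})$. Your first paragraph (unitriangular base change from $\pi(\a)$ to $L_{\a}$) is unnecessary, as you yourself note; and your worry about modular-character twists is not an issue since the comultiplication $c$ is already built from the normalized Jacquet functors.
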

 
\remk the same property of positivity holds for $^i\D$.

The theorem follows from the following two lemmas

\begin{definition}
For $i\in \Z$, let $\phi_{i}$ be the morphism of algebras defined by
\begin{align*}
 \phi_{i}: \mathcal{R}&\rightarrow \Z\\
 \phi_{i}([j,k])&=\delta_{[i], [j,k]}.
\end{align*}
\end{definition}

\begin{lemma}
For all multisegment $\a$, we have $\phi_{i}(L_{\a})=1$ if and
only if $\a$ contains no other segments than $[i]$, otherwise it is zero.
\end{lemma}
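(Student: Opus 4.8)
The plan is to analyze $\phi_i(L_\a)$ by expanding $L_\a$ in the basis of induced representations via \eqref{eq: m(b, a)1}, namely $L_\a = \sum_{\b} \tilde m(\b,\a)\,\pi(\b)$, and then using that $\phi_i$ is an algebra morphism, so that $\phi_i(\pi(\b)) = \prod_{\Delta\in\b}\phi_i(L_\Delta)$. Since $\phi_i(L_\Delta) = \delta_{[i],\Delta}$, this product is nonzero (and equal to $1$) precisely when every segment of $\b$ equals $[i]$, i.e.\ when $\b = k\cdot[i]$ for some $k\ge 0$ (with the empty multisegment giving the constant $1$). Thus $\phi_i(L_\a) = \sum_{k\ge 0}\tilde m(k\cdot[i],\a)$. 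So the whole statement reduces to a purely combinatorial fact about which degenerate multisegments $k\cdot[i]$ can appear in the expansion of $L_\a$.

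First I would dispose of the easy direction: if $\a$ consists only of copies of $[i]$, then $\pi(\a) = L_{[i]}^{\times r}$ is already irreducible (the segments $[i]$ are pairwise unlinked), so $L_\a = \pi(\a)$ and $\phi_i(L_\a) = 1$. For the converse, suppose $\a$ contains a segment $\Delta \neq [i]$; I want to show $\phi_i(L_\a) = 0$, equivalently that no multisegment of the form $k\cdot[i]$ lies in $S(\a)$ — indeed $\tilde m(\b,\a)$ can only be nonzero for $\b \le \a$ (this follows from Lemma~\ref{lem: 3.0.8} together with the triangularity of the change of basis between the $\pi(\b)$ and the $L_\b$, or directly from the structure of \eqref{eq: m(b, a)1} mirroring Zelevinsky~\cite{Z2} 7.1). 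The point is that an elementary operation replaces $\{\Delta_1,\Delta_2\}$ by $\{\Delta_1\cup\Delta_2, \Delta_1\cap\Delta_2\}$ and hence preserves the weight function $\varphi_\a$; so every $\b\le\a$ has $\varphi_\b = \varphi_\a$. If $\b = k\cdot[i]$ then $\varphi_\b = k\chi_{[i]}$ is supported on the single integer $i$, forcing $\varphi_\a$ to be supported on $\{i\}$ as well; but then every segment of $\a$ would have to be $[i]$, contradicting the assumption that $\Delta\neq[i]$ occurs in $\a$. Hence $\phi_i(L_\a) = 0$.

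The main obstacle, such as it is, is making rigorous the claim that $\tilde m(\b,\a)\neq 0 \Rightarrow \b\le\a$; everything else is bookkeeping with the algebra morphism $\phi_i$ and the weight function. One clean way to see this: the matrices $(m(\b,\a))$ and $(\tilde m(\b,\a))$ are mutually inverse and, by the proposition quoted from \cite{Z2} 7.1, $(m(\b,\a))$ is unitriangular with respect to the partial order $\le$ (nonzero only for $\b\le\a$, equal to $1$ on the diagonal); the inverse of a unitriangular matrix is unitriangular for the same order, giving the claim. With that in hand the lemma follows immediately from the weight-support argument above.
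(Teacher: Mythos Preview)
Your proof is correct. Both your argument and the paper's rest on the same two facts: that $\phi_i(\pi(\b))=\prod_{\Delta\in\b}\phi_i(L_\Delta)$ vanishes unless every segment of $\b$ is $[i]$, and that weight is preserved along $S(\a)$ so no $k\cdot[i]$ can lie below a multisegment containing another segment. The organization differs: the paper runs an induction on $|S(\a)|$ using the \emph{forward} expansion $\pi(\a)=L_\a+\sum_{\b<\a}m(\b,\a)L_\b$ (so at the inductive step $\phi_i(\pi(\a))=0$ and each $\phi_i(L_\b)=0$ by induction, forcing $\phi_i(L_\a)=0$), whereas you use the \emph{inverse} expansion $L_\a=\sum_\b\tilde m(\b,\a)\pi(\b)$ together with the observation that the inverse of a unitriangular matrix is unitriangular. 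Your route is a touch more direct once unitriangularity is granted; the paper's induction avoids naming the inverse matrix altogether. Both are short and essentially equivalent.
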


\begin{proof}
We prove this result by induction on the cardinality  of $S(\a)$, denoted by $|S(\a)|$. If $|S(\a)|=1$, then
$\a=\a_{min}$, hence $\phi_{i}(L_{\a})= \phi_{i}(\pi(\a))$, which is nonzero if and
only if $\a$ contains no other segments than $[i]$, and in latter case it is 1. 
Let $\a$ be a general multi-segment,
\[
 \pi(\a)= L_{\a}+\sum_{\b<\a}m(\b,\a)L_{\b}.
\]
Now $|S(\a)|>1$, we know that $\a$ is not minimal in $S(\a)$, hence $\a$
contains segments other than $[i]$, which implies $\phi_{i}(\pi(\a))=0$.

Since $|S(\b)|<|S(\a)|$ for any $\b<\a$, by induction, we know that $\phi_{i}(L_{\b})=0$
because $\b$ must contain segments other than $[i]$. So we are done.

\end{proof}

\begin{lemma}
 We have $\D^i= (1\otimes \phi_{i})\circ c$.
\end{lemma}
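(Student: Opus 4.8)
The statement to prove is the identity $\D^i = (1\otimes \phi_i)\circ c$, where $c$ is the co-multiplication on $\mathcal{R}$ and $\phi_i$ is the algebra morphism sending $[j,k]$ to $\delta_{[i],[j,k]}$. Since both $\D^i$ and $(1\otimes\phi_i)\circ c$ are morphisms of algebras — the former by definition, the latter because $c$ is a coalgebra morphism of the bialgebra $\mathcal{R}$ and $1\otimes\phi_i$ is a tensor product of algebra morphisms — and since by Corollary \ref{cor: 1.2.3} the algebra $\mathcal{R}$ is freely generated by the elements $L_\Delta$ for $\Delta\in\Sigma$, it suffices to check the identity on each generator $L_{[j,k]}$. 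So the first and main reduction is: \emph{verify the formula on segments}.

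\textbf{Computation on a segment.} For a single segment $\Delta=[j,k]$ the representation $L_{[j,k]}$ is irreducible, so its image under the co-multiplication $c(\pi)=\sum_{r} J_{P_{r,n-r}}(\pi)$ is computed by the well-known Jacquet-module (or geometric lemma / Zelevinsky derivative) formula: the Jacquet functor picks off the sub-quotients obtained by splitting the segment at its left end, giving
\[
c(L_{[j,k]}) = L_{[j,k]}\otimes 1 + \sum_{j\le m\le k} L_{[j,m-1]}\otimes L_{[m,k]},
\]
with the convention $L_{[j,j-1]}=1$; this is essentially Zelevinsky's computation of $\mathscr{D}(L_\Delta)$ recalled just before the statement. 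Now apply $1\otimes\phi_i$: the factor $\phi_i(L_{[m,k]})$ is nonzero (and equal to $1$) exactly when $[m,k]=[i]$, i.e.\ when $m=k=i$; the term $L_{[j,k]}\otimes 1$ contributes $L_{[j,k]}\phi_i(1)=L_{[j,k]}$. Hence when $k>j$ we get $L_{[j,k]} + \delta_{i,k}L_{[j,k-1]}$, and when $[j,k]=[j]$ is a point we get $L_{[j]}+\delta_{[i],[j]}$ — precisely the defining formulas for $\D^i$. This matches on all generators, so the two algebra morphisms agree everywhere.

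\textbf{Where the work really is.} The one genuinely substantive point is the co-multiplication formula for $c(L_{[j,k]})$: one must know that the Jacquet restriction of the Zelevinsky segment representation $L_\Delta$ along a maximal parabolic $P_{r,n-r}$ is the single term $L_{[j,m-1]}\otimes L_{[m,k]}$ (and zero unless the cut point is compatible with the segment), which is a standard fact from \cite{Z2} (3.8 and the surrounding discussion of derivatives). Given that input the proof is the two-line check above together with the formal reduction to generators. I would present it in exactly that order: (i) note both sides are algebra morphisms and invoke Corollary \ref{cor: 1.2.3} to reduce to generators; (ii) recall/cite the formula for $c(L_{[j,k]})$; (iii) apply $1\otimes\phi_i$ using the previous lemma (or directly the definition of $\phi_i$) and read off the defining formula of $\D^i$. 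The only ``obstacle'' is bookkeeping with the boundary conventions $L_{[j,j-1]}=1$ and the point case $[i]=[j]$, which is routine.
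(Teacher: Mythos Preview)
Your proof is correct and follows essentially the same approach as the paper: reduce to generators using that both sides are algebra morphisms, invoke the explicit formula for $c(L_{[j,k]})$ from \cite{Z2} proposition 3.4, and apply $1\otimes\phi_i$ term by term. Your formula for $c(L_{[j,k]})$, once the convention $L_{[j,j-1]}=1$ is unwound, coincides with the paper's, and the rest is the same two-line computation.
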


\begin{proof}
Since both are algebraic morphisms, we only need to check that they coincide on
generators. We recall the equation from \cite{Z2}, proposition 3.4
\[
 c(L_{[j,k]})=1\otimes L_{[j,k]}+\sum_{r=j}^{k-1}L_{[j,r]}\otimes L_{[r+1, k]}+L_{[j,k]}\otimes 1.
\]
   Now applying $\phi_{i}$,
\begin{align*}
 (1\otimes \phi_{i})c(L_{[j,k]})=&L_{[j,k]}+\delta_{i,k}L_{[j,k-1]} \text{ if }(k>j)\\
 (1\otimes \phi_{i})c(L_{[j]})=&L_{[j,k]}+\delta_{i,j},
 \end{align*}
 where $\delta_{i, j}$ is the Kronecker symbol. 
Comparing this with the definition of $\D^i$ yields the result.
\end{proof}

\remk We have the following relation between partial derivative and derivative of Zelevinsky.
 Let $e(\a)=\{[i_{1}], \cdots, [i_{\alpha}]: i_{1}\leq \cdots \leq i_{\alpha}\}$ be the end
of $\a$, then 
 \[
  \D(\a)=\D^{[i_{1}, i_{\alpha}]}(\a).
 \]

%  \begin{definition}
%  We define
%   \begin{itemize}
%   \item For $h_{1}, h_{2}\in \mathcal{R}$ be two elements, we say $h_{1}\sim h_{2}$
%    if 
%    \[
%     h_{i}=\sum_{j}t_{ij}L_{\a_{j}},
%    \]
%     such that $t_{ij}\neq 0$ for all $j$ and $i=1, 2$.
%     \item 
%     For $\varphi_{1}, \varphi_{2}$ two  maps from $\mathcal{R}$ to itself such that 
%     $\varphi_{i}(\mathcal{R}^+)\subseteq \mathcal{R}^+$ for $i=1,2$,  
%     we say $\varphi_{1}\sim \varphi_{2}$ if for all $h\in \mathcal{R}^+$, we have $\varphi_{1}(h)\sim \varphi_{2}(h)$.
%  \end{itemize}
% 
%  \end{definition}
% \begin{lemma}
%  Let $\varphi_{i}, i=1,2$ be two algebraic morphisms preserving $\mathcal{R}^+$.
%  Then for $\varphi_{1}\sim \varphi_{2}$, it suffices that for each segment $\Delta$, 
%  $\varphi_{1}(L_{\Delta})\sim \varphi_{2}(L_{\Delta})$.
% \end{lemma}
% \begin{proof}
%  
%  
%  
%  
%  
% \end{proof}
% 
%  
% \begin{prop}\label{prop: 1.4.9}
%  \begin{description}
%   \item [(1)]
%  \end{description}
% 
% \end{prop}

\chapter{Schubert varieties and KL polynomials}

In this chapter we recall some of the geometric constructions of Zelevinsky: 
the nilpotent orbital varieties and their relation with Schubert varieties.

\medskip

Concretely, for $\a, \b$ multisegments 
of degree $n$ such that the nilpotent
orbit $O_{\a}$ is included in the closure 
of $\line{O}_{\b}$, the germs of intersection
complexe $IC(\line{O_{\b}})$ at a generic point 
of $O_{\a}$ gives 
the Poincar\'e polynomial $P_{\a, \b}(q)$
and Zelevinsky conjectured that 
$$m_{\b, \a}=P_{\a, \b}(1)=P_{\sigma(\a), \sigma(\b)}(1)$$
viewed in the Schubert variety associated 
to the symmetric group $S_{n}$, where 
$\sigma(\a)$ and $\sigma(\b)$ are certain permutations
attached to $\a$ and $\b$.
This conjecture was proved by Chriss-Ginzburg \cite{CG}, 
and Ariki \cite{A}. 

\medskip

In the following, we study the case 
of symmetric multisegments in the 
sense of definition \ref{def: 2.1.5}.
The set of symmetric multisegment of some specific  weight $\varphi$
is indexed by $S_m$, where $m=\max_{k\in \Z}\varphi(k)$, which 
is in general strictly smaller than its degree= $\sum_{k}\varphi(k)$.
In this symmetric situation, 
we construct a fibration from the 
symmetric locus in the orbital varieties $E_{\varphi}$
to some smooth variety, where
the stratification of $E_{\varphi}$ gives rise to 
a stratification of the fibers.
And we show that the fiber is
isomorphic to some Schubert
variety of type $A_{m-1}$, which identifies the stratification of 
fiber with the stratification by Schubert cells.

\section{Symmetric multisegments}

Before we introduce the symmetric
multisegments, we present a type 
of multisegments which is more general and will be 
used in Chapter 6.

\begin{definition}\label{def: 2.1.1}
 We say a multisegment $\mathbf{a}$ is ordinary if  there exists no two segments in $\a$ that possesses the same beginning or end.
\end{definition}

\begin{example}
 Some typical examples of ordinary multisegments:
 let $\a=\{\Delta_{1}, \Delta_{2}, \Delta_{3}\}$, and 
 $\b=\{\Delta_{4}, \Delta_{5}, \Delta_{6}\}$
\[
 \Delta_{1}=[1,4],~\Delta_{2}=[2,5],~\Delta_{3}=[3,6],
\]
\[
 \Delta_{4}=[1,2], ~\Delta_{5}=[2, 4], ~\Delta_{6}.
=[4,5]\]

\begin{figure}[!ht]
\centering
\includegraphics{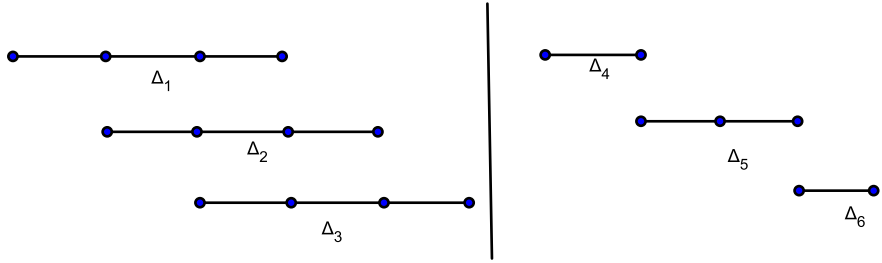}
\caption{\label{fig-multisegment2} Ordinary multi-segments}
\end{figure}
\end{example}

\begin{prop}
If $\a$ is ordinary then every $\b\leq \a$ is ordinary.
\end{prop}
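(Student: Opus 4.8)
The plan is to reduce to the single elementary operation and check that it preserves the "ordinary" property; the general case then follows by induction along a chain $\a = \b_0 \geq \b_1 \geq \cdots \geq \b_n = \b$. So it suffices to treat the case where $\b$ is obtained from $\a$ by one elementary operation: $\b = (\a \setminus \{\Delta_1, \Delta_2\}) \cup \{\Delta_1 \cup \Delta_2, \Delta_1 \cap \Delta_2\}$ for two linked segments $\Delta_1, \Delta_2 \in \a$. Write $\Delta_1 = [\rho_1, \rho_1']$ and $\Delta_2 = [\rho_2, \rho_2']$; since they are linked we may assume (up to swapping) that $b(\Delta_1) = \rho_1$ strictly precedes $b(\Delta_2) = \rho_2$ on the Zelevinsky line and $e(\Delta_1) = \rho_1'$ strictly precedes $e(\Delta_2) = \rho_2'$, with $\rho_2 \preceq \rho_1'$ (nonempty, proper overlap). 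Then $\Delta_1 \cup \Delta_2 = [\rho_1, \rho_2']$ and $\Delta_1 \cap \Delta_2 = [\rho_2, \rho_1']$, so
\[
b(\Delta_1 \cup \Delta_2) = b(\Delta_1), \quad e(\Delta_1 \cup \Delta_2) = e(\Delta_2), \quad b(\Delta_1 \cap \Delta_2) = b(\Delta_2), \quad e(\Delta_1 \cap \Delta_2) = e(\Delta_1).
\]

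First I would observe that the multiset of beginnings $b(\b)$ equals $b(\a)$ and the multiset of ends $e(\b)$ equals $e(\a)$: the operation removes $b(\Delta_1), b(\Delta_2)$ and reinserts $b(\Delta_1 \cup \Delta_2) = b(\Delta_1)$ and $b(\Delta_1 \cap \Delta_2) = b(\Delta_2)$, and similarly for ends. Since $\a$ is ordinary, $b(\a)$ has no repeated element and $e(\a)$ has no repeated element; hence the same holds for $b(\b)$ and $e(\b)$. This already shows no two segments of $\b$ share a beginning and no two share an end, which is exactly the definition of ordinary. Then I would run the straightforward induction: if $\b \leq \a$ via a chain of elementary operations, each intermediate multisegment is ordinary by the one-step result applied successively, so $\b$ is ordinary.

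The only subtlety — and the one point I would be careful about — is making sure the two new segments $\Delta_1 \cup \Delta_2$ and $\Delta_1 \cap \Delta_2$ are themselves genuinely distinct segments with distinct beginnings and ends from each other (not just from the rest of $\a$), and that the case analysis for "linked" is handled correctly: by definition of linked, neither contains the other and the union is a segment, which forces exactly the strict inequalities $b(\Delta_1) \neq b(\Delta_2)$ and $e(\Delta_1) \neq e(\Delta_2)$ used above. This is where I expect essentially all the (minimal) work to lie; once the multiset identities $b(\b) = b(\a)$ and $e(\b) = e(\a)$ are established, the conclusion is immediate. No genuine obstacle is anticipated — this is a bookkeeping lemma — but the cleanest exposition is to isolate the invariance of the multisets $b(-)$ and $e(-)$ under an elementary operation as the crux.
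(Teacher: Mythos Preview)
Your approach is essentially the same as the paper's: reduce to one elementary operation and track what happens to the multisets $b(-)$ and $e(-)$. The paper isolates this as Lemma~\ref{lem: 2.1.3}, stating $b(\b)\subseteq b(\a)$ and $e(\b)\subseteq e(\a)$ (as multisets), from which ordinariness of $\b$ is immediate.

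One small correction: linked segments need not overlap---they may be juxtaposed ($\Delta_1\cap\Delta_2=\emptyset$, e.g.\ $[1,2]$ and $[3,4]$), in which case the elementary operation produces only $\Delta_1\cup\Delta_2$ and your claimed equality $b(\b)=b(\a)$ fails. The paper's containment $b(\b)\subseteq b(\a)$, $e(\b)\subseteq e(\a)$ covers this case too, and your argument goes through verbatim once you replace equality by inclusion.
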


\begin{proof}
From the definition, $\b$ is ordinary if and only if each element in $e(\b)$ and
$b(\b)$ appears with multiplicity one. We deduce from
the following lemma that $\b\leq \a$ is also ordinary.
\end{proof}

\begin{lemma}\label{lem: 2.1.3}
 Note that for $\b\leq \a$, we have 
$e(\b)\subseteq e(\a)$ and $b(\b)\subseteq b(\a)$(cf. notation \ref{nota-be}).
\end{lemma}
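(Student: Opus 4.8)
The plan is to prove Lemma \ref{lem: 2.1.3} by reducing to a single elementary operation, since by Notation \ref{nota: 1.3.2} any $\b \le \a$ is obtained from $\a$ by a finite chain of elementary operations, and both claimed inclusions $e(\b) \subseteq e(\a)$ and $b(\b) \subseteq b(\a)$ of multisets are transitive along such a chain. So it suffices to treat the case where $\b = (\a \backslash \{\Delta_1, \Delta_2\}) \cup \{\Delta_1 \cup \Delta_2, \Delta_1 \cap \Delta_2\}$ for two linked segments $\Delta_1, \Delta_2 \in \a$.

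In that case, write $\Delta_1 = [\rho_1, \rho_1']$ and $\Delta_2 = [\rho_2, \rho_2']$, and assume without loss of generality (since they are linked) that $\Delta_1$ precedes $\Delta_2$, so $b(\Delta_1) \prec b(\Delta_2)$ and $e(\Delta_1) \prec e(\Delta_2)$ along the Zelevinsky line. Then $\Delta_1 \cup \Delta_2 = [\rho_1, \rho_2']$ and $\Delta_1 \cap \Delta_2 = [\rho_2, \rho_1']$. First I would check the endpoints: $\{e(\Delta_1 \cup \Delta_2), e(\Delta_1 \cap \Delta_2)\} = \{\rho_2', \rho_1'\} = \{e(\Delta_2), e(\Delta_1)\}$, so the multiset $e(\b)$ equals $e(\a)$ exactly; similarly $\{b(\Delta_1 \cup \Delta_2), b(\Delta_1 \cap \Delta_2)\} = \{\rho_1, \rho_2\} = \{b(\Delta_1), b(\Delta_2)\}$, so $b(\b) = b(\a)$ as well. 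Hence for a single elementary operation we in fact get equality of both multisets, which a fortiori gives the claimed inclusions.

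Then I would conclude by induction on the length of the chain $\a = \b_0 \ge \b_1 \ge \cdots \ge \b_m = \b$: applying the single-step result repeatedly gives $e(\b) \subseteq e(\b_{m-1}) \subseteq \cdots \subseteq e(\a)$ and likewise for $b$. I do not expect any serious obstacle here — the only point requiring a little care is the bookkeeping that an elementary operation genuinely leaves the multisets $e(\cdot)$ and $b(\cdot)$ unchanged (as opposed to merely not enlarging them), which amounts to the simple observation above that $\{[\rho_1,\rho_2'], [\rho_2,\rho_1']\}$ has the same beginnings and ends as $\{[\rho_1,\rho_1'], [\rho_2,\rho_2']\}$. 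With that, the lemma, and hence the proposition that ordinariness descends to all $\b \le \a$, follows immediately.
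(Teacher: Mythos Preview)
Your approach is essentially identical to the paper's: reduce by transitivity to a single elementary operation and compare endpoints. There is one small oversight, however. You assert that after the elementary operation the multisets $e(\cdot)$ and $b(\cdot)$ are \emph{equal}, but this is only true when $\Delta_1\cap\Delta_2\neq\emptyset$. Linked segments may be juxtaposed (i.e.\ $\rho_2=\nu\rho_1'$), in which case $\Delta_1\cap\Delta_2=\emptyset$ and the new multisegment has one fewer segment; then $e(\b)$ drops $e(\Delta_1)$ and $b(\b)$ drops $b(\Delta_2)$, so you get strict inclusion rather than equality. The paper is careful to write ``if $\Delta_1\cap\Delta_2\neq\emptyset$'' before computing $e(\Delta_1\cap\Delta_2)$ and $b(\Delta_1\cap\Delta_2)$. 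The lemma's conclusion (inclusion) is unaffected, so this is a one-line fix, but your parenthetical remark that the elementary operation ``genuinely leaves the multisets unchanged'' is not correct in general.
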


\begin{proof}
In fact, by transitivity, 
we only need to check this for case where $\b$ can be obtained from $\a$
by applying the elementary operation to the pair $\{\Delta_{1}\prec \Delta_{2}\}$.
Hence 
\[
 \b=\a\setminus \{\Delta_{1}, \Delta_{2}\}\cup \{\Delta_{1}\cup \Delta_{2}, ~\Delta_{1}\cap \Delta_{2}\}.
\]
Note that $e(\Delta_{1}\cup \Delta_{2})=e(\Delta_{2}), ~b(\Delta_{1}\cup \Delta_{2})=b(\Delta_{1})$,
and if $\Delta_{1}\cap \Delta_{2}\neq \emptyset$,  $e(\Delta_{1}\cap \Delta_{2})=e(\Delta_{1}), 
~b(\Delta_{1}\cap \Delta_{2})=b(\Delta_{2})$. Hence $b(\b)\subseteq b(\a), ~ e(\b)\subseteq 
e(\a)$. 
\end{proof}

\begin{definition}\label{def: 2.1.5}
 Let $\a=\{\Delta_{1}, \cdots, \Delta_{n}\}$  be ordinary.
We say that $\a$ is symmetric if 
\[
 \max\{b(\Delta_{i}): i=1,\cdots, n\}\leq \min\{e(\Delta_i): i=1,\cdots, n\}.
\]

 \end{definition}

 To explain the link with 
 the symmetric group, 
 we recall some basic facts about the 
 symmetric group $S_{n}$(cf. \cite{BF}).
Let $(i,j)$ be the transposition exchanging $i$ and $j$,
then 
\[
 S=\{\sigma_{i}:=(i,i+1): i=1,\cdots n-1\}
\]
form a system of generators of $S_{n}$.

\begin{definition}
For $w\in S_{n}$, its length $\ell(w)$ is the smallest integer $k$ such that 
\[
 w=s_{1}s_{2}\cdots s_{k},\text{ with } s_{i}\in S, \text{ for }i=1, \cdots, k.
\]

\end{definition}

On $S_{n}$, we have the famous 
Bruhat order which is defined as follow:
\begin{definition}
Let $T=\{wsw^{-1}: w\in S_{n}, s\in S\}$. For $u, w\in S_{n}$, 
\begin{description}
 \item [(i)]We write $\xymatrix{u\ar[r]^{t} &w}$, if $u^{-1}w=t \in T$ and $\ell(u)<\ell(w)$.
 \item [(ii)] We write $\xymatrix{u\ar[r]& w}$, if  $\xymatrix{u\ar[r]^{t} &w}$ for some $t\in T$.
 \item [(iii)] We write $u\leq w$ if there exists a sequence of $w_{i}\in S_{n}$, such that
 \[
  u\rightarrow w_{1}\rightarrow w_{2}\rightarrow \cdots \rightarrow w_{k}=w.
 \]

\end{description}
This defines a partial order on $S_{n}$, which is called the Bruhat order.
\end{definition}

\begin{prop}\label{teo: 2.3.2}
Let $\a_{\Id}=\{\Delta_{1}, \cdots, \Delta_{n}\}$ be symmetric,
such that 
\[
  b(\Delta_{1})< \cdots < b(\Delta_{n}),
\]

\[
  e(\Delta_{1})< \cdots < e(\Delta_{n}).
\]
Then for $w\in S_{n}$, the formula 
\begin{align*}
\Phi(w)&=\sum_{i=1}^{n} [b(\Delta_{i}), e(\Delta_{w(i)})]
\end{align*}
defines a bijection between $S_{n}$ and $S(\a_{\Id})$.
 Moreover, the order relation on $S(\a_{\Id})$ induces the inverse Bruhat order, i.e.,
 \[
  w\leq v\Leftrightarrow \Phi(w)\geq \Phi(v).
 \]
\end{prop}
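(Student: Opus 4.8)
The strategy is to prove the two assertions in turn: first that $\Phi$ is a bijection from $S_n$ onto $S(\a_{\Id})$, then that it reverses the order. For the bijectivity, I would start by checking that $\Phi(w)$ is indeed a well-defined multisegment of the correct weight: since $b(\Delta_1)<\cdots<b(\Delta_n)$ and $e(\Delta_1)<\cdots<e(\Delta_n)$, and $\a_{\Id}$ is symmetric, every pair $(b(\Delta_i),e(\Delta_{w(i)}))$ satisfies $b(\Delta_i)\le \max_j b(\Delta_j)\le \min_j e(\Delta_j)\le e(\Delta_{w(i)})$, so $[b(\Delta_i),e(\Delta_{w(i)})]$ is a genuine segment, and the resulting multisegment has weight $\varphi_{\a_{\Id}}$. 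Injectivity is immediate: the multiset of beginnings is $\{b(\Delta_1),\dots,b(\Delta_n)\}$ with all entries distinct, so from $\Phi(w)$ one reads off, for each $i$, the unique segment beginning at $b(\Delta_i)$, and its end is $e(\Delta_{w(i)})$; since the $e(\Delta_j)$ are distinct this recovers $w(i)$, hence $w$.

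\textbf{The two inclusions.} The heart of the matter is showing $\Phi(S_n)=S(\a_{\Id})$. For $\Phi(S_n)\subseteq S(\a_{\Id})$, I would argue by induction on the length $\ell(w)$: if $w=v\sigma_k$ with $\ell(w)=\ell(v)+1$, I would exhibit $\Phi(v)$ as obtained from $\Phi(w)$ by a single elementary operation (replacing two linked segments by their union and intersection), so that $\Phi(w)\le \Phi(v)\le\cdots\le\Phi(\Id)=\a_{\Id}$. Concretely, if $v<w$ differ by a transposition swapping the ends assigned to two beginnings, one checks the corresponding two segments in $\Phi(w)$ are linked (this uses that all beginnings and ends are distinct and the symmetry condition guarantees nonempty intersection) and that the elementary operation on them produces exactly $\Phi(v)$. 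For the reverse inclusion $S(\a_{\Id})\subseteq\Phi(S_n)$, I would show that any $\b\le\a_{\Id}$ is ordinary (already proved, since $\a_{\Id}$ is ordinary) with $e(\b)\subseteq e(\a_{\Id})$ and $b(\b)\subseteq b(\a_{\Id})$ by Lemma \ref{lem: 2.1.3}; since $\deg\b=\deg\a_{\Id}=n$ and $\b$ is ordinary with distinct beginnings and ends drawn from those of $\a_{\Id}$, in fact $e(\b)=e(\a_{\Id})$ and $b(\b)=b(\a_{\Id})$ as multisets, so $\b$ assigns to each $b(\Delta_i)$ some end $e(\Delta_{w(i)})$ with $w$ a permutation, i.e. $\b=\Phi(w)$.

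\textbf{Order reversal.} Once both sets are identified, the equivalence $w\le v\Leftrightarrow\Phi(w)\ge\Phi(v)$ can be obtained by showing each direction step by step. The covering relations for the Bruhat order on $S_n$ are generated by $\xymatrix{u\ar[r]^{t}&w}$ with $t$ a transposition and $\ell$ increasing by more than matched in length; but it suffices to treat the case of a covering relation in the Bruhat order and match it with a single elementary operation in $S(\a_{\Id})$, then use transitivity. Combining the induction in the previous paragraph (which shows $w\le v$ in Bruhat order implies $\Phi(w)\le\Phi(v)$ in $S(\a_{\Id})$) with a converse argument — if $\Phi(w)\le\Phi(v)$, decompose the chain of elementary operations and check each one corresponds to a step down in Bruhat order — gives the equivalence.

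\textbf{Main obstacle.} The delicate point is the precise bookkeeping in the induction: verifying that a Bruhat covering $v\lessdot w$ corresponds to an elementary operation on $\Phi(w)$ yielding $\Phi(v)$, and conversely that every elementary operation on an element of $\Phi(S_n)$ lands back in $\Phi(S_n)$ and corresponds to a single Bruhat step. This requires care because a transposition $t=(a,b)$ with $u^{-1}w=t$, $\ell(u)<\ell(w)$ need not be a simple reflection, so the two segments whose ends get swapped need not be ``adjacent''; one must check they are nonetheless linked (which the symmetry hypothesis ensures, since both segments contain the interval $[\max b(\Delta_j),\min e(\Delta_j)]$) and that applying the elementary operation to them is compatible with the intermediate beginnings/ends. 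Getting the direction of the inequality right — why $\Phi$ reverses rather than preserves order — comes down to the observation that the elementary operation $\{\Delta_1,\Delta_2\}\mapsto\{\Delta_1\cup\Delta_2,\Delta_1\cap\Delta_2\}$ moves \emph{down} in $S(\a_{\Id})$ while it corresponds to \emph{decreasing} the permutation's length (making the assignment of ends to beginnings ``more sorted''), which is why $\Id$ maps to the \emph{maximal} element $\a_{\Id}$.
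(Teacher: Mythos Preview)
Your proposal follows essentially the same route as the paper: injectivity is immediate; surjectivity comes from $b(\b)=b(\a_{\Id})$ and $e(\b)=e(\a_{\Id})$ (via Lemma~\ref{lem: 2.1.3} plus the fact that symmetry prevents any segment from disappearing under an elementary operation); and the order equivalence is proved by matching Bruhat covering relations (left multiplication by a transposition increasing length) with single elementary operations, then handling the converse direction by taking a maximal chain of elementary operations and ruling out the wrong Bruhat direction by contradiction with the first implication.

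One caution: your write-up repeatedly flips the direction of the order-reversal. For instance, you say the induction ``shows $w\le v$ in Bruhat order implies $\Phi(w)\le\Phi(v)$'' (it gives $\Phi(w)\ge\Phi(v)$), and in the last paragraph you say an elementary operation ``corresponds to decreasing the permutation's length'' (it increases length: $\Id$ has length $0$ and sits at the \emph{top} of $S(\a_{\Id})$). These look like slips rather than gaps, but since the whole point of the proposition is the reversal, you should straighten them out before writing a final version.
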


\begin{example}
Let $n=3$ and
$\a_{\Id}=\{\Delta_{1}, \Delta_{2}, \Delta_{3}\}$ with  
\[
 \Delta_{1}=[1,4],~\Delta_{2}=[2,5],~\Delta_{3}=[3,6].
\]
Then $\Phi(\sigma_1)=\{\Delta_4, \Delta_5, \Delta_6\}$ with
\[
 \Delta_4=[1, 5], ~\Delta_5=[2, 4], ~\Delta_6=[3, 6].
\]
\begin{figure}[!ht]
\centering
\includegraphics{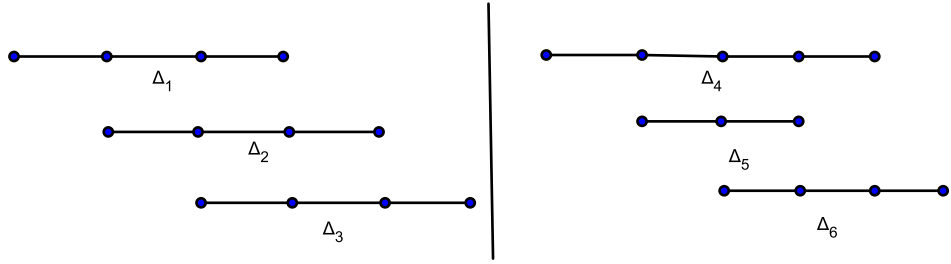}
\caption{\label{fig-multisegment19} Symmetric multi-segments}
\end{figure}
\end{example}

\begin{proof}
The injectivity is clear.
We observe that $\Phi(\Id)=\a_{\Id}$. 
We show now that $\Phi(w)\in S(\a_{\Id})$ for general $w$
and the partial order on $S(\a_{\Id})$ induces the inverse Bruhat order.

\begin{enumerate}
\item[(1)]For $v\leq w$,
 by the chain property of Bruhat order(cf. \cite{BF} Theorem 2.2.6), we have
\[
 v=w_{0}<w_{1}<\cdots <w_{\alpha}=w,
\]
such that $w_{\gamma}=\sigma_{i_{\gamma-1}, j_{\gamma-1}}w_{\gamma-1}$ for some
$i_{\gamma-1}< j_{\gamma-1}$
and $\ell(w_{\gamma})=\ell(w_{\gamma-1})+1$. 
Now by lemma 2.1.4 of \cite{BF}, we know that 

\begin{comment}
We recall the following property of the length function $\ell$ on $S_{n}$:
\begin{displaymath}
 \ell(\sigma_{i}w)=
 \left\{\begin{array}{cc}
&\ell(w)+1,
\text{ if } w^{-1}(i)<w^{-1}(i+1)\\
 &\ell(w)-1, \text{ if } w^{-1}(i)>w^{-1}(i+1)
\end{array}
\right. 
\end{displaymath}

(cf.\cite{BF} equation (1.26) and Propersition 1.5.2).
\end{comment}

\[
 w_{\gamma-1}^{-1}(i_{\gamma-1})<w_{\gamma-1}^{-1}(j_{\gamma-1}).
\]
Hence the segments
$$[b(\Delta_{w_{\gamma-1}^{-1}(i_{\gamma-1})}), e(\Delta_{i_{\gamma-1}})]$$ 
$$[b(\Delta_{w_{\gamma-1}^{-1}(j_{\gamma-1})}), e(\Delta_{j_{\gamma-1}})]$$
are linked in $\Phi(w_{\gamma-1})$. Moreover, by performing the
elementary operation on the two segments, we obtain $\Phi(w_{\gamma})$, 
hence
\[
 \Phi(w_{\gamma-1})>Phi(w_{\gamma}).
\]
Again by transitivity of partial orders, we are done.
Note that we proved that all $\Phi(w)$ are in $S(\a_{\Id})$.
Moreover, for any $\b\in S(\a_{\Id})$, 
the fact that $\a_{\Id}$ is symmetric implies $b(\a_{\Id})=b(\b)$ since no segment 
is juxtaposed to the others. 
The same reason shows that $e(\a_{\Id})=e(\b)$.
Hence 
there is a unique $w\in S_{n}$ 
such that 
\[
\b=\sum_{i=1}^{n}[b(\Delta_{i}), e(\Delta_{w(i)})].
\]
This proves the surjectivity.

\item [(2)] Let $\Phi(w)\geq \Phi(v)$,
 we choose 
$$
 \Phi(w)=\Phi(w_{0})>\Phi(w_{1})>\cdots >\Phi(w_{\alpha})=\Phi(v)
$$
to be a maximal chain of multisegments, where 
$\Phi(w_{\gamma})$ is obtained from $\Phi(w_{\gamma-1})$ by performing the elementary operation 
to segments 
$$
\{[b(\Delta_{i_{\gamma-1}}), e(\Delta_{w_{\gamma-1}(i_{\gamma-1})})], 
\quad [b(\Delta_{j_{\gamma-1}}), e(\Delta_{w_{\gamma-1}(j_{\gamma-1})})]\}
$$
in $\Phi(w_{\gamma-1})$ with $i_{\gamma-1}<j_{\gamma-1}$. Then 
\[
 w_{\gamma-1}(i_{\gamma-1})<w_{\gamma-1}(j_{\gamma-1}).
\]

Hence 
$$
w_{\gamma}=\sigma_{w_{\gamma-1}(i_{\gamma-1}), w_{\gamma-1}(j_{\gamma-1})}w_{\gamma-1}.
$$
Note that in this case, we have either 
\[
 w_{\gamma}<w_{\gamma-1}
\]
or 
\[
 w_{\gamma}>w_{\gamma-1},
\]
by (1), the former implies $\Phi(w_{\gamma-1})<\Phi(w_{\gamma})$, 
contradiction to our assumption.

Hence we must have
\[
 w_{\gamma}>w_{\gamma-1}.
\]
We conclude by transitivity of partial order that $w<v$.
\end{enumerate}
\end{proof}

\begin{comment}
We can define a new partial order on the set of ordinary multisegments:
 
 \begin{definition}
 Let $\a$ and $\b$ be ordinary multisegments.
 Let $|a|$ be the order of the set $\a$.
  Let $\a=\{\Delta_{1}, \cdots, \Delta_{r}\}$ and $\b=\{\Delta_{1}', \cdots, \Delta_{s}'\}$
  be two well ordered multisegment. We define $\b\prec \a$ if for any $\Delta_{i}'\in \b$ and
  $\Delta_{j}\in \a$, we have $\Delta_{i}'\prec \Delta_{j}$.
  And we define $\b\vdash \a$ if $|\b|\leq |\a|$, and 
  $\a-\b=\a_{1}\cup \a_{2}$ such that $\a_{1}\prec \b-\a\prec \a_{2}$.
 \end{definition}
 
 \remk
 This definition only makes sense when $\a$ and $\b$ does not contain
 segments with multiplicity $>1$, hence $\a$ and $\b$ can be regarded as
 a set.
 
\end{comment}

\section{Nilpotent Orbits}
In this section we shall introduce the nilpotent orbits constructed in 
\cite{Z3} and discuss their geometry and relations with multisegments.

\begin{definition}\label{def: 4.1.1}
\begin{description}
 \item[(1)] Let $\varphi\in \mathcal{S}$ (cf. Def. \ref{def: 1.2.5}) such that 
$\supp\varphi=\{1,\cdots,  h\}$. Let $V_{\varphi}=\oplus_{k\in \Z}V_{\varphi, k}$ be a 
$\Z$-graded $\C$ vector space such that $\dim V_{\varphi,k}=\varphi(k)$.
\item [(2)]
 Let $E_{\varphi}$ be the set of endomorphism $T$ of $V_{\varphi}$ of
 degree $1$, i.e. such that $TV_{\varphi, k}\subseteq V_{\varphi,k+1}$.
 \end{description}
\end{definition}

\remk(cf.\cite{Z3}, 1.8)
$G_{\varphi}(\C)=\prod_{k\in \Z}GL(V_{\varphi, k})$ acts on $E_{\varphi}$ by conjugation. For each element $T$ in $E_{\varphi}$, there exists a basis 
of $V_{\varphi}$ that
consists of homogeneous elements, under which $T$ is of the Jordan form . 

\begin{notation}
From now on, for simplicity,  in all circumstances, we will write $G_{\varphi}$ for $G_{\varphi}(\C)$, 
$GL_{n}$ for $GL_{n}(\C)$ and $M_{i,j}$ for $M_{i,j}(\C)$.
\end{notation}

\begin{lemma}\label{lem: 4.1.4}
By fixing a basis for each $V_{k}$, we have 
 \[
  E_{\varphi}\simeq M_{\varphi(2), \varphi(1)}\times \cdots \times M_{\varphi(h), \varphi(h-1)}
 \]
Here we suppose that $\supp\varphi \subseteq [1,h]$ and
$M_{k, \ell}$ denotes the vector space of matrices over $\C$ with
$k$ rows and $\ell$ columns. 
 \end{lemma}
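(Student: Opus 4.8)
The statement to prove is Lemma~\ref{lem: 4.1.4}, asserting that once we fix a homogeneous basis of $V_\varphi$ adapted to the grading, the variety $E_\varphi$ of degree-$1$ endomorphisms is isomorphic (as an affine variety, indeed as a vector space) to the product $M_{\varphi(2),\varphi(1)}\times\cdots\times M_{\varphi(h),\varphi(h-1)}$.

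\medskip

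\textbf{Approach.} The plan is to simply unwind the definitions. An element $T\in E_\varphi$ is a linear endomorphism of $V_\varphi=\bigoplus_{k=1}^h V_{\varphi,k}$ satisfying $T(V_{\varphi,k})\subseteq V_{\varphi,k+1}$ for all $k$. First I would observe that because $V_\varphi$ is a direct sum of the graded pieces, giving $T$ is equivalent to giving its restrictions $T_k:=T|_{V_{\varphi,k}}\colon V_{\varphi,k}\to V_\varphi$ for each $k\in\{1,\dots,h\}$, and these restrictions can be prescribed independently. Next, the degree-$1$ condition says precisely that $T_k$ has image inside $V_{\varphi,k+1}$, so $T_k$ is really a linear map $V_{\varphi,k}\to V_{\varphi,k+1}$; for $k=h$ this forces $T_h=0$ since $V_{\varphi,h+1}=0$ (as $\supp\varphi\subseteq[1,h]$), so only the maps $T_1,\dots,T_{h-1}$ carry information. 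Then I would use the fixed bases of the $V_{\varphi,k}$ to identify $\Hom_\C(V_{\varphi,k},V_{\varphi,k+1})$ with the matrix space $M_{\varphi(k+1),\varphi(k)}$ (matrices with $\varphi(k+1)$ rows and $\varphi(k)$ columns, since $\dim V_{\varphi,k}=\varphi(k)$), and assemble the resulting isomorphism
\[
E_\varphi\;\xrightarrow{\ \sim\ }\;\prod_{k=1}^{h-1}\Hom_\C(V_{\varphi,k},V_{\varphi,k+1})\;\simeq\;M_{\varphi(2),\varphi(1)}\times\cdots\times M_{\varphi(h),\varphi(h-1)},
\]
sending $T\mapsto(T_1,\dots,T_{h-1})\mapsto$ (matrices). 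I would note it is linear with obvious inverse (paste the blocks together into a block-off-diagonal matrix), hence an isomorphism of vector spaces, a fortiori of affine varieties.

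\medskip

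\textbf{Main obstacle.} Honestly there is no real obstacle: the only thing to be slightly careful about is bookkeeping of the indexing conventions --- which index labels rows versus columns in $M_{k,\ell}$, and that the product runs over $k=1,\dots,h-1$ so that the factors are $M_{\varphi(k+1),\varphi(k)}$ with $k+1$ ranging over $2,\dots,h$, matching the displayed list $M_{\varphi(2),\varphi(1)}\times\cdots\times M_{\varphi(h),\varphi(h-1)}$. I would also make explicit that the hypothesis $\supp\varphi\subseteq[1,h]$ is what makes the product finite and truncates it correctly at both ends (no factor $M_{\varphi(1),\varphi(0)}$ on the left since nothing maps into $V_{\varphi,1}$ from below in a way that concerns $E_\varphi$'s description, and no factor beyond $M_{\varphi(h),\varphi(h-1)}$ on the right). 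That is the entire content; the proof is a one-paragraph verification.
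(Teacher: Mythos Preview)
Your proposal is correct and is exactly the approach the paper takes: the paper's proof is the single sentence ``It follows directly from the definition of $E_{\varphi}$,'' and you have simply written out that unwinding in full detail. There is nothing to add.
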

\remk
In this case, the group
\[
 G_{\varphi}=GL_{\varphi(1)}\times \cdots \times GL_{\varphi(h)}
\]
acts by
\[
 (g_{1}, \cdots, g_{h}).(f_{1}, \cdots, f_{h-1})=(g_{2}f_{1}g_{1}^{-1}, g_{3}f_{2}g_{2}^{-1}, \cdots, g_{h}f_{h-1}g_{h-1}^{-1}).
\]
\begin{proof}
It follows directly from the definition of $E_{\varphi}$. 
\end{proof}

\begin{example}\label{ex: 2.2.2}
Consider the function $\varphi=\chi_{1}+2\chi_{2}+\chi_{3}\in \mathcal{S}$(cf. Def \ref{def: 11}),
where $\chi_{k}$ denotes the characteristic 
function of $k$.
To $\varphi$ we can attach the space $V_{\varphi}=V_{1}\oplus V_{2}\oplus V_{3}$ such that 
\[
 V_{1}=\C v_{1}, ~V_{2}=\C v_{2}\oplus \C v_{3}, ~ V_{3}=\C v_{4}.
\]
Consider the operator $T\in E_{\varphi}$, such that 
\[
 T(v_{1})=v_{2}-v_{3}, ~T(v_{2})=T(v_{3})=v_{4}.  
\]
Then by choosing a new basis 
\[
 v_{1}'=v_{1}, ~v_{2}'=v_{1}-v_{2}, ~v_{3}'=v_{1}+v_{3}, ~v_{4}'=2v_{4},
\]
we get 
\[
 T(v_{1}')=v_{2}', ~T(v_{2}')=0, T(v_{3}')=v_{4}',
\]
which gives the Jordan form $J_{T}$ of $T$
$$J_{T}=
\begin{pmatrix}
0 &0&0& 0\\
1&0&0& 0\\
0&0&0& 0\\
0&0&1&0
\end{pmatrix}
$$ 
\end{example}

\begin{prop}(cf.\cite{Z3}, 2.3)\label{prop: 2.2.4}
The orbits of $E_{\varphi}$ under
$G_{\varphi}$ are naturally parametrized by multisegments of weight $\varphi$.
\end{prop}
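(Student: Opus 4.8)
\textbf{Proof proposal for Proposition \ref{prop: 2.2.4}.}

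The plan is to establish a bijection between $G_{\varphi}$-orbits on $E_{\varphi}$ and multisegments of weight $\varphi$ by using Jordan theory for the single nilpotent operator $T$, refined by the $\Z$-grading. First I would recall, as noted in the remark after Definition \ref{def: 4.1.1}, that any $T\in E_{\varphi}$ can be put in Jordan form with respect to a homogeneous basis of $V_{\varphi}$: since $T$ has degree $+1$ and $\supp\varphi$ is finite, $T$ is nilpotent, so $V_{\varphi}$ decomposes as a direct sum of $T$-cyclic subspaces, and one checks that this decomposition can be chosen compatibly with the grading (pick a homogeneous vector generating the top-dimensional cyclic piece, split it off, and induct on $\dim V_{\varphi}$; the key point is that $\ker T$ and $\Im T$ are graded subspaces because $T$ is homogeneous). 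Each cyclic subspace is spanned by $\{v, Tv, \dots, T^{j-i}v\}$ with $v$ homogeneous of degree $i$ and $T^{j-i}v\neq 0$, $T^{j-i+1}v=0$; such a piece contributes exactly the segment $[i,j]=\Delta_{\nu^i, j-i+1}$.

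Next I would define the map $\Phi: E_{\varphi}/G_{\varphi}\to S(\varphi)$ by sending the orbit of $T$ to the multiset of segments $\{[i,j]\}$ read off from the homogeneous Jordan decomposition, with $a_{ij}$ equal to the number of cyclic blocks starting in degree $i$ and ending in degree $j$. Well-definedness has two parts: (a) the resulting multisegment has weight $\varphi$, which holds because the cyclic blocks partition a homogeneous basis and the block $[i,j]$ occupies one dimension in each graded piece $V_{\varphi,k}$ for $i\le k\le j$, so summing the $\chi_{[i,j]}$ with multiplicities recovers $\dim V_{\varphi,k}=\varphi(k)$; (b) independence of the choice of homogeneous Jordan basis, i.e. two operators in the same $G_{\varphi}$-orbit give the same multiset — this is immediate since conjugation by $g\in G_{\varphi}$ carries one homogeneous Jordan basis to another with the same block structure, and conversely I must show the multiset is a complete invariant.

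For injectivity of $\Phi$ (that the multisegment determines the orbit), given two operators $T, T'$ with the same associated multisegment, I would build an explicit graded isomorphism matching their cyclic decompositions block by block: order the blocks of the same type $[i,j]$ arbitrarily in each, and send generators to generators; this intertwines $T$ and $T'$ and is homogeneous, hence lies in $G_{\varphi}$. For surjectivity, given any multisegment $\a=\sum_{i\le j}a_{ij}[i,j]$ of weight $\varphi$, I would directly construct $T\in E_{\varphi}$: choose a homogeneous basis of $V_{\varphi}$ indexed so that it splits into $a_{ij}$ strings for each pair $i\le j$, and let $T$ act as the shift along each string (killing the last vector). The weight condition $\varphi=\sum a_{ij}\chi_{[i,j]}$ guarantees the dimensions match up in each degree, so this $T$ genuinely lies in $E_{\varphi}$ and maps to $\a$.

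The main obstacle, and the only step requiring real care, is part (b) above: showing that the homogeneous Jordan form is unique up to $G_{\varphi}$, equivalently that the multiset of segments is a complete conjugacy invariant rather than just an invariant. The cleanest route is to express the multiplicities $a_{ij}$ intrinsically in terms of ranks of graded pieces of powers of $T$ — for instance $a_{ij}$ is an alternating sum of $\dim$ of the graded components of $\Im(T^{m})\cap \ker(T^{m'})$ for appropriate $m,m'$ — since ranks of (graded pieces of) $T^k$ are manifestly $G_{\varphi}$-invariant, this forces the multisegment to depend only on the orbit, and combined with the explicit constructions above yields the bijection. This is the graded refinement of the classical fact that Jordan type is determined by the numbers $\rank T^k$, and I would cite \cite{Z3} 2.3 for the precise bookkeeping rather than reproduce it.
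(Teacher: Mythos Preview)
Your proposal is correct and follows the same approach as the paper: both identify the orbit of $T$ with the multiset of its homogeneous Jordan cells, associating to $\a=\sum a_{ij}[i,j]$ the operators having exactly $a_{ij}$ cyclic blocks running from degree $i$ to degree $j$. The paper's proof is in fact just a one-line description of this correspondence with a citation to \cite{Z3}, whereas you supply the full argument (existence of a homogeneous Jordan basis, well-definedness via rank invariants, and explicit constructions for injectivity and surjectivity), so your write-up is strictly more detailed than what the paper records.
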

\begin{proof}
Let $\a=\sum_{i\leq j}a_{ij}[i,j]$ such that 
$\varphi_{\a}=\varphi$, then 
the orbit associated consists of 
the operators having exactly $a_{ij}$ Jordan cells starting from $V_{\varphi, j}$ and
ending in $V_{\varphi,i}$.
\end{proof}

\begin{notation}
We denote by $O_{\a}$ the orbit associated to the multisegment $\a$. 
\end{notation}

\begin{example}
We take the same function $\varphi=\chi_{1}+2\chi_{2}+\chi_3$ as in example \ref{ex: 2.2.2}.
Then the multisegments of weight $\varphi$ are listed below(cf. \cite{Z2} section 11.4)
\[
 \a_{\max}=\{[1], [2], [2], [3]\},~ \a_{\ell}=\{[1, 2], [2], [3]\}, 
\]
\[
 \a_{r}=\{[1], [2], [2, 3]\}, ~\a_{0}=\{[1, 2], [2, 3]\}, ~\a_{\min}=\{[1, 3], [2]\}.
\]
And the corresponding Jordan forms are given by 
$$
J_{\a_{\max}}=0, ~
J_{\a_{\ell}}=
\begin{pmatrix}
0 &0&0& 0\\
1&0&0& 0\\
0&0&0& 0\\
0&0&0&0
\end{pmatrix}
$$
$$
J_{\a_{r}}=
\begin{pmatrix}
0 &0&0& 0\\
0&0&0& 0\\
0&0&0& 0\\
0&0&1&0
\end{pmatrix}, ~
J_{\a_{0}}=
\begin{pmatrix}
0 &0&0& 0\\
1&0&0& 0\\
0&0&0& 0\\
0&0&1&0
\end{pmatrix},~
J_{\a_{\min}}=
\begin{pmatrix}
0 &0&0& 0\\
1&0&0& 0\\
0&0&0& 0\\
0&1&0&0
\end{pmatrix}.
$$ 
\end{example}

\begin{prop}(cf. \cite{Z3}, 2.2)
In $E_{\varphi}$, we have $\line{O}_{\b}=\coprod_{\a\geq \b}O_{\a} $. 
\end{prop}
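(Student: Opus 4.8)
\textbf{Proof plan for $\line{O}_{\b}=\coprod_{\a\geq \b}O_{\a}$.}

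The plan is to prove the two inclusions separately, using the explicit Jordan-cell description of the orbits from Proposition \ref{prop: 2.2.4} and the characterization of orbit closures via ranks of composite maps. First I would translate membership of $T\in E_{\varphi}$ in a given orbit into numerical invariants: for a nilpotent degree-$1$ operator $T$, the multiset of Jordan cells is determined by the numbers $r_{i,j}(T):=\rank\bigl(T^{j-i}\colon V_{\varphi,i}\to V_{\varphi,j}\bigr)$ for all $i\le j$, and conversely the multisegment $\a$ with $T\in O_{\a}$ can be recovered from these ranks by an inclusion-exclusion formula (a graded analogue of the classical recovery of a partition from the ranks of powers of a nilpotent matrix). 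The key elementary fact is that each $r_{i,j}$ is lower semicontinuous on $E_{\varphi}$: the locus where $\rank(T^{j-i}|_{V_{\varphi,i}\to V_{\varphi,j}})\le c$ is Zariski closed, being cut out by vanishing of minors.

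Next I would make precise the combinatorial bridge between the partial order $\leq$ on $\O$ and the ranks $r_{i,j}$. The claim to establish is: $\a\ge\b$ if and only if $r_{i,j}(\a)\le r_{i,j}(\b)$ for all $i\le j$, where $r_{i,j}(\a)$ denotes the common value of $r_{i,j}$ on $O_{\a}$. The forward direction reduces, by transitivity, to a single elementary operation replacing $\{\Delta_1,\Delta_2\}$ (with $\Delta_1\prec\Delta_2$ linked) by $\{\Delta_1\cup\Delta_2,\Delta_1\cap\Delta_2\}$; one checks directly that this operation does not decrease any $\rank(T^{j-i})$ and strictly increases at least one, which is a short bookkeeping computation on how a pair of Jordan cells degenerates. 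The reverse direction (that the rank inequalities force $\b$ to lie above $\a$ in the poset) is the combinatorial heart and can be handled by downward induction on $\sum_{i\le j} r_{i,j}$, peeling off a maximal segment and applying an elementary operation to produce a smaller multisegment still satisfying the inequalities; this is close in spirit to the $\a_{\min}$-argument already given in the proof that $S(\a)$ has a unique minimal element.

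With these two ingredients the proposition follows formally. For ``$\supseteq$'': if $\a\ge\b$, then $O_{\a}$ lies in the closed set $\{T: r_{i,j}(T)\le r_{i,j}(\b)\ \forall i\le j\}$, which contains $O_{\b}$, so $\line{O}_{\a}\subseteq$ that closed set; but actually we want $O_\a\subseteq\line{O}_\b$, so instead I argue that $\line{O}_\b$ is $G_\varphi$-stable and closed, its complement is a union of orbits, and any orbit $O_\a$ with $\a\ge\b$ must meet $\line{O}_\b$ because one can exhibit an explicit one-parameter degeneration (a $\C^*$-action or a curve in $E_\varphi$) from a point of $O_\a$ in the single-elementary-operation case, then iterate. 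For ``$\subseteq$'': $\line{O}_\b$ is irreducible and $G_\varphi$-stable, hence a union of $O_\b$ with orbits of strictly smaller dimension; any such $O_\a\subseteq\line{O}_\b$ satisfies $r_{i,j}(\a)\le r_{i,j}(\b)$ by semicontinuity, hence $\a\ge\b$ by the bridge. I expect the main obstacle to be the reverse direction of the rank characterization — showing the rank inequalities actually imply $\b\le\a$ in Zelevinsky's order — since the order is defined by elementary operations rather than by inequalities, so one needs the right inductive peeling argument; the semicontinuity and the degeneration constructions are routine by comparison. (Alternatively, one may simply cite \cite{Z3}, 2.2 and only sketch the rank-semicontinuity argument, since the statement is attributed there.)
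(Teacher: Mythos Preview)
Your plan is correct and is essentially the standard argument due to Zelevinsky. Note, however, that the paper does not give its own proof of this proposition: it is simply quoted from \cite{Z3}, 2.2, with no argument supplied. The rank characterization you identify as the combinatorial bridge is likewise stated without proof two paragraphs later as Proposition~\ref{prop: 4.2.3} (again cited to \cite{Z3}), so in the paper's logic these two results come as a package from the cited source rather than being derived.

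One small wobble in your write-up: in the ``$\supseteq$'' paragraph you start down the rank-semicontinuity path, realize it only gives containment in the determinantal locus $\{T:r_{i,j}(T)\le r_{i,j}(\b)\}$ rather than in $\line{O}_{\b}$ itself, and then switch to explicit degenerations. That switch is the right move---the determinantal locus could a priori be strictly larger than $\line{O}_{\b}$, so you genuinely need the one-parameter family for a single elementary operation (e.g.\ scale one off-diagonal block of a Jordan representative by $t\in\C^{\times}$ and let $t\to 0$) and then iterate along a chain $\a=\a_0>\a_1>\cdots>\a_r=\b$. With that in hand, semicontinuity plus the rank characterization gives ``$\subseteq$'' exactly as you say, and the argument closes. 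Your instinct that the reverse implication of the rank characterization is the least mechanical step is accurate; Zelevinsky's proof of it is close in spirit to the peeling argument you sketch.
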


\begin{definition}\label{def: 4.1.5}
 For any $T\in E_{\varphi}$, and $i\leq j$, denote by $T^{[i,j]}$ the
 composition map: 
 \begin{displaymath}
\xymatrix
{
V_{i}\ar[r]^{\hspace{-0.5cm }T} & V_{i+1}\cdots \ar[r]^{\hspace{0.5cm }T}&V_{j},\\
}
\end{displaymath}
we define
 \[
  r_{ij}(T)=\rank(T^{[i,j]}).
 \]
 \end{definition}
 \remk
 For $\a$ a multisegment, $r_{ij}(T)$ remains constant for any $T\in O_{\a}$, we denote
 it by $r_{ij}(\a)$.
 
We recall the following combinatorial results
\begin{prop}(cf. \cite{Z3}section 2.5)\label{prop: 4.2.3}
 Let $\a, \b$ be two multisegments such that 
 \[
  \varphi_{\a}=\varphi_{\b}.
 \]

  Then the following two conditions are equivalent:
 \begin{description}
  \item [(1)] $\b\leq \a$;
  \item [(2)] $r_{ij}(\a)\leq r_{ij}(\b)$ for all $i\leq j$.
 \end{description}
\end{prop}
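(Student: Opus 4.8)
The plan is to prove the equivalence of (1) and (2) by relating the rank functions $r_{ij}$ to the combinatorics of elementary operations, following the combinatorial setup of Zelevinsky in \cite{Z3}. First I would show that (1) implies (2). It suffices, by transitivity of the partial order and the fact that an elementary operation can only decrease each $r_{ij}$ (or leave it unchanged), to check the single-step case: if $\b$ is obtained from $\a$ by replacing a linked pair $\{\Delta_1, \Delta_2\}$ with $\{\Delta_1 \cup \Delta_2, \Delta_1 \cap \Delta_2\}$, then $r_{ij}(\a) \leq r_{ij}(\b)$ for all $i \leq j$. This is a purely local check on the Jordan structure: a generic operator in $O_{\b}$ degenerates to one in $\line{O}_{\a}$, so $O_{\a} \subseteq \line{O}_{\b}$ by the closure formula $\line{O}_{\b} = \coprod_{\a \geq \b} O_{\a}$, and since $r_{ij}(T) = \rank(T^{[i,j]})$ is lower semicontinuous on $E_{\varphi}$, its value on the smaller orbit $O_{\a}$ in the closure is $\leq$ its value on $O_{\b}$. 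Concretely one can also verify this by the explicit formula expressing $r_{ij}(\a)$ in terms of the multiplicities $a_{k\ell}$, namely $r_{ij}(\a) = \sum_{k \leq i,\ j \leq \ell} a_{k\ell}$, and checking that the elementary operation on $\{\Delta_1, \Delta_2\}$ does not increase this sum for any window $[i,j]$.

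The harder direction is that (2) implies (1). Here the strategy is: assuming $r_{ij}(\a) \leq r_{ij}(\b)$ for all $i \leq j$, and $\a \neq \b$, produce an elementary operation applicable to $\b$ yielding $\b'$ with $\b \geq \b'$ and $r_{ij}(\a) \leq r_{ij}(\b')$ still holding for all $i \leq j$, together with a strict decrease of some nonnegative integral quantity (for instance $\sum_{i \leq j}(r_{ij}(\b) - r_{ij}(\a))$, or the length $\ell(\b)$) so that the process terminates, necessarily at $\b' = \a$. To find such an operation, I would use the hypothesis $\a \neq \b$ together with $r_{ij}(\a) \leq r_{ij}(\b)$ to locate a pair of linked segments $\{\Delta_1, \Delta_2\}$ in $\b$ whose merging is "compatible" with $\a$: pick, among the segments of $\b$, a minimal witness to the discrepancy between the rank functions and argue that $\b$ must contain a linked pair which, when subjected to the elementary operation, keeps all $r_{ij}$ above those of $\a$. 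This is the standard but delicate combinatorial core; it is exactly the argument of \cite{Z3}, \S2.5, and I would either cite it directly or reproduce the inductive bookkeeping.

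The main obstacle I anticipate is precisely this existence-of-a-compatible-elementary-operation step in the direction (2)$\Rightarrow$(1): one must guarantee that the chosen linked pair in $\b$ exists (i.e. that if $\b \neq \a$ and the rank inequalities hold then $\b$ is not minimal, or more precisely that some merging does not overshoot) and that the operation does not violate any of the inequalities $r_{ij}(\a) \leq r_{ij}(\cdot)$. Since all of this is established in \cite{Z3}, the cleanest route in the text is to invoke that reference for the combinatorial equivalence and, if desired, supplement it with the geometric remark that $r_{ij}$ is lower semicontinuous, which gives (1)$\Rightarrow$(2) immediately and makes the statement transparent in the language of the orbit closures $\line{O}_{\b} = \coprod_{\a \geq \b} O_{\a}$ recalled just above.
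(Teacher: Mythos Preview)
The paper does not prove this proposition at all: it is stated with the citation ``(cf.\ \cite{Z3} section 2.5)'' and no argument is given. Your proposal is therefore already more than what the paper does, and your suggestion to ultimately invoke \cite{Z3} for the combinatorial core of (2)$\Rightarrow$(1) matches the paper's treatment exactly; the geometric remark you add for (1)$\Rightarrow$(2) (lower semicontinuity of $r_{ij}$ together with the closure formula $\line{O}_{\b}=\coprod_{\a\geq \b}O_{\a}$, which the paper has already recorded from \cite{Z3}, 2.2) is correct and a nice supplement.
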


In symmetric case, we have the following interesting description of
$r_{ij}$.
\begin{lemma}
Let $w\in S_n$. Then
we have
$ r_{i,j+n-1}(w):=r_{i, j+n-1}(\Phi(w))=\{k\leq i: w(k)\geq j\}.$
\end{lemma}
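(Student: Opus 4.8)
The plan is to unwind the definitions and reduce the rank computation to counting Jordan cells passing through a given graded piece. Recall from Proposition~\ref{teo: 2.3.2} that for $w\in S_n$ the symmetric multisegment $\Phi(w)$ is $\sum_{k=1}^n [b(\Delta_k), e(\Delta_{w(k)})]$, where the $\Delta_i$ are the segments of $\a_{\Id}$ arranged so that $b(\Delta_1)<\cdots<b(\Delta_n)$ and $e(\Delta_1)<\cdots<e(\Delta_n)$. Since $\a_{\Id}$ is symmetric, we may as well normalize the beginnings and ends; the natural choice (and the one implicit in the statement, given the index shift $j\mapsto j+n-1$) is $b(\Delta_k)=k$ and $e(\Delta_k)=k+n-1$, so that $\Phi(w)=\sum_{k=1}^n[k,\,w(k)+n-1]$. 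Then the segment indexed by $k$ is the interval $[k, w(k)+n-1]$, which ``starts'' at graded position $k$ and ``ends'' at graded position $w(k)+n-1$.

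First I would recall, from Proposition~\ref{prop: 2.2.4} and its proof, that for $T\in O_{\Phi(w)}$ the operator $T$ has exactly one Jordan cell running from $V_{\varphi, k}$ to $V_{\varphi, w(k)+n-1}$ for each $k$ (the $a_{ij}$ here are all $0$ or $1$ since $\a_{\Id}$, being ordinary, has distinct beginnings and ends, and this is preserved under $\Phi$). Next I would use Definition~\ref{def: 4.1.5}: $r_{ij}(T)=\rank(T^{[i,j]})$, where $T^{[i,j]}\colon V_i\to V_j$ is the composite $T\circ\cdots\circ T$. Choosing a Jordan basis, $T^{[i,j]}$ is a partial permutation matrix: a basis vector in $V_i$ lying in the $k$-th Jordan cell is sent to a nonzero vector of $V_j$ precisely when that cell both passes through position $i$ and has not yet terminated at position $j$, i.e. when $k\le i$ and $w(k)+n-1\ge j$. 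Distinct such $k$ give linearly independent images (they lie in distinct Jordan cells, hence distinct basis vectors of $V_j$), so the rank is exactly the number of such $k$:
\[
r_{i,j}(\Phi(w))=\#\{\,k : k\le i,\ w(k)+n-1\ge j\,\}.
\]
Substituting $j+n-1$ for $j$ gives $r_{i,j+n-1}(\Phi(w))=\#\{k\le i : w(k)\ge j\}$, as claimed.

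The only genuinely substantive point — and the step I would be most careful about — is the rank count for the composite map $T^{[i,j]}$: one must check that a Jordan cell of $T$ contributes to the rank exactly when it straddles the whole range $[i,j]$, and that the nonzero images coming from different cells are independent. Both follow from the Jordan normal form description (each cell is a single chain of basis vectors with $T$ shifting one step along the chain and killing the top), but it requires keeping the index bookkeeping straight: a cell from $V_a$ to $V_b$ sends the basis vector at level $i$ forward to the basis vector at level $j$ iff $a\le i\le j\le b$, which under our normalization $a=k$, $b=w(k)+n-1$ is exactly $k\le i$ and $w(k)+n-1\ge j$. Everything else is definition-chasing, so I expect the write-up to be short.
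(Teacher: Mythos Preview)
Your proof is correct and follows essentially the same approach as the paper. The paper's argument is terser: it simply asserts that ``by definition'' $r_{i,j+n-1}(w)$ is the number of segments of $\Phi(w)$ containing $[i,j+n-1]$, and then observes that $[k,w(k)+n-1]\supseteq[i,j+n-1]$ iff $k\le i$ and $w(k)\ge j$; your Jordan-basis computation is precisely the justification for that ``by definition'' step, so you are spelling out what the paper leaves implicit.
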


\begin{proof}
In fact, take 
\[
\a_{\Id}=\sum_{k=1}^{n}[k, k+n-1],
\]
and consider the bijection 
\[
\Phi: S_n\rightarrow S(\a_{\Id})
\]
with 
 \[
  \Phi(w)=\sum_{k=1}^{n}[k, w(k)+n-1].
\]
By definition, $r_{i,j+n-1}(w)$ is the number of segments in $L_{\Phi(w)}$
which contains $[i,j+n-1]$, hence is of the form $[k, w(k)+n-1]$ with
\[
 k\leq i,\quad w(k)\geq j.
\]
\end{proof}

Now combining with the proposition \ref{prop: 4.2.3}, gives the following
known results,
\begin{prop}(\cite{LM} Proposition 2.1.12)
In $S_{n}$,
$v\leq w\Leftrightarrow r_{ij}(v)\leq r_{ij}(w)$, for all $i\leq j$.  
\end{prop}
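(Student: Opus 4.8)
The statement to prove is the characterization of Bruhat order in $S_n$ via rank functions: $v \leq w \Leftrightarrow r_{ij}(v) \leq r_{ij}(w)$ for all $i \leq j$, where $r_{ij}(w) = \#\{k \leq i : w(k) \geq j\}$ when one shifts indices appropriately (the $r_{i,j+n-1}$ notation from the preceding lemma).

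The plan is to deduce this directly from the two results just established: Proposition \ref{prop: 4.2.3}, which says $\b \leq \a \Leftrightarrow r_{ij}(\a) \leq r_{ij}(\b)$ for multisegments of equal weight, and the lemma identifying $r_{i,j+n-1}(\Phi(w))$ with $\#\{k \leq i : w(k) \geq j\}$, together with Proposition \ref{teo: 2.3.2}, which gives the order-reversing bijection $\Phi\colon S_n \to S(\a_{\Id})$ with $v \leq w \Leftrightarrow \Phi(v) \geq \Phi(w)$. First I would fix $\a_{\Id} = \sum_{k=1}^n [k, k+n-1]$, a symmetric multisegment, so that every element of $S(\a_{\Id})$ has the same weight $\varphi_{\a_{\Id}}$ and Proposition \ref{prop: 4.2.3} applies to any pair $\Phi(v), \Phi(w)$. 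Then chaining the equivalences: $v \leq w$ in $S_n$ iff (by \ref{teo: 2.3.2}) $\Phi(w) \leq \Phi(v)$ as multisegments iff (by \ref{prop: 4.2.3}) $r_{ij}(\Phi(v)) \leq r_{ij}(\Phi(w))$ for all $i \leq j$ iff (by the lemma, after the index shift $j \mapsto j + n - 1$) $\#\{k \leq i : v(k) \geq j\} \leq \#\{k \leq i : w(k) \geq j\}$ for all $i \leq j$, which is exactly $r_{ij}(v) \leq r_{ij}(w)$.

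One point requiring a little care is the index bookkeeping: the lemma is phrased for $r_{i,j+n-1}$ because the segments in $\Phi(w)$ run from $k$ to $w(k)+n-1$, so the relevant subintervals $[i, j+n-1]$ of the Zelevinsky line have to be tracked consistently, and one should note that as $(i,j)$ ranges over $1 \leq i, j \leq n$ the pairs $(i, j+n-1)$ range over exactly the sub-segments of $[1, 2n-1]$ that can occur, so no information is lost in restricting Proposition \ref{prop: 4.2.3}'s condition to these. I would also remark that the rank functions $r_{ij}$ appearing in the target statement are, by definition/convention, precisely these shifted ones, so the identification is purely notational. Since $\Phi$ is a bijection onto $S(\a_{\Id})$, the whole equivalence transfers without loss.

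The proof is essentially a bookkeeping exercise once the three prior results are in hand, so there is no serious obstacle; the only mild subtlety is making sure the order-reversal in $\Phi$ and the order-reversal in Proposition \ref{prop: 4.2.3} (which says $\b \leq \a$ corresponds to $r_{ij}(\a) \leq r_{ij}(\b)$, i.e.\ also order-reversing in the ranks) compose to give an order-\emph{preserving} statement on $S_n$, which they do. I would write the argument as a short display of iff's:
\[
v\leq w \iff \Phi(v)\geq \Phi(w) \iff r_{ij}(\Phi(v))\leq r_{ij}(\Phi(w))\ \forall i\leq j \iff r_{ij}(v)\leq r_{ij}(w)\ \forall i\leq j,
\]
and conclude. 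If one wanted a self-contained proof not invoking the multisegment machinery, the alternative would be the classical tableau-criterion argument (lifting an inversion, using the exchange condition), but given the prior results in this chapter the route above is the natural one.
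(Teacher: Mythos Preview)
Your proposal is correct and follows exactly the approach the paper takes: it simply combines Proposition~\ref{teo: 2.3.2} (the order-reversing bijection $\Phi$), Proposition~\ref{prop: 4.2.3} (the rank-function criterion for multisegments), and the preceding lemma identifying $r_{i,j+n-1}(\Phi(w))$ with the permutation count. The paper's proof is the single sentence ``combining with the proposition~\ref{prop: 4.2.3}'', so your write-up is in fact more detailed about the index shift and the double order-reversal than the original.
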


\section{Schubert Varieties and KL Polynomials}
%\section{Relation with Intersection Cohomology}
% In this section we shall introduce the notion of intersection cohomology complexes
% and Schubert varieties. 
% We then define the Kazhdan Lusztig Polynomials as the Poincar\'e series of 
% the intersection cohomology of Schubert varieties. And for two multisegments
% $\a>\b$, we define the analogue polynomial $P_{\a, \b}(q)$ as the  
% the Poincar\'e series of 
% the intersection cohomology of the orbital varieties $\line{O}_{\b}$, 
% located in a point of $O_{\a}$. Then we describe a result in $\cite{Z4}$, proving that 
% the variety $\line{O}_{\a}$ is locally isomorphic to a Schubert variety.
% Finally, we explain the relation between the coefficients $m(\b,\a)$ and 
% the polynomials $P_{\a, \b}$, which is known as the p-adic analogue of 
% the Kazhdan-Lusztig Conjecture and proved in  \cite{CG}.
% 
% \vspace{0.5cm}
Let $Y$ be an algebraic variety over $\C$.
\begin{definition}
By a stratification $\mathfrak{H}$ on $Y$, we mean a decomposition of $Y$ into 
locally closed smooth sub-varieties $Y_{i}$. An element of $\mathfrak{H}$ is 
called a stratum.
\end{definition}
\remk We require a variety to be irreducible.

\begin{definition}
 Let $D^{b}(Y)=D^b_c(Y)$ be the bounded derived category of  sheaves with values in 
complex vector spaces over $Y$. 
And let $D(Y)$ be the subcategory consisting of those complexes whose cohomology 
sheaves are constructible.
\end{definition}

Given a stratification $\mathfrak{H}$, we let $U_{\ell}$ denote the 
set of strata whose dimension is $\geq \ell$.

\begin{definition}(cf.\cite{de05} Remark 3.8.1)
Given a local system on the open stratum $U_{d}$ with $d=\dim(Y)$, we define inductively 
a complex $IC(Y, L)$ in $D(Y)$ as follows. 

We start by letting $IC(U_{d}, L): =L[\dim Y]$. Assuming that we already
defined $IC(U_{\ell+1}, L)$, let $j: U_{\l+1}\rightarrow U_{\l}$ be the 
open immersion, then we define
\[
 IC(U_{\ell}, L): = \tau_{\leq -\ell-1} Rj_{*}IC(U_{\ell+1}, L),
\]
here $\tau_{\leq k}$ is the truncation from the right in degree $k$.
In finite step, we get $IC(Y, L)$.
\end{definition}

\begin{notation}
 When we take $L=\C$, which is always the case for us, we denote 
 $IC(Y, \C)$ by  $IC(Y)$. In this case we denote 
 \[
  \mathcal{H}^{i}(Y):=\mathcal{H}^{i}(IC(Y)).
 \]
\end{notation}

\remk The cohomology sheaves $\mathcal{H}^{i}(Y)$ are locally 
constant over each stratum in $\mathfrak{H}$.

\begin{definition}
Let $n\geq 1$. By a Schubert variety of type $A_{n-1}$, we mean a closed sub-variety of the 
projective variety $GL_{n}/B_{n}$ which is stable under the 
multiplication by $B_{n}$ from the left, where $B_{n}$ is the Borel subgroup consisting of upper triangular 
matrices.
\end{definition}

\remk Let $V$ be a $\C$ vector space. Note that $GL_n$ acts transitively on the set of complete flags 
$\mathcal{F}(V):= \{(U^i: i=0, \cdots, n): 0=U^0\subset U^1\subset \cdots \subset U^n=V, \dim(U^i)=i\}$
and the stabilizer of a complete flag is given by a Borel subgroup. Hence by fixing a complete flag $(V^i: i=0, \cdots, n)$
and denoting its stabilizer by $B$, 
we  identify the variety $GL_n/B$ with $\mathcal{F}(V)$, in this way, 
we can consider the Schubert variety as a subset of $\mathcal{F}(V)$.

\begin{prop}(cf. \cite{C} page 148.)
We identify $S_{n}$ with the set of the permutation matrices in $GL_{n}$. Then
we have the Bruhat decomposition $GL_{n}=\coprod_{w\in S_{n}}B_{n}wB_{n}$.  
Moreover, we have 
\[
 \line{B_{n}wB_{n}}=\coprod_{v\leq w}B_{n}vB_{n}.
\]
\end{prop}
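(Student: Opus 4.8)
The plan is to establish the Bruhat decomposition $GL_n = \coprod_{w \in S_n} B_n w B_n$ first, and then to deduce the closure formula $\overline{B_n w B_n} = \coprod_{v \leq w} B_n v B_n$. For the decomposition itself, I would argue by Gaussian elimination: given $g \in GL_n$, one can use left multiplication by elementary lower-triangular matrices (row operations of a controlled type) and right multiplication by upper-triangular matrices (column operations) to bring $g$ into the shape of a permutation matrix. More precisely, one processes the columns from left to right, each time locating the lowest nonzero entry, clearing everything below it within its row using the freedom in $B_n$ on the right, and clearing the rest of its column using $B_n$ on the left; the bookkeeping shows the resulting permutation is well-defined, which gives both the existence and the disjointness of the double cosets (two permutation matrices lying in the same $B_n$--$B_n$ double coset must be equal, since $B_n w B_n = B_n w' B_n$ forces $w^{-1} w' \in w^{-1} B_n w \cap B_n$, and one checks this intersection meets $S_n$ only in the identity). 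Alternatively, since the excerpt permits citing \cite{C}, I would simply invoke that reference for the decomposition and for the basic structure of $BN$-pairs, and concentrate the argument on the closure statement.

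For the closure formula, the key input is the characterization of the Bruhat order by rank conditions — precisely Proposition stated just above the target (the $S_n$ case of \cite{LM} Proposition 2.1.12): $v \leq w$ if and only if $r_{ij}(v) \leq r_{ij}(w)$ for all $i \leq j$, where here one reads the $r_{ij}$ off the permutation matrices. The plan is to realize $GL_n/B_n$ as the flag variety $\mathcal{F}(V)$ as in the remark preceding the target, fix the standard flag $(V^i)$ stabilized by $B_n$, and observe that the $B_n$-orbit of the flag $wB_n$ is determined by the discrete invariants $\dim(w \cdot V^i \cap V^j)$; these dimensions are exactly expressible through the rank numbers $r_{ij}(w)$ of the permutation matrix. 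Then I would show: (i) these rank functions are upper semicontinuous on the flag variety (ranks of the relevant linear maps can only drop under specialization, equivalently the loci where a rank is $\geq$ some value are open), so on the closure of the cell $C_w := B_n w B_n/B_n$ every point has rank invariants at least as large as the generic ones, forcing it to lie in some $C_v$ with $v \leq w$; and (ii) conversely, every such $C_v$ with $v \leq w$ actually appears in the closure. For (ii) it suffices, by transitivity and the chain property of the Bruhat order already used in the proof of Proposition~\ref{teo: 2.3.2}, to treat a covering relation $v \lessdot w$, and there one exhibits an explicit one-parameter family of flags in $C_w$ degenerating to a chosen flag in $C_v$ (the standard $\mathrm{SL}_2$-type curve attached to the transposition realizing the cover).

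The main obstacle I anticipate is part (ii): proving that the closure is not too small, i.e. that every lower cell genuinely occurs. The upper semicontinuity argument for (i) is essentially formal once the rank invariants are identified, but for (ii) one needs either the explicit degeneration curves together with a dimension count (each cell $C_v$ for $v \lessdot w$ has codimension one in $\overline{C_w}$, and $\overline{C_w}$ is irreducible of dimension $\ell(w)$, so one must check the limit flag lands in the right cell and that enough such limits sweep out the full codimension-one boundary), or an inductive argument on $\ell(w)$ using the $T$-action and the Bialynicki-Birula-type decomposition. A clean alternative is to quote \cite{C} for the closure relations in a $BN$-pair directly, reducing the whole proof to the identification of the abstract Bruhat order on $S_n$ with the rank order — which is precisely the content of the Proposition (\cite{LM}~2.1.12) already recorded in the excerpt. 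I would present the streamlined version: cite \cite{C} for the general $BN$-pair statement and then note that by the preceding proposition the order occurring there coincides with the order on $S_n$ used throughout, so the formula holds as stated.
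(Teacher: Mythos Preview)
The paper does not supply a proof of this proposition: it is stated with the citation ``(cf.~\cite{C} page~148)'' and no \texttt{proof} environment follows. In other words, the paper's ``own proof'' is simply an appeal to Chevalley's book on semisimple groups, where the Bruhat decomposition and the closure relations are established for general $BN$-pairs.

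Your proposal therefore goes well beyond what the paper does. The detailed argument you outline --- Gaussian elimination for the decomposition, upper semicontinuity of the rank functions $r_{ij}$ for one containment of the closure formula, and explicit $SL_2$-degenerations along covering relations for the other --- is a correct and standard route, and your fallback plan (cite \cite{C} and then identify the abstract Bruhat order with the combinatorial one via the \cite{LM} proposition recorded in \S2.2) is exactly in the spirit of the paper. Either version would be acceptable; the paper simply chose the shortest one, namely a bare citation.
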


\begin{definition}
 We denote  $C_{w}: =B_{n}wB_{n}/B_{n}$ in $GL_{n}/B_{n}$ and the Schubert variety 
 $X_{w}=\line{C_{w}}$.
\end{definition}

Then for the Schubert variety $X_{w}$, we have a stratification given by 
$\mathfrak{H}=\{C_{v}: v\leq w\}$.

\begin{definition}
Let $v\leq w$, we define the Kazhdan Lusztig polynomial for the pair $v, w$:
\[
 P_{v, w}(q)=\sum_{i}q^{(i+d_{w})/2}\dim \mathcal{H}^i(X_{w})_{x_{v}},
\]
where $x_{v}$ is an element in $C_{v}$ and $d_{w}=\dim(X_{w})=\ell(w)$. 
\end{definition}

Concerning the intersection cohomology of Schubert varieties, we have the following purity theorem 
due to Kazhdan and Lusztig.

\begin{teo}(\cite{KL79})\label{teo: 2.3.9}
If $i+\ell(w)$ is odd, then the cohomology group 
$$\mathcal{H}^i(X_{w})=0.$$
\end{teo}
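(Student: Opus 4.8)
\textbf{Proof strategy for Theorem \ref{teo: 2.3.9}.}
The plan is to identify the cohomology sheaves $\mathcal{H}^i(X_w)$ with the stalks of the intersection complex of a Schubert variety and to invoke the parity vanishing that follows from its pointwise purity. First I would recall that the whole package $D(X_w)$, the stratification $\mathfrak{H}=\{C_v:v\leq w\}$ by Schubert cells, and the inductive construction of $IC(X_w,\C)$ of the earlier definition are compatible with the Bruhat decomposition $GL_n=\coprod_{w}B_nwB_n$ and with the cellular structure $C_v\simeq \mathbb{A}^{\ell(v)}$. The key point is that $X_w$, being a Schubert variety, is defined over a finite field $\mathbb{F}_q$ (indeed over any field, via its incidence description inside the flag variety), and the strata $C_v$ are iterated affine bundles, hence have Frobenius acting on their $\ell$-adic cohomology purely by powers of $q$.

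The heart of the argument is the pointwise purity theorem of Kazhdan--Lusztig \cite{KL79}: the stalks $\mathcal{H}^i_{x_v}(IC(X_w))$ carry a Frobenius action that is pure of weight $i$, and moreover the Frobenius eigenvalues on $\mathcal{H}^i_{x_v}(IC(X_w))$ are all equal to $q^{i/2}$ (equivalently, the local intersection cohomology is ``of Hodge--Tate type''). This is exactly what makes the Poincaré polynomial $P_{v,w}(q)$ of the definition above an honest polynomial in $q$ with nonnegative integer coefficients and only integer powers. Concretely, the step is: run the defining recursion $IC(U_\ell,\C)=\tau_{\leq -\ell-1}Rj_*IC(U_{\ell+1},\C)$, track weights through each $Rj_*$ along an affine-bundle inclusion of strata (which shifts weights in a controlled, even way), and conclude by the decomposition theorem / Gabber purity that no odd-weight classes survive the truncation. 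Since $\mathcal{H}^i(X_w)$ in the paper's normalization is $\mathcal{H}^{i-d_w}(IC(X_w))$ with $d_w=\ell(w)$, a class in $\mathcal{H}^i(X_w)$ corresponds to local intersection cohomology in cohomological degree $i-\ell(w)$ relative to the shift; purity forces this to vanish unless $i-\ell(w)$ — equivalently $i+\ell(w)$ — is even.

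The step I expect to be the main obstacle is establishing the purity of the stalks itself: this is not formal from the construction of $IC$ but genuinely uses the geometry of Schubert varieties — their definability over $\mathbb{F}_q$, the affine paving by Schubert cells, and Deligne's weight theory (or, in the original \cite{KL79}, a direct inductive argument via the Bott--Samelson resolution). In practice I would not reprove this; I would cite \cite{KL79} for pointwise purity and then spend only a line translating between the two normalizations (the $[\dim Y]$ shift in $IC(U_d,L)$ versus the degree convention for $\mathcal{H}^i(X_w)$) to land on the stated parity $i+\ell(w)$ odd $\Rightarrow \mathcal{H}^i(X_w)=0$. An alternative, purely topological route — showing $X_w$ admits an algebraic cell decomposition and invoking the fact that the $IC$ of such a variety has cohomology concentrated in degrees of fixed parity — would also work and avoids mixing in arithmetic, but the citation to \cite{KL79} is the cleanest.
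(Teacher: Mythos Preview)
The paper does not actually prove this theorem; it is simply quoted from \cite{KL79} and immediately followed by the remark that it forces $P_{v,w}(q)$ to be a polynomial in $q$. Your proposal is a correct and standard outline of the Kazhdan--Lusztig argument (pointwise purity of $IC$ stalks via the $\mathbb{F}_q$-structure and affine paving, then translating the parity through the $[\dim X_w]$ shift), so there is nothing to compare---you are supplying exactly the content that the paper chooses to cite rather than reproduce.
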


\remk This implies that  
$P_{v, w}(q)$ is a polynomial in $q$.

\section{Orbital Varieties and Schubert Varieties}
Note that on the orbital variety $\line{O}_{\b}$, we have a stratification
given by $\mathfrak{H}_{\b}=\{O_{\a}: \varphi_{\a}=\varphi_{\b}, \b\leq\a\}$. 

\begin{definition}
Let $\a$, $\b$ be two multisegments such that $\b\in S(\a)$. Then we define 
the polynomial
\[
 P_{\a, \b}(q)=\sum_{i}q^{(i+d_{\b})/2}\dim \mathcal{H}^i(\line{O}_{\b})_{x_{\a}},
\]
where $x_{\a}\in O_{\a}$ is an arbitrary point and $d_{\b}=\dim(O_{\b})$. 
We call it the Kazhdan Lusztig polynomial associated to $\{\a, \b\}$.
\end{definition}

\remk In \cite{Z4} Theorem 1, Zelevinsky showed that the varieties $\line{O}_{\b}$ are locally
isomorphic to some Schubert varieties of type $A_{m}$, where $m=\deg(\b)$. 
Hence again by theorem \ref{teo: 2.3.9}, we know that $P_{\a, \b}$ 
is a polynomial in $q$.

\vspace{0.5cm}
Here, we briefly recall Zelevinsky's results in \cite{Z4}.   
Let $\varphi$ be a function in $\mathcal{S}$(cf. Def. \ref{def: 1.2.5})
such that $\supp(\varphi)\subseteq [1, r]$.
We consider the 
flag variety 
$$
\F(\varphi)=\{0=U^0\subset U^1\subset \cdots U^r=V_{\varphi}: \dim(U^i/U^{i-1})=\varphi(i), 1\leq i\leq r\}
$$

We fix the standard flag 
\[
 F_{\varphi}=\{0=V_{\varphi}^0\subseteq V_{\varphi}^1\cdots \subseteq V_{\varphi}^r: V_{\varphi}^i=
 V_{\varphi, 1}\oplus \cdots \oplus V_{\varphi, i}\}\in \F(\varphi).
\]
\begin{definition}
Let $\G(\varphi)$ be the subset of $\F(\varphi)$ containing the elements $(U^i:0\leq i\leq r)\in \F(\varphi)$
such that $U^i\supseteq V_{\varphi}^{i-1}$ for $i=1, \cdots, r$. 
\end{definition}

Zelevinsky defined a map $\tau: E_{\varphi}\rightarrow \G(\varphi)$, 
by associating to $T\in E_{\varphi}$ the element $\tau(T)=(U^{i}: 0\leq i\leq r)$ such that 
\[
U^i=\{(v_{1},\cdots, v_{r})\in V_{\varphi, 1}\oplus \cdots \oplus V_{\varphi, r}: v_{j+1}=T(v_{j}), j\geq i\}.
\]

\begin{teo}(cf.\cite{Z4}Theorem 1)
 The morphism $\tau$ is an open immersion into the Schubert variety $\mathcal{G}(\varphi)$. 
\end{teo}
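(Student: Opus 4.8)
The plan is to show that $\tau$ is (i) a morphism of varieties, (ii) injective with image landing inside $\G(\varphi)$, and then (iii) an open immersion, by identifying $\G(\varphi)$ with a Schubert variety and checking that $\tau$ restricts to an isomorphism onto the open dense cell. First I would verify that $\tau$ is well-defined: given $T\in E_\varphi$, the subspaces $U^i=\{(v_1,\dots,v_r): v_{j+1}=T(v_j),\ j\geq i\}$ form a flag because raising $i$ imposes fewer conditions, so $U^i\subseteq U^{i+1}$; and since $T$ has degree $+1$, the condition $v_{j+1}=T(v_j)$ for $j\geq i$ is compatible with the grading, giving $U^i\supseteq V_\varphi^{i-1}$, so $\tau(T)\in\G(\varphi)$. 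The dimension count $\dim(U^i/U^{i-1})=\varphi(i)$ follows because, going from $U^{i-1}$ to $U^i$, one drops the single equation $v_i=T(v_{i-1})$, and the free coordinate $v_i$ ranges over $V_{\varphi,i}$. That $T\mapsto\tau(T)$ is algebraic is clear since the $U^i$ are cut out by linear equations whose coefficients are polynomial (indeed linear) in the matrix entries of $T$.

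Next I would prove injectivity: from $\tau(T)$ one recovers $T$, because the graph of $T$ restricted to $V_{\varphi,j}\to V_{\varphi,j+1}$ is read off from $U^j\cap(V_{\varphi,j}\oplus V_{\varphi,j+1})$ modulo the lower part — more precisely, for $v\in V_{\varphi,j}$ there is a unique element of $U^j$ of the form $(0,\dots,0,v,T(v),*,\dots)$ and the $(j+1)$-st component is forced, so $T$ is determined. This shows $\tau$ is a bijection onto its image. To see the image is exactly an open subset of $\G(\varphi)$, I would identify $\G(\varphi)$ with a Schubert variety: the condition $U^i\supseteq V_\varphi^{i-1}$ says precisely that the flag $(U^i)$ lies in $\line{B wB}/B$ for the appropriate $w$ (the "anti-identity" permutation relative to the standard flag $F_\varphi$), so $\G(\varphi)$ is a Schubert variety in the partial flag variety $\F(\varphi)$. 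Then the locus where all the intersections $U^i\cap V_\varphi^j$ have the generic (smallest possible) dimension is an open dense cell, and I would check that this cell is exactly the image of $\tau$ and that $\tau$ is an isomorphism onto it — the inverse being the map that sends a flag in the cell to the endomorphism reconstructed as above, which is again given by rational formulas that are in fact regular on the cell.

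The main obstacle I expect is the bookkeeping in step (iii): precisely pinning down which permutation $w$ makes $\G(\varphi)=\line{C_w}$, and then verifying that the "generic intersection dimension" open condition carves out exactly $\tau(E_\varphi)$ rather than something larger or smaller. One has to match the linear-algebra description of $\tau(T)$ (a flag whose intersections with the standard flag are governed by the ranks $r_{ij}(T)$, which for the open orbit $O_{\a_{\max}}$ where $T=0$ take their extreme values) against the combinatorial description of Schubert cells via rank conditions on flag intersections. The key point is that $T=0$ maps to the flag $U^i=V_{\varphi,1}\oplus\cdots\oplus V_{\varphi,i-1}\oplus\{(v,\dots)\}$ which is the "most transverse" flag in $\G(\varphi)$, i.e. the big cell, and that as $T$ varies over $E_\varphi$ one sweeps out an affine space of the correct dimension $\sum_{i}\varphi(i)\varphi(i+1)$ matching $\dim C_w$; a dimension comparison together with the injectivity and the openness of the transversality condition then forces $\tau$ to be an open immersion onto the big cell of the Schubert variety $\G(\varphi)$. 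I would also invoke that $\tau$ is $G_\varphi$-equivariant for a suitable action on $\G(\varphi)$, so that the stratification of $E_\varphi$ by orbits $O_\a$ matches the Schubert stratification, which is what makes the statement useful downstream; but strictly for the stated theorem a dimension count plus injectivity plus the linear (hence regular) nature of $\tau$ and its inverse on the big cell suffices.
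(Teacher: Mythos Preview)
The paper does not prove this theorem; it is quoted from \cite{Z4}. So there is no in-paper argument to compare against, only your proposal to assess.

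Your overall architecture (well-definedness, injectivity, explicit inverse on an open locus) is the right one, but there is a real error in your identification of the image. You compute $\tau(0)$ and call it ``the most transverse flag in $\G(\varphi)$, i.e.\ the big cell''; in fact for $T=0$ the defining conditions $v_{j+1}=T(v_j)=0$ give $U^i=V_\varphi^i$, so $\tau(0)=F_\varphi$ is the \emph{standard} flag, which lies in the smallest (zero-dimensional) $B$-cell, not the big one. Relatedly, $O_{\a_{\max}}=\{0\}$ is the closed orbit, not the open one. Your proposed description of the image --- ``the locus where the intersections $U^i\cap V_\varphi^j$ have the generic (smallest possible) dimension'' --- is exactly the big $B$-cell, and $\tau(0)$ does not lie there, so the image cannot be that cell. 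More to the point, the very next proposition in the paper, $O_\b=Y_\b\cap E_\varphi$, shows that $\tau(E_\varphi)$ must meet \emph{every} Schubert cell $Y_\b$ of $\G(\varphi)$; it is not a single cell.

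The correct open condition is transversality to the \emph{opposite} flag: $\tau(E_\varphi)$ is the locus $\{(U^i)\in\G(\varphi): U^i\cap V_\varphi^{>i}=0\}$, where $V_\varphi^{>i}=\bigoplus_{j>i}V_{\varphi,j}$. This is the intersection of $\G(\varphi)$ with the big cell for the opposite Borel, an affine open neighbourhood of $F_\varphi$. One checks directly that $\tau(T)$ satisfies this transversality (if $v_1=\cdots=v_i=0$ and $v_{j+1}=T(v_j)$ for $j\ge i$, then all $v_j=0$), and conversely that on this locus each $U^i$ is the graph of a map $V_\varphi^i\to V_\varphi^{>i}$ vanishing on $V_\varphi^{i-1}$, from which the nestedness $U^i\subseteq U^{i+1}$ forces these graphs to come from a single degree-$+1$ endomorphism $T$. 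With this corrected description the explicit inverse you sketch goes through, and the dimension count $\dim E_\varphi=\sum_i\varphi(i)\varphi(i+1)=\dim\G(\varphi)$ is indeed the right consistency check.
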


In fact, for $\b$ a multisegment of weight $\varphi$, we can describe explicitly the image of $O_{\b}$
in terms of Schubert cells 
in $\G(\varphi)$. Let $\b= \sum_{1\leq i\leq j\leq r}b_{ij}[i,j], X^{\b}=(x_{ij})$ with 
\begin{align*}
x_{ij}=& b_{ij}, \text{ \quad for } i\leq j\\
x_{ij}=&0, \text{ \quad for } i>j+1\\
x_{i,i-1}=&\sum_{n\leq i-1,  i\leq m}b_{nm}.
\end{align*}  

\begin{example}\label{ex: 2.3.13}
Let $\varphi=\chi_{[1]}+2\chi_{[2]}+\chi_{[3]}$,$\a=[1, 2]+[2,3],~ \b=[1,3]+[2]$.
And $ X_{\a}=(x_{ij}^{\a}), ~X_{\b}=(x_{ij}^{\b})$ be the matrix such that

$$
X^{\a}=
\begin{pmatrix}
0 &1&0\\
1&0&1\\
0&1&0
\end{pmatrix},~
X^{\b}=
\begin{pmatrix}
0 &0&1\\
1&1& 0\\
0&1& 0
\end{pmatrix}
$$ 
\end{example}

\begin{definition}
Let $\b$ be a multisegment of weight $\varphi$ and $$X^{\b}=(x_{i,j})_{1\leq i, j\leq r}.$$
We define $Y_{\b}$ to be the set of flags $$(U^i: i=0,1,\cdots, r)\in\G(\varphi)$$ such that 
\[
\dim((U^i\cap V_{\varphi}^j)/(U^i\cap V_{\varphi}^{j-1}+U^{i-1}\cap V_{\varphi}^j))=x_{ij},
 \text{ for all } 1\leq i,j\leq r.
\]
\end{definition}

\begin{example}
Let $\a$ be the multisegment in example \ref{ex: 2.3.13}. We have 
$Y_{\a}$ be the set of flags $(U^i: i=0, 1, 2, 3)$ such that 
\begin{align*}
 U^0&=0;\\
 \dim(U^1\cap V^1_{\varphi})&=x_{11}^{\a}=0\Rightarrow U^1\cap V^1_{\varphi}=0;\\
 \dim(U^1\cap V^2_{\varphi})&=x_{12}^{\a}=1\Rightarrow U^1\subseteq V^2_{\varphi};\\
 \dim(U^2\cap V^1_{\varphi})&=x_{21}^{\a}=1\Rightarrow U^2\supseteq  V^1_{\varphi}.
\end{align*}
And 
\[
  \dim(U^2\cap V^2_{\varphi}/(U^2\cap V^1_{\varphi}+U^1\cap V^2_{\varphi}))=x_{22}^{\a}=0,
\]
which implies
\[
 U^2\cap V^2_{\varphi}=U^2\cap V^1_{\varphi}+U^1\cap V^2_{\varphi};
\]
hence $U^2\cap V_{\varphi}^2=V_{\varphi}^1+U^1$, which 
is of dimension 2. 
 Hence $Y_{\a}$ is the set of flags $(U^i: i=0, 1, 2, 3)$ satisfying 
 \[
  U^0=0,~ U^1\cap V^1_{\varphi}=0,~ U^2\cap V_{\varphi}^2=V_{\varphi}^1+U^1,~ U^3=V_{\varphi}^3.
 \]
\end{example}

\begin{prop}(cf. \cite{Z4} Theorem 1.)
We have $O_{\b}=Y_{\b}\cap E_{\varphi}$. 
\end{prop}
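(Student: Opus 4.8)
\emph{Strategy.} The plan is to reduce the statement to an identity between rank functions. By Proposition~\ref{prop: 4.2.3} together with antisymmetry of $\leq$, the orbit $O_{\b}$ is exactly $\{T\in E_{\varphi}:r_{ij}(T)=r_{ij}(\b)\text{ for all }i\leq j\}$. Since $\tau$ is injective, it suffices to show, for a fixed $T\in E_{\varphi}$ with $\tau(T)=(U^{i})_{0\leq i\leq r}$, that $\tau(T)\in Y_{\b}$ if and only if $r_{ij}(T)=r_{ij}(\b)$ for all $i\leq j$. I will freely use the description of the orbits from Proposition~\ref{prop: 2.2.4}: for any multisegment $\b=\sum b_{mm'}[m,m']$ one has $r_{ij}(\b)=\#\{(m,m'):b_{mm'}>0,\ m\leq i,\ m'\geq j\}$, with the conventions $r_{0,j}(-)=0$ and $r_{i,j}(-)=0$ for $j>r$.

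\emph{Step 1: the intersections $U^{i}\cap V_{\varphi}^{j}$.} From the formula defining $\tau$, every vector of $U^{i}$ has the shape $(v_{1},\dots,v_{i-1},v_{i},Tv_{i},\dots,T^{r-i}v_{i})$ with $v_{1},\dots,v_{i}$ arbitrary; in particular $\dim U^{i}=\sum_{l\leq i}\varphi(l)$ and $U^{i}\supseteq V_{\varphi}^{i-1}$. When $j\leq i-1$ one has $U^{i}\cap V_{\varphi}^{j}=V_{\varphi}^{j}$, and when $i\leq j\leq r$ such a vector lies in $V_{\varphi}^{j}$ precisely when its $(j+1)$-st component $T^{[i,j+1]}(v_{i})$ vanishes, the later ones being then automatic. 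Using $\dim\ker(T^{[i,j+1]})=\varphi(i)-r_{i,j+1}(T)$ this yields
\[
d(i,j):=\dim\bigl(U^{i}\cap V_{\varphi}^{j}\bigr)=\sum_{l\leq\min(i,j)}\varphi(l)-\varepsilon(i,j)\,r_{i,j+1}(T),
\]
where $\varepsilon(i,j)=1$ if $i\leq j$ and $\varepsilon(i,j)=0$ otherwise.

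\emph{Step 2: the defining quantities of $Y_{\b}$.} As $U^{i-1}\subseteq U^{i}$ and $V_{\varphi}^{j-1}\subseteq V_{\varphi}^{j}$, we have $(U^{i}\cap V_{\varphi}^{j-1})\cap(U^{i-1}\cap V_{\varphi}^{j})=U^{i-1}\cap V_{\varphi}^{j-1}$, whence
\[
\dim\Bigl(\frac{U^{i}\cap V_{\varphi}^{j}}{U^{i}\cap V_{\varphi}^{j-1}+U^{i-1}\cap V_{\varphi}^{j}}\Bigr)=d(i,j)-d(i,j-1)-d(i-1,j)+d(i-1,j-1).
\]
Inserting Step~1 and splitting into the ranges $j\geq i+1$, $j=i$, $j=i-1$ and $j\leq i-2$, the right-hand side collapses to an explicit $\Z$-linear combination of the $r_{pq}(T)$: it equals $r_{ij}(T)-r_{i-1,j}(T)-r_{i,j+1}(T)+r_{i-1,j+1}(T)$ for $j\geq i+1$, a similar expression involving in addition $\varphi(i)$ for $j=i$, the single rank $r_{i-1,i}(T)$ for $j=i-1$, and $0$ for $j\leq i-2$. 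Counting Jordan cells as above, the same combinations of the $r_{pq}(\b)$ give back exactly the entries of $X^{\b}$, namely $b_{ij}$ for $i\leq j$, then $\sum_{m\leq i-1,\,m'\geq i}b_{mm'}=r_{i-1,i}(\b)$ for $j=i-1$, and $0$ for $i>j+1$. Hence $r_{pq}(T)=r_{pq}(\b)$ for all $p\leq q$ implies $\tau(T)\in Y_{\b}$.

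\emph{Step 3: converse, and the main obstacle.} For the reverse implication put $s_{ij}=r_{ij}(T)-r_{ij}(\b)$. The hypothesis $\tau(T)\in Y_{\b}$ combined with Step~2 gives $s_{i-1,i}=0$ for every $i$ and $(s_{ij}-s_{i-1,j})-(s_{i,j+1}-s_{i-1,j+1})=0$ for $i\leq j\leq r$; together with the boundary values $s_{0,j}=0$ and $s_{i,j}=0$ for $j>r$, telescoping first in $j$ (so that $s_{ij}-s_{i-1,j}$ is independent of $j$, hence vanishes) and then in $i$ forces $s_{ij}=0$ for all $i\leq j$, i.e. $T\in O_{\b}$. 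I expect the only genuine difficulty to be the bookkeeping of the degenerate ranges in Steps~1 and~2 (the cases $j<i$, $j=i$, $j=r$, and the conventions under which $r_{0,j}$ and $r_{i,r+1}$ vanish) and the verification that the off-diagonal entry $x_{i,i-1}$ of $X^{\b}$ is produced correctly; everything else is linear algebra together with the Jordan-cell description of the rank functions, following Zelevinsky \cite{Z4}.
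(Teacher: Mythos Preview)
The paper does not actually prove this proposition: it is stated with the citation ``(cf.\ \cite{Z4} Theorem 1)'' and followed only by an illustrative example, so there is no argument in the paper to compare against. Your rank-function approach is the natural one and is essentially what Zelevinsky's original argument amounts to.

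Your computation is correct in substance. The identification of $U^{i}\cap V_{\varphi}^{j}$ via $\ker T^{[i,j+1]}$ in Step~1 is right, the four-term alternating sum in Step~2 reduces exactly to the second difference of the rank function as you claim (with the diagonal case $j=i$ handled by $r_{ii}(T)=\varphi(i)=r_{ii}(\b)$), and the telescoping in Step~3 is clean once one notes that the recursion $(s_{ij}-s_{i-1,j})=(s_{i,j+1}-s_{i-1,j+1})$ is valid on the full range $i\leq j\leq r$ and terminates at $j=r$ via $s_{i,r+1}=0$.

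One small slip: your formula $r_{ij}(\b)=\#\{(m,m'):b_{mm'}>0,\ m\leq i,\ m'\geq j\}$ counts distinct segment types rather than Jordan blocks; the correct expression is $r_{ij}(\b)=\sum_{m\leq i,\ m'\geq j}b_{mm'}$. This does not affect the logic, since the inclusion--exclusion in Step~2 and the entry $x_{i,i-1}=\sum_{n\leq i-1,\ m\geq i}b_{nm}$ are both stated with multiplicities anyway.
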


\begin{example}
 Again, let $\a=[1, 2]+[2,3]$. 
 Let $T\in E_{\varphi}\cap Y_{\a}$, then 
 we have $\tau(T)=(U^i: i=0, 1, 2, 3)$, satisfying 
 \[
  U^0=0,~ U^1\cap V^1_{\varphi}=0,~ U^2\cap V_{\varphi}^2=V_{\varphi}^1+U^1,~ U^3=V_{\varphi}^3.
 \] 
 By definition,  if we write $T=(T_1, T_2)$ such that  
 \[
 T_i: V_{\varphi, i}\rightarrow V_{\varphi, i+1}, \quad i=1,2,
 \]
 then 
 $$U^1=\{(v, T_1v, T_2T_1v)\in V_{\varphi}: v\in V_{\varphi, 1}\},$$
 and $U^1\cap V^1_{\varphi}=0$ is equivalent to $T_1v\neq 0$. 
 Also,  we have 
 $$U^2=\{(v_1, v_2, T_2v_2)\in V_{\varphi}: v_1\in V_{\varphi, 1}, v_2\in V_{\varphi, 2}\}.$$ 
 Note that $U^2\cap V_{\varphi}^2=V_{\varphi}^1+U^1$  is equivalent to the following conditions
 \[
 U^1\subseteq V_{\varphi}^2,\quad  U^2\nsupseteq V_{\varphi, 2}. 
  \]
We know that $U^1\subseteq V_{\varphi}^2$ is equivalent to the fact that 
 for any $v\in V_{\varphi, 1}$, $(v, T_1v, T_2T_1v)\in V_{\varphi, 2}$, hence 
 $ T_2T_1v=0$. Furthermore,  we know that $U^2\nsupseteq V_{\varphi, 2}$ is equivalent to the fact that there exits 
 $v\in V_{\varphi, 2}$ such that $(0, v, T_2v)\notin V_{\varphi, 2}$, hence 
 $T_2v\neq 0$. 
 Hence we obtain that $T\in E_{\varphi}\cap Y_{\a}$ is equivalent to the following facts
 \[
 T_1\neq 0, \quad T_2T_1=0, \quad T_2\neq 0.
 \]
 The latter  is the same as to say that $T\in O_{\a}$. 
\end{example}

\begin{definition}
 Let $B_{i}(\varphi)= \{j: \sum_{m\leq i-1}\varphi(m)< j\leq \sum_{m\leq i}\varphi(m)\}$.
$$
S^{\b}=\{w\in S_{\deg(\b)}: Card(w(B_{i}(\varphi))\cap B_{j}(\varphi))=x_{ij}, 1\leq i,j\leq r\}.
$$
We denote by $w(\b)$ the unique element in $S^{\b}$ of maximal length.
\end{definition}

\begin{example}
In the example \ref{ex: 2.3.13}, we have 
\[
 B_{1}(\varphi)=\{1\}, B_{2}(\varphi)=\{2,3\}, B_{3}(\varphi)=\{4\}.
\]  
Let $\a=[1, 2]+[2,3]$. Then by definition
\[
 S^{\a}=\{w\in S_4: Card(w(B_i(\varphi))\cap B_j(\varphi))=x_{ij}^{\a}, 1\leq i, j\leq 3\}
\]
Therefore, for $w\in S^{\a}$, we have 
\[
 w(1), w(4)\in\{2, 3\}, ~\{w(2), w(3)\}\cap \{2, 3\}=\emptyset, ~
\]
therefore, 
\[
 \{w(1), w(4)\}=\{2, 3\}, ~\{w(2), w(3)\}=\{1, 4\},
\]
hence 
\[
 w=(13)(24), \text{ or } w=(12)(34),
\]
compare the length, we have 
\[
 w(\a)=(13)(24).
\]
The same method shows that
\[
 w(\b)=(1423)
\]

\begin{figure}[!ht]
\centering
\includegraphics{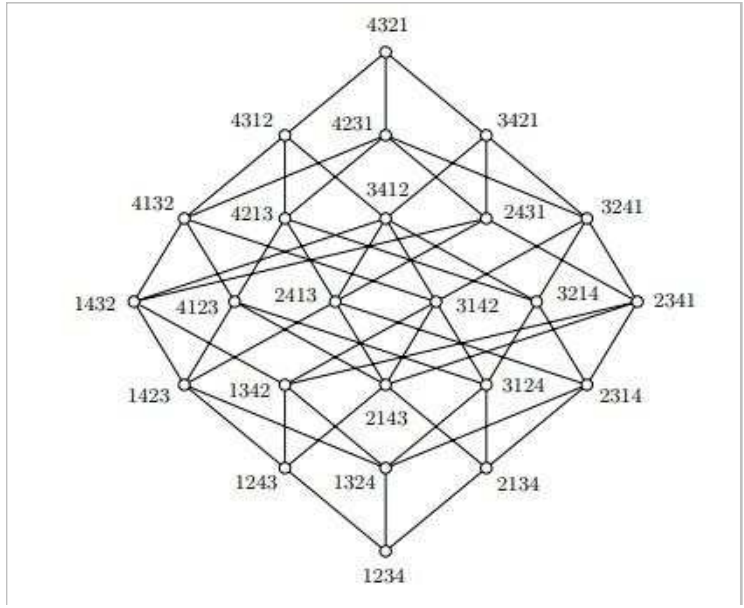}
\caption{\label{fig-multisegment12} Bruhat Order for $S_{4}$}
\end{figure} 
Note in the picture we denote a permutation
by its image.

\end{example}

\begin{teo}\label{cor: 6}(cf. \cite{Z4})
Let $\b'\geq \b$ such that $Y_{\b'}\subseteq \line{Y}_{\b}$, 
 we have
 \[
 P_{\b', \b}(q)=P_{w(\b'), w(\b)}(q).
 \]

\end{teo}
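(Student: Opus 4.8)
\textbf{Proof plan for Theorem \ref{cor: 6}.}
The plan is to transport the geometric situation on the orbital side to the flag-variety side via Zelevinsky's open immersion $\tau: E_\varphi \hookrightarrow \G(\varphi)$ and then compare intersection complexes locally. First I would record what the previous results give us: by the theorem of \cite{Z4} recalled above, $\tau$ identifies $E_\varphi$ with an open subvariety of the Schubert variety $\G(\varphi)$, and under this identification $O_\b = Y_\b \cap E_\varphi$, where $Y_\b$ is the locally closed piece of $\G(\varphi)$ cut out by the rank conditions $X^\b$. Moreover the preceding Proposition identifies $\line{Y}_\b$ (the closure taken in $\G(\varphi)$) with a union of Schubert cells, so that $\line{Y}_\b$ is itself a Schubert variety, namely $X_{w(\b)}$ up to the translation by the parabolic involved in passing from $\F(\varphi)$ to $GL_m/B_m$; here $m = \deg(\b)$ and $w(\b) \in S_m$ is the maximal-length element of $S^\b$. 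The hypothesis $Y_{\b'} \subseteq \line{Y}_\b$ then says precisely that $Y_{\b'}$ is one of the Schubert cells $C_{w(\b')}$ appearing in this union, i.e. $w(\b') \le w(\b)$ in the Bruhat order of $S_m$.

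Next I would argue that the intersection cohomology sheaves agree. The key point is that $\tau$ is an \emph{open} immersion, and the formation of $IC$ is local on the base: for an open immersion $j: U \hookrightarrow X$ one has $j^* IC(X) = IC(U)$ (with the appropriate shift), because the inductive truncation construction of $IC$ recalled in Definition~\ref{def: 4.1.5}'s vicinity only involves pushforward from larger strata and truncation, both of which commute with restriction to an open set, provided the open set is a union of strata --- which is the case here since $\tau$ carries the orbit stratification of $E_\varphi$ to the restriction of the Schubert-cell stratification. Therefore, for the point $x_{\a}$ (resp. $x_{\b'}$) lying in $O_{\b'} = Y_{\b'} \cap E_\varphi$, the stalk $\mathcal{H}^i(\line{O}_\b)_{x_{\b'}}$ is canonically isomorphic to $\mathcal{H}^i(\line{Y}_\b)_{x_{\b'}} = \mathcal{H}^i(X_{w(\b)})_{x_{w(\b')}}$, up to a global degree shift by $\dim Y_\b - \dim O_\b$, which is zero since $\tau$ is an open immersion of the same dimension. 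Here one must also check that $\line{O}_\b$ computed inside $E_\varphi$ is exactly $\line{Y}_\b \cap E_\varphi$; this follows from $\line{O}_\b = \bigsqcup_{\a \ge \b} O_\a$ together with $O_\a = Y_\a \cap E_\varphi$ and the compatibility of the closure relations, i.e. $\a \ge \b \Leftrightarrow Y_\a \subseteq \line{Y}_\b$.

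Then I would translate the shift conventions. By definition $P_{\b',\b}(q) = \sum_i q^{(i+d_\b)/2}\dim\mathcal{H}^i(\line O_\b)_{x_{\b'}}$ with $d_\b = \dim O_\b$, whereas $P_{w(\b'),w(\b)}(q) = \sum_i q^{(i+d_{w(\b)})/2}\dim \mathcal{H}^i(X_{w(\b)})_{x_{w(\b')}}$ with $d_{w(\b)} = \dim X_{w(\b)} = \ell(w(\b))$. Since $\tau$ restricts to an isomorphism $O_\b \xrightarrow{\sim} Y_\b$ and $Y_\b \cong C_{w(\b)}$ as varieties, we have $\dim O_\b = \ell(w(\b))$, so $d_\b = d_{w(\b)}$ and the two normalizing shifts coincide. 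Combined with the stalk isomorphism of the previous step, this gives $P_{\b',\b}(q) = P_{w(\b'),w(\b)}(q)$ term by term.

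\textbf{Main obstacle.} The delicate point is the bookkeeping around closures and the parabolic: $\G(\varphi)$ is a Schubert variety in a \emph{partial} flag variety $\F(\varphi)$, not in $GL_m/B_m$ directly, so to invoke the $S_m$-indexed Kazhdan--Lusztig polynomial one must lift through the projection $GL_m/B_m \to \F(\varphi)$ and check that choosing the maximal-length representative $w(\b)$ of $S^\b$ makes $X_{w(\b)}$ map birationally onto $\line Y_\b$ with matching intersection cohomology (this is where smoothness of the fibers of the projection over the relevant cells enters, guaranteeing the $IC$ sheaves pull back correctly). Once the dictionary $\b \leftrightarrow w(\b)$, $Y_\b \leftrightarrow C_{w(\b)}$, $\line Y_\b \leftrightarrow X_{w(\b)}$ is set up carefully --- which is essentially the content of \cite{Z4} that we are allowed to cite --- the rest is the formal open-immersion argument above. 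I expect the closure-compatibility verification $\a \ge \b \Leftrightarrow Y_\a \subseteq \line Y_\b \Leftrightarrow w(\a) \le w(\b)$ to be the part requiring the most care, but it is already implicit in the results quoted.
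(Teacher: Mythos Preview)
The paper does not supply its own proof of this theorem: it is quoted as a result of Zelevinsky \cite{Z4} and stated without argument. Your outline is essentially the correct reconstruction of that argument --- transport along the open immersion $\tau$, locality of $IC$, and the passage from the partial flag variety $\F(\varphi)$ to $GL_m/B_m$ --- so there is nothing in the paper to compare against beyond the citation.

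One point in your write-up needs correction. You assert $Y_\b \cong C_{w(\b)}$ and hence $\dim O_\b = \ell(w(\b))$, but this is false: $Y_\b$ sits in the partial flag variety $\F(\varphi)=GL_m/P_\varphi$ while $C_{w(\b)}$ sits in $GL_m/B_m$, and their dimensions differ by $\ell(w_{P_\varphi})$, the length of the longest element of the Levi Weyl group. The correct link is through the smooth projection $\pi:GL_m/B_m\to GL_m/P_\varphi$: since $S^\b$ is the double coset $W_{P_\varphi}w(\b)W_{P_\varphi}$, one has $\pi^{-1}(\overline{Y}_\b)=X_{w(\b)}$, and smoothness of $\pi$ gives $IC(X_{w(\b)})=\pi^*IC(\overline{Y}_\b)[\ell(w_{P_\varphi})]$. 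This shift exactly cancels the discrepancy $\ell(w(\b))-\dim Y_\b=\ell(w_{P_\varphi})$ between the two normalizations, and the polynomial identity follows. You gesture at this in your ``main obstacle'' paragraph, but the map is a smooth fibration, not a birational morphism as you write; the shift is nonzero and must be tracked rather than declared to vanish.
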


\begin{teo}(\cite{Z3}, \cite{CG})\label{teo: 4.1.5}
 Let $\mathcal{H}^{i}(\line{O}_{\b})_{\a}$ denote the stalk at a point
 $x\in O_{\a}$ of the $i$-th intersection cohomology sheaf of the variety $\line{O}_{\b}$.
 Then 
 \[
  m(\b,\a)=P_{\b, \a}(1).
 \]
 \end{teo}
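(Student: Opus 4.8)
The statement $m(\b,\a)=P_{\b,\a}(1)$ is the $p$-adic analogue of the Kazhdan--Lusztig conjecture, and the cleanest route is to deduce it from the general multiplicity formula of Chriss--Ginzburg (\cite{CG}, Chapter 8) rather than to reprove everything from scratch. The first step is to set up the geometric dictionary already assembled in this chapter: to a multisegment $\a$ of weight $\varphi=\varphi_\a$ one associates the orbit $O_\a\subseteq E_\varphi$, and by Proposition~\ref{prop: 2.2.4} together with the closure relation $\line O_{\b}=\coprod_{\a\ge\b}O_{\a}$ the poset $S(\a)$ is realized geometrically; via $\tau$ and Theorem~\ref{cor: 6} this matches up with Schubert varieties of type $A_m$, $m=\deg(\b)$, so the polynomials $P_{\b,\a}$ coincide with genuine Kazhdan--Lusztig polynomials $P_{w(\b),w(\a)}$ and in particular are polynomials in $q$ (using Theorem~\ref{teo: 2.3.9}).

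The second step is to recall the representation-theoretic side in the language of the Hecke algebra / Grothendieck group. One forms the $\Z[q^{\pm 1/2}]$-module with basis the $\pi(\a)$ (equivalently the standard modules indexed by multisegments) and the basis given by the $L_\a$; by Corollary~\ref{cor: 1.2.3} and the upper-triangularity in Proposition~7.1 of \cite{Z2} (the fact that $m(\b,\a)\ne 0\Leftrightarrow \b\le\a$ and $m(\a,\a)=1$), the change-of-basis matrix is unitriangular with respect to the order $\le$ on $\O$. The key input is then Ginzburg's theorem that the geometrically defined basis — the classes of the IC sheaves $IC(\line O_\b)$ pushed through the convolution/Jacquet functor machinery — is exactly the $L_\b$, with transition coefficients governed by the stalks $\mathcal H^i(IC(\line O_\b))_{\a}$; specializing at $q=1$ and matching the two unitriangular systems forces $m(\b,\a)=P_{\b,\a}(1)$.

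Concretely I would carry it out as follows: (i) invoke the Deligne--Langlands/Zelevinsky correspondence in the form used by Chriss--Ginzburg — the category of unipotent representations of $GL_n(F)$ is equivalent, at the level of Grothendieck groups with their filtrations, to a category of $G_\varphi$-equivariant perverse sheaves on $\bigsqcup_\varphi E_\varphi$, under which $\pi(\a)$ corresponds to the (shifted) constant sheaf on $\line O_\a$ and $L_\a$ to $IC(\line O_\a)$; (ii) read off from the decomposition of the constant sheaf on $\line O_\a$ into IC sheaves the identity $[\pi(\a)] = \sum_{\b\le \a} P_{\b,\a}(1)\,[L_\b]$ in $\mathcal R$; (iii) compare with $\pi(\a)=\sum_\b m(\b,\a)L_\b$ and use that both expansions are unitriangular for $\le$ to conclude coefficient-by-coefficient. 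The reduction to the Zelevinsky line of the trivial representation (Bernstein decomposition plus types, \cite{MV}) is what makes $\varphi_\a$ and the orbits $E_\varphi$ available in the first place, and the passage through Theorem~\ref{cor: 6} is what guarantees the $P_{\b,\a}$ are honest KL polynomials.

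\textbf{Main obstacle.} The real content is not any of the combinatorial bookkeeping but the identification in step (i)–(ii): that the \emph{representation-theoretic} multiplicities $m(\b,\a)$ and the \emph{sheaf-theoretic} IC stalk dimensions agree. This is precisely the hard theorem of Chriss--Ginzburg (and, in the affine Hecke algebra guise, of Ariki \cite{A}); it rests on Ginzburg's geometric construction of the Hecke/affine Hecke module via Borel--Moore homology of Steinberg-type varieties and on the decomposition theorem. I would not reprove it — I would cite \cite{CG}, Chapter 8 — but I would need to check carefully that the normalizations (the shift $d_\b=\dim O_\b=\ell(w(\b))$ in the definition of $P_{\a,\b}$, the choice of well-ordered representative for $\a$, and the direction of the closure order, $\line O_\b=\coprod_{\a\ge\b}O_\a$) are the ones for which the Chriss--Ginzburg formula yields $m(\b,\a)=P_{\b,\a}(1)$ with the indices in exactly the stated position, rather than the transpose or an inverse KL polynomial.
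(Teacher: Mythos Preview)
Your proposal is essentially correct, but there is no ``paper's own proof'' to compare it to: the paper simply states Theorem~\ref{teo: 4.1.5} with the attributions \cite{Z3}, \cite{CG} and does not give a proof, treating it as an imported result. Your sketch --- reduce to the unipotent block, identify $\pi(\a)$ with the constant sheaf and $L_\a$ with $IC(\line O_\a)$ under the Chriss--Ginzburg equivalence, and read off the multiplicities from the IC stalks --- is exactly the argument that \cite{CG}, Chapter~8 packages, and your caveat about checking normalizations (shift by $d_\b$, direction of the order) is well placed. So you have supplied more than the paper does; nothing is missing.
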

\remk The intersection cohomology is nonzero only if $i+\dim(O_{\b})$ is even.

Hence $m(\b,\a)$ is the value
at $v=1$ of a certain Kazhdan Lusztig polynomial for the symmetric group $S_{m}$ with
$m=\deg (\b)$.

\remk 
Combining with theorem \ref{cor: 6}, this theorem gives a complete
calculation of the coefficients $m(\b, \a)$. But as we have seen,
this often involves elements in a huge symmetric group, which
is too clumsy. Moreover, 
another difficulty arise from the description
of the element $w(\b)$, which is not explicit.

\remk
In this chapter, 
for symmetric multisegments $\a$ and $\b$, 
we will give more concrete description about the coefficient $m_{\b, \a}$
in terms of elements in $S_{n}$ with $n$ equals to the number of segments
contained in $\a$, cf. corollary \ref{cor: 2.5.9}.
For general case, we will give use the reduction method from
chapter 4 to give a more concrete description.

\section{Geometry of Symmetric Nilpotent Orbits} 

For the moment, we consider a special case of symmetric multisegments, 
we assume that
\[                       
\a_{\Id}=\sum_{i=1}^{n}[i, n+i-1],\quad \varphi=\sum_{\Delta\in \a}f_{\a}(\Delta)\chi_{\Delta}.                      
\]
We remind that we already constructed a bijection
\[
 \Phi: S_{n}\rightarrow S(\a_{\Id})
\]
such that $\Phi(\Id)=\a_{\Id}$.

\begin{definition}
 Let 
 $$O_{w}=O_{\Phi(w)},\quad \hbox{ and }\quad  O_{\varphi}^{\sym}=\coprod_{w\in S_{n}} O_{w}\subseteq E_{\varphi}.$$
 Also, let $$\line{O}_{w}^{sym}=\line{O}_{w}\cap O_{\varphi}^{\sym}.$$
\end{definition}

\begin{definition}
 Let 
 \[
 \xymatrix
 { 
 E_{\varphi}\ar[d]^{ p_{\varphi}}&\hspace{-1cm} =M_{2,1}\times \cdots M_{n-1, n-2}\times M_{n,n-1}\times M_{n-1,n}
 \times \cdots \times M_{1,2}\\
 Z_{\varphi}&\hspace{-2.8cm}: =M_{2,1}\times \cdots M_{n-1, n-2}\times M_{n-1,n}\times \cdots \times M_{1,2}.
 }
 \]
\end{definition}
be the natural projection
with fiber $M_{n,n-1}$.

Now we want to describe the fiber of the 
restriction $p_{\varphi}|_{O_{\varphi}^{\sym}}$.
\begin{definition}
 We define $GL_{n,n-1}$ to be the subset of $M_{n, n-1}$ consisting
of the matrices of rank $n-1$.

\end{definition}

We denote by $p_{n}: M_{n, n}\twoheadrightarrow M_{n, n-1}$ the morphism
of forgetting the last column of elements in $M_{n, n}$.

\remk
Now by restriction to $GL_{n}$, we have the morphism
\[
 p_{n}: GL_{n}\twoheadrightarrow GL_{n,n-1},
\]
which satisfies the property that $p_n(g_1g_2)=g_1p_n(g_2)$ for $g_1, g_2\in GL_n$.

\begin{prop}\label{prop: 4.3.6}
The morphism 
\[
  p_{n}: GL_{n}\twoheadrightarrow GL_{n,n-1},
\]
is a fibration. 
Furthermore, it induces a bijection 
\[
p_{n}: B_{n}\backslash GL_{n}/B_{n}\rightarrow B_{n}\backslash GL_{n,n-1}/B_{n-1}.
\]
\end{prop}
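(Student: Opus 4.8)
The plan is to treat the two assertions separately. For the fibration statement, I would first recognize $GL_{n,n-1}\subseteq M_{n,n-1}$ as the open subvariety of full-rank $n\times(n-1)$ matrices, which is the (total space of the) Stiefel-type variety of injective linear maps $\C^{n-1}\hookrightarrow \C^n$; then $GL_n\to GL_{n,n-1}$, $g\mapsto p_n(g)$ (forgetting the last column) is exactly the map sending an invertible matrix to its first $n-1$ columns. Given a full-rank $M\in GL_{n,n-1}$, the fiber $p_n^{-1}(M)$ consists of all ways to adjoin an $n$-th column making the result invertible, i.e. the fiber is $\C^n\setminus \mathrm{span}(M)\cong \C^n\setminus \C^{n-1}$, a homotopy-constant affine complement. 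To see it is Zariski-locally trivial, I would cover $GL_{n,n-1}$ by the open sets where a fixed $(n-1)\times(n-1)$ minor is nonzero; on each such chart one can choose a regular section (e.g. complete the given columns by the appropriate standard basis vector after a triangular change of coordinates), and local triviality $p_n^{-1}(U)\cong U\times(\C^n\setminus\C^{n-1})$ follows by sending $(M,v)$ to $[M\mid v + (\text{section of }M)]$ or similar. This also makes transparent the left-equivariance $p_n(g_1g_2)=g_1 p_n(g_2)$ noted in the remark, which is immediate since left multiplication acts column-by-column.

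For the bijection on double cosets, I would argue as follows. The map $p_n$ is right $B_{n-1}$-equivariant in the sense that right multiplication of $g\in GL_n$ by $\mathrm{diag}(b,1)$ with $b\in B_{n-1}$ acts on the first $n-1$ columns exactly as right multiplication by $b$; combined with left $B_n$-equivariance, $p_n$ descends to a well-defined map $\bar p_n\colon B_n\backslash GL_n/B_n \to B_n\backslash GL_{n,n-1}/B_{n-1}$ — here I use that $B_n$ contains the parabolic with Levi block $GL_{n-1}\times GL_1$, so the $B_n$ on the right of $GL_n$ maps onto enough of $B_{n-1}$ and also absorbs the last column's ambiguity. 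To show $\bar p_n$ is a bijection, I would use the Bruhat decomposition $GL_n=\coprod_{w\in S_n}B_n w B_n$ from the cited Proposition (page 148 of \cite{C}) together with the description $B_n\backslash GL_{n,n-1}/B_{n-1}$ as a set of $(n-1)$-element "partial permutation" data: surjectivity because every full-rank $n\times(n-1)$ matrix can be brought by left $B_n$ and right $B_{n-1}$ into a partial-permutation-matrix normal form, which lifts to a genuine permutation matrix by filling in the missing row/column; injectivity because two permutations with the same first-$(n-1)$-column data (up to the Bruhat double-coset relations) differ only in where the last column sits, and the relation generated by $B_n$ on both sides forces them into the same $B_n\backslash GL_n/B_n$ class. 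Concretely I would set up the explicit bijection $B_n\backslash GL_n/B_n\leftrightarrow S_n$ and $B_n\backslash GL_{n,n-1}/B_{n-1}\leftrightarrow \{$injections $\{1,\dots,n-1\}\hookrightarrow\{1,\dots,n\}\}$ and check $\bar p_n$ becomes the obvious "restrict the permutation, remember only the image" map, which is a bijection because a permutation of $\{1,\dots,n\}$ is determined by its restriction to $\{1,\dots,n-1\}$.

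The main obstacle I expect is the bookkeeping in the double-coset statement: one must be careful about exactly which Borel acts on which side and verify that the $B_n$ acting on the right of $GL_n$ genuinely surjects (through $p_n$) onto $B_{n-1}$ while simultaneously killing the freedom in the last column — i.e. that no information is lost \emph{and} none is spuriously identified. Phrasing everything via the Bruhat normal forms (permutation matrices on the $GL_n$ side, partial permutation matrices on the $GL_{n,n-1}$ side) is the cleanest way to make this rigorous and side-steps delicate direct manipulation of cosets; the fibration part, by contrast, is routine once one identifies the fiber as an affine-space complement and writes down local sections over the standard minor charts.
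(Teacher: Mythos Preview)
Your proposal is correct and, for the double-coset bijection, follows essentially the same path as the paper: both arguments hinge on the observation that right multiplication by $B_n$ factors as a piece acting only on the last column (killed by $p_n$) times the $B_{n-1}$ action on the first $n-1$ columns, and that a permutation $w\in S_n$ is recovered uniquely from $p_n(w)$. The paper makes this explicit by writing $b_2=b_3v$ with $b_3\in GL_{n-1}$ and $v-\Id$ supported on the last column, so that $p_n(b_1wb_2)=b_1\,p_n(w)\,b_3$; your phrasing in terms of partial-permutation normal forms and injections $\{1,\dots,n-1\}\hookrightarrow\{1,\dots,n\}$ is the same content. (One small slip: $B_n$ is \emph{contained in} the standard parabolic with Levi $GL_{n-1}\times GL_1$, not the other way around; your subsequent sentence shows you are using the correct inclusion.)

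For the fibration assertion the routes differ. You exhibit explicit Zariski-local sections over the charts where a fixed $(n-1)\times(n-1)$ minor is nonzero, giving a direct trivialization with fibre $\C^n\setminus\C^{n-1}$. The paper instead notes that $p_n$ is left $GL_n$-equivariant and $GL_n$ acts transitively on $GL_{n,n-1}$, so $GL_{n,n-1}\simeq GL_n/H$ for $H$ the stabilizer of $p_n(\Id)$, and then invokes Serre (\cite{S}, proposition~3) to conclude \'etale-local triviality of the quotient map. Your argument is more self-contained and yields Zariski-local triviality directly; the paper's is more conceptual and matches the style used again later (e.g.\ in the proof of proposition~\ref{prop: 4.6.10}).
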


\begin{proof}
To see that it is locally trivial, 
note that $p_n$ is $GL_n$ equivariant with $GL_n$ acting by multiplication
from the left. Since $GL_n$ acts transitively on itself, it acts also 
transitively on $GL_{n, n-1}$. Now $p_n$ is equivariant implies that 
all the fibers of $p_n$ are isomorphic. 
Let $H$ be the stabilizer of $p_n(\Id)$, then 
$GL_{n, n-1}\simeq GL_n/H$, it is a \'etale locally trivial 
fibration according to Serre \cite{S} proposition
 3.
 By Bruhat decomposition, every $g\in GL_{n}$ admits a decomposition
\[
 g=b_{1}wb_{2}, \quad b_{i}\in B_{n}, i=1,2, \quad w\in S_{n},
\]
here we identify $S_n$ with the set of permutation matrices in $GL_n$.
We can decompose $b_{2}=b_{3}v$, where $b_{3}\in GL_{n-1}$, which is identified 
with the direct summand in the Levi subgroup $GL_{n-1}\times \C^{\times}$,
and $v-\Id$ only contains non zero elements
in the last column, 
by definition, 
\[
 p_{n}(g)=b_{1}p_{n}(w)b_{3}.
\]
We obtain that $p_n$ induces 
\[
p_{n}: B_{n}\backslash GL_{n}/B_{n}\rightarrow B_{n}\backslash GL_{n,n-1}/B_{n-1}.
\]

It is bijective
because given $p_{n}(w)$, there is a unique way to reconstruct an
element which belongs to $S_{n}$.
\end{proof}

\begin{teo}\label{teo: 4.3.7}
The morphism
$$p_{\varphi}|_{O_{\varphi}^{\sym}}$$
is smooth with fiber $GL_{n,n-1}$. 
Moreover, the morphism $p_{\varphi}|_{O_{w}}: O_{w}\rightarrow p_{\varphi}(O_{\varphi}^{\sym})$ 
is surjective with fiber $B_{n}p_{n}(w)B_{n-1}$.
\end{teo}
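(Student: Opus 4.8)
The plan is to analyze the map $p_\varphi$ restricted to the symmetric locus by using the explicit description of $O_\varphi^{\sym}$ via the open immersion $\tau$ of Theorem (Zelevinsky's) into the Schubert variety $\G(\varphi)$, together with Proposition \ref{prop: 4.3.6}. First I would set up coordinates: write $T = (T_2, \dots, T_{n-1}, T_n, T_{n-1}', \dots, T_1') \in E_\varphi$ according to the factorization
\[
E_\varphi = M_{2,1}\times\cdots\times M_{n,n-1}\times M_{n-1,n}\times\cdots\times M_{1,2},
\]
so that $p_\varphi$ forgets the single factor $T_n \in M_{n,n-1}$, the map $V_{\varphi,n-1}\to V_{\varphi,n}$. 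Since $\varphi_{\a_{\Id}}$ is the weight of a symmetric multisegment with $n$ segments, all graded pieces in the "middle" have dimension $n$; the claim that the fiber of $p_\varphi|_{O_\varphi^{\sym}}$ is $GL_{n,n-1}$ should come down to showing that for $T \in O_\varphi^{\sym}$ the remaining data forces $T_n$ to have maximal rank $n-1$, and conversely that any rank-$(n-1)$ choice of $T_n$ keeps $T$ in the symmetric locus.

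Next I would identify the relevant rank conditions. By the lemma computing $r_{i,j+n-1}(w)=\#\{k\le i: w(k)\ge j\}$ and Proposition \ref{prop: 4.2.3}, membership of $T$ in $O_{\Phi(w)}$ is governed entirely by the ranks $r_{ij}(T)$ of the composite maps $T^{[i,j]}$. The point is that the ranks involving the factor $T_n$ (i.e. those $r_{ij}$ with $i\le n \le j$, translated appropriately) combine with the fixed "outer" data to pin down $T_n$ up to the action of the relevant parabolic/Borel, which is exactly a Bruhat cell $B_n p_n(w) B_{n-1}$ inside $GL_{n,n-1}$; here I invoke Proposition \ref{prop: 4.3.6} to pass between $B_n\backslash GL_n/B_n$ and $B_n\backslash GL_{n,n-1}/B_{n-1}$. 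To get smoothness of $p_\varphi|_{O_\varphi^{\sym}}$ with fiber $GL_{n,n-1}$, I would argue that $O_\varphi^{\sym}$ is a union of the $O_w$, each fibering over $p_\varphi(O_\varphi^{\sym})$ with fiber a Bruhat cell $B_n p_n(w)B_{n-1}$, and that these cells assemble (as $w$ ranges over $S_n$) into all of $GL_{n,n-1} = \coprod_w B_n p_n(w) B_{n-1}$ via the Bruhat decomposition of $GL_{n,n-1}$ transported from $GL_n$ by $p_n$; local triviality follows as in Proposition \ref{prop: 4.3.6} from $GL_n$-equivariance and Serre's result on quotients.

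The main obstacle I expect is the bookkeeping that shows the "outer" data (the factors other than $T_n$) is genuinely free, i.e. that $p_\varphi(O_\varphi^{\sym})$ is a smooth variety and that over it the fiber is \emph{exactly} $GL_{n,n-1}$ and not some proper subvariety — equivalently, that no rank constraint among the $r_{ij}(T)$ for $T\in O_\varphi^{\sym}$ constrains $T_n$ beyond requiring $\rank(T_n) = n-1$. This requires carefully checking, using the symmetry hypothesis $\max b(\Delta_i)\le \min e(\Delta_i)$ (which forces $b(\a_{\Id})=b(\b)$ and $e(\a_{\Id})=e(\b)$ for all $\b\in S(\a_{\Id})$, as in the proof of Proposition \ref{teo: 2.3.2}), that every segment of every $\Phi(w)$ genuinely straddles the middle index, so that the Jordan structure across $V_{\varphi,n-1}\to V_{\varphi,n}$ is that of a full-rank map; the finer Bruhat-cell identification of the fiber of $p_\varphi|_{O_w}$ is then the combinatorial heart, matching $w\in S_n$ with the double coset $B_n p_n(w) B_{n-1}$ compatibly with the $O_w \leftrightarrow w$ labelling. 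Once this compatibility is established, surjectivity of $p_\varphi|_{O_w}$ onto $p_\varphi(O_\varphi^{\sym})$ and smoothness of $p_\varphi|_{O_\varphi^{\sym}}$ follow formally.
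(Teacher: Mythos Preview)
Your plan for the fiber identification is in the right spirit but takes a different and harder route than the paper, and your smoothness argument is more complicated than necessary. The paper disposes of smoothness in one line: $p_\varphi : E_\varphi \to Z_\varphi$ is a linear projection of affine spaces (hence smooth), and $O_\varphi^{\sym}$ is open in $E_\varphi$ because $S(\a_{\Id})$ is downward-closed in $S(\varphi)$; there is no need to assemble Bruhat cells first, and Zelevinsky's immersion $\tau$ plays no role. For the fiber, the paper does not argue via the rank conditions $r_{ij}(T)$. Instead it writes down explicit representatives $e_w \in O_w$ on a fixed graded basis of $V_\varphi$, chosen so that $p_\varphi(e_w) = p_\varphi(e_{\Id})$ for every $w$ (the $e_w$ differ only in the middle component $V_{\varphi,n-1} \to V_{\varphi,n}$); surjectivity of each $p_\varphi|_{O_w}$ is then immediate by $G_\varphi$-equivariance. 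The key step is a stabilizer computation: the subgroup of $G_\varphi$ fixing $p_\varphi(e_{\Id})$ is shown, by an elementary induction on graded pieces using injectivity of the outer components $e_i$, to be exactly $B_n \times B_{n-1}$ sitting in the degree-$n$ and degree-$(n{-}1)$ factors. This identifies the fiber over the basepoint with a single $(B_n \times B_{n-1})$-orbit in $GL_{n,n-1}$, and the explicit form of $e_w$ pins it down as $B_n p_n(w) B_{n-1}$. Your rank-condition approach would eventually need this same stabilizer fact (that the fixed outer data put the induced flags in standard position), so that computation is the organizing step missing from your outline; once you have it, the explicit $e_w$ make the Bruhat-cell matching immediate without tracking composite ranks.
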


\begin{proof}
Note that smoothness follows from that $p_{\varphi}: E_{\varphi}\rightarrow Z_{\varphi}$ 
is smooth and that $O_{\varphi}^{\sym}$ is open in $E_{\varphi}$. 
To see the rest of the properties, we fix an element $e_{w}$ in 
each orbit $O_{w}$ as follow.
Let $(v_{ij})(i=1, \cdots, 2n-1, j=1, \cdots, \varphi(i))$ be a basis of $V_{\varphi, i}$, 
and an element $e_{w}$
satisfying
\begin{displaymath}
    \left\{ \begin{array}{cc}
            e_{w}(v_{ij})&\hspace{-0.5cm}=v_{i+1,j}, \quad\text{ for } i<n-1\\
            e_{w}(v_{n-1, j})&\hspace{-3 cm}=v_{n, w(j)}, \\
            e_{w}(v_{ij})&\hspace{-0.5 cm}=v_{i+1,j-1}, \quad\text{ for } i\geq n.
           \end{array}\right.,
\end{displaymath}
here we put $v_{i,0}=0$. 
\begin{example} 
Let $w=(1,2)$, then by the strategy in the proof, 
$e_{w}$ is given by the following picture:
\begin{figure}[!ht]
\centering
\includegraphics{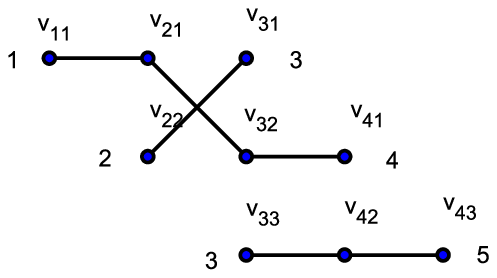}
\caption{\label{fig-multisegment13} }{Construction of $e_{w}$ in case $n=3$}
\end{figure} 
\end{example}
\vspace{0.5cm}
We claim that $e_{w}\in O_{w}$. 
% Since for $i<n$, $T_{w}|_{V_{\varphi, i}}$ is injective, and
% for $i\geq n$, $T_{w}|_{V_{\varphi, i}}$ is surjective, and remember that
% \[
%  (g_{1}, \cdots, g_{2n-1}).(v_{1}, \cdots, v_{2n-1})=(g_{2}v_{1}g_{1}^{-1}, 
%  g_{3}v_{2}g_{2}^{-1}, \cdots, g_{2n-1}v_{2n-2}g_{2n-2}^{-1}),
% \]
% with $v_{i}\in V_{\varphi, i}$.
In fact, it suffices to observe that 
\[
 e_{w}: v_{ii}\rightarrow \cdots\rightarrow v_{n-1,i}\rightarrow v_{n, w(i)}\rightarrow v_{n+1, w(i)-1}\rightarrow \cdots 
 v_{n+w(i)-1, 1},
\]
which by proposition
\ref{prop: 2.2.4}, implies that the multisegment indexing $e_{w}$ contains $[i,w(i)+n-1]$
for all $i=1, \cdots, n$, 
hence it must be $\Phi(w)$. 
Note that, by definition, we have 
\[
 p_{\varphi}(e_{\Id})=p_{\varphi}(e_{w}), ~\text{for all  }w\in S_{n}.
\]
Since $p_{\varphi}$ is compatible with the action of $G_{\varphi}$, 
we get 
\[
 p_{\varphi}(O_{\varphi}^{\sym})=p_{\varphi}(O_{w}),~ \text{for all }w\in S_{n},
\]
which implies that $p|_{O_{w}}$ is surjective. Now it remains to characterize
its fiber.
Let $T'\in p_{\varphi}(O_{\varphi}^{\sym})$,  then $p_{\varphi}^{-1}(T')\simeq M_{n, n-1}$
in $E_{\varphi}$. Moreover, for $T=(T_1, \cdots, T_{2n-2})\in p_{\varphi}^{-1}(T')$, then $T\in O_{\varphi}^{\sym}$ 
if and only if 
\[
T_{n-1}\in GL_{n, n-1}.
\]
Therefore, the map $T\mapsto T_{n-1}$ induces 
\[
p_{\varphi}^{-1}(T')\cap O_{\varphi}^{\sym}\simeq GL_{n, n-1}.
\]

Consider the variety $p_{\varphi}^{-1}(T')\cap O_w$. Note that 
since $G_{\varphi}$ acts transitively on  $p_{\varphi}(O_{\varphi}^{\sym})$, we may assume that $T'=p_{\varphi}(e_{\Id})$. 

\begin{lemma}\label{lem: 4.3.8}
The set of $f_{w}\in O_{w}$ satisfying 
\begin{displaymath}
    \left\{ \begin{array}{cc}
            f_{w}(v_{ij})&=v_{i+1,j}, \quad \text{ for } i<n-1\\
            f_{w}(v_{ij})&\hspace{-0.5cm}=v_{i+1,j-1}, \quad \text{ for } i\geq n.
           \end{array}\right.
\end{displaymath}
is in bijection with $B_{n}p_{n}(w)B_{n-1}$ via $p_{\varphi}^{-1}(p_{\varphi}(e_{\Id}))\cap O_{\varphi}^{\sym}\simeq GL_{n, n-1}$.
\end{lemma}

\begin{proof}
% First of all, we show that there exists an element in $O_{w}$ 
% satisfying the conditions in the lemma.
% \begin{description}
% \item[(1)] 
% The action of 
% $B_{n}\times B_{n-1}$ on $f_{w}$ is given by for $(g,g')\in B_{n}\times B_{n-1}$,
% \[
%  (1, \cdots, 1, g, g', 1, \cdots, 1).f_{w}.
% \]
% We explain how to construct the set $f_{w}$ for $w\in S_{n}$.
% In fact, we pick up an element $T\in O_{w}$. We show inductively
% how to obtain $T\in O_{w}$ such that 
% \[
%  T(v_{ij})=v_{i+1,j}, \quad \text{ for } i<n-1.
% \]
% For $i<n-1$, suppose we already have 
% \[
%   T(v_{i-1,j})=v_{ij},
% \]
% if $T(v_{ij})=v_{i+1,j}$,  
% then go to $i+1$, if not, there exists $g\in G_{i+1}$, such
% that $g(T(v_{i,j}))=v_{i+1,j}$ for $j=1,\cdots, i+1$. 
% Then we replace $T$ by $g.T$ and go to $i+1$, here
% $g$ is regarded as an element in $G_{\varphi}$. 
% 
% \item[(2)]The same method applies to the construction of $T\in O_{w}$ such that 
% \[
% T(v_{ij})=v_{i+1,j-1}, \quad \text{ for } i\geq n.
% \]
% In detail, suppose we already have for $i\geq n$
% \[
%  T(v_{i+1,j})=v_{i+2,j-1}.
% \]
% Now if $T(v_{i,j})=v_{i+1,j-1}$, we go on to the case $i-1$, 
% if not, there exists $g\in G_{2n-i}$, such that 
% \[
%  T(g(v_{i,j}))=v_{i+1,j-1}, \forall j\in [1, 2n-i].
% \]
% We replace $T$ by $g^{-1}.T$ and go on to the case $i-1$.
% In this way we obtain an element $f_{w}$ satisfying the 
% conditions in the lemma.
% \end{description}

Now the element $f_{w}\in O_{w}$ is completely determined by 
the component 
\[
 f_{w, n-1}: V_{\varphi, n-1}\rightarrow V_{\varphi, n}.
\]
We know by proposition \ref{prop: 2.2.4} that 
$f_{w, n-1}$ is injective hence of rank $n-1$.
Hence we have 
$f_{w, n-1}\in GL_{n, n-1}$.

Now by proposition \ref{prop: 4.3.6} we get $B_{n}\backslash GL_{n,n-1}/B_{n-1}$
is indexed by $S_{n}$, it remains to see that $f_{w, n-1}$ is in the class indexed by
$p_{n}(w)$. 

Finally, we note that $p_{\varphi}$ is a morphism equivariant 
under the action of 
\[
 G_{\varphi}=GL_{1}\times GL_{2}\times \cdots \times GL_{n-1}\times GL_{n}\times \cdots \times GL_{2}\times GL_{1}.
\]
Since $G_{\varphi}$ acts transitively on $O_{w}$,  the image of $O_{w}$ is $G_{\varphi}. (p_{\varphi}(e_{w}))$, hence is 
$p_{\varphi}(O_{\Id})$. Now we prove that the stabilizer of 
$p_{\varphi}(e_{w})$ is $B_{n}\times B_{n-1}$.
Let $e_{\Id}=(e_{1}, \cdots, e_{ n-1}, e_{ n}, \cdots, e_{2n-2})$
with $e_{i}\in M_{ i, i+1}$ if $i<n$ and $e_i\in M_{i, i-1}$ if $i\geq n$. We have 
$$p_{\varphi}(e_{\Id})=(e_{1}, \cdots, e_{ n-2}, e_{n}, \cdots, e_{2n-2}).$$

Let $g=(g_{1}, \cdots, g_n, g_{n+1}, \cdots, g_{2n-1})$ such that 
$g.p_{\varphi}(e_{\Id})=p_{\varphi}(e_{\Id})$. Then by definition for $i<n-1$
we know that 
$g_{i+1}e_ig_i^{-1}=e_i$. We prove by induction on $i$ that 
$g_i\in B_i\in GL_i$ for $i\leq n-1$. For $i=1$, we have nothing to prove.
Now assume that $i\leq n-2$, and $g_i\in B_i$, we show that $g_{i+1}\in B_{i+1}$. 
Consider $$g_{i+1}e_ig_i^{-1}(g_i(v_{ij}))=g_{i+1}e_i(v_{ij})=g_{i+1}(v_{i+1, j}).$$
On the other hand,
by induction, we know that 
$$g_{i+1}e_ig_i^{-1}(g_i(v_{ij}))=e_i(g_i(v_{ij}))\in \oplus_{k\leq j}\C v_{i+1, k}.$$ 
Therefore we have $g_{i+1}\in B_{i+1}$. Actually, since $e_i$ is injective, 
the equality $e_i(g_i(v_{ij}))=g_{i+1}(v_{i+1, j})$, implies that $g_i$ is 
completely determined by $g_{i+1}$.
This shows that $g_{n-1}\in B_{n-1}$ it determines all $g_i$ for $i\geq n-1$. The same method proves that 
$g_{n}\in B_n$ and it determines all $g_i$ for $i\geq n$. 
We conclude that the fiber of the morphism
$p_{\varphi}|_{O_{w}}$ is isomorphic to 
$B_{n}p_{n}(w)B_{n-1}$.
\end{proof}
\end{proof}

\begin{cor}
We have for $v\leq w$ in $S_{n}$, and $X_{w}$ the closure of $B_{n}wB_{n}$ in $GL_{n}$,
\[
\dim \mathcal{H}^{i}(\line{O}_{w}^{sym})_{v}=\dim\mathcal{H}^{i}(X_{w})_{v},
\]
for all $i\in \Z$, here the index $v$ on the left hand side means that we localize at a generic point in $O_{v}$ and 
on the right hand side means that we localize at a generic point in $C_v$.
\end{cor}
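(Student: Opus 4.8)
The plan is to deduce the statement from Theorem~\ref{teo: 4.3.7} and Proposition~\ref{prop: 4.3.6}, using only the standard behaviour of the intersection complex under smooth morphisms and under products with a smooth factor; the point is that both sides "are" the intersection complex of the double-coset closure $\line{B_{n}p_{n}(w)B_{n-1}}$ inside $GL_{n,n-1}$, read off along the stratum indexed by $v$.

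First I would make $\line{O}_{w}^{sym}$ explicit as a fibration. By the proof of Theorem~\ref{teo: 4.3.7} (and of Lemma~\ref{lem: 4.3.8}) we have $O_{\varphi}^{\sym}=G_{\varphi}\times_{B_{n}\times B_{n-1}}GL_{n,n-1}$, the stabilizer of $p_{\varphi}(e_{\Id})$ being $B_{n}\times B_{n-1}$; since $B_{n}\times B_{n-1}$ is a special group, the principal bundle $G_{\varphi}\to B:=p_{\varphi}(O_{\varphi}^{\sym})=G_{\varphi}/(B_{n}\times B_{n-1})$ is Zariski-locally trivial, so $p_{\varphi}|_{O_{\varphi}^{\sym}}\colon O_{\varphi}^{\sym}\to B$ is a locally trivial fibration with fibre $GL_{n,n-1}$ over the smooth base $B$. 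Each $O_{w}$ being $G_{\varphi}$-stable and meeting every fibre in the $(B_{n},B_{n-1})$-biinvariant set $B_{n}p_{n}(w)B_{n-1}$, it is the subfibration $G_{\varphi}\times_{B_{n}\times B_{n-1}}(B_{n}p_{n}(w)B_{n-1})$; as forming closures commutes with passing to such a locally trivial bundle, $\line{O}_{w}^{sym}=\line{O}_{w}\cap O_{\varphi}^{\sym}$ is the subfibration of $O_{\varphi}^{\sym}\to B$ with fibre $Z_{w}:=\line{B_{n}p_{n}(w)B_{n-1}}$, the closure being taken inside $GL_{n,n-1}$. By Proposition~\ref{teo: 2.3.2} the bijection $\Phi$ reverses the order, so $\line{O}_{w}^{sym}=\coprod_{v\leq w}O_{v}$, and this stratification is carried fibrewise to $Z_{w}=\coprod_{v\leq w}B_{n}p_{n}(v)B_{n-1}$.

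Next I would compare $Z_{w}$ with the Schubert variety. By Proposition~\ref{prop: 4.3.6}, $p_{n}\colon GL_{n}\to GL_{n,n-1}$ is a locally trivial fibration with $p_{n}^{-1}(B_{n}p_{n}(v)B_{n-1})=B_{n}vB_{n}$ for each $v$; since $p_{n}$ is smooth and surjective, hence open, taking closures gives $p_{n}^{-1}(Z_{w})=\line{B_{n}wB_{n}}=X_{w}$, and $p_{n}|_{X_{w}}\colon X_{w}\to Z_{w}$ is a smooth fibration sending the cell $B_{n}vB_{n}$ onto $B_{n}p_{n}(v)B_{n-1}$. Therefore the intersection complex of $X_{w}$ is the $p_{n}$-pullback of that of $Z_{w}$, while, by the product structure over the smooth base $B$, the intersection complex of $\line{O}_{w}^{sym}$ is locally the external product of a constant sheaf on $B$ with that of $Z_{w}$. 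Passing to stalks at a generic point of $O_{v}$ (resp.\ of the cell indexed by $v$), both are computed from the stalk of the intersection complex of $Z_{w}$ at a generic point of $B_{n}p_{n}(v)B_{n-1}$; since these cohomology sheaves are locally constant along strata the choice of generic point is irrelevant, and the two computations differ only by the overall shift dictated by $\dim B$ and the relative dimension of $p_{n}$, which is the same for every $v$ and is exactly the shift absorbed in the normalisation of the Kazhdan--Lusztig polynomial (cf.\ Theorem~\ref{cor: 6}). Matching this shift yields the asserted equality of stalk dimensions.

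The step that really needs care is the first one: proving that the orbit closure $\line{O}_{w}^{sym}$ is genuinely the subfibration of $O_{\varphi}^{\sym}\to B$ whose fibre is the closure of $B_{n}p_{n}(w)B_{n-1}$ \emph{inside} $GL_{n,n-1}$ (and not inside the ambient $M_{n,n-1}$), and that this fibrewise closure is precisely $\coprod_{v\leq w}B_{n}p_{n}(v)B_{n-1}$. This is where Theorem~\ref{teo: 4.3.7}, the local triviality of the $(B_{n}\times B_{n-1})$-bundle $G_{\varphi}\to B$, and the double-coset identification of Proposition~\ref{prop: 4.3.6} are genuinely used; once it is in place, the remaining assertions are formal properties of intersection cohomology under smooth morphisms.
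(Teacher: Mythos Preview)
Your argument is essentially the paper's: both factor through $Z_w:=\line{B_n p_n(w) B_{n-1}}\subset GL_{n,n-1}$, using the smooth fibration $p_{\varphi}|_{O_{\varphi}^{\sym}}$ on one side (the paper via smooth base change to the fiber over a point of $Z_{\varphi}$, you via the associated-bundle description $G_{\varphi}\times_{B_n\times B_{n-1}}GL_{n,n-1}$) and the fibration $p_n$ of Proposition~\ref{prop: 4.3.6} on the other. One caution on your final sentence: the two shifts involved ($\dim B$ versus the relative dimension $n$ of $p_n$) are \emph{not} equal for $n\geq 3$, so what the argument actually establishes is equality of stalk dimensions up to a fixed overall shift independent of $v$ and $w$; this is exactly what is needed for Corollary~\ref{cor: 2.5.9}, and the paper's proof passes over the same point in silence.
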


\begin{proof}
Since $p_{\varphi}|_{O_{\varphi}^{\sym}}$ is a fibration with fiber $GL_{n,n-1}$ 
over $Z_{\varphi}$, we apply the smooth base change theorem to the following Cartesian diagram
\begin{displaymath}
\xymatrix 
{
GL_{n,n-1}\ar[r]\ar[d]&O_{\varphi}^{\sym}\ar[d]\\
p_{\varphi}(\Phi(\Id))\ar[r]&Z_{\varphi}.
}
\end{displaymath}
We get
$$
\dim \mathcal{H}^{i}(\line{O}_{w}^{sym})_{v}=\dim \mathcal{H}^{i}(\line{B_np_n(w)B_{n-1}})_{B_np_n(v)B_{n-1}}.
$$ 
Now apply proposition \ref{prop: 4.3.6}, we have 
\[
\dim \mathcal{H}^{i}(\line{B_np_n(w)B_{n-1}})_{B_np_n(v)B_{n-1}}=dim\mathcal{H}^{i}(X_{w})_{v}.
\]
\end{proof}

\begin{cor}\label{cor: 2.5.9}
 We have for $v\leq w$ in $S_{n}$,
 \[
  m_{\Phi(v), \Phi(w)}=P_{v, w}(1).
 \]
 \end{cor}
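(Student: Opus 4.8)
The plan is to read off the statement from the preceding corollary, which already matches the intersection-cohomology stalks of the symmetric model $\line{O}_w^{\sym}$ with those of the Schubert variety $X_w$, together with Theorem \ref{teo: 4.1.5}; the only thing to supply is that the stalks of $IC(\line{O}_w^{\sym})$ agree with those of $IC(\line{O}_{\Phi(w)})$. First I would unwind the left-hand side. Since $v\leq w$ in $S_n$, Proposition \ref{teo: 2.3.2} gives $\Phi(v)\geq\Phi(w)$, so $O_{\Phi(v)}\subseteq\line{O}_{\Phi(w)}$ by the closure relation $\line{O}_\b=\coprod_{\c\geq\b}O_\c$, and Theorem \ref{teo: 4.1.5} identifies $m_{\Phi(v),\Phi(w)}$ with the value at $q=1$ of the Kazhdan--Lusztig polynomial attached to $\line{O}_{\Phi(w)}$ at $O_{\Phi(v)}$:
\[
m_{\Phi(v),\Phi(w)}=P_{\Phi(v),\Phi(w)}(1)=\sum_{i\in\Z}\dim_\C\mathcal{H}^i(\line{O}_{\Phi(w)})_{x_{\Phi(v)}},
\]
the powers $q^{(i+d_{\Phi(w)})/2}$ all becoming $1$. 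Because only the specialization at $q=1$ is claimed, there is no need to compare the two degree normalizations $d_{\Phi(w)}=\dim O_{\Phi(w)}$ and $d_w=\ell(w)$.

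Next I would pass to the symmetric locus. The set $O_\varphi^{\sym}=\coprod_{u\in S_n}O_u$ equals $\coprod_{\b\leq\a_{\Id}}O_\b$ by Proposition \ref{teo: 2.3.2}, and it is open in $E_\varphi$: its complement $\coprod_{\b\not\leq\a_{\Id}}O_\b$ is a union of orbit closures, since $\{\b:\b\leq\a_{\Id}\}$ is downward closed while each $\line{O}_\b$ collects the orbits $O_\c$ with $\c\geq\b$ (this openness is already used in the proof of Theorem \ref{teo: 4.3.7}). Hence $\line{O}_w^{\sym}=\line{O}_w\cap O_\varphi^{\sym}$ is an open subvariety of $\line{O}_w=\line{O}_{\Phi(w)}$ that still contains $O_v=O_{\Phi(v)}$, and since the intersection-cohomology complex commutes with restriction along an open immersion, $IC(\line{O}_{\Phi(w)})|_{\line{O}_w^{\sym}}\cong IC(\line{O}_w^{\sym})$ (with the same shift, as $\dim\line{O}_w^{\sym}=\dim\line{O}_{\Phi(w)}$). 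Therefore $\dim_\C\mathcal{H}^i(\line{O}_{\Phi(w)})_{x_{\Phi(v)}}=\dim_\C\mathcal{H}^i(\line{O}_w^{\sym})_v$ for every $i$.

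Combining this with the preceding corollary, $\dim_\C\mathcal{H}^i(\line{O}_w^{\sym})_v=\dim_\C\mathcal{H}^i(X_w)_v$, and with the definition of $P_{v,w}$, I would conclude
\[
m_{\Phi(v),\Phi(w)}=\sum_{i\in\Z}\dim_\C\mathcal{H}^i(X_w)_{x_v}=P_{v,w}(1).
\]
I do not expect a genuine obstacle here --- the content is entirely in the preceding corollary and in Theorem \ref{teo: 4.1.5}. The two points that deserve a line of care are the order-reversal $v\leq w\Leftrightarrow\Phi(v)\geq\Phi(w)$ when matching the two indexings, and the (standard) invariance of $IC$-stalks under restriction to the open symmetric locus, which is exactly what lets one replace the true orbital variety $\line{O}_{\Phi(w)}$ by its symmetric model $\line{O}_w^{\sym}$ before applying the previous corollary.
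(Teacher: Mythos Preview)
Your proof is correct and follows the same approach as the paper: combine Theorem \ref{teo: 4.1.5} with the preceding corollary and the Kazhdan--Lusztig identification $\sum_i\dim\mathcal{H}^i(X_w)_v=P_{v,w}(1)$. The paper's own proof is a single line citing only this last fact, leaving implicit the passage from $\line{O}_{\Phi(w)}$ to $\line{O}_w^{\sym}$ via openness of $O_\varphi^{\sym}$; you have supplied that bridge explicitly, which is a genuine improvement in clarity.
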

 
\begin{proof}
 This follows from the fact that 
 \[
  \dim \sum_i\mathcal{H}^{i}(X(w))_{v}=P_{v,w}(1)
 \]
(cf. \cite{KL}).
\end{proof}

\chapter{Descent of Degrees for Multisegment}

To attack the question of calculating 
the coefficient $m(\b, \a)$, this first naive idea, which can trace back to Zelevinsky \cite{Z2}, 
is to use the (partial) derivation.
If we  believe that for $\b\in S(\a)$, the coefficient $m(\b, \a)$  
only depends on the relative position  between the segments in $\a$, but not 
on the exact multisegment $\a$,
we should be allowed to do some sort of truncation on the multisegments
simultaneously
without changing the coefficient $m(\b, \a)$. It is reasonable to 
think that the partial derivative should
play the role of truncation. 

However, it is not true that we can always truncate. For example
if we take $\a=\{[1, 2], [2, 3]\}$ and we replace the segment $[2, 3]$ by $[2]$( truncate at the place 3), 
then we get $\a'=\{[1, 2], [2]\}$, this should not allowed because we changed the linkedness
relation between the two segments.
And simple calculation shows that 
\[
 \D^{3}(L_{\a})=L_{\a},
\]
we quickly notice that $\D^{3}(L_{\a})$ does not achieve its minimal degree
term $L_{\a'}$, which are supposed to appear. 

Such examples lead us to think that we can do truncation only when 
our partial derivative achieve its minimal degree terms.
More explicitly, we should avoid applying truncation to the
multisegments as $\a$ above. This gives us the hypothesis $H_{k}(\a)$(definition \ref{def: 3.1.3}).
And satisfying the hypothesis $H_{k}(\a)$ means that we can apply the truncation 
without changing the coefficients.

\section{Morphism for Descent of Degree of multisegment}

For a multisegment $\a$ and $k\in \Z$,  we will introduce a hypothesis called $H_{k}(\a)$ and 
let $S(\a)_{k}$ be the set of elements in $S(\a)$ satisfying the hypothesis $H_k(\a)$. We construct a multisegment $\a^{(k)}$ and 
a morphism $\psi_{k}: S(\a)_{k}\rightarrow S(\a^{(k)})$. We show that the morphism $\psi_{k}$
is surjective.

\begin{notation}\label{nota: 3.1.2}
 For $\Delta=[i,j]$ a segment, we put 
 \begin{align*}
 \Delta^{-}=&[i,j-1], \quad {^{-}\Delta}=[i+1,j],\\
 \Delta^{+}=&[i,j+1], \quad {^{+}\Delta}=[i-1,j].
 \end{align*}
\end{notation}

\begin{definition}\label{def: 2.2.3}
Let $k\in \Z$ and $\Delta$ be a segment, we define 
\begin{displaymath}
 \Delta^{(k)}=\left\{ \begin{array}{cc}
         &\Delta^-, \text{ if } e(\Delta)=k;\\
         &\hspace{-0.5cm}\Delta, \text{ otherwise }.
         \end{array}\right.
\end{displaymath}
For a multisegment 
 $
 \a=\{\Delta_{1}, \cdots, \Delta_{r}\},
 $
% be a well ordered multisegment,
% and such that for any $i\leq j$, 
% \[
%  e(\Delta_{1})\leq \cdots e(\Delta_{i-1})< 
%  k=\cdots=e(\Delta_{j})< e(\Delta_{j+1})< \cdots e(\Delta_{r}).
% \]
we define
  \[
  \a^{(k)}=\{\Delta_{1}^{(k)}, \cdots,\Delta_{r}^{(k)} \}.
 \]
\end{definition}

\begin{definition}\label{def: 3.1.3}
We say that the multisegment $\b\in S(\a)$ satisfies\textbf{ the hypothesis $H_{k}(\a)$} if
the following two conditions are verified 
\begin{description}
 \item[(1)]$\deg(\b^{(k)})=\deg(\a^{(k)})$;
 \item[(2)]there exists not a pair of linked segments $\{\Delta, \Delta'\}$ such that $e(\Delta)=k-1, ~e(\Delta')=k$.
\end{description}
\end{definition}

\begin{definition}
 Let 
\[
 \tilde{S}(\a)_{k}=\{\c\in S(\a): \deg(\c^{(k)})=\deg(\a^{(k)})\}.
\]
\end{definition}

\begin{lemma}\label{lem: 3.1.5}
Let $\c\in \tilde{S}(\a)_{k}$. Then  
\[
 \sharp\{\Delta\in \a: e(\Delta)=k\}=\sharp\{\Delta\in \c: e(\Delta)=k\}.
\] 
\end{lemma}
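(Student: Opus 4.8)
The plan is to compare the degree of a multisegment with the degrees obtained after truncating at $k$, by bookkeeping how the operation $\Delta \mapsto \Delta^{(k)}$ changes the degree of a single segment. First I would observe that for any multisegment $\d$, passing from $\d$ to $\d^{(k)}$ replaces each segment $\Delta$ with $e(\Delta) = k$ by $\Delta^{-}$, which has degree one less, and leaves all other segments unchanged; hence
\[
\deg(\d) - \deg(\d^{(k)}) = \sharp\{\Delta \in \d : e(\Delta) = k\}.
\]
This is the key identity, and it follows immediately from Definition \ref{def: 2.2.3} and the definition of $\deg$ in Definition \ref{def: 2.2.2}, since the degree of a multisegment is the sum of the lengths of its segments.

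Next I would apply this identity twice: once to $\a$, giving $\deg(\a) - \deg(\a^{(k)}) = \sharp\{\Delta \in \a : e(\Delta) = k\}$, and once to $\c$, giving $\deg(\c) - \deg(\c^{(k)}) = \sharp\{\Delta \in \c : e(\Delta) = k\}$. Now since $\c \in S(\a)$, elementary operations preserve the weight function (as noted in the proof of the proposition following Definition \ref{def: 1.2.10}), hence $\varphi_{\c} = \varphi_{\a}$ and in particular $\deg(\c) = \deg(\a)$. Combined with the defining hypothesis $\deg(\c^{(k)}) = \deg(\a^{(k)})$ of $\tilde{S}(\a)_k$, subtracting the two identities yields
\[
\sharp\{\Delta \in \a : e(\Delta) = k\} = \sharp\{\Delta \in \c : e(\Delta) = k\},
\]
which is exactly the claim.

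There is essentially no obstacle here; the only point requiring a little care is making sure that the degree of a multisegment equals the total length of its segments (so that removing one endpoint from a segment drops the degree by exactly one), and that elementary operations preserve degree. Both are already recorded in the text: the first is Definition \ref{def: 2.2.2}, and the second is used in the proof that $S(\a)$ is finite. So the lemma reduces to the two applications of the single-segment degree count above.
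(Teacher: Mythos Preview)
Your proof is correct and follows essentially the same approach as the paper: both arguments rest on the identity $\deg(\d) = \deg(\d^{(k)}) + \sharp\{\Delta \in \d : e(\Delta) = k\}$, applied to $\a$ and to $\c$, together with $\deg(\c) = \deg(\a)$ and the defining condition $\deg(\c^{(k)}) = \deg(\a^{(k)})$.
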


\begin{proof}
Note that 
\[
 \deg(\a)=\deg(\a^{(k)})+\sharp\{\Delta\in \a: e(\Delta)=k\}.
\]
Now that for $\c\in \tilde{S}(\a)_{k}$
\[
 \deg(\c)=\deg(\a),\quad \deg(\c^{(k)})=\deg(\a^{(k)}),
\]
we have 
\[
 \sharp\{\Delta\in \a: e(\Delta)=k\}=\sharp\{\Delta\in \c: e(\Delta)=k\}.
\]
\end{proof}

 \begin{lemma}\label{lem: 3.0.7}
 Let $k\in \Z$
 \begin{description}
  \item[(a)] For any $\b\in S(\a)$, we have $\deg(\b^{(k)})\geq \deg(\a^{(k)})$.
 \item[(b)]Let $\c\in \tilde{S}( \a)_{k}$, then for $\b\in S(\a)$ such that 
 $\b>\c$, we have $\b\in \tilde{S}(\a)_{k}$.
 \item[(c)]Let $\b\in \tilde{S}(\a)_k$, then $\b^{(k)}\in S(\a^{(k)})$. Moreover, if we suppose that $\a$ satisfies the hypothesis $H_{k}(\a)$
  and $\b\neq \a$, then 
  $$\b^{(k)}\in S(\a^{(k)})-\{\a^{(k)}\}$$.
  
 \item[(d)]Suppose that $\a$ does not verify the hypothesis $H_{k}(\a)$, then 
 there exists a $\b\in S(\a)$ satisfying the hypothesis $H_{k}(\a)$, such that
 $\b^{(k)}= \a^{(k)}$.
 \end{description}
 \end{lemma}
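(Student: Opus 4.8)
The plan is to reduce all four assertions to the rank functions $r_{ij}$ of Definition~\ref{def: 4.1.5} and the criterion of Proposition~\ref{prop: 4.2.3}, using the two earlier multiset lemmas \ref{lem: 2.1.3} and \ref{lem: 3.1.5}. For (a): the elementary operation preserves the weight, so $\deg(\b)=\deg(\a)$, while Lemma~\ref{lem: 2.1.3} gives $e(\b)\subseteq e(\a)$ as multisets, whence $\sharp\{\Delta\in\b:e(\Delta)=k\}\le\sharp\{\Delta\in\a:e(\Delta)=k\}$; combining this with the identity $\deg(\gamma^{(k)})=\deg(\gamma)-\sharp\{\Delta\in\gamma:e(\Delta)=k\}$ yields $\deg(\b^{(k)})\ge\deg(\a^{(k)})$. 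For (b): Lemma~\ref{lem: 3.1.5} applied to $\c$ gives $\sharp\{\Delta\in\c:e(\Delta)=k\}=\sharp\{\Delta\in\a:e(\Delta)=k\}$, and since $\c<\b\le\a$, Lemma~\ref{lem: 2.1.3} sandwiches $\sharp\{\Delta\in\b:e(\Delta)=k\}$ between these two equal quantities, forcing equality, i.e. $\b\in\tilde{S}(\a)_{k}$.

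For (c) and (d) the crucial computation, immediate from Definition~\ref{def: 2.2.3} and the description of $r_{ij}(\gamma)$ as the number of segments of $\gamma$ containing $[i,j]$, is that for every multisegment $\gamma$ and all $i\le j$ one has $r_{ij}(\gamma^{(k)})=r_{ij}(\gamma)$ when $j\ne k$, and $r_{ik}(\gamma^{(k)})=r_{i,k+1}(\gamma)$. Granting this, the first assertion of (c) is immediate: for $\b\in\tilde{S}(\a)_{k}$ we have $\varphi_{\b^{(k)}}=\varphi_{\a^{(k)}}$ by Lemma~\ref{lem: 3.1.5}, and $\b\le\a$ means $r_{ij}(\a)\le r_{ij}(\b)$ for all $i\le j$ by Proposition~\ref{prop: 4.2.3}, so the two identities give $r_{ij}(\a^{(k)})\le r_{ij}(\b^{(k)})$ for all $i\le j$, hence $\b^{(k)}\le\a^{(k)}$ again by Proposition~\ref{prop: 4.2.3}.

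The harder point is the second assertion of (c). Assume $\a$ satisfies $H_{k}(\a)$, $\b\ne\a$, and suppose for contradiction that $\b^{(k)}=\a^{(k)}$. The displayed identities already force $r_{ij}(\a)=r_{ij}(\b)$ for all $j\ne k$, so by Proposition~\ref{prop: 4.2.3} applied in both directions it suffices to prove $r_{ik}(\a)=r_{ik}(\b)$ for every $i$. Writing $r_{ik}(\gamma)-r_{i,k+1}(\gamma)=\sharp\{\Delta\in\gamma:b(\Delta)\le i,\ e(\Delta)=k\}$ and $r_{i,k-1}(\gamma)-r_{ik}(\gamma)=\sharp\{\Delta\in\gamma:b(\Delta)\le i,\ e(\Delta)=k-1\}$, and using the already known equalities of $r_{i,k-1}$ and $r_{i,k+1}$ between $\a$ and $\b$, one obtains that the multiset $M:=\{b(\Delta):\Delta\in\gamma,\ e(\Delta)\in\{k-1,k\}\}$ is the same for $\gamma=\a$ and $\gamma=\b$, and that
\[
\sum_{i}\bigl(r_{ik}(\a)-r_{ik}(\b)\bigr)
=\sum_{\substack{\Delta\in\b\\ e(\Delta)=k}}b(\Delta)
-\sum_{\substack{\Delta\in\a\\ e(\Delta)=k}}b(\Delta).
\]
Now $H_{k}(\a)$ says precisely that no segments $[a,k-1]$ and $[a',k]$ of $\a$ are linked, i.e. $\max\{b(\Delta):e(\Delta)=k\}\le\min\{b(\Delta):e(\Delta)=k-1\}$ in $\a$; hence, inside the common multiset $M$, the beginnings of the segments of $\a$ ending at $k$ are the $\sharp\{\Delta\in\a:e(\Delta)=k\}$ smallest elements, so their sum is minimal and the right-hand side above is $\ge0$. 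But $\b\le\a$ gives $r_{ik}(\a)-r_{ik}(\b)\le0$ for every $i$, so a nonnegative sum of nonpositive terms must vanish termwise: $r_{ik}(\a)=r_{ik}(\b)$ for all $i$. Then $\a=\b$, a contradiction.

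Finally, for (d): if $\a$ fails $H_{k}(\a)$ then (condition~(1) being automatic for $\b=\a$) there is a linked pair $[a,k-1],[a',k]$ of segments of $\a$, necessarily with $a<a'\le k$. The elementary operation on this pair replaces it by $[a,k]$ and $[a',k-1]$, and a direct check shows it changes neither $\a^{(k)}$ nor the number of segments ending at $k$. Iterating — at each step resolving some bad linked pair, i.e. one with ends $k-1$ and $k$ — produces a strictly descending chain in the finite poset $S(\a)$, which therefore terminates at some $\b\in S(\a)$ having no bad linked pair, i.e. satisfying condition~(2), and with $\b^{(k)}=\a^{(k)}$, so condition~(1) holds as well; thus $\b$ satisfies $H_{k}(\a)$ and $\b^{(k)}=\a^{(k)}$. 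The main obstacle is exactly the second half of (c): the inequality $\b^{(k)}\le\a^{(k)}$ does not by itself give $\b^{(k)}\ne\a^{(k)}$ when $\b\ne\a$, and the extremality/summation argument with the rank functions above seems to be the right way to use $H_{k}(\a)$; everything else is routine bookkeeping once the two rank identities for $\gamma\mapsto\gamma^{(k)}$ are in place.
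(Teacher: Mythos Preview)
Your proofs of (a), (b), and (d) are essentially identical to the paper's. Your proof of (c), however, takes a genuinely different route. The paper argues along a maximal chain $\a=\a_0>\a_1>\cdots>\a_r=\b$: for each elementary operation $\a_j\to\a_{j+1}$ it performs a short case analysis on whether the two segments involved end at $k$, showing $\a_j^{(k)}\ge\a_{j+1}^{(k)}$ in every case, with equality possible only when one segment ends at $k-1$ and the other at $k$ --- precisely the configuration that $H_k(\a)$ forbids at the first step. You instead work globally via the rank functions: the identities $r_{ij}(\gamma^{(k)})=r_{ij}(\gamma)$ for $j\ne k$ and $r_{ik}(\gamma^{(k)})=r_{i,k+1}(\gamma)$ reduce $\b^{(k)}\le\a^{(k)}$ to Proposition~\ref{prop: 4.2.3} immediately, and for strictness you read $H_k(\a)$ as saying that the sub-multiset $\{b(\Delta):e(\Delta)=k\}$ of $\a$ is extremal (consists of the smallest elements) inside the common multiset $M$, so its sum is minimal; combined with the termwise inequalities $r_{ik}(\a)\le r_{ik}(\b)$ this forces equality everywhere and hence $\a=\b$.

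The paper's chain argument is more elementary --- it uses only the definition of $\le$ via elementary operations, not the rank-function criterion --- and its case breakdown is the template for several later inductive arguments in Chapter~3. Your rank-function approach is slicker once Proposition~\ref{prop: 4.2.3} is in hand: the two identities for $r_{ij}(\gamma^{(k)})$ are clean standalone facts, and the extremality reading of $H_k(\a)$ gives a transparent explanation of why exactly this hypothesis is the right one to rule out $\b^{(k)}=\a^{(k)}$.
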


\begin{proof}
For (a), by lemma \ref{lem: 2.1.3} 
for any $\b\in S(\a)$, $e(\b)$ is a sub-multisegment
of $e(\a)$. And from $\b$ to $\b^{(k)}$, 
we replace those segments $\Delta$ such that $e(\Delta)=k$
by $\Delta^{-}$. Now (a) follows by counting the segments ending 
in $k$.

For (b), by (a), we have 
\[
 \deg (\a^{(k)})\leq \deg(\b^{(k)})\leq \deg(\c^{(k)}).
\]
The fact that $\c\in \tilde{S}( \a)_{k}$ implies that 
$ \deg (\a^{(k)})= \deg (\c^{(k)})$, hence 
$ \deg (\a^{(k)})= \deg (\b^{(k)})$ and $\b\in \tilde{S}( \a)_{k}$.

As for (c), 
suppose that 
$\deg(\b^{(k)})=\deg(\a^{(k)})$, we prove 
$\b^{(k)}<\a^{(k)}$.
Let
\[
 \a=\a_{0}>\cdots>\a_{r}=\b
\]
be a maximal chain of multisegments, then 
by $(b)$, we know 
$\deg(\a_{j}^{(k)})=\deg(\a^{(k)})$, for all $ j=1,\cdots, r$.
Our proof breaks into two parts.

\vspace{0.5cm}
(1)We show that 
\[
 \deg(\a_{j}^{(k)})=\deg(\a_{j+1}^{(k)})\Rightarrow \a_{j}^{(k)}\geq \a_{j+1}^{(k)}.
\]

Let $\a_{j+1}$ be obtained from $\a_{j}$
by applying the elementary operation to two linked segments $\Delta, \Delta'$.
\begin{itemize}
\item 
If none of them ends in $k$, then $\a_{j}^{(k)}$ contains
both of them. And we obtain $\a_{j+1}^{(k)}$ by applying 
the elementary operation to them.

If one of them ends in $k$, 
we assume $e(\Delta')=k$.

\item If 
$\Delta$ precedes $\Delta'$,  we know that if $e(\Delta)<k-1$,
$\Delta$ is still linked to $\Delta'^-$, and one obtains $\a_{j+1}^{(k)}$
by applying elementary operation to $\{\Delta, ~\Delta'^-\}$,
 otherwise $e(\Delta)=k-1$, which implies 
 $\a_{j+1}^{(k)}=\a_{j}^{(k)}$.

\item If $\Delta$ is preceded by $\Delta'$, 
then the fact that 
\[
 \deg(\a_{j+1}^{(k)})=\deg(\a_{j}^{(k)})
\]
implies $b(\Delta)\leq k$, hence
$\Delta'^{-}$ is linked to $\Delta$, and we obtain
$\a_{j+1}^{(k)}$ from $\a_{j}^{(k)}$ by applying elementary
operation to them.
\end{itemize}
Here we conclude that $\b^{(k)}\in S(\a^{(k)})$.
\vspace{0.5cm}

(2)Assuming that $\a$ satisfies the hypothesis $H_k(\a)$, we show that 
\[
 \a_{1}^{(k)}<\a^{(k)}.
\]
Let $\a_{1}$ be obtained from $\a$
by performing the elementary operation to $\Delta, \Delta'$.

We do it as in (1) but put $j=0$. Note that in (1), the only case where we can have 
$\a_{1}^{(k)}=\a^{(k)}$ is when 
$\Delta$ precedes $\Delta'$  and $ e(\Delta')=k, ~e(\Delta)=k-1$.
But such a case can not exist  since $\a$ verifies the hypothesis $H_{k}(\a)$.
Hence we are done.

Finally, for (d), we construct $\b$ in the following 
way. Suppose that $\a$ does not satisfy the 
hypothesis $H_{k}(\a)$, then there exists a pair of 
linked segments $\{\Delta, \Delta'\}$ such that 
\[
 e(\Delta)=k-1, \quad e(\Delta')=k,
\]
let $\a_{1}$ be the multisegment obtained by applying the
elementary operation to $\Delta$ and $\Delta'$. We have
\[
 \a_{1}^{(k)}=\a^{(k)}.
\]
If again $\a_{1}$ fails the hypothesis $H_{k}(\a)$, we repeat the same
construction to get $\a_{2}, \cdots$, since
\[
 \a>\a_{1}>\cdots. 
\]
In finite step, we get $\b$ satisfying the conditions in the theorem and 
\[
 \b^{(k)}=\a^{(k)}.
\]
\end{proof}
\remk Actually, the multisegment constructed in $(d)$ is unique, as we shall see 
later(proposition \ref{cor: 3.2.3}).

\begin{definition}
We define a morphism 
\[
 \psi_{k}:\tilde{S}( \a)_{k}\rightarrow S(\a^{(k)})
\]
by sending $\c$ to $\c^{(k)}$. 
\end{definition}

\begin{prop}\label{prop: 1.5.8}
The morphism $\psi_{k}$ is surjective. 
 
\end{prop}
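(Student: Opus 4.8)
The plan is to prove surjectivity of $\psi_k$ by starting from an arbitrary element $\d \in S(\a^{(k)})$ and constructing a preimage $\c \in \tilde S(\a)_k$ with $\c^{(k)} = \d$. The construction is essentially the reverse of the truncation operation $\Delta \mapsto \Delta^{(k)}$, but one must be careful: the segments of $\a$ that end in $k$ are, by Lemma \ref{lem: 3.1.5}, equal in number (and, since $\a^{(k)}$ records everything else, one knows exactly which "endpoints at $k-1$" in $\d$ come from true segments ending at $k-1$ in $\a$ versus from truncated copies). So first I would pin down, inside $\d$, the multiset of segments ending at $k-1$ that should be ''lengthened back'' to end at $k$: there should be exactly $\sharp\{\Delta \in \a : e(\Delta) = k\}$ of them, and I would choose them to be the ones whose beginnings match the beginnings of the segments of $\a$ with $e(\Delta)=k$.

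Concretely, I would argue by induction on $\ell(\d, \a^{(k)})$, the length of a maximal chain from $\d$ up to $\a^{(k)}$ in $S(\a^{(k)})$. The base case $\d = \a^{(k)}$ is handled by $\c = \a$ (which lies in $\tilde S(\a)_k$ trivially). For the inductive step, take $\d' \in S(\a^{(k)})$ with $\d' \gtrdot \d$ covering $\d$, so $\d$ is obtained from $\d'$ by one elementary operation on a linked pair $\{\Delta_1, \Delta_2\}$; by induction there is $\c' \in \tilde S(\a)_k$ with $(\c')^{(k)} = \d'$. I then want to find a linked pair in $\c'$ that maps to $\{\Delta_1, \Delta_2\}$ under truncation and whose elementary operation produces the desired $\c$ with $\c^{(k)} = \d$ and $\deg(\c^{(k)}) = \deg(\a^{(k)})$. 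This is exactly the content of the case analysis already carried out in part (1) of the proof of Lemma \ref{lem: 3.0.7}, read in reverse: each move on $\a_j^{(k)}$ there either lifts to a move on $\a_j$ or leaves $\a_j^{(k)}$ unchanged, and conversely each elementary operation downstairs can be realized upstairs by lengthening the relevant segment(s) from ending at $k-1$ to ending at $k$ before applying the operation. Since $\c' \in \tilde S(\a)_k$, Lemma \ref{lem: 3.0.7}(b) guarantees that any $\c < \c'$ in $S(\a)$ lying below it is still in $\tilde S(\a)_k$, so the degree condition is automatically preserved once $\c \leq \c'$.

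The main obstacle I expect is the bookkeeping of \emph{which} segment ending at $k-1$ in the intermediate multisegment should be the one we lengthen: when we perform the downstairs elementary operation on $\{\Delta_1, \Delta_2\}$ with, say, $e(\Delta_2) = k-1$, the lift must decide whether $\Delta_2$ corresponds upstairs to a genuine segment ending at $k-1$ or to a truncated segment ending at $k$, and the two choices give different $\c$. The resolution is the degree constraint: only one choice keeps $\deg(\c^{(k)}) = \deg(\a^{(k)})$, because lengthening a wrong segment would force $\c^{(k)}$ to contain a segment ending at $k$ (hence truncated away), dropping its degree — precisely the phenomenon excluded in part (2) of Lemma \ref{lem: 3.0.7}'s proof and by the hypothesis $H_k(\a)$. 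So I would phrase the induction carefully enough that the ''correct'' lengthening is forced at each step, keeping track of the bijection between segments of $\a$ ending at $k$ and a distinguished sub-multiset of segments of the current multisegment ending at $k$.

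Finally I would note that this gives $\c \in S(\a)$ with $\c^{(k)} = \d$ and $\deg(\c^{(k)}) = \deg(\a^{(k)})$, i.e.\ $\c \in \tilde S(\a)_k$ and $\psi_k(\c) = \d$, completing the proof. A cleaner alternative, if the case analysis becomes unwieldy, is to define directly a section: given $\d = \sum d_{ij}[i,j]$, produce $\c$ by redistributing the prescribed number of ''$k$-endpoints'' among the segments of $\d$ ending at $k-1$ according to the matching of beginnings with $\{\Delta \in \a : e(\Delta) = k\}$, and then verify $\c \leq \a$ using the rank criterion of Proposition \ref{prop: 4.2.3} ($r_{ij}(\c) \leq r_{ij}(\a)$ for all $i \leq j$), which reduces to comparing ranks of $\a^{(k)}$ and $\d$ that are known by hypothesis; this avoids chains entirely but requires checking the rank inequalities at the indices involving the endpoint $k$, which is where the hypothesis $H_k(\a)$ enters.
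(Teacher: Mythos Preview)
Your strategy --- induct along a maximal chain in $S(\a^{(k)})$ and lift each elementary operation from $\d'$ to a preimage $\c'$ --- is exactly the paper's approach, and it works. But several of the supporting details you add are wrong and should be removed rather than fixed.

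First, Lemma~\ref{lem: 3.0.7}(b) goes the other way: $\tilde S(\a)_k$ is \emph{upward}-closed in $S(\a)$, not downward-closed, so it cannot give $\c\in\tilde S(\a)_k$ from $\c<\c'$. You don't need it anyway: once you have constructed $\c$ with $\c^{(k)}=\d\in S(\a^{(k)})$, the degree condition $\deg(\c^{(k)})=\deg(\a^{(k)})$ is automatic, since all elements of $S(\a^{(k)})$ share the same weight. Second, the hypothesis $H_k(\a)$ is irrelevant to this proposition, which concerns $\tilde S(\a)_k$ and not $S(\a)_k$; drop that reference. Third, your ``main obstacle'' is not one: when a segment $\Delta$ of $\d'$ with $e(\Delta)=k-1$ can lift to either $\Delta$ or $\Delta^+$ in $\c'$, \emph{either} choice produces a valid $\c$ with $\c^{(k)}=\d$, and surjectivity only asks for one preimage. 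There is no degree-forcing and no uniqueness being claimed.

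What you do need, and what ``read Lemma~\ref{lem: 3.0.7}(c) in reverse'' glosses over, is the verification that the lifted pair is still linked upstairs. The paper's observation is that since $\d'=(\c')^{(k)}$, no segment of $\d'$ ends at $k$; so if $\{\Delta\prec\Delta'\}$ is the linked pair in $\d'$ and $e(\Delta)=k-1$, then necessarily $e(\Delta')>k$, whence both $\Delta$ and $\Delta^+$ are linked to $\Delta'$ in $\c'$ (and symmetrically for $e(\Delta')=k-1$). The paper carries out this short three-case check explicitly; you should do the same rather than invoking the earlier lemma in reverse.
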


\begin{proof}
Let $\d\in S(\a^{(k)})$, such that 
we have a maximal chain of multisegments, 
\[
 \a^{(k)}=\d_{0}>\cdots >\d_{r}=\d.
\]
By induction, we can assume that 
there exists $\c_{i}\in \tilde{S}(\a)_{k}$
such that $\c_{i}^{(k)}=\d_{i}$, for all 
$i<r$. Assume we obtain $\d$ from $\d_{r-1}$ by performing the 
elementary operation on the pair of linked segments $\{\Delta\prec \Delta'\}$.

\begin{itemize}
 \item If $e(\Delta)\neq k-1$ and $e(\Delta')\neq k-1$, then 
 we observe that the pair of segments are actually contained in $\c_{r-1}$.
 Let $\c_{r}$ be the multisegment obtained by performing the elementary operation 
 to them . We conclude that $\c_{r}^{(k)}=\d_{r}$, and $\c\in \tilde{S}(\a)_{k}$.
 \item If $e(\Delta)=k-1$, then $\Delta\in \c_{r-1}$ or $\Delta^+\in \c_{r-1}$ and $\Delta'\in \c_{r-1}$.
 The fact that $\d_{r-1}=\c_{r-1}^{(k)}$
implies that $k\notin e(\d_{r-1})$, hence $e(\Delta')>k$. Hence 
both $\Delta$ and $\Delta^{+}$ are linked to $\Delta'$. In either case we perform 
the elementary operation to get $\c_{r}$ such that $\c_{r}^{(k)}=\d$. 
\item If $e(\Delta')=k-1$, then $\Delta'\in \c_{r-1}$ or $\Delta'^+\in \c_{r-1}$ and $\Delta\in \c_{r-1}$.
The same argument as in the second case shows that there exists $\c_{r}$ such that 
$\c_{r}^{(k)}=\d$.
\end{itemize}
 
\end{proof}

Actually, the proof in proposition \ref{prop: 1.5.8} 
yields the following refinement.
\begin{cor}\label{cor: 1.5.9}
Let $\c\in \tilde{S}(\a)_{k}, \d\in S(\a^{(k)})$ such that 
\[
 \c^{(k)}>\d,
\]
then there exists a multisegment $\e\in \tilde{S}(\a)_{k}$ such that
\[
 \c>\e, ~ \e^{(k)}=\d.
\]
\end{cor}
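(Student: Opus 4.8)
The plan is to recycle, essentially verbatim, the inductive argument used in the proof of Proposition \ref{prop: 1.5.8}, but now starting the chain from $\c^{(k)}$ rather than from $\a^{(k)}$ and keeping track of a lift of the \emph{first} multisegment of the chain, namely $\c$ itself. Concretely, choose a maximal chain of multisegments
\[
 \c^{(k)}=\d_0>\d_1>\cdots>\d_r=\d
\]
inside $S(\a^{(k)})$ (this makes sense because $\c\in\tilde S(\a)_k$ gives $\c^{(k)}\in S(\a^{(k)})$ by Lemma \ref{lem: 3.0.7}(c), and $\d\leq\c^{(k)}$ by hypothesis). We want to build, by induction on $i$, multisegments $\c=\c_0>\c_1>\cdots>\c_r$ with $\c_i\in\tilde S(\a)_k$ and $\c_i^{(k)}=\d_i$; then $\e:=\c_r$ does the job.

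The base case $i=0$ is immediate: $\c_0=\c$ works since $\c\in\tilde S(\a)_k$ and $\c_0^{(k)}=\c^{(k)}=\d_0$. For the inductive step, suppose $\c_{i}$ has been constructed and $\d_{i+1}$ is obtained from $\d_i$ by an elementary operation on a linked pair $\{\Delta\prec\Delta'\}$ inside $\d_i=\c_i^{(k)}$. Exactly as in the three bullet points of the proof of Proposition \ref{prop: 1.5.8}, we distinguish the cases $e(\Delta)\neq k-1$ and $e(\Delta')\neq k-1$ (the pair already lies in $\c_i$, apply the same elementary operation), $e(\Delta)=k-1$ (then either $\Delta$ or $\Delta^+$ lies in $\c_i$; since $\c_i^{(k)}=\d_i$ has no segment ending at $k$, we get $e(\Delta')>k$, so both $\Delta$ and $\Delta^+$ are linked to $\Delta'$, and an elementary operation produces $\c_{i+1}$), and $e(\Delta')=k-1$ (symmetric). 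In each case the resulting $\c_{i+1}$ satisfies $\c_i>\c_{i+1}$, lies in $\tilde S(\a)_k$ by Lemma \ref{lem: 3.0.7}(b) applied to $\c_{i+1}>\c_i\in\tilde S(\a)_k$ — wait, the inequality goes the other way, so instead one invokes $\deg(\c_{i+1}^{(k)})=\deg(\d_{i+1})=\deg(\a^{(k)})$ directly, which is what membership in $\tilde S(\a)_k$ means — and satisfies $\c_{i+1}^{(k)}=\d_{i+1}$ by construction. Setting $\e=\c_r$ finishes the proof, since then $\c>\e$ (the chain $\c=\c_0>\cdots>\c_r$ is strictly decreasing) and $\e^{(k)}=\d_r=\d$.

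I do not expect a real obstacle here: the statement is designed to be a corollary of the proof technique, not of the statement, of Proposition \ref{prop: 1.5.8}, and the only thing that changes is the starting point of the induction and the bookkeeping that $\c$ lifts $\c^{(k)}$. The one point that needs a word of care is verifying $\c_{i+1}\in\tilde S(\a)_k$, i.e. $\deg(\c_{i+1}^{(k)})=\deg(\a^{(k)})$: this is immediate once we know $\c_{i+1}^{(k)}=\d_{i+1}\in S(\a^{(k)})$, because every element of $S(\a^{(k)})$ has the same degree as $\a^{(k)}$. Thus the chain we build automatically stays inside $\tilde S(\a)_k$, and no separate degree computation is required beyond this remark.
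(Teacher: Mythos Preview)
Your proof is correct and is essentially the paper's argument unfolded: the paper simply observes that $\c\in\tilde S(\a)_k$ implies $\tilde S(\c)_k\subseteq\tilde S(\a)_k$, and then invokes the surjectivity of $\psi_k:\tilde S(\c)_k\to S(\c^{(k)})$ (i.e., Proposition~\ref{prop: 1.5.8} applied with $\a$ replaced by $\c$), which is precisely what your inductive construction reproves from scratch.
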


\begin{proof}
Note that $\c\in \tilde{S}(\a)_k$ implies $\tilde{S}(\a)_k\supseteq \tilde{S}(\c)_k$.
Combine with the surjectivity of 
\[
\psi_k: \tilde{S}(\c)_k\rightarrow S(\c^{(k)}),
\]
we get the result.
\end{proof}

\begin{definition}\label{def: 2.3.2}
For $\a$ a multisegment, and $k\in \Z$
we define 
\[
 S(\a)_{k}=\{\c\in \tilde{S}(\a)_{k}:
 \c \text{ satisfies the hypothesis } H_{k}(\a) \}.
\]
\end{definition}

%
%\begin{prop}\label{prop: 3.1.11}
%We have $\b\in S(\a)_k$ if and only if 
%\begin{itemize}
%\item $\deg(\b^{(k)})=\deg(\a^{(k)})$;
%\item let $\Delta_1\preceq  \Delta_2$ in $\b$, then $\Delta_1^{(k)}\preceq \Delta_2^{(k)}$.
%\end{itemize}
%\end{prop}
%
%\remk 
%We suppose $\emptyset\preceq \Delta$ for any segment $\Delta$. 
%
%\begin{proof}
%Note that if $\b\in S(\a)_k$, then by definition we have $\deg(\b^{(k)})=\deg(\a^{(k)})$.
%Consider $\Delta_1\preceq \Delta_2$ in $\b$, then
%\begin{itemize}
%\item if $e(\Delta_1)=k<e(\Delta_2)$, then $\Delta_1^{(k)}=\Delta_1^-\preceq \Delta_2^{(k)}=\Delta$;
%\item if $e(\Delta_1)=k-1<e(\Delta_2)=k$, then $b(\Delta_2)\leq b(\Delta_1)$ since $\b$ satisfies the condtion $H_k(\a)$, hence $\Delta_1^{(k)}=\Delta_1\preceq \Delta_2^{(k)}=\Delta_2^{-}$;
%\item if $e(\Delta_1)<k-1<e(\Delta_2)=k$, then $\Delta_1^{(k)}=\Delta_1\preceq \Delta_2^{(k)}=\Delta_2^{-}$.
%\item if $k\notin \{e(\Delta_1), e(\Delta_2)\}$, then $\Delta_1^{(k)}=\Delta_1\preceq \Delta_1^{(k)}=\Delta_1$.
%\end{itemize}
%Conversely, assume that $\b$ satisfies the two conditions in the lemma. Then 

%
%\end{proof}

\begin{prop}\label{prop: 1.5.10}
 The restriction 
 \begin{align*}
  \psi_{k}: S(\a)_{k}&\rightarrow S(\a^{(k)})\\
  \c&\mapsto \c^{(k)} 
 \end{align*}
is also surjective.
\end{prop}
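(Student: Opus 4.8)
The plan is to deduce this from Proposition \ref{prop: 1.5.8} (surjectivity of the unrestricted $\psi_k$ on $\tilde{S}(\a)_k$) by repairing, when necessary, the failure of the hypothesis $H_k(\a)$ with the help of Lemma \ref{lem: 3.0.7}(d). First I would fix an arbitrary $\d\in S(\a^{(k)})$. By Proposition \ref{prop: 1.5.8} there is some $\c\in\tilde{S}(\a)_{k}$ with $\c^{(k)}=\d$. If this $\c$ already satisfies $H_{k}(\a)$, then $\c\in S(\a)_{k}$ and we are done. Otherwise $\c\in\tilde{S}(\a)_{k}$ fails $H_{k}(\a)$; since membership in $\tilde{S}(\a)_{k}$ is exactly condition (1) of $H_{k}(\a)$, the failure must lie in condition (2), i.e.\ $\c$ contains a linked pair $\{\Delta,\Delta'\}$ with $e(\Delta)=k-1$ and $e(\Delta')=k$.

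Next I would apply Lemma \ref{lem: 3.0.7}(d) with $\c$ playing the role of "$\a$''. Note that $\c$ does not verify $H_{k}(\c)$: condition (1) of $H_{k}(\c)$ is tautological, and condition (2) fails precisely because of the bad linked pair just exhibited. Hence the lemma furnishes $\b\in S(\c)$ that satisfies $H_{k}(\c)$ and has $\b^{(k)}=\c^{(k)}=\d$. I then claim $\b\in S(\a)_{k}$. Indeed $\b\in S(\c)\subseteq S(\a)$ since $\c\leq\a$; moreover $\deg(\b^{(k)})=\deg(\c^{(k)})=\deg(\a^{(k)})$ because $\c\in\tilde{S}(\a)_{k}$, so condition (1) of $H_{k}(\a)$ holds for $\b$; and condition (2) for $\b$ is precisely part of $H_{k}(\c)$, which $\b$ satisfies. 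Therefore $\b\in S(\a)_{k}$ and $\psi_{k}(\b)=\b^{(k)}=\d$, which establishes the surjectivity.

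I do not expect a genuine obstacle here, since the two substantive ingredients are already in place: the surjectivity of the unrestricted $\psi_{k}$ and the fact (Lemma \ref{lem: 3.0.7}(d)) that one can always pass from a multisegment failing $H_{k}$ down to one satisfying it without altering the $(k)$-truncation. The only point requiring care is the bookkeeping that the hypothesis $H_{k}(\c)$ produced for $\b$ upgrades to $H_{k}(\a)$; this is guaranteed by the degree equality $\deg(\c^{(k)})=\deg(\a^{(k)})$ built into membership in $\tilde{S}(\a)_{k}$, together with the observation that condition (2) of $H_k$ is intrinsic to the multisegment and does not depend on the chosen base. (Alternatively, one could run the same descent step-by-step along a maximal chain $\a^{(k)}=\d_0>\cdots>\d_r=\d$, as in the proof of Proposition \ref{prop: 1.5.8}, but the argument above via Lemma \ref{lem: 3.0.7}(d) is shorter.)
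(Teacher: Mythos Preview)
Your proof is correct and follows essentially the same route as the paper: pick a preimage $\c\in\tilde{S}(\a)_k$ via Proposition~\ref{prop: 1.5.8}, then use Lemma~\ref{lem: 3.0.7}(d) to descend to some $\b\leq\c$ satisfying $H_k$ with the same $(k)$-truncation, and observe that $\b\in S(\a)_k$ because the degree condition transfers from $\c$ and condition~(2) of $H_k$ is intrinsic to $\b$. The paper phrases the last step as the inclusion $S(\c)_k\subseteq S(\a)_k$, which is exactly the content of your explicit verification.
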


\begin{proof}
 For $\d\in S(\a^{(k)})$, by proposition \ref{prop: 1.5.8},
 we know that there exists $\c\in \tilde{S}(\a)_{k}$
 such that $\c^{(k)}=\d$. 
 But by $(d)$ in lemma \ref{lem: 3.0.7}, we know that 
 there exists $\c'\in S(\c)_{k}$ such that 
 $\c'^{(k)}=\c^{(k)}=\d$. 
 We conclude by the observation that if $\c\in \tilde{S}(\a)_{k}$, then 
 \[
  S(\c)_{k}\subseteq S(\a)_{k}.
 \] 
\end{proof}

Also, concerning the corollary \ref{cor: 1.5.9}, we have the 
following 

\begin{cor}\label{cor: 3.1.11}
Let $\c\in \tilde{S}(\a)_{k}$ and $\d\in S(\a^{(k)})$ such that
$\c^{(k)}>\d$. Then there exists a multisegment $\e\in S(\c)_{k}$
such that $\e^{(k)}=\d$.
\end{cor}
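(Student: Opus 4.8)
The plan is to deduce Corollary \ref{cor: 3.1.11} from the combination of Corollary \ref{cor: 1.5.9} and part (d) of Lemma \ref{lem: 3.0.7}, exactly mimicking the way Proposition \ref{prop: 1.5.10} was deduced from Proposition \ref{prop: 1.5.8}. First I would apply Corollary \ref{cor: 1.5.9} to the pair $\c \in \tilde{S}(\a)_k$ and $\d \in S(\a^{(k)})$ with $\c^{(k)} > \d$: this produces a multisegment $\e_0 \in \tilde{S}(\a)_k$ with $\c > \e_0$ and $\e_0^{(k)} = \d$. The only gap between this and the desired conclusion is that $\e_0$ need not satisfy the hypothesis $H_k(\a)$, i.e. we only know $\e_0 \in \tilde{S}(\a)_k$, not $\e_0 \in S(\a)_k$.

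To close that gap I would invoke part (d) of Lemma \ref{lem: 3.0.7}, but applied relative to $\e_0$ rather than $\a$: that is, using the observation (already recorded in the proof of Proposition \ref{prop: 1.5.10}) that $\tilde{S}(\e_0)_k \subseteq \tilde{S}(\a)_k$ and $S(\e_0)_k \subseteq S(\a)_k$ whenever $\e_0 \in \tilde{S}(\a)_k$. If $\e_0$ already verifies $H_k(\e_0)$ — equivalently, since condition (1) of $H_k$ is automatic here, $\e_0$ has no pair of linked segments ending in $k-1$ and $k$ — we are done with $\e = \e_0$. Otherwise Lemma \ref{lem: 3.0.7}(d) (with $\a$ replaced by $\e_0$) yields $\e \in S(\e_0)_k$ with $\e^{(k)} = \e_0^{(k)} = \d$; note that $\e < \e_0 < \c$ so in particular $\c > \e$, and by the containment $S(\e_0)_k \subseteq S(\a)_k$ we get $\e \in S(\c)_k$ as well, since the same containment applies with $\c$ in place of $\a$ (here using $\e_0 \in \tilde{S}(\c)_k$, which follows from $\c > \e_0$ via Lemma \ref{lem: 3.0.7}(b) applied inside $S(\c)$). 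This gives the required $\e \in S(\c)_k$ with $\e^{(k)} = \d$.

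I expect the main obstacle to be purely bookkeeping: keeping straight which base multisegment ($\a$, $\c$, or $\e_0$) the various sets $\tilde{S}(-)_k$, $S(-)_k$, and the hypothesis $H_k(-)$ are taken with respect to, and checking the nesting $S(\e_0)_k \subseteq S(\c)_k \subseteq S(\a)_k$ carefully. The substantive inputs — surjectivity of $\psi_k$ refined along chains (Corollary \ref{cor: 1.5.9}) and the repair step producing a multisegment satisfying $H_k$ with the same truncation (Lemma \ref{lem: 3.0.7}(d)) — are already available, so no new geometric or combinatorial idea is needed; the proof is a two-line concatenation of these facts together with the monotonicity/containment remarks made just before the statement.
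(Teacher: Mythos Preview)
Your proposal is correct and follows essentially the same approach as the paper's own proof: apply Corollary~\ref{cor: 1.5.9} to produce an $\e_0\in\tilde{S}(\c)_k$ with $\e_0^{(k)}=\d$, then use Lemma~\ref{lem: 3.0.7}(d) to upgrade to an $\e\in S(\e_0)_k\subseteq S(\c)_k$ with the same truncation. The paper's writeup is terser (it skips the case split on whether $\e_0$ already satisfies $H_k$) but the argument is identical.
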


\begin{proof}
By corollary \ref{cor: 1.5.9}, we know that there exists 
an $\e'\in \tilde{S}(\c)_{k}$ such that $\e'^{(k)}=\d$.
By $(d)$ in lemma \ref{lem: 3.0.7}, we know that 
 there exists $\e\in S(\e')_{k}$ such that 
 $\e^{(k)}=\e'^{(k)}=\d$. 
 Hence we conclude by 
 the fact that
 if $\e'\in \tilde{S}(\a)_{k}$, then 
 \[
  S(\e')_{k}\subseteq S(\a)_{k}.
 \]

\end{proof}

\begin{definition}
Let $k\in \Z$ and $\Delta$ be a segment.
\begin{displaymath}
 {^{(k)}\Delta}=\left\{ \begin{array}{cc}
         &{^{-}\Delta}, \text{ if } b(\Delta)=k;\\
         &\Delta, \text{ otherwise }.
         \end{array}\right.
\end{displaymath}

Let
$$
\a=\{\Delta_{1}, \cdots, \Delta_{r}\},
$$
be a multisegment, we define 
  \[
  {^{(k)}\a}=\{{^{(k)}\Delta_{1}}, \cdots,{^{(k)}\Delta_{r}}, \}.
 \]
%  and a map 
% \[
%  \psi_{\leq k}: \O(1)\rightarrow \O(1)
% \]
% sending $\a$ to $\a^{\leq k}$.
\end{definition}

\begin{definition}  We say that the multisegment $\b\in S(\a)$ satisfies \textbf{ the hypothesis ${_{k}H(\a)}$} if
 the following two conditions are verified 
\begin{description}
 \item[(1)]$\deg({^{(k)}\b})=\deg({^{(k)}\a})$;
 \item[(2)] there exists no pair of linked segments $\{\Delta, \Delta'\}$ such that 
 $$b(\Delta)=k, ~b(\Delta')=k+1.$$
\end{description}
\end{definition}

\remk 
There exists a version of lemma \ref{lem: 3.0.7} for ${^{(k)}a}$.
In the following sections, we will work exclusively with 
$\a^{(k)}$ and the hypothesis  $H_{k}(\a)$.
But all our results will remain valid if we replace $\a^{(k)}$ 
by  ${^{(k)}\a}$ and $H_{k}(\a)$ by ${_{k}H(\a)}$.

\section{Injectivity of \texorpdfstring{$\psi_{k}$}{Lg}: First Step}
By previous section, we know there exists $\c\in S(\a)_k$ such that $\c^{(k)}=(\a^{\min})_{\min}$, the 
minimal element in $S(\a^{(k)})$. In this section, we give an explicit construction of such a $\c$
and show that it is the unique multisegment in $S(\a)_k$ which is set to $(\a^{(k)})_{\min}$ by $\psi_k$.

%This will serve to prove the irreducibility of certain varieties we construct in the next
%section (See Prop.\ref{prop: 3.3.13}).
\begin{itemize}
\item In proposition \ref{prop: 3.2.2}, we construct a multisegment $\c\in S(\a_1)_k$ such that $\c^{(k)}=(\a^{(k)})_{\min}$, 
where $\a_1$ is a multisegment such that $\a\in S(\a_1)$.

\item We prove that there exists a unique element in  $S(\a)_{k}$ which is sent to $(\a^{(k)})_{\min}$ by $\psi_k$. 

\item Then we apply the uniqueness result to $S(\a_{1})_{k}$ to prove that the constructed $\c$ before is in $S(\a)_k$.\footnote{
Here we use partial derivative to prove our result, but it can also be done 
in a purely combinatorial way, which is less elegant and more lengthy though.
}
\end{itemize}

\begin{notation}\label{nota: 3.2.1}
Let $\ell_{k}=f_{e(\a)}(k)$ (cf. Def.\ref{def: 1.2.5}).
\end{notation}

%By surjectivity of $\psi_{k}$, 
%we know that there exists $\c\in S(\a)_{k}$
%such that $\c^{(k)}=(\a^{(k)})_{\min}$.

\begin{definition}
Let 
\[
 \a_{0}=\{\Delta\in (\a^{(k)})_{\min}: e(\Delta)=k-1\}.
\]
\end{definition}

\begin{prop}\label{prop: 3.2.2}
Let $\a_{0}=\{\Delta_{1}\succeq \cdots \succeq \Delta_{r}\}$. 
Let $\c$ be a multisegment such that 
\begin{description}
 \item[(1)]If $\varphi_{\a}(k-1)\geq \varphi_{\a}(k)$, then $r=\varphi_{\a}(k-1)-\varphi_{\a}(k)+\ell_{k}$.
 Let 
 \[
  \c=((\a^{(k)})_{\min}\setminus \a_{0})\cup\{\Delta_{1}^+\succeq \cdots \succeq \Delta_{\ell_{k}}^+\succeq \Delta_{m+1}\succeq \cdots \succeq \Delta_{r} \}.
 \]
\item[(2)]If $\varphi_{\a}(k)-\ell_{k}<\varphi_{\a}(k-1)<\varphi_{\a}(k)$, then $r=\varphi_{\a}(k-1)-\varphi_{\a}(k)+\ell_{k}$. Let 
\[
 \c=((\a^{(k)})_{\min}\setminus \a_{0})\cup \{\Delta_{1}^+\succeq \cdots \succeq \Delta_{r}^+\succ\underbrace{ [k]=\cdots =[k]}_{\l_k-r}\}
\]
\item[(3)]If $\varphi_{\a}(k-1)\leq \varphi_{\a}(k)-\ell_{k}$, then $\a_{0}=\emptyset$ and
\[
 \c=\a^{(k)}+\ell_{k}[k].
\]
Then $\c$ satisfies the hypothesis $ H_{k}(\c)$ and $\c^{(k)}=(\a^{(k)})_{\min}$.
\end{description}
\end{prop}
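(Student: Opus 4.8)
The plan is to make everything rest on an explicit description of $(\a^{(k)})_{\min}$ near the levels $k-1$ and $k$, after which the three cases become bookkeeping. First I would recall two inputs: from the proof of the proposition on $\a_{\min}$, that the minimal multisegment of a poset $S(\b)$ is the unique $\b$-multisegment with no pair of linked segments; and the elementary remark that such a multisegment can never contain both a segment ending at $k-1$ and a segment beginning at $k$, since those two would be linked. Using this together with the facts that $\a^{(k)}$ — and hence $(\a^{(k)})_{\min}$, by Lemma \ref{lem: 2.1.3} — has no segment ending at $k$, while $\varphi_{\a^{(k)}}(k-1)=\varphi_{\a}(k-1)$ and $\varphi_{\a^{(k)}}(k)=\varphi_{\a}(k)-\ell_k$, I would deduce that the number of segments of $(\a^{(k)})_{\min}$ ending at $k-1$ is $\max\{0,\,\varphi_{\a}(k-1)-\varphi_{\a}(k)+\ell_k\}$. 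This is exactly the value $r$ of cases (1) and (2), and is $0$ in case (3); moreover, since those segments share the endpoint $k-1$ they are totally nested, which forces $\Delta_1\supseteq\cdots\supseteq\Delta_r$ and pins down $\a_0$.

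Next I would check $\c^{(k)}=(\a^{(k)})_{\min}$ by direct computation. In each case $\c$ is obtained from $(\a^{(k)})_{\min}$ by (a) replacing some segments $\Delta_i$, all ending at $k-1$, by $\Delta_i^+=[b(\Delta_i),k]$, and (b) adjoining some copies of $[k]$; in case (3), where $\a_0=\emptyset$, only step (b) occurs (one also checks that the $\a^{(k)}$ written in case (3) is already minimal, which holds e.g. whenever $\a$ is minimal, truncation of right endpoints being unable to create a linked pair). Since $e(\Delta_i^+)=k$ one has $(\Delta_i^+)^{(k)}=(\Delta_i^+)^-=\Delta_i$, while $[k]^{(k)}$ is the empty segment and every other segment of $\c$ has endpoint $\neq k$ and is therefore fixed by $(\,\cdot\,)^{(k)}$; so $\c^{(k)}$ reproduces $(\a^{(k)})_{\min}$ exactly. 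The only bookkeeping point is that the number of replaced segments plus adjoined copies of $[k]$ equals $\ell_k$ — $\ell_k$ in case (1), $r+(\ell_k-r)$ in case (2), $\ell_k$ in case (3) — which is forced by the value of $r$ and by $\deg((\a^{(k)})_{\min})=\deg(\a^{(k)})$.

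Finally I would verify $H_k(\c)$. Taking $\b=\c$, condition (1) is vacuous, so only condition (2) has content: $\c$ must contain no linked pair $\{\Delta,\Delta'\}$ with $e(\Delta)=k-1$ and $e(\Delta')=k$. In cases (2) and (3) no segment of $\c$ ends at $k-1$ at all — in case (2) because all of $\a_0$ was replaced, and in case (3) because $\a_0=\emptyset$ — so condition (2) holds trivially. In case (1) the segments of $\c$ ending at $k-1$ are precisely $\Delta_{\ell_k+1},\cdots,\Delta_r$ and those ending at $k$ are precisely $\Delta_1^+,\cdots,\Delta_{\ell_k}^+$ (since $(\a^{(k)})_{\min}$ has no segment ending at $k$); and for $i\leq\ell_k<j$ the nesting $\Delta_i\supseteq\Delta_j$ gives $b(\Delta_i)\leq b(\Delta_j)$, whence $\Delta_j=[b(\Delta_j),k-1]\subseteq[b(\Delta_i),k]=\Delta_i^+$, and two segments one of which contains the other are not linked.

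The part I expect to be the main obstacle is the first step: pinning down the local shape of $(\a^{(k)})_{\min}$ — the count $|\a_0|=r$, the total nesting of $\a_0$, and the absence of any segment ending at $k$ — and, for case (3), the point that $\varphi_{\a}(k-1)\leq\varphi_{\a}(k)-\ell_k$ forces $\a_0=\emptyset$, so that the adjoined copies of $[k]$ cannot be linked to anything ending at $k-1$. Once that local picture is in hand, the rest is the routine verification sketched above; the only minor wrinkle is degenerate one-point segments $[k]$, which $(\,\cdot\,)^{(k)}$ simply deletes and which therefore only shorten the counts.
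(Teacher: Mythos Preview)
Your argument is correct and follows the same route as the paper's (which only writes out case~(1) explicitly and declares the others similar); your check of $H_k(\c)$ in case~(1) via the nesting $\Delta_j\subseteq\Delta_i^+$ is in fact more detailed than what the paper provides. You are right to flag case~(3): as written, $\c=\a^{(k)}+\ell_k[k]$ only yields $\c^{(k)}=(\a^{(k)})_{\min}$ when $\a^{(k)}$ is already minimal --- this is a typo in the statement, and one should read $(\a^{(k)})_{\min}$ there, consistently with the other two cases and with how $\c$ is used immediately afterwards (where the paper writes $\c\in S((\a^{(k)})_{\min}+m[k])$).
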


%\remk
%The case (1) can be illustrated by the following figure
%
%\begin{tikzpicture}
%\draw (0,0)--(1, 0)
%\end{tikzpicture}

\begin{proof}
We show only the case $\varphi_{\a}(k-1)>\varphi_{\a}(k)$,
the proof for other cases is similar.
Note that we have the following equality
\[
 \varphi_{\a}(k-1)=\varphi_{(\a^{(k)})_{\min}}(k-1)=r+\sharp \{\Delta\in (\a^{(k)})_{\min}: \Delta\supseteq [k-1, k]\}.
\]
Moreover, $\varphi_{\a}(k-1)>\varphi_{\a}(k)$ implies 
that no segment in $(\a^{(k)})_{\min}$ starts at $k$ by minimality, hence we also have
\[
 \varphi_{\a}(k)=\varphi_{(\a^{(k)})_{\min}}(k)+\l_k=\sharp \{\Delta\in (\a^{(k)})_{\min}: \Delta\supseteq [k-1, k]\}+\l_k.
\]
Now comparing the two formulas gives
the equality 
$r=\varphi_{\a}(k-1)-\varphi_{\a}(k)+\ell_{k}$.
By definition we have $\c^{(k)}=(\a^{(k)})_{\min}$.
To check that $\c$ satisfies the hypothesis $H_{k}(\c)$, it suffices 
to note that $(\a^{(k)})_{\min}\setminus \a_{0}$ does not contain segment 
which ends in $k-1$.
\end{proof}

\begin{lemma}
 Let $\c\in S(\c)_{k}$ be a multisegment such that $\c^{(k)}$ is minimal. Then 
 if $\d\in S(\c)$ such that $\d^{(k)}=\c^{(k)}$, then $\c=\d$ 
\end{lemma}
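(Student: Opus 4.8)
The plan is to assume $\d\neq\c$, descend from $\c$ to $\d$ along a chain of elementary operations, and track the $k$-truncation by re-using the case analysis already carried out inside the proof of lemma \ref{lem: 3.0.7}(c). First I would record two preliminary observations. Since $\d^{(k)}=\c^{(k)}$ we get $\deg(\d^{(k)})=\deg(\c^{(k)})$, hence $\d\in\tilde{S}(\c)_{k}$. And the hypothesis $\c\in S(\c)_{k}$ says exactly that $\c$ verifies $H_{k}(\c)$, since $\c$ automatically lies in $\tilde{S}(\c)_{k}$ (the minimality of $\c^{(k)}$ will in fact not be needed).

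Now suppose $\d\neq\c$, so $\d<\c$, and fix a chain of single elementary operations $\c=\c_{0}>\c_{1}>\cdots>\c_{r}=\d$ with $r\geq 1$. Every $\c_{i}$ lies in $S(\c)$; moreover for $i<r$ one has $\c_{i}>\d$ with $\d\in\tilde{S}(\c)_{k}$, so part (b) of lemma \ref{lem: 3.0.7} forces $\c_{i}\in\tilde{S}(\c)_{k}$, and together with $\c_{0}=\c$ and $\c_{r}=\d$ this gives $\deg(\c_{i}^{(k)})=\deg(\c^{(k)})$ for every $i$.

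Next I would invoke the two steps of the proof of lemma \ref{lem: 3.0.7}(c). Step (1) of that proof shows that whenever $\c_{i+1}$ is obtained from $\c_{i}$ by one elementary operation and $\deg(\c_{i}^{(k)})=\deg(\c_{i+1}^{(k)})$, then $\c_{i+1}^{(k)}$ is obtained from $\c_{i}^{(k)}$ either by an elementary operation or equals it, so $\c_{i}^{(k)}\geq\c_{i+1}^{(k)}$ in all cases. Step (2) of that proof, which uses precisely that $\c$ verifies $H_{k}(\c)$ together with the degree equality of the previous paragraph, shows that the first drop is strict: $\c_{1}^{(k)}<\c^{(k)}$. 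Chaining these inequalities gives
\[
\c^{(k)}=\d^{(k)}=\c_{r}^{(k)}\leq\c_{r-1}^{(k)}\leq\cdots\leq\c_{1}^{(k)}<\c^{(k)},
\]
a contradiction; hence $\d=\c$.

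The one point requiring care is the appeal to Step (2) of the proof of lemma \ref{lem: 3.0.7}(c): one must check that the single configuration appearing in Step (1) which could produce $\c_{1}^{(k)}=\c^{(k)}$ rather than a strict inequality — namely $\Delta$ preceding $\Delta'$ with $e(\Delta)=k-1$ and $e(\Delta')=k$ — is exactly the configuration forbidden by condition (2) of the hypothesis $H_{k}(\c)$, so that it cannot occur at the top of the chain. I expect this to be the main (and essentially only) obstacle; the remainder is bookkeeping in the poset $S(\c)$ and with the degree function.
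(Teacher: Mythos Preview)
Your proof is correct and, for the lemma as stated, cleaner than the paper's. You have in effect recognised that the claim is just the second clause of lemma~\ref{lem: 3.0.7}(c) applied with $\a=\c$ and $\b=\d$: if $\c$ satisfies $H_k(\c)$ and $\d\in\tilde S(\c)_k$ with $\d\neq\c$, then $\d^{(k)}\in S(\c^{(k)})\setminus\{\c^{(k)}\}$, contradicting $\d^{(k)}=\c^{(k)}$. Your argument transparently unpacks this via steps~(1) and~(2) of that proof, and you are right that the minimality of $\c^{(k)}$ plays no role here.

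The paper proceeds differently: after using step~(1) to force $\c_i^{(k)}=\c^{(k)}$ along the whole chain and reducing to a single elementary operation, it runs a fresh case analysis on the pair $\{\Delta,\Delta'\}$ which uses the minimality of $\c^{(k)}$ to exclude each configuration. The purpose of this detour is the final sentence of the paper's proof, which records the stronger conclusion that \emph{every} $\d<\c$ lies outside $\tilde S(\c)_k$; this is what is actually invoked in the proof of proposition~\ref{prop: 3.2.4}. Your method recovers this stronger fact as well once minimality is brought in: from $\c_1^{(k)}<\c^{(k)}$ and $\c_1^{(k)}\in S(\c^{(k)})=\{\c^{(k)}\}$ one gets an immediate contradiction for any $\d<\c$ in $\tilde S(\c)_k$.
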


\begin{proof}
Suppose that $\d<\c$ is a multisegment such that $\d^{(k)}=\c^{(k)}$. Consider the maximal chain of multisegments
\[
\c=\c_0>\cdots > \c_t=\d.
\]
Our assumption implies that $\c_i^{(k)}=\c^{(k)}$ for all $i=1, \cdots, t$ by lemma \ref{lem: 3.0.7}. Hence we can assume $t=1$ and consider 
$\d\in S(\c)$ to be a multisegment obtained by applying the elementary
operation to the pair of linked segments $\{\Delta\prec \Delta'\}$.
\begin{itemize}
 \item If $e(\Delta)\neq k, e(\Delta')\neq k$, then 
the pair $\{\Delta, \Delta'\}$ also appears in $\c^{(k)}$, 
contradict the fact that $\c^{(k)}$ is minimal.
\item If $e(\Delta')=k$, then by the fact that $\c\in S(\c)_{k}$, 
we know that $e(\Delta)<k-1$, which implies that the pair 
$\{\Delta, \Delta^-\}$ is linked and belongs to $\c^{(k)}$,contradiction.
\item If $e(\Delta')=k$ and $b(\Delta')<k+1$, then the 
pair $\{\Delta^-, \Delta'\}$ is still linked and belongs to 
$\c^{(k)}$, contradiction.
\end{itemize} 
Hence we must have $e(\Delta')=k$ and $b(\Delta')=k+1$, this
implies that $\deg(\d^{(k)})>\deg(\c^{(k)})$ and $\d\notin \tilde{S}(\c)_{k}$.
Finally, (b) of lemma \ref{lem: 3.0.7} implies that for all $\d<\c$, 
we have $\d\notin \tilde{S}(\c)_{k}$.
\end{proof}

\begin{prop}\label{prop: 3.2.4}
Let $\c\in S(\c)_{k}$ be a multisegment such that $\c^{(k)}$ is minimal. Then
the partial derivative $\D^{k}(L_{\c})$ contains in $\mathcal{R}$ a unique term 
of minimal degree $L_{\c^{(k)}}$, which appears with multiplicity one.
\end{prop}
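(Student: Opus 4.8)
The plan is to reduce the statement to the multiplicativity of $\D^{k}$ together with the positivity of Theorem \ref{teo: 3}. The first observation is that, since $\c^{(k)}$ is minimal, $\pi(\c^{(k)})$ is irreducible (by the remark following Proposition \ref{prop: 1.1.11} on the structure of $S(\a)$), so that $\pi(\c^{(k)})=L_{\c^{(k)}}$; this is what makes the minimal-degree part of $\D^{k}(L_{\c})$ a single basis vector rather than a combination.

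Next I would compute $\D^{k}$ on induced representations. Writing $\c=\{\Delta_{1},\dots,\Delta_{r}\}$ and using that $\D^{k}$ is a ring morphism, $\D^{k}(\pi(\c))=\prod_{i=1}^{r}\D^{k}(L_{\Delta_{i}})$, where $\D^{k}(L_{\Delta})=L_{\Delta}$ if $e(\Delta)\ne k$, $\D^{k}(L_{[j,k]})=L_{[j,k]}+L_{[j,k-1]}$ if $j<k$, and $\D^{k}(L_{[k]})=L_{[k]}+1$. Expanding this product, each monomial is $\pi(\delta)$ for some multisegment $\delta$ with $\deg(\delta)\geq\deg(\c)-\sharp\{\Delta\in\c:e(\Delta)=k\}=\deg(\c^{(k)})$, and the value $\deg(\c^{(k)})$ is attained by exactly one monomial, namely $\pi(\c^{(k)})$ (obtained by selecting the shorter segment $L_{\Delta^{-}}$, resp.\ the unit $1$, from every factor with $e(\Delta)=k$). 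Using the first step, this gives
\[
\D^{k}(\pi(\c))=L_{\c^{(k)}}+(\text{terms of degree}>\deg(\c^{(k)})),
\]
and the same computation applied to an arbitrary multisegment $\b$ shows that every term occurring in $\D^{k}(\pi(\b))$ has degree $\geq\deg(\b^{(k)})$.

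Then I would pass from $\pi(\cdot)$ to $L_{(\cdot)}$. From $\pi(\b)=\sum_{\b'\leq\b}m(\b',\b)L_{\b'}$ with nonnegative coefficients and Theorem \ref{teo: 3} (each $\D^{k}(L_{\b'})$ has nonnegative coordinates in the basis $\{L_{\d}\}$), one gets that $\D^{k}(L_{\b})$ is coordinatewise dominated by $\D^{k}(\pi(\b))$; hence every term occurring in $\D^{k}(L_{\b})$ has degree $\geq\deg(\b^{(k)})$. Now for $\b<\c$ the preceding lemma — more precisely the argument in its proof, which uses that $\c$ satisfies $H_{k}(\c)$, that $\c^{(k)}$ is minimal, and parts (a)--(c) of Lemma \ref{lem: 3.0.7} — shows that $\b\notin\tilde S(\c)_{k}$, so $\deg(\b^{(k)})>\deg(\c^{(k)})$. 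Finally, in
\[
\D^{k}(L_{\c})=\D^{k}(\pi(\c))-\sum_{\b<\c}m(\b,\c)\,\D^{k}(L_{\b}),
\]
the subtracted sum contributes only terms of degree $>\deg(\c^{(k)})$, so the part of $\D^{k}(L_{\c})$ of degree $\leq\deg(\c^{(k)})$ coincides with that of $\D^{k}(\pi(\c))$, which is exactly $L_{\c^{(k)}}$ with coefficient $1$. Hence $L_{\c^{(k)}}$ is the unique term of minimal degree in $\D^{k}(L_{\c})$, and it occurs with multiplicity one.

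The step I expect to need the most care is the combinatorial bookkeeping in the product expansion — verifying that the unique degree-minimal monomial of $\prod_{i}\D^{k}(L_{\Delta_{i}})$ is precisely $\pi(\c^{(k)})$ and that all other monomials are strictly larger in degree — together with the correct invocation, for $\b<\c$, of $\b\notin\tilde S(\c)_{k}$; everything else is formal positivity and degree counting.
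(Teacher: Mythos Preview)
Your proof is correct and follows essentially the same approach as the paper: compute $\D^{k}(\pi(\c))$ via multiplicativity to identify $\pi(\c^{(k)})=L_{\c^{(k)}}$ as the unique minimal-degree term, use the preceding lemma to conclude $\b\notin\tilde S(\c)_{k}$ (hence $\deg(\b^{(k)})>\deg(\c^{(k)})$) for every $\b<\c$, and then combine positivity of $\D^{k}$ on irreducibles with $\pi(\c)=L_{\c}+\sum_{\b<\c}m(\b,\c)L_{\b}$ to isolate the minimal-degree part of $\D^{k}(L_{\c})$. Your presentation is in fact a bit more explicit than the paper's in two places --- you spell out that $\pi(\c^{(k)})$ is irreducible because $\c^{(k)}$ is minimal, and you phrase the positivity step as coordinatewise domination $\D^{k}(L_{\b})\leq\D^{k}(\pi(\b))$ --- but these are the same ideas.
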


\begin{proof}
Let 
$\c=\{\Delta_{1}, \cdots, \Delta_{r}\}$
such that $e(\Delta_{t})=k$ if and only if 
$t=i, \cdots, j$ with $i\leq j$.
Then 
\[
 \D^k(\pi(\c))=\Delta_{1}\times \cdots 
 \times \Delta_{i-1}\times (\Delta_{i}+\Delta_{i}^-)\times
 \cdots \times (\Delta_{j}+\Delta_{j}^-)\times \Delta_{j+1}\times 
 \cdots\times \Delta_{r}
\]
with minimal degree term given by 
\[
 \pi(\c^{(k)})=\Delta_{1}\times \cdots \times \Delta_{i-1}\times
 \Delta_{i}^-\times \cdots \times \Delta_{j}^-\times \Delta_{j+1}
 \times \cdots \times \Delta_{r}.
\]
The same calculation shows that for any $\d\in S(\c)$, 
the minimal degree terms in $\D^k(\pi(\d))$ is given 
by $\pi(\d^{(k)})$, whose degree is strictly greater
than that of $\c^{(k)}$ since by previous lemma we know that $\d\notin \tilde{S}(\c)_k$. Note that 
$\D^k(L_{\d})$ is a non-negative sum of irreducible representations
( Theorem \ref{teo: 3}), which cannot contain 
any representation of degree equal to that of $\c^{(k)}$, 
by comparing the minimal degree terms in 
$\D^k(\pi(\d))$ and $\sum_{\e\in S(\d)}m(\e, \d)\D^k(L_{\e})$.
Finally, comparing the minimal degree terms in 
 $\D^k(\pi(\c))$ and $\sum_{\e\in S(\c)}m(\e, \c)\D^k(L_{\e})$
 gives the proposition.
 
\end{proof}

\begin{prop}\label{prop: 3.2.5}
Let $\a$ be a multisegment. 
Then  $S(\a)_{k}$ contains a unique 
multisegment $\c$ such that $\c^{(k)}=(\a^{(k)})_{\min}$.
\end{prop}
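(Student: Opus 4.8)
The plan is to prove existence and uniqueness separately. Existence is immediate from Proposition~\ref{prop: 1.5.10}: the map $\psi_k\colon S(\a)_k\to S(\a^{(k)})$ is surjective and $(\a^{(k)})_{\min}\in S(\a^{(k)})$, so at least one $\c\in S(\a)_k$ satisfies $\c^{(k)}=(\a^{(k)})_{\min}$. The substance of the statement is therefore uniqueness, which I would obtain by reconstructing such a $\c$ explicitly from its image together with the hypothesis $H_k(\a)$.

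First I would pin down the shape of $\c$. Since $\c^{(k)}$ is obtained from $\c$ by replacing each segment $\Delta$ with $e(\Delta)=k$ by $\Delta^{-}$ (the segments $\Delta=[k]$ producing nothing), the segments of $\c$ not ending at $k$ form a sub-multiset of $\c^{(k)}=(\a^{(k)})_{\min}$, and the complementary multiset $X$ inside $(\a^{(k)})_{\min}$ consists exactly of the truncations $\Delta^{-}$ of the segments $\Delta$ of $\c$ ending at $k$ with $b(\Delta)<k$; in particular every member of $X$ ends at $k-1$. Hence
\[
\c=\bigl((\a^{(k)})_{\min}\setminus X\bigr)\ \cup\ \{\Delta^{+}:\Delta\in X\}\ \cup\ \{\,\underbrace{[k],\ldots,[k]}_{j}\,\}
\]
for a sub-multiset $X$ of the segments of $(\a^{(k)})_{\min}$ ending at $k-1$ and an integer $j\ge 0$. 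Comparing degrees, using $\deg(\c)=\deg(\a)$ and $\deg\bigl((\a^{(k)})_{\min}\bigr)=\deg(\a^{(k)})=\deg(\a)-\ell_k$ (Notation~\ref{nota: 3.2.1}), yields $|X|+j=\ell_k$, so $\c$ is determined by $X$ once $j=\ell_k-|X|$.

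Next I would invoke condition~(2) of $H_k(\a)$: $\c$ has no pair of linked segments $\{\Delta,\Delta'\}$ with $e(\Delta)=k-1$ and $e(\Delta')=k$. In $\c$ the segments ending at $k-1$ are the members of $(\a^{(k)})_{\min}$ ending at $k-1$ that are \emph{not} in $X$ (the ``kept'' ones), while the segments ending at $k$ are the $\Delta^{+}$ for $\Delta\in X$ together with the $j$ copies of $[k]$; a direct check shows $\{[a,k-1],[b,k]\}$ is linked exactly when $a<b$. So the hypothesis says: for every segment $[b,k]$ of $\c$ and every kept segment $[a,k-1]$ one has $b\le a$. Since $[k]$ has beginning $k>k-1\ge a$, if $j>0$ there can be no kept $(k-1)$-ending segment, which forces $X$ to be \emph{all} segments of $(\a^{(k)})_{\min}$ ending at $k-1$; and if $j=0$ the condition forces the beginnings occurring in $X$ to lie entirely below those of the kept segments, i.e.\ $X$ must be the $\ell_k$ segments of smallest beginning among those of $(\a^{(k)})_{\min}$ ending at $k-1$. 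In either case $X$, hence $j$, hence $\c$, is uniquely determined (by whether $\ell_k$ exceeds the number of $(k-1)$-ending segments of $(\a^{(k)})_{\min}$); combined with existence this proves the proposition, and one recognizes that the unique $\c$ is the multisegment exhibited in Proposition~\ref{prop: 3.2.2}.

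The step requiring the most care is the reconstruction of $\c$: one must track multiplicities scrupulously (the $(k-1)$-ending segments of $(\a^{(k)})_{\min}$ may repeat, and a $(k-1)$-ending segment of $\c$ may coincide with the truncation of a $k$-ending one), and one must verify that the ``initial segment'' choice of $X$ genuinely avoids a bad pair, not merely that every other choice creates one. As the presence of Proposition~\ref{prop: 3.2.4} suggests, uniqueness can also be approached via partial derivatives: any such $\c$ satisfies its hypotheses (it lies in $S(\c)_k$, and $(\a^{(k)})_{\min}$ is trivially the minimum of $S\bigl((\a^{(k)})_{\min}\bigr)$), so $\D^k(L_{\c})$ has $L_{(\a^{(k)})_{\min}}$ as its only minimal-degree term with multiplicity one; expanding $\D^k(\pi(\c))=\sum_{\b\le\c}m(\b,\c)\D^k(L_{\b})$ and using the positivity of Theorem~\ref{teo: 3} then forces no multisegment strictly below such a $\c$ to map to $(\a^{(k)})_{\min}$, which one can push to full uniqueness by descending along an elementary operation applied to a bad pair.
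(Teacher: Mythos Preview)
Your combinatorial reconstruction is correct and complete: existence comes from Proposition~\ref{prop: 1.5.10}, and your analysis of the shape of $\c$ together with condition~(2) of $H_k(\a)$ forces $X$ (hence $j$, hence $\c$) uniquely, exactly as in Proposition~\ref{prop: 3.2.2}. The linkedness check for $\{[a,k-1],[b,k]\}$ and the multiplicity bookkeeping are both fine.

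This is, however, a genuinely different route from the paper's. The paper does not reconstruct $\c$ at all for uniqueness; instead it expands $\D^{k}(\pi(\a))=\sum_{\d\in S(\a)}m(\d,\a)\,\D^{k}(L_{\d})$, observes that the minimal-degree part of the left side is $\pi(\a^{(k)})$ in which $L_{(\a^{(k)})_{\min}}$ occurs once, and then applies Proposition~\ref{prop: 3.2.4} together with the positivity of Theorem~\ref{teo: 3}: each $\c\in S(\a)_k$ with $\c^{(k)}=(\a^{(k)})_{\min}$ contributes at least $m(\c,\a)\ge 1$ to the coefficient of $L_{(\a^{(k)})_{\min}}$, and all other contributions are nonnegative, so there can be only one such $\c$. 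Your approach is more elementary and independent of the positivity machinery; the paper's is shorter and conceptual but leans on Theorem~\ref{teo: 3} and Proposition~\ref{prop: 3.2.4}. Note that your closing sketch of the derivative approach expands $\D^{k}(\pi(\c))$ rather than $\D^{k}(\pi(\a))$, which only yields ``nothing strictly below $\c$ works'' and then needs the extra descent step you mention; expanding $\D^{k}(\pi(\a))$ as the paper does gives global uniqueness in one stroke.
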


\begin{proof}
Let 
$\a=\{\Delta_{1}', \cdots, \Delta_{s}'\}$
such that $e(\Delta_{t}')=k$ if and only if 
$n=i, \cdots, j$ with $i\leq j$.
Then 
\[
 \D^k(\pi(\a))=\Delta_{1}'\times \cdots 
 \times \Delta_{i-1}'\times (\Delta_{i}'+{\Delta_{i}'}^{-})\times
 \cdots \times (\Delta_{j}'+\Delta_{j}'^-)\times \Delta_{j+1}'\times 
 \cdots\times \Delta_{s}'
\]
with minimal degree term given by 
\[
 \pi(\a^{(k)})=\Delta_{1}'\times \cdots \times \Delta_{i-1}'\times
 \Delta_{i}'^-\times \cdots \times \Delta_{j}'^-\times \Delta_{j+1}'
 \times \cdots \times \Delta_{r}'.
\]
Note that in $\pi(\a^{(k)})$, 
$m((\a^{(k)})_{\min}, \a^{(k)})=1$(cf. \cite{Z3}). Now compare 
with the terms of minimal degree in 
$\sum_{\d\in S(\a)}m(\d, \a)\D^k(L_{\d})$ 
and apply the proposition \ref{prop: 3.2.5}
yields the uniqueness of $\c$ such that 
$\c^{(k)}=(\a^{(k)})_{\min}$.
\end{proof}

\begin{prop}
 Let $\c$ be the multisegment constructed in 
 proposition \ref{prop: 3.2.2}. Then $\c\in S(\a)$.
\end{prop}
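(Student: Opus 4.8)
The plan is to compare the explicitly constructed $\c$ with the element of $S(\a)_{k}$ that lies over $(\a^{(k)})_{\min}$ and is supplied abstractly by the previous section, by routing both through a single common upper bound $\a_{1}$ of $\a$ and $\c$. Explicitly, I would set
\[
\a_{1}=\a^{(k)}+\ell_{k}[k],
\]
where $\ell_{k}=f_{e(\a)}(k)$ as in Notation~\ref{nota: 3.2.1}; this is the multisegment $\a_{1}$ with $\a\in S(\a_{1})$ and $\c\in S(\a_{1})_{k}$ alluded to in the outline.

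First I would record the elementary features of $\a_{1}$. No segment of $\a^{(k)}$ ends at $k$, and each of the $\ell_{k}$ adjoined copies of $[k]$ is killed by the truncation $(-)^{(k)}$, so $\a_{1}^{(k)}=\a^{(k)}$; hence $\deg(\a_{1}^{(k)})=\deg(\a^{(k)})$ and $(\a_{1}^{(k)})_{\min}=(\a^{(k)})_{\min}$. Next, $\a\le\a_{1}$: the multisegment $\a^{(k)}$ is $\a$ with its $\ell_{k}$ segments ending at $k$ shortened, and $\a$ is recovered from $\a_{1}$ by applying, for each such segment that is not already the point $[k]$, the elementary operation to the linked pair $\{[a,k-1],[k]\}$ (union the segment $[a,k]$), leaving the genuine point segments untouched. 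Finally, $\c\le\a_{1}$: by monotonicity of $\mathbf{x}\mapsto\mathbf{x}+\ell_{k}[k]$ and the definitional inequality $(\a^{(k)})_{\min}\le\a^{(k)}$ one gets $(\a^{(k)})_{\min}+\ell_{k}[k]\le\a_{1}$, and inspecting the three cases in the definition of $\c$ in Proposition~\ref{prop: 3.2.2} one checks that $\c$ is obtained from $(\a^{(k)})_{\min}+\ell_{k}[k]$ by performing the elementary operation on the pairs $\{\Delta_{i},[k]\}$, where $\Delta_{i}$ ranges over the segments of $(\a^{(k)})_{\min}$ ending at $k-1$ that are lengthened in the construction (in case (3), $\c=\a_{1}$ outright). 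Thus $\c\le\a_{1}$.

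With this in hand I would conclude as follows. By Proposition~\ref{prop: 1.5.10}, $\psi_{k}:S(\a)_{k}\to S(\a^{(k)})$ is surjective, so there is $\c_{0}\in S(\a)_{k}$ with $\c_{0}^{(k)}=(\a^{(k)})_{\min}$; in particular $\c_{0}\le\a\le\a_{1}$. Both $\c$ and $\c_{0}$ then lie in $S(\a_{1})_{k}$: each is $\le\a_{1}$; both $\c^{(k)}$ and $\c_{0}^{(k)}$ have degree $\deg(\a^{(k)})=\deg(\a_{1}^{(k)})$, so $\c,\c_{0}\in\widetilde{S}(\a_{1})_{k}$; and each satisfies clause (2) of $H_{k}$ — for $\c$ by Proposition~\ref{prop: 3.2.2}, for $\c_{0}$ because $\c_{0}$ satisfies $H_{k}(\a)$ — whence each satisfies $H_{k}(\a_{1})$. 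Moreover $\c^{(k)}=\c_{0}^{(k)}=(\a^{(k)})_{\min}=(\a_{1}^{(k)})_{\min}$. By Proposition~\ref{prop: 3.2.5} applied to $\a_{1}$ there is a unique element of $S(\a_{1})_{k}$ with this property, so $\c=\c_{0}\in S(\a)$.

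The step requiring genuine care is the verification that $\c$ is reached from $(\a^{(k)})_{\min}+\ell_{k}[k]$ by the advertised elementary operations: this forces one to go separately through the three cases of Proposition~\ref{prop: 3.2.2}, to identify precisely the segments of $(\a^{(k)})_{\min}$ ending at $k-1$ together with the extra copies of $[k]$, and to handle the degenerate configurations in which a segment of $\a$ ending at $k$ is itself the point $[k]$. Everything else — the bookkeeping with degrees and with clause (2) of $H_{k}$ — is routine once the surjectivity and uniqueness statements for $\psi_{k}$ from the previous section are invoked.
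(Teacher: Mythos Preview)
Your proof is correct and follows the same route as the paper: introduce the common upper bound $\a_{1}=\a^{(k)}+\ell_{k}[k]$, place both the explicitly constructed $\c$ and the abstractly supplied element of $S(\a)_{k}$ over $(\a^{(k)})_{\min}$ inside $S(\a_{1})_{k}$, and then invoke the uniqueness statement of Proposition~\ref{prop: 3.2.5} applied to $\a_{1}$. Your write-up fills in several details that the paper leaves implicit (the explicit elementary operations yielding $\a\le\a_{1}$ and $\c\le(\a^{(k)})_{\min}+\ell_{k}[k]$, the verification of $H_{k}$), but the strategy and each step are the same.
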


\begin{proof}
Let $$\a_{1}=\a^{(k)}+m[k],$$ then we observe that 
$\a\in S(\a_{1})$.
Because of 
$\c\in S((\a^{(k)})_{\min}+m[k])$, 
we have $\c\in S(\a_{1})$.
Note that since $\deg ((\a_{1})^{(k)})=\deg (\c^{(k)})$,
the fact that $\c\in S(\c)_{k}$ implies that
$\c\in S(\a_{1})_{k}$. Now 
let $\d\in S(\a)_{k}$, then we have 
$\d\in S(\a_{1})_{k}$ since $\deg(\d^{(k)})=\deg (\a_{1}^{(k)})=\deg(\a^{(k)})$.
Assume furthermore that $\d^{(k)}$ is minimal, then 
by  proposition \ref{prop: 3.2.5}, we know that such a multisegment
in $S(\a_{1})_{k}$ is unique, which implies $\d=\c$.
\end{proof}

\begin{cor}\label{cor: 3.2.7}
Let $\c\in S(\a)_{k}$ such that $\c^{(k)}=(\a^{(k)})_{\min}$, then 
$\c$ is minimal in $\tilde{S}(\a)_{k}$. 
\end{cor}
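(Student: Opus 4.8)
The plan is to reduce the statement to the lemma preceding Proposition~\ref{prop: 3.2.4}, combined with the fact that the invariant $\d\mapsto\deg(\d^{(k)})$ is monotone along chains in $S(\a)$. Recall that $\c$ being minimal in $\tilde{S}(\a)_k$ means exactly that every $\d\in\tilde{S}(\a)_k$ with $\d\leq\c$ must equal $\c$. So I would fix such a $\d$ and aim to prove $\d=\c$.

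First I would check that $\c$ meets the hypotheses of the lemma preceding Proposition~\ref{prop: 3.2.4}. On the one hand $\c\in S(\c)_k$: indeed $\c\in\tilde{S}(\c)_k$ trivially, and condition~(2) of $H_k(\c)$ is literally condition~(2) of $H_k(\a)$, which holds because $\c\in S(\a)_k$. On the other hand $\c^{(k)}$ is minimal, since $\c^{(k)}=(\a^{(k)})_{\min}$ contains no linked segment by the characterization of $\a_{\min}$. Hence that lemma applies to $\c$; moreover its proof actually yields the sharper statement $\tilde{S}(\c)_k=\{\c\}$. To see this, note that any $\e\in\tilde{S}(\c)_k$ has $\e^{(k)}\in S(\c^{(k)})$ by Lemma~\ref{lem: 3.0.7}(c), and $S(\c^{(k)})=\{\c^{(k)}\}$ by minimality of $\c^{(k)}$, so $\e^{(k)}=\c^{(k)}$, and then the lemma forces $\e=\c$.

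It remains to place $\d$ inside $\tilde{S}(\c)_k$. Since $\d\leq\c$ we have $\d\in S(\c)$; since $\c\in S(\a)_k\subseteq\tilde{S}(\a)_k$ we have $\deg(\c^{(k)})=\deg(\a^{(k)})$; and since $\d\in\tilde{S}(\a)_k$ we have $\deg(\d^{(k)})=\deg(\a^{(k)})$. Therefore $\deg(\d^{(k)})=\deg(\c^{(k)})$, i.e. $\d\in\tilde{S}(\c)_k=\{\c\}$, whence $\d=\c$, as required. The only point that needs care is the passage from the formal conclusion of the lemma preceding Proposition~\ref{prop: 3.2.4} to the identity $\tilde{S}(\c)_k=\{\c\}$; once this is secured, the remainder is bookkeeping about the constancy of $\deg((-)^{(k)})$, so I do not anticipate a genuine obstacle.
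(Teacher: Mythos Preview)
Your argument is correct: you verify $\c\in S(\c)_k$ with $\c^{(k)}$ minimal, invoke the lemma preceding Proposition~\ref{prop: 3.2.4} together with Lemma~\ref{lem: 3.0.7}(c) to deduce $\tilde{S}(\c)_k=\{\c\}$, and then observe that any $\d\in\tilde{S}(\a)_k$ with $\d\leq\c$ automatically lies in $\tilde{S}(\c)_k$. This establishes that $\c$ is a minimal element of $\tilde{S}(\a)_k$.

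The paper's proof is different in both method and conclusion. It appeals to Corollary~\ref{cor: 3.1.11} and Proposition~\ref{prop: 3.2.5}: for \emph{every} $\d\in\tilde{S}(\a)_k$ one produces $\c'\in S(\d)_k\subseteq S(\a)_k$ with $\c'^{(k)}=(\a^{(k)})_{\min}$, and then the uniqueness statement of Proposition~\ref{prop: 3.2.5} forces $\c'=\c$, so $\c\leq\d$. Thus the paper actually proves that $\c$ is the \emph{minimum} of $\tilde{S}(\a)_k$, a stronger conclusion than minimality. The trade-off is that Proposition~\ref{prop: 3.2.5} rests on the positivity of partial derivatives (Theorem~\ref{teo: 3} via Proposition~\ref{prop: 3.2.4}), whereas your route is purely combinatorial and avoids that machinery. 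If later arguments need only minimality, your approach is the cleaner one; if they require that $\c$ lies below every element of $\tilde{S}(\a)_k$, you would still need to invoke Corollary~\ref{cor: 3.1.11} and uniqueness.
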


\begin{proof}
 By corollary \ref{cor: 3.1.11}, we know that for any $\d\in \tilde{S}(\a)_{k}$,
 there exists a multisegment $\c'\in S(\a)_{k}$ 
 with $\c'^{(k)}=(\a^{(k)})_{\min}$, such that 
 $\d>\c'$. By uniqueness, we must have $\c=\c'$.
\end{proof}

\section{Geometry of Nilpotent Orbits: General Cases}

In this section, we show geometrically that 
the morphism 
\begin{align*}
 \psi_{k}: S(\a)_{k}&\rightarrow S(\a^{(k)})\\
 \c&\mapsto \c^{(k)}  
\end{align*}
is bijective, satisfying the properties
\begin{description}
 \item[(1)]For $\c\in S(\a)_{k}$, we have 
 $m(\c, \a)=m(\c^{(k)}, \a^{(k)})$.
 \item [(2)] The morphism $\psi_{k}$ preserves the order, i.e, 
 for $\c, \d\in S(\a)_{k}$, $\c>\d$ if and only if $\c^{(k)}>\d^{(k)}$.
\end{description}

To achieve this, firstly we consider the sub-variety
$X_{\a}^{k}= \coprod_{\c\in \tilde{S}( \a)_{k}}O_{\c}$, 
and construct a fibration $\alpha$ from $X_{\a}^{k}$
to $Gr(\ell_{k}, V_{\varphi_{\a}, k})$, the latter is the space of the 
$\ell_{k}$-dimensional subspace of $V_{\varphi_{\a}, k}$. 
Secondly, we construct an open immersion 
\[
 \tau_{W}: (X_{\a}^{k})_{W}\rightarrow Y_{\a^{(k)}}\times \Hom(V_{\varphi_{\a}, k-1}, W),
\]
where $(X_{\a}^{k})_{W}$ is the fiber over $W$ with respect to $\alpha$
and $Y_{\a^{(k)}}=\coprod_{\c\in S(\a^{(k)})}O_{\c}$.
Our main difficulty here lies in proving that $\tau_{W}$ is actually an 
open immersion. The idea is to  apply Zariski Main theorem, to do this, 
we have to prove the normality and irreducibility of both varieties. 
Irreducibility of $(X_{\a}^{k})_{W}$ follows from our results 
in previous section, and normality follows from the fibration $\alpha$ 
and the fact that orbital varieties are locally isomorphic to some 
Schubert varieties, by Zelevinsky, cf. \cite{Z4}.

Once we prove that $\tau_{W}$ is an open immersion.
All the desired properties of $\psi_{k}$ follow.

\vspace{0.5cm}
Here we fix a multisegment $\a$ and let $\varphi=\varphi_{\a}$.
%We shall continue the study of the orbital varieties contained in 
%$E_{\varphi}$. 

\begin{definition}\label{def: 3.3.1}
\footnote{ In this section 
we only work with $X_{\a}^{(k)}$ instead of 
$\tilde{X}_{\a}^{(k)}=\coprod_{\b\in S(\a)_k}O_{\b}$. The reason can be seen from the simple example of the 
affine plane $\mathbb{A}^2$ endowed with the stratification 
\[
 X_{1}=\mathbb{A}^2-\mathbb{A}^1, \quad X_{2}=\mathbb{A}^1-pt, \quad X_{3}=pt. 
\]
If we are interested in $X_{1}\coprod X_{3}$, it is better to 
study $\mathbb{A}^2$, because there is no nontrivial directed extension
of $X_{1}$ by $X_{3}$. Instead, if we are interested in 
$X_{1}\coprod X_{2}$, we can study $\mathbb{A}^{2}-pt$, which is 
already a nontrivial extension.
}
\begin{itemize}
 \item 
Let 
$$
X_{\a}^{k}= \coprod_{\c\in \tilde{S}( \a)_{k}}O_{\c},  
$$ 
\item Let
$Y_{\a^{(k)}}= \coprod_{c\in S(\a^{(k)})}O_{\c}$.

\item For $\b>\c$ in $\tilde{S}(\a)_{k}$,
we define 
\[
 X_{\b, \c}^{k}=\coprod_{\b\geq \d\geq\c}O_{\d}.
\] 
\end{itemize}
\end{definition}

Let $\c\in \tilde{S}( \a)_{k}, T\in O_{\c}$, then 
 
\begin{lemma}
Let $\varphi=\varphi_{\a}$.
We have 
$\dim(\ker(T|_{V_{\varphi, k}}))=\sharp\{\Delta\in \a: e(\Delta)=k\}=\ell_{k}$(Notation \ref{nota: 3.2.1}),
which does not depend on the choice of $T$.
\end{lemma}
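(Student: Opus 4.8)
The plan is to rewrite $\dim\ker(T|_{V_{\varphi,k}})$ as a rank, which is manifestly constant on the orbit $O_{\c}$, and then to evaluate that rank combinatorially via the Jordan description of the orbit.

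First I would note that $\dim\ker(T|_{V_{\varphi,k}}) = \dim V_{\varphi,k} - \rank(T^{[k,k+1]}) = \varphi(k) - r_{k,k+1}(T)$, where $T^{[k,k+1]}\colon V_{\varphi,k}\to V_{\varphi,k+1}$ is the map of Definition \ref{def: 4.1.5} (here it is just $T$ itself restricted to degree $k$). By the remark following that definition, $r_{k,k+1}(T)$ takes the constant value $r_{k,k+1}(\c)$ for every $T\in O_{\c}$; this already yields the independence of $T$ asserted in the lemma, and reduces everything to computing $r_{k,k+1}(\c)$.

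Next I would compute $r_{k,k+1}(\c)$. Writing $\c=\sum_{i\le j}c_{ij}[i,j]$, Proposition \ref{prop: 2.2.4} realizes $T\in O_{\c}$ as an operator with $c_{ij}$ Jordan cells running from $V_{\varphi,i}$ to $V_{\varphi,j}$. Choosing a homogeneous Jordan basis, the degree-$k$ basis vector of a given cell $[i,j]$ (which exists iff $i\le k\le j$) is sent by $T$ to the degree-$(k+1)$ basis vector of the same cell when $j\ge k+1$, and to $0$ when $j=k$; moreover the nonzero images belonging to distinct cells are linearly independent. Hence
\[
r_{k,k+1}(\c)=\sum_{i\le k,\ j\ge k+1}c_{ij},\qquad
\varphi(k)=\varphi_{\c}(k)=\sum_{i\le k\le j}c_{ij},
\]
the second equality being the definition of the weight (Definition \ref{def: 2.2.2}). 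Subtracting gives
\[
\dim\ker(T|_{V_{\varphi,k}})=\varphi(k)-r_{k,k+1}(\c)=\sum_{i\le k}c_{ik}=\sharp\{\Delta\in\c\colon e(\Delta)=k\}.
\]

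Finally I would invoke Lemma \ref{lem: 3.1.5}: since $\c\in\tilde S(\a)_k$, one has $\sharp\{\Delta\in\c\colon e(\Delta)=k\}=\sharp\{\Delta\in\a\colon e(\Delta)=k\}$, and the right-hand side is exactly $\ell_k=f_{e(\a)}(k)$ by Notation \ref{nota: 3.2.1}. This finishes the argument. I do not expect a genuine obstacle here; the only point requiring a little care is matching the abstract kernel dimension with the combinatorial count of segments ending at $k$, and passing through $r_{k,k+1}$ makes both this matching and the independence of $T$ transparent.
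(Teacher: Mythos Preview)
Your proof is correct and follows the same route as the paper: both reduce to the identity $\dim\ker(T|_{V_{\varphi,k}})=\sharp\{\Delta\in\c:e(\Delta)=k\}$ and then invoke Lemma~\ref{lem: 3.1.5}. The paper simply asserts that identity as an immediate consequence of $T\in O_{\c}$ (via the Jordan description of Proposition~\ref{prop: 2.2.4}), whereas you spell it out through the rank function $r_{k,k+1}$; this is only a difference in level of detail, not in strategy.
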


\begin{proof}
The fact $T\in O_{\c}$ implies 
\[
 \dim(\ker(T|_{V_{\varphi, k}}))=\sharp\{\Delta\in \c: e(\Delta)=k\}.
\]
Then our lemma follows from lemma \ref{lem: 3.1.5}.

\end{proof}

\begin{definition}
Let 
\[
 Gr(\ell_{k}, V_{\varphi})=\{W\subseteq V_{\varphi, k}: \dim(W)=\ell_{k}\},
\]
and for $W\in  Gr(\ell_{k}, V_{\varphi})$, let 
\[
 V_{\varphi}/W=V_{\varphi, 1}\oplus \cdots V_{\varphi, k-1}
 \oplus V_{\varphi, k}/W \oplus \cdots.
\]
Also, we denote by 
 \[
 p_{W}: V_{\varphi}\rightarrow V_{\varphi}/W
 \]
the canonical projection.
\end{definition}

\begin{definition}
We define
\[
 \tilde{Z}^{k}=\{(T, W): W\in Gr(\ell_{k}, V_{\varphi}), T\in End(V/W) \text{ of degree +1}\},
\]
and the canonical projection
\begin{align*}
 \pi:&\tilde{Z}^{k}\rightarrow Gr(\ell_{k}, V_{\varphi})\\
 &(T, W)\mapsto W.
\end{align*}

\end{definition}

\begin{prop}
 The morphism $\pi$ is a fibration with fiber 
 $$
 E_{\varphi_{\a^{(k)}}}(\text{ Def.\ref{def: 4.1.1}}).
$$
 \end{prop}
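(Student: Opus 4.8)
The plan is to first identify the fibers of $\pi$ and then to check that the family is locally trivial. \textbf{Identification of the fiber.} Fix $W\in Gr(\ell_{k}, V_{\varphi})$, so $W\subseteq V_{\varphi, k}$ with $\dim W=\ell_{k}$. The $\Z$-graded vector space $V_{\varphi}/W=V_{\varphi,1}\oplus\cdots\oplus V_{\varphi,k-1}\oplus V_{\varphi,k}/W\oplus V_{\varphi,k+1}\oplus\cdots$ has graded pieces of dimension $\varphi(j)$ for $j\neq k$ and $\varphi(k)-\ell_{k}$ for $j=k$. On the other hand, $\a^{(k)}$ is obtained from $\a$ by replacing each segment $\Delta$ with $e(\Delta)=k$ by $\Delta^{-}$ (Definition \ref{def: 2.2.3}); there are exactly $\ell_{k}=f_{e(\a)}(k)$ such segments, and each replacement removes one copy of $\nu^{k}$ from the cuspidal support and changes nothing else, so $\varphi_{\a^{(k)}}(k)=\varphi(k)-\ell_{k}$ and $\varphi_{\a^{(k)}}(j)=\varphi(j)$ for $j\neq k$. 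Hence $V_{\varphi}/W$ has exactly the dimension vector of $V_{\varphi_{\a^{(k)}}}$, and since by Lemma \ref{lem: 4.1.4} the variety $E_{\psi}$ depends only on the dimension vector of $\psi$, the fiber $\pi^{-1}(W)$, which by definition is the set of degree $+1$ endomorphisms of $V_{\varphi}/W$, is isomorphic to $E_{\varphi_{\a^{(k)}}}$ once a graded isomorphism $V_{\varphi}/W\simeq V_{\varphi_{\a^{(k)}}}$ is chosen.

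\textbf{Local triviality.} The group $G_{\varphi}=\prod_{j}GL(V_{\varphi,j})$ acts on $\tilde{Z}^{k}$ by $g\cdot(T,W)=(\bar{g}\circ T\circ\bar{g}^{-1},\ g_{k}W)$, where $\bar{g}\colon V_{\varphi}/W\xrightarrow{\sim} V_{\varphi}/g_{k}W$ is the isomorphism induced by $g$; this makes $\pi$ equivariant, and $G_{\varphi}$ acts transitively on $Gr(\ell_{k},V_{\varphi})$ (already $GL(V_{\varphi,k})$ does). Thus, writing $H$ for the stabilizer of a base point $W_{0}$, we have $Gr(\ell_{k},V_{\varphi})\simeq G_{\varphi}/H$, which is an étale locally trivial fibration by Serre \cite{S}, Proposition 3, exactly as in the proof of Proposition \ref{prop: 4.3.6}, and $\tilde{Z}^{k}$ is the associated bundle with fiber $\pi^{-1}(W_{0})\simeq E_{\varphi_{\a^{(k)}}}$. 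Concretely one may also argue directly: for any subspace $U\subseteq V_{\varphi,k}$ with $\dim U=\varphi(k)-\ell_{k}$, the open locus $\Omega_{U}=\{W : W\oplus U=V_{\varphi,k}\}$ carries the canonical isomorphism $U\hookrightarrow V_{\varphi,k}\twoheadrightarrow V_{\varphi,k}/W$, hence a canonical identification of $V_{\varphi}/W$ with the fixed graded space $V_{\varphi,1}\oplus\cdots\oplus U\oplus\cdots$, independent of $W\in\Omega_{U}$; this trivializes $\pi$ over $\Omega_{U}$, and the $\Omega_{U}$ cover $Gr(\ell_{k},V_{\varphi})$.

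\textbf{Main obstacle.} There is no real difficulty here beyond the bookkeeping: the one point to get right is the dimension count showing $\varphi_{\a^{(k)}}$ has graded dimensions $(\dots,\varphi(k-1),\varphi(k)-\ell_{k},\varphi(k+1),\dots)$, which rests on Lemma \ref{lem: 3.1.5} and the definition of $\a^{(k)}$. Once this is settled, local triviality is the standard fact that the universal quotient family over a Grassmannian is Zariski locally trivial, which we may either quote or deduce from the homogeneity of the situation as above.
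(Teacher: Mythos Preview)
Your proposal is correct and takes essentially the same approach as the paper, which dismisses the statement in a single line (``This follows from the definition''). You have simply unpacked what that means: the fiber over $W$ is by construction the set of degree $+1$ endomorphisms of $V_{\varphi}/W$, whose dimension vector is $\varphi_{\a^{(k)}}$, and local triviality is the standard fact about the tautological quotient bundle on a Grassmannian. The reference to Lemma~\ref{lem: 3.1.5} is unnecessary for the dimension count (it follows directly from Definition~\ref{def: 2.2.3}), but this does no harm.
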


\begin{proof}
 This follows from the definition.
\end{proof}

\begin{definition}\label{def: 3.3.6}
Assume $\b, \c\in S(\a^{(k)})$.
\begin{itemize}
 \item Let
\[
 Z^{k, \a}=\{(T,W)\in \tilde{Z}^{k}: T\in Y_{\a^{(k)}} \}.
\]
\item Let
\[
 Z^{k, \a}_{\b, \c}=\{(T,W)\in \tilde{Z}^{k}: T\in \coprod_{\b\geq\d\geq \c}O_{\d}\}, 
 ~Z^{k, \a}_{\b}=\{(T,W)\in \tilde{Z}^{k}: T\in \coprod_{\d\geq \b}O_{\d}\}.
\]

\item Let
\[
 Z^{k, \a}(\c)=\{(T, W)\in Z^{k, \a}, T\in O_{\c}\}.
\]

\end{itemize}
\end{definition}

\remk 
 The restriction of $\pi$ to $Z^{k, \a}$ is a fibration 
 with fiber $ Y_{\a^{(k)}}$.

\begin{definition}\label{def: 3.3.7}
Now we define $T^{(k)}\in End(V/\ker(T|_{V_{\varphi, k}}))$ such that 
\begin{displaymath}
 T^{(k)}|_{V_{\varphi,i}}=\left\{ \begin{array}{cc}
&\hspace{-1cm}T|_{V_{\varphi, i}}, \text{ for }i \neq k, k-1,\\
&\hspace{-0.4cm}p_{T, k}\circ T|_{V_{\varphi, i}}, \text{ for }i=k-1\\
&\hspace{-1cm} T|_{V_{\varphi, i}}\circ p_{T, k}, \text{ for }i=k.
\end{array}
\right.
\end{displaymath}
where $p_{T, k}: V_{\varphi}\rightarrow V_{\varphi}/\ker(T|_{V_{\varphi, k}})$ is the canonical projection.
\end{definition}

This gives naturally an element $(T^{(k)}, \ker(T|_{V_{\varphi, k}}))$ in $Z^{k, \a}$.
We construct a morphism 
\[
 \gamma_{k}: X_{\a}^{k}\rightarrow Z^{k, \a}.
\]

by 
\[
 \gamma_{k}(T)=(T^{(k)}, \ker(T|_{V_{\varphi, k}})).
\]

\begin{definition}
 We define 
\[
 \alpha: X_{\a}^{k}\rightarrow Gr(\ell_{k}, V_{\varphi}), 
\]
with $\alpha(T)=\ker(T|_{V_{\varphi, k}})$.
\end{definition}

\remk 
 We have a commutative diagram
 \begin{displaymath}
  \xymatrix{
  X_{\a}^{k}\ar[d]^{ \alpha}\ar[r]^{\gamma_{k}}&Z^{k, \a}\ar[dl]^{\pi}\\
  Gr(\ell_{k}, V_{\varphi}).
  }
  \end{displaymath}
where $\gamma_{k}$ maps fibers to fibers.

\begin{prop}\label{prop: 4.6.10}
 The morphism $\alpha$ is a fiber bundle such that $\alpha|_{O_{\c}}$
 is surjective for any $\c\in \tilde{S}(\a)_{k}$.
\end{prop}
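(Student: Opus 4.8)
The plan is to realise $\alpha$ as a $G_\varphi$-equivariant morphism onto the homogeneous space $Gr(\ell_k,V_\varphi)$ and then invoke the standard fact that an equivariant morphism onto a homogeneous space is a locally trivial fibration. The first thing I would check is that $X_\a^k$ is a $G_\varphi$-stable locally closed subvariety of $E_\varphi$ on which $\rank(T|_{V_{\varphi,k}})$ is constant. The set $\coprod_{\b\in S(\a)}O_\b$ is open in $\line{O}_{\a_{\min}}$: its complement there is $\coprod_{\b\geq\a_{\min},\,\b\not\leq\a}O_\b$, and $\{\b:\b\geq\a_{\min},\,\b\not\leq\a\}$ is an up-set (if $\b$ is in it and $\d\geq\b$, then $\d\geq\a_{\min}$, while $\d\leq\a$ would force $\b\leq\a$), so that complement is closed. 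By Lemma~\ref{lem: 3.0.7}(a) together with Lemma~\ref{lem: 3.1.5}, every $T$ lying over a point of $S(\a)$ satisfies $\dim\ker(T|_{V_{\varphi,k}})\leq\ell_k$, with equality exactly on the strata $O_\c$, $\c\in\tilde{S}(\a)_k$; since this dimension is upper semicontinuous, $X_\a^k$ is the closed subvariety of $\coprod_{\b\in S(\a)}O_\b$ where it equals $\ell_k$. Hence $T\mapsto\ker(T|_{V_{\varphi,k}})$ defines a rank-$\ell_k$ subbundle of the trivial bundle $X_\a^k\times V_{\varphi,k}$, and $\alpha$ is a well-defined morphism to $Gr(\ell_k,V_\varphi)$.

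Next I would check equivariance and surjectivity. For $g=(g_i)_i\in G_\varphi$ acting on $E_\varphi$ as recalled after Lemma~\ref{lem: 4.1.4}, the $k$-th component of $g\cdot T$ is $g_{k+1}\circ(T|_{V_{\varphi,k}})\circ g_k^{-1}$, so $\ker\bigl(g_{k+1}(T|_{V_{\varphi,k}})g_k^{-1}\bigr)=g_k\bigl(\ker(T|_{V_{\varphi,k}})\bigr)$, i.e. $\alpha(g\cdot T)=g_k\cdot\alpha(T)$; thus $\alpha$ is $G_\varphi$-equivariant for the action on $Gr(\ell_k,V_\varphi)$ through the $k$-th factor $GL(V_{\varphi,k})$, which acts transitively. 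Since for $\c\in\tilde{S}(\a)_k$ the stratum $O_\c$ is a single nonempty $G_\varphi$-orbit, $\alpha(O_\c)$ is a nonempty $GL(V_{\varphi,k})$-stable subset of $Gr(\ell_k,V_\varphi)$, hence equals the whole Grassmannian; this gives the asserted surjectivity of $\alpha|_{O_\c}$ (and of $\alpha$).

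For the bundle structure I would fix $W\in Gr(\ell_k,V_\varphi)$, let $P_W\subseteq GL(V_{\varphi,k})$ be its stabiliser (a parabolic), and use that $GL(V_{\varphi,k})\to GL(V_{\varphi,k})/P_W=Gr(\ell_k,V_\varphi)$ is Zariski-locally trivial (cf.~\cite{S}), so it admits a section $s\colon U\to GL(V_{\varphi,k})$ over a Zariski-open $U\ni W$ with $s(W')\cdot W=W'$ for all $W'\in U$. Then $(W',T)\mapsto s(W')\cdot T$ is an isomorphism $U\times\alpha^{-1}(W)\xrightarrow{\ \sim\ }\alpha^{-1}(U)$: it lands in $X_\a^k$ because $X_\a^k$ is $G_\varphi$-stable, it covers $\alpha$ by the equivariance of the previous step, and its inverse is $T'\mapsto\bigl(\alpha(T'),\,s(\alpha(T'))^{-1}\cdot T'\bigr)$. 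Hence $\alpha$ is a fibre bundle with fibre $\alpha^{-1}(W)=(X_\a^k)_W$. The only genuinely delicate point is the first step, namely pinning down $X_\a^k$ as a variety and the constancy of the rank of $T|_{V_{\varphi,k}}$ on it; once that is in place the rest is the routine ``equivariant map onto a homogeneous space is a fibre bundle'' argument, so I expect no serious obstacle here, the substantive work being postponed to the open-immersion statement for $\tau_W$ that follows.
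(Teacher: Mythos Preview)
Your proof is correct and follows essentially the same approach as the paper: both exploit the $GL(V_{\varphi,k})$-equivariance of $\alpha$, the transitivity of this action on $Gr(\ell_k,V_\varphi)$, and the Zariski-local triviality of $GL(V_{\varphi,k})\to GL(V_{\varphi,k})/P_W$ for parabolic $P_W$ (citing Serre). The paper packages the trivialization via the associated-bundle isomorphism $GL_{\varphi(k)}\times_{P_W}\alpha^{-1}(W)\xrightarrow{\ \delta\ }X_\a^k$, $\delta([g,T])=g\cdot T$, whereas you unwind this by choosing a local section $s$ and writing the trivialization $(W',T)\mapsto s(W')\cdot T$ directly; these are equivalent formulations. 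Your opening paragraph, pinning down $X_\a^k$ as a locally closed subvariety and verifying the constancy of $\dim\ker(T|_{V_{\varphi,k}})$ via Lemma~\ref{lem: 3.0.7}(a), is a point the paper takes for granted, so your treatment is in fact slightly more careful here.
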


\begin{proof}
 We have to show that $\alpha$ is locally trivial. 
 We fix $W\in Gr(\ell_{k}, V_{\varphi})$
 Note that $GL_{\varphi(k)}$ acts transitively 
 on $Gr(\ell_{k}, V_{\varphi})$. Let 
 $P_{W}$ be the stabilizer of $W$. Then by Serre \cite{S} proposition
 3, we know that the principle bundle 
 $$
 GL_{\varphi(k)}\rightarrow GL_{\varphi(k)}/P_{W}
 $$
 is \'etale-locally trivial.
 Here the base $GL_{\varphi(k)}/P_{W}$ is isomorphic to $Gr(\ell_{k}, V_{\varphi})$.
 It is even Zariski-locally trivial because $P_{W}$ is parabolic, which is special 
 in the sense of Serre \cite{S}, $\S$ 4.
 Now we can write
 \begin{displaymath}
  \xymatrix{
 X_{\a}^{ k}\ar[d] & GL_{\varphi(k)}\times_{P_{W}}\alpha^{-1}(W)\ar[l]_{\hspace{-1.5cm}\delta}\ar[dl]\\
  Gr(\ell_{k}, V_{\varphi})&
  }
\end{displaymath}
 where 
 \[
  \delta([g, T])=g.T.
 \]
We claim that $\delta$ is an isomorphism.
In fact, for any $T\in X_{\a}^{ k}$, 
we choose $g\in GL_{\varphi(k)}$ such that 
\[
 g(\ker(T|_{V_{\varphi, k}}))=W.
\]
This implies $g.T\in \alpha^{-1}(W)$, thus
\[
 \delta([g^{-1}, g.T])=T.
\]
This shows the surjectivity. For injectivity, 
it is enough to show that 
$$
\delta([g,T])=g.T\in \alpha^{-1}(W)
$$
implies $g\in P_{W}$. But this is by definition of $P_{W}$.

The fact that $\alpha$ is locally trivial then can 
be deduced from that of  
$$GL_{\varphi(k)}\times_{P_{W}}\alpha^{-1}(W),$$
while the latter is a consequence of the fact that $GL_{\varphi(k)}$
is locally trivial over $Gr(\ell_{k}, V_{\varphi})$.

Finally, we want to show the surjectivity of the orbit $\alpha|_{O_{\c}}$. 
This is a consequence the fact that $GL_{\varphi(k)}$ acts transitively 
 on $Gr(\ell_{k}, V_{\varphi})$.
\end{proof}

\begin{prop}\label{prop: 3.2.11}
Let $\c\in \tilde{S}(\a)_{k}$. 
The restriction map
\[
 \gamma_{k}: O_{\c}\rightarrow Z^{k, \a}(\c^{(k)})
\]
is surjective.
\end{prop}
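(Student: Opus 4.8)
The plan is to show that every element $(T', W) \in Z^{k,\a}(\c^{(k)})$ lies in the image of $\gamma_k$ restricted to $O_\c$. Fix such a pair: $W \in Gr(\ell_k, V_\varphi)$ is an $\ell_k$-dimensional subspace of $V_{\varphi,k}$, and $T' \in O_{\c^{(k)}}$ is a degree $+1$ endomorphism of $V_\varphi/W$. We must produce $T \in O_\c$ with $\ker(T|_{V_{\varphi,k}}) = W$ and $T^{(k)} = T'$. The idea is to construct $T$ by lifting $T'$ along the quotient map $p_W : V_\varphi \to V_\varphi/W$: for $i \neq k-1, k$ set $T|_{V_{\varphi,i}} = T'|_{V_{\varphi,i}}$ (identifying $V_{\varphi,i}$ with its image in the quotient, which is the identity since $W \subseteq V_{\varphi,k}$); for $i = k$ we need a map $V_{\varphi,k} \to V_{\varphi,k+1}$ whose composition with $p_W$ at the source recovers $T'|_{V_{\varphi,k}/W}$, i.e. a lift of $T'|_{V_{\varphi,k}/W}$ through the surjection $V_{\varphi,k} \twoheadrightarrow V_{\varphi,k}/W$ with kernel exactly $W$; and for $i = k-1$ we need a map $V_{\varphi,k-1}\to V_{\varphi,k}$ lifting $p_W \circ (-) = T'|_{V_{\varphi,k-1}}$, i.e. any section-type lift.

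First I would fix a complement $W' $ of $W$ in $V_{\varphi,k}$, so $V_{\varphi,k} = W \oplus W'$ and $p_W$ identifies $W' \simeq V_{\varphi,k}/W$. Using this splitting, the lift of $T'|_{V_{\varphi,k-1}}$ to $T|_{V_{\varphi,k-1}}: V_{\varphi,k-1} \to W' \subseteq V_{\varphi,k}$ is canonical, and the lift $T|_{V_{\varphi,k}}: V_{\varphi,k} \to V_{\varphi,k+1}$ is defined to be zero on $W$ and equal to $T'|_{W'}$ on $W'$; then $\ker(T|_{V_{\varphi,k}}) \supseteq W$, and in fact equals $W$ provided $T'|_{V_{\varphi,k}/W}$ is injective. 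The second step is to verify that $T$ so constructed lies in $O_\c$, which by Proposition \ref{prop: 2.2.4} amounts to checking the Jordan cell data: one reads off the ranks $r_{ij}(T)$ and compares with $r_{ij}(\c)$. Since $T'$ has the Jordan type of $\c^{(k)}$, and passing from $\c$ to $\c^{(k)}$ precisely removes the last node of each segment ending at $k$, the construction re-attaches exactly $\ell_k$ segments at the $k$-th graded piece — here one uses Lemma \ref{lem: 3.1.5} and the hypothesis $\c \in \tilde S(\a)_k$ to match multiplicities. A clean way to organize this is to track the chains $v \mapsto Tv \mapsto T^2 v \mapsto \cdots$ through $V_{\varphi,k}$: the $\ell_k$ chains of $T$ that terminate in $V_{\varphi,k}$ are exactly those that used to terminate there in any $T_0 \in O_\c$ with $\ker(T_0|_{V_{\varphi,k}}) = W$, and the remaining chains are unaffected by the passage to the quotient.

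The third step is to confirm $\gamma_k(T) = (T', W)$, which is immediate from Definition \ref{def: 3.3.7} once $\ker(T|_{V_{\varphi,k}}) = W$ is established: $T^{(k)}$ is then defined by $p_{T,k}\circ T|_{V_{\varphi,k-1}} = T'|_{V_{\varphi,k-1}}$, $T|_{V_{\varphi,k}}\circ p_{T,k}$ which on $W'\simeq V_{\varphi,k}/W$ is $T'|_{V_{\varphi,k}/W}$, and $T|_{V_{\varphi,i}} = T'|_{V_{\varphi,i}}$ elsewhere — i.e. $T^{(k)} = T'$. It remains only to check that we may indeed take $T' \in O_{\c^{(k)}}$ with the required properties; here the point is that $Z^{k,\a}(\c^{(k)})$ is by Definition \ref{def: 3.3.6} precisely the locus where $T$ lands in $O_{\c^{(k)}}$ inside the fiber $Y_{\a^{(k)}}$, so injectivity of $T'|_{V_{\varphi,k}/W}$ in the relevant direction is automatic from the shape of the multisegment $\c^{(k)}$ (which still has $\ell_k$ segments touching level $k$, now ending at $k-1$... ), and the construction goes through.

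The main obstacle I expect is the bookkeeping in the second step: verifying that the lifted operator $T$ really has Jordan type $\c$ and not some $\c' > \c$, i.e. that attaching the missing ends does not accidentally merge two chains or fail to extend one. This requires care about which chains of $T'$ in $V_\varphi/W$ pass through the image of $V_{\varphi,k}$ and how they interact with the $\ell_k$-dimensional space $W$ we are re-inserting; the cleanest route is the rank computation $r_{ij}(T) = r_{ij}(\c)$ for all $i \le j$, splitting into the cases according to whether $[i,j]$ meets level $k$, and invoking Lemma \ref{lem: 3.1.5} together with $\c \in \tilde S(\a)_k$ to pin down the count of segments ending at $k$. Once that is in place the surjectivity statement follows.
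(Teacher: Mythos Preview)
There is a genuine gap in your construction, not just a bookkeeping issue. When you take the ``canonical'' lift $T|_{V_{\varphi,k-1}}: V_{\varphi,k-1} \to W' \subseteq V_{\varphi,k}$, the image of $T|_{V_{\varphi,k-1}}$ lies entirely in the complement $W'$ of $W$. Combined with $\ker(T|_{V_{\varphi,k}}) = W$, this forces every vector of $W$ to start its own Jordan chain of length one at level $k$: no chain of $T$ coming up from level $k-1$ can terminate at level $k$, because it lands in $W'$ and then continues to $V_{\varphi,k+1}$. Hence the Jordan type of your $T$ is $\c^{(k)} + \ell_k[k]$, which equals $\c$ only in the degenerate case where every segment of $\c$ ending at $k$ is the point $[k]$. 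For a concrete failure take $\c = \{[1,2]\}$, $k=2$: then $W = V_{\varphi,2}$, $W' = 0$, and your construction gives $T = 0 \in O_{\{[1],[2]\}}$, not $O_{\{[1,2]\}}$.

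The repair, which is what the paper actually does, is to add a second component $T': V_{\varphi,k-1} \to W$ so that $T|_{V_{\varphi,k-1}} = T' \oplus T_0|_{V_{\varphi,k-1}}$ with respect to the splitting. The map $T'$ must be chosen with rank equal to $m$, the number of segments in $\c$ ending at $k$ with length $\geq 2$, and must moreover extend precisely the right chains: those of $T_0$ that end at level $k-1$ because they came from segments $[i,k]$ with $i<k$ in $\c$, as opposed to those that already ended at $k-1$ in $\c$. Concretely one filters $\ker(T_0|_{V_{\varphi,k-1}})$ by the subspaces $W_i = \Im(T_0^{[b(\Delta_i),k-1]}) \cap \ker(T_0|_{V_{\varphi,k-1}})$ indexed by the segments $\Delta_i$ with $e(\Delta_i)=k$ and $\deg(\Delta_i)\geq 2$, and requires that $T'$ be injective on each step of this filtration. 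Such $T'$ exists, and then the resulting $T$ lies in $O_\c$. Your phrase ``any section-type lift'' was headed in the right direction, but the section has to be chosen with care, not canonically.
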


\begin{proof}
Let $(T_{0}, W)\in Z^{k, \a}(\c^{(k)})$. 
Consider 
\[
 m=\sharp\{\Delta\in \c: e(\Delta)=k, \deg(\Delta)\geq 2\}\leq \min\{\l_k, \dim(\ker(T_0|_{V_{\varphi, k-1}}))\}.
\]
We choose a splitting $V_{\varphi, k}=W\oplus V_{\varphi, k}/W$ and 
let $T': V_{\varphi, k-1}\rightarrow W$ be a linear morphism of rank $m$.
Finally, we define $T\in \gamma^{-1}_k((T_{0}, W))$ by letting
\[
 T|_{V_{\varphi, k-1}}=T'\oplus T_{0}|_{V_{\varphi, k-1}}, 
\]
\[
T|_{V_{\varphi, k}}=T_{0}|_{V_{\varphi, k}/W}\circ p_{W},
\]
\[
  T|_{V_{\varphi, i}}=T|_{V_{\varphi, i}},
 \text{ for } i\neq k-1, k.
\]
Let 
\[
\{\Delta\in \c: e(\Delta)=k, \deg(\Delta)\geq 2\}=\{\Delta_1, \cdots, \Delta_m\}, \quad b(\Delta_1)\leq \cdots \leq b(\Delta_m).
\]
We denote $W_i=T_0^{[b(\Delta_1), k-1]}(V_{\varphi, b(\Delta_1)})\cap \ker(T_0|_{V_{\varphi, k-1}})$, then
\[
W_1 \subseteq \cdots \subseteq W_r \subseteq \ker(T_0|_{V_{\varphi, k-1}}).
\]
Then we have $T\in O_\c$ if and only if 
\[
\dim(T'(W_i))-\dim(T(W_{i-1}))=\dim(W_i/W_{i-1}), \quad i=1, \cdots, m.
\]
Since such $T'$ always exists, we are done.
\end{proof}

\begin{notation}
 We fix $W\in Gr(\ell_{k}, V_{\varphi})$, and denote 
 $$(X_{\a}^{ k})_{W}, \quad (Z^{k, \a})_{W}$$ the fibers over $W$.
\end{notation}

\begin{prop}\label{prop: 3.3.13}
The fiber $(X_{\a}^{ k})_{W}$ is normal and irreducible as an algebraic variety
over $\C$.
\end{prop}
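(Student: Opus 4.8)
\emph{Proof proposal.} The plan is to realise $X_{\a}^{k}$ as an open subvariety of the closure of a single nilpotent orbit, to deduce its normality and irreducibility from Zelevinsky's comparison with Schubert varieties, and then to transport these properties to the fibre $(X_{\a}^{k})_{W}$ along the bundle $\alpha$.

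First I would pin down the combinatorics. By Corollary \ref{cor: 3.2.7}, together with the uniqueness statement of Proposition \ref{prop: 3.2.5}, there is a unique $\c_{0}\in S(\a)_{k}$ with $\c_{0}^{(k)}=(\a^{(k)})_{\min}$, and $\c_{0}$ is the minimum of $\tilde{S}(\a)_{k}$ for the order $\leq$. On the other side $\a$ is the maximum of $\tilde{S}(\a)_{k}$, and part (b) of Lemma \ref{lem: 3.0.7} says $\tilde{S}(\a)_{k}$ is upward closed inside $S(\a)$; hence $\tilde{S}(\a)_{k}=\{\d\in S(\a):\c_{0}\leq \d\}$ is the full order-interval from $\c_{0}$ to $\a$. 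Using $\line{O}_{\b}=\coprod_{\d\geq \b}O_{\d}$ this gives $X_{\a}^{k}\subseteq \line{O}_{\c_{0}}$ and
\[
 \line{O}_{\c_{0}}\setminus X_{\a}^{k}=\coprod_{\d\geq \c_{0},\ \d\not\leq \a}O_{\d}.
\]
If $\d$ lies in this complement and $\d'\geq \d$, then $\d'\geq \c_{0}$ and also $\d'\not\leq \a$ (otherwise $\d\leq \d'\leq \a$ would force $\d\leq \a$), so $\line{O}_{\d}$ is contained in the complement; thus the complement is closed and $X_{\a}^{k}$ is open in $\line{O}_{\c_{0}}$.

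Next, $\line{O}_{\c_{0}}$ is irreducible, being the closure of the $G_{\varphi}$-orbit $O_{\c_{0}}$ and $G_{\varphi}$ connected, and by Zelevinsky \cite{Z4} it is locally isomorphic to a Schubert variety of type $A$; since Schubert varieties are normal, $\line{O}_{\c_{0}}$ is normal. A nonempty open subvariety of a normal irreducible variety is again normal and irreducible, so $X_{\a}^{k}$ is normal and irreducible. It remains to descend to the fibre. By Proposition \ref{prop: 4.6.10} the morphism $\alpha\colon X_{\a}^{k}\to Gr(\ell_{k},V_{\varphi})$ is a Zariski-locally trivial fibre bundle, so we may choose an affine open $U\ni W$ in $Gr(\ell_{k},V_{\varphi})$ with $\alpha^{-1}(U)\cong U\times (X_{\a}^{k})_{W}$. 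The left-hand side is open in $X_{\a}^{k}$, hence normal and irreducible; irreducibility of the product forces irreducibility of $(X_{\a}^{k})_{W}$, while the second projection $U\times (X_{\a}^{k})_{W}\to (X_{\a}^{k})_{W}$, being smooth and surjective hence faithfully flat, lets normality descend to $(X_{\a}^{k})_{W}$.

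The main obstacle is the first part: recognising $X_{\a}^{k}$ as a genuine open subvariety of one orbit closure. This is exactly where the uniqueness results of the previous section enter (Corollary \ref{cor: 3.2.7} and Proposition \ref{prop: 3.2.5}): without the interval description of $\tilde{S}(\a)_{k}$ one cannot control which orbits are missing from $\line{O}_{\c_{0}}$, and the footnote example of $\mathbb{A}^{2}\setminus\{pt\}$ shows that this open subvariety genuinely need not be closed. Once that is settled, the remaining steps use only normality of Schubert varieties together with Zelevinsky's local model \cite{Z4}, and the descent along $\alpha$ is a routine application of faithfully flat descent of normality.
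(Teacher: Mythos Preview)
Your proof is correct and follows essentially the same route as the paper: identify $X_{\a}^{k}$ as an open subset of the single orbit closure $\line{O}_{\c_{0}}$ (using the unique minimal element of $\tilde{S}(\a)_{k}$ from Corollary~\ref{cor: 3.2.7}), invoke Zelevinsky's local Schubert model for normality and irreducibility, and then pass to the fibre via the locally trivial bundle $\alpha$ of Proposition~\ref{prop: 4.6.10}. Your write-up is in fact more careful than the paper's, which asserts openness of $X_{\a}^{k}$ in $\line{O}_{\c_{0}}$ without spelling out the interval description $\tilde{S}(\a)_{k}=\{\d:\c_{0}\leq \d\leq \a\}$ or the closedness of the complement, and which simply says ``both normal and irreducible implies the fibre is'' where you give the faithfully-flat-descent argument.
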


\begin{proof}
Note that since 
$\tilde{S}(\a)_{k}$ contains a unique minimal element $\c$, 
the variety $X_{\a}^{ k}$ is contained and is open in the 
irreducible variety $\line{O}_{\c}$. Now by 
\cite{Z4} theorem 1, we know that  $X_{\a}^{ k}$ is actually normal.

By proposition \ref{prop: 4.6.10}, we know that $\alpha$ is a fibration between two varieties 
$X_{\a}^{k}$ and $Gr(\ell_{k}, V_{\varphi})$.
The fact that both are normal and irreducible implies that the fiber $(X_{\a}^{ k})_{W}$ is normal 
and irreducible.
\end{proof}

\remk 
Note that by definition, we are allowed to identify 
$(Z^{k, \a})_{W}$ with $Y_{\a^{(k)}}$. This is what 
we do from now on.

\begin{definition}\label{def: 3.3.13}
 We choose a splitting $V_{\varphi, k}=W\oplus V_{\varphi, k}/W$
and denote by $q_{W}: V_{\varphi, k}\rightarrow W$ the projection.
 We define a morphism $\tau_{W}$ 
  $$
  \tau_{W}(T)=((\gamma_{ k})_{W}(T), q_{W}\circ T|_{V_{\varphi, k-1}}).
  $$
  \end{definition}
  
\remk
  Then we have the following commutative diagram
  \begin{displaymath}
  \xymatrix{
  (X_{\a}^{ k})_{W}\ar[r]^{\hspace{-2cm}\tau_{W}}\ar[d]^{(\gamma_{ k})_{W}} 
  &(Z^{k, \a})_{W}\times \Hom(V_{\varphi, k-1}, W)\ar[dl]^{s}\\
  (Z^{k, \a})_{W}
  }
  \end{displaymath}
 where $s$ is the canonical projection.

 \begin{lemma}\label{lem: 4.6.14}
 The morphism $\tau_{W}$ is injective. 
%and 
 %for $\c\in \tilde{S}(\a)_{k}$, 
 %\[
  %O_{\c}\cap (X_{\a}^{k})_{W}=(O_{\c^{(k)}}\times 
  %Hom(V_{\varphi, k-1}, W))\cap  (X_{\a}^{k})_{W}.
 %\]
\end{lemma}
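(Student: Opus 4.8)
The plan is to show that $\tau_W$ loses no information: it records all the graded components $T|_{V_{\varphi,i}}$ of an operator $T\in(X_{\a}^{k})_W$, so that two such operators with the same image must coincide. First I would take $T,T'\in(X_{\a}^{k})_W$ with $\tau_W(T)=\tau_W(T')$. Since both lie in the fiber over $W$ we have $\ker(T|_{V_{\varphi,k}})=\ker(T'|_{V_{\varphi,k}})=W$, and the hypothesis $\tau_W(T)=\tau_W(T')$ unpacks into the two equalities $T^{(k)}=(T')^{(k)}$ in $\End(V_{\varphi}/W)$ (this is $(\gamma_k)_W(T)=(\gamma_k)_W(T')$) together with $q_W\circ T|_{V_{\varphi,k-1}}=q_W\circ T'|_{V_{\varphi,k-1}}$. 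Then the goal is to recover each $T|_{V_{\varphi,i}}$ from this data.

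Next I would go through the graded components one at a time. For $i\neq k,k-1$, Definition \ref{def: 3.3.7} gives $T^{(k)}|_{V_{\varphi,i}}=T|_{V_{\varphi,i}}$, so $T^{(k)}=(T')^{(k)}$ forces $T|_{V_{\varphi,i}}=T'|_{V_{\varphi,i}}$. For $i=k$, since $W$ is exactly the kernel of $T|_{V_{\varphi,k}}$, this map factors uniquely through the canonical projection $V_{\varphi,k}\to V_{\varphi,k}/W$, and the induced map $V_{\varphi,k}/W\to V_{\varphi,k+1}$ is precisely $T^{(k)}|_{V_{\varphi,k}}$ (the $k$-th graded component of $T^{(k)}$); the same holds for $T'$, whence $T|_{V_{\varphi,k}}=T'|_{V_{\varphi,k}}$. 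For $i=k-1$, I would use the fixed splitting $V_{\varphi,k}=W\oplus V_{\varphi,k}/W$ to write $T|_{V_{\varphi,k-1}}$ as the sum of its $W$-component and its $(V_{\varphi,k}/W)$-component: the former is $q_W\circ T|_{V_{\varphi,k-1}}$, i.e. the second coordinate of $\tau_W(T)$, while the latter is $p_{T,k}\circ T|_{V_{\varphi,k-1}}=T^{(k)}|_{V_{\varphi,k-1}}$, read off from $(\gamma_k)_W(T)$. Since both components agree for $T$ and $T'$, we get $T|_{V_{\varphi,k-1}}=T'|_{V_{\varphi,k-1}}$. Assembling the three cases gives $T=T'$, which is the desired injectivity.

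The only delicate point, and it is purely bookkeeping, is to keep straight the identification of the abstract quotient $V_{\varphi,k}/W$ occurring inside $T^{(k)}$ with the second summand of the chosen splitting $V_{\varphi,k}=W\oplus V_{\varphi,k}/W$, so that $p_{T,k}$ restricted to $V_{\varphi,k}$ really is the projection $V_{\varphi,k}\to V_{\varphi,k}/W$ along $W$ and the $(V_{\varphi,k}/W)$-component of $T|_{V_{\varphi,k-1}}$ is literally $T^{(k)}|_{V_{\varphi,k-1}}$. Once these identifications are fixed at the outset, the argument is a line-by-line comparison of the definitions of $T^{(k)}$, $q_W$ and $\tau_W$, and there is no substantive obstacle; the content of the section lies rather in the subsequent step, showing $\tau_W$ is an \emph{open} immersion.
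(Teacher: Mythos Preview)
Your proof is correct and follows essentially the same approach as the paper's: both argue that $T$ is determined by $(\gamma_k)_W(T)$ together with $T|_{V_{\varphi,k-1}}$, and that the latter decomposes along the chosen splitting $V_{\varphi,k}=W\oplus V_{\varphi,k}/W$ into $q_W\circ T|_{V_{\varphi,k-1}}$ and $p_W\circ T|_{V_{\varphi,k-1}}$, the second of which is already recorded in $(\gamma_k)_W(T)$. Your version simply spells out the recovery of each graded component $T|_{V_{\varphi,i}}$ explicitly, whereas the paper compresses this into a couple of sentences.
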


\begin{proof}
Note that any $T\in (X_{\a}^{ k})_{W}$ is determined by 
$(\gamma_{ k})_{W}(T)$ and $T|_{V_{\varphi, k-1}}$.
Furthermore, $T|_{V_{\varphi, k-1}}$ is determined by 
$p_{W}\circ T|_{V_{\varphi, k-1}}$ and $q_{W}\circ T|_{V_{\varphi, k-1}}$.
Since $p_{W}\circ T|_{V_{\varphi, k-1}}$ is a component of 
$(\gamma_{ k})_{W}(T)$, 
it is determined by $(\gamma_{ k})_{W}(T)$ and
$q_{W}\circ T|_{V_{\varphi, k-1}}$. This gives us the injectivity.
%And  for $\c\in \tilde{S}(\a)_{k}$,
%the fact 
%\[
%  O_{\c}\cap (X_{\a}^{k})_{W}=(O_{\c^{(k)}}\times 
%  Hom(V_{\varphi, k-1}, W))\cap  (X_{\a}^{k})_{W}.
%\]
%follows from the injectivity of $\tau_{W}$ and 
%\[
% (\gamma_{(k)})_{W}=s\circ \tau_{W}.
%\]
\end{proof}

\begin{lemma}
Let $\c\in S(\a)_{k}$ such that 
$\c^{(k)}=(\a^{(k)})_{\min}$. Then 
The image of $ O_{\c}\cap (X_{\a}^{k})_{W}$ is open 
in $O_{\c^{(k)}}\times \Hom(V_{\varphi, k-1}, W)$. 
\end{lemma}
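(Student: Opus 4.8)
The plan is to combine the surjectivity statement of Proposition \ref{prop: 3.2.11} with the injectivity of $\tau_W$ (Lemma \ref{lem: 4.6.14}) and a dimension count, the latter being forced by the normality/irreducibility already established. First I would unwind the definitions: fix $\c\in S(\a)_k$ with $\c^{(k)}=(\a^{(k)})_{\min}$, so that $O_{\c}$ is (by Corollary \ref{cor: 3.2.7}) the unique minimal stratum of $\tilde S(\a)_k$, hence open and dense in $\line O_{\c}$ and therefore open and dense in $X_{\a}^{k}$; intersecting with the fiber $\alpha^{-1}(W)$, the set $O_{\c}\cap (X_{\a}^{k})_W$ is open and dense in $(X_{\a}^{k})_W$, which by Proposition \ref{prop: 3.3.13} is irreducible (and normal) of some dimension $d$. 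On the target side, $O_{\c^{(k)}}=O_{(\a^{(k)})_{\min}}$ is open and dense in $Y_{\a^{(k)}}$, so $O_{\c^{(k)}}\times \Hom(V_{\varphi, k-1}, W)$ is a smooth irreducible variety.

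Next I would use the commutative triangle after Definition \ref{def: 3.3.13}: $\tau_W$ sends $(X_{\a}^{k})_W$ into $(Z^{k,\a})_W\times\Hom(V_{\varphi, k-1},W)=Y_{\a^{(k)}}\times\Hom(V_{\varphi, k-1},W)$, compatibly with the projections $(\gamma_k)_W$ and $s$. Restricting to $O_{\c}\cap (X_{\a}^{k})_W$: by Proposition \ref{prop: 3.2.11} the map $(\gamma_k)_W: O_{\c}\cap (X_{\a}^{k})_W\to Z^{k,\a}(\c^{(k)})_W = O_{\c^{(k)}}$ (the $W$-fiber of $Z^{k,\a}(\c^{(k)})$) is surjective, and over each point the remaining freedom is exactly the $\Hom(V_{\varphi, k-1},W)$-coordinate coming from $q_W\circ T|_{V_{\varphi,k-1}}$ — I would check that, given $(\gamma_k)_W(T)$ and an arbitrary element $h\in\Hom(V_{\varphi,k-1},W)$, the operator with $T|_{V_{\varphi,k-1}}$ reassembled from its $p_W$-part (fixed by $(\gamma_k)_W(T)$) and $q_W$-part $=h$ still lies in $O_{\c}$. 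This last point is where the hypothesis $\c^{(k)}=(\a^{(k)})_{\min}$ is essential: because the target multisegment is minimal, the rank conditions in Proposition \ref{prop: 2.2.4} that cut out $O_{\c}$ inside $\gamma_k^{-1}(O_{\c^{(k)}})$ are checked one step at a time as in the proof of Proposition \ref{prop: 3.2.11}, and minimality kills any extra linkedness obstruction, so every $h$ is allowed. Hence $\tau_W$ maps $O_{\c}\cap (X_{\a}^{k})_W$ \emph{onto} $O_{\c^{(k)}}\times\Hom(V_{\varphi, k-1},W)$.

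Now $\tau_W$ restricted to $O_{\c}\cap (X_{\a}^{k})_W$ is injective (Lemma \ref{lem: 4.6.14}) and surjective onto the smooth irreducible variety $O_{\c^{(k)}}\times\Hom(V_{\varphi, k-1},W)$; since $O_{\c}\cap (X_{\a}^{k})_W$ is dense in the irreducible $(X_{\a}^{k})_W$ and its image equals a locally closed (indeed, here a whole) subset of the target, I would invoke that a bijective morphism from a variety onto a normal variety is an isomorphism (Zariski's Main Theorem, exactly the tool flagged in the introduction to this section), or more elementarily note that $O_{\c}\cap (X_{\a}^{k})_W\to O_{\c^{(k)}}\times\Hom(V_{\varphi,k-1},W)$ is a bijective morphism of irreducible varieties with normal target, hence an isomorphism onto its image, and the image is all of $O_{\c^{(k)}}\times\Hom(V_{\varphi,k-1},W)$, which is in particular open in itself — giving the claimed openness. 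The main obstacle I anticipate is the verification that the $q_W$-component can be prescribed freely without leaving $O_{\c}$: one must translate the Jordan-cell/rank description of $O_{\c}$ through the projection $p_{T,k}$ and show the minimality of $\c^{(k)}$ exactly matches the degrees of freedom counted by $\Hom(V_{\varphi,k-1},W)$, i.e. that no subvariety-defining equation survives; this is the same delicate rank bookkeeping as in Proposition \ref{prop: 3.2.11}, now run in the fiber.
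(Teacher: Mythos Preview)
Your central claim --- that for every $h\in\Hom(V_{\varphi,k-1},W)$ the reassembled operator lies in $O_{\c}$, so that $\tau_W$ maps $O_{\c}\cap(X_{\a}^k)_W$ \emph{onto} $O_{\c^{(k)}}\times\Hom(V_{\varphi,k-1},W)$ --- is false in general, and this is exactly where your plan breaks. Take the regime $\varphi(k)-\ell_k<\varphi(k-1)$ (cases (2) and (3) of Proposition~\ref{prop: 3.2.2}). There $\c$ contains no segment ending at $k-1$ (or, in case (3), only the prescribed ones $\Delta_{\ell_k+1},\dots,\Delta_r$), so membership in $O_{\c}$ forces rank conditions on $T'|_{V_{\varphi,k-1}}$: for instance in case (2) one needs $T'|_{V_{\varphi,k-1}}$ injective, which amounts to $h|_{\ker(T^{(k)}|_{V_{\varphi,k-1}})}$ injective. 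Taking $h=0$ already gives $T'\notin O_{\c}$. Minimality of $\c^{(k)}$ does not ``kill the obstruction''; rather, it is precisely what produces the explicit description of $\c$ in Proposition~\ref{prop: 3.2.2}, and that description tells you which $h$ are allowed --- an open, generally proper, subset.

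The paper's proof does not attempt surjectivity at all. It splits into the three cases of Proposition~\ref{prop: 3.2.2} and, in each, writes down explicitly the condition on $h=T_0$ for the reconstructed $T'$ to lie in $O_{\c}$ (no condition in case (1); injectivity of $T_0$ on $\ker(T^{(k)}|_{V_{\varphi,k-1}})$ in case (2); a flag-rank condition $\dim T_0(V_i)-\dim T_0(V_{i-1})=\dim(V_i/V_{i-1})$ in case (3)), observing each time that this is an open condition. Note also the logical order: this lemma is an input to Proposition~\ref{prop: 4.6.14} (it supplies the birationality needed for Zariski's Main Theorem), so you cannot invoke $\tau_W$ being an open immersion here. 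If you want to salvage your dimension-count strategy, you would need an independent proof that $\dim\bigl(O_{\c}\cap(X_{\a}^k)_W\bigr)=\dim\bigl(O_{\c^{(k)}}\times\Hom(V_{\varphi,k-1},W)\bigr)$, after which injectivity plus equal dimension plus smooth target would indeed give an open immersion; but you have not supplied that computation, and the direct case analysis is shorter.
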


\begin{proof}

Let $\c\in S(\a)_{k}$ such that
$\c^{(k)}=(\a^{(k)})_{\min}$.
We shall use the description in proposition 
\ref{prop: 3.2.2}.
We show that the image of 
\[
  O_{\c}\cap (X_{\a}^{k})_{W}
%=(O_{\c^{k}}\times 
%Hom(V_{\varphi, k-1}, W))\cap  (X_{\a}^{k})_{W}
\]
is open in $O_{\c^{(k)}}\times \Hom(V_{\varphi, k-1}, W)$.

Let $T\in (O_{\c})_{W}$. We check case by case:
\begin{description}
 \item [(1)]If $\varphi(k-1)\leq \varphi(k)-\ell_{k}$, 
 the fact $\c^{(k)}=(\a^{(k)})_{\min}$ implies that 
 $T^{(k)}|_{V_{\varphi, k-1}}$ is injective.
 As a consequence
we have $\Im(T|_{V_{\varphi, k-1}})\cap W=0$. Hence for any element $T_{0}\in \Hom(V_{\varphi, k-1}, W)$
, we define $T_{0}\in O_{\c}$, such that 
 \[
  T_{0}|_{V_{\varphi, k-1}}=T_{0}\oplus T^{(k)}|_{V_{\varphi, k-1}},
 \]
which lies in the fiber over $(\gamma_{k})_{W}^{-1}((T^{(k)}, W))$. 
Since by proposition \ref{prop: 3.2.11}, every element in 
$O_{\c^{(k)}}$ comes from some element in $O_{\c}$, 
hence
\[
 \tau_{W}(O_{\c}\cap (X_{\a}^{k})_{W})=O_{\c^{(k)}}\times \Hom(V_{\varphi, k-1}, W),
\]
which is open.
 
\item[(2)]
If $\varphi(k)-\ell_{k}<\varphi(k-1)< \varphi(k)$,
the fact $\c^{ (k)}=(\a^{(k)})_{\min}$ implies that
the morphism 
\[
 T^{ (k)}|_{V_{\varphi, k-1}}
\]
contains a kernel of dimension 
\[
 \varphi(k-1)-\varphi(k)+\ell_{k}.
\]

Our description of $\c$ in proposition \ref{prop: 3.2.2} shows that in this case  
$$
\dim(\Im(T|_{V_{\varphi, k-1}})\cap W)=\varphi(k-1)-\varphi(k)+\ell_{k}.
$$

In this situation, given an element $T_{0}\in \Hom(V_{\varphi, k-1}, W)$ 
 we define $T'\in E_{\varphi}$, such that 
 \[
  T'|_{V_{\varphi, k-1}}=T_{0}\oplus T^{(k)}|_{V_{\varphi, k-1}},
 \]
\[
 T'|_{V_{\varphi, k}}=T^{(k)}|_{V_{\varphi, k}/W}\circ p_{W},
\]
\[
 T'|_{V_{\varphi, i}}=T^{(k)}, \text{ for } i\neq k-1, k.
\]

By construction and proposition 
\ref{prop: 2.2.4}, we know that 
$T'\in O_{\c}$ if and only if $T'|_{V_{\varphi, k-1}}$ is injective, since no segment in $\c$ ends in $k-1$, as described in 
proposition \ref{prop: 3.2.2}. 
And this is equivalent to say
\[
 T_{0}|_{\ker(T^{( k)}|_{V_{\varphi, k-1}})}
\]
is injective. 
This is an open condition, hence $O_{\c}\cap (X_{\a}^{ k})_{W}$ 
is open in $O_{\c^{( k)}}\times \Hom(V_{\varphi, k-1}, W)$.

\item[(3)]If $\varphi(k-1)\geq \varphi(k)$, then by proposition
\ref{prop: 3.2.2}
\[
 \c^{ (k)}=(\a^{( k)})_{\min}
\]
implies 
\[
 \Im(T|_{V_{\varphi, k-1}})\supseteq W.
\]
Recall the notation from proposition \ref{prop: 3.2.2} 
\[
 \a_{0}=\{\Delta_{1}\succeq \cdots \succeq \Delta_{r}\}.
\]
with $r=\varphi(k-1)-\varphi(k)+\ell_{k}$.
Then 
\[
 \c=((\a^{(k)})_{\min}\setminus \a_{0})\cup 
 \{\Delta_{1}^+\succeq \cdots \succeq \Delta_{\ell_{k}}^+\succeq \Delta_{\ell_{k}+1}
 \succeq \cdots \succeq \Delta_{r}\}.
\]

Let $T_{0}\in \Hom(V_{\varphi, k-1},W)$, we define $T'\in E_{\varphi}$
\[
 T'|_{V_{\varphi, k-1}}=T_{0}\oplus T^{(k)}|_{V_{\varphi, k-1}},
 \]
\[
 T'|_{V_{\varphi, k}}=T^{(k)}|_{V_{\varphi, k}/W}\circ p_{W},
\]
\[
 T'|_{V_{\varphi, i}}=T^{(k)}, \text{ for } i\neq k-1, k.
\]
Consider the following flag over $V_{\varphi, k-1}$,
\[
 \ker(T^{(k)}|_{\varphi, k-1})=V_{r}
 \supseteq \cdots \supseteq V_{1}\supseteq V_{0}=0,
\]
where $V_{i}=\Im((T^{(k)})^{\Delta_{i}})\cap \ker(T^{(k)}|_{\varphi, k-1})$, with $i=1, \cdots, r$, for the notation $(T^{(k)})^{\Delta}$, we refer to 
definition \ref{def: 4.1.5}.

Now by proposition \ref{prop: 2.2.4}, we know that 
$T'\in O_{\c}$ if and only if 
\[
 \dim(T_{0}(V_{i}))-\dim(T_{0}(V_{i-1}))
 =\dim(V_{i}/V_{i-1}),
\]
for $i=1, \cdots, \ell_{k}$.
In fact, if $V_{i}\neq V_{i-1}$, then 
\[
 \dim(V_{i}/V_{i-1})=\sharp\{j: \Delta_{j}=\Delta_{i}\}.
\]
And by construction, if $i\leq \ell_{k}$, by proposition \ref{prop: 2.2.4}, 
the fact that $\c$ contains $\Delta_{i}^+$ implies that if $T'\in O_{\c}$, 
\[
 \dim(T_{0}(V_{i}))-\dim(T_{0}(V_{i-1}))
 =\dim(V_{i}/V_{i-1}).
\]
The converse holds by the same reason.

Again, this is an open condition, which proves that 
$O_{\c}\cap (X_{\a}^{k})_{W}$ is open in 
$O_{\c^{(k)}}\times \Hom(V_{\varphi, k-1}, W)$. 
\end{description}
 
\end{proof}

\begin{prop}\label{prop: 4.6.14}

The morphism $\tau_{W}$ is an open immersion.

\end{prop}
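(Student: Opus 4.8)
The plan is to identify $\tau_{W}$ as a quasi-finite, birational morphism between irreducible varieties with normal target, and then invoke Zariski's Main Theorem, which guarantees that such a morphism is automatically an open immersion. Injectivity --- hence quasi-finiteness --- of $\tau_{W}$ is already in hand from Lemma \ref{lem: 4.6.14}, so three things remain to be checked: normality and irreducibility of the source, the same for the target, and dominance of $\tau_{W}$.

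For the source $(X_{\a}^{k})_{W}$ this is exactly Proposition \ref{prop: 3.3.13}. For the target, I would first recall that $(Z^{k,\a})_{W}$ is identified with $Y_{\a^{(k)}}=\coprod_{\c\in S(\a^{(k)})}O_{\c}$, and observe that this union of orbits is an open subvariety of $\line{O}_{(\a^{(k)})_{\min}}$: indeed $S(\a^{(k)})$ is downward closed and every $\c\leq \a^{(k)}$ satisfies $\c\geq (\a^{(k)})_{\min}$, so the complement of $Y_{\a^{(k)}}$ in $\line{O}_{(\a^{(k)})_{\min}}$ is a union of orbital closures. Hence $Y_{\a^{(k)}}$ is irreducible, with open dense orbit $O_{(\a^{(k)})_{\min}}$, and it is normal, because by \cite{Z4} Theorem 1 the orbital closures are Zariski-locally isomorphic to Schubert varieties of type $A$, which are normal. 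Taking the product with the affine space $\Hom(V_{\varphi, k-1}, W)$ preserves both properties, so $Y_{\a^{(k)}}\times \Hom(V_{\varphi, k-1}, W)$ is normal and irreducible.

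For dominance, I would take $\c\in S(\a)_{k}$ to be the unique multisegment with $\c^{(k)}=(\a^{(k)})_{\min}$ (Proposition \ref{prop: 3.2.5}), which by Corollary \ref{cor: 3.2.7} is the minimal, hence generic, element of $\tilde{S}(\a)_{k}$; thus $O_{\c}$ is open dense in $X_{\a}^{k}$, and since $\alpha|_{O_{\c}}$ is surjective (Proposition \ref{prop: 4.6.10}) the slice $O_{\c}\cap (X_{\a}^{k})_{W}$ is a nonempty open, hence dense, subset of the irreducible fiber $(X_{\a}^{k})_{W}$. By the lemma just proved, $\tau_{W}$ carries this slice onto a nonempty open subset of $O_{(\a^{(k)})_{\min}}\times \Hom(V_{\varphi, k-1}, W)$, which is itself open and dense in the target; therefore $\tau_{W}$ is dominant. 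Being in addition injective and defined over $\C$, it is generically one-to-one, so the inclusion $K(\text{target})\hookrightarrow K(\text{source})$ has degree one and $\tau_{W}$ is birational onto the target.

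Finally I would conclude: $\tau_{W}$ is a quasi-finite birational morphism of irreducible varieties with normal target, so Zariski's Main Theorem applies and $\tau_{W}$ is an open immersion. The step I expect to be the main obstacle is the normality of the target $Y_{\a^{(k)}}$, which is not intrinsic to the combinatorics and is supplied only by Zelevinsky's local structure theorem for orbital varieties; the rest of the argument is then bookkeeping to match the generic orbit $O_{\c}$ of the source with $O_{(\a^{(k)})_{\min}}$ so as to feed the correct hypotheses into Zariski's Main Theorem.
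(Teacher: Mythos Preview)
Your proof is correct and follows the same strategy as the paper: normality of the target via Zelevinsky's local Schubert description, irreducibility and normality of the source via Proposition~\ref{prop: 3.3.13}, injectivity from Lemma~\ref{lem: 4.6.14}, and then Zariski's Main Theorem. In fact you are more careful than the paper's own write-up, which invokes Zariski's Main Theorem immediately after checking normality of source and target without spelling out the dominance/birationality hypothesis; your use of the preceding lemma on $O_{\c}\cap (X_{\a}^{k})_{W}$ to secure dominance is exactly the missing link, and is implicitly what the paper relies on.
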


\begin{proof}
To see that it is open immersion, we shall use Zariski's main theorem.
Since all Schubert varieties are normal, we observe that 
\[
(Z^{k, \a})_{W}\times \Hom(V_{\varphi, k-1}, W)
\]
are normal by theorem 1 of \cite{Z4}. Also, by 
proposition \ref{prop: 3.3.13}, we know that 
$(X_{\a}^{k})_{W}$ is irreducible and normal,
hence $\tau_W$ is an open immersion.
\end{proof}

\begin{prop}\label{prop: 4.6.15}
Let $\c\in \tilde{S}(\a)_{k}$. Then 
$\c\in S(\a)_{k}$ if and only if
\[
 O_{\c}\cap (X_{\a}^{k})_{W}
\]
is open in 
\[
 (O_{\c^{(k)}}\times \Hom(V_{\varphi, k-1}, W)).
\]

\end{prop}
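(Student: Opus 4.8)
The plan is to transport the statement across the open immersion $\tau_W$ of Proposition \ref{prop: 4.6.14} and reduce it to a combinatorial description of the fibres of $\psi_k$. Write $U=\tau_W((X_{\a}^{k})_{W})$, an open subset of $Y_{\a^{(k)}}\times\Hom(V_{\varphi,k-1},W)$. Using the commutative triangle following Definition \ref{def: 3.3.13} and the fact that $\gamma_k$ carries $O_\c$ into $Z^{k,\a}(\c^{(k)})$ (Proposition \ref{prop: 3.2.11}), one gets $\tau_W(O_\c\cap(X_{\a}^{k})_{W})\subseteq O_{\c^{(k)}}\times\Hom(V_{\varphi,k-1},W)$ for every $\c\in\tilde{S}(\a)_{k}$, and, after identifying $(Z^{k,\a})_W$ with $Y_{\a^{(k)}}$, for each $\d\in S(\a^{(k)})$
\[
U\cap\bigl(O_\d\times\Hom(V_{\varphi,k-1},W)\bigr)=\coprod_{\substack{\c\in\tilde{S}(\a)_{k}\\ \c^{(k)}=\d}}\tau_W\bigl(O_\c\cap(X_{\a}^{k})_{W}\bigr),
\]
each summand being locally closed and irreducible (a $P_W$-homogeneous space through the fibre bundle $\alpha$ of Proposition \ref{prop: 4.6.10}, intersected with an open set). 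Since $\tau_W$ is an open immersion, $O_\c\cap(X_{\a}^{k})_{W}$ is open in $O_{\c^{(k)}}\times\Hom(V_{\varphi,k-1},W)$ if and only if $\tau_W(O_\c\cap(X_{\a}^{k})_{W})$ is.

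Next I would check that the decomposition above is a good stratification. Because $\alpha$ is a Zariski-locally trivial fibre bundle and the $G_\varphi$-orbit stratification of $X_{\a}^{k}$ is constant along its fibres ($G_\varphi$-stability of the orbits together with $GL_{\varphi(k)}$-equivariance of $\alpha$), the closure of $O_\c\cap(X_{\a}^{k})_{W}$ inside $(X_{\a}^{k})_{W}$ is $\coprod_{\c'\ge\c}O_{\c'}\cap(X_{\a}^{k})_{W}$, the union being over $\c'\in\tilde{S}(\a)_{k}$. Transporting through $\tau_W$, the set $U\cap(O_\d\times\Hom(V_{\varphi,k-1},W))$ is open, hence (being nonempty by surjectivity of $\psi_k$, Proposition \ref{prop: 1.5.8}) irreducible and dense, in the irreducible variety $O_\d\times\Hom(V_{\varphi,k-1},W)$; it carries a good stratification indexed by the induced poset on $\psi_k^{-1}(\d)$. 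An irreducible variety with a good stratification has exactly one open stratum, namely the one indexed by its therefore unique minimal element $\c_\d^{\circ}$. Consequently $O_\c\cap(X_{\a}^{k})_{W}$ is open in $O_{\c^{(k)}}\times\Hom(V_{\varphi,k-1},W)$ exactly when $\c$ is the minimal element of $\psi_k^{-1}(\c^{(k)})$.

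It remains to show, combinatorially, that for $\c\in\tilde{S}(\a)_{k}$ one has $\c\in S(\a)_{k}$ if and only if $\c$ is minimal in $\psi_k^{-1}(\c^{(k)})$. If $\c'<\c$ lies in the same fibre, pick a maximal chain $\c=\c_0>\cdots>\c_t=\c'$; it stays in $\tilde{S}(\a)_{k}$ by Lemma \ref{lem: 3.0.7}(b), and Lemma \ref{lem: 3.0.7}(c) applied successively gives $\c_i^{(k)}=\c^{(k)}$ for all $i$. A one-step analysis of the elementary operation carrying $\c_0$ to $\c_1$ then shows that $\c_1^{(k)}=\c_0^{(k)}$ forces the two segments involved to be linked with ends $k-1$ and $k$; so if $\c$ satisfies $H_k(\a)$ no such $\c'$ exists, i.e. $\c$ is minimal. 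Conversely, if $\c$ fails $H_k(\a)$, applying the elementary operation to a linked pair with ends $k-1$ and $k$ produces $\c'<\c$ with $\c'^{(k)}=\c^{(k)}$ and $\c'\in\tilde{S}(\a)_{k}$, so $\c$ is not minimal. Combining with the previous paragraph gives $\c\in S(\a)_{k}\Leftrightarrow\c=\c_{\c^{(k)}}^{\circ}\Leftrightarrow O_\c\cap(X_{\a}^{k})_{W}$ is open in $O_{\c^{(k)}}\times\Hom(V_{\varphi,k-1},W)$, which is the claim. The main obstacle is the middle step: establishing that the stratification transported through $\tau_W$ is good (closure of a stratum is a union of strata) and extracting from irreducibility the uniqueness of the minimal element of each fibre; the equivariance and local-triviality bookkeeping for $\alpha$ is where the care is needed, whereas the elementary-operation case analysis is routine.
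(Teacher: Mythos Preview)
Your proposal is correct and follows essentially the same route as the paper: both transport the question through the open immersion $\tau_W$, decompose $U\cap(O_{\c^{(k)}}\times\Hom(V_{\varphi,k-1},W))$ as $\coprod_{\d^{(k)}=\c^{(k)}}O_{\d}\cap(X_{\a}^{k})_{W}$, use irreducibility to conclude there is a unique open stratum indexed by the minimal element of the fibre, and identify that minimal element with the unique representative in $S(\a)_{k}$. The only cosmetic difference is that the paper invokes Lemma~\ref{lem: 3.0.7}(c),(d) directly for the combinatorial equivalence ``minimal in the fibre $\Leftrightarrow$ satisfies $H_k$'', whereas you reprove that equivalence by an elementary-operation case analysis; and the paper leaves the ``good stratification'' step implicit (using only that an irreducible variety has at most one open locally closed piece), whereas you make it explicit via the closure relations inherited from the fibre bundle $\alpha$.
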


\begin{proof}
We already showed that  
\[
 O_{\c}\cap (X_{\a}^{k})_{W}
\]
is a  sub-variety of 
\[
 O_{\c^{(k)}}\times \Hom(V_{\varphi, k-1}, W).
\]
Moreover, we know that 
\[
  (O_{\c^{(k)}}\times \Hom(V_{\varphi, k-1}, W))\cap (X_{\a}^{k})_{W}
\]
is open in
\[
  O_{\c^{(k)}}\times \Hom(V_{\varphi, k-1}, W)
\]
since $\tau_{W}$ is open.  
Finally, by proposition \ref{prop: 3.2.11}, 
\begin{align*}
 &( O_{\c^{(k)}}\times \Hom(V_{\varphi, k-1}, W))\cap (X_{\a}^{k})_{W}\\
&=\coprod_{\d \in \tilde{S}(\a)_{k}, \d^{(k)}=\c^{(k)}}O_{\d}\cap (X_{\a}^{k})_{W}.
 \end{align*}
The variety $( O_{\c^{(k)}}\times \Hom(V_{\varphi, k-1}, W))\cap (X_{\a}^{k})_{W}$ 
 is irreducible because $( O_{\c^{(k)}}\times \Hom(V_{\varphi, k-1}, W))$ is irreducible, hence the 
 stratification $\coprod_{\d \in \tilde{S}(\a)_{k}, \d^{(k)}=\c^{(k)}}O_{\d}\cap (X_{\a}^{k})_{W}$
 by locally closed sub-varieties can only contain one term which is open, 
 from the point of view of Zariski topology.
Since for any element 
\[
 \d'\in \{\d \in \tilde{S}(\a)_{k}, \d^{(k)}=\c^{(k)}\},
\]
by (d) of lemma \ref{lem: 3.0.7}, we know that 
there exists $\c'\in S(\a)_{k}$ such that $\d'>\c'$. Hence we conclude that
\[
\{\d \in \tilde{S}(\a)_{k}, \d^{(k)}=\c^{(k)}\},
\]
contains a unique minimal element, which lies in $S(\a)_{k}$.
Now our proposition follows. 
\end{proof}

\begin{cor}\label{cor: 4.6.16}
Let $\a$ be a multisegment  and 
$$\c\in S(\a)_{k},$$
then 
\[
 P_{\a, \c}(q)=P_{\a^{(k)}, \c^{(k)}}(q).
 \]
\end{cor}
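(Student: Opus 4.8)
The plan is to transport the intersection-cohomology stalk that computes $P_{\a,\c}(q)$ through the geometric dictionary built in this section --- the fibration $\alpha$, the open immersion $\tau_{W}$, and the triviality of the $\Hom$-factor --- and to check that the cohomological shifts produced along the way cancel exactly against the normalising exponent $(i+d_{\c})/2$. Two elementary facts are worth recording first. Since $\c\leq\a$, Lemma \ref{lem: 3.0.7}(b) gives $\{\d:\d\geq\c\}\subseteq\tilde{S}(\a)_{k}$, so $\line{O}_{\c}$ (closure in $E_{\varphi}$, with $\varphi=\varphi_{\a}$) is a closed subvariety of $X_{\a}^{k}$ containing $O_{\a}$; and, by transitivity of $\leq$ together with the formula $\line{O}_{\b}=\coprod_{\d\geq\b}O_{\d}$, the locally closed set $Y_{\a^{(k)}}$ is in fact \emph{open} in $E_{\varphi_{\a^{(k)}}}$, so that $P_{\a^{(k)},\c^{(k)}}(q)$ may be computed using the stratification of $Y_{\a^{(k)}}$ alone. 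Fix $x_{\a}\in O_{\a}$ and set $W=\ker\bigl(x_{\a}|_{V_{\varphi,k}}\bigr)\in Gr(\ell_{k},V_{\varphi})$.

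Next I would descend to the fibre over $W$. Since $\alpha$ is $GL_{\varphi(k)}$-equivariant (proof of Proposition \ref{prop: 4.6.10}) and $\line{O}_{\c}$ is $GL_{\varphi(k)}$-stable, $\alpha|_{\line{O}_{\c}}$ is again a Zariski-locally trivial fibration over the smooth base $Gr(\ell_{k},V_{\varphi})$, with fibre $(\line{O}_{\c})_{W}:=\line{O}_{\c}\cap(X_{\a}^{k})_{W}$; because $(X_{\a}^{k})_{W}$ is irreducible (Proposition \ref{prop: 3.3.13}) and contains $O_{\c}\cap(X_{\a}^{k})_{W}$ as a dense open subset, $(\line{O}_{\c})_{W}$ is the closure of $O_{\c}\cap(X_{\a}^{k})_{W}$ there, and the stratification of $\line{O}_{\c}$ pulls back to that of the fibre. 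Local triviality over a smooth base then gives, with $\delta:=\dim Gr(\ell_{k},V_{\varphi})=\ell_{k}(\varphi(k)-\ell_{k})$,
\[
\mathcal{H}^{i}(\line{O}_{\c})_{x_{\a}}=\mathcal{H}^{i+\delta}\bigl((\line{O}_{\c})_{W}\bigr)_{x_{\a}}.
\]

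Then I would push forward through $\tau_{W}\colon(X_{\a}^{k})_{W}\hookrightarrow Y_{\a^{(k)}}\times\Hom(V_{\varphi,k-1},W)$, an open immersion by Proposition \ref{prop: 4.6.14}. By the construction of $T^{(k)}$ (Definition \ref{def: 3.3.7}) and Proposition \ref{prop: 3.2.11}, $\tau_{W}$ carries $O_{\d}\cap(X_{\a}^{k})_{W}$ into $O_{\d^{(k)}}\times\Hom(V_{\varphi,k-1},W)$; in particular $\tau_{W}(x_{\a})$ lies over $x_{\a^{(k)}}\in O_{\a^{(k)}}$. Since $\c\in S(\a)_{k}$, Proposition \ref{prop: 4.6.15} makes $\tau_{W}\bigl(O_{\c}\cap(X_{\a}^{k})_{W}\bigr)$ open in $O_{\c^{(k)}}\times\Hom(V_{\varphi,k-1},W)$, and the latter is open dense in $\bigl(\line{O}_{\c^{(k)}}\cap Y_{\a^{(k)}}\bigr)\times\Hom(V_{\varphi,k-1},W)$; taking closures, $\tau_{W}$ identifies $(\line{O}_{\c})_{W}$ with an open subvariety of $\bigl(\line{O}_{\c^{(k)}}\cap Y_{\a^{(k)}}\bigr)\times\Hom(V_{\varphi,k-1},W)$. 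Hence $IC\bigl((\line{O}_{\c})_{W}\bigr)$ is the restriction of $IC\bigl(\line{O}_{\c^{(k)}}\times\Hom(V_{\varphi,k-1},W)\bigr)$; since $\Hom(V_{\varphi,k-1},W)\cong\mathbb{A}^{N}$ with $N=\ell_{k}\varphi(k-1)$ is smooth, this external product is $IC(\line{O}_{\c^{(k)}})\boxtimes\C_{\mathbb{A}^{N}}[N]$, and, using also that $Y_{\a^{(k)}}$ is open in $E_{\varphi_{\a^{(k)}}}$,
\[
\mathcal{H}^{i+\delta}\bigl((\line{O}_{\c})_{W}\bigr)_{x_{\a}}=\mathcal{H}^{i+\delta+N}(\line{O}_{\c^{(k)}})_{x_{\a^{(k)}}}.
\]

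Finally I would do the bookkeeping. Running the same three reductions on the open dense orbit $O_{\c}\subseteq\line{O}_{\c}$ --- here using Proposition \ref{prop: 4.6.15} once more, which is the one place where $\c\in S(\a)_{k}$, not merely $\c\in\tilde{S}(\a)_{k}$, is essential --- yields $d_{\c}=\dim O_{\c}=\delta+N+\dim O_{\c^{(k)}}=\delta+N+d_{\c^{(k)}}$. Substituting the two stalk identities into the definition of $P_{\a,\c}(q)$ and reindexing by $j=i+\delta+N$, the exponent becomes $(i+d_{\c})/2=(j+d_{\c^{(k)}})/2$, so that
\[
P_{\a,\c}(q)=\sum_{i}q^{(i+d_{\c})/2}\dim\mathcal{H}^{i}(\line{O}_{\c})_{x_{\a}}=\sum_{j}q^{(j+d_{\c^{(k)}})/2}\dim\mathcal{H}^{j}(\line{O}_{\c^{(k)}})_{x_{\a^{(k)}}}=P_{\a^{(k)},\c^{(k)}}(q).
\]
The step I expect to be delicate is the passage through $\tau_{W}$: one has to verify that forming closures is compatible with the open immersion along the stratum through $\tau_{W}(x_{\a})$, so that the intersection complexes genuinely restrict to one another --- all the substantive geometry (the fibration $\alpha$, the irreducibility and normality of $(X_{\a}^{k})_{W}$, and the open-immersion property of $\tau_{W}$) being already available from the preceding results.
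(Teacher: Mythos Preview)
Your argument is correct and follows essentially the same route as the paper: pass to the fibre of $\alpha$ via Proposition \ref{prop: 4.6.10}, transport through the open immersion $\tau_{W}$ of Proposition \ref{prop: 4.6.14}, invoke Proposition \ref{prop: 4.6.15} to match strata, and strip off the affine $\Hom$-factor by K\"unneth. The paper's version is terser and simply writes the stalk identities with the same cohomological index $j$ throughout, suppressing the shifts by $\delta$ and $N$ that you track explicitly; your bookkeeping $d_{\c}=d_{\c^{(k)}}+\delta+N$ is exactly what makes the normalisations in the two Poincar\'e polynomials line up, so your more careful accounting is a genuine improvement in clarity rather than a different method.
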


\begin{proof}
First of all, by proposition \ref{prop: 4.6.10}
and Kunneth formula, we know that 
\[
 \mathcal{H}^{j}(\line{O}_{\c})_{\a}=\mathcal{H}^{j}(\line{O}_{\c}\cap (X_{\a}^{(k)})_{W})_{\a},
\]
the localization being taken at a point in $O_{\a}\cap (X_{\a}^{(k)})_{W}$.
Now by proposition \ref{prop: 4.6.14} and proposition \ref{prop: 4.6.15}
, we may regard $\line{O}_{\c}\cap (X_{\a}^{(k)})_{W}$
as an open subset of $\line{O}_{\c^{(k)}}\times Hom(V_{\varphi, k-1}, W)$, hence
\[
 \mathcal{H}^{j}(\line{O}_{\c}\cap (X_{\a}^{(k)})_{W})_{\a}=
 \mathcal{H}^{j}(\line{O}_{\c^{(k)}}\times Hom(V_{\varphi, k-1}, W))_{\a^{(k)}}
 \]
and Kunneth formula implies that the latter is equal to 
\[
 \mathcal{H}^{j}(\line{O}_{\c^{(k)}})_{\a^{(k)}}.
\]
\end{proof}

\begin{cor}\label{cor: 4.6.17}
Let $\d\in S(\a)$  such that
 \[
  \d^{(k)}= \a^{(k)},
 \]
and 
 \[
  \c\in S(\a)_{k},
 \]
then $\c< \d$, and 
\[
 P_{\a, \c}(q)=P_{\d, \c}(q).
\]
\end{cor}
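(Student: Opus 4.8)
The plan is to derive this from Corollary \ref{cor: 4.6.16}, applying it both to the pair $(\a,\c)$ and to the pair $(\d,\c)$; the one point that must be settled before the second application is legitimate is that $\c$ also belongs to $S(\d)_{k}$, in other words that $S(\a)_{k}=S(\d)_{k}$.

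First I would prove $S(\d)_{k}\subseteq S(\a)_{k}$: if $\e\in S(\d)_{k}$ then $\e\le\d\le\a$, and $\deg(\e^{(k)})=\deg(\d^{(k)})=\deg(\a^{(k)})$ because $\d^{(k)}=\a^{(k)}$; moreover condition (2) of the hypothesis $H_{k}(-)$ involves only the segments of $\e$ itself, so $\e$ satisfies $H_{k}(\a)$ as well, whence $\e\in S(\a)_{k}$. For the reverse inclusion, take $\c\in S(\a)_{k}$; then $\c^{(k)}\in S(\a^{(k)})=S(\d^{(k)})$, so by surjectivity of $\psi_{k}\colon S(\d)_{k}\to S(\d^{(k)})$ (Proposition \ref{prop: 1.5.10} applied to $\d$) there is $\e\in S(\d)_{k}$ with $\e^{(k)}=\c^{(k)}$, and by the inclusion just proved $\e\in S(\a)_{k}$. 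Since $\c$ and $\e$ both lie in $S(\a)_{k}$ with the same image under $\psi_{k}$, the injectivity of $\psi_{k}$ on $S(\a)_{k}$ --- part of the bijectivity of $\psi_{k}$ established via the open immersion $\tau_{W}$ of Proposition \ref{prop: 4.6.14} together with the characterisation of $S(\a)_{k}$ in Proposition \ref{prop: 4.6.15} --- forces $\c=\e\in S(\d)_{k}$. Hence $S(\a)_{k}=S(\d)_{k}$.

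Granting this, the statement follows immediately. From $\c\in S(\a)_{k}=S(\d)_{k}$ we get $\c\le\d$ (an equality only when $\c=\d$, and then all the polynomials below are $1$), which gives the asserted inequality. For the identity of polynomials, Corollary \ref{cor: 4.6.16} applied to $(\a,\c)$ gives $P_{\a,\c}(q)=P_{\a^{(k)},\c^{(k)}}(q)$, while applied to $(\d,\c)$ --- now legitimate since $\c\in S(\d)_{k}$ --- it gives $P_{\d,\c}(q)=P_{\d^{(k)},\c^{(k)}}(q)$. Since $\d^{(k)}=\a^{(k)}$ the two right-hand sides are the same polynomial, so $P_{\a,\c}(q)=P_{\d,\c}(q)$.

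The whole content of the argument is concentrated in the identification $S(\a)_{k}=S(\d)_{k}$, and within it in the bijectivity of $\psi_{k}$; everything else is bookkeeping with degrees and with the definition of $H_{k}$. As the geometry underlying the bijectivity --- principally Proposition \ref{prop: 4.6.14} --- has already been carried out, no new ideas are needed here.
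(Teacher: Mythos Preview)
Your proof is correct and follows essentially the same route as the paper's. The paper argues slightly more tersely: it invokes Corollary~\ref{cor: 3.1.11} (applied to $\d$) to produce $\c'\in S(\d)_{k}$ with $\c'^{(k)}=\c^{(k)}$, then cites Proposition~\ref{prop: 4.6.15} for the uniqueness forcing $\c'=\c$, and finishes with Corollary~\ref{cor: 4.6.16} exactly as you do. Your framing via the equality $S(\a)_{k}=S(\d)_{k}$ and your choice of Proposition~\ref{prop: 1.5.10} (plain surjectivity) in place of Corollary~\ref{cor: 3.1.11} are cosmetic differences---indeed your version sidesteps the strict-inequality hypothesis in Corollary~\ref{cor: 3.1.11}, which is a small improvement.
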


\begin{proof}
By corollary \ref{cor: 3.1.11}, we know that there exists $\c'\in S(\a)_{k}$
such that 
\[
 \d>\c', ~\c'^{(k)}=\c^{(k)}.
\]
And proposition \ref{prop: 4.6.15} implies $\c'=\c$.
Finally, applying the corollary \ref{cor: 4.6.16} to 
the pairs $\{\a, \c\}$ and $\{\d, \c\}$ yields the 
result.
\end{proof}

\section{Conclusion}
In this section, we draw some conclusions from what we 
have done before, espectially the properties related to $\psi_{k}$.

\begin{prop}\label{cor: 3.2.3}
 The map
 \begin{align*}
 \psi_{k}: S(\a)_{k} &\rightarrow S(\a^{(k)})\\
\c &\mapsto \c^{(k)}
 \end{align*}
is bijective. Moreover, 
\begin{itemize}
 \item  for $\c\in  S(\a)_{k}$
\[
 m(\c,\a)=m(\c^{(k)}, \a^{(k)}).
\]
\item for $\b, \c\in S(\a)_{k}$, we have $\b>\c$ if and only if
$\b^{(k)}> c^{(k)}$.
\end{itemize}
\end{prop}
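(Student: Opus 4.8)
The plan is to deduce everything from the geometric work of the previous section together with Zelevinsky's $p$-adic Kazhdan--Lusztig theorem. Surjectivity of $\psi_{k}$ is already Proposition \ref{prop: 1.5.10}, so the genuinely new points are injectivity, the multiplicity identity, and the two implications of the order statement; the first of these is where all the geometry is used, and the rest are formal.

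\textbf{Injectivity.} Fix $W\in Gr(\ell_{k}, V_{\varphi})$ and suppose $\c,\c'\in S(\a)_{k}$ satisfy $\c^{(k)}=\c'^{(k)}=:\d$. By Proposition \ref{prop: 4.6.15}, both $O_{\c}\cap(X_{\a}^{k})_{W}$ and $O_{\c'}\cap(X_{\a}^{k})_{W}$ are open in $O_{\d}\times\Hom(V_{\varphi,k-1},W)$ under the identification furnished by the open immersion $\tau_{W}$ of Proposition \ref{prop: 4.6.14}, and they are nonempty because $\alpha$ restricted to any orbit indexed by $\tilde{S}(\a)_{k}$ is surjective (Proposition \ref{prop: 4.6.10}). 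The variety $(O_{\d}\times\Hom(V_{\varphi,k-1},W))\cap(X_{\a}^{k})_{W}$ is irreducible (Proposition \ref{prop: 3.3.13}, carried over through $\tau_{W}$) and is decomposed into the pairwise disjoint locally closed pieces $O_{\e}\cap(X_{\a}^{k})_{W}$ for $\e\in\tilde{S}(\a)_{k}$ with $\e^{(k)}=\d$; two disjoint nonempty open subsets of an irreducible variety cannot coexist, so at most one such piece is open. Hence $\c=\c'$, and combined with Proposition \ref{prop: 1.5.10} this shows $\psi_{k}$ is a bijection.

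\textbf{Multiplicities and order.} For the multiplicity identity, Theorem \ref{teo: 4.1.5} gives $m(\c,\a)=P_{\a,\c}(1)$ and $m(\c^{(k)},\a^{(k)})=P_{\a^{(k)},\c^{(k)}}(1)$ for $\c\in S(\a)_{k}$, and these polynomials agree by Corollary \ref{cor: 4.6.16}. For the order, let $\b,\c\in S(\a)_{k}$. If $\b>\c$, then since $\b\in\tilde{S}(\a)_{k}$ one checks $\b$ satisfies $H_{k}(\b)$ and $\c\in\tilde{S}(\b)_{k}$ (the degree conditions all follow from $\deg(\c^{(k)})=\deg(\a^{(k)})=\deg(\b^{(k)})$, while condition (2) of the hypothesis is intrinsic to $\c$), so part (c) of Lemma \ref{lem: 3.0.7} applied to the pair $(\b,\c)$ gives $\c^{(k)}\in S(\b^{(k)})\setminus\{\b^{(k)}\}$, i.e. $\b^{(k)}>\c^{(k)}$. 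Conversely, if $\b^{(k)}>\c^{(k)}$, then Corollary \ref{cor: 3.1.11} (applicable since $\b\in\tilde{S}(\a)_{k}$ and $\c^{(k)}\in S(\a^{(k)})$) produces $\e\in S(\b)_{k}\subseteq S(\a)_{k}$ with $\e^{(k)}=\c^{(k)}$; injectivity forces $\e=\c$, so $\c=\e\leq\b$, and $\c\neq\b$ because $\c^{(k)}\neq\b^{(k)}$, whence $\b>\c$.

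\textbf{Main obstacle.} The step carrying the weight is injectivity: that is exactly where the geometry of the preceding section enters essentially, through the open immersion $\tau_{W}$ (obtained via Zariski's main theorem) and the normality and irreducibility of the fibers $(X_{\a}^{k})_{W}$, which in turn rest on Zelevinsky's identification of orbital varieties with Schubert varieties. Once injectivity is established, the multiplicity identity and both directions of the order statement are formal consequences of Theorem \ref{teo: 4.1.5}, Corollary \ref{cor: 4.6.16}, Lemma \ref{lem: 3.0.7} and Corollary \ref{cor: 3.1.11}.
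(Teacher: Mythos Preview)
Your proof is correct and follows essentially the same approach as the paper: injectivity via Proposition~\ref{prop: 4.6.15} (you unpack the irreducibility argument that sits inside its proof), surjectivity via Proposition~\ref{prop: 1.5.10}, the multiplicity identity via Corollary~\ref{cor: 4.6.16} and Theorem~\ref{teo: 4.1.5}, and the forward order implication via Lemma~\ref{lem: 3.0.7}(c). The only minor deviation is in the converse order implication, where the paper argues geometrically that $\b^{(k)}>\c^{(k)}$ forces $\line{O}_{\b}\subseteq\line{O}_{\c}$ via Proposition~\ref{prop: 4.6.15}, while you instead lift $\c^{(k)}$ to $S(\b)_{k}$ using Corollary~\ref{cor: 3.1.11} and then invoke the injectivity just established; both are short and rest on the same underlying geometry.
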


\begin{proof}
By proposition \ref{prop: 4.6.15},
we know that $\psi_{k}$ is injective. 
Surjectivity is given by proposition \ref{prop: 1.5.10}.

For $\c\in  S(\a)_{k}$,
\[
 m(\c,\a)=m(\c^{(k)}, \a^{(k)})
\]
is by corollary \ref{cor: 4.6.16} by putting $q=1$, 
and applying theorem \ref{teo: 4.1.5}.

Finally, for $\b, \c\in S(\a)_{k}$, if $\b>\c$, then 
$\c\in S(\b^{(k)}, \b)$, and 
by lemma \ref{lem: 3.0.7}, we know that $\b^{(k)}>\c^{(k)}$. 
Reciprocally, if $\b^{(k)}>\c^{(k)}$, by 
proposition \ref{prop: 4.6.15}, we know that $\line{O}_{\b}\subseteq \line{O}_{\c}$, hence 
$\b>\c$. 

\end{proof}

\begin{cor}\label{lem: 2.3.4}
We have
\begin{itemize}
\item 
\addtocounter{theo}{1}
\begin{equation}\label{equ: (2)}
 \pi(\a^{(k)})=
 \sum_{\c\in S(\a)_{k}}m(\c,\a)L_{\c^{(k)}},
\end{equation}
\item let $\b\in S(\a)$ such that $\b$ satisfies the hypothesis $H_{k}(\a)$
and $\b^{(k)}=\a^{(k)}$, then 
\[
 m(\b, \a)=1, ~S(\a)_{k}=S(\b)_{k}.
\]
\end{itemize}
\end{cor}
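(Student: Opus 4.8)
The two statements in Corollary \ref{lem: 2.3.4} are essentially bookkeeping consequences of the bijection $\psi_k$ together with the multiplicity-preservation established in proposition \ref{cor: 3.2.3}. First I would prove \eqref{equ: (2)}. The plan is to start from the definition
\[
\D^{k}(\pi(\a)) = \Delta_1 \times \cdots \times \Delta_{i-1} \times (\Delta_i + \Delta_i^-) \times \cdots \times (\Delta_j + \Delta_j^-) \times \Delta_{j+1} \times \cdots \times \Delta_s,
\]
exactly as in the proof of proposition \ref{prop: 3.2.5}, so that the minimal-degree part of $\D^k(\pi(\a))$ is $\pi(\a^{(k)})$. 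On the other hand, writing $\pi(\a) = \sum_{\d \in S(\a)} m(\d,\a) L_\d$ and applying $\D^k$ termwise, theorem \ref{teo: 3} tells us that each $\D^k(L_\d)$ is a nonnegative sum of irreducibles; by lemma \ref{lem: 3.0.7}(a) the minimal-degree constituents occurring in $\D^k(L_\d)$ all have degree $\geq \deg(\a^{(k)})$, and the extreme degree $\deg(\a^{(k)})$ is reached precisely for those $\d$ with $\deg(\d^{(k)}) = \deg(\a^{(k)})$, i.e. $\d \in \tilde S(\a)_k$. Thus the degree-$\deg(\a^{(k)})$ part of both sides gives
\[
\pi(\a^{(k)}) = \sum_{\d \in \tilde S(\a)_k} m(\d,\a)\,\bigl(\text{minimal-degree part of }\D^k(L_\d)\bigr).
\]
I would then argue that for $\d \in \tilde S(\a)_k$ the minimal-degree part of $\D^k(L_\d)$ is exactly $L_{\d^{(k)}}$ with multiplicity one: this follows from proposition \ref{prop: 3.2.4} when $\d \in S(\d)_k$, and in general one reduces to that case by noting $\tilde S(\d)_k \subseteq \tilde S(\a)_k$ and invoking lemma \ref{lem: 3.0.7}(d) to pass to a multisegment satisfying $H_k$ with the same $(k)$-truncation. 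Finally, replacing the sum over $\tilde S(\a)_k$ by a sum over $S(\a)_k$ is justified because, by proposition \ref{cor: 3.2.3}, $\psi_k$ is a bijection $S(\a)_k \to S(\a^{(k)})$, so the $L_{\d^{(k)}}$ with $\d$ ranging over $\tilde S(\a)_k$ produce each $L_{\c'}$, $\c' \in S(\a^{(k)})$, and collecting terms one may take the representative in $S(\a)_k$; the coefficient stays $m(\c,\a) = m(\c^{(k)},\a^{(k)})$ by proposition \ref{cor: 3.2.3}. I expect the main subtlety here to be the careful accounting of multiplicities when several $\d \in \tilde S(\a)_k$ have the same $\d^{(k)}$, which is where the injectivity half of proposition \ref{cor: 3.2.3} (via proposition \ref{prop: 4.6.15}) is essential — this is the step I would be most careful about.

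For the second bullet, suppose $\b \in S(\a)$ satisfies $H_k(\a)$ and $\b^{(k)} = \a^{(k)}$. I would first observe that $\b \in \tilde S(\a)_k$ since $\deg(\b^{(k)}) = \deg(\a^{(k)})$, and that $H_k(\a)$ is a condition only on linked pairs ending at $k-1,k$, which for $\b$ reads as $H_k(\b)$; hence $\b \in S(\b)_k$. Now $\b^{(k)} = \a^{(k)}$ is the \emph{maximal} element of $S(\b^{(k)}) = S(\a^{(k)})$, so $m(\b^{(k)},\b^{(k)}) = 1$, and by proposition \ref{cor: 3.2.3} applied with the pair $\{\a,\b\}$ — using $\b \in S(\a)_k$ and $\psi_k(\b) = \b^{(k)} = \a^{(k)} = \psi_k(\a)$ — we get $m(\b,\a) = m(\b^{(k)},\a^{(k)}) = m(\a^{(k)},\a^{(k)}) = 1$. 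For the equality $S(\a)_k = S(\b)_k$: the inclusion $S(\b) \subseteq S(\a)$ gives $\tilde S(\b)_k \subseteq \tilde S(\a)_k$ hence $S(\b)_k \subseteq S(\a)_k$; conversely, given $\c \in S(\a)_k$, proposition \ref{cor: 3.2.3} makes $\psi_k$ order-preserving, so $\c^{(k)} \in S(\a^{(k)}) = S(\b^{(k)})$, and since $\psi_k \colon S(\b)_k \to S(\b^{(k)})$ is a bijection there is a unique $\c'' \in S(\b)_k$ with $\c''^{(k)} = \c^{(k)}$; but $\c'' \in S(\b)_k \subseteq S(\a)_k$ and $\psi_k$ is injective on $S(\a)_k$, whence $\c'' = \c$ and $\c \in S(\b)_k$. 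The only thing to double-check is that $H_k(\a)$ and $H_k(\b)$ genuinely coincide as hypotheses when restricted to elements of $S(\b)$, which is immediate from definition \ref{def: 3.1.3} since condition (2) of $H_k$ refers only to the multisegment in question; so no real obstacle remains here.
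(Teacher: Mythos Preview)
Your approach to the first bullet is overcomplicated and contains a genuine gap. You want to show, for every $\d\in\tilde S(\a)_k$, that the degree-$\deg(\a^{(k)})$ part of $\D^k(L_\d)$ is $L_{\d^{(k)}}$ (with multiplicity one) when $\d$ satisfies $H_k$, and zero otherwise. But proposition~\ref{prop: 3.2.4} only handles the very special case where $\c^{(k)}$ is \emph{minimal}; it says nothing about a general $\d\in S(\a)_k$. Your proposed ``reduction'' via lemma~\ref{lem: 3.0.7}(d) produces a different multisegment with the same truncation, which does not help compute $\D^k(L_\d)$ for the original $\d$. The statement you actually need is exactly proposition~\ref{teo: 3.0.6}, which in the paper comes \emph{after} this corollary and in fact invokes corollary~\ref{lem: 2.3.4} in its proof. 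So your route is either incomplete or circular.

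The paper's argument for \eqref{equ: (2)} avoids all of this: it is a three-line reparametrization. Start from the defining expansion $\pi(\a^{(k)})=\sum_{\d\in S(\a^{(k)})} m(\d,\a^{(k)})L_\d$, use the bijection $\psi_k\colon S(\a)_k\to S(\a^{(k)})$ from proposition~\ref{cor: 3.2.3} to rewrite the sum over $\c\in S(\a)_k$ with $\d=\c^{(k)}$, and then replace $m(\c^{(k)},\a^{(k)})$ by $m(\c,\a)$ using the multiplicity preservation in the same proposition. No partial derivatives, no minimal-degree analysis, no accounting of the fibres of $\tilde S(\a)_k\to S(\a^{(k)})$.

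Your treatment of the second bullet is correct and matches the paper's in spirit. The paper phrases the equality $S(\a)_k=S(\b)_k$ slightly more briskly by comparing cardinalities (both sets are in bijection with $S(\a^{(k)})=S(\b^{(k)})$ and one contains the other), whereas you give an element-wise argument using injectivity; both work.
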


\begin{proof}

The first part follows from the fact that 
$\psi_{k}$ is bijective and 
$m(\c, \a)=m(\c^{(k)}, \a^{(k)})$.
For the second part of the lemma, we note that 
$L_{\b^{(k)}}=L_{\a^{(k)}}$
appears with multiplicity one in $\pi(\a^{(k)})$, then equation (\ref{equ: (2)})
implies $m(\b, \a)=m(\b^{(k)}, \a^{(k)})=1$.
To see that $ S(\a)_{k}= S( \b)_{k}\subseteq S(\b)$.
Note that we have $S(\b)_{k}\subseteq S(\a)_{k}$ and two bijection 
\[
 \psi_{k}: S(\a)_{k}\rightarrow S(\a^{(k)}),
\]
\[
 \psi_{k}: S(\b)_{k}\rightarrow S(\b^{(k)})=S(\a^{(k)}),
\]
Hence comparing the cardinality gives 
$S(\a)_{k} =S(\b)_k$.
\end{proof}

\section{Minimal Degree Terms in Partial Derivatives}

\begin{prop}\label{teo: 3.0.6}
\begin{description}
\item[(i)] 
Suppose that $\a$ satisfies the hypothesis $H_{k}(\a)$.\\
Then $\D^{k}(L_{\a})$ contains  in $\mathcal{R}$ a unique 
irreducible representation of minimal degree, which is
 $L_{\a^{(k)}}$,
 and it appears with multiplicity one.
\item[(ii)]If $\a$ fails to satisfy the hypothesis $H_{k}(\a)$, then \\
 $L_{\a^{(k)}}$
 will not appear in $\D^{k}(L_{\a})$, 
 and the irreducible representations appearing
 are all of degree $>\deg(\a^{(k)})$. 
\end{description}
\end{prop}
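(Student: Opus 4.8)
The plan is to reduce everything to the already-established "descent of degree" machinery of $\S 3.1$--$\S 3.4$, in particular Corollary~\ref{cor: 4.6.16} and Corollary~\ref{cor: 4.6.17}, together with Proposition~\ref{prop: 3.2.5} and Lemma~\ref{lem: 2.3.4}. First I would prove (i). Since $\a$ satisfies $H_k(\a)$, the multisegment $\a$ itself lies in $S(\a)_k$, and $\psi_k\colon S(\a)_k\to S(\a^{(k)})$ sends $\a$ to $\a^{(k)}$. Writing, as in the proof of Proposition~\ref{prop: 3.2.4}, $\a=\{\Delta_1,\dots,\Delta_r\}$ with $e(\Delta_t)=k$ exactly for $t=i,\dots,j$, one has
\[
\D^k(\pi(\a))=\Delta_1\times\cdots\times\Delta_{i-1}\times(\Delta_i+\Delta_i^-)\times\cdots\times(\Delta_j+\Delta_j^-)\times\Delta_{j+1}\times\cdots\times\Delta_r,
\]
whose unique minimal-degree term is $\pi(\a^{(k)})$, occurring with multiplicity one. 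Now expand $L_\a=\sum_{\d\le\a}\tilde m(\d,\a)\pi(\d)$ via \eqref{eq: m(b, a)1}, apply $\D^k$ to each $\pi(\d)$, and note that for each such $\d$ the minimal-degree term of $\D^k(\pi(\d))$ is $\pi(\d^{(k)})$, of degree $\ge\deg(\a^{(k)})$ by (a) of Lemma~\ref{lem: 3.0.7}. So among the $\pi(\d^{(k)})$ the term of degree exactly $\deg(\a^{(k)})$ is uniquely $\pi(\a^{(k)})$ (using that $\a$ is the unique element of $S(\a)$ with $\deg(\a^{(k)})=\deg(\a)$ achieving equality at the top — this is where $H_k(\a)$ is used, via (c) of Lemma~\ref{lem: 3.0.7}). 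Translating back to the $L_\bullet$ basis, and invoking the positivity Theorem~\ref{teo: 3} to rule out cancellation, $\D^k(L_\a)$ contains $L_{\a^{(k)}}$ with multiplicity one and no irreducible of strictly smaller degree; any other irreducible constituent has degree $>\deg(\a^{(k)})$.

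For (ii), suppose $\a$ fails $H_k(\a)$. By (d) of Lemma~\ref{lem: 3.0.7} there is $\b\in S(\a)$ satisfying $H_k(\a)$ with $\b^{(k)}=\a^{(k)}$, and by the remark after that lemma (made precise in Proposition~\ref{cor: 3.2.3} and Corollary~\ref{cor: 3.2.7}) this $\b$ is the unique minimal element of $\tilde S(\a)_k$; in particular $\b<\a$ strictly, so $\deg(\a^{(k)})=\deg(\b^{(k)})<\deg(\a)$ forces, together with part (c) of Lemma~\ref{lem: 3.0.7}, that no $\d\in S(\a)$ other than those strictly above $\b$ lies in $\tilde S(\a)_k$, and $\a\notin\tilde S(\a)_k$ would be false — rather, the point is that $\a^{(k)}$ is \emph{not minimal-degree-attained by $\a$ alone}. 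More directly: expand $L_\a=\sum_{\d\le\a}\tilde m(\d,\a)\pi(\d)$ and apply $\D^k$; the only $\d$ contributing a term of degree $\le\deg(\a^{(k)})$ after derivation are those with $\d^{(k)}$ of minimal degree, i.e. $\d\in S(\a)_k$ with $\d^{(k)}=(\a^{(k)})_{\min}$ or, more generally, $\d\in\tilde S(\a)_k$ — but $\deg(\d^{(k)})\ge\deg(\a^{(k)})$ always, with equality forcing $\d\in\tilde S(\a)_k$, and none of these $\d$ equals a multisegment whose derivative produces $L_{\a^{(k)}}$ because $\a^{(k)}$ is obtained from $\a$ only through the forbidden linked pair $\{\Delta,\Delta'\}$ with $e(\Delta)=k-1$, $e(\Delta')=k$. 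Concretely, running the same expansion $\D^k(\pi(\d))$ as above shows $L_{\a^{(k)}}$ could only arise from $\pi(\a)$ itself, and there the coefficient of $\pi(\a^{(k)})$-degree terms that equal $\pi(\a^{(k)})$ is accounted for entirely by $L_\b$ with $\b^{(k)}=\a^{(k)}$, $\b\ne\a$ — so in $\D^k(L_\a)=\D^k(\pi(\a))-\sum_{\d<\a}m(\d,\a)\D^k(L_\d)$ the $L_{\a^{(k)}}$ occurring in $\D^k(\pi(\a))$ is cancelled by the $\d=\b$ term (using $m(\b,\a)=1$ from Lemma~\ref{lem: 2.3.4}), while all surviving constituents of $\D^k(L_\a)$ have degree $>\deg(\a^{(k)})$ by positivity and the degree count.

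The main obstacle is the bookkeeping in (ii): one must show not merely that $L_{\a^{(k)}}$ fails to appear, but that \emph{every} constituent of $\D^k(L_\a)$ has degree strictly exceeding $\deg(\a^{(k)})$. This requires knowing that the minimal element $\b$ of $\tilde S(\a)_k$ with $\b^{(k)}=\a^{(k)}$ satisfies $m(\b,\a)=1$ (so the cancellation of $L_{\a^{(k)}}$ is exact, not partial) and that no $\d\in S(\a)\setminus\{\a,\b,\dots\}$ sneaks in a constituent of degree $\le\deg(\a^{(k)})$; both follow from Lemma~\ref{lem: 2.3.4} and part (c) of Lemma~\ref{lem: 3.0.7}, but assembling them cleanly — and handling the case where several linked pairs $\{\Delta,\Delta'\}$ with $e(\Delta)=k-1$, $e(\Delta')=k$ occur simultaneously, so that one iterates the construction in (d) — is the delicate part. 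I would organize (ii) by first passing to the unique such $\b$, applying (i) to $\b$ (which \emph{does} satisfy $H_k(\b)=H_k(\a)$), and then comparing $\D^k(L_\a)$ with $\D^k(L_\b)$ using $S(\a)_k=S(\b)_k$ from Lemma~\ref{lem: 2.3.4} and the identity $\pi(\a)=\sum_{\b'\le\a}m(\b',\a)L_{\b'}$ restricted to the relevant degree.
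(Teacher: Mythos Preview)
Your argument for (i) has a genuine gap. After expanding $L_\a = \sum_{\d\le\a}\tilde m(\d,\a)\,\pi(\d)$ and applying $\D^k$, you assert that ``among the $\pi(\d^{(k)})$ the term of degree exactly $\deg(\a^{(k)})$ is uniquely $\pi(\a^{(k)})$''. This is false: \emph{every} $\d\in\tilde S(\a)_k$ has $\deg(\d^{(k)})=\deg(\a^{(k)})$, so each such $\d$ contributes $\tilde m(\d,\a)\,\pi(\d^{(k)})$ to the minimal-degree part, and there may be many. What Lemma~\ref{lem: 3.0.7}(c) actually says is that $\d^{(k)}<\a^{(k)}$ in the \emph{poset order} on multisegments when $\d\ne\a$; it does not say $\deg(\d^{(k)})<\deg(\a^{(k)})$. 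These multisegments all share the same weight and hence the same degree. So the minimal-degree part of $\D^k(L_\a)$ is a signed sum $\sum_{\d\in\tilde S(\a)_k}\tilde m(\d,\a)\,\pi(\d^{(k)})$ with many terms, and showing it collapses to $L_{\a^{(k)}}$ is not immediate from positivity alone.

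The paper sidesteps this by using the \emph{direct} expansion $\pi(\a)=L_\a+\sum_{\b<\a}m(\b,\a)L_\b$ and inducting on $\ell(\a)$. Applying $\D^k$ and invoking the inductive hypothesis on each $\D^k(L_\b)$ with $\b<\a$, one finds that the minimal-degree part $\pi(\a^{(k)})$ of $\D^k(\pi(\a))$ equals $X+\sum_{\c\in S(\a)_k\setminus\{\a\}}m(\c,\a)L_{\c^{(k)}}$, where $X$ is the minimal-degree part of $\D^k(L_\a)$; then the already-proved identity $\pi(\a^{(k)})=\sum_{\c\in S(\a)_k}m(\c,\a)L_{\c^{(k)}}$ of Corollary~\ref{lem: 2.3.4} forces $X=L_{\a^{(k)}}$. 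For (ii), your cancellation bookkeeping can be made to work, but the paper uses a much cleaner trick: since $\b^{(k)}=\a^{(k)}$, Lemma~\ref{lem: 3.0.8} gives $\pi(\a)-\pi(\b)\ge 0$ as a sum of irreducibles containing $L_\a$, and $\D^k(\pi(\a)-\pi(\b))=\pi(\a^{(k)})-\pi(\b^{(k)})+(\text{higher degree})$ has \emph{no} terms of degree $\le\deg(\a^{(k)})$; positivity of $\D^k$ then forces the same for each summand, in particular for $\D^k(L_\a)$.
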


\begin{proof}
Let $\a=\{\Delta_1\preceq \cdots \preceq \Delta_r\}$, such that 
\[
 e(\Delta_1)\leq \cdots<e(\Delta_i)=\cdots=e(\Delta_j)<\cdots \leq e(\Delta_r),
\]
with $k=e(\Delta_i)$.

We prove the proposition by induction on $\ell(\a)$(cf. definition \ref{def: 1.2.10}).
For, $\ell(\a)=0$, which means that $\a=\a_{\min}$, 
in this case $\a$ satisfies the $H_{k}(\a)$, and 
\[
 \D^{k}(L_{\a})=\D^{k}(\pi(\a))=
 \Delta_{1}\times \cdots \times (\Delta_{i}+\Delta_{i}^{-})\times \cdots 
 \times (\Delta_{j}+\Delta_{j}^{-}) \times \cdots  
\]
which contains
\[
 L_{\a^{(k)}}=\pi(\a^{(k)})=\Delta_{1}\times \cdots \times \Delta_{i}^{-}
 \times \cdots \Delta_{j}^{-}\times \cdots .
\]
Hence we are done.

For general $\a$, we have refer to the lemma \ref{lem: 3.0.7}.
 
We write
\addtocounter{theo}{1}
\begin{equation}\label{eq: 1}
\pi(\a)=L_{\a}+ \sum_{\b<\a}m(\b,\a)L_{\b}. 
\end{equation}
Now applying $\D^{k}$ to both sides and consider only the lowest degree
terms,
on the left hand side, we get
\addtocounter{theo}{1}
\begin{equation}\label{eq: 2}
\pi(\a^{(k)})=\Delta_{1}\times \cdots \times \Delta_{i-1}\times \Delta_{i}^{-}\times \cdots \times \Delta_{j}^{-}\times \cdots \Delta_{r}.
\end{equation}
By theorem \ref{teo: 3}, both sides are positive sum 
of irreducible representations, then 

\begin{itemize}
\item 
If  $\a$ satisfies the hypothesis $H_{k}(\a)$, 
on the right hand side, from our lemma \ref{lem: 3.0.7} and induction, we know that for all $\b<\a$,
$\D^{k}(L_{\b})$ does not contain
$L_{\a^{(k)}}$ as subquotient, hence
$\D^{k}(L_{\a})$
must contain $L_{\a^{(k)}}$ with multiplicity one. 
We have to show that it does not contain 
other subquotients of $\pi(\a^{(k)})$. 
Note that by induction, we have the following formula
\[
 \pi(\a^{(k)})=X+\sum_{\c\in S(\a)_{k}\setminus \a}m(\c, \a)L_{\c^{(k)}},
\]
where $X$ denotes the minimal degree terms in $\D^{k}(L_{\a})$. 
Now apply corollary \ref{lem: 2.3.4},  we conclude that $X=L_{\a^{(k)}}$.

\item 
Now if $\a$ fails to satisfy the hypothesis $H_{k}(\a)$, $\a\notin S(\a)_k$,
by proposition \ref{cor: 3.2.3} and induction, we know that there exists $\b\in S(\a)_k$, such that
$\a^{(k)}=\b^{(k)}$ and $\D^{k}(L_{\b})$ contains $L_{\a^{(k)}}$ as a subquotient with
multiplicity one.

Now by the lemma \ref{lem: 3.0.8}, $\pi(\a)-\pi(\b)$ is a positive sum 
of irreducible representations which contain $L_{\a}$: by the positivity of partial derivative, 
we know that we obtain a positive sum of irreducible representations after
applying $\D^{k}$. Now
$$
\D^{k}(\pi(\a)-\pi(\b))=\pi(\a^{(k)})-\pi(\b^{(k)})+\text{ higher degree terms}
$$ 
contains only terms of degree
$>\deg(\a^{(k)})$, so does $\D^{k}(L_{\a})$.

\end{itemize}
This finishes our induction.

\end{proof}

\begin{cor}\label{cor: 3.5.2}
Let $\a$ be a multisegment such that $\varphi_{e(\a)}(k)=1$. Then 
\begin{itemize}
 \item If $\a\in S(\a)_k$, then $\D^k(L_{\a})=L_{\a}+L_{\a^{(k)}}$. 
 
 \item If $\a\notin S(\a)_k$, then $\D^k(L_{\a})=L_{\a}$.
\end{itemize}
 
\end{cor}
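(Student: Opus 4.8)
The plan is to obtain this corollary as a degree count built on top of Proposition \ref{teo: 3.0.6}; the whole point is that the hypothesis $\varphi_{e(\a)}(k)=f_{e(\a)}(k)=\ell_{k}=1$ forces $\deg(\a^{(k)})=\deg(\a)-1$, so that $\deg(\a^{(k)})$ is the \emph{only} degree strictly below $\deg(\a)$ that a constituent of $\D^{k}(L_{\a})$ can have.

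First I would pin down the shape of $\D^{k}(L_{\a})$. Since exactly one segment $\Delta_{0}$ of $\a$ ends at $k$, the defining formulas for $\D^{k}$ on generators give directly $\D^{k}(\pi(\a))=\pi(\a)+\pi(\a^{(k)})$, a $\Z_{\geq 0}$-combination of irreducibles of degrees $\deg(\a)$ and $\deg(\a)-1$ only. Writing $\D^{k}(\pi(\a))=\sum_{\b\le\a}m(\b,\a)\D^{k}(L_{\b})$ with each $\D^{k}(L_{\b})\ge 0$ (Theorem \ref{teo: 3}), each $m(\b,\a)\ge 0$, and the $L_{\c}$ forming a $\Z$-basis of $\mathcal{R}$, no cancellation is possible, so every irreducible occurring in any $\D^{k}(L_{\b})$ with $\b\le\a$ — in particular in $\D^{k}(L_{\a})$ — has degree $\deg(\a)$ or $\deg(\a)-1=\deg(\a^{(k)})$. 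A short induction on $|S(\b)|$ (as in the proof of Proposition \ref{teo: 3.0.6}, using $m(\b,\b)=1$, $m(\c,\b)=0$ unless $\c\le\b$, and $\deg(\c)=\deg(\b)$ whenever $\c\le\b$) then shows that the degree $\deg(\b)$ part of $\D^{k}(L_{\b})$ is exactly $L_{\b}$. Applying this with $\b=\a$, I get $\D^{k}(L_{\a})=L_{\a}+R_{\a}$ where $R_{\a}\ge 0$ is a sum of irreducibles each of degree exactly $\deg(\a^{(k)})$.

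Next I would split into the two cases, noting that $\a\in\tilde{S}(\a)_{k}$ holds trivially, so $\a\in S(\a)_{k}$ is equivalent to $\a$ satisfying the hypothesis $H_{k}(\a)$. If $\a\in S(\a)_{k}$, Proposition \ref{teo: 3.0.6}(i) says $\D^{k}(L_{\a})$ has a unique irreducible constituent of minimal degree, namely $L_{\a^{(k)}}$, with multiplicity one; since $L_{\a^{(k)}}$ has degree $\deg(\a^{(k)})<\deg(\a)$ it must lie in $R_{\a}$, so $R_{\a}\neq 0$, and as every constituent of $R_{\a}$ has degree exactly $\deg(\a^{(k)})$ — which is this minimal degree — each is a minimal-degree constituent of $\D^{k}(L_{\a})$; by uniqueness $R_{\a}=L_{\a^{(k)}}$, i.e. $\D^{k}(L_{\a})=L_{\a}+L_{\a^{(k)}}$. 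If instead $\a\notin S(\a)_{k}$, then $\a$ fails $H_{k}(\a)$, and Proposition \ref{teo: 3.0.6}(ii) says every irreducible constituent of $\D^{k}(L_{\a})$ has degree $>\deg(\a^{(k)})$; but the constituents of $R_{\a}$ all have degree exactly $\deg(\a^{(k)})$, so $R_{\a}=0$ and $\D^{k}(L_{\a})=L_{\a}$.

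All the real content sits in Proposition \ref{teo: 3.0.6}, which is already established; the corollary is then pure bookkeeping. The only step requiring care is the first one — identifying the degree $\deg(\a)$ part of $\D^{k}(L_{\a})$ with $L_{\a}$ and ruling out constituents of degree $<\deg(\a^{(k)})$ — and this is precisely where $\varphi_{e(\a)}(k)=1$ enters: it forces $\deg(\a^{(k)})=\deg(\a)-1$, so the ``higher-degree terms'' allowed by Proposition \ref{teo: 3.0.6}(ii) and the only possible lower stratum of $\D^{k}(L_{\a})$ sit in the single value $\deg(\a)-1$, which is what makes the two cases mutually exclusive and exhaustive. I do not anticipate any obstacle beyond this.
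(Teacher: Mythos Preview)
Your proposal is correct and follows essentially the same approach as the paper: both argue by induction that the degree-$\deg(\a)$ part of $\D^{k}(L_{\a})$ is exactly $L_{\a}$, then use $\varphi_{e(\a)}(k)=1$ to force $\deg(\a^{(k)})=\deg(\a)-1$ and invoke the two parts of Proposition~\ref{teo: 3.0.6} to pin down the remainder in each case. Your write-up is in fact a bit more explicit than the paper's (you spell out the positivity/non-cancellation argument bounding the degrees of all constituents via $\D^{k}(\pi(\a))=\pi(\a)+\pi(\a^{(k)})$), but the underlying argument is identical.
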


\begin{proof}
First of all, we observe 
that the highest degree term in $\D^k(L_{\a})$ is given by 
$L_{\a}$. In fact, we have 
\[
 \D^k(\pi(\a))=\D^k(L_{\a})+\sum_{\b<\a}m(\b, \a)\D^k(L_{\b}),
\]
meanwhile we have 
\[
 \D^k(\pi(\a))=\pi(\a)+\text{ lower terms. }
\]
By induction on $\ell(\a)$ we conclude that the highest degree terms
in $\D^k(L_{\a})$ is $L_{\a}$.

If $\a\in S(\a)_k$, then proposition \ref{teo: 3.0.6} implies that 
the minimal degree term of $\D^k(L_{\a})$, but since 
$\deg(\a^{(k)})=\deg(\a)-1$, therefore we must have 
\[
 \D^k(L_{\a})=L_{\a}+L_{\a^{(k)}}.
\]
On the contrary, if  $\a\notin S(\a)_k$, then 
by (ii) of the proposition \ref{teo: 3.0.6}, we know that 
all irreducible representations appearing in 
 $\D^k(L_{\a})$ are of degree $>\deg(\a^{(k)})=\deg(\a)-1$, which implies 
 \[
 \D^k(L_{\a})=L_{\a}.
\]

\end{proof}

\chapter{Reduction to symmetric cases}

- In the first paragraph of this chapter, we generalize the construction of chapter 3 by iterating the truncation 
functor to obtain for $\c_1, \c_2$ two multisegments, 
the truncation ${^{(\c_1)}\b^{(\c_2)}}$ of a multisegment
$\b$. 

- Then we give an algorithm
to, starting from two multisegments $\a$ and $\b\in S(\a)$,
construct two symmetric multisegments 
$\a^{\sym}$ and $\b^{\sym}\in S(\a^{\sym})$ such that we have
the following equality
\[
 m(\b, \a)=m(\b^{\sym}, \a^{\sym}).
\]

- Then  we study some examples and we show how
our algorithm works for finding the coefficient $m(\b, \a)$.

- Finally, in the last paragraph, we give a proof of the Zelevinsky's conjecture stated in the introduction.

\section{Minimal Degree Terms}

The goal of this section is to define the 
set $S( \a)_{\d}\subseteq S(\a)$ and describe some of 
its properties.

\begin{definition}
Let  $(k_{1}, \cdots, k_{r})$ be a sequence of integers.
We define 
\[
\a^{(k_{1}, \cdots, k_{r})}=(((\a^{(k_{1})})\cdots)^{ (k_{r})}).
\]
\end{definition}

\begin{notation}
Let $\Delta=[k, \ell]$, we denote 
\[
 \a^{ (\Delta)}=\a^{(k, \cdots, \ell)}.
\]
More generally, for $\d=\{\Delta_{1}\preceq \cdots \preceq \Delta_{r}\}$,
 let
\[
 \a^{ (\d)}=(\cdots ((\a^{ (\Delta_{r})})^{ (\Delta_{r-1})})\cdots)^{ (\Delta_{1})}.
\]
\end{notation}

\begin{definition}
Let $(k_{1}, \cdots, k_{r})$ be a sequence of integers 
, then we define 
\[
S(\a)_{k_{1}, \cdots, k_{r}}=
\{\c\in S(\a):
\c^{( k_{1}, \cdots , k_{i})}
\in  S(\a^{( k_{1}, \cdots , k_{i})})_{k_{i+1}}, \text{ for }i=1, \cdots, r\}. 
\]
and 
\[
 \psi_{k_{1}, \cdots, k_{r}}:
 S(\a)_{k_{1}, \cdots, k_{r}}\rightarrow S(\a^{(k_{1}, \cdots, k_{r})}),
\]
sending $\c$ to $\c^{(k_{1}, \cdots, k_{r})}$.
\end{definition}

\begin{notation} 
Let $\d=\{\Delta_{1}\preceq \cdots \preceq \Delta_{r}\}$ such 
that $\Delta_{i}=[k_{i}, \ell_{i}]$. 
We denote 
\[
 S(\a)_{\d}: =S(\a)_{k_{r},\cdots, \ell_{r}, k_{r-1}, \cdots, k_{1}, \cdots, \ell_{1}}
\]
and 
\[
 \psi_{\d}: =\psi_{k_{r},\cdots, \ell_{r}, k_{r-1}, \cdots, k_{1}, \cdots, \ell_{1}}.
\]
\end{notation}

\begin{prop}\label{prop: 3.2.17}
Let $(k_{1}, \cdots, k_{r})$ be a sequence of integers . Then the set 
$S(\a)_{k_{1}, \cdots, k_{r}}$ is non-empty.
In fact, we have a bijective morphism 
\[
 \psi_{k_{1}, \cdots, k_{r}}: 
 S(\a)_{k_{1}, \cdots, k_{r}}\rightarrow S(\a^{(k_{1}, \cdots, k_{r})}).
 \]
 Moreover,
 \begin{description}
  \item[(1)] For $\c\in S(\a)_{k_{1}, \cdots, k_{r}}$,
  we have 
  \[
   m(\c, \a)=m(\c^{(k_{1}, \cdots, k_{r})}, \a^{(k_{1}, \cdots, k_{r})}).
  \]
\item[(2)] For $\b, \c \in S(\a)_{k_{1}, \cdots, k_{r}}$, 
then $\b>\c$ if and only if $\b^{(k_{1}, \cdots, k_{r})}>\c^{(k_{1}, \cdots, k_{r})}$.
 \item[(3)] We have
 \[
  \pi(\a^{(k_{1}, \cdots, k_{r})})
  =\sum_{\c\in S(\a)_{k_{1}, \cdots, k_{r}}}m(\c, \a)L_{\c^{(k_{1}, \cdots, k_{r})}}.
 \]
\item[(4)] Let $\b\in S(\a)_{k_{1}, \cdots, k_{r}}$ and 
$\b^{(k_{1}, \cdots, k_{r})}=\a^{(k_{1}, \cdots, k_{r})}$, then 
\[
 S(\a)_{k_{1}, \cdots, k_{r}}=S( \b)_{k_{1}, \cdots, k_{r}}.
\]

 \end{description}
 
\end{prop}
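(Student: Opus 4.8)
The plan is to proceed by induction on $r$, using Proposition \ref{cor: 3.2.3} (the single-step case $r=1$) as the base of the induction and as the key building block. Write $\a' = \a^{(k_1)}$ and observe that, by definition of $S(\a)_{k_1,\cdots,k_r}$, an element $\c$ lies in $S(\a)_{k_1,\cdots,k_r}$ exactly when $\c \in S(\a)_{k_1}$ and $\c^{(k_1)} \in S(\a')_{k_2,\cdots,k_r}$. Thus the map $\psi_{k_1,\cdots,k_r}$ factors as the composite
\[
 S(\a)_{k_1,\cdots,k_r} \xrightarrow{\ \psi_{k_1}\ } S(\a')_{k_2,\cdots,k_r} \xrightarrow{\ \psi_{k_2,\cdots,k_r}\ } S(\a'^{(k_2,\cdots,k_r)}) = S(\a^{(k_1,\cdots,k_r)}),
\]
where the first arrow is (the restriction of) the map from Proposition \ref{cor: 3.2.3} — one must first check that $\psi_{k_1}$ indeed restricts to a bijection between $S(\a)_{k_1,\cdots,k_r}$ and $S(\a')_{k_2,\cdots,k_r}$, which amounts to the set-theoretic identity above together with the bijectivity of $\psi_{k_1}$ on $S(\a)_{k_1}$. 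The second arrow is a bijection by the induction hypothesis. Non-emptiness then follows since $S(\a^{(k_1,\cdots,k_r)})$ contains at least its own maximal element, and bijectivity of $\psi_{k_1,\cdots,k_r}$ follows since it is a composite of bijections.

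With the bijectivity and non-emptiness in hand, properties (1)–(4) follow by iterating the corresponding statements of Proposition \ref{cor: 3.2.3} along the factorization. For (1), given $\c \in S(\a)_{k_1,\cdots,k_r}$, Proposition \ref{cor: 3.2.3} gives $m(\c,\a) = m(\c^{(k_1)}, \a')$, and the induction hypothesis applied to $\c^{(k_1)} \in S(\a')_{k_2,\cdots,k_r}$ gives $m(\c^{(k_1)}, \a') = m(\c^{(k_1,\cdots,k_r)}, \a^{(k_1,\cdots,k_r)})$; compose. For (2), the order-preservation clause of Proposition \ref{cor: 3.2.3} gives $\b > \c \Leftrightarrow \b^{(k_1)} > \c^{(k_1)}$ for $\b,\c \in S(\a)_{k_1}$, and the induction hypothesis upgrades $\b^{(k_1)} > \c^{(k_1)} \Leftrightarrow \b^{(k_1,\cdots,k_r)} > \c^{(k_1,\cdots,k_r)}$. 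For (3), starting from Corollary \ref{lem: 2.3.4}, equation (\ref{equ: (2)}), we have $\pi(\a') = \sum_{\c \in S(\a)_{k_1}} m(\c,\a) L_{\c^{(k_1)}}$; applying the induction hypothesis (3) to $\pi(\a'^{(k_2,\cdots,k_r)})$ and substituting $m(\c^{(k_1)},\a') = m(\c,\a)$ via (1), and noting that the indexing set $S(\a')_{k_2,\cdots,k_r}$ is exactly the image of $S(\a)_{k_1,\cdots,k_r}$ under $\psi_{k_1}$, yields the claimed expansion. For (4), if $\b \in S(\a)_{k_1,\cdots,k_r}$ has $\b^{(k_1,\cdots,k_r)} = \a^{(k_1,\cdots,k_r)}$, one shows $\b^{(k_1)}$ satisfies $\b^{(k_1)(k_2,\cdots,k_r)} = \a'^{(k_2,\cdots,k_r)}$, so the induction hypothesis gives $S(\a')_{k_2,\cdots,k_r} = S(\b^{(k_1)})_{k_2,\cdots,k_r}$; then one applies the single-step identity $S(\a)_{k_1} = S(\b)_{k_1}$ (second bullet of Corollary \ref{lem: 2.3.4}, which requires knowing $\b$ satisfies $H_{k_1}(\a)$ and $\b^{(k_1)} = \a^{(k_1)}$ — the latter must be deduced, not assumed, from $\b^{(k_1,\cdots,k_r)} = \a^{(k_1,\cdots,k_r)}$) and combines the two.

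The main obstacle I anticipate is the bookkeeping in property (4), and more specifically the point just flagged: from the hypothesis $\b^{(k_1,\cdots,k_r)} = \a^{(k_1,\cdots,k_r)}$ at the end of the chain, one needs to conclude $\b^{(k_1)} = \a^{(k_1)}$ at the first step, i.e. that equality after the full truncation forces equality after the first truncation. This should follow from part (a) of Lemma \ref{lem: 3.0.7} (degrees only go up under truncation, so if they end equal they were equal all along) combined with the fact that truncation is order-compatible, but making this precise — and simultaneously verifying that $\b$ does satisfy $H_{k_1}(\a)$ so that Corollary \ref{lem: 2.3.4} applies — is where the care is needed. The other steps are a fairly mechanical unwinding of the $r=1$ case, and no new geometric input beyond Proposition \ref{cor: 3.2.3} and Corollary \ref{lem: 2.3.4} is required.
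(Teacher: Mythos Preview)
Your overall strategy---induction on $r$, factoring $\psi_{k_1,\cdots,k_r}$ through the single-step map $\psi_{k_1}$ and the inductive map $\psi_{k_2,\cdots,k_r}$---is exactly the paper's approach, and your treatment of bijectivity and of properties (1), (2), (3) matches the paper's proof essentially verbatim.

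The gap is in (4), and it is precisely the obstacle you flagged, but it is worse than you suggest: the intermediate equality $\b^{(k_1)} = \a^{(k_1)}$ that you need in order to invoke Corollary~\ref{lem: 2.3.4} is \emph{false in general}. Degree arguments from Lemma~\ref{lem: 3.0.7}(a) only give $\deg(\b^{(k_1)}) = \deg(\a^{(k_1)})$, not equality of the multisegments themselves. Concretely, take $\a = [1,2]+[2,4]$, $k_1=4$, $k_2=3$. Then $\a$ satisfies $H_4(\a)$, but $\a^{(4)} = [1,2]+[2,3]$ fails $H_3$ (the two segments are linked and end at $2,3$). One computes $S(\a)_{4,3} = \{[1,4]+[2]\}$, and with $\b = [1,4]+[2]$ one has $\b^{(4,3)} = [1,2]+[2] = \a^{(4,3)}$, yet $\b^{(4)} = [1,3]+[2] \neq [1,2]+[2,3] = \a^{(4)}$. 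So the single-step identity $S(\a)_{k_1} = S(\b)_{k_1}$ from Corollary~\ref{lem: 2.3.4} is simply not available here.

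The paper's argument for (4) avoids this entirely and is much shorter: one containment $S(\b)_{k_1,\cdots,k_r} \subseteq S(\a)_{k_1,\cdots,k_r}$ holds because $\b \in S(\a)_{k_1,\cdots,k_r}$ forces $\deg(\b^{(k_1,\cdots,k_i)}) = \deg(\a^{(k_1,\cdots,k_i)})$ at every stage, so the defining conditions for the two sets coincide on $S(\b)$. For the reverse containment, take any $\c \in S(\a)_{k_1,\cdots,k_r}$; then $\c^{(k_1,\cdots,k_r)} \leq \a^{(k_1,\cdots,k_r)} = \b^{(k_1,\cdots,k_r)}$, and now property (2), already established, gives $\c \leq \b$, i.e.\ $\c \in S(\b)$. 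Combined with the degree equalities this yields $\c \in S(\b)_{k_1,\cdots,k_r}$. No intermediate equality of truncations is needed.
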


\begin{proof}
Injectivity follows from the fact 
\[
 \psi_{k_{1}, \cdots, k_{r}}=\psi_{k_{r}}\circ \psi_{k_{r-1}}\circ \cdots \circ \psi_{k_{1}}.
\]

For surjectivity, let $\d\in S(\a^{(k_{1}, \cdots, k_{r})})$,
we construct $\b$ inductively such that 
$\psi_{k_{1}, \cdots, k_{r}}(\b)=\d$. Let $\a_{r}=\d$, 
assume that we already construct $\a_{i}\in  S(\a^{( k_{1}, \cdots,  k_{i})})_{k_{i+1}}$ satisfying
that 
$$\a_{i}^{( k_{i+1} ,\cdots , k_{j})}\in 
S( \a^{( k_{1}, \cdots , k_{j}}))_{k_{j+1}}$$
for all $i< j\leq r$ and $\a_{i}^{( k_{i+1}, \cdots, k_{r})}=\d$.

Note that by the bijectivity of the morphism 
\[
 \psi_{ k_{i}}: S( \a^{( k_{1}, \cdots , k_{i-1})})_{k_{i}}\rightarrow 
 S(\a^{(k_{1}, \cdots, k_{i})}),
\]
there exists a unique $\a_{i-1}\in S( \a^{( k_{1}, \cdots , k_{i-1})})_{k_{i}}$, such that 
\[
\a_{i-1}^{ (k_{i})}=\a_{i}.
\]
Finally, take $\b=\a_{0}\in S(\a)_{k_{1}, \cdots, k_{r}}$.
We show (1)
by induction on $r$. The case for $r=1$ is by proposition
\ref{cor: 3.2.3}. For general $r$, by induction
\[
 m(\c, \a)=m(\c^{(k_{1}, \cdots, k_{r-1})}, \a^{(k_{1}, \cdots, k_{r-1})}),
\]
and now apply the case $r=1$ to the pair 
$\c^{(k_{1}, \cdots, k_{r-1})}, \a^{(k_{1}, \cdots, k_{r-1})}$
gives 
\[
 m(\c^{(k_{1}, \cdots, k_{r-1})}, \a^{(k_{1}, \cdots, k_{r-1})})=
 m(\c^{(k_{1}, \cdots, k_{r})}, \a^{(k_{1}, \cdots, k_{r})}).
\]
Hence 
\[
 m(\c, \a)=m(\c^{(k_{1}, \cdots, k_{r})}, \a^{(k_{1}, \cdots, k_{r})}).
\]
Also, to show (2), it suffices 
to apply successively the proposition \ref{cor: 3.2.3}.
And (3) follows from the bijectivity of $\psi_{k_{1}, \cdots, k_{r}}$
and (1). 
As for (4), we know by definition, 
\[
 S(\a)_{k_{1}, \cdots, k_{r}}\supseteq 
 S( \b)_{k_{1}, \cdots, k_{r}}.
\]
 We know that any for $\c\in S(\a)_{k_{1}, \cdots, k_{r}}$,
we have $\c^{(k_{1}, \cdots, k_{r})}\leq \b^{(k_{1}, \cdots, k_{r})}$, by (2), 
this implies that $\c\leq \b$. Hence we are done.
\end{proof}

Similarly, we have 
\begin{definition}
Let $(k_{1}, \cdots, k_{r})$ be a sequence of integers, 
then we define 
\[
_{k_{r}, \cdots, k_{1}}S(\a)=
\{\c\in S(\a):
{^{( k_{i}, \cdots , k_{1})}\c}
\in  {_{k_{i+1}}S({^{( k_{i}, \cdots , k_{1})}\a})}, \text{ for }i=1, \cdots, r\}. 
\]
and 
\[
 _{k_{r}, \cdots, k_{1}}\psi:
 {_{k_{r}, \cdots, k_{1}}S}(\a)\rightarrow S(^{(k_{r}, \cdots, k_{1})}\a),
\]
sending $\c$ to $^{(k_{r}, \cdots, k_{1})}\c$.
\end{definition}

\begin{notation} 
Let $\d=\{\Delta_{1},\cdots , \Delta_{r}\}$ such 
that $\Delta_{i}=[k_{i}, \ell_{i}]$ with $k_{1}\leq \cdots \leq k_{r}$
We denote 
\[
 _{\d}S(\a): =_{k_{r},\cdots, \ell_{r}, k_{r-1}, \cdots, k_{1}, \cdots, \ell_{1}}S(\a),
\]
and 
\[
 _{\d}\psi: =_{k_{r},\cdots, \ell_{r}, k_{r-1}, \cdots, k_{1}, \cdots, \ell_{1}}\psi.
\]
\end{notation}

\remk
Let $k_{1}, k_{2}$ be two integers. 
In general, we do not have 
\[
{ _{k_{2}}(S(\a)_{k_{1}})}=(_{k_{2}}S(\a))_{k_{1}}.
\]
For example, let $k_{1}=k_{2}=1$, $\a=\{[1], [2]\}$, then 
\[
 { _{k_{2}}(S(\a)_{k_{1}})}=\{\a\}, ~(_{k_{2}}S(\a))_{k_{1}}=\{[1, 2]\}.
\]

\begin{notation}\label{nota: 4.1.8}
We write for multisegments $\d_{1}, \d_{2}, \a$,
\[
 {_{\d_{2}}S(\a)_{\d_{1}}}: =({_{\d_{2}}S(\a))_{\d_{1}}}, ~
 S(\a)_{\d_{1}, \d_{2}}: =(S(\a)_{\d_{1}})_{\d_{2}}.
\]
and 
\[
 {_{\d_{2}}\psi_{\d_{1}}}: =({_{\d_{2}}\psi)_{\d_{1}}},
 ~\psi_{\d_{1}, \d_{2}}: =(\psi_{\d_{1}})_{\d_{2}}
\] 
And for $\b\in S(\a)$, 
\[
 ^{(\d_{2})}\b^{(\d_{1})}: =(^{\d_{2}}\b)^{(\d_{1})},
 ~\b^{(\d_{1}, \d_{2})}: =(\b^{(\d_{1})})^{(\d_{2})}.
\]

\end{notation}

\section{Reduction to symmetric case}

Now we return to the main question, i.e., the calculation of 
the coefficient $m(\c, \a)$ for $\c\in S(\a)$. Before we go into the details, 
we describe our strategies: 
\begin{description}
 \item[(i)]Find a symmetric multisegment, denoted by $\a^{\sym}$, 
 such that $L_{\a}$ is the minimal degree term in some partial derivative of 
 $L_{\a^{\sym}}$.
 \item[(ii)]For $\c\in S(\a)$, find $\c^{\sym }\in S(\a^{\sym})$, such that 
 we have $m(\c, \a)=m(\c^{\sym}, \a^{\sym})$.
\end{description}

\begin{prop}\label{cor: 5}
Let $\a$ be any multisegment, then there exists an ordinary multisegment $\b$, and
two  multisegments $\c_{i}, i=1,2$ such that 
\[
 \b\in {_{\c_{2}}S(\b)_{\c_{1}}}, ~ \a={^{(\c_{2})}\b^{(\c_{1})}}
\]
\end{prop}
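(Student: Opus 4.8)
The plan is to build the ordinary multisegment $\b$ and the auxiliary multisegments $\c_1,\c_2$ by "expanding" each segment of $\a$ to the left and to the right, so that all the resulting beginnings become distinct and all the resulting ends become distinct; then the truncations $^{(\c_2)}(\cdot)^{(\c_1)}$ should collapse these expansions back down to $\a$. Concretely, suppose $\a=\{\Delta_1,\dots,\Delta_r\}$. The idea is to choose, for each $i$, a segment $\tilde\Delta_i\supseteq\Delta_i$ with $b(\tilde\Delta_i)$ strictly decreasing in $i$ and $e(\tilde\Delta_i)$ strictly increasing in $i$ (after reordering the $\Delta_i$ suitably), so that $\b=\{\tilde\Delta_1,\dots,\tilde\Delta_r\}$ is ordinary. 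One must be careful: merely making beginnings and ends distinct is not enough for $\b$ to be obtainable from the collapse in a controlled way, so the expansions have to be performed "outwards" far enough that no unwanted linking is created. I would record the left-extension data as a multisegment $\c_1$ (the multiset of intervals $[b(\tilde\Delta_i)+ \text{length of }\Delta_i,\, e(\tilde\Delta_i)]$, i.e. the pieces that the right-truncation $(\cdot)^{(\c_1)}$ must chew off from the right end) and the right-extension data as $\c_2$ analogously for $^{(\c_2)}(\cdot)$, using Notation \ref{nota: 4.1.8} for the iterated truncations.

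First I would make precise the single-segment case: given $\Delta=[i,j]$ and targets $i'\le i$, $j'\ge j$, set $\tilde\Delta=[i',j']$; then $^{(i',\dots,i-1)}\tilde\Delta=[i,j']$ and $([i,j'])^{(j+1,\dots,j')}=[i,j]=\Delta$, using the definitions in \ref{def: 2.2.3} and the ${}^{(k)}\Delta$ construction, provided the truncation indices are applied in the correct (outermost-first) order, which is exactly what the notation $\b^{(\d)}$ and $^{(\d)}\b$ encode. The key point is that on a \emph{single} segment the iterated left/right truncation is just "restrict the interval", so it visibly recovers $\Delta$. Then I would run this for all $i$ simultaneously, choosing the targets $i'=i'(i)$, $j'=j'(i)$ generically (for instance $i'(i)=M-i$ and $j'(i)=N+i$ for $M,N$ large) so that the $b(\tilde\Delta_i)$ are pairwise distinct, the $e(\tilde\Delta_i)$ are pairwise distinct, and moreover the segments of $\b$ pairwise intersect — i.e. $\b$ is not just ordinary but can even be arranged symmetric in the sense of Definition \ref{def: 2.1.5}; the statement only asks for ordinary, so genericity of the shift is enough.

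The main obstacle — the step I expect to be genuinely delicate rather than bookkeeping — is verifying that $\b$ actually lies in $_{\c_2}S(\b)_{\c_1}$, i.e. that at every stage of the iterated truncation the relevant hypothesis $H_k(\cdot)$ (Definition \ref{def: 3.1.3}) and its left analogue ${}_kH(\cdot)$ hold, so that Proposition \ref{prop: 3.2.17} (and its left version) applies and the truncations behave as expected. Condition (1) of $H_k$ (no drop in degree) will follow because we are peeling off exactly the artificially-added pieces, one endpoint at a time, and at each step there is a segment ending exactly at the index being removed; condition (2) (no pair of linked segments with consecutive ends $k-1,k$) is where the genericity of the chosen shifts $i'(i),j'(i)$ must be exploited: by spacing the endpoints far apart we ensure that when we truncate at index $k$ there is at most one segment with end $k$ and none with end $k-1$ among the "live" segments, killing the forbidden configuration. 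I would prove this by induction on the number of truncation steps, at each step identifying precisely which segment currently ends (resp. begins) at the index being removed and checking the two conditions; the ordering conventions in the definition of $\a^{(k_1,\dots,k_r)}$ and of $\b^{(\d)}$ are exactly set up to make this induction go through. Once $\b\in{}_{\c_2}S(\b)_{\c_1}$ is established, the identity $\a={}^{(\c_2)}\b^{(\c_1)}$ is just the single-segment computation applied coordinatewise, and the proposition follows.
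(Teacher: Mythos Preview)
There is a concrete error in your single-segment computation, which then propagates through the whole plan. By definition $\b^{([k,\ell])}=\b^{(k,k+1,\dots,\ell)}$ applies $(\cdot)^{(k)}$ first, then $(\cdot)^{(k+1)}$, and so on; each $(\cdot)^{(m)}$ only touches segments \emph{currently ending at $m$}. So for a single segment $[i,j']$ with $j'>j+1$, the operation $[i,j']^{(j+1,\dots,j')}$ does nothing at the steps $j+1,\dots,j'-1$ and truncates exactly once, at $j'$: you get $[i,j'-1]$, not $[i,j]$. In the paper's construction this works precisely because each segment is only ever shifted by $+1$ at a time, so one pass of $(\cdot)^{([k,\ell])}$ undoes one layer of $+1$ shifts. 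Your ``coordinatewise'' reading, and the plan to record a single segment of $\c_1$ per $\Delta_i$ that chews off an arbitrarily long right tail, is therefore not what the notation means.

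Even if you fix the notation, the ``generic large extension'' idea breaks on the hypothesis $H_k$. The operations $(\cdot)^{(k)}$ and $^{(k)}(\cdot)$ are global: every segment with the given endpoint moves. Once two ends (or two beginnings) merge during truncation they can never separate again, so to recover an $\a$ in which, say, $[1,3]$ and $[2,3]$ have the same end but different beginnings, you must never let their beginnings merge along the way --- yet with your nested $\b$ they will. Worse, the claim that spacing endpoints far apart guarantees ``none with end $k-1$'' is false at the crucial final steps: to reach $\a=\{[1,3],[2,3]\}$ from any $\b$ with distinct ends you must at some stage truncate at $4$ with one segment ending at $3$ and another at $4$, and if the one at $4$ has the larger beginning (as your scheme $j'(i)=N+i$ with $b$ increasing would produce) they are linked and $H_4$ fails. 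The paper avoids exactly this: among segments sharing a repeated end it extends by $+1$ the one that is $\preceq$-largest, i.e.\ the one with the \emph{smallest} beginning, so that after extension it \emph{contains} the others and is therefore not linked to them; the shift cascades upward so that each subsequent truncation index has no segment ending one below it. That choice --- which segment to extend, and only by one step --- is the content you are missing; genericity does not supply it.
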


\begin{proof}
Let $\a=\{\Delta_{1}, \cdots, \Delta_{r}\}$ be such that
\[
 \Delta_{1}\preceq \cdots \preceq \Delta_{r},
\]
and  
\[
 e(\Delta_{1})\leq  \cdots < e(\Delta_{j})=\cdots =e(\Delta_{i})< e(\Delta_{i+1})\leq  \cdots,
\]
such that $\Delta_{j}$ is the smallest multisegment in $\a$ such that $e(\Delta_{j})$ appears 
in $e(\a)$ with multiplicity greater than 1. Let 
$\Delta^{1}=[e(\Delta_{i})+1, \ell]$ be a segment, where $\ell$ is the maximal integer such that for any 
$m$ such that 
$e(\Delta_{i})\leq m\leq \ell-1$,
there is a segment in $\a$ which ends in $m$. Let $\a_{1}$ 
be the multisegment obtained 
by 
replacing $\Delta_{i}$ by $\Delta_{i}^+$,  and all $ \Delta \in \a$ such that $e(\Delta)\in (e(\Delta_{i}), \ell]$ by $\Delta^+$.
 Now we continue the previous
construction with $\a_{1}$ to get $\a_{2}\cdots$, until we get a 
multisegment $\a_{r_{1}}$ such that
$e(\a_{r_{1}})$ contains no segment with multiplicity greater
than 1. Let
\[
 c_{1}=\{\Delta^{1}, \Delta^{2}, \cdots, \Delta^{r_{1}}\}.
\]
Note that by construction, we have
\[
 \Delta^{1}\prec \Delta^{2}\prec \cdots \prec \Delta^{r_{1}}.
\]
And we show that $\a_{r_{1}}\in S(\a_{r_{1}})_{\c_{1}}$.
Note that 
\[
 \a_{i}=\a_{r_1}^{(\Delta^{r_{1}}, \cdots, \Delta^{i+1})},
\]
by induction on $r_1$, we can assume  that $\a_{1}\in S(\a_{r_{1}})_{\Delta^{r_{1}}, \cdots, \Delta^{2}}$
and show that 
$\a\in S(\a_{1})_{\Delta^{1}}$.
We observe that in $\a_{1}$, by construction, 
with the notations above, $\Delta_{j}, \cdots, \Delta_{i-1}$ are the only segments
in $\a_{1}$ that ends in $e(\Delta_{i})$, and $\Delta_{i}^+$ is the only segment
in $\a_{1}$ that ends in $e(\Delta_{i})+1$. Hence we conclude that 
$\a_{1}\in S(\a_{1})_{e(\Delta_{i})+1}$. And for $e(\Delta_{i})+1<m\leq \ell$, we know that 
$\a_{1}^{(e(\Delta_{1})+1, \cdots, m-1)}$ does not contain a segment which ends in $m-1$, hence 
$\a_{1}^{(e(\Delta_{1})+1, \cdots, m-1)}\in S(\a_{1}^{(e(\Delta_{1})+1, \cdots, m-1)})_{m}$.
We are done by putting $m=\ell$.

Now same construction can be applied to show that there exists a multisegment
$\a_{r_{2}}$ such that $b(\a_{r_{2}})$ contains no segment with multiplicity greater
than 1, and 
\[
 c_{2}=\{^{1}\Delta, \cdots, ^{r_{2}}\Delta\},
\]
such that
\[
  \a_{r_{2}}\in {^{\c_{2}}S(\a_{2})}, ~\a_{r_{1}}={^{(\c_{2})}\a_{r_{2}}}
\]
as minimal degree component.

Note that in this way we construct an
ordinary multisegment $\b=\a_{r_{2}}$, 
\[
 \b\in {_{\c_{2}}S(\b)_{\c_{1}}}, ~ \a={^{(\c_{2})}\b^{(\c_{1})}}
\]
\end{proof}

To finish our strategy (i), we are reduced to consider
the case of ordinary multisegments.

\begin{prop}\label{prop: 4.2.2}
Let $\b$ be an ordinary multisegment, then there exists a symmetric multisegment $\b^{\sym}$, and
a multisegment $\c$ such that 
such that 
\[
 \b^{\sym}\in S(\b^{\sym})_{\c},~ \b=\b^{\sym, ~(\c)}.
\]
\end{prop}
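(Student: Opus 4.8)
The plan is to mimic the proof of Proposition \ref{cor: 5}, but in the ``opposite'' direction: instead of resolving multiplicities in $e(\a)$, I \emph{stretch} the ends of the already ordinary multisegment $\b$ upward, one ``run'' at a time, until its smallest end exceeds every beginning. Set $b_{\max}=\max\{b(\Delta):\Delta\in\b\}$ and $e_{\min}=\min\{e(\Delta):\Delta\in\b\}$. If $e_{\min}\geq b_{\max}$ then $\b$ is already symmetric (cf. Definition \ref{def: 2.1.5}) and I take $\c=\emptyset$, $\b^{\sym}=\b$. Otherwise $e_{\min}<b_{\max}$, and I perform one elementary step. Since $\b$ is ordinary there is a unique $\Delta_0\in\b$ with $e(\Delta_0)=e_{\min}$; let $\ell$ be the least integer $>e_{\min}$ with no segment of $\b$ ending at $\ell$, so that $e_{\min},e_{\min}+1,\cdots,\ell-1$ all appear, once each, in $e(\b)$; let $\b_1$ be obtained from $\b$ by replacing every $\Delta\in\b$ with $e(\Delta)\in[e_{\min},\ell-1]$ by $\Delta^{+}$ (Notation \ref{nota: 3.1.2}). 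Then $\b_1$ is again ordinary, $b(\b_1)=b(\b)$, and $e(\b_1)=(e(\b)\setminus\{e_{\min}\})\cup\{\ell\}$; in particular $\min\{e(\Delta):\Delta\in\b_1\}=e_{\min}+1$.

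Put $\Delta^{1}=[e_{\min}+1,\ell]$. A direct computation shows $\b=\b_1^{(\Delta^{1})}$: truncating $\b_1$ successively at $e_{\min}+1,e_{\min}+2,\cdots,\ell$ shortens, at each step, exactly one segment, namely the unique one currently ending at that value, and after the last step one recovers $\b$. I then argue by induction on $d:=\max\{b_{\max}-e_{\min},\,0\}$. As $\max b(\b_1)=b_{\max}$ and $\min e(\b_1)=e_{\min}+1$, the inductive hypothesis applies to $\b_1$ and yields a symmetric $\b_1^{\sym}$ and a multisegment $\c'$ with $\b_1^{\sym}\in S(\b_1^{\sym})_{\c'}$ and $\b_1=\b_1^{\sym,(\c')}$. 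I set $\b^{\sym}=\b_1^{\sym}$ and $\c=\{\Delta^{1}\}\cup\c'$. To conclude $\b=\b^{\sym,(\c)}$ one only needs $\Delta^{1}$ to be the $\prec$-minimum of $\c$, so that it is applied last in $\b^{\sym,(\c)}$; this holds because the segments recorded by the later runs (those making up $\c'$) all have end strictly larger than $\ell$ — at each iteration the top of the active run strictly increases — and in the order $\prec$ a smaller end means smaller. Hence $\b^{\sym,(\c)}=(\b^{\sym,(\c')})^{(\Delta^{1})}=\b_1^{(\Delta^{1})}=\b$.

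The heart of the matter is verifying that $\b^{\sym}\in S(\b^{\sym})_{\c}$, i.e. that all the hypotheses $H_{k}$ along the truncation sequence $\c$ hold; by Proposition \ref{prop: 3.2.17} (applied to the integer sequence underlying $\c$) it suffices to check, for the block of truncations $e_{\min}+1,\cdots,\ell$ coming from $\Delta^{1}$, that each intermediate multisegment lies in the relevant $\tilde S(\cdot)_{k}$ and satisfies $H_{k}$, the remaining block (that of $\c'$, applied first, passing from $\b_1^{\sym}$ to $\b_1$) being taken care of by the inductive hypothesis together with $\b^{\sym}=\b_1^{\sym}\in S(\b^{\sym})_{\c'}$. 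Condition $(1)$ of $H_{k}$ (the degree drops by one) holds because each of these truncations shortens exactly one segment, of length $\geq 2$. Condition $(2)$ — no linked pair $\{\Delta,\Delta'\}$ with $e(\Delta)=k-1$, $e(\Delta')=k$ — holds \emph{vacuously}: after the run-bump no segment of $\b_1$ ends at $e_{\min}$, and after truncating at $e_{\min}+1,\cdots,k-1$ no segment ends at $k-1$, so there is nothing for a segment ending at $k$ to be linked to. The base case $d\le 0$ (that is, $\b$ symmetric, $\c=\emptyset$) is immediate, which closes the induction. The one genuine nuisance I expect is the bookkeeping of the order $\prec$ on the accumulated segments $\Delta^{1},\Delta^{2},\cdots$, needed so the truncations in $\c$ compose in the order prescribed by the definition of $\a^{(\d)}$; everything else is routine.
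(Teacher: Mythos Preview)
Your proof is correct and uses essentially the same idea as the paper's --- an iterative run-bump construction --- but mirrored: you stretch \emph{ends} to the right via $\Delta\mapsto\Delta^{+}$ and record right-truncation segments $\Delta^{s}=[e_{\min}^{(s)}+1,\ell^{(s)}]$, whereas the paper stretches \emph{beginnings} to the left via $\Delta\mapsto{}^{+}\Delta$ and records left-truncation segments. In fact there is a small discrepancy in the paper itself: the statement of Proposition~\ref{prop: 4.2.2} asserts $\b=\b^{\sym,(\c)}$ (right truncation), but the proof as written concludes $\b={}^{(\c)}\b^{\sym}$ and $\b^{\sym}\in{}_{\c}S(\b^{\sym})$ (left truncation). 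Your argument thus establishes the statement exactly as printed; the paper's proof establishes the equally usable left-handed variant.

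Two minor points. First, your discussion of ``Condition~(1) of $H_k$'' is unnecessary: since you are verifying $\b^{\sym}\in S(\b^{\sym})_{\c}$, at every stage the element being tested coincides with the reference multisegment, so the degree condition in $H_k$ is automatic; only the linked-pair condition~(2) needs checking, and your vacuous argument (no segment ends at $k-1$ after the preceding truncations) is correct. Second, the claim that every segment of $\c'$ has end strictly larger than $\ell$ is not part of the bare statement you are proving by induction, so strictly you should either fold it into a strengthened inductive hypothesis or, equivalently, argue directly with the explicit iterated construction rather than a black-box induction. Your justification that $\ell^{(s)}$ strictly increases (because after each bump the previous gap $\ell^{(s)}$ now lies in $e(\b_{s+1})$) is correct, so this is only a matter of presentation.
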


\begin{proof}
In general $\b$ is not symmetric, i.e,
we do not have $\min\{e(\Delta): \Delta\in \b\}\geq \max\{b(\Delta): \Delta\in \b\}$.
Let 
\[
 \b=\{\Delta_{1}, \cdots, \Delta_{r}\},  \quad b(\Delta_{1})> \cdots >b( \Delta_{r}).
\]
so that
\[
 b(\Delta_{1})=\max\{b(\Delta_i): i=1, \cdots, r\}.
\]
If $\b$ is not symmetric, let
$\Delta^{1}=[\ell, b(\Delta_{1})-1] $ with $\ell$ maximal satisfying 
that for any $m$ such that $\ell-1\leq m\leq b(\Delta_{1})$, there is a segment in $\b$ starting in $m$.
We construct $\b_{1}$ by replacing every segment $\Delta$ in 
$\b$ ending in $\Delta^{1}$ by $^{+}\Delta$.
Repeat this construction with $\b_{1}$ to get $\b_{2}\cdots $,
until we get $\b^{\sym}=\b_{s}$, which is symmetric.
Let $\c=\{\Delta^{1}, \cdots, \Delta^{s}\}$, then as before, we have 
\[
 \b^{\sym}\in {_{\c}S(\b^{\sym})}, ~ \b= ~^{(\c)}(\b^{\sym}).
\]
\end{proof}

As a corollary, we know that 

\begin{cor}\label{cor: 4.2.3}
For any multisegment $\a$, we can 
find a symmetric multisegment $\a^{\sym}$ and three multisegments $\c_{i}, i=1, 2, 3$, such that 
\[
 \a^{\sym}\in {_{\c_{2}, \c_{3}}S(\a^{\sym})_{\c_{1}}}, ~
 \a={^{(\c_{2}, \c_{3})}\a^{\sym, (\c_{1})}}.
\]
\end{cor}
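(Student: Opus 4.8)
The plan is to chain together the two previous propositions, Proposition \ref{cor: 5} and Proposition \ref{prop: 4.2.2}, using the compatibility of iterated truncations recorded in Notation \ref{nota: 4.1.8} and Proposition \ref{prop: 3.2.17}. First I would apply Proposition \ref{cor: 5} to the given multisegment $\a$: this produces an \emph{ordinary} multisegment $\b$ together with multisegments $\c_1, \c_2$ such that
\[
\b\in {_{\c_{2}}S(\b)_{\c_{1}}}, \qquad \a={^{(\c_{2})}\b^{(\c_{1})}}.
\]
Then I would apply Proposition \ref{prop: 4.2.2} to this ordinary $\b$: this produces a \emph{symmetric} multisegment $\b^{\sym}$ and a multisegment $\c_3$ with
\[
\b^{\sym}\in {_{\c_{3}}S(\b^{\sym})}, \qquad \b={^{(\c_{3})}(\b^{\sym})}.
\]
Substituting the second identity into the first gives $\a={^{(\c_{2}, \c_{3})}(\b^{\sym})^{(\c_{1})}}$ once one checks that left-truncations commute with each other, i.e. that ${^{(\c_2)}({^{(\c_3)}\b^{\sym}})}={^{(\c_2,\c_3)}\b^{\sym}}$ in the sense of Notation \ref{nota: 4.1.8}. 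Setting $\a^{\sym}:=\b^{\sym}$, the candidate conclusion is exactly the displayed equation in the statement.

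The remaining point, and the one I expect to require the most care, is verifying the membership relation $\a^{\sym}\in {_{\c_{2}, \c_{3}}S(\a^{\sym})_{\c_{1}}}$. Each of the two propositions gives a membership statement only for its own truncation data: Proposition \ref{prop: 4.2.2} tells us $\b^{\sym}\in{_{\c_3}S(\b^{\sym})}$, while Proposition \ref{cor: 5} tells us $\b\in{_{\c_2}S(\b)_{\c_1}}$, but the latter is phrased in terms of $\b$, not $\b^{\sym}$. The key lemma I would invoke is (4) of Proposition \ref{prop: 3.2.17} (and its left-handed analogue), which says that if $\b$ is obtained from $\b^{\sym}$ as a minimal-degree term of a truncation then $S$-sets with the \emph{same} truncation indices coincide: ${_{\c_3}S(\b^{\sym})}$ and its image under $_{\c_3}\psi$ carry the membership of $\b$ upward to $\b^{\sym}$. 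Concretely, since $\b={^{(\c_3)}\b^{\sym}}$ with $\b^{\sym}$ satisfying the relevant hypotheses, the set $S(\b^{\sym})_{\c_1}$ restricted to the $\c_3$-stratum maps bijectively onto $S(\b)_{\c_1}$ under $_{\c_3}\psi$, so $\b\in{_{\c_2}S(\b)_{\c_1}}$ pulls back to $\b^{\sym}\in{_{\c_2,\c_3}S(\b^{\sym})_{\c_1}}$. I would spell this out by induction on the lengths of $\c_2$ and $\c_3$, repeatedly applying (4) of Proposition \ref{prop: 3.2.17} to peel off one index at a time, while observing that the left-truncation $^{(\c_3)}$ and the operations $_{\c_2}S(-)$, $S(-)_{\c_1}$ are compatible because they act on disjoint ``ends'' of the segments (left-truncation modifies beginnings, right-truncation modifies ends).

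A subtlety worth isolating is that the order in which one composes $^{(\c_2)}$ and $^{(\c_3)}$ matters for bookkeeping but not for the result: in Proposition \ref{cor: 5} the truncation $^{(\c_2)}$ is applied to an already-truncated $\b^{(\c_1)}$ that came from the \emph{original} $\a$, whereas in Proposition \ref{prop: 4.2.2} the truncation $^{(\c_3)}$ is applied to $\b$ itself. The combined truncation should be organized so that $\c_3$ (which repairs the failure of symmetry on the beginnings of $\b$) is applied first to $\b^{\sym}$, then $\c_2$ and $\c_1$ restore $\a$. I would check that $\c_2$ and $\c_3$ both consist of segments modifying beginnings and hence can be concatenated into a single sequence of left-truncation indices, while $\c_1$ consists of segments modifying ends; this is precisely why Notation \ref{nota: 4.1.8} writes ${^{(\d_2)}\b^{(\d_1)}}=({^{\d_2}\b})^{(\d_1)}$ unambiguously. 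With these compatibilities in hand the proof is a two-line composition; the real content is entirely in the two cited propositions and in the transfer-of-$S$-sets lemma. I therefore anticipate that the write-up will be short, with the only genuine obstacle being the careful statement and invocation of the left-handed version of Proposition \ref{prop: 3.2.17}(4), which the excerpt asserts exists but does not state explicitly.
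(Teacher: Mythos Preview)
Your approach is correct and matches the paper's: the corollary is stated there with no proof at all, simply as ``As a corollary, we know that'' immediately after Propositions \ref{cor: 5} and \ref{prop: 4.2.2}, so the intended argument is exactly the two-step composition you describe. Your additional care about verifying the membership $\a^{\sym}\in {_{\c_{2}, \c_{3}}S(\a^{\sym})_{\c_{1}}}$ via Proposition \ref{prop: 3.2.17}(4) and its left-handed analogue is more than the paper itself supplies, and the observation that left- and right-truncations act on disjoint ends of segments is the right reason the compositions behave well.
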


Now applying proposition \ref{prop: 3.2.17}
\begin{prop}\label{prop: 4.2.4}
The morphism 
\[
{_{\c_{2}, \c_{3}}\psi_{\c_{1}}}: {_{\c_{2}, \c_{3}}S(\a^{\sym})_{\c_{1}}}
 \rightarrow S(\a)
\]
is bijective, and  for $\b\in S(\a)$, there exists a unique $\b^{\sym}\in S(\a^{\sym})$ such that
\[
 m(\b, \a)=m(\b^{\sym}, \a^{\sym}).
\]
\end{prop}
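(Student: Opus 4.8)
The plan is to deduce Proposition \ref{prop: 4.2.4} directly from Corollary \ref{cor: 4.2.3} together with Proposition \ref{prop: 3.2.17}, applied to the chain of truncations that appears in the corollary. By Corollary \ref{cor: 4.2.3}, for the given multisegment $\a$ we may fix a symmetric multisegment $\a^{\sym}$ and multisegments $\c_1, \c_2, \c_3$ such that $\a^{\sym}\in {_{\c_2,\c_3}S(\a^{\sym})_{\c_1}}$ and $\a = {^{(\c_2,\c_3)}\a^{\sym,(\c_1)}}$. First I would unwind the compound notation of Notation \ref{nota: 4.1.8}: the decorated set ${_{\c_2,\c_3}S(\a^{\sym})_{\c_1}}$ is obtained by composing the operators $\psi_{\d}$ and $_{\d}\psi$ for $\d$ ranging over $\c_1$, then $\c_3$, then $\c_2$ (in the order dictated by the notation), each of which is a truncation of the form $\psi_{k_1,\dots,k_r}$ or its left-hand analogue.

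The key step is then to invoke Proposition \ref{prop: 3.2.17} (and its left-hand mirror, which the remark after Definition \ref{def: 2.3.2} tells us holds verbatim) once for each of $\c_1,\c_3,\c_2$, and to compose the resulting bijections. Concretely, Proposition \ref{prop: 3.2.17} gives a bijection $\psi_{\c_1}: S(\a^{\sym,(\c_1)}_{\text{pre}})_{\c_1}\to S(\a^{\sym,(\c_1)})$ preserving the order (part (2)) and the multiplicities $m(-,-)$ (part (1)); iterating with the left-truncations by $\c_3$ and $\c_2$ yields the composite bijection
\[
{_{\c_2,\c_3}\psi_{\c_1}}: {_{\c_2,\c_3}S(\a^{\sym})_{\c_1}}\longrightarrow S(\a),
\]
since ${^{(\c_2,\c_3)}\a^{\sym,(\c_1)}} = \a$. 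Because each factor is a bijection onto the appropriate $S(-)$, so is the composite; and because each factor preserves $m(\cdot,\cdot)$, so does the composite. Hence for every $\b\in S(\a)$ there is a unique preimage, which we name $\b^{\sym}$, lying in ${_{\c_2,\c_3}S(\a^{\sym})_{\c_1}}\subseteq S(\a^{\sym})$, and $m(\b,\a) = m(\b^{\sym},\a^{\sym})$.

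The only real subtlety — and the step I expect to need the most care — is checking that the iterated truncation sets compose correctly, i.e. that $S(\a)_{\c_1,\c_2} = (S(\a)_{\c_1})_{\c_2}$ as defined in Notation \ref{nota: 4.1.8} is exactly the domain on which Proposition \ref{prop: 3.2.17} can be applied stage by stage, and that the left- and right-truncation operators interact as claimed (the remark before Notation \ref{nota: 4.1.8} warns that left and right truncations do \emph{not} commute in general, so one must apply them in precisely the order encoded in the notation, never reordering). Once one verifies that at each stage the relevant multisegment genuinely satisfies the hypothesis needed to apply Proposition \ref{prop: 3.2.17} — which is built into the definition of ${_{\c_2,\c_3}S(\a^{\sym})_{\c_1}}$ via the membership conditions $\c^{(k_1,\dots,k_i)}\in S(\a^{(k_1,\dots,k_i)})_{k_{i+1}}$ and their left analogues — the composition of bijections is automatic and the multiplicity identity follows. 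I would therefore structure the proof as: (1) recall the decomposition from Corollary \ref{cor: 4.2.3}; (2) apply Proposition \ref{prop: 3.2.17} successively to $\c_1$ (right truncation) and to $\c_3$ then $\c_2$ (left truncations), noting bijectivity and the preservation of $m$; (3) conclude that $_{\c_2,\c_3}\psi_{\c_1}$ is a bijection onto $S(\a)$ with $m(\b,\a)=m(\b^{\sym},\a^{\sym})$, and that uniqueness of $\b^{\sym}$ is exactly injectivity of this bijection.
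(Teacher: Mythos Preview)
Your proposal is correct and takes essentially the same approach as the paper: the paper's proof consists of the single phrase ``Now applying proposition \ref{prop: 3.2.17}'' placed before the statement, so your careful unpacking of the iterated application of Proposition \ref{prop: 3.2.17} (and its left-hand analogue) to $\c_1$, then $\c_3$, then $\c_2$ is exactly what is intended, with more detail than the paper itself provides.
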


\section{Examples }

In this section we shall give some examples to illustrate the idea of 
reduction to symmetric case. 

We first take $\a=\{[1], [2], [2], [3]\}$ to show how to reduce 
a general multisegment to an ordinary multisegment. 
The procedure is showed in the following picture. 

\begin{figure}[!ht]
\centering
\includegraphics{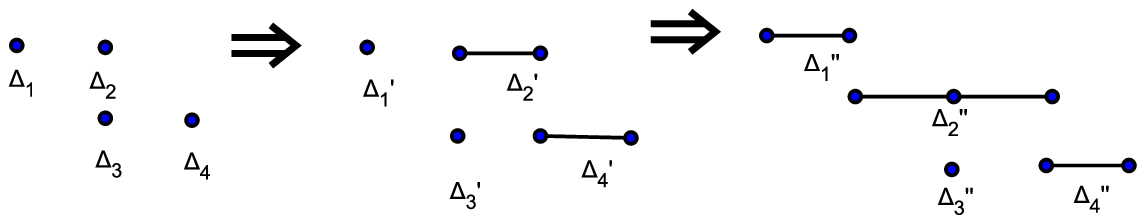}
\caption{\label{fig-multisegment4} }
\end{figure} 
Here we have $\a_{2}=\{[0, 1], [1,3], [2], [3, 4]\}$, 
such that 

\[
 \a_{2}\in {_{[0,1]}S(\a_{2})_{[3,4]}}, ~\a={^{([0,1])}\a_{2}^{([3,4])}}
\]

Next, we reduce the ordinary multisegment $\a_{2}$ to 
a multisegment $\a^{\sym}$, as is showed in the 
following picture.

\begin{figure}[!ht]
\centering
\includegraphics{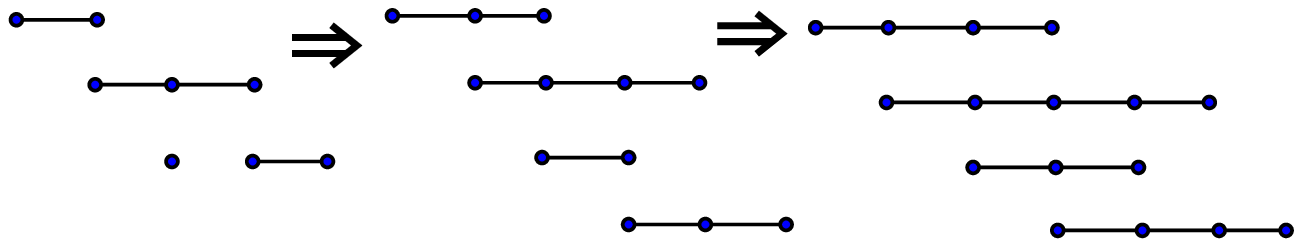}
\caption{\label{fig-multisegment5} }
\end{figure} 

Here,we have 
\[
 \a^{\sym}=\{[0, 3], [1, 5], [2, 4], [3, 6]\}=\Phi(w)
\]
where $w=\sigma_{2}\in S_{4}$.

Now we take $\b=\{[1, 2], [2, 3]\}$, we want to find 
$\b^{\sym}\in S(\a^{\sym})$ such that $m(\b, \a)=m(\b^{\sym}, \a^{\sym})$. 
Actually, following the procedure in Figure 2 above, we have 

\begin{figure}[!ht]
\centering
\includegraphics{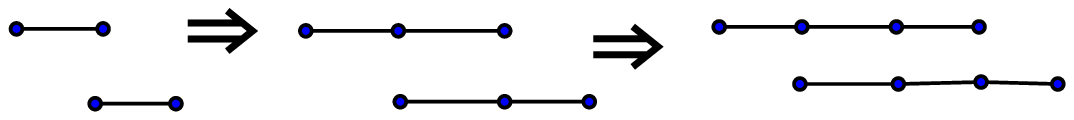}
\caption{\label{fig-multisegment6} }
\end{figure} 

Here we get $\b_{2}=\{[0, 3], [1], [2, 4]\}$. Again, follow the procedure
in Figure 3 above gives

\begin{figure}[!ht]
\centering
\includegraphics{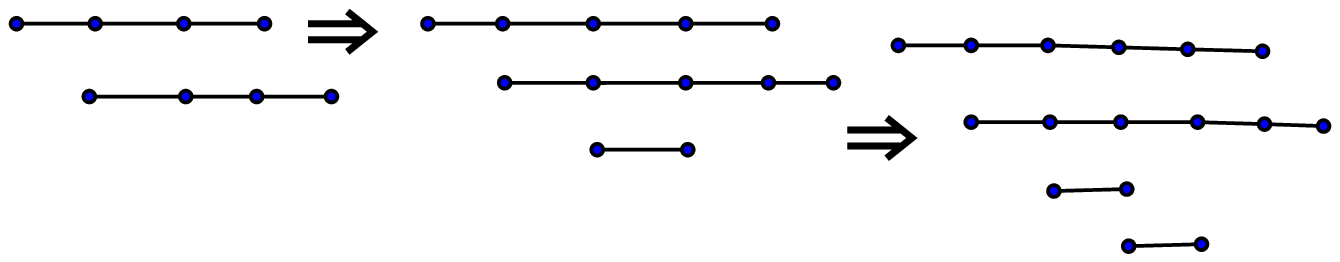}
\caption{\label{fig-multisegment7} }
\end{figure} 

Hence we get  
\[
 \b^{\sym}=\{[0, 5], [1, 3], [2, 6], [3, 4]\}=\Phi(v)
\]
with $v=(13)(24)\in S_{4}$. From \cite{Z2} section 11.3, we know
that $m(\b, \a)=2$, hence we get $m(\b^{\sym}, \a^{\sym})=2$.

\remk
We showed in section 2 that 
\[
 m(\b^{\sym}, \a^{\sym})=P_{v, w}(1),
\]
where $P_{v,w}(q)$ is the Kazhdan Lusztig polynomial associated to $v, w$.
One knows that $P_{v,w}(q)=1+q$, hence $P_{v,w}(1)=2$.

As we have seen, to each multisegment, we have (at least) two different 
ways to attach a Kazhdan Lusztig polynomial:

(1)To use the Zelevinsky construction as described in 
section 4.2. \\
(2)To first construct an associated symmetric multisegment, 
and then attach the corresponding Kazhdan Lusztig polynomial. 

\vspace{0.5cm}
\remk 
In general, for $\a>\b$, (1) gives a polynomial $P_{\a, \b}^{Z}$ which is a
Kazhdan Lusztig polynomial for the symmetric group $S_{\deg(\a)}$. And (2)
gives a polynomial $P_{\a, \b}^{S}$, which is a KL polynomial for a symmetric 
group $S_{n}$ with $n\leq \deg(\a)$. 
It may happen that $n=\deg(\a)$.
By corollary \ref{cor: 4.6.16}, we always have $P_{\a, \b}^{Z}=P_{\a, \b}^{S}$.

\begin{example}
Consider $\a=\{1, 2, 2, 3\}, \b=\{[1,2], [3,4]\}$, then by 
\cite{Z3} section 3.4, we know that $P_{\a, \b}^{Z}=1+q$.
And the symmetrization of $\a$ and $\b$ are given by
\[
 \a^{\sym}=\Psi((2,3)), \quad \b^{\sym}=\Psi((1,3)(2,4)).
\]
Hence $P_{\a, \b}^{S}=P_{(2,3), (1,3)(2,4)}=1+q$,
which is 
the Kazhdan Lusztig polynomial for the pair $((2,3), (1,3)(2,4))$ in $S_{4}$
\end{example}

\section{Proof of the Zelevinsky's conjecture}
 \begin{definition}
 The relation type between 2 segments $\{\Delta, \Delta'\}$ is one of the following
 \begin{itemize}
  \item $\Delta$ cover $\Delta'$ if $\Delta\supseteq \Delta'$;
 \item linked  but not juxtaposed if $\Delta$ does not cover $\Delta'$ and $\Delta\cup \Delta'$ is a segment but $\Delta\cap \Delta'\neq \emptyset$;
 \item juxtaposed if $\Delta\cup \Delta'$ is a segment but $\Delta\cap \Delta'=\emptyset$;
 \item unrelated if $\Delta\cap \Delta'=\emptyset$ and $\Delta, \Delta'$ are not linked.
 \end{itemize}

 \end{definition}

\begin{definition}
Two multisegments 
$$\a=\{\Delta_1,  \cdots,  \Delta_r\} \qquad \hbox{and} \qquad 
\a'=\{\Delta_1', \cdots, \Delta'_{r'}\}$$ 
have the same relation type if 
\begin{itemize}
\item $r=r'$;
\item there exists a bijection 
\[
\xi: \a\rightarrow \a'
\]
 of multisets which preserves the partial order $\preceq$ and relation type of segments and induces bijection of multisets 
 \[
 e(\xi): e(\a)\rightarrow e(\a'), \quad b(\xi): b(\a)\rightarrow b(\a'). 
 \]
\end{itemize} 
satisfying 
\[
e(\xi)(e(\Delta))=e(\xi(\Delta)), \quad b(\xi)(b(\Delta))=b(\xi(\Delta)).
\]
\end{definition}

\begin{lemma}
Let $\a$ and $\a'$ be of the same relation type induced by $\xi$. 
Let $\{\Delta_1\preceq \Delta_2\}$ be linked in $\a$.  Denote by $\a_1$($\a_1'$, resp.) the multisegment obtained by applying the elementary
operation to $\{\Delta_1, \Delta_2\}$( $\{\xi(\Delta_1), \xi(\Delta_2)\}$, resp.). 
Then $\a_1$ and $\a_1'$
also have the same relation type.
\end{lemma}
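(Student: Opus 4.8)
The plan is to track an elementary operation through the bijection $\xi$ and show that the resulting multisegments $\a_1$ and $\a_1'$ still differ only by relabelling beginnings and ends in a relation-type-preserving way. First I would set up notation: write $\Delta_1 = [b_1, e_1]$ and $\Delta_2 = [b_2, e_2]$ with $\Delta_1 \preceq \Delta_2$ linked, and put $\Delta_1' = \xi(\Delta_1) = [b_1', e_1']$, $\Delta_2' = \xi(\Delta_2) = [b_2', e_2']$. Since $\xi$ preserves the relation type of the pair $\{\Delta_1, \Delta_2\}$, the pair $\{\Delta_1', \Delta_2'\}$ is linked with the same ``$\preceq$-shape'', i.e. $\Delta_1' \preceq \Delta_2'$ and $\Delta_1'$ proceeds $\Delta_2'$ exactly when $\Delta_1$ proceeds $\Delta_2$. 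The elementary operation replaces $\{\Delta_1, \Delta_2\}$ by $\{\Delta_1 \cup \Delta_2, \Delta_1 \cap \Delta_2\} = \{[b_1, e_2], [b_2, e_1]\}$ (when the intersection is nonempty; when juxtaposed it is just $\{[b_1,e_2]\}$, but ``linked'' here forces nonempty intersection by the convention in Definitions \ref{def: 6}, so I will treat linked-but-not-juxtaposed as the main case and note juxtaposed separately). I would then define $\xi_1 : \a_1 \to \a_1'$ to agree with $\xi$ on the untouched segments $\a \setminus \{\Delta_1, \Delta_2\}$, and to send $[b_1,e_2] \mapsto [b_1', e_2']$ and $[b_2,e_1] \mapsto [b_2', e_1']$.

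Next I would verify the three things the definition of ``same relation type'' demands of $\xi_1$. First, $\xi_1$ is a bijection of multisets: on the untouched part this is immediate, and by Lemma \ref{lem: 2.1.3} applied to both $\a$ and $\a'$ we know $e(\a_1) \subseteq e(\a)$, $b(\a_1) \subseteq b(\a)$, and the new segments contribute exactly the endpoints $\{b_1, b_2\}$, $\{e_1, e_2\}$ on the one side and $\{b_1', b_2'\}$, $\{e_1', e_2'\}$ on the other, which match under $e(\xi), b(\xi)$. Second, $\xi_1$ preserves the induced maps $e(\xi_1), b(\xi_1)$ on $e(\a_1), b(\a_1)$: here one checks $e(\xi)(e_2) = e(\xi(\Delta_2)) = e_2' = e(\xi_1([b_1,e_2]))$, and similarly $e_1, b_1, b_2$, all directly from the compatibility $e(\xi)(e(\Delta)) = e(\xi(\Delta))$, $b(\xi)(b(\Delta)) = b(\xi(\Delta))$ that $\xi$ satisfies. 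Third, and this is the crux, $\xi_1$ must preserve the partial order $\preceq$ on segments and the pairwise relation type between any two segments of $\a_1$.

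The main obstacle will be this last point: I must show that for every old segment $\Delta$ (or for the two new segments) the relation type of $\{\Delta, [b_1,e_2]\}$ and of $\{\Delta, [b_2,e_1]\}$ in $\a_1$ matches that of $\{\xi(\Delta), [b_1',e_2']\}$ and $\{\xi(\Delta), [b_2',e_1']\}$ in $\a_1'$, and likewise that $\{[b_1,e_2],[b_2,e_1]\}$ has the same relation type as its image. The key structural fact is that the relation type (cover / linked-not-juxtaposed / juxtaposed / unrelated) and the order $\preceq$ between two segments are determined purely by the order relations among their four endpoints $b, e$ and by which of the endpoint-pairs are ``adjacent'' in the Zelevinsky line (i.e. differ by one step of $\nu$). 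Since $\xi$ is a relation-type isomorphism, the induced bijections $e(\xi), b(\xi)$ on endpoints are order-preserving and adjacency-preserving on the relevant sublines, and the new endpoints $b_1, b_2, e_1, e_2$ are among the old ones; hence every inequality and every adjacency among $\{b_1, b_2, e_1, e_2, b(\Delta), e(\Delta)\}$ is mirrored among the primed versions. I would make this precise by a short case analysis: enumerate the possible positions of $\Delta$'s endpoints relative to the interval $[b_2, e_2] \supseteq$ region and $[b_1,e_1]$ region (finitely many cases), and in each case read off the relation type from the endpoint configuration, observing it is a function of data preserved by $\xi$. The juxtaposed and cover subcases of the operation itself ($\Delta_1 \cup \Delta_2$ a single segment, or one of $\Delta_1, \Delta_2$ equal to the intersection) should be handled as degenerate instances of the same analysis. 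I expect no genuinely new idea is needed beyond carefully organizing this case check, so the write-up will be a clean but slightly tedious verification.
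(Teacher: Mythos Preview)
Your approach is essentially the paper's: you define $\xi_1$ exactly as the paper does (agreeing with $\xi$ off the touched pair, sending $\Delta_1\cup\Delta_2\mapsto\xi(\Delta_1)\cup\xi(\Delta_2)$ and $\Delta_1\cap\Delta_2\mapsto\xi(\Delta_1)\cap\xi(\Delta_2)$), check the bijection and endpoint compatibility, and then do a case analysis for relation-type preservation. The paper organises that case analysis by the relation type of a new segment (it treats $\Delta=\Delta_1\cup\Delta_2$ against an arbitrary third segment $\Delta'$ explicitly), reducing each of the four cases to the relation types of the pairs $\{\Delta_1,\Delta'\}$ and $\{\Delta_2,\Delta'\}$, which $\xi$ preserves by hypothesis; your endpoint-configuration framing is just a repackaging of the same reduction.

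One small correction: in Definitions~\ref{def: 6}, juxtaposed segments \emph{are} linked (their union is a segment and neither contains the other), so ``linked'' does not force $\Delta_1\cap\Delta_2\neq\emptyset$. Since you already flag the juxtaposed case for separate treatment this does not affect the argument, but you should drop the parenthetical claim.
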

\begin{proof}
We define a bijection
\[
\xi_1: \a_1\rightarrow \a_1'
\]
by
\[
\xi_1(\Delta_1\cup \Delta_2)=\xi(\Delta_1)\cup \xi(\Delta_2), \quad \xi_1(\Delta_1\cap \Delta_2)=\xi(\Delta_1)\cap \xi(\Delta_2)
\]
and 
\[
\xi_1(\Delta)=\xi(\Delta), \quad \text{ for all } \Delta\in \a\setminus \{\Delta_1, \Delta_2\}.
\]
It induces a bijection between the end multisets $e(\a_1)$ and $e(\a_1')$ as well as the beginning multisets 
$b(\a_1)$  and $b(\a_1')$.
Also the morphism $\xi$ preserves the partial order follows from the fact that for 
$x, y\in e(\a)$ such that $x\leq y$,  then $e(\xi_1)(x)=e(\xi)(x)\leq e(\xi_1)(y)=e(\xi)(y)$( The same fact holds for  $b(\xi_1)$).
Finally, it remains to show that $\xi_1$ respects the relation type. 
Let $\Delta\preceq \Delta'$ be two segments in $\a_1$, if non of them is contained in $\{\Delta_1\cup \Delta_2, \Delta_1\cap \Delta_2\}$, 
then $\xi_1(\Delta)=\xi(\Delta)$ and $\xi_1(\Delta')=\xi(\Delta')$  and they are in the same relation type as $\{\Delta, \Delta'\}$ by assumption.
For simplicity, we only discuss the case where $\Delta=\Delta_1\cup \Delta_2$ 
but $\Delta'$ is not contained in $\{\Delta_1\cup \Delta_2, \Delta_1\cap \Delta_2\}$, other cases are similar. 
\begin{itemize}
\item If $\Delta'$ cover $\Delta$, then $\Delta$ cover $\Delta_1$ and $\Delta_2$,  hence
$\xi_1(\Delta)=\xi(\Delta)$ cover $\xi(\Delta_1)$ and $\xi(\Delta_2)$, which implies $\xi_1(\Delta')$ covers $\xi_1(\Delta)$.
\item If $\Delta'$ is linked to $\Delta$ but not juxtaposed, then either $\Delta'$ covers $\Delta_2$ and linked to $\Delta_1$, 
or $\Delta'$ is linked to $\Delta_2$ but not juxtaposed. In both cases we have $\xi(\Delta')$ is linked to $\xi(\Delta_1)\cup \xi(\Delta_2)$ and 
not juxtaposed. 
\item If $\Delta'$ is juxtaposed to $\Delta$, then $\Delta'$ is juxtaposed to $\Delta_2$ since $\Delta_2\succeq \Delta_1$. 
Therefore $\xi(\Delta')$ is juxtaposed to $\xi(\Delta_2)$ which implies $\xi_1(\Delta')$ is juxtaposed to the segment $\xi_1(\Delta)$.
\item If $\Delta'$ is unrelated to $\Delta_1\cup \Delta_2$, then it is unrelated to both $\Delta_1$ and $\Delta_2$ with $\Delta_2\preceq \Delta'$, 
this implies that $\xi(\Delta')$ is unrelated to $\xi(\Delta_1)\cup \xi(\Delta_2)$.  
\end{itemize}

\end{proof}

\remk As every element $\b\in S(\a)$ is obtained from $\a$ by a sequence of elementary operations,
we can define an application of poset
$$
\Xi: S(\a)\longrightarrow S(\a').
$$

\begin{lemma}
 The application $\Xi$ is well defined and bijective.
 \end{lemma}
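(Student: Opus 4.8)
The plan is to show that $\Xi$ is a well-defined map of posets by the standard argument that a composition of elementary operations yields a result independent of the chosen sequence, and then to produce an inverse using the symmetry of the hypotheses.

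\medskip

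First I would address well-definedness. The subtlety is that an element $\b \in S(\a)$ may be reached from $\a$ by several different sequences of elementary operations, and we must check that applying $\xi$ (iterated, via the previous lemma) along any two such sequences lands on the same multisegment $\b'$. I would argue as follows: by the previous lemma, any sequence of elementary operations $\a = \a_0 > \a_1 > \cdots > \a_n = \b$ produces, step by step, a sequence $\a' = \a_0' > \a_1' > \cdots > \a_n' = \b'$ together with bijections $\xi_i \colon \a_i \to \a_i'$ preserving partial order and relation type, and inducing the corresponding bijections on $e(-)$ and $b(-)$. The key observation is that a multisegment $\b'$ is uniquely determined by its underlying multiset of segments, so it suffices to show the multiset $\b'$ does not depend on the chosen chain. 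For this I would use that $\b$ is determined by its weight $\varphi_\b = \varphi_\a$ together with the rank functions $r_{ij}(\b)$ (cf. Proposition \ref{prop: 4.2.3}), and that the relation-type-preserving, $e/b$-preserving bijection $\xi_i$ forces $r_{ij}(\a_i') $ to be computable from $r_{\xi(i)\xi(j)}(\a_i)$ in a way that depends only on $\a_i$, not on the chain leading to it; alternatively, and more cleanly, one notes that the map sending a segment $\Delta$ to the segment with beginning $b(\xi)(b(\Delta))$ and end $e(\xi)(e(\Delta))$ is a fixed bijection $\Sigma(\a) \to \Sigma(\a')$ of the ambient segment sets, under which every elementary operation in $\a$ corresponds to exactly one elementary operation in $\a'$; hence the image multiset $\b'$ is literally the image of the multiset $\b$ under this fixed bijection on functions in $\mathcal{C}$, and so is manifestly independent of the chain. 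This also shows $\Xi$ preserves $\leq$, since a covering relation in $S(\a)$ maps to a covering relation in $S(\a')$.

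\medskip

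For bijectivity, I would observe that the relation ``$\a$ and $\a'$ have the same relation type'' is symmetric: if $\xi$ witnesses it, then $\xi^{-1}$ witnesses that $\a'$ and $\a$ have the same relation type (preservation of $\preceq$, of relation types between segments, and of the induced $e$- and $b$-multiset bijections all pass to the inverse, using that $e(\xi)$ and $b(\xi)$ are themselves bijections of multisets). Applying the construction of $\Xi$ with the roles of $\a$ and $\a'$ exchanged and with $\xi^{-1}$ in place of $\xi$ gives a poset map $\Xi' \colon S(\a') \to S(\a)$. Then $\Xi' \circ \Xi$ and $\Xi \circ \Xi'$ are computed by chaining the fixed segment-level bijections $\Sigma(\a) \to \Sigma(\a')$ and its inverse, so both composites are the identity on the respective posets. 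Hence $\Xi$ is a bijection of posets.

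\medskip

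I expect the main obstacle to be the well-definedness step: making rigorous that applying $\xi$ along two different maximal chains from $\a$ to $\b$ gives the same answer. The cleanest route, which I would take, is to avoid chain-induction entirely and instead exhibit once and for all the bijection $\Sigma(\a) \to \Sigma(\a')$, $\Delta \mapsto [\,b(\xi)(b(\Delta)),\, e(\xi)(e(\Delta))\,]$ — checking it is well-defined as a map of segments (i.e. $b(\xi)(b(\Delta)) \le e(\xi)(e(\Delta))$, which follows from order-preservation of $e(\xi), b(\xi)$ and the relation-type data, e.g. that $\Delta$ covers or is comparable appropriately to itself's endpoints) — and then checking that an elementary operation on a linked pair $\{\Delta_1 \preceq \Delta_2\}$ in $\a$ corresponds under this bijection exactly to the elementary operation on $\{\xi(\Delta_1) \preceq \xi(\Delta_2)\}$, which is precisely the content of the preceding lemma. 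Once that correspondence is in place, $\Xi$ is just the induced bijection on the subsets of $\mathcal{C}$ cut out by reachability from $\a$ and from $\a'$, and all remaining assertions are immediate.
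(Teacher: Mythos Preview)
Your proposal is correct and converges on essentially the same argument as the paper: the paper also bypasses chain-induction by giving the explicit closed-form definition $\Xi(\b)=\{[b(\xi)(b(\Delta)), e(\xi)(e(\Delta))]:\Delta\in\b\}$, checks that it agrees with the elementary-operation recipe via the previous lemma, and then obtains bijectivity from $\xi^{-1}$ exactly as you describe. The only small point you flag that the paper glosses over is verifying $b(\xi)(b(\Delta))\leq e(\xi)(e(\Delta))$ for $\Delta\in\b$, but this is harmless.
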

 \begin{proof}
 We give a new definition of $\Xi$ in the following way. For $\b\in S(\a)$, we define
 \[
 \Xi(\b)=\{[b(\xi)(b(\Delta)), e(\xi)(e(\Delta))]: \Delta\in \b\}
 \]
 such a definition is independent of the choice of elementary operations.
 It remains to see that it  coincides with the one using elementary operation. 
 In fact, let $\a_1$ be a multisegment obtained by applying the elementary operation to 
 the pair of segments $\{\Delta_1\preceq \Delta_2\}$, then by our original definition of $\Xi$, it
sends $\a_1$ to $\a_1'$ in the previous lemma. Now 
 by the new definition, we have  $\Xi(\a_1)$ given by
 \[
\{\xi(\Delta):   \Delta\in \a\setminus \{\Delta_1, \Delta_2\}\}\cup \{[b(\xi)(b(\Delta_1)), b( \xi)(b(\Delta_2))], [b(\xi)(b(\Delta_2)), b( \xi)(b(\Delta_1))]\}.
 \]
 By our definition of $\xi$, we get 
 \[
 [b(\xi)(b(\Delta_1)), b( \xi)(b(\Delta_2))]=\xi(\Delta_1)\cup \xi(\Delta_2),
 \]
 and 
 \[
 [b(\xi)(b(\Delta_2)), b( \xi)(b(\Delta_1))]=\xi(\Delta_1)\cap \xi(\Delta_2).
 \]
 
 Hence we conclude that $\Xi$ is well defined.  
 Note that by our definition, $\xi$ is invertible, which gives $\xi^{-1}$, and in the same way we can construct $\Xi^{-1}$.
 Now we have 
 \[
 \Xi\Xi^{-1}=\Id, \quad \Xi^{-1}\Xi=\Id
 \]
 by our definition above using $b(\xi)$ and $e(\xi)$.  This shows that $\Xi$ is bijective.

 \end{proof}

%\begin{prop}
%Let $\a$ and $\a'$ be two multisegments such that there exists a sequence of integers $k_1, \cdots, k_r$ such that 
%\[
%\a=\a'^{(k_1, \cdots, k_r)}
%\]
%Then $\a'\in S(\a)_{k_1, \cdots, k_r}$ if and only if for any pair of segments $\{\Delta\preceq \Delta'\}$ in $\a'$,  $\Delta'$ covers $\Delta$ if and only if 
%$\Delta'^{(k_1, \cdots, k_r)}$ covers $\Delta^{(k_1, \cdots, k_r)}$.
%\end{prop}
%
%\remk
%It may happen that $\Delta^{(k_1, \cdots, k_r)}=\emptyset$, in this case we say that $\{\Delta^{(k_1, \cdots, k_r)}$ is covered by $\Delta'^{(k_1, \cdots, k_r)}\}$
%as unrelated.
%
%\begin{proof}
%By induction, we are reduced to the case where $r=1$. 
%\end{proof}

\begin{teo}\label{teo: 4.4.5}
For $\a$ and $\a'$ having the same relation type,  then for $\b\in S(\a)$ with $\b'=\Xi(\b)$, we have 
$$
m(\b, \a)=m(\b', \a').
$$
\end{teo}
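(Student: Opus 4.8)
The plan is to reduce Theorem \ref{teo: 4.4.5} to the symmetric case, where it becomes the statement that $m(\b^{\sym},\a^{\sym})$ depends only on the Bruhat interval, which in turn follows from Corollary \ref{cor: 2.5.9} and the combinatorial invariance of Kazhdan--Lusztig polynomials for $S_n$ --- or, more self-containedly, from the fact that the poset $S(\a^{\sym})$ determines which symmetric group $S_n$ occurs and which Bruhat interval inside it. First I would observe that the reduction machinery of Chapters 3 and 4 is entirely functorial with respect to the relation type: given $\a$ and $\a'$ of the same relation type via $\xi$, the symmetrization algorithm of Proposition \ref{cor: 5} followed by Proposition \ref{prop: 4.2.2} is driven only by the combinatorics of beginnings, ends, and linkedness of segments, all of which $\xi$ preserves by definition. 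Hence running the algorithm on $\a$ and on $\a'$ in parallel produces symmetric multisegments $\a^{\sym}$, $\a'^{\sym}$ together with truncation data $\c_1,\c_2,\c_3$ and $\c_1',\c_2',\c_3'$, and the two symmetrizations have the same relation type; moreover $\a^{\sym}$ and $\a'^{\sym}$, being symmetric of the same relation type, have the same number $n$ of segments.

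Next I would use Corollary \ref{cor: 4.2.3} and Proposition \ref{prop: 4.2.4}: for $\b\in S(\a)$ with $\b'=\Xi(\b)\in S(\a')$, there are unique $\b^{\sym}\in S(\a^{\sym})$ and $\b'^{\sym}\in S(\a'^{\sym})$ with
\[
m(\b,\a)=m(\b^{\sym},\a^{\sym}),\qquad m(\b',\a')=m(\b'^{\sym},\a'^{\sym}).
\]
So it suffices to prove $m(\b^{\sym},\a^{\sym})=m(\b'^{\sym},\a'^{\sym})$. Here I would invoke Proposition \ref{teo: 2.3.2}: for a well-chosen base point $\a_{\Id}$ (whose choice is immaterial by the symmetric-case statement), $S(\a_{\Id})\simeq S_n$ as posets via $\Phi$, and the identification sends the order to the inverse Bruhat order. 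Since $\a^{\sym}$ and $\a'^{\sym}$ have the same relation type with $n$ segments, one checks that the bijection $\Xi$ restricted to the symmetric locus is exactly the poset isomorphism $S(\a^{\sym})\to S(\a'^{\sym})$ coming from the identification of both with the same Bruhat interval $[\sigma,\tau]$ in the same $S_n$ (one must verify that relation-type-preserving bijections on symmetric multisegments correspond to relabelling of $\a_{\Id}$, hence to an inner automorphism / relabelling that fixes Bruhat intervals). Then Corollary \ref{cor: 2.5.9} gives $m(\b^{\sym},\a^{\sym})=P_{v,w}(1)$ and $m(\b'^{\sym},\a'^{\sym})=P_{v,w}(1)$ for the \emph{same} pair $v\le w$ in the \emph{same} $S_n$, so the two values agree.

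The main obstacle I expect is the compatibility step: showing that running the Chapter 4 symmetrization algorithm on two multisegments of the same relation type yields results of the same relation type, and that the induced bijection $\Xi$ on the symmetric locus genuinely coincides with the abstract poset isomorphism coming from the common Bruhat interval. This requires tracking that each truncation step $\a\mapsto\a^{(k)}$ and each ``raising'' step (the $\Delta\mapsto\Delta^{+}$ replacements in Propositions \ref{cor: 5} and \ref{prop: 4.2.2}) commutes with $\xi$ up to an induced bijection of the same relation type, and that the hypotheses $H_k$ and $_kH$ are relation-type invariant --- which they are, since they are phrased purely in terms of multiplicities of endpoints and linkedness. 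Once this bookkeeping is in place, Proposition \ref{prop: 3.2.17}(1) propagates the equality $m(\c,\a)=m(\c^{(\cdots)},\a^{(\cdots)})$ through every step in parallel for $\a$ and $\a'$, and the theorem follows. An alternative, cleaner finish that avoids the delicate identification is: use Proposition \ref{prop: 4.2.4} to reduce both sides to the symmetric case, then note that $S(\a^{\sym})$ and $S(\a'^{\sym})$ are isomorphic posets (via $\Xi$ on the symmetric locus, or directly because they are intervals in a symmetric group determined by the relation type), and finally apply Corollary \ref{cor: 2.5.9} together with the combinatorial invariance of Kazhdan--Lusztig polynomials --- or just the observation that $P_{v,w}(1)$ for a Bruhat interval in $S_n$ is recovered from the isomorphism type of that interval once $n$ is pinned down, which it is since $n$ equals the number of segments of $\a^{\sym}$, an invariant of the relation type.
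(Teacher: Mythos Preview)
Your overall architecture --- reduce to the symmetric case via Proposition~\ref{prop: 4.2.4}, then handle symmetric multisegments --- matches the paper, and your remarks about tracking relation type through the symmetrization algorithm are on point (the paper formalizes this as a lemma). The gap is in how you finish the symmetric case.

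You want to conclude by saying both $m(\b^{\sym},\a^{\sym})$ and $m(\b'^{\sym},\a'^{\sym})$ equal the same $P_{v,w}(1)$ via Corollary~\ref{cor: 2.5.9}. But Corollary~\ref{cor: 2.5.9} is proved only for the \emph{specific} choice $\a_{\Id}=\sum_{i=1}^{n}[i,i+n-1]$; for a general symmetric base point you would need Corollary~\ref{coro-sym1}, and that corollary is stated and proved \emph{as a consequence} of Theorem~\ref{teo: 4.4.5}. So invoking it here is circular. Your alternative finish is worse: the claim that ``$P_{v,w}(1)$ is recovered from the isomorphism type of the interval once $n$ is pinned down'' is precisely (a special case of) the Lusztig--Dyer combinatorial invariance conjecture, which the paper explicitly flags as open and which Theorem~\ref{teo: 4.4.5} is meant to establish a special case of --- you cannot assume it.

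The paper's actual argument for the symmetric case avoids Kazhdan--Lusztig polynomials entirely. Given symmetric $\a=\Phi(w)$ and $\a'=\Phi'(w)$ of the same relation type, it normalizes both (using that relation type is preserved) so that $b(\Delta_i)=b(\Delta_{i-1})+1$ and $e(\Delta_{w^{-1}(i)})=e(\Delta_{w^{-1}(i-1)})+1$ on each side, and then observes that the two normalized symmetric multisegments differ only by a uniform shift of ends; one is literally obtained from the other by a sequence of truncations $\a'\mapsto \a'^{(\c)}$, and Proposition~\ref{prop: 3.2.17} gives $m(\b',\a')=m(\b,\a)$ step by step. In other words, the missing idea is that two symmetric multisegments of the same relation type are connected by a chain of $(\cdot)^{(k)}$-moves inside the symmetric world, so the equality of multiplicities comes from the truncation machinery of Chapter~3, not from any statement about Kazhdan--Lusztig polynomials.
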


\begin{proof}
First of all, we consider the case where $\a$ and $\a'$ are symmetric multisegments.
Let $\a=\Phi(w)$ by fixing a map
\[
\Phi: S_n\rightarrow S(\a_{\Id}).
\]
Now since $\a$ and $\a'$ have the same relation type, we know that $\a'=\Phi'(w)$
for some fixe map 
\[
\Phi': S_n\rightarrow S(\a'_{\Id}).
\]
Finally,  let $\a=\{\Delta_1, \cdots, \Delta_n\}$ and $\a'=\{\Delta_1', \cdots, \Delta_n'\}$ such that 
\[
b(\Delta_1)<\cdots< b(\Delta_n),  \quad \Delta_i'=\xi(\Delta_i).
\]
Without loss of generality, we assume that $b(\Delta_1)=b(\Delta_1')$. 
We can assume that $b(\Delta_i)=b(\Delta_{i-1})+1$. In fact, if 
$b(\Delta_i)>b(\Delta_{i-1})+1$, then by replacing $\Delta_i$ by $^{+}\Delta_i$ , we get a new symmetric multisegment $\a_1$
which has the same relation type as $\a$. Moreover, let $\b\in S(\a)$ and $\b_1$ be the corresponding multisegment in $S(\a_1)$, then 
\[
m(\b, \a)=m(\b_1, \a_1)
\]
by proposition \ref{cor: 3.2.3}. 
We note that the equality
\[
m(\b_1, \a_1)=m(\b', \a')
\]
implies that
\[
m(\b', \a')=m(\b, \a).
\]
Therefore it suffices to prove the theorem for $\a_1$ and $\a'$.
From now on, let $b(\Delta_i)=b(\Delta_{i-1})+1$ and $b(\Delta_i)=b(\Delta_i')$. 
The same argument shows that we can furthermore assume that 
\[
e(\Delta_{w^{-1}(i)})=e(\Delta_{w^{-1}(i-1)})+1, \quad e(\Delta'_{w^{-1}(i)})=e(\Delta'_{w^{-1}(i-1)})+1.
\]
Now if $e(\Delta_{w^{-1}(1)})<e(\Delta'_{w^{-1}(1)})$, 
then consider the truncation functor  $\a'\mapsto \a'^{(e(\Delta_{w^{-1}(1)})+1, \cdots, e(\Delta_{w^{-1}(1)}))}$, 
the latter is a symmetric multisegment having the same relation type as $\a'$, and 
\[
m(\b', \a')=m(\b'^{(e(\Delta_{w^{-1}(1)})+1, \cdots, e(\Delta_{w^{-1}(1)}))}, \a'^{(e(\Delta_{w^{-1}(1)})+1, \cdots, e(\Delta_{w^{-1}(1)}))})
\] 
by proposition \ref{prop: 3.2.17}. Repeat the same procedure, 
in finite step, we find $\c$, such that 
\[
\a=\a'^{(\c)}
\]
and 
\[
m(\b, \a)=m(\b', \a'). 
\]
by  proposition \ref{prop: 3.2.17}.

\remk an interesting application of this computation is given in the corollary \ref{coro-sym1}.

For general case, note that in section 4.4, we construct a symmetric multisegment $\a^{\sym}$  and three multisegments $\c_i, i=1, 2, 3$
such that 
\[
 \a^{\sym}\in {_{\c_{2}, \c_{3}}S(\a^{\sym})_{\c_{1}}}, ~
 \a={^{(\c_{2}, \c_{3})}\a^{\sym, (\c_{1})}}.
\]
(cf. Corollary \ref{cor: 4.2.3}).  
The same for $\a'$, we have 
\[
 \a'^{\sym}\in {_{\c'_{2}, \c'_{3}}S(\a'^{\sym})_{\c'_{1}}}, ~
 \a'={^{(\c'_{2}, \c'_{3})}\a'^{\sym, (\c'_{1})}}.
\]

\end{proof}

\begin{lemma}
The two multisegment $\a^{\sym}$ and $\a'^{\sym}$ have the same relation type. And let 
$\Xi^{\sym}: S(\a^{\sym})\rightarrow \a'^{\sym}$ be the bijection constructed above, then we have 
the following commutative diagram
\begin{displaymath}
\xymatrix{
{_{\c_{2}, \c_{3}}S(\a^{\sym})_{\c_{1}}}\ar[r]^{\Xi^{\sym}}\ar[d]^{{_{\c_{2}, \c_{3}}\psi_{\c_{1}}}}& {_{\c'_{2}, \c'_{3}}S(\a'^{\sym})_{\c'_{1}}}\ar[d]^{_{\c'_{2}, \c'_{3}}\psi_{\c'_{1}}}\\
S(\a)\ar[r]^{\Xi}& S(\a').
}
\end{displaymath}
\end{lemma}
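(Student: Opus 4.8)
The plan is to prove the lemma by first checking that $\a^{\sym}$ and $\a'^{\sym}$ have the same relation type, and then verifying that the two symmetrization constructions (the one from Corollary \ref{cor: 4.2.3}, which builds $\a^{\sym}$ from $\a$ by iterated truncations, and the corresponding one for $\a'$) are ``the same'' when $\a$ and $\a'$ have the same relation type, so that the diagram commutes. The key observation is that the whole symmetrization algorithm of Section 4.4 (Propositions \ref{cor: 5}, \ref{prop: 4.2.2} and Corollary \ref{cor: 4.2.3}) is driven entirely by combinatorial data: which ends of segments coincide and with what multiplicity, which beginnings coincide, and the order $\preceq$ together with the relation types of pairs of segments. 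Since $\xi: \a \to \a'$ preserves $\preceq$, relation types, and induces bijections $e(\xi): e(\a)\to e(\a')$, $b(\xi): b(\a)\to b(\a')$ compatible with $e,b$, at each step of the algorithm the multisegment $\a_i$ built from $\a$ and the multisegment $\a_i'$ built from $\a'$ have the same relation type, with the bijection between them determined by $\xi$ together with the (order-preserving) reindexing of beginnings and ends.

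First I would recall the structure of the construction in Proposition \ref{cor: 5}: one locates the smallest segment $\Delta_j$ whose end appears with multiplicity $>1$ in $e(\a)$, chooses a segment $\Delta^1=[e(\Delta_i)+1,\ell]$ with $\ell$ maximal so that every intermediate value is hit by some end, and replaces the relevant segments $\Delta$ by $\Delta^+$. All of this is governed by the poset $(e(\a),\le)$ with multiplicities, which $\xi$ transports faithfully to $(e(\a'),\le)$; so the ``same'' step applied to $\a'$ produces $\a_1'$ having the same relation type as $\a_1$. An induction on the number of steps — using the Lemma just before Theorem \ref{teo: 4.4.5} (elementary operations preserve same relation type) only to know that $\b,\b'$ track each other — gives that $\a^{\sym}$ and $\a'^{\sym}$ have the same relation type, and simultaneously that the auxiliary multisegments $\c_i$ and $\c_i'$ are ``matched'' by the reindexings $e(\xi)$, $b(\xi)$ (each $\Delta^t$ being sent to the segment with the corresponding beginning and end). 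The same argument applies to the second step (Proposition \ref{prop: 4.2.2}, now working with beginnings instead of ends) and to the composition in Corollary \ref{cor: 4.2.3}.

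Next I would prove commutativity of the square. By construction $\b^{\sym}$ is the unique element of ${_{\c_{2}, \c_{3}}S(\a^{\sym})_{\c_{1}}}$ with ${^{(\c_{2},\c_{3})}(\b^{\sym})^{(\c_{1})}}=\b$ (uniqueness from the bijectivity in Proposition \ref{prop: 4.2.4}), and likewise for $\b'$. So it suffices to show $\Xi^{\sym}(\b^{\sym})$ lies in ${_{\c'_{2}, \c'_{3}}S(\a'^{\sym})_{\c'_{1}}}$ and maps under $_{\c'_{2}, \c'_{3}}\psi_{\c'_{1}}$ to $\Xi(\b)$. For this I would use the explicit description of $\Xi$ (the ``new definition'' in the Lemma before Theorem \ref{teo: 4.4.5}): $\Xi(\b)=\{[b(\xi)(b(\Delta)),e(\xi)(e(\Delta))]:\Delta\in\b\}$, and the analogous formula for $\Xi^{\sym}$. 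Since the truncation operators ${^{(k)}(-)}$ and $(-)^{(k)}$ act on a segment purely by shrinking its beginning or end by one when that endpoint equals $k$, and since the matching of $\c_i$ with $\c_i'$ via $e(\xi),b(\xi)$ means the truncation indices correspond under these bijections, one checks directly that applying $_{\c'_{2}, \c'_{3}}\psi_{\c'_{1}}$ after $\Xi^{\sym}$ gives the same segment-by-segment formula $[b(\xi)(b(-)),e(\xi)(e(-))]$ as applying $\Xi$ after $_{\c_{2}, \c_{3}}\psi_{\c_{1}}$. The membership of $\Xi^{\sym}(\b^{\sym})$ in the truncation-admissible set follows from the fact that the defining conditions (the hypotheses $H_k$ and degree conditions) are again combinatorial in the endpoints and so are preserved by $\Xi^{\sym}$, which I would spell out using Proposition \ref{prop: 3.2.17}.

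The main obstacle I expect is the bookkeeping in the induction for the first part: one must carry along not just ``same relation type'' but also the precise compatibility between the auxiliary segments $\Delta^t$ produced for $\a$ and those produced for $\a'$ — i.e. that the maps $e(\xi),b(\xi)$ extend consistently to the enlarged supports appearing in the intermediate multisegments $\a_i$ — and verify that the choices of maximal $\ell$ in Propositions \ref{cor: 5} and \ref{prop: 4.2.2} agree under the correspondence. This is where a clean inductive formulation is needed; once it is in place, the commutativity of the diagram is a short verification from the explicit formulas, and then the lemma, combined with the already-proved symmetric case of Theorem \ref{teo: 4.4.5} and Proposition \ref{prop: 4.2.4}, immediately yields $m(\b,\a)=m(\b^{\sym},\a^{\sym})=m(\b'^{\sym},\a'^{\sym})=m(\b',\a')$.
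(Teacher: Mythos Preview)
Your plan is plausible and the overall strategy could be made to work, but it differs from the paper's proof in both parts, and the paper's choices sidestep precisely the bookkeeping you flag as the main obstacle.

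For the first claim (same relation type), the paper does \emph{not} track through the symmetrization algorithm. It simply defines $\xi^{\sym}:\Delta_i\mapsto\Delta'_i$ via the total order $\preceq$ on $\a^{\sym}$ and $\a'^{\sym}$, and then records a direct dictionary between relation types in $\a^{\sym}$ and in $\a$: for $i\le j$, the segments $\Delta_i,\Delta_j$ are linked in $\a^{\sym}$ if and only if their truncations ${}^{(\c_2,\c_3)}\Delta_i^{(\c_1)},{}^{(\c_2,\c_3)}\Delta_j^{(\c_1)}$ in $\a$ are linked or unrelated; and $\Delta_j$ covers $\Delta_i$ iff the truncation covers. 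Since $\xi$ preserves relation types in $\a$, this immediately gives that $\xi^{\sym}$ does in $\a^{\sym}$, with no induction on the steps of Propositions~\ref{cor: 5} and~\ref{prop: 4.2.2} and no need to extend $e(\xi),b(\xi)$ to the enlarged supports.

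For commutativity, the paper also avoids your direct formula verification. Rather than checking that $\Xi^{\sym}(\b^{\sym})$ lies in ${_{\c'_2,\c'_3}S(\a'^{\sym})_{\c'_1}}$ and then computing its truncation, the paper argues by induction on $\ell(\b)$. For $\ell(\b)=1$ it takes the obvious lift $\tilde{\b}\in S(\a^{\sym})$ (elementary operation on the same-indexed pair), chooses a maximal chain $\tilde{\b}=\tilde{\b}_0>\cdots>\tilde{\b}_n=\b^{\sym}$, and shows inductively that each $\Xi^{\sym}(\tilde{\b}_i)$ still truncates to $\b'$, by observing that every elementary operation along this chain involves a pair of segments whose truncations either coincide at an endpoint or vanish. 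Proposition~\ref{prop: 4.6.15} then yields $\Xi^{\sym}(\b^{\sym})\ge\b'^{\sym}$; the reverse inequality comes from running the same argument with $\Xi^{\sym,-1}$. This two-sided inequality trick completely bypasses the membership verification, which is exactly where your approach is most delicate: the hypothesis $H_k$ refers to the \emph{consecutive} values $k$ and $k-1$, and there is no reason the endpoint bijections send consecutive integers to consecutive integers, so ``combinatorial in the endpoints'' does not quite suffice as stated.
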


Admitting the lemma, we have 
\[
m(\b, \a)=m(\b^{\sym}, \a^{\sym}), \quad m(\b', \a')=m(\b'^{\sym}, \a'^{\sym})
\]
by proposition \ref{prop: 4.2.4}. Now by what we have proved before and the above lemma, we have 
\[
m(\b^{\sym}, \a^{\sym})=m(\b'^{\sym}, \a'^{\sym}),
\]
which implies $m(\b, \a)=m(\b', \a')$.

\begin{proof}
Note that by construction we know that the number of segments in $\a^{\sym}$ is the same as that of $\a$. 
Let $\a^{\sym}=\{\Delta_1\preceq \cdots \preceq  \Delta_r\}$, then 
$\a=\{{^{(\c_{2}, \c_{3})}\Delta_1^{ (\c_{1})}}\preceq \cdots \preceq {^{(\c_{2}, \c_{3})}\Delta_r^{(\c_{1})}}\}$.
Also let $\a'^{\sym}=\{\Delta'_1\preceq \cdots \preceq  \Delta'_r\}$.
We define 
\begin{align*}
\xi^{\sym}:& \a^{\sym}\rightarrow \a'^{\sym}\\
  & \Delta_i \mapsto\Delta'_i.
\end{align*}
This automatically induces bijections
\[
e(\xi^{\sym}): e(\a^{\sym})\rightarrow e(\a'^{\sym}), \quad b(\xi^{\sym}): b(\a^{\sym})\rightarrow b(\a'^{\sym}),
\]
since all of them are sets.  Note that we definitely have 
\[
\xi({^{(\c_{2}, \c_{3})}\Delta_i^{ (\c_{1})}})={^{(\c'_{2}, \c'_{3})}\Delta_i'^{ (\c'_{1})}}
\]

It remains to show that $\xi^{\sym}$ preserve the relation type.  
Let $i\leq j$.  Then $\Delta_i$ and $\Delta_j$ are linked if and only if one of the following happens 
\begin{itemize}
\item ${^{(\c_{2}, \c_{3})}\Delta_i^{ (\c_{1})}}$ and ${^{(\c_{2}, \c_{3})}\Delta_j^{ (\c_{1})}} $ are linked, juxtaposed or not;
\item ${^{(\c_{2}, \c_{3})}\Delta_i^{ (\c_{1})}} $ and ${^{(\c_{2}, \c_{3})}\Delta_j^{ (\c_{1})}}$ are unrelated.
\end{itemize}

And $\Delta_j$ covers $\Delta_i$ if and only if ${^{(\c_{2}, \c_{3})}\Delta_j^{ (\c_{1})}}$ covers ${^{(\c_{2}, \c_{3})}\Delta_i^{ (\c_{1})}}$. 
Since $\xi$ preserves relation types, this shows that $\xi^{\sym}$ also preserves relation types.  Hence we conclude that 
$\a^{\sym}$ and $\a'^{\sym}$ have same relation type.  To see that the map $\Xi^{\sym}$ sends ${_{\c_{2}, \c_{3}}S(\a^{\sym})_{\c_{1}}}$ to 
${_{\c'_{2}, \c'_{3}}S(\a'^{\sym})_{\c'_{1}}}$, consider $\b\in S(\a)$ and its related element $\b^{\sym}\in {_{\c_{2}, \c_{3}}S(\a^{\sym})_{\c_{1}}}$. 

\medskip

- First of all, we assume that $l(\b)=1$, i.e. $\b$ can be obtained from $\a$ by applying the elementary operation to the pair 
$\{{^{(\c_{2}, \c_{3})}\Delta_i^{ (\c_{1})}}, {^{(\c_{2}, \c_{3})}\Delta_j^{ (\c_{1})}}\}(i<j)$.  Let 
$\tilde{\b}$ be the element in $S(\a^{\sym})$ obtained by applying the elementary operation to the pair of segments $\{\Delta_i, \Delta_j\}$ in $\a^{\sym}$. 
Then we have 
\[
\b={^{(\c_{2}, \c_{3})}\tilde{\b}^{(\c_{1})}}.
\]
Let $\tilde{\b}'=\Xi^{\sym}(\tilde{\b})$.  By construction, we have
\[
\b'=\Xi(\b)={^{(\c'_{2}, \c'_{3})}\tilde{\b}'^{(\c'_{1})}}.
\]
Now consider 
\[
\tilde{\b}_0=\tilde{\b}>\cdots >\tilde{\b}_n=\b^{\sym}
\]
be a maximal chain of multisegments and let $\tilde{\b}'_i=\Xi^{\sym}(\tilde{\b}'_i)$, then  
\[
\tilde{\b}'_0>\cdots >\tilde{\b}'_n.
\]
Let 
\[
\tilde{\b}_{i}=\{\Delta_{i, 1}\preceq \cdots \preceq \Delta_{i, r_i}\},\quad \tilde{\b}'_{i}=\{\Delta_{i, 1}'\preceq \cdots \preceq \Delta_{i, r_i}'\}.
\]
We prove by induction that 
\[
\b'={^{(\c'_{2}, \c'_{3})}\tilde{\b}_i'^{(\c'_{1})}}.
\]
We already showed the case where $i=0$. Assume that we have 
\[
\b'={^{(\c'_{2}, \c'_{3})}\tilde{\b}_j'^{(\c'_{1})}}
\]
for $j<i$.  Suppose that $\tilde{\b}_i$ is obtained from $\tilde{\b}_{i-1}$ by applying the elementary operation to 
the pair of segments $\{\Delta_{i-1, \alpha_{i-1}}\preceq \Delta_{i-1, \beta_{i-1}}\}$.  
We deduce from the fact $\tilde{\b}_i\geq \b^{\sym}$ that we are in one of the following situatios
\begin{itemize}
\item  ${^{(\c_{2}, \c_{3})}\Delta_{i-1, \alpha_{i-1}}^{(\c_{1})}}=\emptyset$ or ${^{(\c_{2}, \c_{3})}\Delta_{i-1, \beta_{i-1}}^{(\c_{1})}}=\emptyset$;
\item  $b({^{(\c_{2}, \c_{3})}\Delta_{i-1, \beta_{i-1}}^{(\c_{1})}})=b({^{(\c_{2}, \c_{3})}\Delta_{i-1, \alpha_{i-1}}^{(\c_{1})}})$;
\item $e({^{(\c_{2}, \c_{3})}\Delta_{i-1, \beta_{i-1}}^{(\c_{1})}})=e({^{(\c_{2}, \c_{3})}\Delta_{i-1, \alpha_{i-1}}^{(\c_{1})}})$.
\end{itemize}
According the our assumption that $\tilde{\b}'_i=\Xi^{\sym}(\tilde{\b}'_i)$, we have 
\[
\xi({^{(\c_{2}, \c_{3})}\Delta_{i-1, j}^{(\c_{1})}})={^{(\c_{2}, \c_{3})}\Delta_{i-1, j}'^{(\c_{1})}},
\]
therefore the pair $\{{^{(\c_{2}, \c_{3})}\Delta_{i-1, \alpha_{i-1}}'^{(\c_{1})}}, {^{(\c_{2}, \c_{3})}\Delta_{i-1, \beta_{i-1}}'^{(\c_{1})}}\}$
also satisfies one of the listed properties above.  And this shows that $\tilde{\b}'_i$ is sent to $\b'$ by $_{\c'_{2}, \c'_{3}}\psi_{\c'_{1}}$. 
Therefore by proposition \ref{prop: 4.6.15}, we know that 
\[
\b_n'\geq \b'^{\sym}.
\]
Conversely, we have 
\[
\Xi^{\sym-1}(\b'^{\sym})\geq \b^{\sym}.
\]
Combine the two inequalities to get
\[
\Xi^{\sym}(\b^{\sym})= \b'^{\sym}.
\]

- The general case where $\l(\b)>1$, we can choose a maximal chain of multisegments 
 \[
 \a=\a_0>\cdots >\a_{\l(\b)}=\b.
 \]
 Let $\a_i'=\Xi(\a_i)$, by assumption, we can assume that for $i<\l(\b)$, we have
 \[
 \Xi^{\sym}(\a_i^{\sym})=\a_i'^{\sym}.
 \]
 By considering the set $S(\a_{\l(\b)-1})$, we are reduce to the case where $\l(\b)=1$. Hence we are done.
 \end{proof}
 
\begin{cor} \label{coro-sym1}
Let $\a_{\Id}$ be a symmetric multisegment associated to the identity in $S_n$ and 
\[
\Phi: S_n\rightarrow S(\a_{\Id}).
\] 
Then 
\[
m(\Phi(v), \Phi(w))=P_{w, v}(1).
\]
\end{cor}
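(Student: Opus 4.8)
\textbf{Proof plan for Corollary \ref{coro-sym1}.}

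The statement to prove is that for a symmetric multisegment $\a_{\Id}$ attached to the identity of $S_n$, with the bijection $\Phi: S_n\rightarrow S(\a_{\Id})$ of proposition \ref{teo: 2.3.2}, one has $m(\Phi(v), \Phi(w))=P_{w,v}(1)$ for $v\leq w$ in $S_n$. The plan is to reduce this to the already-established identifications rather than redo any geometry. First I would invoke theorem \ref{teo: 4.4.5}: since any two symmetric multisegments attached to the same permutation $w$ via two choices $\Phi,\Phi'$ have the same relation type (the beginnings, ends, and pairwise relation types of segments depend only on the permutation $w$ and not on the particular $\a_{\Id}$), the coefficient $m(\Phi(v),\Phi(w))$ is independent of the choice of $\a_{\Id}$. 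This lets me specialize to the convenient normalization $\a_{\Id}=\sum_{i=1}^{n}[i,n+i-1]$ used throughout \S 2.5.

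For that normalization, corollary \ref{cor: 2.5.9} gives directly $m_{\Phi(v),\Phi(w)}=P_{v,w}(1)$ --- wait, here one must be careful about the direction of the order: proposition \ref{teo: 2.3.2} shows $\Phi$ is order-\emph{reversing}, i.e.\ $w\leq v\Leftrightarrow \Phi(w)\geq\Phi(v)$, so $m(\b,\a)$ being nonzero iff $\b\leq\a$ translates into $m(\Phi(v),\Phi(w))\neq 0$ iff $\Phi(v)\leq\Phi(w)$ iff $w\leq v$. Thus the correct reading of corollary \ref{cor: 2.5.9}, stated there for $v\leq w$ with conclusion $m_{\Phi(v),\Phi(w)}=P_{v,w}(1)$, must be matched up with the indexing in the present corollary: here $\Phi(w)$ plays the role of the ``larger'' multisegment, so $w\leq v$ in $S_n$, and the Kazhdan--Lusztig polynomial is $P_{w,v}(q)$ (the pair written in the order smaller-then-larger). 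So the step is: apply corollary \ref{cor: 2.5.9} with the roles of $v$ and $w$ interchanged to get $m(\Phi(v),\Phi(w))=P_{w,v}(1)$ for the normalized $\a_{\Id}$.

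Combining the two steps: for an arbitrary symmetric $\a_{\Id}$, theorem \ref{teo: 4.4.5} identifies $m(\Phi(v),\Phi(w))$ with the corresponding coefficient for the normalized symmetric multisegment, and corollary \ref{cor: 2.5.9} evaluates the latter as $P_{w,v}(1)$; hence $m(\Phi(v),\Phi(w))=P_{w,v}(1)$ in general. The only genuinely delicate point is bookkeeping of the order reversal and making sure the relation-type map $\Xi$ of \S 4.4 intertwines the two $\Phi$'s compatibly with the identification of both posets with $S_n$ --- concretely, that $\Xi\circ\Phi=\Phi'$ as maps $S_n\to S(\a'_{\Id})$, which follows because both $\Phi$ and $\Phi'$ are built from the same combinatorial recipe $w\mapsto \sum_i[b(\Delta_i),e(\Delta_{w(i)})]$ and $\Xi$ is the relation-type-preserving bijection matching $\Delta_i$ with $\Delta_i'$. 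Once that compatibility is recorded, the corollary is immediate. I do not expect any serious obstacle beyond this indexing care, since all the hard analytic and geometric content has already been packaged into theorem \ref{teo: 4.4.5} and corollary \ref{cor: 2.5.9}.
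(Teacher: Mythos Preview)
Your proposal is correct and follows exactly the paper's own argument: reduce to the normalized choice $\a_{\Id}=\sum_{i=1}^{n}[i,n+i-1]$ via theorem \ref{teo: 4.4.5} (two symmetric $\a_{\Id}$'s attached to the same $S_n$ have the same relation type), then invoke corollary \ref{cor: 2.5.9}. Your additional care about the order reversal and the compatibility $\Xi\circ\Phi=\Phi'$ is welcome elaboration but not a different route.
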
 
\begin{proof}
The special case where
\[
\a_{\Id}=\sum_{i=1}^{n}[i, i+n-1]
\]
is already treated in corollary \ref{cor: 2.5.9}.
The general case can be deduced from the theorem above.
\end{proof}

\part{Applications }

\chapter{Geometric Proof of KL Relations}

For $n\geq 1$, recall that the permutation group $S_n$ of $\{1, \cdots, n\}$
and that $S=\{\sigma_i=(i, i+1): i=1, \cdots, n-1\}$ is a set of generators.
It is followed from \cite{KL79} that the following properties characterize
a unique family of polynomials $P_{x,y}(q)$ of $\Z[q]$ for $x, y\in S_n$ 
 \begin{description}
  \item[(1)] $P_{x,x}=1$ for all $x\in S_{n}$;
  \item[(2)]if $x<y$ and $s\in S$,  are such that $sy<y$, $sx>x$,
  then $P_{x,y}=P_{sx, y}$;
   \item[(3)]if $x<y$ and $s\in S$, are such that $ys<y$, $xs>x$,
  then $P_{x,y}=P_{xs, y}$;
   \item[(4)]if $x<y$ and $s\in S$, are such that $sy<y$, $sx<x$,
   and $x$ is not comparable to $sy$, 
  then $P_{x,y}=P_{sx, sy}$;
   \item[(5)]if $x<y$ and $s\in S$, are such that $sy<y$, $sx<x$, and $x<sy$,
  then
  \[
   P_{x,y}=P_{sx, sy}+qP_{x, sy}-\sum_{x\leq z <sy,sz<z}q^{1/2(\ell(y)-\ell(z))}\mu(z, sy)P_{x,z},
  \]
here $\mu(z, sy)$ is the coefficient of degree $1/2(\ell(sy)-\ell(z)-1)$ in $P_{z, sy}$ defined to be zero if 
$\ell(sy)-\ell(z)$ is even).
 \end{description}

In this chapter , we shall
prove by using our results in section 3.3 that the polynomial
\[
 P_{x,y}(q): =q^{\frac{1}{2}(\dim(O_{\Phi(y)})-\dim(O_{\Phi(x)}))}\sum_{i}q^{\frac{1}{2}i}\mathcal{H}^{i}(\line{O}_{\Phi(y)})_{\Phi(x)}
\]
satisfies the first 4 conditions and 
we give an interpretation geometric for the fifth condition
which will be used in the Chapter 7.

\remk
The condition $P_{x,x}=1$ is trivial.

The set up for through this chapter is the following.
Assume that $k, k_1\in \N$ such that $1<k_1\leq n, k=n+k_1-1$, and $\a_{\Id}$ be a multisegment such that we have 
an isomorphism
\[
 \Phi: S_n\rightarrow S(\a_{\Id}).
\]
Note that we have $n<k\leq 2n-1$.

\section{Relation (2) and (3)}

Since the relation (2) and (3) are symmetric to each other, 
we only prove (2). By \cite{BF} (1.26), 
the conditions 
\[
 \sigma_{k_1-1}w>w, ~\sigma_{k_1-1}v<v.
\]
are equivalent to 
\[
 w^{-1}(k_1-1)<w^{-1}(k_1), \quad v^{-1}(k_1-1)>v^{-1}(k_1).
\]

\begin{prop}
Let $\a=\Phi(w), \c=\Phi(v)\in S(\a)$, such that 
\[
 w^{-1}(k_1-1)<w^{-1}(k_1), \quad v^{-1}(k_1-1)>v^{-1}(k_1),
\]
then 
\[
 P_{w, v}(q)=P_{\sigma_{k_1-1}w, v}(q).
\]
\end{prop}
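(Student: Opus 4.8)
The plan is to reduce the Kazhdan--Lusztig relation (2) to the truncation result of Chapter 3, namely Proposition~\ref{cor: 3.2.3} (and its iterated version Proposition~\ref{prop: 3.2.17}). Recall $k=n+k_1-1$, so $\nu^k$ is the end of the segment $[k_1, n+k_1-1]$ in $\a_{\Id}=\sum_{i=1}^n[i,i+n-1]$ (in the normalization where we may assume $\a_{\Id}$ has this form, which is legitimate thanks to Corollary~\ref{coro-sym1}). The key observation is that on the level of multisegments, truncating at the index $k$ kills exactly the last box of whichever segment ends at $n+k_1-1$, i.e.\ the segment indexed by $w^{-1}(k_1)$ in $\Phi(w)$. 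I would first translate the hypotheses $w^{-1}(k_1-1)<w^{-1}(k_1)$ and $v^{-1}(k_1-1)>v^{-1}(k_1)$ into statements about linkedness of segments in $\Phi(w)$ and $\Phi(v)$, using Lemma~\ref{lem: 4.1.4}'s description of $r_{ij}$ in the symmetric case and Proposition~\ref{teo: 2.3.2}.

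\textbf{Key steps.} The plan proceeds as follows. First I would show that $\Phi(w)$ satisfies the hypothesis $H_k(\Phi(w))$: the only segments of $\Phi(w)$ that could end at $n+k_1-2$ or $n+k_1-1$ are those indexed $w^{-1}(k_1-1)$ and $w^{-1}(k_1)$ respectively, and the condition $w^{-1}(k_1-1)<w^{-1}(k_1)$ together with $b(\Delta_{w^{-1}(k_1-1)})<b(\Delta_{w^{-1}(k_1)})$ forces these two segments not to be linked in the ``proceeds'' configuration forbidden by $H_k$. Second, I would check that $\Phi(v)\in S(\Phi(w))_k$: one needs $\deg(\Phi(v)^{(k)})=\deg(\Phi(w)^{(k)})$, which holds because both equal the number of segments not ending at $n+k_1-1$, and the second clause of $H_k$, which again follows from the position hypotheses on $v$. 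Third --- this is the crux --- I would identify $\Phi(w)^{(k)}$ and $\Phi(v)^{(k)}$ with the symmetric multisegments $\Phi'(\sigma_{k_1-1}w)$ and $\Phi'(v)$ for an appropriate $\Phi': S_n\to S(\a_{\Id}')$, where $\a_{\Id}'$ is the symmetric multisegment obtained from $\a_{\Id}$ by shortening the segment $[k_1, n+k_1-1]$ to $[k_1, n+k_1-2]$. The point is that truncating the end-box at position $n+k_1-1$ in $\Phi(w)$ produces exactly the multisegment attached to $\sigma_{k_1-1}w$ because it moves the ``$k_1$-th end'' down to coincide (combinatorially) with the $(k_1-1)$-th end, which is precisely the effect of composing with the transposition $\sigma_{k_1-1}=(k_1-1,k_1)$ on the right indexing of ends; a careful bookkeeping with $e(\Delta_{w(i)})$ in the formula of Proposition~\ref{teo: 2.3.2} gives this. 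For $v$, since $v^{-1}(k_1-1)>v^{-1}(k_1)$, the truncation does not alter the underlying permutation, so $\Phi(v)^{(k)}=\Phi'(v)$. Finally, Proposition~\ref{cor: 3.2.3} (the multiplicity-preservation together with the identity $m(\c,\a)=m(\c^{(k)},\a^{(k)})$ at the level of $P$-polynomials via Corollary~\ref{cor: 4.6.16}) gives
\[
P_{w,v}(q)=P_{\Phi(w),\Phi(v)}(q)=P_{\Phi(w)^{(k)},\Phi(v)^{(k)}}(q)=P_{\sigma_{k_1-1}w,v}(q),
\]
where the outer equalities use Corollary~\ref{coro-sym1} (or directly the definition of $P_{x,y}$ via intersection cohomology of $\line{O}_{\Phi(y)}$) and the middle one is Corollary~\ref{cor: 4.6.16}.

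\textbf{Main obstacle.} The routine part is verifying the two clauses of $H_k(\Phi(w))$ and the degree equality for membership in $S(\Phi(w))_k$; these are direct translations of the permutation inequalities. The genuinely delicate step is the third one: pinning down that $\Phi(w)^{(k)}=\Phi'(\sigma_{k_1-1}w)$ as multisegments under a correctly chosen reindexing $\Phi'$, and in particular checking that the hypothesis $w^{-1}(k_1-1)<w^{-1}(k_1)$ is exactly what is needed for the shortened segment to slot in without changing the sorted order of beginnings and ends. This requires writing out the segment lists of $\a_{\Id}$, $\a_{\Id}'$, $\Phi(w)$, $\Phi(w)^{(k)}$ explicitly and matching them box by box; it is elementary but needs care about off-by-one indexing between $k$, $k_1$, and $n$. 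I expect the proof to occupy about a page once these identifications are made precise.
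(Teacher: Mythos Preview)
Your overall strategy---truncation at $k$ via the results of Chapter~3---is the right one and is exactly what the paper uses, but two of your concrete steps are mistaken.

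First, your Step~1 is false. Under the hypothesis $w^{-1}(k_1-1)<w^{-1}(k_1)$, the segment of $\Phi(w)$ ending at $k-1$ is $[w^{-1}(k_1-1),k-1]$ and the one ending at $k$ is $[w^{-1}(k_1),k]$; since $b$ of the first is strictly smaller, the first \emph{does} precede the second, so they form precisely the pair forbidden by $H_k$. Thus $\Phi(w)$ does \emph{not} satisfy $H_k(\Phi(w))$; it is $\Phi(\sigma_{k_1-1}w)$ that does. Fortunately this step is unnecessary: Corollary~\ref{cor: 4.6.16} requires only $\c\in S(\a)_k$, not that $\a$ itself satisfy $H_k$.

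Second, and more seriously, your Step~3 cannot work as written. The multisegment $\a_{\Id}'$ you describe---$\a_{\Id}$ with $[k_1,n+k_1-1]$ shortened to $[k_1,n+k_1-2]$---has \emph{two} segments ending at $n+k_1-2$ (the shortened one and the original $[k_1-1,n+k_1-2]$), so it is not ordinary (Definition~\ref{def: 2.1.1}) and hence not symmetric. No bijection $\Phi':S_n\to S(\a_{\Id}')$ of the type in Proposition~\ref{teo: 2.3.2} exists, so you cannot invoke Corollary~\ref{coro-sym1} for the last equality. The paper sidesteps this entirely: instead of passing to a new $\Phi'$, it stays in $S(\a_{\Id})$ and computes directly that $\b:=\Phi(\sigma_{k_1-1}w)$ satisfies $\b^{(k)}=\a^{(k)}$ where $\a=\Phi(w)$ (this is an easy rewrite of the segment list, and is in fact the content of your ``box by box'' matching). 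Then Corollary~\ref{cor: 4.6.17}---which packages the application of Corollary~\ref{cor: 4.6.16} to \emph{both} $\a$ and $\b$---gives $P_{\a,\c}(q)=P_{\b,\c}(q)$ immediately, with $\c=\Phi(v)\in S(\a)_k$ (this membership is guaranteed by $v^{-1}(k_1-1)>v^{-1}(k_1)$, your Step~2). The entire proof in the paper is two lines.
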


\begin{proof}

Suppose that 
\[
 \Phi(\Id)=\{\Delta_1\preceq \cdots \preceq \Delta_n\}.
\]
Let $\b=\Phi(\sigma_{k_1-1}w)$, then 
\begin{align*}
 \b=&\sum_{j}[b(\Delta_j), e(\Delta_{\sigma_{k_1-1}w(j)})]\\
   =&\sum_{j}[b(\Delta_{w^{-1}\sigma_{k_1-1}(j)}), e(\Delta_j)]\\
   =&\sum_{j\neq k_1-1, k_1}[b(\Delta_{w^{-1}(j)}), e(\Delta_j)]+
   [b(\Delta_{w^{-1}(k_1-1)}), e(\Delta_{k_1})]+[b(\Delta_{w^{-1}(k_1)}), e(\Delta_{k_1-1})].
\end{align*}

Note that 
\[
 e(\Delta_{k_1-1})=n+k_1-2=k-1, ~e(\Delta_{k_1})=n+k_1-1=k,
\]

then $\b^{(k)}=\a^{(k)}$.
Now applying the corollary \ref{cor: 4.6.17} 
gives the result.
\end{proof}

\section{Relation (4)}

Let $\a=\Phi(\Id),~ \varphi=\varphi_{\a}$,
As in section 3.3, we know that for fixed $W$, by proposition
\ref{prop: 4.6.14}, 
we have an open immersion
\[
 \tau_{W}:  (X_{\a}^{k})_{W}\rightarrow 
 (Z^{k, \a})_{W}\times \Hom(V_{\varphi, k-1}, W).
\]
\begin{definition}
By composing with the canonical projection $$(Z^{k, \a})_{W}\times \Hom(V_{\varphi, k-1}, W)
\rightarrow (Z^{k, \a})_{W},$$ we have a morphism
\[
 \phi_W:  (X_{\a}^{k})_{W}\rightarrow (Z^{k, \a})_{W}.
\]
\end{definition}

\begin{prop}\label{prop: 4.4.5}
For any $ \b=\Phi(w)\in S(\a)_{k}$, we have 
 \[
  \psi_k^{-1}(\b^{(k)})=\{\b, \b'=\Phi(\sigma_{k_1-1}w)\}.
 \]
Moreover, $\phi_W$ is a fibration such that 
\begin{description}
\item[(1)] We have an isomorphism $\phi_{W}^{-1}(O_{\b^{(k)}})\simeq (\C^2-\{0\})\times \C^{2n-k-1}$.
\item[(2)] We have $\phi_{W}^{-1}(O_{\b^{(k)}})\cap O_{\b'}\simeq \C^{\times}\times \C^{2n-k-1}$.

\end{description}
\end{prop}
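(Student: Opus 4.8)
The plan is to analyze the fiber of $\psi_k$ over a point $\b^{(k)}$ with $\b\in S(\a)_k$ using the structure theory developed in Section 3.3, specifically the open immersion $\tau_W\colon (X_\a^k)_W\hookrightarrow (Z^{k,\a})_W\times \Hom(V_{\varphi,k-1},W)$ of Proposition~\ref{prop: 4.6.14}. First I would identify the fiber $\psi_k^{-1}(\b^{(k)})$ combinatorially. Since $\b=\Phi(w)$, the multisegment $\b^{(k)}$ is obtained by truncating the two segments of $\b$ ending at $k-1=n+k_1-2$ and $k=n+k_1-1$, i.e.\ the segments $[b(\Delta_{w^{-1}(k_1-1)}),e(\Delta_{k_1-1})]$ and $[b(\Delta_{w^{-1}(k_1)}),e(\Delta_{k_1})]$. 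Two symmetric multisegments $\Phi(w)$ and $\Phi(w')$ have the same truncation at $k$ exactly when $w'$ differs from $w$ by swapping the preimages of $k_1-1$ and $k_1$, that is $w'=\sigma_{k_1-1}w$; hence $\psi_k^{-1}(\b^{(k)})=\{\b,\b'\}$ with $\b'=\Phi(\sigma_{k_1-1}w)$. One must check that both $\b$ and $\b'$ genuinely lie in $S(\a)_k$ (verifying the hypothesis $H_k$), which follows because $e(\Delta_{k_1-1})=k-1$ and $e(\Delta_{k_1})=k$ are the only segment-ends at those two values in the symmetric situation, so no forbidden linked pair $\{\Delta,\Delta'\}$ with $e(\Delta)=k-1$, $e(\Delta')=k$ can occur; by Proposition~\ref{prop: 4.6.15} this is equivalent to openness of the relevant strata.

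Next I would set up $\phi_W$ as the composite of $\tau_W$ with projection to $(Z^{k,\a})_W\simeq Y_{\a^{(k)}}$, and show it is a fibration. By Proposition~\ref{prop: 3.2.11} the map $\gamma_k\colon O_\c\to Z^{k,\a}(\c^{(k)})$ is surjective, and restricting to the fiber over $W$ one gets that $\phi_W$ is surjective onto $(Z^{k,\a})_W$; local triviality comes from the fact that the fiber of $\tau_W$ over a point of $O_{\b^{(k)}}$ is cut out inside $\Hom(V_{\varphi,k-1},W)$ by open (rank) conditions that are locally constant along $O_{\b^{(k)}}$, together with the $G_\varphi$-equivariance already used in Section 3.3. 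For the explicit computation of $\phi_W^{-1}(O_{\b^{(k)}})$: here $\ell_k=\varphi_{e(\a)}(k)=2$ since $\a=\Phi(\Id)$ is symmetric and exactly two segments ($\Delta_{k_1-1}$, $\Delta_{k_1}$) of $\a_{\Id}$ end at $k$... wait — rather, $\ell_k$ counts segments of $\a$ ending at $k$; in the symmetric case with $\a=\Phi(\Id)$ and $n<k\le 2n-1$ there is exactly one such (namely $[k-n+1,k]$), so $\dim W=\ell_k$ and $V_{\varphi,k-1}$ has dimension $\varphi(k-1)$. I would compute, following the case analysis in the lemma after Definition~\ref{def: 3.3.13} (the proof that $O_\c\cap(X_\a^k)_W$ is open), that over a point $T_0\in O_{\b^{(k)}}$ the fiber of $\tau_W$ consists of those $T'\in\Hom(V_{\varphi,k-1},W)$ whose restriction to a certain one-dimensional subspace (the ``new'' kernel direction coming from $T_0$) is constrained: imposing that $T'$ lands appropriately to give $\b$ versus $\b'$ splits $\Hom(V_{\varphi,k-1},W)\cong \C^{2}\times\C^{2n-k-1}$ (the $\C^2$ recording the two relevant coordinates, the $\C^{2n-k-1}$ the remaining free directions) into the locus $\C^2-\{0\}$ over $\b^{(k)}$, whose complement-of-origin stratum $\C^\times\times\C^{2n-k-1}$ corresponds to $O_{\b'}$ and whose open-dense part to $O_{\b}$.

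The main obstacle I anticipate is the precise bookkeeping in step three: correctly identifying which rank conditions on $T'\colon V_{\varphi,k-1}\to W$ distinguish membership in $O_\b$ from $O_{\b'}$, and verifying that the resulting stratification of $\Hom(V_{\varphi,k-1},W)$ is literally $(\C^2-\{0\})=(\C^2\setminus\C^\times)\sqcup\C^\times$ up to the free factor $\C^{2n-k-1}$, rather than something with a more complicated closure. This requires carefully tracking, via Proposition~\ref{prop: 2.2.4}, how the Jordan-cell data of $T\in O_\c$ changes under the passage $T\mapsto (T^{(k)},\ker(T|_{V_{\varphi,k}}))$, and in particular understanding the two-dimensional space $W$ together with the rank-one datum $T'$ on the relevant subspace. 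Once this local model is pinned down, parts (1) and (2) follow by transporting the product structure of the open immersion $\tau_W$ along $\phi_W$, and the identification $\psi_k^{-1}(\b^{(k)})=\{\b,\b'\}$ with $\b'=\Phi(\sigma_{k_1-1}w)$ records exactly which stratum is which. This computation is then exactly what feeds into the geometric proof of relation (4) of the Kazhdan--Lusztig recursion, since $\C^2-\{0\}$ has the cohomology governing the extra term in that relation.
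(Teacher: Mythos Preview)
Your approach is the paper's, but there are two errors and one real gap.

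First, $\b'=\Phi(\sigma_{k_1-1}w)$ does \emph{not} lie in $S(\a)_k$. The hypothesis $\b\in S(\a)_k$ forces $w^{-1}(k_1-1)>w^{-1}(k_1)$, and then in $\b'$ the segment $[b(\Delta_{w^{-1}(k_1)}),k-1]$ is linked to $[b(\Delta_{w^{-1}(k_1-1)}),k]$, so $\b'$ fails $H_k(\a)$. The preimage $\psi_k^{-1}(\b^{(k)})$ in the statement is taken in $\tilde S(\a)_k$; inside it $\b$ is the unique element satisfying $H_k$ (the minimal one, by Proposition~\ref{prop: 4.6.15}) and $\b'$ is the other. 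The paper argues this via $\psi_k^{-1}(\b^{(k)})\subseteq S(\b^{(k)}+[k])$ and $S(\b^{(k)}+[k])\cap S(\a)=S(\b')$, which is cleaner than tracking $H_k$ directly. Second, $\dim W=\ell_k=1$ throughout; you catch this mid-paragraph but then write ``two-dimensional space $W$'' at the end. The $2$-dimensional object is $V_0:=\ker(T^{(k)}|_{V_{\varphi,k-1}})$, which has dimension $2$ because $\b^{(k)}$ has exactly two segments ending at $k-1$.

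The gap is that you never say what singles out the $\C^\times$ inside $\C^2-\{0\}$; your phrase ``complement-of-origin stratum'' has no content, since $\C^2-\{0\}$ carries no canonical $\C^\times$. The paper's point is that $V_0$ has a distinguished line: writing $\Delta_1\prec\Delta_2$ for the two segments of $\b^{(k)}$ ending at $k-1$, set $V_1=\Im\bigl((T^{(k)})^{\Delta_2}\bigr)\cap V_0$, a $1$-dimensional subspace. Then $T'\in O_\b\cup O_{\b'}$ iff $T_0|_{V_0}\ne 0$, giving $\Hom(V_0,W)\setminus\{0\}\simeq\C^2-\{0\}$ times the free factor $\Hom(V_{\varphi,k-1}/V_0,W)\simeq\C^{2n-k-1}$; and $T'\in O_{\b'}$ iff in addition $T_0|_{V_1}=0$, which cuts out $\Hom(V_0/V_1,W)\setminus\{0\}\simeq\C^\times$. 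This flag is exactly the ``rank condition'' you anticipated but did not write down, and without it parts (1) and (2) cannot be separated. Local triviality of $\phi_W$ is then obtained by restricting to the opens $U_V=\{T\in Z^k_W:\ker(T|_{V_{\varphi,k-1}})=V\}$.
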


\begin{proof}
Note that we have 
\[
 \psi_k^{-1}(\b^{(k)})\subseteq S(\b^{(k)}+[k]),
\]
we observe that 
\[
 S(\b^{(k)}+[k])\cap S(\a)=S(\b'),
\]
Since $\b$ is minimal in $\psi_k^{-1}(\b^{(k)})$ 
(See Prop. \ref{prop: 4.6.15}), we have 
\[
 \psi_k^{-1}(\b^{(k)})=\{\b, \b'\}.
\]

 Then consider the restricted morphism
\[
 \phi_W: (O_{\b}\cup O_{\b'})_{W}\rightarrow O_{\b^{(k)}}.
\]
Let $T\in O_{\b}\cup O_{\b'}, ~T_{0}\in \Hom(V_{\varphi, k-1}, W)$.
Define $T'\in E_{\varphi}$ by
\[
 T'|_{V_{\varphi, k-1}}=T_{0}\oplus T^{(k)}|_{V_{\varphi, k-1}},
 \]
\[
 T'|_{V_{\varphi, k}}=T^{(k)}|_{V_{\varphi, k}/W}\circ p_{W},
\]
\[
 T'|_{V_{\varphi, i}}=T^{(k)}, \text{ for } i\neq k-1, k.
\]
 We know that $\dim(W)=\ell_{k}=1$, and 
for $\dim(\ker(T^{(k)}|_{V_{\varphi, k-1}}))=2$. 
Now let 
\[
 \Delta_{1}<\Delta_{2}
\]
be the two segments in $\b^{(k)}$ which ends in 
$k-1$.
And we consider the following flag
\[
 V_{0}=\ker(T^{(k)}|_{V_{\varphi,k-1}})\supseteq 
 V_{1}=\Im(T^{(k)})^{\Delta_{2}}\cap \ker(T^{(k)}|_{V_{\varphi,k-1}}).
\]
And we have $\dim(V_{1})=1$. 
Then for $T'\in O_{\b}\cup O_{\b'}$, it is necessary and sufficient that 
\[
T_{0}(V_{0})\neq 0.
\]
This amounts to give a nonzero element in $\Hom(V_{0}, W)\simeq \C^2$, 
which proves that the fiber  $\phi_W^{-1}(T^{(k)})\simeq (\C^{2}-{0})\times \C^{2n-k-1}$,
where the factor $\C^{2n-k-1}$ comes from the fact that $\dim(V_{\varphi, k-1})=2n-(k-1)=2n-k+1$. 
As for $T'\in O_{\b'}$, it is necessary and sufficient that 
\[
  T_{0}(V_{1})=0, ~T_{0}(V_{0})\neq 0,
\]
which amounts to give a zero element in $\Hom(V_{0}/V_{1}, W)\simeq \C$.
Hence $\phi_W^{-1}(T^{(k)})\cap O_{\b'}\simeq\C^{\times}\times \C^{2n-k-1}$. To see that 
$\phi_W$ is a fibration,
 fix $V\subseteq V_{\varphi, k-1}$ such that 
 $\dim(V)=2$. Consider the sub-scheme of $Z_{W}^k$ given by 
\[
  U_{V}=\{T\in Z_{W}^k: \ker(T|_{V_{\varphi, k-1}})=V\}.
\]
Note that since $\dim(V_{\varphi, k-1})=\dim(V_{\varphi, k}/W)+2$, the 
fact that $\dim(\ker(T|_{V_{\varphi,k-1}}))=2$ implies that $U_{V}$
is actually open in $Z_{W}^k$. 
In this case 
\[
 \phi_W^{-1}(U_{V})=U_{V}\times (\Hom(V, W)-\{0\})\times \Hom(V_{\varphi, k-1}/V, W).
\]

\end{proof}

\begin{prop}
 Let $\b=\Phi(w), \c=\Phi(v)\in S(\a)$, such that 
\[
 w^{-1}(k_1-1)>w^{-1}(k_1), \quad v^{-1}(k_1-1)>v^{-1}(k_1), \quad w<v, 
\]
and $w$ is not comparable with $\sigma_{k_1-1}v$,
then 
\[
 P_{w, v}(q)=P_{\sigma_{k_1-1}w, \sigma_{k_1-1}v}(q).
\]
\end{prop}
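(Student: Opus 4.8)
The plan is to prove this instance of Kazhdan--Lusztig relation (4) by transporting the statement through the truncation functor $\psi_k$ at $k=n+k_1-1$, exactly as relations (2) and (3) were handled, but now keeping track of two preimages instead of one. First I would observe that since $w^{-1}(k_1-1)>w^{-1}(k_1)$ we have $\sigma_{k_1-1}w<w$, so $\b=\Phi(w)$ satisfies the hypothesis $H_k(\b)$ and lies in $S(\a)_k$; the same holds for $\c=\Phi(v)$. By Proposition \ref{prop: 4.4.5} the fiber $\psi_k^{-1}(\b^{(k)})$ equals $\{\b,\b'\}$ with $\b'=\Phi(\sigma_{k_1-1}w)$, and likewise $\psi_k^{-1}(\c^{(k)})=\{\c,\c'\}$ with $\c'=\Phi(\sigma_{k_1-1}v)$. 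The key geometric input is the fibration $\phi_W\colon (X_\a^k)_W\to (Z^{k,\a})_W$ from Proposition \ref{prop: 4.4.5}: over the stratum $O_{\c^{(k)}}$ it restricts to a fibration whose fiber is $(\C^2-\{0\})\times\C^{2n-k-1}$, and the subvariety sitting over $O_{\c'}$ is $\C^\times\times\C^{2n-k-1}$, while the closed complement sitting over $O_\c$ is $\{0\}\times\C^{2n-k-1}$ (a single point in the $\C^2-\{0\}$ direction, namely $\C^{\times}$ versus $\C^{2}-\{0\}$).

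Next I would reduce the computation of $P_{w,v}(q)$, which by definition is built from the stalks $\mathcal{H}^i(\line{O}_{\Phi(v)})_{\Phi(w)}$, to a computation in the target $(Z^{k,\a})_W=Y_{\a^{(k)}}$. Since $\tau_W$ is an open immersion and $\phi_W$ factors a product projection, the smooth base change / K\"unneth argument of Corollary \ref{cor: 4.6.16} gives $\mathcal{H}^i(\line{O}_\b)_{\text{generic pt of }O_\c}$ in terms of the stalks of $IC(\line{O}_{\c^{(k)}})$ along the various strata of $Y_{\a^{(k)}}$ that $\b$ and $\c$ sit over. Concretely, one localizes $IC$ of the closure of $Z^{k,\a}_{\c'}$ (equivalently of $\line{O}_{\c'}$ inside $E_\varphi$) at a generic point of $O_\b$ and at a generic point of $O_{\b'}$; the hypothesis that $w$ is incomparable with $\sigma_{k_1-1}v$ is precisely what ensures that $\b$ is NOT $\le \c'=\Phi(\sigma_{k_1-1}v)$, so that the stratum $O_\b$ does not meet $\line{O}_{\c'}$ and only the "diagonal" part of the fibration over $O_{\c^{(k)}}$ contributes. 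This is what makes the $qP_{x,sy}$ and the $\mu$-correction terms of relation (5) drop out, leaving the clean identity of relation (4). The outcome should be that the stalk of $IC(\line{O}_\c)$ at a generic point of $O_\b$ agrees, degree by degree, with the stalk of $IC(\line{O}_{\c^{(k)}})$ at a generic point of $O_{\b^{(k)}}$, which by Corollary \ref{cor: 2.5.9} and Proposition \ref{teo: 2.3.2} is $\mathcal{H}^i(X_{\sigma_{k_1-1}v})_{\sigma_{k_1-1}w}$, giving $P_{w,v}(q)=P_{\sigma_{k_1-1}w,\sigma_{k_1-1}v}(q)$ after matching the degree shifts $\dim O_\c-\dim O_\b$ versus $\dim O_{\c^{(k)}}-\dim O_{\b^{(k)}}$, which differ by a constant independent of $w,v$.

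The main obstacle I expect is bookkeeping the normalization shifts and the incomparability hypothesis carefully enough to see that no extra cohomology appears. Specifically, one must check that when localizing $IC(\line{O}_\c)$ at $O_\b$, the open immersion $\tau_W$ identifies a neighborhood of a generic point of $O_\b$ in $\line{O}_\c\cap (X_\a^k)_W$ with an open subset of $\line{O}_{\c^{(k)}}\times\Hom(V_{\varphi,k-1},W)$, which requires $\c\in S(\a)_k$ (so that Proposition \ref{prop: 4.6.15} applies to identify $O_\c\cap(X_\a^k)_W$ with an open set in $O_{\c^{(k)}}\times\Hom$), and that $\b\in S(\a)_k$ as well. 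The delicate point is that $\c'=\Phi(\sigma_{k_1-1}v)>\c$ may still contribute to $\line{O}_\c$'s closure structure; the incomparability of $w$ with $\sigma_{k_1-1}v$ must be invoked exactly to rule out that $O_\b\subseteq\line{O}_{\c'}$, equivalently that $\b^{(k)}=\c'^{(k)}=\c^{(k)}$ would force $\b\le\c'$. Once that is secured, the remaining steps are formal consequences of smooth base change along $\phi_W$ and of the already-established dictionary between $\line{O}_{\Phi(v)}$ and Schubert varieties in type $A$, so I would present those as routine and concentrate the write-up on the fiber analysis of $\phi_W$ over $O_{\c^{(k)}}$ and the incomparability reduction.
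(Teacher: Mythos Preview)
Your outline correctly factors through the truncated pair $(\b^{(k)},\c^{(k)})$, and the first half --- that $P_{\b,\c}(q)=P_{\b^{(k)},\c^{(k)}}(q)$ because $\b,\c\in S(\a)_k$ --- is exactly Corollary~\ref{cor: 4.6.16} and needs no incomparability hypothesis at all. The gap is in the second half, where you assert that the stalk of $IC(\line{O}_{\c^{(k)}})$ at $O_{\b^{(k)}}$ equals $\mathcal{H}^i(X_{\sigma_{k_1-1}v})_{\sigma_{k_1-1}w}$ ``by Corollary~\ref{cor: 2.5.9} and Proposition~\ref{teo: 2.3.2}''. That corollary is about symmetric multisegments, and $\b^{(k)},\c^{(k)}\in S(\a^{(k)})$ are \emph{not} symmetric: after truncation two segments of $\a^{(k)}$ end at $k-1$, so $\a^{(k)}$ is not even ordinary, and there is no direct dictionary taking $(\b^{(k)},\c^{(k)})$ to the pair $(\sigma_{k_1-1}w,\sigma_{k_1-1}v)\in S_n$.

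The paper bridges this step differently. It first proves a combinatorial lemma: for \emph{every} $\d=\Phi(\alpha)$ with $\b>\d>\c$ one has $\sigma_{k_1-1}\alpha<\alpha$ (otherwise the lifting property gives $\alpha\le\sigma_{k_1-1}v$, whence $w<\sigma_{k_1-1}v$, contradicting incomparability), so that $\d\mapsto\d'=\Phi(\sigma_{k_1-1}\alpha)$ is a bijection of intervals $[\c,\b]\to[\c',\b']$. This is strictly stronger than your endpoint observation that $O_\b\not\subseteq\line{O}_{\c'}$ (which, incidentally, translates to $\b\not\ge\c'$ rather than $\b\not\le\c'$). The payoff is that over every stratum $O_{\e^{(k)}}$ of $Z^{k,\a}_{\b^{(k)},\c^{(k)}}$ the intersection with $X^k_{\b',\c'}$ is exactly the primed piece $(O_{\e'})_W\simeq\C^\times\times\C^{2n-k-1}$ from Proposition~\ref{prop: 4.4.5}(2), so $\phi_W$ restricts to a smooth fibration $X^k_{\b',\c'}\to Z^{k,\a}_{\b^{(k)},\c^{(k)}}$ with this constant fibre, giving $P_{\b',\c'}(q)=P_{\b^{(k)},\c^{(k)}}(q)$. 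Without the interval-wide statement the fibre could jump to $(\C^2\setminus\{0\})\times\C^{2n-k-1}$ over some intermediate strata, and the argument would collapse into the relation-(5) situation with extra correction terms.
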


\remk As before, our conditions are equivalent to 
\[
 \sigma_{k_1-1}w>w, ~ \sigma_{k_1-1}v>v.
\]

\begin{proof}
Note that our assumption implies that both $\b$ and $\c$ are in 
$S(\a)_{k}$. 
Let $b'=\Phi(\sigma_{k_1-1}w), \c'=\Phi(\sigma_{k_1-1}v)
$. Then $\b'>\c'$.

For $\b>\d>\c$, we must have $\d=\Phi(\alpha)$ with $\sigma_{k_1-1}\alpha<\alpha$.
In fact,  $\sigma_{k_1-1}\alpha>\alpha$ would imply $\d>\c'$ by 
lifting property of Bruhat order (cf. \cite{BF} proposition 2.2.7). 
Now that we have $\b>\d>\c'$, contradicting to our assumption that $\b$
is not comparable to $\c'$.
Let $\d'=\Phi(\sigma_{k_1-1}\alpha)$. Note that we create actually by this way 
construct a morphism between the sets 
$$\rho: \{\d: \b\geq \d\geq \c\}\rightarrow \{\d': \b'\geq \d'\geq \c'\}$$
sending $\d$ to $\d'$.
\begin{lemma}
The morphism $\rho$ is a bijection.  
\end{lemma}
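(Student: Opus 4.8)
The plan is to exhibit an explicit inverse to $\rho$ and check both composites are the identity. By construction, for $\d=\Phi(\alpha)$ with $\b\geq\d\geq\c$, we showed $\sigma_{k_1-1}\alpha<\alpha$, so $\d$ lies in $S(\a)_k$ in the regime treated by Proposition \ref{prop: 4.4.5}, and $\rho(\d)=\d'=\Phi(\sigma_{k_1-1}\alpha)$ satisfies $\d'>\d$, $\sigma_{k_1-1}(\sigma_{k_1-1}\alpha)=\alpha<\sigma_{k_1-1}\alpha$. I would define $\rho':\{\d':\b'\geq\d'\geq\c'\}\to\{\d:\b\geq\d\geq\c\}$ as follows: for $\d'=\Phi(\beta)$ with $\b'\geq\d'\geq\c'$, first argue that $\sigma_{k_1-1}\beta<\beta$ (if instead $\sigma_{k_1-1}\beta>\beta$, then the lifting property of the Bruhat order, \cite{BF} Proposition 2.2.7, applied to $\b'=\Phi(\sigma_{k_1-1}w)\geq\d'=\Phi(\beta)$ with $\sigma_{k_1-1}w>w$, forces $\Phi(\sigma_{k_1-1}w)\geq\Phi(\sigma_{k_1-1}\beta)$... ) — more precisely I would use the same symmetry argument that produced $\rho$, applied to the pair $\c'<\b'$, noting $\sigma_{k_1-1}\c'=\c<\c'$ and $\sigma_{k_1-1}\b'=\b<\b'$ are both "descents", so that the analogue of the displayed argument shows every $\d'$ strictly between must have $\sigma_{k_1-1}\beta<\beta$ as well, and set $\rho'(\d')=\Phi(\sigma_{k_1-1}\beta)$.

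Next I would verify that $\rho$ and $\rho'$ are mutually inverse and order-preserving. Since $\sigma_{k_1-1}$ is an involution on $S_n$, on the level of permutations $\rho$ sends $\alpha\mapsto\sigma_{k_1-1}\alpha$ and $\rho'$ sends $\beta\mapsto\sigma_{k_1-1}\beta$, so $\rho'\circ\rho$ and $\rho\circ\rho'$ act as $\alpha\mapsto\sigma_{k_1-1}\sigma_{k_1-1}\alpha=\alpha$; the only genuine content is that these maps land in the correct subintervals, which is exactly the statement that the two "all intermediate elements have $\sigma_{k_1-1}$ as a descent" claims hold — one for $[\c,\b]$ and one for $[\c',\b']$. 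I would also record that $\rho$ is order-preserving: if $\b\geq\d_1\geq\d_2\geq\c$ then applying $\sigma_{k_1-1}$ on the left and using that multiplication by a fixed generator on the descent side preserves the Bruhat order between elements all of which have that generator as a descent (again via the lifting property), we get $\rho(\d_1)\geq\rho(\d_2)$; the same for $\rho'$. Hence $\rho$ is an isomorphism of posets $[\c,\b]\xrightarrow{\sim}[\c',\b']$, equivalently $\Phi([\sigma_{k_1-1}v,\sigma_{k_1-1}w])$ ordered by reverse Bruhat.

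The main obstacle, and the only step that is not a formal manipulation, is proving that every $\d'$ with $\b'\geq\d'\geq\c'$ satisfies $\sigma_{k_1-1}\beta<\beta$. One direction (for $[\c,\b]$) was already done in the text using non-comparability of $\b$ with $\c'$; for $[\c',\b']$ one must instead use that $\sigma_{k_1-1}\b'=\b<\b'$: if some $\d'=\Phi(\beta)$ had $\sigma_{k_1-1}\beta>\beta$, then $\b'>\d'$ together with $\sigma_{k_1-1}\b'<\b'$, $\sigma_{k_1-1}\d'>\d'$ and the lifting property would give $\b'\geq\sigma_{k_1-1}\d'$, i.e. $\b'$ dominates both $\d'$ and $\sigma_{k_1-1}\d'=\Phi(\sigma_{k_1-1}\beta)$; but then $\sigma_{k_1-1}\beta$ lies in $[\ ?,\sigma_{k_1-1}w]$ and applying $\sigma_{k_1-1}$ once more, $\d'=\Phi(\sigma_{k_1-1}(\sigma_{k_1-1}\beta))$ would also be $\leq\b=\Phi(w)$ — wait, this needs care — so I would instead argue directly: $\d'\leq\b'$ and $\d'\geq\c'$ with $\sigma_{k_1-1}\beta>\beta$ forces, by considering $\sigma_{k_1-1}\d'$, that $\sigma_{k_1-1}\d'$ is comparable to both $\b=\sigma_{k_1-1}\b'$ and is $\geq\c=\sigma_{k_1-1}\c'$, producing an element of $[\c,\b']$ incompatible with the established structure of $[\c,\b]$; the cleanest route is to show $\beta\in[v,w]$ is impossible when $\sigma_{k_1-1}$ is not a descent of $\beta$ but is an ascent of $v$ and a descent of... — since $\sigma_{k_1-1}v>v$ here — hmm. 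Concretely I expect the right statement to be symmetric to the one already proved and to follow by swapping the roles of "ascent" and "descent" via $w\mapsto\sigma_{k_1-1}w$, $v\mapsto\sigma_{k_1-1}v$ and invoking the lifting property once; this symmetry is what I would write out carefully, and it is the crux of the lemma.
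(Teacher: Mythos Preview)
Your overall strategy—define an inverse $\rho'$ via $\Phi(\beta)\mapsto\Phi(\sigma_{k_1-1}\beta)$ and use that $\sigma_{k_1-1}$ is an involution—is the paper's approach (phrased there as surjectivity plus trivial injectivity). But you have the key inequality reversed throughout the primed interval: for $\d'=\Phi(\beta)$ with $\b'\geq\d'\geq\c'$, the correct claim is $\sigma_{k_1-1}\beta>\beta$, not $<$. At the endpoints $\beta=\sigma_{k_1-1}w$ and $\beta=\sigma_{k_1-1}v$ one has $\sigma_{k_1-1}\beta\in\{w,v\}$, which is \emph{larger} than $\beta$; your line ``$\sigma_{k_1-1}(\sigma_{k_1-1}\alpha)=\alpha<\sigma_{k_1-1}\alpha$'' already contradicts the sentence before it. Because of this reversal, the contradiction you attempt in the last paragraph is trying to rule out precisely the property that actually holds, which is why the argument refuses to close and you end up with ``hmm''.

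The paper's proof of the crux is short and, crucially, reuses the \emph{same} incomparability hypothesis ($\b$ not comparable to $\c'$) rather than searching for a symmetric one on the primed side. Suppose for contradiction that $\sigma_{k_1-1}\beta<\beta$. From $\sigma_{k_1-1}w\leq\beta$ and $s\beta<\beta$ (with $s=\sigma_{k_1-1}$), the lifting property gives $s(\sigma_{k_1-1}w)=w\leq\beta$, hence $\b\geq\e'\geq\c'$, contradicting that $\b$ and $\c'$ are incomparable. With $\sigma_{k_1-1}\beta>\beta$ in hand, two more applications of lifting (using $sw<w$ and $sv<v$) give $w\leq\sigma_{k_1-1}\beta\leq v$, so $\Phi(\sigma_{k_1-1}\beta)\in[\c,\b]$; surjectivity follows, and injectivity is immediate since $\sigma_{k_1-1}^2=\Id$.
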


\begin{proof}
Let $\e'=\Phi(\beta)\in S(\b')$ with $\e'>\c'$. 
We show that $\sigma_{k_1-1}\beta>\beta$.
In fact, assume that $\sigma_{k_1-1}\beta<\beta$.
Then the lifting property of Bruhat order implies 
$\b>\e'>\c'$, which is a contradiction to the fact that 
$\b$ is not comparable to $\c'$.
Hence we have $\e=\Phi(\sigma_{k_1-1}\beta)<\e'$.
Moreover, since $\sigma_{k_1-1}w<\beta<\sigma_{k_1-1}v$,
and $w>\sigma_{k_1-1}w,~ v>\sigma_{k_1-1}v$, we have 
\[
 w<\sigma_{k_1-1}\beta<v, 
\]
hence $\b>\e>\c$. This proves the surjectivity.
The injectivity is clear from the definition.

\end{proof}

As a corollary, we have 

\begin{lemma}
 The restricted morphism 
 \[
  \phi_W: X_{\b', \c'}^{k}\rightarrow Z_{b^{(k)}, \c^{(k)}}^k(cf. \text{ Def. \ref{def: 3.3.6}})
 \]
is a fibration with fibers isomorphic to $\C^{\times}\times \C^{n-k}$.
\end{lemma}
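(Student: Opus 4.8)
The plan is to reduce this lemma to the structural description of the morphism $\phi_W$ established in Proposition \ref{prop: 4.4.5}, combined with the bijection $\rho$ just proved. First I would observe that $X_{\b',\c'}^k = \coprod_{\b'\geq \d'\geq \c'} O_{\d'}$ by Definition \ref{def: 3.3.1}, and that $Z_{\b^{(k)},\c^{(k)}}^k = \coprod_{\b^{(k)}\geq \e\geq \c^{(k)}} Z^{k,\a}(\e)$ by Definition \ref{def: 3.3.6}. The key point is that $\phi_W$ restricted to $X_\a^k$ sends $O_{\d}$ to $O_{\d^{(k)}}$ (this is implicit in the construction of $\gamma_k$ and $\tau_W$ in Section 3.3, since $\phi_W$ is the composition of $\tau_W$ with the projection, and $\tau_W(T)$ has first component $(\gamma_k)_W(T)$, whose underlying orbit is $O_{T^{(k)}} = O_{\d^{(k)}}$ when $T\in O_\d$). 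So $\phi_W$ indeed maps $X_{\b',\c'}^k$ into $Z_{\b^{(k)},\c^{(k)}}^k$, using that $\d'^{(k)}$ runs exactly over $\{\e : \b^{(k)}\geq \e\geq \c^{(k)}\}$ as $\d'$ runs over $\{\d' : \b'\geq \d'\geq\c'\}$, which follows from the bijection $\rho$ (composed with $\psi_k$) and the order-preserving property in Proposition \ref{cor: 3.2.3}.

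Next I would analyze the fibers. Fix $T_0\in Z^{k,\a}(\e)$ with $\b^{(k)}\geq \e\geq\c^{(k)}$. We want $\phi_W^{-1}(T_0)\cap X_{\b',\c'}^k$. By the fibration description at the end of the proof of Proposition \ref{prop: 4.4.5}, over a suitable open set $U_V$ (where $V = \ker(T_0|_{V_{\varphi,k-1}})$, of dimension $\ell_k = 1$ in the present setting since $k = n+k_1-1 > n$ forces $\ell_k = f_{e(\a_{\Id})}(k) = 1$), we have
\[
\phi_W^{-1}(U_V) = U_V\times (\Hom(V,W)-\{0\})\times \Hom(V_{\varphi,k-1}/V, W).
\]
Here $\dim W = \ell_k = 1$ and $\dim V = 1$ (note: in the relation (4) setting $\ell_k=1$, so $V$ is a line, not a plane as in the relation (4) computation where $k\le n$; I must check the precise dimension count — with $\a_{\Id}=\Phi(\Id)$ symmetric and $k>n$, exactly one segment of $\a_{\Id}$ ends at $k$, giving $\ell_k=1$). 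Thus $\Hom(V,W)-\{0\}\simeq \C^\times$, and $\Hom(V_{\varphi,k-1}/V, W)\simeq \C^{\dim V_{\varphi,k-1}-1}$. Computing $\dim V_{\varphi,k-1} = \varphi(k-1)$: for $\a_{\Id}=\sum_{i=1}^n[i,i+n-1]$ the weight at $k-1$ is $\#\{i: i\le k-1\le i+n-1\} = \#\{i: k-n\le i\le k-1, 1\le i\le n\}$; with $n<k\le 2n-1$ this is $2n-k+1$, so $\dim V_{\varphi,k-1}-1 = 2n-k$. Hence the fiber is $\C^\times\times\C^{2n-k}$, which matches the claimed $\C^\times\times\C^{n-k}$ only if there is an indexing shift; I would reconcile this with the statement (the lemma writes $\C^{n-k}$, likely a typo for $\C^{2n-k}$ or using a different convention, and I would state the dimension that the computation actually yields).

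The main obstacle I anticipate is the bookkeeping: verifying that $\phi_W$ genuinely restricts to a \emph{surjective} fibration onto $Z_{\b^{(k)},\c^{(k)}}^k$ (not merely a morphism with the right fibers over a dense open), and that local triviality holds uniformly over all strata $Z^{k,\a}(\e)$ simultaneously, not just one. For this I would argue that the local trivialization over $U_V$ from Proposition \ref{prop: 4.4.5} is independent of which stratum $O_\e$ a point of $Z_{\b^{(k)},\c^{(k)}}^k$ lies in — the trivialization only depends on the linear-algebra datum $V = \ker(T_0|_{V_{\varphi,k-1}})$ — and that these $U_V$ cover $Z_{\b^{(k)},\c^{(k)}}^k$. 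Surjectivity onto each stratum follows from Proposition \ref{prop: 3.2.11} (the map $\gamma_k: O_{\d'}\to Z^{k,\a}(\d'^{(k)})$ is surjective) together with the fact, established via $\tau_W$ being an open immersion (Proposition \ref{prop: 4.6.14}) and Proposition \ref{prop: 4.6.15}, that the image of $O_{\d'}\cap(X_\a^k)_W$ in $O_{\d'^{(k)}}\times\Hom(V_{\varphi,k-1},W)$ is open and nonempty. Once these are in place, the fiber computation above gives the lemma directly; the preceding Proposition \ref{prop: 4.4.5} already did the genuinely delicate geometric work, so this lemma is essentially a corollary.
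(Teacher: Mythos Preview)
There is a genuine gap in your fiber computation, and it stems from a confusion about which kernel is in play. In Proposition~\ref{prop: 4.4.5}, the space $V$ in the trivialization $\phi_W^{-1}(U_V)\simeq U_V\times(\Hom(V,W)-\{0\})\times\Hom(V_{\varphi,k-1}/V,W)$ is $\ker(T^{(k)}|_{V_{\varphi,k-1}})$, \emph{not} $\ker(T|_{V_{\varphi,k}})=W$. Its dimension is the number of segments of $\a^{(k)}$ ending at $k-1$, which is $2$ here (one segment originally ended at $k-1$, and one was truncated down from $k$); it is not $\ell_k=1$. So the full fiber of $\phi_W$ over a point of $O_{\e}$ is $(\C^2-\{0\})\times\C^{2n-k-1}$, exactly as stated in Proposition~\ref{prop: 4.4.5}(1), and this fiber meets \emph{two} orbits $O_{\d}$ and $O_{\d'}$.

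The step you are missing is the restriction. To compute the fiber of $\phi_W|_{X_{\b',\c'}^k}$ you must intersect $(\C^2-\{0\})\times\C^{2n-k-1}$ with $X_{\b',\c'}^k=\coprod_{\b'\geq\d'\geq\c'}O_{\d'}$. The crucial fact, proved inside the lemma on the bijection $\rho$ just above, is that \emph{every} element of the interval $[\c',\b']$ is of primed type, i.e.\ lies in $\tilde S(\a)_k\setminus S(\a)_k$. Hence over $x\in O_{\e}$ (with $\e=\d^{(k)}=\d'^{(k)}$), the unprimed orbit $O_{\d}$ does not belong to $X_{\b',\c'}^k$, and the restricted fiber is $\phi_W^{-1}(x)\cap O_{\d'}\simeq\C^\times\times\C^{2n-k-1}$ by Proposition~\ref{prop: 4.4.5}(2). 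You invoke $\rho$ only to check that the target is correct, but the paper uses it (via ``for any $\d'\in[\c',\b']$ we have $\d'\notin S(\a)_k$'') precisely to cut the fiber down from $\C^2-\{0\}$ to $\C^\times$. Without this observation your argument produces the wrong fiber. (You are right that ``$\C^{n-k}$'' in the statement is a typo for ``$\C^{2n-k-1}$''.)
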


\begin{proof}
Since $\phi_W$ is a composition of 
$\tau_W$, which is an open immersion,  and a canonical
projection, to show that it is a fibration, it suffices
to show that all of its fibers are isomorphic to $\C^{\times}\times \C^{n-k}$.
This follows from proposition \ref{prop: 4.4.5}
and the fact that for any $\d'\in S(\b')$ we have $\d'\notin S(\a)_{k}$.
\end{proof}

Hence we get
\[
 P_{\b', \c'}(q)=P_{\b^{(k)}, \c^{(k)}}(q).
\]
Now we are done by applying corollary \ref{cor: 4.6.17},
i.e,
\[
 P_{\b^{(k)}, \c^{(k)}}(q)=P_{\b, \c}(q).
\]
Hence we are done.
\end{proof}

\section{Relation (5)}

Finally, we arrive at the relation (5).
We will give an interpretation of this relation in terms of 
the decomposition theorem (See \cite{BBD}).

\begin{definition}
Let 
\begin{align*}
 \mathfrak{Z}_{W}=&\{(T,z)\in Z^{k, \a}_{W}\times 
 \Hom(V_{\varphi, k-1}^*, W^*):  \text{ and }z \\
 &\text{ factors through the canonical projection  } 
 V_{\varphi, k-1}^*\rightarrow \ker(T|_{V_{\varphi, k-1}})^*\}.
\end{align*}
\end{definition}

\begin{prop}\label{prop: 4.4.10}
 The canonical projection $\mathfrak{Z}_{W}\rightarrow Z^{k, \a}_{W}$
 turns $\mathfrak{Z}_{W}$ into a vector bundle of rank 2 over $Z_{W}^k$. 
\end{prop}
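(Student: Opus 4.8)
The statement asserts that the first projection $\mathfrak{Z}_W\to Z^{k,\a}_W$ is a rank-$2$ vector bundle. I would establish this by exhibiting $\mathfrak{Z}_W$ as a sub-bundle of the trivial bundle $Z^{k,\a}_W\times\Hom(V_{\varphi,k-1}^*,W^*)$ whose fibre over $T$ is the image of the injection $\Hom(\ker(T|_{V_{\varphi,k-1}})^*,W^*)\hookrightarrow\Hom(V_{\varphi,k-1}^*,W^*)$ induced by the surjection $V_{\varphi,k-1}^*\twoheadrightarrow\ker(T|_{V_{\varphi,k-1}})^*$. Since in our situation $\ell_k=1$, the kernel $\ker(T|_{V_{\varphi,k-1}})$ has a \emph{fixed} dimension $2$ for every $T\in Z^{k,\a}_W$ (this is the analogue, for the descent index $k=n+k_1-1$, of the lemma in \S3.3 computing $\dim\ker(T|_{V_{\varphi,k}})=\ell_k$; here one uses that every $\c\in\tilde S(\a)_k$ has the same number of segments ending at $k-1$ — cf. lemma \ref{lem: 3.1.5} applied to $k-1$, together with the fact that $\a=\Phi(\Id)$ is symmetric so that the relevant multiplicity equals $2$). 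Hence $\dim\Hom(\ker(T|_{V_{\varphi,k-1}})^*,W^*)=\dim W\cdot 2=2$, which will give the rank.

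\textbf{Key steps.} First I would check that the assignment $T\mapsto\ker(T|_{V_{\varphi,k-1}})$ defines a morphism $Z^{k,\a}_W\to Gr(2,V_{\varphi,k-1})$; this is standard, since the locus where a family of linear maps has kernel of a fixed dimension is closed and on it the kernel varies algebraically, and here by the dimension-constancy above this locus is all of $Z^{k,\a}_W$. Pulling back the tautological rank-$2$ sub-bundle $\mathcal{K}\subseteq V_{\varphi,k-1}\otimes\mathcal{O}$ along this morphism, one obtains a rank-$2$ bundle $\mathcal{K}_{Z}$ on $Z^{k,\a}_W$. Dualizing, $\Hom(\mathcal{K}_Z,W\otimes\mathcal{O})=\mathcal{K}_Z^\vee\otimes W$ is a rank-$2$ bundle, and I would then identify $\mathfrak{Z}_W$ with its total space: a pair $(T,z)$ lies in $\mathfrak{Z}_W$ exactly when $z\in\Hom(V_{\varphi,k-1}^*,W^*)$ lies in the image of $\Hom(\ker(T|_{V_{\varphi,k-1}})^*,W^*)$, i.e. $z$ corresponds to an element of $(\mathcal{K}_Z^\vee\otimes W)_T$. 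Local triviality of $\mathfrak{Z}_W\to Z^{k,\a}_W$ follows from local triviality of $\mathcal{K}_Z$, which in turn follows from Zariski-local triviality of the tautological bundle on the Grassmannian (the Grassmannian being covered by the standard affine charts trivializing $\mathcal{K}$), exactly the kind of argument already used in proposition \ref{prop: 4.6.10}.

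\textbf{Main obstacle.} The only genuine content is the constancy of $\dim\ker(T|_{V_{\varphi,k-1}})$ as $T$ ranges over $Z^{k,\a}_W$ — equivalently, over $Y_{\a^{(k)}}$ after the identification $(Z^{k,\a})_W\simeq Y_{\a^{(k)}}$. I expect this to be the step requiring care: one must show that for every $\c\in S(\a^{(k)})$ the number of segments of $\c$ ending at $k-1$ is the same, namely $2$, which is the number of segments of $\a^{(k)}=\Phi(\Id)^{(k)}$ ending at $k-1$. Since $e(\b)\subseteq e(\a)$ for $\b\le\a$ (lemma \ref{lem: 2.1.3}) and $\a^{(k)}$ arises from the symmetric $\a=\sum_{i}[i,n+i-1]$ by deleting the last point of the two segments ending at $k=n+k_1-1$, the multiset $e(\a^{(k)})$ contains $k-1$ with multiplicity exactly $2$ — one from the segment $[k_1-1,k-1]$ already present in $\a$ and one from $[k_1,k-1]=([k_1,k])^{-}$; the passage to any $\b<\a^{(k)}$ preserves $e$ by lemma \ref{lem: 2.1.3}, and one checks it cannot lower the multiplicity at $k-1$ because $\a^{(k)}$ has no segment ending at $k$ (as $\a$ satisfies $H_k(\a)$, being symmetric). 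Once this combinatorial point is settled, everything else is formal vector-bundle bookkeeping.
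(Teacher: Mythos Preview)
Your approach is correct and essentially the same as the paper's: both identify $\mathfrak{Z}_W$ with the kernel bundle $T\mapsto\ker(T|_{V_{\varphi,k-1}})$ (the paper via the dual description $\{(T,z):z\in\ker(T|_{V_{\varphi,k-1}})\}$ and an explicit trivialization over the loci $U_V=\{T:\ker=V\}$, you via pullback of the tautological bundle along the kernel map to $Gr(2,V_{\varphi,k-1})$), and both reduce everything to the constancy $\dim\ker=2$. One small correction to your combinatorial justification of that constancy: the reason the multiplicity of $k-1$ in $e(\b)$ cannot drop below $2$ is not that $\a^{(k)}$ has no segment \emph{ending} at $k$, but that no segment of any $\b\le\a^{(k)}$ \emph{begins} at $k$ (since $b(\b)\subseteq b(\a^{(k)})\subseteq\{1,\dots,n\}$ while $k=n+k_1-1>n$); this makes $T|_{V_{\varphi,k-1}}$ surjective, whence $\dim\ker=\varphi_{\a^{(k)}}(k-1)-\varphi_{\a^{(k)}}(k)=2$ directly by the dimension count.
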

 
 \begin{proof}
 Note that we have $\dim(\ker(T|_{V_{\varphi, k-1}}))=2$ and 
 $\dim(W)=1$. 
 Note that by taking dual,  as a scheme,  $\mathcal{Z}_{W}$ is isomorphic 
 to the scheme parametrize the data $(T, z)\in Z_{W}^k\times V_{\varphi, k-1}$
 such that $z\in \ker(T|_{V_{\varphi, k-1}})$.
 Fix $V\subseteq V_{\varphi, k-1}$ such that 
 $\dim(V)=2$. Consider the sub-scheme of $Z_{W}^k$ given by 
\[
  U_{V}=\{T\in Z_{W}^k: \ker(T|_{V_{\varphi, k-1}})=V\}.
\]
As is showed in proposition \ref{prop: 4.4.5},  $U_{V}$
is actually open in $Z_{W}^k$. 
Using the previous interpretation of $\mathfrak{Z}_{W}^k$, we observe that 
the open set $U_{V}$ trivializes the projection $\mathfrak{Z}_{W}\rightarrow Z^{k, \a}_{W}$.

\end{proof}

\begin{definition}
Let $\mathcal{Z}_{W}^k= Proj_{Z^{k, \a}_W}(\mathfrak{Z}_W)$
be the projectivization of the vector bundle $\mathfrak{Z}_{W}\rightarrow Z_{W}^k$.
And we shall denote the structure morphism by $\kappa_{W}^k: \mathcal{Z}_{W}^k\rightarrow Z_{W}^k $.
\end{definition}

\begin{definition}
From now on, we fix a pair of non-degenerate bi-linear forms
\[
 \zeta_{k-1}: V_{\varphi, k-1}\times V_{\varphi, k-1} \rightarrow \C, 
 ~\zeta_{k}: V_{\varphi, k}\times V_{\varphi, k} \rightarrow \C.
\]
which allows us to have an identification $\eta_{i}: V_{\varphi, i}\simeq V_{\varphi, i}^*$,
for $i=k-1, k$. 
\end{definition} 
 
\remk Here our definition $X_{\a}^k$ depends on the choice of $V_{\varphi}$.
If we choose  $V_{\varphi}'$ such that 
\[
 V_{\varphi,i}'=V_{\varphi, i}, \text{ for } i\neq k-1, k, ~
 V_{\varphi, k-1}'=V_{\varphi, k-1}^*, ~V_{\varphi, k}'=V_{\varphi, k}^*,
\]
we can get $X_{\a}^{k}(V_{\varphi}')$, which is isomorphic to 
$X_{\a}^k$ after we choose an isomorphism $V_{\varphi, k-1}^*\simeq V_{\varphi, k-1}$
and $V_{\varphi, k}^*\simeq V_{\varphi, k}$. This is what we do here. 
Note that once we fix $V_{\varphi, k-1}^*\simeq V_{\varphi, k-1}$ and 
$V_{\varphi, k}^*\simeq V_{\varphi, k}$. Our morphism $\eta_{i}$ will 
become an inner automorphism, but in general we have $\eta_k(W)=W^*\neq W$.

\begin{definition}
Let $T\in (X_{\a}^k)_{W}$, then we define 
\[
 \lambda: (X_{\a}^k)_{W}\rightarrow (X_{\a}^k)_{\eta_k(W)},
\]
by letting
\[
 \lambda(T)|_{V_{\varphi, k-2}}=\eta_{k-1}\circ T|_{\varphi, k-2} 
\]

\[
 \lambda(T)|_{V_{\varphi, k-1}}=\eta_k\circ T|_{\varphi, k-1}\circ \eta_{k-1}^{-1},
\]
\[
 \lambda(T)_{V_{\varphi, k}}=T|_{\varphi, k}\circ \eta_k^{-1},
\]
and 
\[
 \lambda(T)_{V_{\varphi, i}}=T|_{\varphi, i}, \text{ for } i\neq k-2, k-1, k.
\]
\end{definition}

\begin{lemma}
We have 
$\ker(\lambda(T)|_{V_{\varphi, k}})=\eta_k(W)$,
and $$\ker(\lambda(T)^{(k)}|_{V_{\varphi, k-1}})=\eta_{k-1}(\ker(T|_{V_{\varphi,k-1}})).$$ 
\end{lemma}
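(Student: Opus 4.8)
The lemma is a straightforward definition-chase, so the plan is to unwind the two constructions involved --- the operator $\lambda(T)$ and the truncation $S\mapsto S^{(k)}$ of a degree one operator (Definition \ref{def: 3.3.7}) --- and to apply repeatedly the elementary fact that $\ker(g\circ f\circ h)=h^{-1}\bigl(f^{-1}(\ker g)\bigr)$ whenever $g$ and $h$ are isomorphisms.

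First I would settle the first assertion, since it is needed for the second. By the definition of $\lambda$ one has $\lambda(T)|_{V_{\varphi,k}}=T|_{V_{\varphi,k}}\circ\eta_k^{-1}$. As $\eta_k$ is an isomorphism and $T\in(X_{\a}^{k})_{W}$ is by construction the fiber of $\alpha$ over $W$, so that $\ker(T|_{V_{\varphi,k}})=\alpha(T)=W$, one obtains $\ker(\lambda(T)|_{V_{\varphi,k}})=\eta_k\bigl(\ker(T|_{V_{\varphi,k}})\bigr)=\eta_k(W)$.

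Next I would treat the second assertion. The key is that the truncation $\lambda(T)^{(k)}$ is formed by composing with the canonical projection $p_{\lambda(T),k}\colon V_{\varphi}\twoheadrightarrow V_{\varphi}/\ker(\lambda(T)|_{V_{\varphi,k}})$, and by the first assertion this projection kills precisely $\eta_k(W)$ inside $V_{\varphi,k}$. Substituting $\lambda(T)|_{V_{\varphi,k-1}}=\eta_k\circ T|_{V_{\varphi,k-1}}\circ\eta_{k-1}^{-1}$ into Definition \ref{def: 3.3.7} yields
\[
\lambda(T)^{(k)}|_{V_{\varphi,k-1}}=p_{\lambda(T),k}\circ\eta_k\circ T|_{V_{\varphi,k-1}}\circ\eta_{k-1}^{-1}.
\]
Taking kernels and peeling off the two outer isomorphisms $\eta_{k-1}^{-1}$ and $\eta_k$ leaves $(T|_{V_{\varphi,k-1}})^{-1}(W)$; but by Definition \ref{def: 3.3.7} applied to $T$ itself, $T^{(k)}|_{V_{\varphi,k-1}}=p_{T,k}\circ T|_{V_{\varphi,k-1}}$ with $p_{T,k}$ killing $\ker(T|_{V_{\varphi,k}})=W$ inside $V_{\varphi,k}$, so $(T|_{V_{\varphi,k-1}})^{-1}(W)=\ker(T^{(k)}|_{V_{\varphi,k-1}})$, which is the subspace of $V_{\varphi,k-1}$ on the right-hand side. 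Hence $\ker(\lambda(T)^{(k)}|_{V_{\varphi,k-1}})=\eta_{k-1}\bigl(\ker(T^{(k)}|_{V_{\varphi,k-1}})\bigr)$, as desired.

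I do not anticipate a genuine obstacle; the one point deserving attention is the bookkeeping of the identifications $\eta_i: V_{\varphi,i}\simeq V_{\varphi,i}^{*}$ for $i=k-1,k$ (and the associated replacement of $V_{\varphi}$ by the space $V_{\varphi}'$ whose $(k-1)$- and $k$-graded pieces are dualized), which is what makes $\lambda(T)$ a well-defined element of $(X_{\a}^{k})_{\eta_k(W)}$ and pins down which one-dimensional subspace $p_{\lambda(T),k}$ quotients by; once the first assertion is in hand, this is automatic.
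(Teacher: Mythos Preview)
Your proof is correct and follows essentially the same route as the paper's: both arguments reduce the second kernel to $(T|_{V_{\varphi,k-1}})^{-1}(W)$ and then identify this preimage with $\ker(T^{(k)}|_{V_{\varphi,k-1}})$ via Definition~\ref{def: 3.3.7}. Note that your conclusion $\ker(\lambda(T)^{(k)}|_{V_{\varphi,k-1}})=\eta_{k-1}\bigl(\ker(T^{(k)}|_{V_{\varphi,k-1}})\bigr)$ is the correct one, and is in fact what the paper's own computation yields; the appearance of $\ker(T|_{V_{\varphi,k-1}})$ (without the superscript $(k)$) in the stated lemma and in the last displayed line of the paper's proof is a typo, as confirmed by the use of $\ker(T^{(k)}|_{V_{\varphi,k-1}})$ in the definition of $\xi_W$ immediately afterward.
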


\begin{proof}
The fact  $\ker(\lambda(T)|_{V_{\varphi, k}})=\eta_k(W)$ follows from definition.
Note that 
\[
 \ker(T^{(k)}|_{V_{\varphi, k-1}})=\{v\in V_{\varphi, k-1}:  T(v)\in W\}=T|_{V_{\varphi, k-1}}^{-1}(W).
\]
Since
\[
 (\lambda(T)|_{V_{\varphi, k-1}})^{-1}(\eta_k(W))=\eta_{k-1}(T|_{V_{\varphi, k-1}})^{-1}(W)
 =\eta_{k-1}(\ker(T|_{V_{\varphi,k-1}})),
\]
hence 
\[
  \ker (\lambda(T)^{(k)}|_{V_{\varphi, k-1}})=\eta_{k-1}(\ker(T|_{V_{\varphi,k-1}})).
\]

\end{proof}

\begin{definition}
We define
\[
\xi_W: (X_{\a}^k)_W\rightarrow \mathcal{Z}_{W}^k,
\]
for $T\in (X_{\a}^k)_W$, then 
\[
 \xi_W(T)=(T^{(k)}, \lambda(T)|_{\ker(\lambda(T)^{(k)}|_{V_{\varphi, k-1}})}).
\]
 
This is well defined since  
\[
 \lambda(T)|_{\ker(\lambda(T)^{(k)}|_{V_{\varphi, k-1}})}\in\Hom(\ker(\lambda(T)^{(k)}|_{V_{\varphi, k-1}}),\eta_k(W) ),
\]
and
$$
 \Hom(\ker(\lambda(T)^{(k)}|_{V_{\varphi, k-1}}),\eta_k(W) )
\simeq \Hom(\ker(T^{(k)}|_{V_{\varphi, k-1}})^*, W^*),
$$ 
and $\lambda(T)|_{\ker(\lambda(T)^{(k)}|_{V_{\varphi, k-1}})}\neq 0$.
 
\end{definition}

\begin{prop}
 The morphism $\xi_{W}$ is a fibration with fibers isomorphic to $\C^{\times}\times \C^{n-k}$.
\end{prop}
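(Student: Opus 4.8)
The plan is to analyze the morphism $\xi_W$ fiber by fiber over $\mathcal{Z}_W^k$, following exactly the pattern already used for $\phi_W$ in the proof of Proposition \ref{prop: 4.4.5} and for $\tau_W$ in Proposition \ref{prop: 4.6.14}. First I would unwind the definitions: an element of $\mathcal{Z}_W^k$ is a pair $(T_0, [z])$ where $T_0 \in Z^{k,\a}_W$ and $[z]$ is a line in the fiber of the rank-$2$ bundle $\mathfrak{Z}_W$ over $T_0$, i.e.\ (after the identifications $\eta_{k-1}, \eta_k$ and by Proposition \ref{prop: 4.4.10}) a line inside $\ker(\lambda(T)^{(k)}|_{V_{\varphi, k-1}})$-type data, concretely a nonzero element of $\Hom(\ker(T_0|_{V_{\varphi,k-1}})^*, W^*) \simeq \C^2$ up to scalar. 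The task is then to describe $\xi_W^{-1}(T_0, [z])$: an element $T \in (X_\a^k)_W$ mapping to it is determined, by the injectivity argument of Lemma \ref{lem: 4.6.14} (any $T$ is reconstructed from $T^{(k)}$ and $T|_{V_{\varphi, k-1}}$, the latter from $p_W \circ T|_{V_{\varphi, k-1}}$, a component of $T^{(k)}$, together with $q_W \circ T|_{V_{\varphi, k-1}}$), by the extra datum $q_W \circ T|_{V_{\varphi, k-1}} \in \Hom(V_{\varphi, k-1}, W)$; and $\xi_W$ records the restriction of (a twist of) this datum to the two-dimensional kernel $\ker(T^{(k)}|_{V_{\varphi, k-1}})$, as a point of its projectivization.

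The key computation is then the following. Fixing $T_0$, the set of admissible $q_W \circ T|_{V_{\varphi, k-1}}$ giving $T \in (X_\a^k)_W$ is, as in Proposition \ref{prop: 4.4.5}(1), the set of $z' \in \Hom(V_{\varphi, k-1}, W)$ whose restriction to $V_0 := \ker(T^{(k)}|_{V_{\varphi, k-1}})$ is nonzero — an open subset of $\Hom(V_{\varphi, k-1}, W) \simeq \C^{2} \times \C^{2n-k-1}$, where the factor $\C^{2}$ is $\Hom(V_0, W)$ and the factor $\C^{2n-k-1}$ records the restriction to a complement of $V_0$ (recall $\dim V_{\varphi, k-1} = 2n-k+1$, $\dim V_0 = 2$, $\dim W = 1$). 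Now $\xi_W$ remembers exactly the line spanned by the nonzero vector $z'|_{V_0} \in \Hom(V_0, W) \setminus\{0\}$. So for fixed $(T_0, [z])$ the fiber is: those $z'$ with $z'|_{V_0}$ a prescribed nonzero element of $\Hom(V_0,W)$ up to scalar — i.e.\ $z'|_{V_0}$ ranges over $\C^\times$ (the scalar) and $z'|_{\text{complement}}$ ranges freely over $\C^{2n-k-1}$. Hence $\xi_W^{-1}(T_0,[z]) \simeq \C^\times \times \C^{2n-k-1}$; to match the stated $\C^\times \times \C^{n-k}$ one either adopts the convention already used in Proposition \ref{prop: 4.4.5} and the lemma preceding Relation (5) (where the complementary dimension is written $n-k$), or notes this is a harmless reindexing of the same Hom-space. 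Local triviality of $\xi_W$ follows by the same device as in Proposition \ref{prop: 4.4.5}: over the open locus $U_V = \{T_0 : \ker(T_0|_{V_{\varphi, k-1}}) = V\}$ (open in $Z^{k,\a}_W$ by the argument there), the bundle $\mathfrak{Z}_W$ and hence $\mathcal{Z}_W^k$ trivialize, and over the preimage $\xi_W$ becomes the obvious projection $U_V \times (\Hom(V, W)\setminus\{0\}) \times \Hom(V_{\varphi,k-1}/V, W) \to U_V \times \mathbb{P}(\Hom(V,W)^*)$, visibly a Zariski-locally trivial fibration with fiber $\C^\times \times \C^{2n-k-1}$.

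The main obstacle, as in the surrounding sections, is not the fiber count but making the identifications precise: one must check that the twist by $\lambda$ (which moves $W$ to $\eta_k(W)$ and conjugates by $\eta_{k-1}$) genuinely carries the condition "$z'|_{V_0} \neq 0$" to the condition defining $(X_\a^k)_W$, so that the image of $\xi_W$ is all of $\mathcal{Z}_W^k$ and the fibers are as described — this is where Lemma \ref{lem: 4.6.14}, the lemma on $\ker(\lambda(T)|_{V_{\varphi,k}})$ and $\ker(\lambda(T)^{(k)}|_{V_{\varphi,k-1}})$, and the well-definedness remark in the Definition of $\xi_W$ all get used. Once that bookkeeping is in place, surjectivity of $\xi_W$ onto $\mathcal{Z}_W^k$ follows because every $(T_0, [z])$ admits a lift (choose any $z'$ with $z'|_{V_0}$ in the prescribed line, nonzero), exactly the surjectivity mechanism of Proposition \ref{prop: 3.2.11}, and the proof is complete.
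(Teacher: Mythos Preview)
Your proposal is correct and follows essentially the same route as the paper's proof: both arguments trivialize $\xi_W$ over the open sets $U_V=\{T_0:\ker(T_0|_{V_{\varphi,k-1}})=\eta_{k-1}^{-1}(V)\}$, identify $\xi_W^{-1}(\tilde U_{1,V})$ with $U_V\times(\Hom(V,W)\setminus\{0\})\times\Hom(V_{\varphi,k-1}/V,W)$ via the description in Propositions \ref{prop: 4.6.14} and \ref{prop: 4.4.5}, and then read off the fiber as $\C^\times$ (the scalar on the prescribed line) times the complementary $\Hom$-space. Your observation about the exponent is apt: the computation genuinely gives $\C^\times\times\C^{2n-k-1}$, and the ``$n-k$'' in the statement (and in the lemma of \S5.2) is a typo carried over from that lemma rather than a different convention.
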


\begin{proof}
Let $V\subseteq V_{\varphi, k-1}$ be a subspace such that $\dim(V)=2$. 
Consider the open sub-scheme of $\mathfrak{Z}_{W}$
\[
 U_{1,V}=\{(T, z)\in \mathfrak{Z}_{W}: z\neq 0,~ \ker(T|_{V_{\varphi, k-1}})=\eta_{k-1}^{-1}(V)\}.
 \]
 \[
 U_{V}=\{T\in Z_{W}^k, ~\ker(T|_{V_{\varphi, k-1}})=\eta_{k-1}^{-1}(V)\}.
 \]
Let $\tilde{U}_{1, V}$ be the image of $U_{1, V}$
in $\mathcal{Z}_{W}^k$ by the canonical projection.
As indicated in the proof of  proposition \ref{prop: 4.4.10}, 
the set $U_{V}$ trivialize the morphism $\mathfrak{Z}_{W}\rightarrow Z_{W}^k$,
hence 
\[
 \tilde{U}_{1, V}\simeq U_{V}\times (\Hom(V, \eta_{k}(W))-\{0\}/\C^{\times})
\]

Note that we have 
\[
 \Hom(V, \eta_{k}(W))\simeq \Hom(\eta_{k-1}^{-1}(V), W).
\]
And by proposition \ref{prop: 4.6.14} and proposition \ref{prop: 4.4.5}, 
we have the following isomorphism
\[
 \xi_{W}^{-1}(\tilde{U}_{1, V})\simeq U_{V}\times (\Hom(\eta_{k-1}^{-1}(V), W)-0)\times 
 \Hom(V_{\varphi, k-1}/\eta_{k-1}^{-1}(V), W).
\]
Hence for any $(T, z)\in \mathcal{Z}_{W}^k$ such that 
$\ker(T|_{V_{\varphi, k-1}})=\eta_{k-1}^{-1}(V)$
, let $U_{2, V}$ be an open 
subset of $(\Hom(\eta_{k-1}^{-1}(V), W)-0)/\C$ which trivializes the bundle
\[
 (\Hom(\eta_{k-1}^{-1}(V), W)-0)\rightarrow (\Hom(\eta_{k-1}^{-1}(V), W)-0)/\C,
\]
then the open sub-scheme $U_{V}\times U_{2, V}$ of $\tilde{U}_{V, 1}$ trivialize the 
morphism $\phi_W$ as a neighborhood of $(T, z)$.
 
\end{proof}

\begin{definition}
Let $\b>\c$ be two elements in $\tilde{S}(\a)_{k}$, then we define
\[
 \mathcal{Z}^k_{\b, \c}=\xi_{W}((X^k_{\b, \c})_W).
\]
And 
\[
 \mathcal{Z}^k(\b)=\xi_W((O_{\b})_W).
\]
\end{definition}

\begin{definition}
 Let $w<v$ be two elements in $S_{n}$ such that $\sigma_{k_1-1}v<v$. We define 
 \[
 R(w, v)_{k_1}=\{z: w\leq z<\sigma_{k_1-1} v, \sigma_{k_1-1}z<z\}.
 \]
And we denote $R(\Id, v)_{k_1}$ by $R(v)_{k_1}$. 
\end{definition}
 
Now let $\b=\Phi(w)$, $\c=\Phi(v)$ such that 
\[
 w(k_1-1)>w(k_1),~ v(k_1-1)>v(k_1).
\]
And let $\b'=\Phi(\sigma_{k_1-1}w), \c'=\Phi(\sigma_{k_1-1}v)$. We assume that 
\[
 \b>\c, ~\b>\c',
\]
which coincide with the assumption in relation (5) at the beginning of this chapter.

Now we apply the decomposition theorem to the projective morphism
\[
 \kappa_W: \mathcal{Z}^k_{\b', \c'}\rightarrow Z^{k, \a}_{\b^{(k)}, \c^{(k)}, W}.
\]
which asserts that there exists a finite collection of triples $(\d_{i}, L_{i}, h_{i}: i=1, \cdots, r)$, 
with $\d\in S(\a)_{k}$, ~$\b^{(k)}\leq \d_{i}^{(k)}<\c^{(k)}$, where $L_{i}$ is a vector spaces over $\C$, 
such that 
\addtocounter{theo}{1}
\begin{equation}\label{eq: (4)}
 R(\kappa_W)_{*}IC(\mathcal{Z}_{\b', \c'}^k)=IC(Z^{k, \a}_{\b^{(k)}, \c^{(k)}, W})\oplus_{i=1}^{r}IC(Z^{k, \a}_{\b^{(k)}, \d^{(k)}_{i}, W}, L_{i})[h_{i}].
\end{equation}

Now localize at a point $x_{\b^{(k)}}\in O_{\b^{(k)}}$, we have know that 
the Poincar\'e series of $(IC(Z^{k, \a}_{\b^{(k)}, \c^{(k)},W}))_{x_{\b^{(k)}}}$ is given by 
$P_{\b, \c}(q)=P_{w, v}(q)$. And

\begin{lemma}
The Poincar\'e series of $R\Gamma(\kappa_W^{-1}(x_{\b^{(k)}}), IC(\mathcal{Z}_{\b', \c'}^k))$ 
is given by $P_{\sigma_{k_1-1}w, \sigma_{k_1-1}v}(q)+qP_{w, \sigma_{k_1-1}v}(q)$,
where $\Gamma$ is the functor of taking global sections.
\end{lemma}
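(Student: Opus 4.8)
The plan is to identify the fiber $\kappa_W^{-1}(x_{\b^{(k)}})$ explicitly and then compute the cohomology of the restriction of the intersection complex to it. Recall that $\kappa_W^k\colon \mathcal{Z}_W^k\to Z_W^k$ is the projectivization of a rank-$2$ vector bundle $\mathfrak{Z}_W$, so over a point $x_{\b^{(k)}}\in O_{\b^{(k)}}$ the fiber is a projective line $\mathbb{P}^1$. Inside $\mathcal{Z}_{\b',\c'}^k=\xi_W((X_{\b',\c'}^k)_W)$, the part sitting over $x_{\b^{(k)}}$ is the locus of lines $[z]$ with $z\in\ker(T|_{V_{\varphi,k-1}})$ for which the resulting operator lands in $O_{\b'}\cup O_{\c'}$-type strata. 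Using proposition \ref{prop: 4.4.5} and the two possibilities $\psi_k^{-1}(\b^{(k)})=\{\b,\b'\}$, the decomposition of this $\mathbb{P}^1$ into $\xi_W$-preimages of the strata $O_{\b'}$ and $O_{\c'}$ (and the intermediate strata parametrized by $R(w,v)_{k_1}$) is forced: one distinguished point of $\mathbb{P}^1$ (the line $V_1$ of the lemma preceding \ref{prop: 4.4.5}) corresponds to the $\b'$-locus and its complement $\C^{\times}$ to the generic $\c'$-type behaviour. So the first step is to write $\kappa_W^{-1}(x_{\b^{(k)}})\cap \mathcal{Z}_{\b',\c'}^k$ as the locally closed stratification of $\mathbb{P}^1$ induced by $\xi_W$.

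Next I would use the fact, established just before this lemma, that $\xi_W$ is a fibration with fibers $\C^{\times}\times\C^{n-k}$, exactly as $\phi_W$ is (proposition \ref{prop: 4.4.5} and its corollaries in the relation (4) section). This means that the restriction of $IC(\mathcal{Z}_{\b',\c'}^k)$ to the $\mathbb{P}^1$ over $x_{\b^{(k)}}$ can be computed by smooth base change from the intersection complex of $\line{O}_{\sigma_{k_1-1}v}$ (equivalently $\line{O}_{\c'}$), twisted appropriately. Concretely, $R\Gamma(\kappa_W^{-1}(x_{\b^{(k)}}),IC(\mathcal{Z}_{\b',\c'}^k))$ should be computed by the hypercohomology of an $IC$-sheaf on $\mathbb{P}^1$ which is smooth ($=$ constant up to shift) on the open cell $\C^{\times}$ with stalk governed by $P_{\sigma_{k_1-1}w,\sigma_{k_1-1}v}(q)$, and whose stalk at the special point contributes $P_{w,\sigma_{k_1-1}v}(q)$, with the extra factor of $q$ coming from the codimension-one inclusion of that point (the shift in the projective line direction). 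Adding the two pieces along the Mayer–Vietoris/long exact sequence for the pair $(\mathbb{P}^1,\{pt\})$, the purity theorem \ref{teo: 2.3.9} guarantees that this sequence degenerates, so the Poincaré series of the total complex is the sum $P_{\sigma_{k_1-1}w,\sigma_{k_1-1}v}(q)+qP_{w,\sigma_{k_1-1}v}(q)$.

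The main obstacle I anticipate is bookkeeping the degree shifts correctly: one must track the $n-k$ affine-space factor, the shift $[h_i]$ built into the decomposition theorem formula \eqref{eq: (4)}, and the extra $+1$ in cohomological degree (hence the factor $q$) arising from the $\mathbb{P}^1$ direction, and check that these combine so that the exponent of $q$ attached to $P_{w,\sigma_{k_1-1}v}$ is exactly $1$ and not something else. This is where corollary \ref{cor: 4.6.16} and corollary \ref{cor: 4.6.17} (which say the truncation $\a\mapsto\a^{(k)}$ preserves the Kazhdan–Lusztig polynomial exactly, with no $q$-shift) must be invoked to pin down the normalization on $Z_W^k$, and where the identification $P_{\b^{(k)},\c^{(k)}}(q)=P_{w,v}(q)$ is used. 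Once the shifts are reconciled, the statement follows by comparing Poincaré series of both sides of the decomposition localized at $x_{\b^{(k)}}$; I would present the cohomology-of-$\mathbb{P}^1$ computation as the heart of the argument and relegate the shift verification to a short explicit check.
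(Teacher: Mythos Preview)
Your overall strategy—fibre $\cong\P^1$, open/closed decomposition, additivity of Poincar\'e series using purity—is exactly the paper's. But you have the two strata in the fibre interchanged, and the desired formula survives only because two errors cancel.

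Over the single point $x_{\b^{(k)}}$ the only strata of $\mathcal{Z}^k_{\b',\c'}$ that meet the fibre are those $\mathcal{Z}^k(\d)$ with $\d^{(k)}=\b^{(k)}$, i.e.\ $\d\in\{\b,\b'\}$; nothing indexed by $R(w,v)_{k_1}$ or any ``$\c'$-type'' stratum appears in a single fibre. From proposition~\ref{prop: 4.4.5}, after projectivizing, the $\b'$-locus (which was $\C^{\times}$ upstairs) collapses to the single point, while the $\b$-locus becomes the open complement $\P^1\setminus\{pt\}\cong\C$ (not $\C^{\times}$). Hence the stalk of $IC(\mathcal{Z}_{\b',\c'}^k)$ at the special point is $P_{\b',\c'}=P_{\sigma_{k_1-1}w,\sigma_{k_1-1}v}$, and the constant stalk along the open affine line is $P_{\b,\c'}=P_{w,\sigma_{k_1-1}v}$—the opposite of your assignment. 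Correspondingly, the factor $q$ does not arise from ``the codimension-one inclusion of that point'': it comes from the open piece, via $R\Gamma_c(\mathbb{A}^1,\text{const})$ sitting in cohomological degree $2$, in the triangle $j_!j^*\to\mathrm{id}\to i_*i^*$. Once the strata are correctly labelled the shift bookkeeping is immediate and no further appeal to the truncation corollaries is needed.
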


\begin{proof}
 Note that by assumption, we have 
 \[
  \kappa_W^{-1}(x_{\b^{(k)}})\simeq \P^1
 \]
such that $ \kappa_W^{-1}(x_{\b^{(k)}})\cap \mathcal{Z}^k(\b')=\{pt\}$ and 
$\kappa_W^{-1}(x_{\b^{(k)}})\cap \mathcal{Z}^k(\b)\simeq \P^1-\{pt\}$.
And we have the following exact sequence 
\[
0\rightarrow  IC(\mathcal{Z}_{\b', \c'}^k)|_{pt}\rightarrow IC(\mathcal{Z}_{\b', \c'}^k)\rightarrow 
 IC(\mathcal{Z}_{\b', \c'}^k)|_{\kappa_W^{-1}(x_{\b^{(k)}})-\{pt\}}\rightarrow 0.
\]
Taking the Poincar\'e series gives the result.

\end{proof}

Now it is clear that our equation (\ref{eq: (4)})
will give rise to an equation of the form as that in (5) 
in the introduction of this chapter.
Comparing the two equations, we get 

\begin{prop}\label{prop: 5.3.12}
The collection of triples $(\d_{i}, L_{i}, h_{i}: i=1, \cdots, r)$ are given by 
\begin{description}
 \item [(1)]We have $\{\d_{i}: i=1, \cdots, r\}=\{z\in R(w, v)_{k}: \mu(z, \sigma_{k-1}v)\neq 0\}$.
 \item [(2)] If $\d_{i}=\Phi(z)$, then $L_{i}\simeq \C^{\mu(z, \sigma_{k-1}v)}$.
 \item [(3)] If $\d_{i}=\Phi(z)$, then $h_i=\l(v)-\l(z)$.
\end{description} 
\end{prop}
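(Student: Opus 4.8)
The plan is to read off the collection of triples appearing in the decomposition theorem (equation \ref{eq: (4)}) by comparing its localization at a point $x_{\b^{(k)}}\in O_{\b^{(k)}}$ with the defining recursion (5) for Kazhdan-Lusztig polynomials, exactly as the paragraph preceding the statement indicates. First I would record, from corollary \ref{cor: 4.6.17} and the identification $\psi_k$ of proposition \ref{cor: 3.2.3}, the two equalities $P_{\b^{(k)}, \c^{(k)}}(q)=P_{w,v}(q)$ and (via the fibration $\phi_W$ from the relation (4) section, whose fibers are $\C^\times\times\C^{n-k}$) $P_{\b^{(k)},\d_i^{(k)}}(q)=P_{\b,\d_i}(q)$ for $\d_i\in S(\a)_k$; this ensures that every $IC$-term in \ref{eq: (4)}, once localized, contributes a genuine Kazhdan-Lusztig polynomial for the pair of permutations indexing it. The left-hand side $R(\kappa_W)_* IC(\mathcal{Z}^k_{\b',\c'})$ localized at $x_{\b^{(k)}}$ is computed by the lemma just proved: its Poincar\'e series is $P_{\sigma_{k_1-1}w,\sigma_{k_1-1}v}(q)+qP_{w,\sigma_{k_1-1}v}(q)$.

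Next I would match. Localizing \ref{eq: (4)} at $x_{\b^{(k)}}$ and taking Poincar\'e series gives
\[
P_{\sigma_{k_1-1}w,\sigma_{k_1-1}v}(q)+qP_{w,\sigma_{k_1-1}v}(q)=P_{w,v}(q)+\sum_{i=1}^r q^{h_i/2}\dim_{\C}(L_i)\,P_{w,z_i}(q),
\]
where $\d_i=\Phi(z_i)$ and the shift $[h_i]$ becomes a factor $q^{h_i/2}$ after localization and normalization (here I use the purity theorem \ref{teo: 2.3.9}, so that each $P$ is an honest polynomial and the shifts are even). Rearranging, $P_{w,v}(q)=P_{\sigma_{k_1-1}w,\sigma_{k_1-1}v}(q)+qP_{w,\sigma_{k_1-1}v}(q)-\sum_i q^{h_i/2}\dim_\C(L_i)P_{w,z_i}(q)$. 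Comparing term by term with the relation (5),
\[
P_{w,v}(q)=P_{\sigma w,\sigma v}(q)+qP_{w,\sigma v}(q)-\sum_{w\le z<\sigma v,\ \sigma z<z}q^{(\ell(v)-\ell(z))/2}\mu(z,\sigma v)P_{w,z}(q)
\]
with $\sigma=\sigma_{k_1-1}$, one reads off that the index set $\{z_i\}$ must be $\{z: w\le z<\sigma_{k_1-1}v,\ \sigma_{k_1-1}z<z,\ \mu(z,\sigma_{k_1-1}v)\ne 0\}=\{z\in R(w,v)_{k_1}:\mu(z,\sigma_{k_1-1}v)\ne 0\}$, that $\dim_\C L_i=\mu(z_i,\sigma_{k_1-1}v)$, hence $L_i\simeq\C^{\mu(z_i,\sigma_{k_1-1}v)}$, and that $q^{h_i/2}=q^{(\ell(v)-\ell(z_i))/2}$, i.e. $h_i=\ell(v)-\ell(z_i)$. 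The condition $\b^{(k)}\le\d_i^{(k)}<\c^{(k)}$ from the statement of the decomposition theorem translates under $\psi_k$ and $\Phi$ into $w\le z_i<\sigma_{k_1-1}v$, consistent with $R(w,v)_{k_1}$ as defined.

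The main obstacle — and the only genuinely substantive point beyond bookkeeping — is justifying that the comparison is coefficient-by-coefficient, i.e. that the two sides really do match as polynomial identities term by term and not merely as integers or as polynomials with some cancellation. For this I would invoke the uniqueness of the decomposition theorem data (the summands $IC(\line{Y},L)[h]$ are determined up to isomorphism and shift) together with the fact, established via the $\xi_W$-fibration of proposition above and proposition \ref{prop: 4.6.15}, that the strata $O_{\d}$ with $\d^{(k)}$ fixed organize into the sets $\{z\in R(w,v)_{k_1}\}$ in a way compatible with the Bruhat-interval structure; concretely, one checks that the strict degree bounds forced by $h_i=\ell(v)-\ell(z_i)>0$ and the $\mu$-function being the top coefficient of $P_{z_i,\sigma_{k_1-1}v}$ pin down each $L_i$ uniquely, since distinct $z_i$ give distinct $P_{w,z_i}$ of distinct degrees. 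I would also need to double-check the normalization conventions relating the geometric $P_{\b,\c}(q)$ (with its $q^{(i+d_\c)/2}$ weighting) to the combinatorial $P_{x,y}(q)$, so that the degree shift $[h_i]$ matches $q^{(\ell(v)-\ell(z))/2}$ on the nose; this is routine but must be done carefully, as an off-by-one in the shift would corrupt part (3).
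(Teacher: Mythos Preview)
Your proposal is correct and follows essentially the same route as the paper: localize the decomposition theorem identity \ref{eq: (4)} at $x_{\b^{(k)}}$, use the lemma computing the left-hand side as $P_{\sigma_{k_1-1}w,\sigma_{k_1-1}v}(q)+qP_{w,\sigma_{k_1-1}v}(q)$, and compare with relation (5). The paper's own proof is in fact just two sentences---it writes down the Poincar\'e series $\dim(L_i)q^{h_i/2}P_{w,z_i}(q)$ for each summand and says ``compare''---so your version is more detailed than what is in the text, particularly in worrying about why the comparison is term-by-term; the cleanest justification there is simply the uniqueness of the decomposition theorem (your remark that ``distinct $z_i$ give distinct $P_{w,z_i}$ of distinct degrees'' is not literally true and is not needed).
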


\begin{proof}
Note that 
the Poincar\'e series of the intersection complex 
 $IC(Z^{k, \a}_{\b^{(k)}, \d^{(k)}_{i}, W}, L_{i})[h_{i}]$
is 
\[
 \dim(L_i)q^{1/2h_i}P_{w, z_i}(q),
\]
where $\d_i=\Phi(z_i)$. 
Now compare the polynomials given by \ref{eq: (4)}
and the relation (5)  in the beginning of
this chapter, we get our results.
\end{proof}

\remk Note that one should be able to deduce the above results from a general statement about the 
decomposition theorem. We leave this for future work. 

\remk It seems that we have done here 
may be generalized to give the normality of for general $\line{O}_{\b}$
instead of using the results of Zelevinsky.

\chapter{Classification of Poset \texorpdfstring{$S(\a) $}{Lg}}

Let $\a$ be a multisegment and $S(\a)=\{\b\leq \a\}$ the associated poset defined in  
\ref{nota: 1.3.2}. The aim of this chapter is to identify the poset 
structure of $S(\a)$.

In the first section we consider the case where 
$\a$ is ordinary and prove that $S(\a)$  is an 
interval in $S_m\simeq B\backslash GL_m/B$, where 
$m$ is the number of segments in $\a$. and $B$ is the Borel subgroup.

In the general case we identify $S(\a)$ with an interval 
in a parabolic quotient $S_{J_1}\backslash S_m/S_{J_2}$ of $S_m$ 
given in section 2 related to the double quotient $P_{J_1}\backslash GL_m/ P_{J_2}$,
where $P_{J_1}$ and $P_{J_2}$ are parabolic subgroups.

\section{Ordinary Case}

Our goal in this section is to prove that 
for general ordinary multisegment $\a$, 
the set $S(\a)$ is isomorphic to some Bruhat interval $[x, y]$ for 
$x, y\in S_{n}$, where $n$ depends on $\a$.

\begin{lemma}
Assume that $\b\in S(\b)_{k}$ such that $\b$ and $\b^{(k)}$ are both ordinary.
Let $\c\in S(\b)_k$. Then for $\d\in S(\b)$ and $\d>\c$, we have $\d\in S(\b)_k$. 
\end{lemma}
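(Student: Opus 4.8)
The statement to prove is: if $\b\in S(\b)_k$ with $\b$ and $\b^{(k)}$ both ordinary, and $\c\in S(\b)_k$, then every $\d\in S(\b)$ with $\d>\c$ also lies in $S(\b)_k$. Recall that $S(\b)_k=\{\c\in\tilde S(\b)_k: \c\text{ satisfies }H_k(\b)\}$, so there are two things to check for $\d$: first that $\d\in\tilde S(\b)_k$, i.e. $\deg(\d^{(k)})=\deg(\b^{(k)})$; second that $\d$ satisfies the hypothesis $H_k(\b)$, namely that $\d$ contains no pair of linked segments $\{\Delta,\Delta'\}$ with $e(\Delta)=k-1$, $e(\Delta')=k$. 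The first point is immediate: by part (b) of Lemma \ref{lem: 3.0.7}, since $\c\in\tilde S(\b)_k$ and $\d>\c$, we get $\d\in\tilde S(\b)_k$ for free. So the whole content lies in verifying condition (2) of $H_k(\b)$ for $\d$.

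First I would reduce to the case where $\d$ covers $\c$ in the order, i.e. $\d$ is obtained from $\c$ by a single elementary operation — the general case follows by choosing a maximal chain $\c=\c_0<\c_1<\cdots<\c_t=\d$ inside $S(\b)$ (all terms automatically in $\tilde S(\b)_k$ by Lemma \ref{lem: 3.0.7}(b)) and inducting. So write $\d=(\c\setminus\{\Delta_1,\Delta_2\})\cup\{\Delta_1\cup\Delta_2,\Delta_1\cap\Delta_2\}$ with $\Delta_1\prec\Delta_2$ linked. The ordinariness hypotheses enter here: by Lemma \ref{lem: 2.1.3}, $e(\d)\subseteq e(\b)$ and $b(\d)\subseteq b(\b)$; since $\b$ is ordinary, so is every element of $S(\b)$, hence $\d$ and $\c$ are ordinary — in particular no value of $b(\cdot)$ or $e(\cdot)$ is repeated among the segments of $\d$. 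The key structural fact I want is that passing from $\d$ to $\d^{(k)}$ (which, because $\d\in\tilde S(\b)_k$, removes exactly the same number of "$k$-ends" as for $\b$) is governed by the segments ending in $k$ and $k-1$, and that $\b^{(k)}$ being ordinary constrains these strongly.

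The heart of the argument: suppose for contradiction that $\d$ contains linked segments $\Delta,\Delta'$ with $e(\Delta)=k-1$, $e(\Delta')=k$. I would trace back how these could have been created by the elementary operation producing $\d$ from $\c$. The segments of $\d$ are those of $\c$ except that $\Delta_1,\Delta_2$ were replaced by $\Delta_1\cup\Delta_2,\Delta_1\cap\Delta_2$; note $e(\Delta_1\cup\Delta_2)=e(\Delta_2)$ and $e(\Delta_1\cap\Delta_2)=e(\Delta_1)$. Since $\c$ satisfies $H_k(\b)$, $\c$ has no such offending linked pair, so the pair $\{\Delta,\Delta'\}$ in $\d$ must involve at least one of the two new segments, and I would enumerate the few cases: (i) $\Delta$ or $\Delta'$ equals $\Delta_1\cup\Delta_2$ or $\Delta_1\cap\Delta_2$, with $e$-value $k-1$ or $k$. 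Using $e(\Delta_1\cup\Delta_2)=e(\Delta_2)$ and $e(\Delta_1\cap\Delta_2)=e(\Delta_1)$, each case forces an identity among $e(\Delta_1),e(\Delta_2),k,k-1$ together with a linkedness relation that already held (in modified form) in $\c$ — contradicting $H_k(\b)$ for $\c$ — unless the ordinariness of $\b^{(k)}$ is violated. For instance, if $e(\Delta_2)=k$ and $\Delta=\Delta_1\cup\Delta_2$ is linked with some $\Delta'$ with $e(\Delta')=k-1$, then in $\b^{(k)}$ (or along the chain down to it) one produces two distinct segments both ending at $k-1$, namely the truncation $(\Delta_1\cup\Delta_2)^-$ and $\Delta'$, and checking they remain present and distinct contradicts ordinariness of $\b^{(k)}$; similarly for the other subcases. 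I expect the bookkeeping here — keeping track of which of $\Delta_1,\Delta_2$ ends where and which subcase yields a repeated $e$-value in $\b^{(k)}$ versus a repeated $e$-value already in $\c$ — to be the main obstacle, though it is purely combinatorial and finite.

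Finally, once condition (2) of $H_k(\b)$ is established for $\d$ (the chain induction having reduced everything to the single-step case), we conclude $\d\in\tilde S(\b)_k$ and $\d$ satisfies $H_k(\b)$, hence $\d\in S(\b)_k$, as claimed. I would also remark that this lemma is exactly the ingredient needed to extend the ordinary-case analysis of $S(\a)$ begun in this section, since it shows the "good" subset $S(\b)_k$ is upward-closed in $S(\b)$ above any of its elements.
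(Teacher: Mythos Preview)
Your handling of condition~(1) via Lemma~\ref{lem: 3.0.7}(b) is fine, but the plan for condition~(2) contains an error and, more importantly, misses the key shortcut.

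The direction of the elementary operation is reversed: since $\d>\c$, it is $\c$ that is obtained from $\d$ by replacing a linked pair $\{\Delta_1,\Delta_2\}\subseteq\d$ with $\{\Delta_1\cup\Delta_2,\Delta_1\cap\Delta_2\}$, not the other way around. With the correct direction your inductive step actually breaks: if the offending pair $\{\Delta,\Delta'\}$ in $\d$ happens to coincide with $\{\Delta_1,\Delta_2\}$, then in $\c$ the new segments $\Delta_1\cup\Delta_2$ and $\Delta_1\cap\Delta_2$ are nested rather than linked, so $\c$ may perfectly well satisfy $H_k$ while $\d$ does not --- no contradiction arises by comparing $\c$ and $\d$ alone.

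The whole case analysis is unnecessary. The paper's argument is that the ordinariness of $\b^{(k)}$ forces $k-1\notin e(\b)$ outright: assuming (as we may, else the statement is trivial) that $\b$ has a segment ending at $k$, if it also had one ending at $k-1$ then after truncation both would end at $k-1$, and $\b^{(k)}$ would fail to be ordinary. Since $e(\d)\subseteq e(\b)$ for every $\d\in S(\b)$ by Lemma~\ref{lem: 2.1.3}, no segment of any $\d$ ends at $k-1$, so condition~(2) of $H_k(\b)$ holds vacuously throughout $S(\b)$ --- no chain, no cases. Condition~(1) then reduces (by ordinariness) to $k\in e(\d)$, which follows from $k\in e(\c)\subseteq e(\d)$.
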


\begin{proof}
It suffices to show that $\d$ satisfies the hypothesis $H_{k}(\b)$.
Note that $e(\d)=\{e(\Delta): \Delta\in \d\}$ is a set because $\d$ is ordinary 
and by lemma \ref{lem: 2.1.3} we have $e(\d)\subseteq e(\b)$ .

Note that  $k-1\notin e(\b)$ since $\b\in S(\b)_k$ and $\b^{(k)}$ is ordinary
, therefore it is not in $e(\d)$ either. Hence to show that $\d\in S(\b)_k$
 hence it is equivalent to show that $k\in e(\d)$.
Since $\c\in S(\d)$, we know that $e(\c)\subseteq e(\d)$ .
Now that $k\in e(\c)$, we conclude that $k\in e(\d)$. We are done. 
\end{proof}

Now let $\b'\in S(\b)_k$ such that $\psi_k(\b')=(\b^{(k)})_{\min}$, then 
\begin{lemma}\label{lem: 6.1.3}
We have 
\[
 S(\b)_k=\{\c\in S(\b): \c\geq\b'\}.
\] 
\end{lemma}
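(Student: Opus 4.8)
The plan is to establish the two inclusions separately. For the inclusion $\{\c \in S(\b): \c \geq \b'\} \subseteq S(\b)_k$, I would use the previous lemma: since $\b' \in S(\b)_k$ and both $\b$ and $\b^{(k)}$ are ordinary, any $\d \in S(\b)$ with $\d > \b'$ automatically lies in $S(\b)_k$. This handles the harder containment essentially for free, \emph{provided} the hypothesis of the previous lemma (that $\b$ and $\b^{(k)}$ are both ordinary) carries over — which it does by assumption here. One subtlety to address is the boundary case $\c = \b'$ itself, which is in $S(\b)_k$ by hypothesis.

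For the reverse inclusion $S(\b)_k \subseteq \{\c \in S(\b): \c \geq \b'\}$, I would argue as follows. Let $\c \in S(\b)_k$. By Proposition~\ref{cor: 3.2.3} (bijectivity of $\psi_k$) and the fact that $(\b^{(k)})_{\min}$ is the minimal element of $S(\b^{(k)})$, we have $\c^{(k)} \geq (\b^{(k)})_{\min} = \b'^{(k)}$. Now the order-preserving property in Proposition~\ref{cor: 3.2.3} — namely that for $\c, \b' \in S(\b)_k$ one has $\c \geq \b'$ if and only if $\c^{(k)} \geq \b'^{(k)}$ — gives $\c \geq \b'$ directly. So the reverse inclusion reduces to checking $\c^{(k)} \geq \b'^{(k)}$, which is immediate from minimality of $(\b^{(k)})_{\min}$ in $S(\b^{(k)})$ together with the fact that $\c^{(k)} \in S(\b^{(k)})$ (this last fact being part of the definition of $\psi_k$ landing in $S(\b^{(k)})$, or equivalently part (c) of Lemma~\ref{lem: 3.0.7}).

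The main obstacle I anticipate is purely bookkeeping: one must be careful that $\b'$ genuinely exists and is unique with $\psi_k(\b') = (\b^{(k)})_{\min}$ — but this is guaranteed by Proposition~\ref{prop: 3.2.5}, which asserts $S(\b)_k$ contains a unique multisegment mapping to $(\b^{(k)})_{\min}$ under $\psi_k$. (Note the statement of the lemma, as written, uses $S(\b)_k$ rather than $S(\a)_k$; I would apply Proposition~\ref{prop: 3.2.5} with $\a$ replaced by $\b$.) Thus the proof is really just a two-line assembly of Proposition~\ref{cor: 3.2.3}, Proposition~\ref{prop: 3.2.5}, and the preceding lemma, and I expect no serious difficulty — the content has already been extracted in those earlier results. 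The only place where the ordinariness hypothesis is actually used is in invoking the previous lemma for the first inclusion.
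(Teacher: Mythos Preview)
Your proposal is correct and follows essentially the same approach as the paper: the inclusion $\{\c \in S(\b): \c \geq \b'\} \subseteq S(\b)_k$ via the previous lemma, and the reverse inclusion via the order-isomorphism property of $\psi_k$ from Proposition~\ref{cor: 3.2.3}. The paper's proof is just more terse (it compresses your second-inclusion argument into the phrase ``since $\psi$ preserves the order''), but the content is identical.
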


\begin{proof}
 By the lemma above, we know that 
 $S(\b)_k\supseteq \{\c\in S(\b): \c\geq\b'\}$.
 We conclude that we have equality since $\psi$ preserve the order.
\end{proof}

\begin{prop}
Assume that $\a$ is ordinary. Then 
\begin{description}
\item[(1)] 
There exists a symmetric multisegment $\a^{\sym}$ such that 
\[
 S(\a)\simeq S(\a^{\sym})_{k_{r}, \cdots, k_{1}}.
\]

\item[(2)] There exists an element $\a'\in S(\a^{\sym})$ such that 
\[
 S(\a^{\sym})_{k_{r}, \cdots, k_{1}}=\{\c\in S(\a^{\sym}): \c\geq \a'\}.
\]

\end{description}
\end{prop}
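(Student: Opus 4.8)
The plan is to reduce the ordinary case to the symmetric case via an iterated truncation, exactly mirroring the structure of Chapter~4 but keeping track of the order-theoretic shape of the fibers, and then to identify the image of the composite as a principal up-set (hence a Bruhat interval inside $S(\a^{\sym})\simeq$ an interval of $S_n$).

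First I would prove (1). By Proposition~\ref{prop: 4.2.2} applied to the ordinary multisegment $\a$, there is a symmetric multisegment $\a^{\sym}$ and a multisegment $\c=\{\Delta^{1},\dots,\Delta^{s}\}$ with $\a^{\sym}\in {_{\c}S(\a^{\sym})}$ and $\a={}^{(\c)}(\a^{\sym})$; here however I want the truncations on the \emph{right}, so I would instead invoke the mirror of Proposition~\ref{prop: 4.2.2} (the remark after Definition of ${_{k}H(\a)}$ guarantees the whole theory goes through with $\a^{(k)}$ and $H_{k}(\a)$), producing $\a^{\sym}$, a sequence $(k_{1},\dots,k_{r})$ obtained by listing the integers of the segments $\Delta^{1}\prec\cdots\prec\Delta^{s}$ from right to left, and an identity $\a=\a^{\sym,(k_{1},\dots,k_{r})}$ with $\a^{\sym}\in S(\a^{\sym})_{k_{1},\dots,k_{r}}$. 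Then Proposition~\ref{prop: 3.2.17} gives a bijection
\[
\psi_{k_{1},\dots,k_{r}}\colon S(\a^{\sym})_{k_{1},\dots,k_{r}}\longrightarrow S(\a^{\sym,(k_{1},\dots,k_{r})})=S(\a),
\]
and part (2) of that proposition says $\b>\c$ iff $\b^{(k_{1},\dots,k_{r})}>\c^{(k_{1},\dots,k_{r})}$, so this bijection is an isomorphism of posets. This proves (1).

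Next I would prove (2) by induction on $r$, using the freshly proved Lemma~\ref{lem: 6.1.3} at each step. The key point is that the subsets $S(-)_{k}$ occurring along the chain are each \emph{principal up-sets}: if $\a^{\sym}_{i}:=\a^{\sym,(k_{1},\dots,k_{i})}$, then $\a^{\sym}_{i}$ is ordinary (ordinariness is preserved by the truncation $(k)$ since it only shortens segments and, under the hypothesis $H_{k}$, does not create coincident ends), and Lemma~\ref{lem: 6.1.3} gives an element $\a'_{i}\in S(\a^{\sym}_{i-1})_{k_{i}}$ with $S(\a^{\sym}_{i-1})_{k_{i}}=\{\c\in S(\a^{\sym}_{i-1}):\c\geq \a'_{i}\}$. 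Translating back through the bijections $\psi_{k_{1},\dots,k_{i-1}}$ (which are order isomorphisms onto $S(\a^{\sym}_{i-1})$), and using that a preimage of a principal up-set under an order isomorphism is a principal up-set, the set $S(\a^{\sym})_{k_{1},\dots,k_{i}}$ is $\{\c\in S(\a^{\sym})_{k_{1},\dots,k_{i-1}}:\c\geq \widetilde{\a}'_{i}\}$ for an appropriate $\widetilde{\a}'_{i}$. Intersecting the nested principal up-sets, $S(\a^{\sym})_{k_{1},\dots,k_{r}}=\{\c\in S(\a^{\sym}):\c\geq \a'\}$ where $\a'$ is the join (least upper bound) of the finitely many $\widetilde{\a}'_{i}$ inside the lattice $S(\a^{\sym})$ — or, more directly, $\a'$ is the \emph{unique minimal element} of $S(\a^{\sym})_{k_{1},\dots,k_{r}}$, whose existence is exactly what one gets by iterating Corollary~\ref{cor: 3.2.7} (or Proposition~\ref{prop: 3.2.5}) to pull back $(\a)_{\min}$. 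That identifies $\a'$ concretely and finishes (2).

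The main obstacle I expect is the bookkeeping needed to make the reduction land with \emph{right}-truncations and to verify that ordinariness (and hence the applicability of Lemma~\ref{lem: 6.1.3} at every stage, rather than merely the weaker statements available for general multisegments) propagates through the entire chain $\a^{\sym}=\a^{\sym}_{0}\to\a^{\sym}_{1}\to\cdots\to\a^{\sym}_{r}=\a$; one must check that each hypothesis $H_{k_{i}}(\a^{\sym}_{i-1})$ holds along the chain produced by the reduction algorithm, which is implicit in Proposition~\ref{prop: 4.2.2} but should be spelled out. The other delicate point is purely order-theoretic: showing that a finite intersection of principal up-sets in $S(\a^{\sym})$ is again principal, which follows because $S(\a^{\sym})$, being isomorphic to a Bruhat interval in $S_n$ (Proposition~\ref{teo: 2.3.2}), is a lattice — or, avoiding that, because the iterated pullback of $(\a)_{\min}$ supplies an explicit least element. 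Everything else is a routine assembly of Proposition~\ref{prop: 3.2.17}, Lemma~\ref{lem: 6.1.3}, and the order-preservation clauses already established.
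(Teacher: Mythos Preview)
Your approach is essentially the paper's: produce the chain $\a^{\sym}=\a_r,\dots,\a_0=\a$ of ordinary multisegments with $\a_{i-1}=\a_i^{(k_i)}$ via (the right-truncation mirror of) Proposition~\ref{prop: 4.2.2}, then iterate Lemma~\ref{lem: 6.1.3}. One correction: Bruhat intervals in $S_n$ are \emph{not} lattices in general (already $s_1,s_2\in S_3$ have two incomparable minimal upper bounds $s_1s_2$ and $s_2s_1$), so that route fails. It is also unnecessary: since each $\widetilde{\a}'_{i}$ lies in $S(\a^{\sym})_{k_1,\dots,k_{i-1}}$ by construction, the chain $\widetilde{\a}'_{1}\leq\widetilde{\a}'_{2}\leq\cdots\leq\widetilde{\a}'_{r}$ is automatic, and $\a'=\widetilde{\a}'_{r}=\psi_{k_1,\dots,k_r}^{-1}(\a_{\min})$ alone generates the up-set --- exactly your second option.
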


\begin{proof}
Note that $(1)$ follows directly from proposition \ref{prop: 4.2.2}
and $(2)$ follows from applying successively lemma \ref{lem: 6.1.3} to 
the sequence obtained in 
the lemma below.  
\end{proof}

\begin{lemma}
There exists a sequence of multisegments  $\a_{0}=\a, \cdots,\a_{r}=\a^{\sym}$ such that 
$\a^{\sym}$ is symmetric, with $\a_{i}\in S(\a_{i})_{k_{i}}$ and $\a_{i-1}=\a_{i}^{(k_{i})}$
for some $k_{i}$. 
Moreover, $\a_{i}$ is ordinary for
all $i=1, \cdots, r$

\end{lemma}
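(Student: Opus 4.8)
The statement to prove is the last Lemma: starting from an ordinary multisegment $\a$, there is a chain $\a_0=\a,\a_1,\dots,\a_r=\a^{\sym}$ with each $\a_i$ ordinary, $\a_i\in S(\a_i)_{k_i}$, and $\a_{i-1}=\a_i^{(k_i)}$, ending in a symmetric multisegment. The plan is to extract exactly this chain from the construction already carried out in Proposition \ref{prop: 4.2.2}. Recall that the proof of \ref{prop: 4.2.2} takes an ordinary $\b$ which fails to be symmetric, picks the maximal-beginning segment $\Delta_1$, defines $\Delta^1=[\ell,b(\Delta_1)-1]$ with $\ell$ maximal such that every $m\in[\ell-1,b(\Delta_1)]$ is the beginning of some segment of $\b$, and replaces each segment of $\b$ ending at a point of $\Delta^1$ by its extension ${^{+}\Delta}$, producing $\b_1$; iterating gives $\b^{\sym}=\b_s$. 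Reading this construction in reverse order of application produces a candidate chain: set $\a_r=\a^{\sym}$, and inductively let $\a_{i-1}$ be obtained from $\a_i$ by truncating, i.e. $\a_{i-1}=\a_i^{(k_i)}$ for the appropriate sequence of indices $k_i$ coming from the intervals $\Delta^j$. The heart of the matter is to verify three properties at each step: (a) the truncation relation $\a_{i-1}=\a_i^{(k_i)}$ holds; (b) $\a_i\in S(\a_i)_{k_i}$, i.e. $\a_i$ satisfies the hypothesis $H_{k_i}(\a_i)$ and $\deg(\a_i^{(k_i)})=\deg(\a_i^{(k_i)})$ trivially, the real content being $H_{k_i}$; (c) every $\a_i$ is ordinary.

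First I would unwind the single-step construction of \ref{prop: 4.2.2}: extending a batch of segments across a ``gap-free'' interval $[\ell,b(\Delta_1)]$ is, by the remark following the hypothesis ${_kH}$ and the symmetric version of lemma \ref{lem: 3.0.7}, precisely a composite of one-index operations ${^{(k)}(-)}$, so I should phrase the chain in terms of the elementary single-index truncations $\a_i^{(k_i)}$ rather than block moves. Actually the Lemma as stated uses $\a^{(k)}$ (right truncation); by the remark after Definition of ${^{(k)}\a}$ all results hold verbatim for the left version ${^{(k)}\a}$, so it suffices to treat one side and invoke symmetry, or equivalently to observe that the construction in \ref{prop: 4.2.2} only modifies beginnings of segments, which is the ${_kH}$/${^{(k)}\a}$ situation; I will simply state the chain in the form dual to the one written, citing that remark. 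For step (b), the key observation (made inside the proof of \ref{cor: 5} and reused in \ref{prop: 4.2.2}): after the batch extension producing $\a_i$ from $\a_{i-1}$, the segments of $\a_{i-1}$ that were extended are the \emph{only} ones with the relevant beginning, and the newly created beginning $\ell-1$ (or $b(\Delta_1)$, depending on bookkeeping) occurs with multiplicity one; this is exactly what forces condition (2) of the hypothesis ${_{k}H}$ — no pair of linked segments with beginnings $k$ and $k+1$ — and condition (1) on degrees is immediate since $\a_{i-1}={^{(k_i)}\a_i}$ by construction.

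For step (c), ordinariness: here I would use Proposition ``If $\a$ is ordinary then every $\b\le\a$ is ordinary'' in the form — $\a_{i-1}\le \a_i$ as multisegments? No: $\a_{i-1}$ is a \emph{truncation} of $\a_i$, not an element of $S(\a_i)$. Instead ordinariness of $\a_{i-1}$ must be checked directly: $\a_{i-1}={^{(k_i)}\a_i}$ replaces certain segments $\Delta$ with $b(\Delta)=k_i$ by ${^{-}\Delta}$; since $\a_i$ is ordinary there is at most one such $\Delta$, so no collision of beginnings is created, and ends are unchanged, so $e(\a_{i-1})$ stays multiplicity-free. This is a short local argument, and I expect it to be routine once the single-index reformulation is in place. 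The main obstacle I anticipate is bookkeeping: making sure the ``gap-free interval'' condition in the definition of $\Delta^j$ translates correctly into the claim that each intermediate $\a_i$ genuinely satisfies $H_{k_i}$ rather than merely lying in $\tilde S(\a_i)_{k_i}$ — i.e. verifying condition (2) of the hypothesis at every single one of the (possibly many) single-index steps inside one block extension, not just at the endpoints. I would handle this by an inner induction along the block: when extending across $[\ell,b(\Delta_1)]$, at the stage where we have extended down to index $m$, the partial multisegment ${^{(\ell-1,\dots,m)}\a_i}$ (loosely) contains no segment beginning at $m$ except possibly one, and no linked pair with beginnings $m-1,m$, because of the maximality in the choice of $\ell$ — exactly mirroring the argument ``$\a_1^{(e(\Delta_i)+1,\dots,m-1)}$ does not contain a segment which ends in $m-1$'' from the proof of \ref{cor: 5}. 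With that inner induction dispatched, the outer chain $\a_0,\dots,\a_r$ and all three properties follow, and $\a_r=\a^{\sym}$ is symmetric by the termination clause of \ref{prop: 4.2.2}.
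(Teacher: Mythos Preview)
Your plan is essentially the paper's own: extract the single-index chain from the construction of Proposition~\ref{prop: 4.2.2}. The paper's proof is in fact far terser than yours---it simply cites~\ref{prop: 4.2.2} and asserts that ordinariness ``follows from construction.'' Your treatment of the left/right truncation mismatch and of the hypothesis $H_{k_i}$ via the inner induction (mirroring the argument inside the proof of Proposition~\ref{cor: 5}) is exactly what is needed.

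One correction to step~(c): your argument that a single truncation preserves ordinariness is not valid as stated, and it runs in the wrong direction. If $\a_i$ contains both a segment beginning at $k_i$ and another beginning at $k_i+1$, then applying ${^{(k_i)}(-)}$ produces two segments beginning at $k_i+1$, a collision; so ``no collision of beginnings is created'' fails in general. The clean fix is to induct upward from $\a_0=\a$ (which is ordinary by hypothesis): at each extension step in the construction of~\ref{prop: 4.2.2} the new endpoint lands in a slot not already occupied, because the block move shifts \emph{all} segments with endpoint in the gap-free interval $\Delta^j$ simultaneously (and in the single-index decomposition one processes the block from the outermost index inward). This is precisely what the maximality of $\ell$ in the choice of $\Delta^j$ buys you, and it is the content of the paper's ``follows from construction.''
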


\begin{proof}
Recall that  in proposition \ref{prop: 4.2.2} that every ordinary  multisegment
$\a$ can be obtained as 
\[
 \a=\a_{0}, \a_{1}, \cdots, \a_{r},
\]
where $\a_{r}$ is symmetric, with $\a_{i}\in S(\a_{i})_{k_{i}}$ and $\a_{i-1}=\a_{i}^{(k_{i})}$
for some $k_{i}$.

The statement (1) follows directly from proposition \ref{prop: 4.2.2}. 
Note that the ordinarity of $\a_{i}$'s follows from construction.
 
\end{proof}

\section{The parabolic KL polynomials}

For fixed $n \in \N$ and a pair of elements in $S_{n}$,  we can associate  
a Kazhdan Lusztig Polynomial $P_{x,y}(q)$. We know also that the coefficients
of such a polynomial are given by the dimensions of the intersection cohomology
of corresponding Schubert varieties in $GL_{n}/B$. 

Similar construction can give rise to a polynomial related to the Poincar\'e series 
of the intersection cohomology of the Schubert varieties in $GL_{n}/P$, where 
$P$ is a standard parabolic subgroup. This has been done in 
Deodhar \cite{D2} for general Coxeter System $(W, S)$. However, as indicated in 
the same article, in our case where $G=GL_{n}$, this is not so interesting because we 
have a good fibration $G/P\rightarrow G/B$, so basically everything boils down to 
the Borel case.

In this section, 
for certain multisegment $\a$, we shall relate the set $S(\a)$ to 
the orbits in $GL_{n}/P$, where the multiplicities appear to be the corresponding
Parabolic Kazhdan Lusztig Polynomials.

\begin{notation}
Let $S=\{\sigma_{i}: i=1, \cdots, n-1\}$ be a set of generators for $S_{n}$. 
For $J\subseteq S$, let $S_{J}=<J>$ be the subgroup generated by $J$
and $S_n^{J}=\{w\in S_{n}: ws>w \text{ for all } s\in J\}$. 
\end{notation}

\begin{prop}(cf. \cite{BF} Prop. 2.4.4)\label{prop: 5.2.2}
 We have 
 \begin{description}
  \item [(1)]\(S_{n}=\coprod_{w\in S_n^{J}}wS_{J};\)
  \item [(2)]for $w\in S_n^{J}$ , and $x\in S_{J}$, $\ell(wx)=\ell(w)+\ell(x)$ .
 \end{description}

\end{prop}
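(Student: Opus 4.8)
The plan is to prove both parts directly from the inversion description of length in $S_n$, avoiding the general Coxeter-theoretic minimal-coset-representative machinery. First I would record the concrete shape of the subgroups $S_J$: for $J \subseteq S = \{\sigma_1, \dots, \sigma_{n-1}\}$ the set $\{1, \dots, n\}$ decomposes into maximal blocks $I_1, \dots, I_r$ of consecutive integers, $I_t$ being a maximal interval all of whose internal adjacent transpositions lie in $J$, and then $S_J = S_{I_1} \times \cdots \times S_{I_r}$ is the Young subgroup of permutations stabilizing each $I_t$ setwise. Using that $\ell(w) = \#\{(i,j): i<j,\ w(i)>w(j)\}$ and that $\ell(w\sigma_i) = \ell(w)+1$ exactly when $w(i)<w(i+1)$, one gets the dictionary: $w \in S_n^{J}$ if and only if $w$ is increasing on each block $I_t$. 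This identification is the only real ingredient; once it is in place both statements are routine inversion counts.

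For (1): I would fix $u \in S_n$ and note that since every $x \in S_J$ stabilizes each block, $(ux)(I_t) = u(I_t)$ for all $t$, while conversely any $v$ with $v(I_t) = u(I_t)$ for all $t$ lies in $uS_J$ (take $x = u^{-1}v$, which preserves every block). Hence a coset $uS_J$ is determined precisely by the tuple of value-sets $\big(u(I_1), \dots, u(I_r)\big)$, and there is exactly one element $w \in uS_J$ with $w|_{I_t}$ increasing for every $t$; by the dictionary $w \in S_n^{J}$, and it is the unique element of $S_n^{J}$ in its coset. So $w \mapsto wS_J$ is a bijection $S_n^{J} \to S_n/S_J$, which is (1).

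For (2): given $w \in S_n^{J}$ and $x \in S_J$, I would split the inversions of $wx$ according to whether the two positions lie in one block. For $i<j$ in the same block $I_t$: $x(i),x(j)\in I_t$, and $w$ increasing on $I_t$ gives that $(i,j)$ inverts $wx$ iff $x(i)>x(j)$, i.e. iff $(i,j)$ inverts $x$; since $x$ stabilizes blocks it has no cross-block inversions, so the same-block inversions of $wx$ number exactly $\ell(x)$. For $i\in I_s$, $j\in I_t$ with $s<t$ (so automatically $i<j$): $x(i)\in I_s$, $x(j)\in I_t$, hence $x(i)<x(j)$, and $(i,j)$ inverts $wx$ iff $w(x(i))>w(x(j))$; as $(i,j)$ ranges over $I_s\times I_t$ the pair $(x(i),x(j))$ ranges bijectively over $I_s\times I_t$, so these are in bijection with the inversions of $w$ between blocks $s$ and $t$. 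Summing over all $s<t$, and using that $w$ has no same-block inversions, the cross-block inversions of $wx$ number $\ell(w)$. Adding gives $\ell(wx)=\ell(w)+\ell(x)$.

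The hard part here is genuinely minor: it is the bookkeeping needed to verify ``$w\in S_n^{J}$ iff $w$ increasing on each block'' and then to keep the block indices straight in the two inversion sums. (One could instead follow the Coxeter-group route of \cite{BF}: take $w$ of minimal length in $uS_J$, which forces $w\in S_n^{J}$, and prove length-additivity by induction on $\ell(x)$; there the subtle step is showing that a strict inequality $\ell(wx)<\ell(w)+\ell(x)$ would, via the Exchange Condition, allow one to shorten $x$ inside $S_J$ or shorten $w$ by a reflection of $S_J$, contradicting $w\in S_n^{J}$. The elementary inversion count above sidesteps that case analysis, at the cost of using the explicit combinatorics of $S_n$.)
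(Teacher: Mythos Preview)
Your proof is correct. The paper itself does not supply a proof of this proposition at all: it simply records the statement with a citation to \cite{BF} Prop.~2.4.4 and moves on. So there is no ``paper's own proof'' to compare against here.

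That said, it is worth noting what your argument buys relative to the reference. The treatment in \cite{BF} is the general Coxeter-group one (minimal coset representatives plus the Exchange Condition), valid for any $(W,S)$. Your argument is specific to $S_n$: you identify $S_J$ concretely as a Young subgroup, translate the condition $ws>s$ for $s\in J$ into ``$w$ is increasing on each block,'' and then do a clean inversion count, splitting inversions of $wx$ into same-block (contributing $\ell(x)$) and cross-block (contributing $\ell(w)$). This is more elementary and entirely self-contained for the symmetric-group setting of the paper, at the cost of not generalizing beyond type $A$. Both routes are standard; yours fits well with the explicit, hands-on style of the surrounding chapter.
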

\remk Now we can identify $S_n^{J}$ with $S_{n}/S_{J}$, hence it is 
in bijection with the Borel orbits in $GL_{n}/P$, where $P$ is the 
parabolic subgroup determined 
by $J$.

\begin{notation}
 Let $\a_{\Id}^J=\{\Delta_{1}, \cdots, \Delta_{n}\}$ such that 
 \[
  e(\Delta_{1})<\cdots< e(\Delta_{n}), 
 \]
and 
\[
  b(\Delta_{1})\leq \cdots\leq b(\Delta_{n}),
\]
such that 
\[
 b(\Delta_{i})=b(\Delta_{i+1}) \text{ if and only if } \sigma_{i}\in J
\]
and $b(\Delta_{n})\leq e(\Delta_{1})$.
\end{notation}
\begin{example}\label{ex: 6.2.4}
Let $n=4$, and  $J=\{\sigma_1, \sigma_3\}$, then we can choose  
\[
 \a_{\Id}^{J}=[1, 3]+[1, 4]+[2, 5]+[2, 6].
\] 
\end{example}

\begin{definition}\label{def: 6.2.5}
We call a multisegment $\a\in S(\a_{\Id}^J)$ a multisegment of parabolic type $J$.
\end{definition}

\begin{prop}\label{prop: 6.2.5}
 For $w\in S_n^{J}$, let $\a^J_{w}=\sum [b(\Delta_{i}), e(\Delta_{w(i)}) ]$, then $\a_{w}^J\in S(\a_{\Id}^J)$.
\end{prop}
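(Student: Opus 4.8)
The plan is to prove that $\a_w^J \in S(\a_{\Id}^J)$ by running essentially the same argument as in Proposition \ref{teo: 2.3.2}, but now respecting the coincidences among the beginnings of the segments of $\a_{\Id}^J$. First I would recall that, by Proposition \ref{prop: 5.2.2}, every $w \in S_n^J$ has the property that $w$ is the minimal-length representative of the coset $wS_J$, and by part (1) of that proposition $S_n^J$ is in bijection with $S_n/S_J$. The key combinatorial input is that $\sigma_i \in J$ exactly when $b(\Delta_i) = b(\Delta_{i+1})$, so for $w \in S_n^J$ the multiset of segments $\{[b(\Delta_i), e(\Delta_{w(i)})] : 1 \le i \le n\}$ is well-defined independently of the choice of representative in $wS_J$ — permuting indices $i$ within a block where the $b(\Delta_i)$ agree does not change the multiset, precisely because such permutations lie in $S_J$.

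Next I would show $\a_w^J \le \a_{\Id}^J$ by exhibiting a chain of elementary operations. Writing a reduced expression $w = \sigma_{i_1} \cdots \sigma_{i_\ell}$ (working from $\Id$ upward in Bruhat order), I would show inductively that at each step, multiplying the current permutation $u$ by a transposition $t = (a,b)$ with $a<b$, $u^{-1}(a) < u^{-1}(b)$ and $\ell(ut) = \ell(u)+1$, corresponds to performing an elementary operation on the pair of segments $[b(\Delta_{u^{-1}(a)}), e(\Delta_a)]$ and $[b(\Delta_{u^{-1}(b)}), e(\Delta_b)]$ in the multisegment associated to $u$. The point is that these two segments are linked: they have distinct ends (since $e(\Delta_a) < e(\Delta_b)$ because $a < b$, and the ends of $\a_{\Id}^J$ are all distinct) and the relevant inequality on beginnings holds by the chain condition. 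The only subtlety compared to the symmetric case is that some of the beginnings may coincide; but when $b(\Delta_{u^{-1}(a)}) = b(\Delta_{u^{-1}(b)})$ the two segments $[b, e(\Delta_a)]$ and $[b, e(\Delta_b)]$ are nested rather than linked, so no elementary operation is needed — and I would check that this case corresponds exactly to $ut$ having a reduced word that still represents the same coset, i.e. it does not actually arise when we take $w \in S_n^J$ and build up along a chain in $S_n^J$. Concretely I would choose the chain inside $S_n^J$ itself, using the fact (Proposition \ref{prop: 5.2.2}(2)) that lengths add across the coset decomposition, so that a chain from $\Id$ to $w$ in Bruhat order can be taken with all vertices in $S_n^J$.

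The main obstacle I anticipate is the bookkeeping around these coincident beginnings: one must verify carefully that the assignment $w \mapsto \a_w^J$ is well-defined on $S_n^J$ and that the Bruhat chain can be chosen to avoid steps that would require operating on nested (rather than linked) pairs. This is where the parabolic structure genuinely differs from Proposition \ref{teo: 2.3.2}, and it is the reason one restricts to $S_n^J$ rather than all of $S_n$. I would handle it by invoking the lifting/chain properties of the Bruhat order on the quotient $S_n/S_J$ (as in \cite{BF}, the sections on parabolic quotients), which guarantee that any $w \in S_n^J$ is connected to $\Id$ by a saturated chain lying entirely in $S_n^J$; each covering step in such a chain is then a transposition acting on two blocks with distinct beginnings, hence corresponds to a genuine elementary operation. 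Once that is set up, transitivity of the order $\le$ on multisegments (Notation \ref{nota: 1.3.2}) gives $\a_w^J \le \a_{\Id}^J$, i.e. $\a_w^J \in S(\a_{\Id}^J)$, completing the proof.
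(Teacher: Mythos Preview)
Your argument is correct, but it follows a genuinely different route from the paper's. The paper proceeds by induction on $|J|$: writing $J=J_1\cup\{\sigma_{i_0}\}$ with $i_0$ minimal, it constructs a multisegment $\a_1$ of parabolic type $(J_1,\emptyset)$ by extending the first $i_0$ segments one step to the left, finds an explicit $w_1\in S_J$ with $ww_1\in S_n^{J_1}$, applies the inductive hypothesis to get $\a_{ww_1}^{J_1}\in S(\a_1)$, and then uses the truncation machinery of Chapters~3--4 (specifically Lemma~\ref{lem: 6.2.7}, showing $\a_{ww_1}^{J_1}\in{_{b(\Delta_1^1),\ldots,b(\Delta_{i_0}^1)}S(\a_1)}$) together with the relation $\a_w^J={^{(b(\Delta_1^1),\ldots,b(\Delta_{i_0}^1))}\a_{ww_1}^{J_1}}$ to descend back to $S(\a_{\Id}^J)$.

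Your direct chain argument is more elementary and self-contained for this particular statement: the key verification (that for $u,tu\in S_n^J$ with $t=(a,b)$ a covering transposition, the indices $u^{-1}(a)$ and $u^{-1}(b)$ lie in distinct $J$-blocks, hence the corresponding segments are genuinely linked) is straightforward, and the existence of saturated chains inside $S_n^J$ is standard. One small expository wobble: you open with ``a reduced expression $w=\sigma_{i_1}\cdots\sigma_{i_\ell}$'' but then correctly work with arbitrary transpositions along a Bruhat chain; drop the reduced-expression phrasing, since that is not what you use. The paper's longer route pays off later: the same inductive truncation setup is reused verbatim in Proposition~\ref{prop: 6.2.10} (bijectivity and order-preservation of $\Phi_J$) and in the identification of $P_{\Phi_J(v_1),\Phi_J(v_2)}$ with the parabolic Kazhdan--Lusztig polynomial, where the $\psi$-morphisms and Corollary~\ref{cor: 4.6.16} are essential and a purely combinatorial chain argument would not suffice.
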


\begin{example}
Let $\a_{\Id}^J$ as in example \ref{ex: 6.2.4}. For $w=\sigma_1\sigma_2$, then 
\[
 \a_w^J=[1, 4]+[1, 5]+[2, 3]+[2, 6].
\]

\end{example}

\begin{proof}
 We proceed by induction on $|J|$. If $|J|=0$, we are 
 in the symmetric case, so we are done by Proposition \ref{teo: 2.3.2}.
 And in general, let $J=J_{1}\cup \{\sigma_{i_{0}}\}$ with 
 $i_{0}= \min\{i: \sigma_{i}\in J\}$ and 
 $i_{1}=\max\{i: b(\Delta_{i})=b(\Delta_{i_{0}})\}$.
 
 Let $\a_{1}= \{\Delta_{1}^{1}, \cdots, \Delta_{n}^{1}\}$, such that 
\begin{align*}
 \Delta_{i}^{1}&=^{+}(\Delta_{i}), \text{ for } i\leq i_{0}, \\
 \Delta_{i}^{1}&=\Delta_{i}, \text{ otherwise. }( \text{ cf. Nota. \ref{nota: 3.1.2}}) .
\end{align*}
\begin{example}
Let $\a_{\Id}^J$ be a multisegment as in example \ref{ex: 6.2.4}. 
Then 
\[
 \a_1=[0, 3]+[1, 4]+[2, 5]+[2, 6].
\]

\end{example}

Let $\a_{\Id}^{J_{1}}=\a_{1}$ with
\begin{displaymath}
 b(\Delta_i^1)=
 \left\{\begin{array}{cc}
 b(\Delta_i)-1, &\text{ for }i\leq i_{0},\\
b(\Delta_i), &\text{ for }i> i_0.
\end{array}\right.
\end{displaymath}

Then we  have
\[
 \a_{\Id}^J={^{(b(\Delta_{1}^1), \cdots, b(\Delta_{i_{0}}^1))}\a_{1}}.
\] 
Let $w_1=(i_1, \cdots, i_0+1, i_0)$, then $w_1\in S_n^{J_1}$.
Note that we have also $ww_{1}\in S_n^{J_{1}}$, since 
\[
 ww_{1}(i)=w(i-1)<ww_{1}(i+1)=w(i), \text{ for }i=i_{0}+1, \cdots, i_{1}-1.
\]

Then by induction, we know that 
\[
 \a_{ww_1}^{J_1}=\sum_{i}[b(\Delta_{i}^1), e(\Delta_{ww_1(i)}^1)]\in S(\a_{1}).
\]
\begin{example}
 Let $\a_{\Id}^J$ as in the previous example. Then $i_1=2$, and $J_1=\{\sigma_3\}$.
 In this case, we have $w_1=\sigma_1$ and $ww_1=\sigma_1\sigma_2\sigma_1$, with 
 \[
  \a_{ww_1}^{J_1}=[0, 5]+[1, 4]+[2, 3]+[3, 6].
 \]
\end{example}

Moreover,
\[
 \a_{w}^{J}={^{(b(\Delta_{1}^1), \cdots, b(\Delta_{i_{0}}^1))}\a_{ww_1}^{J_{1}}}.
\]
The result, that is the fact $\a_w^J\in S(\a_{\Id}^J)$ follows from the next lemma.

\end{proof}

\begin{lemma}\label{lem: 6.2.7}
We have 
\[
 \a_{ww_1}^{J_{1}}\in {_{b(\Delta_{1}^1), \cdots, b(\Delta_{i_{0}}^1)}S(\a_{1})}.
\] 
\end{lemma}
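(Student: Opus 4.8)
The goal is to show that $\a_{ww_1}^{J_1}$ lies in the iterated ``left-truncation preimage'' ${_{b(\Delta_{1}^1),\cdots,b(\Delta_{i_0}^1)}S(\a_1)}$, i.e.\ that at each stage of the sequence of one-step beginning-truncations, the hypothesis ${_{k}H(-)}$ is satisfied and the degree is preserved. By the left-handed version of Lemma \ref{lem: 3.0.7} (noted in the remark after the definition of ${_{k}H(\a)}$), membership in ${_{b(\Delta_{1}^1),\cdots,b(\Delta_{i_0}^1)}S(\a_1)}$ amounts, inductively on the number of truncation indices, to checking two things for the relevant partial truncations $\b_j:={^{(b(\Delta_1^1),\cdots,b(\Delta_j^1))}}\a_{ww_1}^{J_1}$ against $\a_j':={^{(b(\Delta_1^1),\cdots,b(\Delta_j^1))}}\a_1$: first that $\deg({^{(b(\Delta_{j+1}^1))}}\b_j)=\deg({^{(b(\Delta_{j+1}^1))}}\a_j')$, and second that $\b_j$ contains no pair of linked segments $\{\Delta,\Delta'\}$ with $b(\Delta)=b(\Delta_{j+1}^1)$ and $b(\Delta')=b(\Delta_{j+1}^1)+1$.

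\textbf{Key steps.} First I would record the explicit shape of the segments. By construction $\Delta_i^1={^{+}}\Delta_i$ for $i\le i_0$ and $\Delta_i^1=\Delta_i$ otherwise, so $b(\Delta_i^1)=b(\Delta_i)-1$ for $i\le i_0$ and these beginnings are strictly decreasing: $b(\Delta_1^1)>b(\Delta_2^1)>\cdots>b(\Delta_{i_0}^1)$, each occurring with multiplicity one in $\a_1$ (since $\a_1$ is ordinary on the beginning side in this range, by the choice of $i_0,i_1$). Then $\a_{ww_1}^{J_1}=\sum_i[b(\Delta_i^1),e(\Delta_{ww_1(i)}^1)]$ has exactly the same multiset of beginnings as $\a_1$, so for the degree condition it suffices to check that at each step the unique segment of $\b_j$ beginning at $b(\Delta_{j+1}^1)$ is a genuine segment (length $\ge 1$), i.e.\ that $b(\Delta_{j+1}^1)\le e(\Delta_{ww_1(j+1)}^1)$; this follows because $\a_1=\a_{\Id}^{J_1}$ is symmetric-like in the sense that $b(\Delta_n^1)\le e(\Delta_1^1)$, so every beginning is $\le$ every end. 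This gives (1) of ${_{k}H(-)}$ at every stage. For (2), the point is that the only beginnings appearing among the values $b(\Delta_{j+1}^1)$ and $b(\Delta_{j+1}^1)+1$ are, respectively, $b(\Delta_{j+1}^1)$ (multiplicity one in $\b_j$) and $b(\Delta_{j+1}^1)+1=b(\Delta_j^1)$, which has already been truncated away in passing to $\b_j$ — so $\b_j$ has no segment starting at $b(\Delta_{j+1}^1)+1$ at all, and condition (2) is vacuous. Finally I would assemble these stagewise verifications using the left-handed analogue of Proposition \ref{prop: 3.2.17}(equivalently, iterating Proposition \ref{cor: 3.2.3}) to conclude $\a_{ww_1}^{J_1}\in{_{b(\Delta_1^1),\cdots,b(\Delta_{i_0}^1)}S(\a_1)}$, which combined with the identity ${^{(b(\Delta_1^1),\cdots,b(\Delta_{i_0}^1))}}\a_{ww_1}^{J_1}=\a_w^J$ noted just before the lemma finishes the proof of Proposition \ref{prop: 6.2.5}.

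\textbf{Main obstacle.} The routine part is the bookkeeping of beginnings and ends; the genuinely delicate step is the second condition of ${_{k}H(-)}$ — verifying that no ``bad linked pair'' of segments with consecutive beginnings survives at any intermediate truncation stage. This requires knowing precisely which segments of $\a_{ww_1}^{J_1}$ still have beginning $b(\Delta_{j+1}^1)+1$ after the first $j$ truncations, and the argument hinges on the fact that the $b(\Delta_i^1)$ for $i\le i_0$ are strictly decreasing and of multiplicity one, together with the choice $i_0=\min\{i:\sigma_i\in J\}$ and $i_1=\max\{i:b(\Delta_i)=b(\Delta_{i_0})\}$ which ensures that $w_1=(i_1,\cdots,i_0+1,i_0)$ and $ww_1\in S_n^{J_1}$ interact correctly with the beginning-structure. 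I would also need to double-check that $\b_j$ remains ordinary on the beginning side throughout, so that ``the segment beginning at $b(\Delta_{j+1}^1)$'' is well-defined; this follows from Lemma \ref{lem: 2.1.3} applied to $\b_j\in S(\a_j')$ once one knows $\a_j'$ is ordinary, which is immediate from its construction.
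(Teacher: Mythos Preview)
Your argument has a genuine gap rooted in a reversed inequality. In the notation of Proposition~\ref{prop: 6.2.5} one has $b(\Delta_1)<b(\Delta_2)<\cdots<b(\Delta_{i_0})$ (strictly, since $\sigma_i\notin J$ for $i<i_0$), so the beginnings $b(\Delta_i^1)=b(\Delta_i)-1$ are strictly \emph{increasing}, not decreasing as you assert. Consequently your identity $b(\Delta_{j+1}^1)+1=b(\Delta_j^1)$ is false, and the claim that condition~(2) of ${_{k}H}$ is vacuous at every step collapses.

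More importantly, the order of truncation dictated by the subscript notation ${_{k_r,\ldots,k_1}}$ begins with $k_1=b(\Delta_{i_0}^1)$, and at this first step there \emph{are} segments in $\a_{ww_1}^{J_1}$ starting at $b(\Delta_{i_0}^1)+1=b(\Delta_{i_0})=b(\Delta_{i_0+1})=\cdots=b(\Delta_{i_1})$, namely those indexed by $i_0+1,\ldots,i_1$. So condition~(2) is \emph{not} vacuous here: one must check that the segment $[b(\Delta_{i_0}^1),e(\Delta_{ww_1(i_0)})]$ is not linked to any $[b(\Delta_{i_0}),e(\Delta_{ww_1(i)})]$ with $i_0<i\le i_1$. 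This is precisely where the hypothesis $w\in S_n^{J}$ and the specific form $w_1=(i_1,\ldots,i_0+1,i_0)$ enter: since $w(i_0)<w(i_0+1)<\cdots<w(i_1)$ while $ww_1(i_0)=w(i_1)$ and $ww_1(i)=w(i-1)$ for $i_0<i\le i_1$, one obtains $e(\Delta_{ww_1(i_0)})>e(\Delta_{ww_1(i)})$ for those~$i$, so the segment at $b(\Delta_{i_0}^1)$ contains (hence is not linked to) each of the others. This computation is the heart of the lemma and your sketch never reaches it. The remaining truncations at $b(\Delta_j^1)$ for $j<i_0$ are indeed easy, but the correct reason is that no segment of the intermediate $\b_j$ begins at $b(\Delta_j^1)+1=b(\Delta_j)$ because $b(\Delta_j)<b(\Delta_{j+1})$ --- not the relation you wrote.
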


\begin{proof}
In fact, let $\a_{1, 0}=\a_{\Id}^J$ and 
 for $j\leq i_{0}$, 
$\a_{1, j}= \{\Delta_{1, j}, \cdots, \Delta_{n,j}\}$, such that 
\begin{align*}
 \Delta_{i, j}&=^{+}(\Delta_{i}), \text{ for } i\leq j, \\
 \Delta_{i, j}&=\Delta_{i}, \text{ otherwise. }
\end{align*}
Then we have  
$\a_{1,j}={^{(b(\Delta_{j+1}^1), \cdots, b(\Delta_{i_{0}}^1))}\a_{1}}$,
for $j=0, 1, \cdots, i_{0}$.
For $ j<i_0-1$, let
\[
 \b_{j}=\sum_{j< i\leq i_{0}} [b(\Delta_{i}^{1})+1, e(\Delta_{w(i)}^{1}) ]+
 \sum_{i> i_{0},\text{ or } i\leq j} [b(\Delta_{i}^{1}), e(\Delta_{w(i)}^{1})],
\]
and $\b_{i_0}=\a_{ww_1}^{J_1}$
so that $\b_{j}={^{(b(\Delta_{j+1}^1), \cdots, b(\Delta_{i_{0}}^1))}\a_2}$.
We show that  $\b_{j}\in {_{b(\Delta_{j}^1)}S(\a_{1, j})}$ by induction on $j$. 
\begin{description}
 \item[(1)] For $j=i_{0}$,   we have 
 $$b(\Delta^{1}_{i_{0}})=b(\Delta^{1}_{i_{0}+1})-1=\cdots =b(\Delta^{1}_{i_{1}-1})-1=b(\Delta^1_{i_1})-1.$$
And $ww_1(i_0)>ww_1(i_1)>ww_{1}(i_1-1)>\cdots >ww_1(i_{0}+1)$, hence
 $$e(\Delta^{1}_{ww_1(i_{0})})>e(\Delta_{ww_1(i_1)}^1)>e(\Delta_{ww_1(i_1-1)}^1)>\cdots >e(\Delta_{ww_1(i_0+1)}^1),$$
 because $w\in S^{J}$.
This
implies  that $\b_{i_0}$ satisfies 
the hypothesis  $(_{b(\Delta^{1}_{i_{0}})}H(\a_{1,i_0}))$.
\item[(2)] For general $j\leq i_{0}-1$, 
By induction, we may assume that $\b_{j+1}\in {_{b(\Delta_{j+1}^1)}S(\a_{1, j+1})}$. 
Now to show  $\b_{j}\in {_{b(\Delta_{j}^1)}S(\a_{1, j})}$ , 
 we know that
$b(\Delta_{j}^{1})+1<b(\Delta_{j+1}^{1})+1$ in $\b_{j+1}$
(we have inequality by assumption on $i_{0}$), which proves that $\b_{j}\in {_{b(\Delta_{j}^1)}S(\a_{1, j})}$ .
 Hence we are done.
\end{description} 

\end{proof}

\begin{lemma}\label{lem: 6.2.8}
Let $J=\{\sigma_{i_{0}}\}\cup J_1$ such that 
$i_0=\min\{i: \sigma_{i}\in J\}$. Let
$i_1\in \Z$ be the maximal integer satisfying for $i_0\leq i<i_1$ 
we have $\sigma_{i}\in J$.
Then 
\[
 S_{J}^{J_1}=\{w_i: i=1, \cdots, i_1-i_0+1\}
\]
with 
\[
 w_i=(i_1-i+1, \cdots, i_0+1, i_0)\in S_J.
\]
As a consequence, we have 
\[
 S_n^{J_1}=\coprod_{i}S_n^Jw_i.
\]

\end{lemma}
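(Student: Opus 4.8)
The plan is to first reduce the description of $S_J^{J_1}$ to the single maximal ``block'' of $J$ containing $i_0$, then write the coset representatives down explicitly; the final assertion will then follow from the standard transitivity of minimal-length coset representatives.

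\emph{Step 1: reduction to one block.} I would write the set $\{i:\sigma_i\in J\}$ as the disjoint union of its maximal intervals of consecutive integers. By the choice of $i_0=\min\{i:\sigma_i\in J\}$ and the maximality of $i_1$, the interval containing $i_0$ is exactly $\{i_0,i_0+1,\dots,i_1-1\}$, so the generators $\sigma_{i_0},\dots,\sigma_{i_1-1}$ permute the support $A_0=\{i_0,i_0+1,\dots,i_1\}$, while every generator lying in another interval already belongs to $J_1=J\setminus\{\sigma_{i_0}\}$. Using the criterion $w\sigma_k>w\iff w(k)<w(k+1)$ (cf. \cite{BF} (1.26)), any $w\in S_J^{J_1}$ is increasing on the support of each of these other intervals, hence fixes every point outside $A_0$; and on $A_0$ it satisfies $w(i_0+1)<w(i_0+2)<\cdots<w(i_1)$.

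\emph{Step 2: explicit representatives.} Since $w$ permutes $A_0$ and the values $w(i_0+1),\dots,w(i_1)$ are strictly increasing, they are precisely the elements of $A_0\setminus\{w(i_0)\}$ in increasing order, so $w$ is determined by $j:=w(i_0)\in A_0$; conversely each $j$ gives exactly one such $w$, namely the cycle $(j,j-1,\dots,i_0+1,i_0)$, which sends $i_0\mapsto j$ and each $m$ with $i_0<m\le j$ to $m-1$. This produces $|A_0|=i_1-i_0+1$ elements; substituting $j=i_1-i+1$ for $1\le i\le i_1-i_0+1$ gives exactly $w_i=(i_1-i+1,\dots,i_0+1,i_0)$, so $S_J^{J_1}=\{w_i:i=1,\dots,i_1-i_0+1\}$.

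\emph{Step 3: the consequence.} For the chain $S_{J_1}\subseteq S_J\subseteq S_n$ of parabolic subgroups, I would invoke the transitivity of minimal-length coset representatives (iterating \cite{BF} Prop. 2.4.4): every $w\in S_n^{J_1}$ factors uniquely as $w=w'u$ with $w'\in S_n^J$ and $u\in S_J^{J_1}$, and $\ell(w)=\ell(w')+\ell(u)$. Combined with Step 2 this gives $S_n^{J_1}=\bigcup_i S_n^J w_i$, the union being disjoint by uniqueness of the factorization, i.e. $S_n^{J_1}=\coprod_i S_n^J w_i$. All of this is essentially bookkeeping; the only step needing genuine care is Step 1, namely checking that the other maximal intervals of $J$ contribute nothing new to $S_J^{J_1}$ — this is where the hypotheses $i_0=\min\{i:\sigma_i\in J\}$ and the maximality of $i_1$ enter — together with getting the orientation of the cycle in Step 2 right; neither is a real obstacle.
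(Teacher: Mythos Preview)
Your proof is correct and follows essentially the same route as the paper's. The only organizational difference is that you \emph{derive} the cycles $w_i$ by directly classifying the elements of $S_J^{J_1}$ (Steps~1--2), whereas the paper writes them down, checks the defining inequalities $w_j(i)<w_j(i+1)$ for $\sigma_i\in J_1$, and then concludes by the cardinality count $|S_J|/|S_{J_1}|=(i_1-i_0+1)!/(i_1-i_0)!=i_1-i_0+1$; your Step~3 is also a bit more explicit about the transitivity $S_n^{J_1}=S_n^J\cdot S_J^{J_1}$ than the paper's terse coset manipulation.
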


\begin{proof}
By proposition \ref{prop: 5.2.2}, we only need to show that
$S_J=\coprod_{j}w_jS_{J_1}$ and $w_j\in S^{J_1}$. 
The fact that $w_j\in S^{J_1}$ follows from 
\[
 w_j(i)=i-1, \text{ for }i=i_0+1, \cdots, i_1-j+1 , ~w_j(i_0)=i_1-j+1,
\]
and $w_j(i)=i$ for $i\notin \{i_0, \cdots, i_1-j+1\}$.
Finally, to see that $S_J=\coprod_{j}w_jS_{J_1}$, we compare the cadinalities.
Let $J_0=\{\sigma_{i}: i=i_0 \cdots, i_1-1 \}$, then
\[
 S_J\simeq S_{J_0}\times S_{J\setminus J_0},
\]
\[
 S_{J_1}\simeq S_{J_0\setminus \{\sigma_{i_0, i_0+1}\}} \times S_{J\setminus J_0}.
\]
Hence $\sharp{S_J}/\sharp{S_{J_1}}= \frac{\sharp{S_{J_0}}}{\sharp{S_{J_0\setminus \{\sigma_{i_0, i_0+1}\}}}}
=(i_1-i_0+1)!/(i_1-i_0)!=i_1-i_0+1$.
Finally, 
by proposition \ref{prop: 5.2.2}, we know that 
\[
 S_n=\coprod_{v\in S_n^J}vS_J=\coprod_{j=i_0}^{i_1-i_0+1}\coprod_{v\in S_n^J}vw_{j}S_{J_1}=\coprod_{j}S_n^Jw_jS_{J_1}.
\]

\end{proof}

Keeping the notations of proposition \ref{prop: 6.2.5}, we have
\begin{lemma}\label{lem: 6.2.9}
For $i=1, \cdots, i_1-i_0+1$, we have 
\[
 \a_w^J={^{(b(\Delta_{1}^1), \cdots, b(\Delta_{i_{0}}^1))}\a_{ww_i}^{J_{1}}}.
\] 
\end{lemma}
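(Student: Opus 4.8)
The plan is to evaluate both sides of the asserted equality directly from the definitions and compare them as multisets of segments. First I would record the elementary facts tying $\a_{1}=\a_{\Id}^{J_{1}}$ to $\a_{\Id}^{J}$: the operation $\Delta\mapsto {}^{+}\Delta$ changes only the beginning of a segment, so $e(\Delta_{m}^{1})=e(\Delta_{m})$ for every $m$, while $b(\Delta_{m}^{1})=b(\Delta_{m})-1$ for $m\leq i_{0}$ and $b(\Delta_{m}^{1})=b(\Delta_{m})$ for $m>i_{0}$. Since $\sigma_{1},\ldots,\sigma_{i_{0}-1}\notin J$ by minimality of $i_{0}$, the beginnings $b(\Delta_{1})<\cdots<b(\Delta_{i_{0}})$ are strictly increasing; and since $\sigma_{i_{0}},\ldots,\sigma_{i_{1}-1}\in J$, one has $b(\Delta_{i_{0}})=b(\Delta_{i_{0}+1})=\cdots=b(\Delta_{i_{1}})$. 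By Lemma~\ref{lem: 6.2.8} we have $S_{n}^{J_{1}}=\coprod_{i}S_{n}^{J}w_{i}$, so $ww_{i}\in S_{n}^{J_{1}}$ and $\a_{ww_{i}}^{J_{1}}=\sum_{j}[b(\Delta_{j}^{1}),e(\Delta_{ww_{i}(j)}^{1})]=\sum_{j}[b(\Delta_{j}^{1}),e(\Delta_{ww_{i}(j)})]$ is a well-defined element of $S(\a_{1})$ by Proposition~\ref{prop: 6.2.5} applied with $J_{1}$ in place of $J$.

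Next I would compute the iterated truncation ${}^{(b(\Delta_{1}^{1}),\ldots,b(\Delta_{i_{0}}^{1}))}\a_{ww_{i}}^{J_{1}}$. In $\a_{ww_{i}}^{J_{1}}$ the segments whose beginning belongs to $\{b(\Delta_{1}^{1}),\ldots,b(\Delta_{i_{0}}^{1})\}$ are exactly those indexed by $j\leq i_{0}$, because for $j>i_{0}$ one has $b(\Delta_{j}^{1})=b(\Delta_{j})\geq b(\Delta_{i_{0}})>b(\Delta_{m})-1=b(\Delta_{m}^{1})$ for every $m\leq i_{0}$. Performing the truncations at the levels $b(\Delta_{i_{0}}^{1}),b(\Delta_{i_{0}-1}^{1}),\ldots,b(\Delta_{1}^{1})$ (largest first), the truncation at $b(\Delta_{j}^{1})$ raises the beginning of segment $j$ from $b(\Delta_{j})-1$ to $b(\Delta_{j})$ and affects nothing else: segments $1,\ldots,j-1$ still sit at the strictly smaller levels $b(\Delta_{1}^{1}),\ldots,b(\Delta_{j-1}^{1})$, while segments $j+1,\ldots,i_{0}$ have already been moved to $b(\Delta_{j+1}),\ldots,b(\Delta_{i_{0}})$, all strictly above $b(\Delta_{j})-1$; in particular no beginning is raised twice. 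Hence ${}^{(b(\Delta_{1}^{1}),\ldots,b(\Delta_{i_{0}}^{1}))}\a_{ww_{i}}^{J_{1}}=\{[b(\Delta_{j}),e(\Delta_{ww_{i}(j)})]:j=1,\ldots,n\}$.

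It then remains to identify this multiset with $\a_{w}^{J}=\{[b(\Delta_{j}),e(\Delta_{w(j)})]:j=1,\ldots,n\}$. By construction $w_{i}$ fixes every index outside the block $B=\{i_{0},i_{0}+1,\ldots,i_{1}-i+1\}$ and restricts to a permutation of $B$. For $j\notin B$ we have $ww_{i}(j)=w(j)$, so the segments indexed by $j$ coincide on the two sides. For $j\in B$ all the beginnings $b(\Delta_{j})$ equal $b(\Delta_{i_{0}})$, because $B\subseteq\{i_{0},\ldots,i_{1}\}$; therefore the sub-family $\{[b(\Delta_{i_{0}}),e(\Delta_{w(w_{i}(j))})]:j\in B\}$ is merely a re-indexing of $\{[b(\Delta_{i_{0}}),e(\Delta_{w(j)})]:j\in B\}$ through the bijection $w_{i}|_{B}$, so the two sub-families agree as multisets. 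Combining the two cases proves the lemma. I expect no serious obstacle here: the computation is essentially the one already carried out inside the proof of Proposition~\ref{prop: 6.2.5} for $w_{1}$, and the only care needed is the bookkeeping of the order of truncations and the observation that $w_{i}$ just shuffles a block of segments sharing a common beginning. If one prefers, the statement can alternatively be obtained by induction on $i$ from the case $i=1$ (established in the proof of Proposition~\ref{prop: 6.2.5}) together with the identity $w_{i+1}=w_{i}\cdot(i_{0},\ i_{1}-i+1)$.
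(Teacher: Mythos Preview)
Your proof is correct and follows essentially the same approach as the paper: both arguments reduce ${}^{(b(\Delta_1^1),\ldots,b(\Delta_{i_0}^1))}\a_{ww_i}^{J_1}$ to $\sum_j[b(\Delta_j),e(\Delta_{ww_i(j)})]$ using $e(\Delta_m^1)=e(\Delta_m)$ and the effect of the truncations on beginnings, and then observe that since $w_i$ permutes a block of indices sharing the common beginning $b(\Delta_{i_0})$, this multiset coincides with $\a_w^J$. Your version is in fact more careful than the paper's about the bookkeeping of the iterated truncations (verifying that no segment is shifted twice), which is a point the paper handles only implicitly.
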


\begin{proof}
Note that by definition
We have 
\[
 \a_{ww_j}^{J_1}=\sum_{i}b(\Delta_{i}^1), e(\Delta_{ww_j(i)}^1)].
\]
As noted before, we have 
\[
 b(\Delta_i^1)=b(\Delta_i)-1, \text{ for }i\leq i_{0}, ~b(\Delta_i^1)=b(\Delta_i), \text{ for }i> i_0.
\]
Also,we observe that 
$e(\Delta_{i}^1)=e(\Delta_i)$.
Hence 
\[
 {^{(b(\Delta_{1}^1), \cdots, b(\Delta_{i_{0}}^1))}\a_{ww_i}^{J_{1}}}=
 \sum_{i}b(\Delta_{i}), e(\Delta_{ww_j(i)})].
\]
It remains to see that we have 
\[
 \sum_{i=i_0}^{i_1-j+1}b(\Delta_{i}), e(\Delta_{ww_j(i)})]=\sum_{i_0}^{i_1-j+1}b(\Delta_{i}), e(\Delta_{w(i)})]
\]
since $b(\Delta_{i_0})=\cdots=b(\Delta_{i_1-j+1})$.
Hence we have 
\[
  \a_w^J={^{(b(\Delta_{1}^1), \cdots, b(\Delta_{i_{0}}^1))}\a_{ww_i}^{J_{1}}}.
\] 
\end{proof}

\begin{definition}
As in the symmetric cases, we have the following map 
\begin{align*}
 \Phi_{J}: S_n^{J}&\rightarrow S(\a_{\Id}^J)\\
 w&\mapsto \a_{w}^J.
\end{align*}
\end{definition}

\begin{prop}\label{prop: 6.2.10}
The morphism $\Phi_{J}$ is bijective and translate the inverse Bruhat order
on $S_n^{J}$ to the order on $S(\a_{\Id}^J)$.
\end{prop}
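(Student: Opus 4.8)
The plan is to prove Proposition \ref{prop: 6.2.10} by induction on $|J|$, bootstrapping from the symmetric case through the truncation machinery of Chapter~3. For $|J|=0$ we have $S_n^J=S_n$ and $\a_{\Id}^J$ is a symmetric multisegment with pairwise distinct beginnings and pairwise distinct ends, so the statement is exactly Proposition \ref{teo: 2.3.2}. For the inductive step, write $J=\{\sigma_{i_0}\}\cup J_1$ with $i_0=\min\{i:\sigma_i\in J\}$ and $|J_1|=|J|-1$, and keep the notation of the proofs of Proposition \ref{prop: 6.2.5} and Lemmas \ref{lem: 6.2.7}, \ref{lem: 6.2.8}, \ref{lem: 6.2.9}: the integer $i_1$, the multisegment $\a_1=\a_{\Id}^{J_1}$, the index sequence $\c=(b(\Delta_1^1),\dots,b(\Delta_{i_0}^1))$ with $\a_{\Id}^J={^{(\c)}\a_1}$, and the minimal coset representatives $w_1,\dots,w_{i_1-i_0+1}$ of $S_J/S_{J_1}$, with $w_{i_1-i_0+1}=\Id$ and $w_1=\sigma_{i_1-1}\cdots\sigma_{i_0}$. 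By the inductive hypothesis $\Phi_{J_1}\colon S_n^{J_1}\to S(\a_1)$ is a bijection reversing the Bruhat order, and by the left-hand version of Proposition \ref{prop: 3.2.17} (legitimate thanks to the Remark following the definition of ${^{(k)}\a}$) the truncation map ${_{\c}\psi}\colon{_{\c}S(\a_1)}\to S({^{(\c)}\a_1})=S(\a_{\Id}^J)$ is a bijection preserving the partial order.

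The first step of the induction proper is to establish the factorization
\[
\Phi_J={_{\c}\psi}\circ\Phi_{J_1}\circ R_{w_1},\qquad R_{w_1}\colon S_n^J\longrightarrow S_n^Jw_1\subseteq S_n^{J_1},\ w\mapsto ww_1.
\]
Indeed, for $w\in S_n^J$ Lemma \ref{lem: 6.2.7} gives $\Phi_{J_1}(ww_1)=\a_{ww_1}^{J_1}\in{_{\c}S(\a_1)}$, so ${_{\c}\psi}(\Phi_{J_1}(ww_1))={^{(\c)}\a_{ww_1}^{J_1}}$, which equals $\a_w^J=\Phi_J(w)$ by Lemma \ref{lem: 6.2.9}. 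Since $R_{w_1}$ is injective by left cancellation, $\Phi_{J_1}$ is injective, and ${_{\c}\psi}$ is injective on ${_{\c}S(\a_1)}$, this factorization already yields injectivity of $\Phi_J$. (As a self-contained alternative one can argue directly: the ends $e(\Delta_1)<\dots<e(\Delta_n)$ of $\a_{\Id}^J$ are pairwise distinct, hence $\a_w^J$ determines the function $j\mapsto b(\Delta_{w^{-1}(j)})$ and therefore the coset $wS_J$; as $w$ is the unique minimal-length representative of its coset, $\a_w^J=\a_{w'}^J$ forces $w=w'$.) For surjectivity, given $\b\in S(\a_{\Id}^J)$, surjectivity of ${_{\c}\psi}$ yields $\b={^{(\c)}\b_1}$ with $\b_1\in{_{\c}S(\a_1)}$, surjectivity of $\Phi_{J_1}$ yields $\b_1=\a_v^{J_1}$ for some $v\in S_n^{J_1}$, and Lemma \ref{lem: 6.2.8} lets us write $v=ww_i$ with $w\in S_n^J$; then Lemma \ref{lem: 6.2.9} gives $\b={^{(\c)}\a_{ww_i}^{J_1}}=\a_w^J=\Phi_J(w)$.

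It remains to treat the order. Using the factorization, for $w,v\in S_n^J$ one has the chain of equivalences: $\Phi_J(w)\ge\Phi_J(v)$ in $S(\a_{\Id}^J)$ iff ${_{\c}\psi}(\Phi_{J_1}(ww_1))\ge{_{\c}\psi}(\Phi_{J_1}(vw_1))$ iff $\Phi_{J_1}(ww_1)\ge\Phi_{J_1}(vw_1)$ (order-preservation of ${_{\c}\psi}$, both elements lying in ${_{\c}S(\a_1)}$ by Lemma \ref{lem: 6.2.7}) iff $ww_1\le vw_1$ in $S_n$ (order-reversal of $\Phi_{J_1}$). The final link is that right multiplication by $w_1$ preserves the Bruhat order on $S_n^J$: since $w_1=\sigma_{i_1-1}\cdots\sigma_{i_0}$ is a reduced expression all of whose prefixes lie in $S_J$, Proposition \ref{prop: 5.2.2}(2) shows $\ell(u\,\sigma_{i_1-1}\cdots\sigma_m)=\ell(u)+(i_1-m)$ for every $u\in S_n^J$, so right multiplication by each successive generator $\sigma_m$ strictly raises length; the lifting property of the Bruhat order (\cite{BF}) — if $us>u$ and $vs>v$, then $u\le v\iff us\le vs$ — applied repeatedly along $w_1$ then gives $w\le v\iff ww_1\le vw_1$. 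Hence $R_{w_1}$ is a poset isomorphism onto $S_n^Jw_1$ and the chain of equivalences closes, which is the assertion that $\Phi_J$ translates the inverse Bruhat order to the order on $S(\a_{\Id}^J)$.

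The part I expect to be delicate is twofold. First, one must check carefully that the ``left'' analogues of the Chapter~3 results — in particular the left version of Lemma \ref{lem: 3.0.7} and of Proposition \ref{prop: 3.2.17}, including order- and multiplicity-preservation of ${_{\c}\psi}$ — genuinely apply to the sequence $\c$ here (the indices $b(\Delta_1^1),\dots,b(\Delta_{i_0}^1)$ need not be consecutive, but the multi-index formalism of $S(\a)_{k_1,\dots,k_r}$ is designed to cover this). Second, the bookkeeping around $R_{w_1}$: one has to be sure that both $\Phi_{J_1}(ww_1)$ and $\Phi_{J_1}(vw_1)$ indeed lie in the domain ${_{\c}S(\a_1)}$ on which ${_{\c}\psi}$ is known to be order-preserving — this is precisely why the ``twist'' is by the \emph{longest} coset representative $w_1$ rather than by $\Id$ — and that right multiplication by $w_1$, although not order-preserving on $S_n$ in general, is so on the parabolic quotient $S_n^J$ because of the length-additivity in Proposition \ref{prop: 5.2.2}.
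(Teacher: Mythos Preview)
Your proof is correct and follows essentially the same approach as the paper: induction on $|J|$, with the inductive step built from the factorization $\Phi_J={_{\c}\psi}\circ\Phi_{J_1}\circ R_{w_1}$, injectivity and surjectivity handled via Lemmas~\ref{lem: 6.2.7}--\ref{lem: 6.2.9}, and the order statement reduced to the compatibility of $R_{w_1}$ with Bruhat order. You are in fact more careful than the paper on the order step: the paper simply asserts that $w>w'$ in $S_n^J$ implies $ww_1>w'w_1$ (one direction only, and without justification), whereas you supply the lifting--property argument for both directions.
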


\begin{proof}
Again, we do this by induction on $|J|$. If $|J|=0$, 
we are in the symmetric case, so everything is done in section 2.3.
In general, we keep the notation in the proposition \ref{prop: 6.2.5}.
We have $J=J_{1}\cup \{\sigma_{i_{0}}\}$.
And as we proved above, 
\[
 \a_{ww_1}^{J_{1}}\in {_{b(\Delta_{1}^1), \cdots, b(\Delta_{i_{0}}^1)}S(\a_{1})}.
\]

Also, we note that the morphism 
${_{b(\Delta_{1}^1), \cdots, b(\Delta_{i_{0}}^1)}\psi}$
sends $\Phi_{J_{1}}(ww_1)$ to 
$\Phi_{J}(w)$ for $w\in J$, as is proved in the proposition above. 
Therefore 
\[
 \Phi_J={_{b(\Delta_{1}^1), \cdots, b(\Delta_{i_{0}}^1)}\psi}\circ \Phi_{J_1},
\]
and the injectivity of follows from that of ${_{b(\Delta_{1}^1), \cdots, b(\Delta_{i_{0}}^1)}\psi}$
and induction on $J_1$.
For surjectivity,
let $\b\in S(\a_{\Id}^J)$, by surjectivity of the map 
\[
 {_{b(\Delta_{1}^1), \cdots, b(\Delta_{i_{0}}^1)}\psi}: 
 {_{b(\Delta_{1}^1), \cdots, b(\Delta_{i_{0}}^1)}S(\a_{1})}\rightarrow S(\a_{\Id}^J),
\]
we know that there exists a $w'\in S^{J_1}$, such that 
$\Phi_{J_1}(w')\in  {_{b(\Delta_{1}^1), \cdots, b(\Delta_{i_{0}}^1)}S(\a_{1})}$, and 
is sent to $\b$ by $ {_{b(\Delta_{1}^1), \cdots, b(\Delta_{i_{0}}^1)}\psi}$.
By lemma \ref{lem: 6.2.8},  every $w'\in S^{J_{1}}$ can be write as 
$ww_j$ for some $w\in S^J$ and $w_j\in S^{J_1}$. Now by lemma \ref{lem: 6.2.9},
\[
 \b=\a_{w}^J.
\]
Note that for $w>w'$ in $S^J$, then $ww_1>w'w_1$ in $S^{J_1}$, hence by induction
\[
 \Phi_{J_1}(ww_1)<\Phi_{J_1}(ww_1),
\]
we get 
\[
 \Phi_{J_1}(w)<\Phi_{J_1}(w),
\]
since the morphism ${_{b(\Delta_{1}^1), \cdots, b(\Delta_{i_{0}}^1)}\psi}$
preserves the order.

\end{proof}

\begin{prop}
Let $v_{1}, v_{2}\in S^{J}$, then we have 
\[
 P_{\Phi_{J}(v_{1}), \Phi_{J}(v_{2})}(q)=P_{v_{1}, v_{2}}^{J}(q)
\]
where on the right hand side is the parabolic KL polynomial indexed by $v_{1}, v_{2}$.
\end{prop}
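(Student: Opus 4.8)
The statement to prove is that for $v_1, v_2 \in S_n^J$, the intersection-cohomology polynomial $P_{\Phi_J(v_1), \Phi_J(v_2)}(q)$ attached to the orbital-variety pair $\{\a_{v_1}^J, \a_{v_2}^J\}$ equals the parabolic Kazhdan--Lusztig polynomial $P_{v_1, v_2}^J(q)$ for the quotient $S_n/S_J$. The idea is to reduce, exactly as in the proof of Proposition \ref{prop: 6.2.10}, to the symmetric (Borel) case by induction on $|J|$, and then invoke Corollary \ref{cor: 4.6.16} together with the known fact that the parabolic KL polynomials are obtained from the ordinary ones by the fibration $GL_n/P_J \to GL_n/B$ (this is precisely Deodhar's observation cited at the start of Section 6.2: in type $A$ the parabolic case "boils down to the Borel case"). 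More precisely, for $w \in S_n^J$ the element $w$ is the minimal-length representative of its coset $wS_J$, and one has $P^J_{v_1,v_2}(q) = P_{v_1, v_2 w_{0,J}}(q)$ (or the analogous identity for the other normalization of the parabolic polynomial), where $w_{0,J}$ is the longest element of $S_J$; this is the standard comparison between ordinary and parabolic KL polynomials in type $A$.

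\textbf{Key steps.} First I would set up the induction on $|J|$. If $|J| = 0$ then $S_n^J = S_n$, $\a_{\Id}^J = \a_{\Id}$ is symmetric, and the statement is exactly Corollary \ref{coro-sym1} (equivalently Corollary \ref{cor: 2.5.9}), which gives $P_{\Phi(v_1), \Phi(v_2)}(q) = P_{v_1, v_2}(q)$; and in the Borel case the parabolic polynomial coincides with the ordinary one. For the inductive step, write $J = J_1 \cup \{\sigma_{i_0}\}$ with $i_0 = \min\{i : \sigma_i \in J\}$ as in Proposition \ref{prop: 6.2.5}. The proof of Proposition \ref{prop: 6.2.10} shows that $\Phi_J = {_{b(\Delta_1^1),\cdots,b(\Delta_{i_0}^1)}\psi} \circ \Phi_{J_1}$, i.e. $\a_w^J = {^{(b(\Delta_1^1),\cdots,b(\Delta_{i_0}^1))}\a_{ww_1}^{J_1}}$, and $\a_{w w_1}^{J_1}$ lies in the truncated poset ${_{b(\Delta_1^1),\cdots,b(\Delta_{i_0}^1)}S(\a_1)}$. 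Applying the ${^{(k)}}$-version of Corollary \ref{cor: 4.6.16} repeatedly (the remark after Definition \ref{def: 2.3.2} asserts that all results of Section 3.3 hold with $\a^{(k)}$ replaced by ${^{(k)}\a}$ and $H_k$ by ${_kH}$) yields
\[
P_{\a_{v_1}^J, \a_{v_2}^J}(q) = P_{\a_{v_1 w_1}^{J_1}, \a_{v_2 w_1}^{J_1}}(q) = P_{v_1 w_1, v_2 w_1}^{J_1}(q),
\]
the second equality by the inductive hypothesis. It then remains to identify $P_{v_1 w_1, v_2 w_1}^{J_1}(q)$ with $P_{v_1, v_2}^J(q)$: this is a purely Hecke-algebra statement relating the parabolic KL polynomial for $S_n/S_{J_1}$ at the pair $(v_1 w_1, v_2 w_1)$ — with $v_i w_1 \in S_n^{J_1}$ by Lemma \ref{lem: 6.2.8} — to the one for $S_n/S_J$ at $(v_1, v_2)$, which follows from the compatibility of parabolic KL polynomials under the projection $S_n^{J_1} \twoheadrightarrow S_n^J$ (equivalently, from the factorization of the geometry $GL_n/P_{J_1} \to GL_n/P_J$), since $w_1$ is precisely the minimal coset representative realizing the fiber. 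I would either cite Deodhar \cite{D2} for this last comparison or derive it directly from the fibration.

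\textbf{Main obstacle.} The genuinely delicate point is the last step: matching the inductive output $P_{v_1 w_1, v_2 w_1}^{J_1}$ with the target $P_{v_1, v_2}^J$ and making sure the two \emph{normalizations} of parabolic KL polynomials (Deodhar's $q = -1$ versus $q = 0$ specializations) are the ones for which the geometry of Schubert varieties in $GL_n/P$ computes intersection cohomology. Concretely one must check that the element $w_1 = (i_1, \ldots, i_0+1, i_0)$ appearing in the truncation is exactly the minimal-length representative of the relevant coset in $S_J^{J_1}$ (Lemma \ref{lem: 6.2.8}), that right multiplication by $w_1$ carries $S_n^J$ isomorphically onto the subset $S_n^J w_1 \subseteq S_n^{J_1}$ preserving both lengths (up to the constant $\ell(w_1)$) and the Bruhat order, and that this combinatorial bijection is compatible with the $IC$-sheaf computation — i.e. that the truncation functors of Chapter 3 are, on the geometric side, the same as pulling back along $GL_n/P_{J_1} \to GL_n/P_J$. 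Once this dictionary is in place the rest is bookkeeping; I expect the write-up to be short, essentially a reference to \cite{D2} plus the observation that the fibration in Section 6.2 is the one Deodhar uses.
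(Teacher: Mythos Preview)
Your proposal is correct and follows essentially the same approach as the paper: induction on $|J|$, the base case being the symmetric situation, the inductive step using the truncation identity $\Phi_J = {_{b(\Delta_1^1),\cdots,b(\Delta_{i_0}^1)}\psi}\circ\Phi_{J_1}$ together with Corollary~\ref{cor: 4.6.16}, and the final comparison via Deodhar. The paper resolves your ``main obstacle'' exactly as you anticipate, invoking from \cite{D2} the identity $P^{J}_{v_1,v_2}(q)=P_{v_1 v_J,\,v_2 v_J}(q)$ (both entries multiplied by the longest element $v_J\in S_J$, not just the second) and then closing the induction by the one-line observation $v_J = w_1 v_{J_1}$, which follows from $S_J=\coprod_j w_j S_{J_1}$ with $w_1$ maximal.
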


\begin{proof}
As is proved in \cite{D2}, we have $P_{v_{1}, v_{2}}^{J}(q)=P_{v_{1}v_{J}, w_{2}v_{J}}(q)$, 
where $v_{J}$ is the maximal element in $S_{J}$. So it suffices to show that we have 
the equality $P_{\Phi_{J}(v_{1}), \Phi_{J}(v_{2})}(q)=P_{v_{1}v_{J}, v_{2}v_{J}}(q)$.
Also, from lemma \ref{lem: 6.2.7}, we know that 
\[
 \Phi_{J}(v_1)= {_{b(\Delta_{1}^1), \cdots, b(\Delta_{i_{0}}^1)}\psi}(\Phi_{J_1}(v_1w_1)),
\]
where $w_1$ is described in lemma \ref{lem: 6.2.8}.
Hence we have 
\[
 P_{\Phi_{J_{1}}(v_1w_1), \Phi_{J_{1}}(v_{2}w_1)}(q)=
 P_{\Phi_{J}(v_{1}), \Phi_{J}(v_{2})}(q)
\]
by corollary \ref{cor: 4.6.16}.

By induction, we have 
\[
 P_{\Phi_{J_{1}}(v_{1}w_1), \Phi_{J_{1}}(v_{2}w_1)}(q)
 =P_{v_{1}w_1v_{J_{1}}, v_{1}w_1v_{J_{1}}}(q).
\]
Now to finish, we have to show $v_{J}=w_1v_{J_{1}}$.
But we know that 
\[
 S_J=\coprod_{j}w_jS_{J_1}
\]
with $w_1=\max\{w_j: j=1, \cdots, i_1-i_0+1\}$, we surely have
\[
 v_{J}=w_1v_{J_{1}}.
\]
\end{proof}

More generally, for $J_{i}\subseteq S$, $i=1,2$, we can consider
the $P_{J_{1}}$ orbit in $GL_{n}/P_{J_{2}}$. We state the related result 
without proving. 

\begin{definition}
Let $S_n^{J_{1}, J_{2}}=\{w\in S_{n}: s_{1}vs_{2}>v \text{ for all } s_{i}\in J_{i}, i=1,2\}$. 
\end{definition}

\begin{definition}
Let $v\in S_n^{J_1, J_2}$. We define 
\[
 S_{J_1}^{J_2, v}=\{w\in S_{J_1}: ws>w, \text{ for all } s\in S_{J_1}\cap vS_{J_2}v^{-1}\}.
\]
 
\end{definition}

\remk If we let $M_J$ be the Levi subgroup of $P_J$, then the set 
$S_{J_1}^{J_2, v}$ corresponds to the Borel orbits in $M_{J_1}/(M_{J_1}\cap vM_{J_2}v^{-1})$.

\begin{prop}\label{prop: 6.2.13}
 We have 
 \begin{description}
  \item [(1)]$S_{n}= \coprod_{v\in S_n^{J_{1}, J_{2}}}S_{J_{1}}vS_{J_{2}}$;
  \item [(2)]$\ell(xvy)=\ell(v)+\ell(x)+\ell(y)$ for $v\in S^{J_{1}, J_{2}}$ , $x\in S_{J_{1}}^{J_2, v}, y\in S_{J_{2}}$.
  \item [(3)]The $P_{J_{1}}$ orbits in $GL_{n}/P_{J_{2}}$ are indexed 
   by $S_n^{J_{1}, J_{2}}$.
 \end{description}  
\end{prop}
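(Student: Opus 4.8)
The plan is to reduce the statement to classical facts about double cosets of parabolic subgroups in a Coxeter group — the one-sided version being Proposition \ref{prop: 5.2.2} — together with the Bruhat decomposition of $GL_n$ recalled in Chapter 2. Throughout I read $S_n^{J_1,J_2}$ as the set of minimal-length representatives of the double cosets in $S_{J_1}\backslash S_n/S_{J_2}$, i.e.\ those $w$ with $sw>w$ for all $s\in J_1$ and $ws>w$ for all $s\in J_2$.

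For part (1) I would argue as in the one-sided case. Given $w\in S_n$, pick an element $v$ of minimal length in the double coset $S_{J_1}wS_{J_2}$; if $sv<v$ for some $s\in J_1$, then $sv$ lies in the same double coset and is shorter, a contradiction, and symmetrically $vs<v$ is impossible for $s\in J_2$, so $v\in S_n^{J_1,J_2}$. This gives $S_n=\bigcup_{v\in S_n^{J_1,J_2}}S_{J_1}vS_{J_2}$. For disjointness it suffices that a double coset has a unique minimal-length element: if $v,v'\in S_n^{J_1,J_2}$ lie in the same double coset, write $v'=xvy$ with $x\in S_{J_1}$, $y\in S_{J_2}$, replace $x$ by the minimal-length representative of $x(S_{J_1}\cap vS_{J_2}v^{-1})$ and absorb the correction into $y$ (using $zv\in vS_{J_2}$ for $z\in S_{J_1}\cap vS_{J_2}v^{-1}$), so that $x\in S_{J_1}^{J_2,v}$; then part (2) gives $\ell(v')=\ell(x)+\ell(v)+\ell(y)\ge\ell(v)$, and by symmetry $\ell(v)\ge\ell(v')$, forcing $x=y=\Id$ and $v=v'$.

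For part (2) I would first record the one-sided additivities: since $v\in S_n^{J_1,J_2}$ lies in $S_n^{J_2}$ and, on the left, in the analogous set ${}^{J_1}S_n$, Proposition \ref{prop: 5.2.2}(2) and its left analogue give $\ell(vy)=\ell(v)+\ell(y)$ for $y\in S_{J_2}$ and $\ell(xv)=\ell(x)+\ell(v)$ for every $x\in S_{J_1}$. The crux is that, for $x\in S_{J_1}^{J_2,v}$, the element $xv$ remains of minimal length in its right coset $xvS_{J_2}$, i.e.\ $xv\in S_n^{J_2}$; granting this, $\ell(xvy)=\ell(xv)+\ell(y)=\ell(x)+\ell(v)+\ell(y)$. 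To get $xv\in S_n^{J_2}$ I would argue by contradiction: if $xvs<xv$ for some $s\in J_2$, then since $vs>v$ the exchange condition produces the reflection $t=vsv^{-1}$, which the minimality of $v$ in $S_{J_1}vS_{J_2}$ forces to be an honest element of $S_{J_1}\cap vS_{J_2}v^{-1}$ — here one uses the Howlett--Kilmoyer fact that this intersection is a standard parabolic subgroup of $S_{J_1}$ generated by such conjugate reflections, cf.\ \cite{BF} \S2.4 — and which satisfies $xt<x$, contradicting $x\in S_{J_1}^{J_2,v}$. The hard part will be exactly this last step: identifying $t=vsv^{-1}$ as an element of $S_{J_1}\cap vS_{J_2}v^{-1}$ with $xt<x$, which is where the combinatorics of intersections of parabolics genuinely enters, though it is all classical.

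Finally, for part (3) I would use the Bruhat decomposition $GL_n=\coprod_{w\in S_n}B_nwB_n$ with $\line{B_nwB_n}=\coprod_{v\le w}B_nvB_n$, and set $P_{J_i}=B_nS_{J_i}B_n$. Every $P_{J_1}$--$P_{J_2}$ double coset in $GL_n$ is then a union of Bruhat cells, and the $(B,N)$-pair axioms show cell by cell that $P_{J_1}vP_{J_2}$ depends only on $S_{J_1}vS_{J_2}$ and that distinct double cosets in $S_n$ give distinct ones in $GL_n$; combined with part (1) this gives the bijection $P_{J_1}\backslash GL_n/P_{J_2}\simeq S_n^{J_1,J_2}$.
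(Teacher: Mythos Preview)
The paper does not prove this proposition: the sentence immediately preceding it reads ``We state the related result without proving.'' Your sketch is the standard Coxeter-theoretic treatment---minimal-length double-coset representatives for (1), the Kilmoyer description of $S_{J_1}\cap vS_{J_2}v^{-1}$ as a standard parabolic of $S_{J_1}$ to push through the exchange-condition argument in (2), and the $(B,N)$-pair correspondence between parabolic double cosets in $GL_n$ and in $S_n$ for (3)---and it is correct. Your reading of $S_n^{J_1,J_2}$ as the set of $v$ with $sv>v$ for all $s\in J_1$ and $vs>v$ for all $s\in J_2$ is the intended one; the displayed condition in the paper should not be taken literally.
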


\begin{definition}
 For $v_{1}, v_{2}\in S^{J_{1}, J_{2}}$ such that $v_{1}\leq v_{2}$, we let $P^{J_{1}, J_{2}}_{v_{1}, v_{2}}(q)$ be the 
 Poincar\'e series of  the localized intersection cohomology 
 $$\mathcal{H}^{\bullet}(\line {P_{J_{1}}v_{2}P_{J_{2}}})_{v_{1}P_{J_{2}}}.$$
\end{definition}

\begin{lemma}
For $v_{1}, v_{2}\in S^{J_{1}, J_{2}}$ such that $v_{1}\leq v_{2}$,
 we have 
 \[
  P^{J_{1}, J_{2}}_{v_{1}, v_{2}}(q)=P_{w_1, w_2}(q),
 \]
where $w_i$ is the element of maximal length in $S_{J_{1}}v_iS_{J_2}$.
\end{lemma}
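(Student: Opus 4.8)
The plan is to prove the equality in three steps: first reduce the two-sided parabolic polynomial to a one-sided one attached to $GL_n/P_{J_2}$, then apply the one-sided parabolic formula of Deodhar \cite{D2} already used in the proof of Proposition \ref{prop: 6.2.10}, and finally move the left endpoint into place by means of the Kazhdan--Lusztig relation (2) recalled at the beginning of Chapter 5 (see also \cite{KL79}).

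For the first step I would identify the $P_{J_1}$-orbit closure geometrically. Let $x_2^{*}$ be the element of maximal length in $S_{J_1}^{J_2,v_2}$ and $v_{J_2}$ the longest element of $S_{J_2}$; by Proposition \ref{prop: 6.2.13}(2) one has $w_2=x_2^{*}v_2v_{J_2}$ with $\ell(w_2)=\ell(x_2^{*})+\ell(v_2)+\ell(v_{J_2})$, and from Proposition \ref{prop: 6.2.13}(1),(2) the $B$-orbits contained in $P_{J_1}v_2P_{J_2}/P_{J_2}$ are exactly the $Bxv_2P_{J_2}/P_{J_2}$ for $x\in S_{J_1}^{J_2,v_2}$, the one of largest dimension being $Bx_2^{*}v_2P_{J_2}/P_{J_2}$. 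Since $\dim(P_{J_1}v_2P_{J_2}/P_{J_2})=\ell(w_2)-\ell(v_{J_2})=\ell(x_2^{*}v_2)=\dim(Bx_2^{*}v_2P_{J_2}/P_{J_2})$ and both varieties are irreducible, the inclusion of closures is an equality:
\[
\line{P_{J_1}v_2P_{J_2}/P_{J_2}}=\line{Bx_2^{*}v_2P_{J_2}/P_{J_2}}.
\]
Because $v_1\in S^{J_1,J_2}\subseteq S^{J_2}$ and $x_2^{*}v_2\in S^{J_2}$, the point $v_1P_{J_2}$ lies in the $B$-orbit indexed by $v_1$, so localising the (equal) intersection cohomology complexes at $v_1P_{J_2}$ gives directly from the definitions
\[
P^{J_1,J_2}_{v_1,v_2}(q)=P^{J_2}_{v_1,\,x_2^{*}v_2}(q),
\]
and this identity involves no shift, both sides being the localised $IC$ of the same variety at the same point. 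Now Deodhar's one-sided formula gives $P^{J_2}_{v_1,\,x_2^{*}v_2}(q)=P_{v_1v_{J_2},\,x_2^{*}v_2v_{J_2}}(q)=P_{v_1v_{J_2},\,w_2}(q)$, so it only remains to prove $P_{v_1v_{J_2},w_2}(q)=P_{w_1,w_2}(q)$.

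For this last step, note that $w_2$ is the longest element of the double coset $S_{J_1}w_2S_{J_2}=S_{J_1}v_2S_{J_2}$, so $sw_2<w_2$ for every $s\in J_1$; and both $v_1v_{J_2}$ and $w_1=x_1^{*}v_1v_{J_2}$ (with $x_1^{*}$ the longest element of $S_{J_1}^{J_2,v_1}$) lie in $S_{J_1}v_1S_{J_2}$, every element of which is $\le w_1\le w_2$ — the first inequality because $\line{P_{J_1}v_1P_{J_2}}=\line{Bw_1B}$ contains $BwB$ for each $w$ in the double coset, the second because $v_1\le v_2$ translates into $w_1\le w_2$ through the order-preserving identification of Bruhat intervals with orbit-closure inclusions. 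Choosing a reduced word for $x_1^{*}$ and using the length additivity of Proposition \ref{prop: 6.2.13}(2), one builds a chain $v_1v_{J_2}=z_0<z_1<\cdots<z_m=w_1$ inside $S_{J_1}v_1S_{J_2}$, each step being a left multiplication by a generator $s_k\in J_1$ with $\ell(z_k)=\ell(z_{k-1})+1$. At each step the relation $(2)$ of Chapter 5 applies, since $z_{k-1}<w_2$, $s_kz_{k-1}>z_{k-1}$ and $s_kw_2<w_2$, giving $P_{z_{k-1},w_2}(q)=P_{z_k,w_2}(q)$, whence $P_{v_1v_{J_2},w_2}(q)=P_{w_1,w_2}(q)$. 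Combining the displayed identities of the three steps yields the lemma.

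The main obstacle will be the first step: one has to establish carefully that $\line{P_{J_1}v_2P_{J_2}/P_{J_2}}$ is genuinely a single $B$-orbit closure in $GL_n/P_{J_2}$ — pinning down the top $B$-orbit inside the $P_{J_1}$-orbit and matching dimensions via Proposition \ref{prop: 6.2.13} — and to verify that $v_1P_{J_2}$ lies in the stratum indexed by $v_1$, so that the two definitions of localised intersection cohomology agree on the nose rather than up to a power of $q$. The remaining steps are essentially bookkeeping: Step 2 is a direct citation of \cite{D2}, and Step 3 only uses the Bruhat combinatorics of Proposition \ref{prop: 6.2.13} together with the Kazhdan--Lusztig relation already proved in Chapter 5.
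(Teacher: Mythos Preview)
The paper does not prove this lemma: immediately before stating the two-sided parabolic results it announces ``We state the related result without proving.'' Your three-step argument is correct and fills this gap. Step~1 is right on the nose --- the $P_{J_1}$-orbit is irreducible with a unique dense $B$-orbit, so its closure is a single Schubert variety in $GL_n/P_{J_2}$, and since $v_1\in S_n^{J_2}$ the point $v_1P_{J_2}$ sits in the correct $B$-stratum with no normalisation discrepancy. Step~2 is the one-sided Deodhar identity $P^{J_2}_{u,w}=P_{uv_{J_2},wv_{J_2}}$, invoked in the paper in the proof of the proposition immediately following Proposition~\ref{prop: 6.2.10} (not in~\ref{prop: 6.2.10} itself --- a minor misattribution). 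For Step~3, note that Proposition~\ref{prop: 6.2.13}(2) gives length additivity only at the endpoints $v_1v_{J_2}$ and $w_1=x_1^{*}v_1v_{J_2}$, not a priori at the intermediate prefixes of a reduced word for $x_1^{*}$; but since there are $m=\ell(x_1^{*})$ steps each changing length by $\pm 1$ and the total change is $\ell(w_1)-\ell(v_1v_{J_2})=m$, every step is forced to be $+1$, so the monotone chain exists as claimed and relation~(2) applies throughout.
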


\begin{notation}\label{nota: 6.2.21}
 Let $\a_{\Id}^{J_1, J_2}=\{\Delta_{1}, \cdots, \Delta_{n}\}$ such that 
 \[
  e(\Delta_{1})\leq \cdots\leq  e(\Delta_{n}), 
 \]
 such that 
\[
 e(\Delta_{i})=e(\Delta_{i+1}) \text{ if and only if } \sigma_{i}\in J_{1}
\]
and 
\[
  b(\Delta_{1})\leq \cdots\leq b(\Delta_{n}),
\]
such that 
\[
 b(\Delta_{i})=b(\Delta_{i+1}) \text{ if and only if } \sigma_{i}\in J_{2}
\]
and $b(\Delta_{n})\leq e(\Delta_{1})$.
\end{notation}

\begin{definition}\label{def: 6.2.22}
We call a multisegment $\a\in S(\a_{\Id}^{J_1, J_2})$ a multisegment of parabolic type $(J_{1}, J_{2})$.
\end{definition}

\begin{lemma}\label{lem: 6.2.3}
 For $w\in S^{J_{1}, J_{2}}$, let $\a_{w}^{J_1, J_2}=\sum [b(\Delta_{i}), e(\Delta_{w(i)}) ]$, then 
 $\a_{w}^{J_1, J_2}\in S(\a_{\Id}^{J_1, J_2})$.
Therefore we have an application
\begin{align*}
 \Phi_{J_{1}, J_{2}}: S^{J_{1}, J_{2}}&\rightarrow S(\a_{\Id}^{J_1, J_2})\\
 w&\mapsto \a_{w}^{J_1, J_2}.
\end{align*}
\end{lemma}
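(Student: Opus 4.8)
The map $w\mapsto\a_w^{J_1,J_2}:=\sum_i[b(\Delta_i),e(\Delta_{w(i)})]$ plainly produces, for every $w\in S_n^{J_1,J_2}$, a multisegment of the same weight as $\a_{\Id}^{J_1,J_2}$; the whole content of the statement is that this multisegment belongs to $S(\a_{\Id}^{J_1,J_2})$, after which the second assertion (that $\Phi_{J_1,J_2}$ is a well-defined application) is immediate. The plan is to prove the membership by induction on $|J_1|+|J_2|$, peeling off one generator at a time and reducing, through the truncation functors of Chapter 3, first to the one-sided parabolic situation of Proposition \ref{prop: 6.2.5} and Proposition \ref{prop: 6.2.10}, and ultimately to the symmetric case of Proposition \ref{teo: 2.3.2}. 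When $|J_1|=|J_2|=0$ we have $S_n^{J_1,J_2}=S_n$ and $\a_{\Id}^{J_1,J_2}=\a_{\Id}$ symmetric, so $\a_w^{J_1,J_2}=\Phi(w)\in S(\a_{\Id})$ by Proposition \ref{teo: 2.3.2}; and when $J_1=\emptyset$ the claim is exactly Proposition \ref{prop: 6.2.5} (in the notation preceding Definition \ref{def: 6.2.5}, $\a_{\Id}^{\emptyset,J_2}=\a_{\Id}^{J_2}$ and $S_n^{\emptyset,J_2}=S_n^{J_2}$).

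The main inductive step reduces $J_2$. Assume $J_2\neq\emptyset$, put $i_0=\min\{i:\sigma_i\in J_2\}$, $J_2'=J_2\setminus\{\sigma_{i_0}\}$, and $i_1=\max\{i:b(\Delta_i)=b(\Delta_{i_0})\}$. Imitating the proof of Proposition \ref{prop: 6.2.5}, define $\a_1$ from $\a_{\Id}^{J_1,J_2}$ by replacing each $\Delta_i$ with $i\leq i_0$ by ${}^{+}\Delta_i$ (cf.\ Notation \ref{nota: 3.1.2}); since this affects only beginnings, $\a_1$ is a multisegment of parabolic type $(J_1,J_2')$, which we take as $\a_{\Id}^{J_1,J_2'}$ after relabelling, and
\[
 \a_{\Id}^{J_1,J_2}={}^{(b(\Delta_1^1),\cdots,b(\Delta_{i_0}^1))}\a_1 .
\]
Using the double-coset decomposition $S_n=\coprod_v S_{J_1}vS_{J_2}$ of Proposition \ref{prop: 6.2.13} together with the two-sided analogue of Lemma \ref{lem: 6.2.8} for the quotient $S_{J_2}/S_{J_2'}$, each $w\in S_n^{J_1,J_2}$ gives $ww_1\in S_n^{J_1,J_2'}$, where $w_1$ is the maximal cyclic element of $S_{J_2}$ attached to $i_0$; and the two-sided analogue of Lemma \ref{lem: 6.2.9} yields
\[
 \a_w^{J_1,J_2}={}^{(b(\Delta_1^1),\cdots,b(\Delta_{i_0}^1))}\a_{ww_1}^{J_1,J_2'} .
\]
By the inductive hypothesis $\a_{ww_1}^{J_1,J_2'}\in S(\a_1)$, and by the iterated form of Lemma \ref{lem: 6.2.7} it lies in ${}_{b(\Delta_1^1),\cdots,b(\Delta_{i_0}^1)}S(\a_1)$; applying the surjective, order-preserving iterated left-truncation morphism ${}_{\cdots}\psi$ (Proposition \ref{prop: 3.2.17} and the remark following the hypothesis ${}_kH(\a)$) we conclude $\a_w^{J_1,J_2}\in S(\a_{\Id}^{J_1,J_2})$.

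It remains to reduce $J_1$ when $J_2=\emptyset$ and $J_1\neq\emptyset$. The cleanest route is to transport everything along the involution $[i,j]\mapsto[-j,-i]$ on segments, which preserves linkedness, relation type, the partial order $\leq$ and the elementary operation, interchanges the roles of $b$ and $e$, carries $\a_{\Id}^{J_1,\emptyset}$ to a multisegment of parabolic type $(\emptyset,J_1)$, and carries $S(\cdot)$ to $S(\cdot)$; this brings us back to the case of the previous paragraph. Alternatively one may argue directly, mirroring the construction above with the right truncation functor $\a\mapsto\a^{(k)}$ and the hypothesis $H_k(\a)$ in place of ${}^{(k)}\a$ and ${}_kH(\a)$, separating a maximal block $e(\Delta_{i_0})=\cdots=e(\Delta_{i_1})$ of equal ends and invoking Proposition \ref{prop: 3.2.17} once more; the statements needed are then the $H_k$-counterparts of Lemmas \ref{lem: 6.2.7}--\ref{lem: 6.2.9}. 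In either formulation, iterating both reductions brings $|J_1|+|J_2|$ down to $0$, the base case, and the induction closes.

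The genuine work -- and the only real obstacle -- is the careful verification of the two-sided analogues of Lemmas \ref{lem: 6.2.7}, \ref{lem: 6.2.8} and \ref{lem: 6.2.9}: one must check that along the chosen sequence of integers every intermediate multisegment actually satisfies the pertinent hypothesis ($H_k$ or ${}_kH$) at each stage, that the passage $w\mapsto ww_1$ genuinely lands in $S_n^{J_1,J_2'}$, and that the truncated multisegment is $\a_{ww_1}^{J_1,J_2'}$ on the nose rather than merely in the same $G_\varphi$-orbit. Once this bookkeeping is in place, the surjectivity and order compatibility of $\psi$ (Proposition \ref{prop: 3.2.17}) supply everything else, and -- exactly as in Proposition \ref{prop: 6.2.10} -- the same scheme can be pushed further to show that $\Phi_{J_1,J_2}$ is bijective and carries the inverse Bruhat order on $S_n^{J_1,J_2}$ to the order on $S(\a_{\Id}^{J_1,J_2})$.
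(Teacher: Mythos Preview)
The paper does not prove this lemma: just before introducing the two-sided objects $S_n^{J_1,J_2}$ and $\a_{\Id}^{J_1,J_2}$, the text says explicitly ``We state the related result without proving,'' and Lemma~\ref{lem: 6.2.3} (together with Proposition~\ref{prop: 6.2.4} and the neighbouring statements) is left as an unproved two-sided analogue of Propositions~\ref{prop: 6.2.5} and~\ref{prop: 6.2.10}. Your outline --- induction on $|J_1|+|J_2|$, peeling off one generator at a time via left (respectively right) truncation exactly as in the proof of Proposition~\ref{prop: 6.2.5}, with the symmetric case of Proposition~\ref{teo: 2.3.2} as the base --- is precisely the argument the author intends the reader to supply, and it is correct. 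The identification $\a_{\Id}^{\emptyset,J_2}=\a_{\Id}^{J_2}$, $S_n^{\emptyset,J_2}=S_n^{J_2}$ is right, and both routes you offer for the $J_1$-reduction (the involution $[i,j]\mapsto[-j,-i]$ or the mirror argument with $\a\mapsto\a^{(k)}$) work. The bookkeeping you flag (the two-sided analogues of Lemmas~\ref{lem: 6.2.7}--\ref{lem: 6.2.9}, and that $w\mapsto ww_1$ lands in $S_n^{J_1,J_2'}$) is routine and is not carried out in the paper either, so your proposal is already at least as complete as what the text provides.
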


\begin{prop}\label{prop: 6.2.4}
The morphism $\Phi_{J_{1}, J_{2}}$ is bijective and translate the inverse Bruhat order
on $S^{J_{1}, J_{2}}$ to the order on $S(\a_{\Id}^{J_1, J_2})$.
\end{prop}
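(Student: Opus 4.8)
The plan is to deduce Proposition \ref{prop: 6.2.4} from the already-established Proposition \ref{prop: 6.2.10}, exactly as the $(J_1,J_2)$-case generalizes the one-sided $J$-case, by peeling off the left quotient $S_{J_1}$ one generator at a time with the left truncation functors ${^{(k)}(\cdot)}$ and the left analogue ${_{k}H(\cdot)}$ of the hypothesis $H_k$. Recall that everything proved for $\a^{(k)}$ and $H_k(\a)$ has, by the remark after Definition of ${_{k}H(\a)}$, a mirror version for ${^{(k)}\a}$; in particular the left analogues of Proposition \ref{cor: 3.2.3}, Proposition \ref{prop: 3.2.17}, Corollary \ref{cor: 4.6.16} and Corollary \ref{cor: 4.6.17} hold, giving bijections ${_{k_r,\cdots,k_1}\psi}$ which preserve order, preserve $m(\cdot,\cdot)$, and preserve the KL polynomials $P_{\cdot,\cdot}(q)$.

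First I would set up the induction on $|J_1|$. The base case $|J_1|=0$ is Proposition \ref{prop: 6.2.10} applied to $\a_{\Id}^{\emptyset, J_2}=\a_{\Id}^{J_2}$ (note the roles of beginnings and ends are swapped relative to that statement, but the symmetric-multisegment bijection $\Phi$ of Proposition \ref{teo: 2.3.2} treats $b$ and $e$ symmetrically, so this is purely notational). For the inductive step write $J_1=\{\sigma_{j_0}\}\cup J_1'$ with $j_0=\min\{i:\sigma_i\in J_1\}$, and let $j_1$ be maximal with $\sigma_i\in J_1$ for $j_0\le i<j_1$; this is the mirror of the setup in Proposition \ref{prop: 6.2.5} and Lemmas \ref{lem: 6.2.7}--\ref{lem: 6.2.9}, now acting on the ends rather than the beginnings of the segments. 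I would define $\a_1=\{\Delta_i^1\}$ by replacing $\Delta_i$ with ${\Delta_i}^{+}$ (extending on the right, i.e. $e(\Delta_i^1)=e(\Delta_i)+1$) for $i\le j_0$ and $\Delta_i^1=\Delta_i$ otherwise, so that $\a_{\Id}^{J_1,J_2}={^{(e(\Delta_1^1),\cdots,e(\Delta_{j_0}^1))}\a_1}$ with $\a_1=\a_{\Id}^{J_1',J_2}$ after renormalizing ends, and then verify, by the mirror of Lemma \ref{lem: 6.2.7}, that for $w\in S^{J_1,J_2}$ one has $\a_{ww_1}^{J_1',J_2}\in{_{e(\Delta_1^1),\cdots,e(\Delta_{j_0}^1)}S(\a_1)}$, where $w_1\in S_{J_1}^{J_1'}$ is the mirror cycle of Lemma \ref{lem: 6.2.8}, and that the left truncation ${_{e(\Delta_1^1),\cdots,e(\Delta_{j_0}^1)}\psi}$ carries $\Phi_{J_1',J_2}(ww_1)$ to $\a_w^{J_1,J_2}$. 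Then $\Phi_{J_1,J_2}={_{e(\Delta_1^1),\cdots,e(\Delta_{j_0}^1)}\psi}\circ\Phi_{J_1',J_2}(-\cdot w_1)$; injectivity follows from injectivity of ${_{\cdots}\psi}$ (mirror of Proposition \ref{prop: 4.6.15}) composed with the injective induction hypothesis, surjectivity follows from surjectivity of ${_{\cdots}\psi}$ (mirror of Proposition \ref{prop: 1.5.10}) together with the decomposition $S_n^{J_1',J_2}=\coprod_j S_n^{J_1,J_2}w_j$ (mirror of Lemma \ref{lem: 6.2.8}) and the computation $\a_w^{J_1,J_2}={^{(\cdots)}\a_{ww_j}^{J_1',J_2}}$ (mirror of Lemma \ref{lem: 6.2.9}); and the order-reversal statement follows because ${_{\cdots}\psi}$ is order-preserving (mirror of Proposition \ref{cor: 3.2.3}) and $\Phi_{J_1',J_2}$ reverses Bruhat order by induction, with the elementary fact that $w<w'$ in $S_n^{J_1,J_2}$ iff $ww_1<w'w_1$ in $S_n^{J_1',J_2}$.

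The main obstacle I anticipate is not any single deep step but the bookkeeping needed to make the mirror symmetry rigorous: the excerpt develops $\a^{(k)}$, $H_k(\a)$, $S(\a)_k$, $\psi_k$ in full detail and only asserts the left-hand versions exist, so one must carefully check that the left truncation ${^{(k)}\a}$ interacts with beginnings of segments exactly as $\a^{(k)}$ does with ends — in particular that the mirror of Proposition \ref{prop: 3.2.17} (bijectivity of iterated ${_{k_r,\cdots,k_1}\psi}$, preservation of $m$, of order, and of $\pi$) genuinely holds. Given that, and given that Corollary \ref{cor: 4.6.16} (with its mirror) yields $P_{\Phi_{J_1,J_2}(v_1),\Phi_{J_1,J_2}(v_2)}(q)=P^{J_1,J_2}_{v_1,v_2}(q)$ just as in the one-sided case, the proposition follows. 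I would also remark that, combined with the $P_{J_1}\backslash GL_n/P_{J_2}$ orbit description of Proposition \ref{prop: 6.2.13}, this realizes $S(\a)$ for a parabolic multisegment $\a$ as a Bruhat interval in the double quotient $S_{J_1}\backslash S_n/S_{J_2}$, which is the form of the classification announced in the introduction (proposition \ref{prop: 6.3.6}).
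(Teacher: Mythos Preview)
The paper explicitly declines to prove this proposition: just before the block containing Lemma \ref{lem: 6.2.3} and Proposition \ref{prop: 6.2.4} it says ``We state the related result without proving.'' So there is no proof in the paper to compare against, and your plan of running the argument of Proposition \ref{prop: 6.2.10} a second time, now peeling off $J_1$, is exactly what the author has in mind.

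That said, your implementation of the mirror has a concrete error. In Notation \ref{nota: 6.2.21} the subset $J_1$ governs equalities among the \emph{ends} $e(\Delta_i)$, while $J_2$ governs the beginnings. Hence to reduce $|J_1|$ you must separate two equal ends, which means extending on the right by $\Delta\mapsto\Delta^{+}$ and then undoing this with the \emph{right} truncation $\a\mapsto\a^{(k)}$ and the hypothesis $H_k(\a)$ --- not with ${^{(k)}(\cdot)}$ and ${_{k}H}$, which act on beginnings. Concretely, your formula $\a_{\Id}^{J_1,J_2}={^{(e(\Delta_1^1),\cdots,e(\Delta_{j_0}^1))}\a_1}$ is vacuous: left truncation ${^{(k)}(\cdot)}$ only touches segments with $b(\Delta)=k$, and no segment of $\a_1$ begins at $e(\Delta_i^1)$, so the left-hand side equals $\a_1$ itself. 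Moreover, with $j_0=\min\{i:\sigma_i\in J_1\}$ and right-extending the segments with $i\le j_0$, you get $e(\Delta_{j_0}^1)=e(\Delta_{j_0})+1>e(\Delta_{j_0+1})=e(\Delta_{j_0+1}^1)$, so the ends of $\a_1$ are no longer weakly increasing and $\a_1$ is not of the form $\a_{\Id}^{J_1',J_2}$.

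The fix is to mirror the indices as well as the operations: take $j_0=\max\{i:\sigma_i\in J_1\}$, set $\Delta_i^1=\Delta_i^{+}$ for $i>j_0$ and $\Delta_i^1=\Delta_i$ otherwise, so that $\a_1=\a_{\Id}^{J_1',J_2}$ with $J_1'=J_1\setminus\{\sigma_{j_0}\}$, and recover $\a_{\Id}^{J_1,J_2}=\a_1^{(e(\Delta_n^1),\cdots,e(\Delta_{j_0+1}^1))}$. Then the genuine right-hand versions of Lemmas \ref{lem: 6.2.7}--\ref{lem: 6.2.9} and Proposition \ref{prop: 3.2.17} (no mirror needed: these are already the $\a^{(k)}$, $H_k$, $\psi_k$ statements) give the inductive step exactly as you outline. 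With this correction your argument goes through; the remaining points (base case $J_1=\emptyset$ is Proposition \ref{prop: 6.2.10}, factorisation through $S_{J_1}^{J_1'}$ via the analogue of Lemma \ref{lem: 6.2.8}, order-preservation from Proposition \ref{cor: 3.2.3}) are all correct as stated.
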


\begin{prop}
Let $w_{1}, w_{2}\in S_n^{J_{1},J_{2}}$, then we have 
\[
 P_{\Phi_{J_{1}, J_{2}}(w_{1}), \Phi_{J_{1}, J_{2}}(w_{2})}(q)=P_{w_{1}, w_{2}}^{J_{1}, J_{2}}(q)
\]
where on the right hand side is the parabolic KL polynomial indexed by $w_{1}, w_{2}$.
\end{prop}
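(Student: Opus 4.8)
The plan is to reduce the double-parabolic statement to the single-parabolic case already proved (the proposition stating $P_{\Phi_J(v_1),\Phi_J(v_2)}(q)=P^J_{v_1,v_2}(q)$), using exactly the iterated truncation machinery that was used to pass from the symmetric case to the parabolic case. First I would recall, from Proposition \ref{prop: 6.2.13} and the lemma preceding this statement, that $P^{J_1,J_2}_{v_1,v_2}(q)=P_{w_1,w_2}(q)$ where $w_i$ is the element of maximal length in $S_{J_1}v_iS_{J_2}$; dually, $P^{J_1,J_2}_{v_1,v_2}=P^{J_1}_{u_1,u_2}$ where $u_i$ is the maximal-length representative of $v_iS_{J_2}$ in $S_n^{J_1}$ (this is the parabolic factorization, analogous to Deodhar's $P^{J}_{v_1,v_2}=P_{v_1v_J,v_2v_J}$). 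Thus it suffices to compare $P_{\Phi_{J_1,J_2}(v_1),\Phi_{J_1,J_2}(v_2)}(q)$ with $P^{J_1}_{u_1,u_2}(q)$, and by the already-established single-parabolic proposition the latter equals $P_{\Phi_{J_1}(u_1),\Phi_{J_1}(u_2)}(q)$ for a suitable $\a_{\Id}^{J_1}$.

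The heart of the argument is then to produce the multisegment identity linking $\Phi_{J_1,J_2}$ and $\Phi_{J_1}$. Following the template of Lemma \ref{lem: 6.2.7} and Lemma \ref{lem: 6.2.9}, I would build $\a_{\Id}^{J_1}$ from $\a_{\Id}^{J_1,J_2}$ by iteratively shifting beginnings: for each generator $\sigma_i\in J_2$, starting from the smallest index and running up through the maximal block, decrement the beginnings $b(\Delta_j)$ so as to break one coincidence at a time, exactly as in the passage $\a_{\Id}^J\rightsquigarrow\a_1$ in Proposition \ref{prop: 6.2.5}. This yields a chain of multisegments $\a_{\Id}^{J_1,J_2}=\b_0,\b_1,\dots,\b_s=\a_{\Id}^{J_1}$ together with integer sequences $\c_t$ such that $\b_{t-1}={^{(\c_t)}\b_t}$ and $\b_t\in {_{\c_t}S(\b_t)}$. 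The key combinatorial verification — which is the analogue of Lemma \ref{lem: 6.2.7} — is that the strict inequalities among the relevant beginnings (guaranteed by the condition $b(\Delta_n)\le e(\Delta_1)$ and by $v\in S_n^{J_1,J_2}$) ensure each $\b_t$ satisfies the relevant hypothesis ${_k}H(\cdot)$, so that Proposition \ref{prop: 3.2.17} (its left-right mirror) applies and $m$, hence $P$, is preserved at each step: $P_{\Phi_{J_1,J_2}(v_1),\Phi_{J_1,J_2}(v_2)}(q)=P_{\Phi_{J_1}(u_1),\Phi_{J_1}(u_2)}(q)$.

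Putting this together with the single-parabolic proposition and the Deodhar-type factorization $P^{J_1,J_2}_{v_1,v_2}=P^{J_1}_{u_1,u_2}$ completes the proof. I expect the main obstacle to be the bookkeeping in the combinatorial step: one must check that the maximal-length representative $u_i$ of $v_iS_{J_2}$ really is the permutation whose multisegment $\Phi_{J_1}(u_i)$ is obtained from $\Phi_{J_1,J_2}(v_i)$ by the iterated ${^{(\c)}(-)}$ operation — i.e.\ that the combinatorics of the coset factorization $S_n^{J_1,J_2}\hookrightarrow S_n^{J_1}$ matches the combinatorics of shifting beginnings, uniformly in $v_i$. This is the same type of argument as Lemma \ref{lem: 6.2.8} and Lemma \ref{lem: 6.2.9} but now carried out on the beginning-side with a nontrivial $J_1$ present throughout; the inductive structure on $|J_2|$ should make it go through, reducing at each stage to the case $|J_2|$ smaller, with the base case $J_2=\emptyset$ being precisely the single-parabolic proposition already proved.
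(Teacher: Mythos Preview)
The paper does not prove this proposition: the text immediately preceding this block of results says explicitly ``We state the related result without proving.'' So there is no proof in the paper to compare against; your proposal is supplying what the author chose to omit.

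Your approach is the natural extension of the single-parabolic proof the paper does give, and it should go through: induct on the size of one of the parabolic subsets, peel off one generator at a time via the truncation machinery (the analogues of Lemmas~\ref{lem: 6.2.7}--\ref{lem: 6.2.9}), and invoke Corollary~\ref{cor: 4.6.16} at each step. One notational point to watch, however: in the paper's conventions the $J$ in $\a_{\Id}^{J}$ controls coincidences among \emph{beginnings}, so $\a_{\Id}^{J}$ corresponds to $\a_{\Id}^{\emptyset,J}$ in the double-parabolic notation (Notation~\ref{nota: 6.2.21}). If you reduce $J_2$ by shifting beginnings as you propose, you land on $\a_{\Id}^{J_1,\emptyset}$, which is the \emph{mirror} of the paper's single-parabolic case, not literally $\a_{\Id}^{J_1}$. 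This is not a real obstacle --- the mirror single-parabolic statement holds by the blanket left--right symmetry the paper invokes in Chapter~3 (the remark that all results remain valid when $\a^{(k)}$, $H_k(\a)$ are replaced by ${^{(k)}\a}$, ${_k}H(\a)$) --- but you should either explicitly state and use that mirror version as your base case, or alternatively reduce $J_1$ via \emph{right} truncation $\a^{(k)}$ on the end side so as to land on $\a_{\Id}^{\emptyset,J_2}=\a_{\Id}^{J_2}$, which is precisely the case already proved.
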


\begin{example}
 We are now ready to interpret the following results
(due to Zelevinsky, see \cite{Z3} Section 3.3): let $\a=k[0, 1]+(n-k)[1,2]$
 then $\a$ corresponding to the identity in $S_n^{J, J}$ with 
 \[
  J=\{\sigma_{i}: i\neq k\}. 
 \]
Note that in this case, we have $GL_{n}/P_{J}$ is the 
Grassmanian $G_{k}(\C^{n})$, where as the $P_{J}$
orbits correspond to the stratification, for $r\leq r_{0}=\min\{k, n-k\}$
and fixed $\C^k\in G_{k}(\C^n)$, 
\[
 X_{r}=\{U\in G_{k}(\C^{n}): \dim(U\cap \C^{k})= k-r\}
\]
with $\line{X_{r}}=\coprod_{r'\leq r}X_{r'}$.
\end{example}

\remk 
There is another way to obtain  the result of this section , i.e., by direct
geometric construct, as in section 4.3, where we prove the same result
for symmetric case. In this situation, instead having the flag variety
$G/B$ in the fibers, we will find $G/P_{J}$ in the fibers. 
There is one advantage in this geometric construction, i.e, by employing 
the same proof as in section 4.4, one can get a resolution for $G/P_{J}$
by pulling back that of the corresponding orbit variety. This 
shows for example, that the resolution can not be small when the associated quiver 
is of type $A_{n}$, $n\geq 3$, by the example constructed by Zelevinsky for 
flag variety, which does not admit any small resolution. We remark 
that the resolution is always small for type $A_{2}$, as is proved by Zelevinsky.

\remk Note that in \cite{Z3}, Zelevinsky constructed a small resolution 
for the $\line{O}_{\a}$ with $\a=\{[1, 2], [2, 3]\}$, which corresponds
to a Schubert varieties of 2-step . Now with our interpretation, we should 
be able to construct a small resolution for all 2-step Schubert varieties.
We return to this question later.

\remk With the help of partial derivative which we will develop in 
next section, we will be able to give inverse parabolic KL polynomials
combining results of this section, which is described in \cite{D2}
. See next section for more details.

\section{Non Ordinary Case}

In this section, for a general multisegment $\a$, we will relate the poset $S(\a)$ 
to a Bruhat interval $[x,y]$ with $x<y$ in some $S_{r}^{J_1, J_2}$. 

Now let $\a$ be a multisegment. 
First of all, we decide 
the set $J_1, J_2$. 

\begin{definition}\label{def: 6.3.1}
We define two sets $J_1(\a), J_2(\a)$.
\begin{itemize}
 \item 
Let $b(\a)=\{k_1\leq \cdots\leq k_r\}$.
Then let $J_2(\a)\subseteq S_r$ be the set such that
 $\sigma_{i}\in J_2(\a)$ if and only if $k_i=k_{i+1}$. 
\item Let $e(\a)=\{\ell_1\leq \cdots \leq \ell_r\}$.
Then let $J_1(\a)\subseteq S_r$ be the set such that $\sigma_{i}\in J_1(\a)$ if and only if $\ell_{i}=\ell_{i+1}$.
\end{itemize}
\end{definition}

% \begin{definition}
% For $\Delta=[i,j]$, we denote $\tilde{\Delta}=[-j,-i]$.
% \end{definition}
% \remk
% The contragredient of $L_{\Delta}$
%  is given by $L_{\tilde{\Delta}}$.

Keeping the notations in definition \ref{def: 6.3.1},
\begin{prop}\label{prop: 6.3.2}
There exists a unique $w\in S_r^{J_1(\a), J_2(\a)}$, such that 
\[
 \a=\sum_{j}[k_j, \ell_{w(j)}].
\]
\end{prop}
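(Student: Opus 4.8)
The statement asserts two things: that a $w \in S_r^{J_1(\a),J_2(\a)}$ with $\a = \sum_j [k_j, \ell_{w(j)}]$ exists, and that it is unique. The plan is to extract $w$ directly from the combinatorial data of $\a$ and then verify the two compatibility conditions separately.

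First I would construct $w$. Write $\a = \{\Delta_1, \dots, \Delta_r\}$ and recall that $b(\a) = \{k_1 \le \cdots \le k_r\}$ and $e(\a) = \{\ell_1 \le \cdots \le \ell_r\}$ are the beginning and end multisets (notation \ref{nota-be}). Order the segments of $\a$ so that their beginnings are weakly increasing, say $\a = \{\Delta_1', \dots, \Delta_r'\}$ with $b(\Delta_1') \le \cdots \le b(\Delta_r')$, i.e. $b(\Delta_j') = k_j$; within a block of equal beginnings, order by weakly increasing end. Then $e(\Delta_j')$ is some rearrangement of $\{\ell_1, \dots, \ell_r\}$, so there is a (not a priori unique) permutation $w \in S_r$ with $e(\Delta_j') = \ell_{w(j)}$; we may and do choose, inside each block where $k_j = k_{j+1}$, the $w$-values increasing, and likewise resolve ties among equal $\ell$'s consistently. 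This gives $\a = \sum_j [k_j, \ell_{w(j)}]$ by construction. The remaining work is to show this $w$ lies in $S_r^{J_1(\a),J_2(\a)} = \{v : s_1 v s_2 > v \text{ for all } s_i \in J_i\}$ and that it is the only element of that coset-representative set producing $\a$.

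For membership: by proposition \ref{prop: 5.2.2} (applied on both sides, cf.\ proposition \ref{prop: 6.2.13}), $S_r^{J_1(\a),J_2(\a)}$ is exactly the set of minimal-length double coset representatives for $S_{J_1(\a)} \backslash S_r / S_{J_2(\a)}$, equivalently the $v$ with $v^{-1}(i) < v^{-1}(i+1)$ whenever $\sigma_i \in J_1(\a)$ and $v(i) < v(i+1)$ whenever $\sigma_i \in J_2(\a)$. The condition on the right ($v(i) < v(i+1)$ when $k_i = k_{i+1}$) is forced by our tie-breaking choice in the block of equal beginnings; the condition on the left is the statement that if $\ell_i = \ell_{i+1}$ then the two segments with end $\ell_i$ appear so that the one with smaller beginning gets the smaller label among $\{i, i+1\}$ — again a matter of fixing the tie-break consistently among equal ends. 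So after pinning down both tie-breaking conventions the constructed $w$ automatically satisfies the defining inequalities of $S_r^{J_1(\a),J_2(\a)}$.

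For uniqueness: suppose $w, w' \in S_r^{J_1(\a),J_2(\a)}$ both satisfy $\sum_j [k_j, \ell_{w(j)}] = \sum_j [k_j, \ell_{w'(j)}]$ as multisegments. Comparing the multisets of segments with a fixed beginning $k$ forces, for each value $k$, that $\{\ell_{w(j)} : k_j = k\} = \{\ell_{w'(j)} : k_j = k\}$ as multisets; hence $w$ and $w'$ differ by an element of $S_{J_2(\a)}$ acting on the source (permuting indices within blocks of equal $k_j$). Likewise comparing segments with a fixed end shows $w$ and $w'$ differ by an element of $S_{J_1(\a)}$ on the target. Thus $w$ and $w'$ lie in the same $(S_{J_1(\a)}, S_{J_2(\a)})$-double coset; since $S_r^{J_1(\a),J_2(\a)}$ contains exactly one representative of each such coset (proposition \ref{prop: 6.2.13}(1)), we get $w = w'$. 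The main obstacle is bookkeeping rather than depth: one must check that the two tie-breaking conventions (within equal-beginning blocks and within equal-end blocks) can be imposed \emph{simultaneously} and do yield an element of $S_r^{J_1(\a),J_2(\a)}$ — I expect this to follow because the inequalities from $J_1$ and from $J_2$ constrain $w$ on disjoint ``coordinates'' (source-adjacent transpositions versus target-adjacent transpositions), but making that precise is the one place requiring care, and it is essentially the same compatibility already used implicitly in proposition \ref{prop: 6.2.10} for the ordinary parabolic case.
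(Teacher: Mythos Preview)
Your argument is correct, and your uniqueness argument via double cosets is essentially the same as what underlies the paper's proof. The difference is in how existence is handled.

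The paper does not construct $w$ by careful tie-breaking. Instead it takes \emph{any} $w'\in S_r$ with $\a=\sum_j[k_j,\ell_{w'(j)}]$ (whose existence is immediate), invokes the double coset decomposition of proposition \ref{prop: 6.2.13} to write $w'=w_{J_1(\a)}\,w\,w_{J_2(\a)}$ with $w\in S_r^{J_1(\a),J_2(\a)}$, and then checks algebraically that $\sum_j[k_j,\ell_{w(j)}]=\sum_j[k_j,\ell_{w'(j)}]$ using only the invariances $k_{v(j)}=k_j$ for $v\in S_{J_2(\a)}$ and $\ell_{v(j)}=\ell_j$ for $v\in S_{J_1(\a)}$. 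This completely bypasses the issue you flagged as ``the one place requiring care'': there is no need to impose two tie-breaking conventions simultaneously, because the algebraic manipulation shows that replacing $w'$ by its minimal double-coset representative does not change the multisegment. Your constructive route gives a concrete description of $w$, which is pleasant, but the paper's approach is shorter and makes the compatibility question evaporate rather than requiring it to be checked. (For what it's worth, your compatibility check does go through: within a block of equal beginnings the ends are weakly increasing by your first sort, and assigning labels to equal ends by position then forces $w$ to be increasing on that block; the two constraints never collide because one fixes positions and the other fixes labels.)
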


\begin{proof}
We observe that there exists an element $w'\in S_r$, such that 
\[
\a=\sum_j [k_j, \ell_{w'}(j)]. 
\]
Now by proposition \ref{prop: 6.2.13},
we know that there exists $w'=w_{J_1(\a)}ww_{J_2(\a)}$
with $w_{J_i(\a)}\in S_{J_i(\a)}$ for $i=1, 2$ and $w\in S_r^{J_1(\a), J_2(\a)}$.
Now we only need to prove that 
\[
 \a=\sum_j [k_j, \ell_{w}(j)].
\]
In fact, by definition of $J_i(\a), i=1,2$, we know that
\[
 k_j=k_{v(j)},  \text{ for all } v\in S_{J_{2}(\a)},
\]
\[
 \ell_{j}=\ell_{v(j)}, \text{ for all } v\in S_{J_{1}(\a)}.
\]
Hence 
\begin{align*}
 \a&=\sum_j [k_j, \ell_{w_{J_1(\a)}ww_{J_{2}(\a)}(j)}]\\
   &=\sum_j [k_j, \ell_{w(\a)w_{J_2(\a)}(j)}]\\
   &=\sum_j [k_{w_{J_2(\a)}^{-1}w^{-1}(j)}, \ell_{j}]\\
   &=\sum_j [k_j, \ell_{w(j)}].
\end{align*}
\end{proof}
 
Next we show how to reduce a general multisegment $\a$
to a multisegment $\a_{w}^{J_{1}(\a), J_2(\a)}$ of parabolic 
type $(J_1(\a), J_2(\a))$  without changing the poset structure $S(\a)$.

\begin{prop}\label{prop: 6.3.3}
Let $\a$ be a multisegment, then there exists a multisegment  $\c$, 
and a multisegment $\a_{w}^{J_{1}(\a), J_2(\a)}$  of parabolic type $(J_1(\a), J_2(\a))$, such that 
\[
 \a_{w}^{J_{1}(\a), J_2(\a)}\in S(\a_{w}^{J_{1}(\a), J_2(\a)})_{\c}, ~\a=(\a_{w}^{J_{1}(\a), J_2(\a)})^{(\c)}
\]

\end{prop}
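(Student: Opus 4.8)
The plan is to adapt, \emph{mutatis mutandis}, the reduction arguments behind Proposition \ref{cor: 5} and Proposition \ref{prop: 4.2.2}, the essential difference being that instead of pulling $\a$ apart until its beginnings and ends become pairwise distinct (which would destroy the coincidence patterns recorded by $J_1(\a)$ and $J_2(\a)$), I keep those patterns fixed throughout and only spread the segments out until they all share a common point. Concretely, I would first invoke Proposition \ref{prop: 6.3.2} to write $\a=\sum_{j}[k_{j},\ell_{w(j)}]$ for the unique $w\in S_r^{J_1(\a),J_2(\a)}$; the target is then a multisegment $\a_w^{J_1(\a),J_2(\a)}$ of parabolic type $(J_1(\a),J_2(\a))$, built from $\a$ by finitely many ``inverse truncation'' steps, together with a multisegment $\c$ recording the data of those steps.

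The construction proceeds in two stages. \textbf{Ends.} Suppose $\a$ is not yet of parabolic type because $\max b(\a)>\min e(\a)$, i.e.\ some segment ends strictly below the largest beginning. Choose the lowest end value $m$ at which this failure is visible and, exactly as in the proof of Proposition \ref{cor: 5}, let $\Delta^{1}=[m+1,\ell]$ with $\ell$ chosen maximal so that every integer in the interior of $[m,\ell]$ occurs as the end of some segment of $\a$; form $\a_1$ by replacing by $\Delta^{+}$ (cf.\ Notation \ref{nota: 3.1.2}) every segment $\Delta\in\a$ with $e(\Delta)$ in $[m,\ell-1]$ — note that \emph{all} segments sharing a given end are moved together, so the coincidence pattern $J_1$ is unchanged — so that $\a=\a_1^{(\Delta^{1})}$. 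One then checks that $\a_1$ satisfies the hypothesis $H_{k}$ of Definition \ref{def: 3.1.3} at each of the relevant levels $k$, hence $\a_1\in S(\a_1)_{\Delta^{1}}$ in the sense of Definition \ref{def: 2.3.2}. Iterating yields $\a_{r_1}$ with $\min e(\a_{r_1})\ge\max b(\a_{r_1})$ and with end-pattern still $J_1(\a)$. \textbf{Beginnings.} Now apply the mirror-image procedure, extending beginnings downward via ${}^{+}\Delta$, to obtain $\a_{r_1+r_2}$ whose beginning-pattern is $J_2(\a)$ and which still has all segments overlapping (extending a beginning leftward only enlarges the common intersection and leaves all ends untouched). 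Setting $\c$ equal to the collection of all segments $\Delta^{i}$ produced, in the order in which they are used, and composing the steps via the iterated morphism of Proposition \ref{prop: 3.2.17}, one obtains $\a=(\a_{r_1+r_2})^{(\c)}$ and $\a_{r_1+r_2}\in S(\a_{r_1+r_2})_{\c}$.

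Finally, the multisegment $\a_{r_1+r_2}$ has end-coincidence pattern $J_1(\a)$, beginning-coincidence pattern $J_2(\a)$, and satisfies $\max b\le\min e$, so it is of parabolic type; applying Proposition \ref{prop: 6.3.2} to $\a_{r_1+r_2}$ shows the associated permutation is again $w$, so $\a_{r_1+r_2}=\a_{w}^{J_1(\a),J_2(\a)}\in S(\a_{\Id}^{J_1(\a),J_2(\a)})$ by Lemma \ref{lem: 6.2.3}. This proves the statement; combined with Proposition \ref{prop: 3.2.17} and the parabolic analogue of Lemma \ref{lem: 6.1.3}, it further identifies $S(\a)$ with a Bruhat interval in $S_r^{J_1(\a),J_2(\a)}$.

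\textbf{The main obstacle} I expect is the verification, at every inverse-truncation step, of condition (2) of the hypothesis $H_{k}$ of Definition \ref{def: 3.1.3} — that no pair of linked segments ends at consecutive values $k-1$ and $k$ — since condition (1) is automatic for the top element of each poset $S(\cdot)$. This is precisely the delicate point already faced in Proposition \ref{prop: 4.2.2} and Proposition \ref{cor: 5}, and it is controlled by the maximality in the choice of the run $\Delta^{i}$: extending over the whole maximal block of occupied end-levels guarantees that after the step the only segments ending at the new top level are exactly the ones just moved, which rules out a linked pair of the forbidden shape. A secondary point to check is that performing the ``Ends'' stage before the ``Beginnings'' stage causes no interference, which holds because the ${}^{+}$-extensions of beginnings neither change any end nor reduce any overlap, so the two stages can indeed be carried out consecutively.
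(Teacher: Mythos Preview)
Your Ends stage is essentially the paper's proof, and it already suffices on its own: once you have $\min e(\a_{r_1})\ge\max b(\a_{r_1})$, the multisegment $\a_{r_1}$ is of parabolic type, and since you never touched any beginning, the pattern $J_2$ is automatically $J_2(\a)$. Together with the preservation of $J_1$ that you correctly argue (all segments sharing a given end are moved together), Proposition \ref{prop: 6.3.2} and Lemma \ref{lem: 6.2.3} identify $\a_{r_1}$ with $\a_w^{J_1(\a),J_2(\a)}$. The paper's proof is exactly this one-stage construction.

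The Beginnings stage is therefore superfluous, and if taken literally it creates a notational mismatch with the statement: extending beginnings via ${}^{+}\Delta$ is undone by the \emph{left} truncation ${}^{(k)}(\cdot)$, not by $(\cdot)^{(k)}$, so those steps cannot be absorbed into the single multisegment $\c$ appearing in the formula $\a=(\a_w^{J_1,J_2})^{(\c)}$. You would instead land in the two-sided setting ${}^{(\c_2)}(\cdot)^{(\c_1)}$ of Notation \ref{nota: 4.1.8}, which is not what Proposition \ref{prop: 6.3.3} asserts. Dropping the Beginnings stage entirely fixes this and leaves you with the paper's argument. Your diagnosis of the ``main obstacle'' --- verifying condition (2) of $H_k$ via the maximality of the run $\Delta^i$ --- is exactly the point the paper handles by referring back to the argument of Proposition \ref{cor: 5}.
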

\begin{proof}
 
 In general $\a$ is not of parabolic type, i.e,
we do not have $\min\{e(\Delta): \Delta\in \a\}\geq \max\{b(\Delta): \Delta\in \a\}$.
Now we show how to construct  $\a_{w}^{J_{1}(\a), J_2(\a)}$ .
 
 In fact, let 
\[
 \a=\{\Delta_{1}, \cdots, \Delta_{n}\},  \Delta_{1}\prec \cdots \prec \Delta_{n}.
\]
Then
\[
 e(\Delta_{1})=\min\{k: i=1, \cdots, n\}.
\]
If $\a$ is not of parabolic type, let
$\Delta^{1}=[e(\Delta_{1})+1, \ell] $ with $\ell$ maximal satisfying 
that for any $m$ such that $ e(\Delta_{1})\leq m\leq  \ell-1$, there is a segment in $\a$ ending in $m$.
We construct $\a^{1}$ by replacing every segment $\Delta$ in 
$\a$ ending in $\Delta^{1}$ by $\Delta^{+}$.
Repeat this construction with $\b_{1}$ to get $\a^{2}\cdots $,
until we get $\a^{s}$, which is of parabolic type.
Let $\c=\{\Delta^{1}, \cdots, \Delta^{s}\}$, then we do as in  proposition 
\ref{cor: 5} to get 
\[
 \a^s \in S(\a^s)_{\c},~ \a=(\a^s)^{(\c)}.
\] 
Note that by our construction we have 
\[
 J_1(\a^i)=J_1(\a),~ J_2(\a^i)=J_2(\a),
\]
for $i=1, \cdots, s$. 
\end{proof}

\begin{lemma}
Assume that $\a\in S(\a)_{k}$ such that 
\[
 J_1(\a)=J_1(\a^{(k)}), ~J_2(\a)=J_2(\a^{(k)}).
\]
Let $\c\in S(\b)_k$. Then for $\d\in S(\b)$ and $\d>\c$, we have $\d\in S(\b)_k$. 
\end{lemma}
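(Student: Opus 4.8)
The plan is to mimic the proof strategy of the earlier lemma (the one appearing just before Lemma~\ref{lem: 6.1.3} in the ordinary case), adapting it to the parabolic-type hypothesis. The goal is to show that if $\d \in S(\b)$ satisfies $\d > \c$ for some $\c \in S(\b)_k$, then $\d$ verifies the hypothesis $H_k(\b)$, i.e. $\deg(\d^{(k)}) = \deg(\b^{(k)})$ and there is no pair of linked segments $\{\Delta, \Delta'\}$ in $\d$ with $e(\Delta) = k-1$ and $e(\Delta') = k$. By Lemma~\ref{lem: 3.0.7}(b), since $\c \in \tilde S(\b)_k$ and $\d > \c$ in $S(\b)$, we automatically get $\d \in \tilde S(\b)_k$, so condition~(1) of $H_k(\b)$ holds. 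Thus the whole content is condition~(2).

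First I would exploit the parabolic-type hypothesis $J_1(\b) = J_1(\b^{(k)})$. The equality of these sets of simple reflections is precisely what controls whether two segments of $\b$ have coinciding ends: $J_1(\b)$ records the coincidences among the elements of the multiset $e(\b)$. The hypothesis $\b \in S(\b)_k$ together with $J_1(\b)=J_1(\b^{(k)})$ should force the multiplicity of $k-1$ in $e(\b)$ to be zero (if $k-1$ occurred as the end of some segment $\Delta$ of $\b$ with $e(\Delta)=k-1$, then after truncation $\Delta$ is unchanged while segments ending at $k$ drop to end at $k-1$, which would \emph{increase} the multiplicity of $k-1$ in $e(\b^{(k)})$ and alter $J_1$). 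So $k-1 \notin e(\b)$. By Lemma~\ref{lem: 2.1.3}, for any $\d \le \b$ we have $e(\d) \subseteq e(\b)$, hence $k-1 \notin e(\d)$ either; in particular no segment of $\d$ ends at $k-1$, so condition~(2) of $H_k(\b)$ is vacuously satisfied for $\d$. That already gives $\d \in S(\b)_k$.

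The one subtlety — and the place I expect to actually need care — is to justify cleanly that $\b \in S(\b)_k$ plus $J_1(\b) = J_1(\b^{(k)})$ genuinely implies $k-1 \notin e(\b)$, rather than merely $e(\b)$ having some benign interaction with $k-1$. The argument is a counting/matching argument on the two multisets $e(\b)$ and $e(\b^{(k)})$: passing from $\b$ to $\b^{(k)}$ replaces each segment ending at $k$ by one ending at $k-1$ and leaves all others fixed, so $e(\b^{(k)})$ differs from $e(\b)$ only by moving $\ell_k = f_{e(\b)}(k)$ copies of $k$ down to $k-1$. If $f_{e(\b)}(k-1) > 0$ then the multiplicity of $k-1$ strictly increases and the pattern of equalities among consecutive entries of the sorted end-multiset changes, contradicting $J_1(\b) = J_1(\b^{(k)})$ (one would need to check the edge case $\ell_k = 0$, but then $\b^{(k)} = \b$ and there is nothing to prove, while condition~(2) of $H_k(\b)$ is automatic since no segment ends at $k$). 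Once $k-1 \notin e(\b)$ is established, the rest is the immediate application of Lemma~\ref{lem: 2.1.3} sketched above. I would not even need the hypothesis $J_2(\b) = J_2(\b^{(k)})$ for this particular lemma; it is presumably carried along because the surrounding development (leading to Proposition~\ref{prop: 6.3.6}) needs it for the reciprocal/order-preservation statements.
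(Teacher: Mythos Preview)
Your proposal is correct and follows essentially the same approach as the paper: both argue that the $J_1$-hypothesis forces $k-1\notin e(\b)$, then use Lemma~\ref{lem: 2.1.3} to deduce $k-1\notin e(\d)$, making condition~(2) of $H_k$ vacuous. For condition~(1) you invoke Lemma~\ref{lem: 3.0.7}(b) directly, whereas the paper rederives it via the multiplicity chain $\varphi_{e(\c)}(k)\le\varphi_{e(\d)}(k)\le\varphi_{e(\a)}(k)$ with equality at the ends; your observation that $J_2$ is not actually needed here is also accurate.
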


\begin{proof}
It suffices to show that $\d$ satisfies the hypothesis $H_{k}(\a)$.
Note that $e(\d)\subseteq e(\a)$ by lemma \ref{lem: 2.1.3}.
Assume that $k\in e(\a)$ to avoid triviality.
Now that $k-1\notin e(\a)$ since $\a\in S(\a)_k$ and 
\[
 J_1(\a)=J_1(\a^{(k)}), ~J_2(\a)=J_2(\a^{(k)}),                                                  
\]
 so it is also not in $e(\d)$. Hence to show that $\d\in S(\b)_k$
 hence it is equivalent to show that $\varphi_{e(\d)}(k)=e_{e(\a)}(k)$.
Since $\c\in S(\d)$, we know that $e(\c)\subseteq e(\d)$ hence 
$\varphi_{e(\d)}\leq \varphi_{e(\d)}(k)$.
Now that 
$\c\in S(\a)_k$ implies 
$\varphi_{e(\c)}=\varphi_{e(\a)}$, we conclude that $\varphi_{e(\d)}(k)=e_{e(\a)}(k)$. 
We are done. 
\end{proof}

Now let $\a'\in S(\a)_k$ such that $\psi_k(\a')=(\a^{(k)})_{\min}$, then 
\begin{lemma}\label{lem: 6.3.5}
We have 
\[
 S(\a)_k=\{\c\in S(\a): \c\geq\a'\}.
\] 
\end{lemma}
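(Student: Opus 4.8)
The statement to prove is Lemma~\ref{lem: 6.3.5}: for a multisegment $\a$ and $k \in \Z$, if $\a' \in S(\a)_k$ is the unique element with $\psi_k(\a') = (\a^{(k)})_{\min}$, then $S(\a)_k = \{\c \in S(\a): \c \geq \a'\}$.

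The plan is to argue by double inclusion, reusing essentially verbatim the strategy of Lemma~\ref{lem: 6.1.3} (the ordinary case) but now supported by the lemma immediately preceding this one (the non-ordinary analogue of the ``upward closure'' lemma). First I would establish $S(\a)_k \supseteq \{\c \in S(\a): \c \geq \a'\}$. Here one takes $\c \geq \a'$ in $S(\a)$; since $\a' \in S(\a)_k$ satisfies the hypothesis $H_k(\a)$ and $\psi_k(\a') = (\a^{(k)})_{\min}$, the preceding lemma (with the roles adjusted, applying it to $\a'$ and noting $\c > \a'$ forces $\c \in S(\a')_k \subseteq S(\a)_k$ via the observation that $S(\c')_k \subseteq S(\a)_k$ whenever $\c' \in \tilde{S}(\a)_k$, as in Proposition~\ref{prop: 1.5.10}) gives $\c \in S(\a)_k$. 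One must also check the condition $J_1(\a') = J_1(\a'^{(k)})$ and $J_2(\a') = J_2(\a'^{(k)})$ needed to invoke that lemma; this holds because $\a' \in S(\a)_k$ means $k-1 \notin e(\a')$ by the hypothesis $H_k(\a)$, and the multiplicities at $k$ of the endpoints are unchanged.

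For the reverse inclusion $S(\a)_k \subseteq \{\c \in S(\a): \c \geq \a'\}$, I would invoke Proposition~\ref{cor: 3.2.3}, which tells us that $\psi_k: S(\a)_k \to S(\a^{(k)})$ is an order-preserving (in both directions) bijection. Since $(\a^{(k)})_{\min}$ is the unique minimal element of $S(\a^{(k)})$, every $\d \in S(\a^{(k)})$ satisfies $\d \geq (\a^{(k)})_{\min}$, hence for every $\c \in S(\a)_k$ we get $\c^{(k)} \geq (\a^{(k)})_{\min} = \a'^{(k)}$, and applying the order-reflecting direction of the bijection $\psi_k$ yields $\c \geq \a'$. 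This is the short half. Combining the two inclusions gives the equality.

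The main obstacle will be verifying the bookkeeping around which hypotheses $H_k$ transfer between $\a$, $\a'$, and their truncations — specifically confirming that $\a' \in \tilde{S}(\a)_k$ implies $S(\a')_k \subseteq S(\a)_k$ in the present generality and that the ``$J_i$ preserved under truncation'' condition in the preceding lemma is genuinely met, so that the lemma applies to $\a'$ rather than just to $\a$. None of this is deep, but it requires tracing carefully through Definitions~\ref{def: 3.1.3} and~\ref{def: 2.3.2} and Lemma~\ref{lem: 3.0.7}(b),(d); once that is in place the proof is a one-line consequence of Proposition~\ref{cor: 3.2.3} together with minimality of $(\a^{(k)})_{\min}$ in $S(\a^{(k)})$, exactly mirroring Lemma~\ref{lem: 6.1.3}.
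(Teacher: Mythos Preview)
Your overall strategy matches the paper's exactly: the inclusion $\supseteq$ comes from the preceding upward-closure lemma, and $\subseteq$ from the order-preserving bijection $\psi_k$ together with minimality of $(\a^{(k)})_{\min}$. Your argument for $\subseteq$ is fine and is precisely what the paper means by ``since $\psi$ preserves the order.''

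There is, however, a real slip in your argument for $\supseteq$. You propose applying the preceding lemma ``to $\a'$'' and deducing $\c\in S(\a')_k$. This cannot work: by definition $S(\a')=\{\d:\d\le\a'\}$, so an element $\c\ge\a'$ with $\c\ne\a'$ is never in $S(\a')$, let alone in $S(\a')_k$. The inclusion $S(\a')_k\subseteq S(\a)_k$ is true but irrelevant here. The correct application (exactly as in Lemma~\ref{lem: 6.1.3}) keeps $\a$ as the ambient multisegment: in the preceding lemma take $\a'\in S(\a)_k$ in the role of the lemma's ``$\c$'' and your $\c\in S(\a)$, $\c>\a'$, in the role of ``$\d$''; the conclusion $\c\in S(\a)_k$ is then immediate. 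Consequently the hypothesis you must check is the condition $J_i(\a)=J_i(\a^{(k)})$ on $\a$, not on $\a'$. In the paper this is implicit from context: Lemma~\ref{lem: 6.3.5} is invoked in Proposition~\ref{prop: 6.3.6} along the sequence $\a_0,\ldots,\a_r$ built in the subsequent lemma, where each $\a_i$ satisfies $J_1(\a_i)=J_1(\a)$ and $J_2(\a_i)=J_2(\a)$ by construction.
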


\begin{proof}
 By the lemma above, we know that 
 $S(\a)_k\supseteq \{\c\in S(\a): \c\geq\a'\}$.
 We conclude that we have equality since $\psi$ preserve the order.
\end{proof}

\begin{prop}\label{prop: 6.3.6}
Assume that $\a$ is a multisegment. Then 
\begin{description}
\item[(1)] 
There exists a multisegment 
$\a_{w}^{J_{1}(\a), J_2(\a)}$ of parabolic type $(J_1(\a), J_2(\a))$ and 
a sequence of integers $k_1, \cdots, k_r$
such that 
\[
 S(\a)\simeq S(\a_{w}^{J_{1}(\a), J_2(\a)})_{k_{r}, \cdots, k_{1}}.
\]

\item[(2)] There exists an element $\a'\in S(\a_{w}^{J_{1}(\a), J_2(\a)})$ such that 
\[
 S(\a_{w}^{J_{1}(\a), J_2(\a)})_{k_{r}, \cdots, k_{1}}=\{\c\in S(\a_{w}^{J_{1}(\a), J_2(\a)}): \c\geq \a'\}.
\]

\end{description}
\end{prop}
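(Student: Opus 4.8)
\textbf{Proof plan for Proposition \ref{prop: 6.3.6}.}

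The plan is to assemble the statement from the reduction step just established in Proposition \ref{prop: 6.3.3} together with the iterated-truncation machinery of Proposition \ref{prop: 3.2.17} and the order-description lemma (Lemma \ref{lem: 6.3.5}), in exactly the same way the ordinary case was handled in Section 6.1. First I would spell out part (1). By Proposition \ref{prop: 6.3.3} there is a multisegment $\c=\{\Delta^{1},\cdots,\Delta^{s}\}$ and a multisegment $\a_{w}^{J_{1}(\a),J_{2}(\a)}$ of parabolic type $(J_{1}(\a),J_{2}(\a))$ with
\[
 \a_{w}^{J_{1}(\a),J_{2}(\a)}\in S(\a_{w}^{J_{1}(\a),J_{2}(\a)})_{\c},\qquad \a=(\a_{w}^{J_{1}(\a),J_{2}(\a)})^{(\c)}.
\]
Unwinding the notation $S(\,\cdot\,)_{\c}$ from Notation following Proposition \ref{prop: 3.2.17}, the subscript $\c$ is just shorthand for a finite sequence of integers $(k_{r},\cdots,k_{1})$ (reading the endpoints of the segments $\Delta^{s}\succeq\cdots\succeq\Delta^{1}$ in the prescribed order), so that $S(\a_{w}^{J_{1}(\a),J_{2}(\a)})_{\c}=S(\a_{w}^{J_{1}(\a),J_{2}(\a)})_{k_{r},\cdots,k_{1}}$ and $\a=(\a_{w}^{J_{1}(\a),J_{2}(\a)})^{(k_{r},\cdots,k_{1})}$. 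Proposition \ref{prop: 3.2.17} then gives a bijective morphism of posets
\[
 \psi_{k_{r},\cdots,k_{1}}\colon S(\a_{w}^{J_{1}(\a),J_{2}(\a)})_{k_{r},\cdots,k_{1}}\longrightarrow S\bigl((\a_{w}^{J_{1}(\a),J_{2}(\a)})^{(k_{r},\cdots,k_{1})}\bigr)=S(\a),
\]
where bijectivity and the order-preservation (part (2) of Proposition \ref{prop: 3.2.17}) together yield the poset isomorphism asserted in (1).

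For part (2) I would iterate Lemma \ref{lem: 6.3.5}. Concretely, following the proof of Proposition \ref{prop: 6.3.3}, write the reduction as a chain
\[
 \a=\a_{0},\ \a_{1},\ \cdots,\ \a_{t}=\a_{w}^{J_{1}(\a),J_{2}(\a)},
\]
where each $\a_{i}$ is obtained from $\a_{i-1}$ by the single-index construction, i.e. $\a_{i}\in S(\a_{i})_{m_{i}}$ and $\a_{i-1}=\a_{i}^{(m_{i})}$ for an integer $m_{i}$, and where by construction $J_{1}(\a_{i})=J_{1}(\a)$, $J_{2}(\a_{i})=J_{2}(\a)$ for all $i$ (this is the hypothesis needed to invoke the lemma). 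Apply Lemma \ref{lem: 6.3.5} to $\a_{t}=\a_{w}^{J_{1}(\a),J_{2}(\a)}$ with index $m_{t}$: there is $\a_{t}'\in S(\a_{t})_{m_{t}}$ with $\psi_{m_{t}}(\a_{t}')=(\a_{t}^{(m_{t})})_{\min}=(\a_{t-1})_{\min}$, and $S(\a_{t})_{m_{t}}=\{\c\in S(\a_{t}):\c\geq \a_{t}'\}$. Then lift $(\a_{t-1})_{\min}$ through the bijection $\psi_{m_{t-1}}$, apply Lemma \ref{lem: 6.3.5} again at $\a_{t}$ with the composite index sequence, and continue. At the end one obtains a single element $\a'\in S(\a_{w}^{J_{1}(\a),J_{2}(\a)})$ — namely the preimage of $(\a)_{\min}$ under $\psi_{k_{r},\cdots,k_{1}}$ — such that $S(\a_{w}^{J_{1}(\a),J_{2}(\a)})_{k_{r},\cdots,k_{1}}=\{\c\in S(\a_{w}^{J_{1}(\a),J_{2}(\a)}):\c\geq\a'\}$.

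The routine bookkeeping — translating the $\c$-subscript into an explicit integer sequence, and checking that the iterated $\psi$'s compose as claimed — is handled by Proposition \ref{prop: 3.2.17} and needs no new ideas. The one point that genuinely requires care, and which I expect to be the main obstacle, is verifying the stability of the sets $J_{1}(\cdot)$ and $J_{2}(\cdot)$ along the reduction chain $\a_{0},\cdots,\a_{t}$: one must check that each step of the construction in Proposition \ref{prop: 6.3.3} (replacing segments $\Delta$ with $e(\Delta)$ in a suitable interval by $\Delta^{+}$) neither merges nor separates endpoints in the $e$-multiset beyond what it does to the $b$-multiset, so that the Dynkin-type data $(J_{1},J_{2})$ is literally unchanged. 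This is exactly the hypothesis of Lemma \ref{lem: 6.3.5}, so once it is established, the rest is assembly. I would record this stability as a short separate observation (as was implicitly done in the ordinary case), and then the proof of Proposition \ref{prop: 6.3.6} is the two-paragraph argument above.
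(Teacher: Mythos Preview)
Your proposal is correct and follows essentially the same route as the paper: part (1) is obtained by combining Proposition \ref{prop: 6.3.3} with Proposition \ref{prop: 3.2.17}, and part (2) by iterating Lemma \ref{lem: 6.3.5} along the reduction chain, once one has checked the stability of $J_{1}(\cdot),J_{2}(\cdot)$ along that chain. The paper records precisely this stability as a separate lemma immediately following the proposition (deduced from the construction in the proof of Proposition \ref{prop: 6.3.3}), just as you anticipate.
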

 
\begin{proof}
Note that (1) follows from  proposition \ref{prop: 6.3.3} and 
proposition \ref{prop: 3.2.17}.
And (2) follows from 
applying the lemma \ref{lem: 6.3.5} successively to the lemma below. 
\end{proof}
 
\begin{lemma}
There exists a sequence of multisegments  $\a_{0}=\a, \cdots,\a_{r}=\a_{w}^{J_{1}(\a), J_2(\a)}$ such that 
$\a_{w}^{J_{1}(\a), J_2(\a)}$ is of parabolic type $(J_1(\a), J_2(\a))$,   $\a_{i}\in S(\a_{i})_{k_{i}}$ and $\a_{i-1}=\a_{i}^{(k_{i})}$
for some $k_{i}$. 
Moreover, 
\[
 J_{1}(\a_i)=J_1(\a), ~J_2(\a)=J_2(\a)
\]
for
all $i=1, \cdots, r$.  
\end{lemma}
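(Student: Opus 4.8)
The final statement to prove is the last lemma of Chapter~6: given an arbitrary multisegment $\a$, there is a sequence of multisegments $\a_0=\a, \a_1, \cdots, \a_r = \a_w^{J_1(\a), J_2(\a)}$ with $\a_w^{J_1(\a), J_2(\a)}$ of parabolic type $(J_1(\a), J_2(\a))$, each $\a_i \in S(\a_i)_{k_i}$, each $\a_{i-1} = \a_i^{(k_i)}$, and moreover $J_1(\a_i) = J_1(\a)$, $J_2(\a_i)=J_2(\a)$ for all $i$. My plan is to imitate directly the proof of Proposition~\ref{prop: 4.2.2} (and its ordinary-case analogue, the last lemma of Section~6.1), replacing "symmetric" by "of parabolic type $(J_1(\a),J_2(\a))$" throughout. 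The construction is already spelled out in the proof of Proposition~\ref{prop: 6.3.3}: order $\a = \{\Delta_1 \prec \cdots \prec \Delta_n\}$, and if $\a$ is not yet of parabolic type (i.e.\ $\min\{e(\Delta):\Delta\in\a\} < \max\{b(\Delta):\Delta\in\a\}$), set $\Delta^1 = [e(\Delta_1)+1, \ell]$ with $\ell$ maximal such that every $m$ with $e(\Delta_1)\le m\le \ell-1$ occurs as the end of some segment of $\a$; form $\a^1$ by replacing each $\Delta\in\a$ with $e(\Delta)\in(e(\Delta_1),\ell]$ by $\Delta^+$; iterate to reach $\a^s = \a_w^{J_1(\a),J_2(\a)}$. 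Reading this backwards, $\a^{i-1} = (\a^i)^{(k_i)}$ for the appropriate $k_i$ (namely the one-step truncations implicit in $\a^{i}\mapsto\a^{i-1}$), so the sequence has the required shape $\a_{i-1} = \a_i^{(k_i)}$.

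\textbf{Key steps.} First I would establish $\a_i\in S(\a_i)_{k_i}$ for each $i$, which is exactly the verification already carried out in the proof of Proposition~\ref{cor: 5} (the ordinary-case reduction) applied one truncation at a time: after the replacement producing $\a^i$, the segment $\Delta_i^{+}$ (in the notation of that proof) is the unique segment of $\a^i$ ending at $e(\Delta_i)+1$ and $\Delta_j,\dots,\Delta_{i-1}$ are the only ones ending at $e(\Delta_i)$, so $\a^i$ satisfies the hypothesis $H_{k}(\a^i)$ at the relevant $k$; and at intermediate values $m$ the truncated multisegment contains no segment ending at $m-1$, so again the hypothesis holds. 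Second, and this is the genuinely new ingredient beyond the ordinary case, I would show $J_1(\a_i) = J_1(\a)$ and $J_2(\a_i) = J_2(\a)$ for all $i$. This is where the choice of $\ell$ (maximal with the "no gap" property) matters: the operation $\a\mapsto\a^1$ replaces a contiguous block of segments, all with ends in the interval $(e(\Delta_1),\ell]$, by their left-extensions $\Delta\mapsto\Delta^{+}$. Since no beginning is changed, $b(\a^1)$ equals $b(\a)$ as a multiset, so $J_2$ is literally unchanged. For $J_1$: the multiset $e(\a^1)$ is obtained from $e(\a)$ by shifting an entire contiguous run $\{e(\Delta_1)+1,\dots\}$ (with multiplicities) down by... — more precisely, by the maximality of $\ell$, the set of distinct end-values occurring strictly between $e(\Delta_1)$ and $\ell$ forms an unbroken interval, so subtracting $1$ from each end lying in $(e(\Delta_1),\ell]$ neither merges two previously distinct end-values into equality nor separates two equal ones, hence the coincidence pattern "$\ell_i = \ell_{i+1}$" defining $J_1$ is preserved. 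I would write this out as a short combinatorial lemma: \emph{if $\ell$ is maximal with the no-gap property for $\a$ above $e(\Delta_1)$, then the shift $\a\mapsto\a^1$ preserves $J_1$ and $J_2$}, and then conclude by induction on $s$.

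\textbf{The main obstacle.} The real content is the $J_1$-invariance claim, i.e.\ that the maximality of $\ell$ prevents the end-multiset $e(\a)$ from changing its "equality pattern" under the downward shift. Concretely one must rule out the possibility that some end-value $e(\Delta_i)+1$ gets shifted to $e(\Delta_i)$ which already occurs with, say, lower index, thereby enlarging $J_1$, or that the value $\ell$ at the top edge of the shifted block gets shifted down onto $\ell-1$ while some segment not in the block still ends at $\ell$ unshifted. The first is handled precisely by the no-gap hypothesis (every value in $[e(\Delta_1), \ell-1]$ is an end, so the shifted run lands exactly on the run immediately below it, which is the same run reindexed), and the second by the maximality of $\ell$ (if a segment ended at $\ell$ but not all of $[e(\Delta_1),\ell]$ were ends we could not even define the block consistently; one checks the block is exactly the set of segments with end in a maximal contiguous interval, so nothing is left dangling). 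Once this bookkeeping lemma is in place, the induction is routine and the remaining assertions ($\a_i\in S(\a_i)_{k_i}$, $\a_{i-1}=\a_i^{(k_i)}$) are copied verbatim from the proofs of Propositions~\ref{cor: 5} and~\ref{prop: 4.2.2}. I expect the whole proof to be about the length of that of Proposition~\ref{prop: 4.2.2}, with the one extra paragraph devoted to the invariance of $J_1$ and $J_2$.
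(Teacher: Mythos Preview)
Your approach is essentially the paper's own: the paper's proof of this lemma is the single sentence ``This follows from our construction in the proof of Proposition~\ref{prop: 6.3.3}'', and in that proof the $J_1,J_2$ invariance is already asserted in one line (``by our construction we have $J_1(\a^i)=J_1(\a)$, $J_2(\a^i)=J_2(\a)$''). You are spelling out the details behind that line, which is fine and follows exactly the intended route via Proposition~\ref{cor: 5} and Proposition~\ref{prop: 4.2.2}.

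One correction: you have the direction of the shift backwards. The operation $\Delta\mapsto\Delta^{+}$ sends $[i,j]$ to $[i,j+1]$, so it is a \emph{right} extension and shifts ends \emph{up} by $1$, not down (and it is not a ``left-extension''). Consequently the danger for $J_1$-invariance is at the \emph{top} of the shifted block: ends equal to $\ell-1$ become $\ell$, and one must check that $\ell$ was not already an end. This is exactly what the maximality of $\ell$ guarantees (if $\ell$ were an end then $\ell+1$ would also satisfy the no-gap condition). At the bottom of the block there is nothing to check: ends equal to $e(\Delta_1)$ stay put while ends equal to $e(\Delta_1)+1$ move to $e(\Delta_1)+2$, so the gap only widens. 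With this sign fixed, your combinatorial lemma (the contiguous block of end-values shifts rigidly into the vacant slot $\ell$, preserving the equality pattern of the sorted multiset $e(\a)$) is correct and is precisely the content the paper leaves to the reader.
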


\begin{proof}
 This follows from our construction in the 
 proof of proposition \ref{prop: 6.3.3}.
\end{proof}

\chapter{Computation of Partial Derivatives}

In this chapter, 
we study the problem of computing the partial derivatives
$\D^k(L_{\a})$ of the irreducible representation 
$L_{\a}$ attached to a multisegment $\a$. The idea is to use these computations
to calculate the multiplicities in the induced representation  $L_{\a} \times L_{\b}$, cf. the next chapter.
Recall that we have already given a way of computing 
$L_{\a}$ as a sum, cf. (\ref{equ: (1)})
\[
 L_{\a}=\sum_{\b}\tilde{m}_{\b, \a}\pi(\a).
\]

So one is reduced to the calculate 
\[
 \D^k(\pi(\a))=\sum_{\b} n_{\b, \a} L_{\b},~ \qquad n_{\b, \a}\geq 0.
\]

% we consider the 
% question of the partial derivatives $\D^k(L_{\a})$ for a
% multisegment, which was introduced in \S 1.4. 
% We start with the determination of $\D^k(\pi(\a))$, while 
% knowing 
% \[
%  \pi(\a)=\sum_{\b\leq \a}m_{\b, \a}L_{\b},
% \]
% we can determine $\D^k(L_{\a})$ from $\D^k(\pi(\a))$ by induction.
% In general, 
% we have a formula 

As for the coefficient $m_{\b, \a}$, we first  introduce a new poset structure $\preceq_k$
on the set of multisegments so that  we have the equivalence 
between $n_{\b, \a}>0$ and $\b\preceq_k \a$, cf. proposition \ref{prop: 7.4.2}. 

The principal result of this chapter 
is the interpretation of the coefficient 
$n_{\b, \a}$ as the value at $q=1$ of some 
Poincar\'e series of the Lusztig product of 
two explicit perverse sheaves on orbital varieties, cf. proposition
\ref{prop: 7.3.8}. 

% In section 7.3, we deduce from Lusztig's product of perverse sheaves a formula 
% calculating the numbers $n_{\b, \a}$, cf. proposition \ref{prop: 7.3.8}.

% In section 7.4, we determine Lusztig's product in our special case by 
% giving a stratification for an open sub-variety $E_{\a}''$ of $E''$ considered 
% in the construction of Lusztig's product, cf. corollary \ref{cor: 7.4.19}.

% In section 7.4, we give an explicit formula for calculating the Lusztig 
% products, cf. corollary \ref{cor: 7.4.19}.
In 7.4, we compute these Lusztig products as the push forward by a projection 
$\beta''$, cf. corollary \ref{cor: 7.4.19}, of some concrete perverse sheaf on 
an orbital variety. In \S 7.6 we first study the geometry of the  case where the 
multisegments are of Grassmanian type. In this case the projection $\beta''$ is simply
cf. proposition \ref{prop: 7.6.8}, the natural projection 
\[
 GL_n/P\rightarrow GL_n/P'
\]
with $P\subseteq P'$ two parabolic subgroups.
The geometry of the parabolic case is treated in \S 7.7: the constructions and proofs are the same as the
Grassmanian type.

Finally in the last section 7.8, we obtain a complete formula for 
$\D^k(L_{\a})$ in the general case, cf. corollary \ref{coro-fornula-derivative}

% Finally, we apply the results we obtained to calculate the coefficient
% $n_{\b, \a}$ for multisegments of Grassmanian type, cf. \ref{prop: 7.6.8}, 
% where we show that all the calculation can be done classically.

\section{New Poset Structure on Multisegments}

In this section we define a new poset structure $\preceq_k$ depending on an 
integer $k$ on
the set of multisegments and show that the term $L_{\b}$ appears in
$\D^k(\pi(\a))$ if and only if $\b\preceq_k \a $.

\begin{definition}
For a well ordered multisegment $\a=\{\Delta_1, \cdots, \Delta_s\}$
with $\Delta_1\preceq \cdots \preceq \Delta_s$,
let 
\[
 \a(k):=\{\Delta\in \a: e(\Delta)=k\}=\{\Delta_{i_{0}}, \Delta_{i_0+1}, \cdots, \Delta_{i_1}\}. 
\]
Now let $\Gamma \subseteq \a(k)$, let 
\[
 \a(k)_\Gamma: =(\a(k)\setminus \Gamma)\cup \{\Delta^{(k)}: \Delta\in \Gamma\},
\]
and 
\[
 \a_\Gamma: =(\a\setminus \a(k))\cup \a(k)_\Gamma.
\] 

We say $\b\preceq_k \a$ if there exist a multisegment $\c\in S(\a)$ such that 
\[
 \b\leq \a_\Gamma
\]
for some $\Gamma$.  
\end{definition}

\begin{lemma}
We have 
\addtocounter{theo}{1}
\begin{equation}\label{eq: 7.41}
 \D^k(\pi(\a))=\pi(\a)+\sum_{\Gamma\subseteq \a(k), \Gamma\neq \emptyset}\pi(\a_\Gamma).
\end{equation}  
\end{lemma}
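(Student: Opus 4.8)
The plan is to prove the identity \eqref{eq: 7.41} directly by computing the partial derivative $\D^k$ on the induced representation $\pi(\a)$, using the fact (established in Chapter 1) that $\D^k$ is a morphism of algebras and that $\D^k = (1\otimes \phi_k)\circ c$ where $c$ is the comultiplication. First I would recall that $\pi(\a) = L_{\Delta_1}\times \cdots \times L_{\Delta_s}$ with $\a$ well ordered, and that $\D^k$ being an algebra morphism gives
\[
 \D^k(\pi(\a)) = \D^k(L_{\Delta_1})\times \cdots \times \D^k(L_{\Delta_s}).
\]
For each segment $\Delta = \Delta_i$, the definition of $\D^k$ on generators yields $\D^k(L_{\Delta}) = L_{\Delta} + \delta_{k, e(\Delta)}L_{\Delta^{(k)}}$, i.e. the factor picks up an extra term $L_{\Delta^-}$ precisely when $e(\Delta)=k$ (interpreting $L_{[k]}$'s derivative as $L_{[k]}+1$, so that for a one-point segment ending at $k$ we simply delete it). So expanding the product over all $i$, each subset $\Gamma$ of the index set $\{i : e(\Delta_i)=k\}$ — equivalently each sub-multiset $\Gamma\subseteq \a(k)$ — contributes exactly the product $\prod_{i\notin\Gamma} L_{\Delta_i}\times \prod_{i\in\Gamma}L_{\Delta_i^{(k)}}$, which by definition of $\a_\Gamma$ is $\pi(\a_\Gamma)$ once one checks $\a_\Gamma$ is still well ordered.

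The main point to verify carefully is that $\a_\Gamma$ remains well ordered (so that the product really equals $\pi(\a_\Gamma)$ in the sense of Notation \ref{nota: 1.1.15}), and that the bookkeeping of the expansion is correct. For well-orderedness: passing from $\Delta_i$ to $\Delta_i^{(k)} = \Delta_i^-$ only shrinks a segment from the right end, so if $\Delta_i$ did not proceed $\Delta_j$ for $i<j$ in $\a$, the same holds after the truncation — one checks that shortening the end of $\Delta_i$ cannot create a "proceeds" relation, since proceeding $\Delta_j$ would require $b(\Delta_j) = \nu^m b(\Delta_i)$ with $m>0$ and $\Delta_i^-\cup\Delta_j$ a segment properly containing both; shrinking $\Delta_i$ only makes it harder for the union to be a segment containing $\Delta_j$. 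Conversely for $j<i$, $\Delta_j$ not proceeding $\Delta_i$ is preserved because we only remove the endpoint $\nu^{k}$-part from $\Delta_i$. I would spell this out as a short lemma or inline remark. The expansion itself is then the standard multinomial expansion of $\prod_i (L_{\Delta_i} + \delta_{k,e(\Delta_i)}L_{\Delta_i^{(k)}})$: the term indexed by $\Gamma=\emptyset$ is $\pi(\a)$ itself, and the remaining terms, indexed by nonempty $\Gamma\subseteq\a(k)$, give $\sum_{\Gamma\neq\emptyset}\pi(\a_\Gamma)$.

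The only genuine subtlety — and what I expect to be the main obstacle — is the edge case of one-point segments $[k]$ appearing in $\a(k)$: there $\D^k(L_{[k]}) = L_{[k]} + \delta_{[k],[k]} = L_{[k]} + 1$, so the "extra term" is the empty segment, and one must confirm that the convention $\a(k)_\Gamma := (\a(k)\setminus\Gamma)\cup\{\Delta^{(k)}:\Delta\in\Gamma\}$ with $\Delta^{(k)}=\emptyset$ (i.e. simply deleting the segment) matches $\pi(\a_\Gamma)$ with the empty factor read as the trivial one-dimensional representation, which is the unit of $\mathcal{R}^{univ}$. This is purely a matter of checking the conventions line up; once that is settled, the identity follows immediately from the multiplicativity of $\D^k$ and the formula $\D^k=(1\otimes\phi_k)\circ c$ combined with the expression for $c(L_{[j,k]})$ recalled earlier. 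I would present the argument in three short steps: (i) $\D^k$ multiplicative, reduce to the product formula; (ii) compute $\D^k$ on each factor; (iii) expand and identify terms with the $\pi(\a_\Gamma)$, inserting the well-orderedness check as needed.
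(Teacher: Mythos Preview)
Your proposal is correct and follows exactly the same approach as the paper: use that $\D^k$ is an algebra morphism, apply it factor by factor to $\pi(\a)=\prod L_{\Delta_i}$, and expand the resulting product $\prod(L_{\Delta_i}+\delta_{k,e(\Delta_i)}L_{\Delta_i^{(k)}})$ over subsets $\Gamma\subseteq\a(k)$. The paper's proof is considerably terser and does not pause on either of your two caveats; in fact the well-orderedness check is unnecessary since $\mathcal{R}$ is a commutative polynomial ring (Corollary~\ref{cor: 1.2.3}), so $\pi(\a_\Gamma)$ is well defined as an element of $\mathcal{R}$ regardless of ordering, and the one-point-segment edge case is absorbed silently into the convention $L_\emptyset=1$.
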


\begin{proof}
 
Let 
\[
\a=\{\Delta_1, \cdots, \Delta_r, \Delta_{r+1}, \cdots, \}.     
\]
Then 
\[
 \pi(\a)=\prod_{i=1}^{r}L_{\Delta_i} \times \prod_{i>r}L_{\Delta_i}
\]
 and 
 \begin{align*}
  \D^k(\pi(\a))&=\prod_{i=1}^{r}(L_{\Delta_i}+L_{\Delta_i^{(k)}})\times \prod_{i>r}L_{\Delta_i}\\
               &=\pi(\a)+\sum_{\Gamma\subseteq \a(k), \Gamma\neq \emptyset}\pi(\a_{\Gamma}).
\end{align*}
\end{proof}

\begin{prop}\label{prop: 7.4.2}
Let  
\addtocounter{theo}{1}
\begin{equation}
\D^k(\pi(\a))=\sum_{\b}n_{\b, \a}L_{\b}.
\end{equation}
Then $n_{\b, \a}>0$ if and only if $\b\preceq_k \a$.  
\end{prop}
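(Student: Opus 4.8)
The plan is to prove the equivalence $n_{\b,\a}>0 \Leftrightarrow \b\preceq_k\a$ in two directions, using the explicit formula \eqref{eq: 7.41} for $\D^k(\pi(\a))$ together with the known facts about the coefficients $m(\b,\a)$ and the positivity of partial derivatives (Theorem \ref{teo: 3}).

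\textbf{The ``if'' direction.} Suppose $\b\preceq_k\a$, so there is $\c\in S(\a)$ and $\Gamma\subseteq\c(k)$ with $\b\leq\c_\Gamma$. First I would rewrite $\D^k(L_\a)$ in the $L$-basis: applying $\D^k$ to $\pi(\a)=\sum_{\c\le\a}m(\c,\a)L_\c$ and using \eqref{eq: 7.41} for each $\c$, one gets that $\pi(\a_\Gamma)$ (for all $\c\le\a$ and $\Gamma\subseteq\c(k)$, including $\Gamma=\emptyset$) exhausts the terms of $\D^k(\pi(\a))$. The key point is a triangularity argument: order multisegments by the new poset $\preceq_k$ (which I must first check really is a poset — reflexivity is clear since $\Gamma=\emptyset$, $\c=\a$ is allowed; transitivity needs that $\leq$ on $S(\a_\Gamma)$ combines with the $(k)$-truncation of a further elementary operation, which is exactly the content of the case analysis in Lemma \ref{lem: 3.0.7}(c) and its refinements). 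Since $\D^k(L_\c)$ is a non-negative sum of irreducibles (Theorem \ref{teo: 3}), and since in $\D^k(\pi(\c))$ the term $\pi(\c_\Gamma)=L_{\c_\Gamma}+\sum_{\d<\c_\Gamma}m(\d,\c_\Gamma)L_\d$ appears, one sees that $L_{\c_\Gamma}$ and each $L_\d$ with $\d\le\c_\Gamma$ occurs with positive coefficient in $\D^k(\pi(\a))$, provided one can show no cancellation occurs — but there is no cancellation because all coefficients in sight (the $m(\cdot,\cdot)$, the coefficients of $\D^k(L_\c)$) are non-negative. Hence $n_{\b,\a}>0$.

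\textbf{The ``only if'' direction.} Conversely, suppose $n_{\b,\a}>0$. I would argue by downward induction on $\preceq_k$ (or on $\ell(\a)$ together with the structure of \eqref{eq: 7.41}). Write $\D^k(\pi(\a))=\D^k(L_\a)+\sum_{\c<\a}m(\c,\a)\D^k(L_\c)$; by induction every irreducible $L_\b$ appearing in $\D^k(L_\c)$ for $\c<\a$ satisfies $\b\preceq_k\c$, hence $\b\preceq_k\a$ (using $\c<\a\Rightarrow\c\in S(\a)$ and transitivity of $\preceq_k$, noting $\c\preceq_k\a$ with $\Gamma=\emptyset$). On the other hand, by \eqref{eq: 7.41}, $\D^k(\pi(\a))=\sum_{\Gamma\subseteq\a(k)}\pi(\a_\Gamma)=\sum_\Gamma\sum_{\d\le\a_\Gamma}m(\d,\a_\Gamma)L_\d$, and every $\d$ occurring here satisfies $\d\le\a_\Gamma$ for some $\Gamma\subseteq\a(k)$, i.e. $\d\preceq_k\a$ (with $\c=\a$). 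Since $\D^k(L_\a)=\D^k(\pi(\a))-\sum_{\c<\a}m(\c,\a)\D^k(L_\c)$ and every $L_\b$ on either side of this difference is indexed by a $\b$ with $\b\preceq_k\a$, the same holds for the $\b$ appearing in $\D^k(L_\a)$, and hence for all $\b$ with $n_{\b,\a}>0$.

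\textbf{Main obstacle.} The routine parts (expanding products, invoking positivity) are straightforward; the real work is twofold. First, verifying that $\preceq_k$ is genuinely transitive: this requires checking that if $\b\le\a_\Gamma$ and then a further chain of elementary operations is applied and re-truncated at $k$, the result is still of the form $\a'_{\Gamma'}$ for some $\c'\in S(\a)$ — this is precisely the kind of case-by-case bookkeeping on beginnings/ends of the linked pair that appears in the proof of Lemma \ref{lem: 3.0.7} and in Proposition \ref{prop: 1.5.8}, and I expect it to be the technical heart. Second, ensuring genuinely that no cancellation can hide a term: this follows because in the Grothendieck group $\mathcal R$ everything in sight is a non-negative integer combination of the basis $\{L_\b\}$, so a term that appears with positive coefficient in $\D^k(\pi(\a))$ and is \emph{not} killed by subtracting the lower $\D^k(L_\c)$'s must survive in $\D^k(L_\a)$ — here one uses induction to control exactly which $L_\b$ can appear in the subtracted terms. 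I would organize the whole argument as an induction on $\ell(\a)$, mirroring the structure of the proof of Proposition \ref{teo: 3.0.6}.
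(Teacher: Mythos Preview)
Your argument is correct but dramatically overcomplicated: the paper's proof is two lines. By \eqref{eq: 7.41} one has $\D^k(\pi(\a))=\sum_{\Gamma\subseteq\a(k)}\pi(\a_\Gamma)$, and expanding each $\pi(\a_\Gamma)=\sum_{\d\le\a_\Gamma}m(\d,\a_\Gamma)L_\d$ in the $L$-basis gives $n_{\b,\a}=\sum_\Gamma m(\b,\a_\Gamma)$. Since every $m(\cdot,\cdot)\ge 0$ and $m(\b,\a_\Gamma)>0\Leftrightarrow\b\le\a_\Gamma$, this sum is positive iff $\b\le\a_\Gamma$ for some $\Gamma$, which is precisely the definition of $\b\preceq_k\a$.

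You actually have this argument embedded in your write-up: the sentence in your ``only if'' paragraph beginning ``On the other hand, by \eqref{eq: 7.41}\ldots'' is the entire proof of that direction, and your ``no cancellation because all coefficients are non-negative'' remark is the entire proof of the ``if'' direction. Everything else is unnecessary. In particular: the proposition concerns $\D^k(\pi(\a))$, not $\D^k(L_\a)$, so there is no reason to decompose $\pi(\a)$ into irreducibles first and then reassemble; there is no induction on $\ell(\a)$; and transitivity of $\preceq_k$ plays no role. In fact, in the paper the verification that $\preceq_k$ is a poset comes \emph{after} this proposition (and uses it via Corollary~\ref{cor: 7.4.3} and Proposition~\ref{prop: 7.3.5}), so making transitivity a prerequisite here would invert the logical order. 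Your ``main obstacle'' is an obstacle to a different, harder statement than the one being proved.
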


\begin{proof}
Let $\b\preceq \a$, 
then by definition we have 
$\b\leq \a_\Gamma$ for some $\Gamma$. 
Therefore $m_{\b, \a_\Gamma}>0$, now 
we have $n_{\b, \a}>0$ by equation 
\ref{eq: 7.41}. Conversely, 
if $n_{\b, \a}>0$, then by 
equation \ref{eq: 7.41}, we know that 
 $\b\leq \a_\Gamma$ for some $\Gamma$. 
\end{proof}

\begin{cor}\label{cor: 7.4.3}
We have $\b\preceq_k \a$ 
 if and only if $\D^k(\pi(\a))-\pi(\b)\geq 0$
 in $\cal{R}$. 
\end{cor}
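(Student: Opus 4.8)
The statement to prove is Corollary~\ref{cor: 7.4.3}: we have $\b\preceq_k \a$ if and only if $\D^k(\pi(\a))-\pi(\b)\geq 0$ in $\mathcal{R}$.

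The plan is to deduce this directly from Proposition~\ref{prop: 7.4.2} together with Lemma~\ref{lem: 3.0.8}, so essentially no new geometry or representation theory is needed. First I would treat the forward direction. Assume $\b\preceq_k\a$. By definition there is a multisegment $\c\in S(\a)$ and a subset $\Gamma\subseteq\a(k)$ with $\b\leq\a_\Gamma$; here I must be slightly careful about what role $\c$ plays in the definition of $\preceq_k$ (it seems the intended meaning is $\b\leq \c_\Gamma$ for some $\c\in S(\a)$ and some $\Gamma\subseteq \c(k)$, or else $\c$ is just $\a$ itself). In either reading, $\a_\Gamma$ (resp.\ $\c_\Gamma$) appears in the expansion \eqref{eq: 7.41} of $\D^k(\pi(\a))$ with a nonnegative coefficient: indeed $\D^k(\pi(\a))=\pi(\a)+\sum_{\emptyset\neq\Gamma\subseteq\a(k)}\pi(\a_\Gamma)$, and if one needs $\c\in S(\a)$ one invokes Lemma~\ref{lem: 3.0.8} to write $\pi(\a)=\pi(\c)+(\text{nonneg.\ sum})$ and then applies $\D^k$, using positivity of $\D^k$ (Theorem~\ref{teo: 3}). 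Then since $\b\leq\a_\Gamma$, Lemma~\ref{lem: 3.0.8} gives $\pi(\a_\Gamma)-\pi(\b)\geq 0$ in $\mathcal{R}$. Combining, $\D^k(\pi(\a))-\pi(\b)=\bigl(\pi(\a_\Gamma)-\pi(\b)\bigr)+\sum_{\Gamma'\neq\Gamma}\pi(\a_{\Gamma'})\geq 0$, since every remaining $\pi(\a_{\Gamma'})$ is a nonnegative combination of irreducibles by Proposition~\ref{prop: 1.1.15}-type expansions (or simply because each $\pi(\cdot)$ lies in the positive cone spanned by the $L_{\d}$).

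For the converse, suppose $\D^k(\pi(\a))-\pi(\b)\geq 0$ in $\mathcal{R}$. Write $\pi(\b)=\sum_{\d\leq\b}m(\d,\b)L_\d$; since $m(\b,\b)=1$, the coefficient of $L_\b$ in $\pi(\b)$ is $1$, so the coefficient of $L_\b$ in $\D^k(\pi(\a))$ is at least $1$, i.e.\ $n_{\b,\a}\geq 1$. By Proposition~\ref{prop: 7.4.2} this forces $\b\preceq_k\a$. This direction is short and uses only that $\{L_\d\}$ is a basis of $\mathcal{R}$ (Corollary~\ref{cor: 1.2.3}) and the triangularity $\pi(\b)=L_\b+\sum_{\d<\b}m(\d,\b)L_\d$.

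The only real subtlety — the ``hard part,'' though it is more of a bookkeeping point than a genuine obstacle — is making sure the definition of $\preceq_k$ is used consistently with the identity \eqref{eq: 7.41}: one needs that every multisegment of the form $\a_\Gamma$ (for $\Gamma\subseteq\a(k)$) is dominated by $\D^k(\pi(\a))$, and conversely that every $\b$ with $n_{\b,\a}>0$ satisfies the defining inequality $\b\leq\a_\Gamma$ for some $\Gamma$ and some element of $S(\a)$. Both of these are already packaged into Proposition~\ref{prop: 7.4.2}, so once that is in hand the corollary is immediate; I would present it as a two-line deduction: $n_{\b,\a}>0\iff\b\preceq_k\a$ by Proposition~\ref{prop: 7.4.2}, and $n_{\b,\a}>0\iff \D^k(\pi(\a))-\pi(\b)\geq 0$ because the $L_\d$ form a positive basis and $\pi(\b)=L_\b+\sum_{\d<\b}m(\d,\b)L_\d$ with all coefficients nonnegative, so subtracting $\pi(\b)$ keeps all coefficients nonnegative exactly when the $L_\b$-coefficient $n_{\b,\a}$ of $\D^k(\pi(\a))$ is positive (the lower $L_\d$ being absorbed, which again needs the positivity statement of Lemma~\ref{lem: 3.0.8} applied downward along chains in $S(\b)$).
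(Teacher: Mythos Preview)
Your argument is correct and matches the paper's proof: the forward direction goes through $\b\leq\a_\Gamma$, Lemma~\ref{lem: 3.0.8}, and equation~(\ref{eq: 7.41}), while the converse reads off $n_{\b,\a}\geq 1$ from the $L_\b$-coefficient and invokes Proposition~\ref{prop: 7.4.2}. One small caution: in your final ``two-line deduction,'' the claim that the lower $L_\d$ are ``absorbed'' is not a standalone argument---it only works by passing back through $\a_\Gamma$ exactly as in your first paragraph, so drop that phrasing and just keep the $\a_\Gamma$ route for the forward implication.
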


\begin{proof}
We keep the notations in the proof of  
proposition \ref{prop: 7.4.2}.  
We know that $\b\preceq_k \a$ implies
$\b\leq \a_\Gamma$ for some $\Gamma\subseteq \a(k)$. 
By lemma \ref{lem: 3.0.8}, we know that 
$\b\leq \a_\Gamma$ implies that 
$\pi(\a_\Gamma)-\pi(\b)\geq 0$ in $\cal{R}$. 
Since $\D^k(\pi(\a))-\pi(\a_\Gamma)\geq 0$ by 
equation (\ref{eq: 7.41}), we have $\D^k(\pi(\a))-\pi(\b)\geq 0$.
Conversely, if $\D^k(\pi(\a))-\pi(\b)\geq 0$, 
we have $n(\b, \a)>0$, hence $\b\preceq_k \a$
by proposition \ref{prop: 7.4.2}.
\end{proof}

\begin{prop}\label{prop: 7.3.5}
For any $\b\preceq_k \a$, there exists $\c\in S(\a)$,
and some subset $\Gamma\subseteq \c(k)$,  
such that 
\[
 \b=\c_{\Gamma}.
\]
Conversely, if $\b=\c_{\Gamma}$ for some $\c\in S(\a)$,
 then $\b\preceq_k \a$.
\end{prop}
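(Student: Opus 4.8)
The plan is to treat the two directions separately; the converse is immediate and the forward direction carries all the content. For the converse, if $\b=\c_{\Gamma}$ with $\c\in S(\a)$ and $\Gamma\subseteq\c(k)$, then trivially $\b\le\c_{\Gamma}$, so $\b\preceq_k\a$ by the definition of $\preceq_k$.

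For the forward direction, suppose $\b\preceq_k\a$, so there are $\c\in S(\a)$ and $\Gamma\subseteq\c(k)$ with $\b\le\c_{\Gamma}$, i.e.\ a chain of elementary operations $\c_{\Gamma}=\b_0>\b_1>\cdots>\b_n=\b$. I would prove by induction on $i$ that each $\b_i$ can be written as $(\c_i)_{\Gamma_i}$ with $\c_i\in S(\a)$ and $\Gamma_i\subseteq\c_i(k)$; the case $i=0$ is the hypothesis, and the case $i=n$ is the assertion. The inductive step reduces to the following lifting lemma, which is the heart of the matter: if $\d=\c'_{\Gamma'}$ with $\c'\in S(\a)$, $\Gamma'\subseteq\c'(k)$, and $\d'$ is obtained from $\d$ by one elementary operation on a pair of linked segments $\{\Delta_1\prec\Delta_2\}$, then $\d'=\c''_{\Gamma''}$ for suitable $\c''\in S(\a)$, $\Gamma''\subseteq\c''(k)$.

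To prove the lemma I would distinguish cases according to whether each of $\Delta_1,\Delta_2$ is a \emph{genuine} segment of $\c'$ (untouched by the truncation $\c'\mapsto\c'_{\Gamma'}$) or is the truncation $\Delta^{-}=\Delta^{(k)}$ of some $\Delta\in\Gamma'$. (i) If neither is a truncation, then $\{\Delta_1,\Delta_2\}$ is already linked in $\c'$; take $\c''$ to be the result of the elementary operation in $\c'$ and $\Gamma''=\Gamma'$, and check $\c''_{\Gamma''}=\d'$ using $e(\Delta_1\cup\Delta_2)=e(\Delta_2)$ and $b(\Delta_1\cap\Delta_2)=b(\Delta_2)$, noting that linkedness prevents both $\Delta_1,\Delta_2$ from ending at $k$. (ii) If exactly one, say $\Delta_1=\Delta^{-}$ with $\Delta=[b(\Delta_1),k]\in\Gamma'$, is a truncation, then linkedness of $\{\Delta_1,\Delta_2\}$ in $\d$ forces $e(\Delta_2)\ge k$; when $e(\Delta_2)=k$ (so $\Delta_2\in\c'(k)\setminus\Gamma'$) keep $\c''=\c'$ and set $\Gamma''=(\Gamma'\setminus\{\Delta\})\cup\{\Delta_2\}$, i.e.\ truncate $\Delta_2$ instead of $\Delta$; when $e(\Delta_2)>k$ perform the elementary operation on $\{\Delta,\Delta_2\}$ inside $\c'$ to get $\c''$ and set $\Gamma''=(\Gamma'\setminus\{\Delta\})\cup\{[b(\Delta_2),k]\}$. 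The symmetric situation $\Delta_2=\Sigma^{-}$ is handled identically. (iii) Both being truncations is impossible, since then $e(\Delta_1)=e(\Delta_2)=k-1$, contradicting $e(\Delta_1)<e(\Delta_2)$, which is forced by $\Delta_1\prec\Delta_2$ together with linkedness. In each surviving case $\c''\le\c'$, hence $\c''\in S(\a)$ by Lemma~\ref{lem: 2.1.3} and transitivity, and one verifies $\c''_{\Gamma''}=\d'$ on the nose.

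The main obstacle is the bookkeeping in this lemma: one must be sure every configuration of the two linked segments relative to the truncation is covered, handle multiplicities correctly (treating $\Gamma'$ as a sub-multiset of $\c'(k)$), and in the ``swap which segment is truncated'' subcase check the exact equality $\c''_{\Gamma''}=\d'$ rather than just $\le$. Once the lemma is in hand, iterating it along the chain $\c_{\Gamma}=\b_0>\cdots>\b_n=\b$ gives $\b=(\c_n)_{\Gamma_n}$ with $\c_n\in S(\a)$, $\Gamma_n\subseteq\c_n(k)$, completing the proof.
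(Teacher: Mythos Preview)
Your forward direction is correct and is essentially the paper's own argument: both induct on the length of a chain from $\a_{\Gamma_1}$ down to $\b$ and reduce to a one-step lifting lemma with the same case split (neither/one of the two segments is a truncation). Your case~(ii) with $e(\Delta_2)=k$, where you keep $\c''=\c'$ and swap which segment is truncated, is in fact more careful than the paper, which in its third bullet applies an elementary operation to $\{\Delta^{+},\Delta'\}$ without noting that these two segments are not linked when $e(\Delta')=k$.

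The gap is in your converse. You read the definition of $\preceq_k$ as ``there exist $\c\in S(\a)$ and $\Gamma\subseteq\c(k)$ with $\b\le\c_\Gamma$''; under that reading the converse is indeed trivial. But the paper's working definition (the stray ``$\c\in S(\a)$'' in the displayed definition is a typo, as the proofs of Proposition~\ref{prop: 7.4.2} and Corollary~\ref{cor: 7.4.3} make clear) is simply $\b\le\a_\Gamma$ for some $\Gamma\subseteq\a(k)$. With that definition the converse is not immediate: from $\b=\c_\Gamma$ with $\c\lneq\a$ you must still produce some $\Gamma'\subseteq\a(k)$ with $\b\le\a_{\Gamma'}$. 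The paper does this via positivity in $\mathcal{R}$: $\pi(\a)-\pi(\c)\ge 0$ by Lemma~\ref{lem: 3.0.8}, hence $\D^k(\pi(\a))-\D^k(\pi(\c))\ge 0$ by Theorem~\ref{teo: 3}; since $\pi(\b)=\pi(\c_\Gamma)$ occurs in $\D^k(\pi(\c))$ by~(\ref{eq: 7.41}), one gets $n_{\b,\a}>0$ and concludes by Proposition~\ref{prop: 7.4.2}. You could also run your lifting lemma ``upwards'' along a chain $\a>\cdots>\c$ to give a purely combinatorial proof, but either way it is not a one-liner.
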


\begin{proof}
For the converse part, suppose $\c\neq \a$,
by equation \ref{eq: 7.41}, we have $\D^k(\pi(\c))-\pi(\b)\geq 0$ in $\mathcal{R}$.
By lemme \ref{lem: 3.0.8}, we 
know that $\pi(\a)-\pi(\c)\geq 0$ in $\cal{R}$, hence $\D^k(\pi(\a))-\D^k(\pi(\c))\geq 0$
by theorem \ref{teo: 3}. Therefore $n_{\b,\a}>0$. 
Hence we have $\b\preceq_k \a$.

For the direct part, suppose that 
$\b\preceq_k \a$, hence $\b<\a_{\Gamma_1}$ for some $\Gamma_1$. We prove by induction on $\ell(\b, \a_T)$.
If $\ell(\b, \a_{\Gamma_1})=0$, then $\b=\a_{\Gamma_1}$, we are done.
Now let $\b<\d\leq \a_{\Gamma_1}$ such that 
$\ell(\b, \d)=1$, by induction, we know that 
\[
 \d=\c'_{\Gamma_0}, 
\]
for some $\c'\in S(\a)$.
Note that by replacing $\c'$ by $\a$, we can assume that 
$\d=\a_{\Gamma_1}$ and $\ell(\b, \a_{\Gamma_1})=1$.
 
By definition, we know that $\b$ is obtained by 
applying the elementary operation to a pair of segments $\{\Delta\preceq \Delta'\}$
in $\a_T$. Now we set out to construct $\c$. 

\begin{itemize}
 \item If $\{\Delta, \Delta'\}\subseteq \a\setminus \{\Delta^{(k)}: \Delta\in \Gamma_1\}\subseteq \a$,
 let $\c$ be the multisegment obtained by applying the elementary operations to 
 $\{\Delta, \Delta'\}$. And we have 
 \[
  \b=\c_{\Gamma_1}.
 \]
\item If $\{\Delta, \Delta'\}\cap \{\Delta^{(k)}: \Delta\in \Gamma_1\}=\{\Delta'\}$, 
then $\{\Delta, \Delta'^{+}\}\in \a$
let $\c$ be the 
multisegment obtained by applying the elementary operations to 
 $\{\Delta, \Delta'^+\}$. Then 
 let 
 \[
  \Gamma=(\Gamma_1\setminus \{\Delta'^+\})\cup\{\Delta\cup \Delta'^+\}
 \]
and we have 
 \[
  \b=\c_{\Gamma}.
 \]
\item If $\{\Delta, \Delta'\}\cap \{\Delta^{(k)}: \Delta\in \Gamma_1\}=\{\Delta\}$, 
then $\{\Delta^{+}, \Delta'\}\in \a$
let $\c$ be the 
multisegment obtained by applying the elementary operations to 
 $\{\Delta^{+}, \Delta'\}$. Then let 
 \[
  \Gamma=(\Gamma_1\setminus \{\Delta^+\})\cup\{\Delta\cap \Delta'\}
 \]
 and we have 
  \[
  \b=\c_{\Gamma}.
 \]
\end{itemize}
Hence we are done.

\end{proof}

\begin{prop}
The relation $\preceq_k$ defines a 
poset structure on $\O$. 
\end{prop}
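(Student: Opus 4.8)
The plan is to verify the three defining properties of a partial order for $\preceq_k$: reflexivity, antisymmetry, and transitivity. Reflexivity is immediate: taking $\c=\a$ and $\Gamma=\emptyset$ gives $\a_\Gamma=\a$, so $\a\preceq_k\a$. The real content is antisymmetry and transitivity, and I would handle them using the representation-theoretic characterization from Corollary \ref{cor: 7.4.3}, namely that $\b\preceq_k\a$ if and only if $\D^k(\pi(\a))-\pi(\b)\geq 0$ in $\mathcal{R}$, together with the positivity of partial derivation (Theorem \ref{teo: 3}) and the standard order-theoretic facts about $S(\a)$.

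For transitivity, suppose $\c\preceq_k\b$ and $\b\preceq_k\a$. By Proposition \ref{prop: 7.3.5}, $\b=\a'_{\Gamma_1}$ for some $\a'\in S(\a)$ and $\Gamma_1\subseteq \a'(k)$. The key observation is that $\D^k(\pi(\a))-\pi(\b)\geq 0$ by Corollary \ref{cor: 7.4.3}, and I want to deduce $\D^k(\pi(\a))-\pi(\c)\geq 0$. Since $\c\preceq_k\b$, we have $\D^k(\pi(\b))-\pi(\c)\geq 0$. It therefore suffices to show $\D^k(\pi(\a))-\D^k(\pi(\b))\geq 0$, i.e. $\D^k(\pi(\a)-\pi(\b))\geq 0$. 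Now $\b\preceq_k\a$ does not immediately give $\pi(\a)-\pi(\b)\geq 0$ (that would require $\b\leq\a$), so instead I would write $\b=\a'_{\Gamma_1}$ and argue that $\D^k(\pi(\a))$ dominates $\D^k(\pi(\a'_{\Gamma_1}))=\D^k(\pi(\b))$: indeed from $\a'\in S(\a)$ and Lemma \ref{lem: 3.0.8} we get $\pi(\a)-\pi(\a')\geq 0$, hence $\D^k(\pi(\a))-\D^k(\pi(\a'))\geq 0$ by Theorem \ref{teo: 3}; and from Equation (\ref{eq: 7.41}) applied to $\a'$, $\D^k(\pi(\a'))-\pi(\a'_{\Gamma_1})\geq 0$, but I actually need $\D^k(\pi(\a'))-\D^k(\pi(\a'_{\Gamma_1}))\geq 0$. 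Here I would note that $\D^k(\pi(\a'))$ expands as a sum over $\Gamma\subseteq \a'(k)$ of $\pi(\a'_\Gamma)$, while $\D^k(\pi(\a'_{\Gamma_1}))$ expands over subsets of $\a'_{\Gamma_1}(k)$; comparing these expansions combinatorially (each segment ending at $k$ in $\a'_{\Gamma_1}$ corresponds to a segment of $\a'$ either already truncated or ending at $k$) shows the former dominates the latter. Chaining these inequalities yields $\D^k(\pi(\a))-\pi(\c)\geq 0$, i.e. $\c\preceq_k\a$.

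For antisymmetry, suppose $\b\preceq_k\a$ and $\a\preceq_k\b$. From Proposition \ref{prop: 7.3.5}, write $\b=\a'_{\Gamma}$ with $\a'\in S(\a)$, and symmetrically $\a=\b'_{\Gamma'}$ with $\b'\in S(\b)$. A degree/weight computation is the cleanest route: the operation $\a'\mapsto\a'_\Gamma$ can only decrease the weight function at $k$ (it replaces some segments ending at $k$ by their truncations, removing a $\nu^k$) and the elementary operations passing from $\a$ to $\a'$ preserve weight. So $\deg(\b)\leq\deg(\a)$, with equality iff $\Gamma=\emptyset$ and hence $\b\leq\a$. Symmetrically $\deg(\a)\leq\deg(\b)$. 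Thus $\deg(\a)=\deg(\b)$, forcing $\Gamma=\Gamma'=\emptyset$, so $\b\leq\a$ and $\a\leq\b$; antisymmetry of the original partial order on $\O$ (proven in the proposition following Lemma \ref{lem: 3.0.8}, via \cite{Z2} 7.1) gives $\a=\b$.

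The main obstacle I anticipate is the transitivity step, specifically the combinatorial comparison showing $\D^k(\pi(\a'))-\D^k(\pi(\a'_{\Gamma_1}))\geq 0$ in $\mathcal{R}$ — one must track carefully how the index set of subsets $\Gamma$ behaves under the truncation $\a'\mapsto\a'_{\Gamma_1}$, and verify that every term $\pi(\a'_{\Gamma_1}{}_\Delta)$ appearing in $\D^k(\pi(\a'_{\Gamma_1}))$ already appears among the $\pi(\a'_{\Gamma''})$. An alternative, possibly slicker, approach to transitivity avoiding this: work directly with the definition, using that $\preceq_k$ is built from $\leq$ (which is transitive) composed with the single operation $\a\mapsto\a_\Gamma$, and show a ``commutation'' lemma allowing one to push all the $\Gamma$-operations to the end; I would try the positivity argument first since it reuses machinery already available in the text.
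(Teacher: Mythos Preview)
Your proof is correct. For antisymmetry you spell out the weight/degree argument that the paper compresses into ``by definition'': both of you observe that $\b\preceq_k\a$ forces $\varphi_{\b}(k)\leq\varphi_{\a}(k)$ with equality elsewhere, so mutual comparability forces equal weight and hence $\Gamma=\emptyset$, reducing to the antisymmetry of $\leq$.

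For transitivity you and the paper take slightly different routes through the same toolkit. You write $\b=\a'_{\Gamma_1}$ with $\a'\in S(\a)$ via Proposition~\ref{prop: 7.3.5} and then establish the inequality $\D^k(\pi(\a'))\geq \D^k(\pi(\a'_{\Gamma_1}))$ combinatorially; this works, since by (\ref{eq: 7.41}) one has $\D^k(\pi(\a'_{\Gamma_1}))=\sum_{\Gamma\supseteq\Gamma_1}\pi(\a'_{\Gamma})$, which is a subsum of $\D^k(\pi(\a'))$. The paper instead avoids this computation by applying Proposition~\ref{prop: 7.3.5} \emph{twice}: starting from $\a_1=\c_{\Gamma_1}$ with $\c\in S(\a_2)$, it uses $\D^k(\pi(\a_3))\geq\pi(\a_2)$ together with $\c\leq\a_2$ to get $\c\preceq_k\a_3$ directly, then writes $\c=\c'_{\Gamma_2}$ with $\c'\in S(\a_3)$ and simply takes $\Gamma_3=\Gamma_1\cup\Gamma_2$ to obtain $\a_1=\c'_{\Gamma_3}$. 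This is precisely the ``commutation'' you sketch as an alternative at the end; it is a bit slicker because it sidesteps the term-by-term comparison of the two $\D^k$ expansions, but what your primary argument buys is that it stays entirely within the positivity framework of Corollary~\ref{cor: 7.4.3} without needing to re-invoke the structural characterization of Proposition~\ref{prop: 7.3.5} a second time.
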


\begin{proof}
By definition we have $\a\preceq_k \a$ 
for any $\a\in \O$. 
Suppose $\a_1\preceq_k \a_2, \a_2\preceq_k \a_3$, 
we want to show that $\a_1\preceq_k \a_3$. 
By proposition \ref{prop: 7.3.5}, there exists $\c\in S(\a_2)$
and $\Gamma_{1}\subseteq \c(k)$
, such that 
\[
 \a_1=\c_{\Gamma_1}.
\]
 Note that 
by corollary \ref{cor: 7.4.3},  
the fact $\a_2\preceq_k \a_3$ implies $\D^k(\pi(\a_3))-\pi(\a_2)\geq 0$.
Hence we have $n(\a_3, \c)>0$, therefore $\c\preceq_k \a_3$ by proposition \ref{prop: 7.4.2}.  
In turn, we know that there exists a multisegment $\c'\in S(\a_3)$
and $\Gamma_2\subseteq \c'(k)$, such that 
\[
 \c=\c'_{\Gamma_2}.
\]
Since we have $\c(k)\subseteq \c'(k)$, we take 
\[
 \Gamma_3: =\Gamma_1\cup \Gamma_2\subseteq \c'(k).
\]
Now we get 
\[
 \a_1=\c'_{\Gamma_3},
\]
which implies $\a_1\preceq_k \a_3$ by proposition \ref{prop: 7.3.5}. 
Finally, if $\a\preceq_k \b$ and $\b\preceq_k \a$, then by definition
we have $\a=\b$. 

\end{proof}

\begin{definition}
We let 
\[
\Gamma(\a, k)=\{\b: \b\preceq_k \a\}.
\]
\end{definition}

\section{Canonical Basis and Quantum Algebras}

In this section, following \cite{LNT}, we recall 
the results of Lusztig on canonical basis, the relation of quantum algebras 
and the algebra $\mathcal{R}$. We are especially interested in 
the construction of 
a product of perverse sheaves over orbital varieties defined by Lusztig \cite{Lu}, 
which is closely related to the product defined by induction in $\mathcal{R}$.

% In this section we shall recall the definition of 
% the quantum algebras $U_v^+, A_v$ and of their canonical
% basis, see \cite{LNT} for more discussions.

\begin{definition}
Let $\N^{(\Z)}$ be the semi-group of sequences $(d_j)_{j\in \Z}$ of non negative integers 
which are zero for all but finitely many $j$.   
Let $\alpha_i$ be the element whose $i$-th term is 1 and other terms are zero.
\end{definition}

\begin{definition}
 We define a symmetric bilinear form on $\N^{(\Z)}$ given by 
 \begin{displaymath}
 (\alpha_i, \alpha_j)=
 \left\{\begin{array}{cc}
 2, &\text{ for } i=j;\\
 -1,  &\text{ for } |i-j|=1;\\
 0,   &\text{ otherwise }.
   \end{array}\right. 
 \end{displaymath}
\end{definition}

\begin{definition}
Let $q$ be an indeterminate and $\Q(q^{1/2})$ be the fractional field of $\Z[q^{1/2}]$.
Let $U_q^{\geq 0}$ be the $Q(q^{1/2})$-algebra generated by the elements $E_i$ and $K_i^{\pm 1}$ for $i\in \Z$
with the following relations:
\begin{align*}
K_iK_j=K_jK_i,~K_iK_i^{-1}=1;&\\
K_iE_i=q^{1/2(\alpha_i, \alpha_j)}E_iK_i ;&\\
E_iE_j=E_jE_i, &\text{ if } |i-j|>1;\\
E_i^2E_{j}-(q^{1/2}+q^{-1/2})E_iE_jE_i+E_jE_i^2=0, &\text{ if }|i-j|=1.
\end{align*}
and let $U^{+}$ be the subalgebra generated by 
the $E_i$'s. 
\end{definition}

\remk This is the $+$ part of the quantized enveloping algebra $U$ associated by Drinfeld
and Jimbo to the root system $A_{\infty}$ of $SL_{\infty}$. And for $q=1$, this specializes to the 
classical enveloping algebra of the nilpotent radical of a Borel subalgebra.

\begin{definition}\label{def: 7.2.4}
We define a new order on the set of segments $\Sigma$

\begin{displaymath} \left\{ \begin{array}{cc}
&[j, k]\lhd  [m,n], \text{ if } k< n,\\
&[j,k]\rhd  [m,n], \text{ if } j<m, n=k.
\end{array}\right. 
\end{displaymath}
We also denote $[j,k]\lhd [m, n]$ or $[j, k]=[m, n]$ by $\unlhd [m, n]$. 
\end{definition}

\begin{lemma}
The algebra $U_q^+$ is $\N^{(\Z)}$-graded via the weight function $\wt(E_i)=\alpha_i$. 
Moreover, for a given weight $\alpha$, the homogeneous component of $U_q^+$ with weight $\alpha$ 
is of finite dimension, and its basis are naturally parametrized by the multisegments of the same 
weight.
\end{lemma}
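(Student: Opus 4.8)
The plan is to prove the three assertions in turn: the grading and the finite-dimensionality are formal, while the parametrization by multisegments rests on the quantum PBW theorem together with the description of the positive roots of type $A_\infty$ as segments.

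First I would produce the grading. Let $\mathcal F$ be the free $\Q(q^{1/2})$-algebra on the symbols $E_i$, $i\in\Z$, made $\N^{(\Z)}$-graded by declaring $E_i$ to have degree $\alpha_i$, so that $\mathcal F_\alpha$ is spanned by the words $E_{i_1}\cdots E_{i_n}$ with $\sum_t\alpha_{i_t}=\alpha$. Each defining relation of $U_q^+$, namely $E_iE_j-E_jE_i$ for $|i-j|>1$ and $E_i^2E_j-(q^{1/2}+q^{-1/2})E_iE_jE_i+E_jE_i^2$ for $|i-j|=1$, lies in a single $\mathcal F_\alpha$ (of degree $\alpha_i+\alpha_j$, resp.\ $2\alpha_i+\alpha_j$); hence the two-sided ideal $I$ they generate is homogeneous and the quotient $U_q^+=\mathcal F/I$ inherits the $\N^{(\Z)}$-grading with $\wt(E_i)=\alpha_i$. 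For finite-dimensionality, write $\alpha=\sum_i d_i\alpha_i$ and $n=\sum_i d_i$: then $\mathcal F_\alpha$ is spanned by the $\binom{n}{(d_i)_i}$ monomials obtained by permuting a fixed word using each $E_i$ exactly $d_i$ times, so $(U_q^+)_\alpha$, being a quotient of it, is finite-dimensional.

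For the remaining assertion I would invoke the quantum Poincar\'e--Birkhoff--Witt basis. Since $\alpha$ has finite support, fix an interval $[a,b]\subseteq\Z$ with $\supp(\alpha)\subseteq[a,b]$; the subalgebra generated by the $E_i$ with $a\le i\le b$ is the positive part of the quantized enveloping algebra of the root system of type $A_{b-a+1}$, and its weight-$\alpha$ component equals $(U_q^+)_\alpha$. By Lusztig's theorem, fixing a reduced decomposition of the longest element of the corresponding Weyl group yields root vectors $E_\beta$ indexed by the positive roots $\beta$, and the divided-power monomials $\prod_\beta E_\beta^{(c_\beta)}$, read in a fixed order, form a $\Q(q^{1/2})$-basis. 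Now the positive roots of type $A$ on $[a,b]$ are exactly the intervals $[i,j]$ with $a\le i\le j\le b$, i.e.\ the segments supported in $[a,b]$, and the weight of $E_{[i,j]}^{(c)}$ is $\sum_{i\le m\le j}c\,\alpha_m=c\,\chi_{[i,j]}$. Thus a PBW monomial of weight $\alpha$ is the same datum as a function $f\colon\Sigma\to\N$ of finite support with $\sum_{\Delta}f(\Delta)\chi_\Delta=\alpha$, that is, a multisegment of weight $\alpha$ in the sense of Definition~\ref{def: 2.2.2}; this yields the asserted bijection between a basis of $(U_q^+)_\alpha$ and the multisegments of weight $\alpha$.

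The only nontrivial input is Lusztig's PBW theorem together with the identification of its root vectors with segments; but after the reduction to a finite type $A$ subsystem this is entirely standard, and as an alternative one may instead cite Ringel's realization of $U_q^+$ as the generic (twisted) Hall algebra of the $A_\infty$-quiver, in which the nilpotent indecomposable representations are precisely the interval modules and the Krull--Schmidt theorem directly produces the basis indexed by multisegments. This last route is also the one that best matches the geometric point of view on orbital varieties used throughout the text.
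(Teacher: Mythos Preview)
Your proof is correct and, in fact, more complete than the paper's. The paper's argument consists only of writing down, for a multisegment $\a=\sum_s m_{i_s,j_s}[i_s,j_s]$ ordered by $\unlhd$, the monomial $(E_{j_1}\cdots E_{i_1})\cdots(E_{j_r}\cdots E_{i_r})$ in the Chevalley generators, without justifying the grading, the finite-dimensionality, or why these monomials form a basis; it treats the result as folklore and merely records the parametrizing map. You, by contrast, establish the grading and finite-dimensionality explicitly and then appeal to Lusztig's PBW theorem after reducing to a finite type $A$ subalgebra, identifying the positive roots with segments so that PBW monomials in the root vectors $E_\beta$ are indexed by multisegments.

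The one point worth flagging is that these are \emph{different} bases: the paper's basis is built directly from ordered products of the $E_i$'s, whereas your PBW basis uses the root vectors $E_{[i,j]}$ (iterated $q$-commutators), which the paper introduces only in the definition immediately following the lemma. Both bases are legitimate and both are parametrized by multisegments of the given weight, so your argument proves the lemma as stated; but if one wants the specific basis the paper records in its proof, one would need the additional (also standard) fact that those particular ordered monomials in the $E_i$ are independent, which follows e.g.\ from the triangularity between them and the PBW basis. Your alternative reference to Ringel's Hall-algebra realization is also apt and closest in spirit to the geometric viewpoint of the surrounding chapter.
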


\begin{proof}
Let $\a=\sum_{s=1}^rm_{i_s, j_s} [i_s, j_s]$ be a multisegment of weight $\alpha$, note that 
here we identify the weight $\varphi_{i}$ with $\alpha_i$, and that 
\[
 [i_1, j_1]\unlhd \cdots \unlhd [i_r, j_r] ( \text{ cf. Def. \ref{def: 7.2.4}} )
\]
Then we associate to $\a$ the element
\[
 (E_{j_1}\cdots E_{i_1})\cdots (E_{j_r}\cdots E_{i_r}).
\] 
\end{proof}

\begin{notation}
For $x\in U^+$ be an element of degree $\alpha$, we will denote $\wt(x)=\alpha$. 
\end{notation}

\begin{example}
For $i\leq j$, let $\alpha_{ij}=\alpha_i+\cdots+\alpha_j$. Consider the homogeneous components 
of $U^+$ with weight $\alpha=2\alpha_{12}$, whose basis is given by 
 \[
  E_1E_2E_1E_2, ~E_1E_1E_2E_2. 
 \]
The element $ E_1E_2E_1E_2$ is parametrized by the 
multisegment $[1]+[1,2]+[2]$, while $E_1E_1E_2E_2$ is parametrized
by the multisegment $2[1]+2[2]$.
\end{example}

In \cite{Lu}, Lusztig has defined certain bases for $U_q^+$
associated to the orientations of a Dynkin diagram, called PBW( Poincar\'e-Birkhoff-Witt) basis, 
which
specializes to the classical PBW type bases.
Following \cite{LNT}, we describe the PBW-basis

\begin{definition}
We define 
\[
 E([i])=E_i, ~E([i, j])=[E_j[\cdots [E_{i+1}, E_i]_{q^{1/2}}\cdots ]_{q^{1/2}}  ]_{q^{1/2}},
\]
where $[x, y]_{q^{1/2}}=xy-q^{-1/2(\wt(x), \wt(y))}yx$.  
More generally, let 
$\a=\sum_s a_{i_s, j_s}[i_s, j_s]$ be a multisegment, such that 
\[
 [i_1, j_1]\unlhd \cdots \unlhd [i_r, j_r](  \text{ cf. Def. \ref{def: 7.2.4}}), 
\]
we define 
\[
 E(\a)=\frac{1}{\prod_s [a_{i_s,j_s}]_{q^{1/2}}!}E([i_1,j_1])^{a_{i_1, j_1}}\cdots E([i_r, j_r])^{a_{i_r,j_r}},
\]
here $[m]_{q^{1/2}}=\frac{q^{1/2m}-q^{-1/2m}}{q^{1/2}-q^{-1/2}}$ for $m\in \Z$ and 
$[m]_{q^{1/2}}!=[m]_{q^{1/2}}[m-1]_{q^{1/2}}\cdots [2]_{q^{1/2}}$.

\end{definition}

\begin{definition}
Let $x\mapsto \line{x}$ be the involution defined as the unique ring  automorphism of $U_q^+$ defined by 
\[
 \line{q^{1/2}}=q^{-1/2},~ \line{E_i}=E_i.
\]
\end{definition}

\begin{prop}(cf. \cite{Lu})
Let $\mathcal{L}:=\bigoplus_{\a\in \O}\Z[q^{1/2}]E(\a)\subseteq U_q^+$. Then 
there exists a unique $\Q(q^{1/2})$-basis $\{G(\a): \a\in \O\}$ of $U_q^+$ such 
that 
\[
 \line{G(\a)}=G(\a), ~ G(\a)=E(\a) \text{ modulo }q^{1/2}\mathcal{L}.
\]
This is called Lusztig's canonical basis. 
\end{prop}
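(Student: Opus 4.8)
The plan is to deduce the proposition from Lusztig's structural results on PBW bases (\cite{Lu}) together with the standard Kazhdan--Lusztig existence-and-uniqueness lemma. First I would reduce everything to a fixed weight. Since $U_q^+$ is $\N^{(\Z)}$-graded with finite-dimensional homogeneous components, and since for each $\alpha\in\N^{(\Z)}$ the elements $E(\a)$ with $\wt(\a)=\alpha$ form a $\Q(q^{1/2})$-basis of the component $U_q^+[\alpha]$ (Lusztig's PBW theorem), it suffices to work inside the finite-dimensional space $U_q^+[\alpha]$ with the semilinear involution $x\mapsto\line{x}$ and the lattice $\mathcal{L}[\alpha]=\bigoplus_{\wt(\a)=\alpha}\Z[q^{1/2}]E(\a)$. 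In particular the set $\{\a:\wt(\a)=\alpha\}$ is finite, so there are no infinite descending chains and any partial order on it will support the inductive construction.

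The main step is to establish that $\line{\ }$ is unitriangular on the PBW basis: for every multisegment $\a$,
\[
\line{E(\a)}=E(\a)+\sum_{\b\sqsubset\a}r_{\a,\b}(q^{1/2})\,E(\b),\qquad r_{\a,\b}\in\Z[q^{1/2},q^{-1/2}],
\]
where $\sqsubseteq$ is a suitable partial order on multisegments of weight $\wt(\a)$ (one checks that, up to orientation, it refines the order of Notation~\ref{nota: 1.3.2}). For a single segment $[i,j]$ this is a direct computation: as $\line{\ }$ fixes each $E_m$ and sends $q^{1/2}$ to $q^{-1/2}$, one has $\line{E([i,j])}=[E_j,[\cdots[E_{i+1},E_i]_{q^{-1/2}}\cdots]_{q^{-1/2}}]_{q^{-1/2}}$, and repeatedly applying the quantum Serre relation $E_m^2E_{m\pm1}-(q^{1/2}+q^{-1/2})E_mE_{m\pm1}E_m+E_{m\pm1}E_m^2=0$ together with $E_mE_{m'}=E_{m'}E_m$ for $|m-m'|>1$ rewrites $\line{E([i,j])}-E([i,j])$ as a $\Z[q^{1/2},q^{-1/2}]$-combination of PBW monomials $E(\b)$ with $\b\sqsubset[i,j]$. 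The general case follows by expanding $\line{E(\a)}$ as a product of the bars of the divided powers $E([i_s,j_s])^{a_{i_s,j_s}}$, inserting the single-segment computation in each factor, and then straightening the resulting non-standard products back into the PBW basis via the commutation identities among the $E([i,j])$; each straightening step preserves the weight and produces only monomials $\sqsubset$ the original one, so an induction on $\sqsubseteq$ closes the argument. I would cite \cite{Lu} for the precise straightening identities rather than reproduce them.

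With unitriangularity in hand, I would invoke the abstract lemma: if $M$ is a free $\Z[q^{1/2},q^{-1/2}]$-module with basis $(E(\a))$ indexed by a poset in which every element has finitely many predecessors, and $\line{\ }$ is a semilinear involution with $\line{E(\a)}\in E(\a)+\sum_{\b\sqsubset\a}\Z[q^{1/2},q^{-1/2}]E(\b)$, then there is a unique family $G(\a)\in E(\a)+\sum_{\b\sqsubset\a}q^{1/2}\Z[q^{1/2}]E(\b)$ with $\line{G(\a)}=G(\a)$, and automatically $G(\a)\equiv E(\a)$ modulo $q^{1/2}\mathcal{L}$. Existence is proved by induction on $\sqsubseteq$, at each stage rewriting $\line{E(\a)}$ in the already-constructed basis $(G(\b))_{\b\sqsubset\a}$, checking that the resulting coefficients $p$ satisfy $p+\line{p}=0$, and using that such a $p\in\Z[q^{1/2},q^{-1/2}]$ admits a unique $q\in q^{1/2}\Z[q^{1/2}]$ with $q-\line{q}=p$; uniqueness is the familiar argument that two bar-invariant elements congruent modulo $q^{1/2}\mathcal{L}$ coincide. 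Applying this weight by weight with $M=\bigoplus_{\a}\Z[q^{1/2},q^{-1/2}]E(\a)$ produces the canonical basis $\{G(\a)\}$ of $U_q^+$.

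I expect the genuinely hard part to be the unitriangularity of $\line{\ }$ — i.e. controlling the straightening of products of the $E([i,j])$ into the PBW basis and verifying that every lower term is strictly below $\a$ for the chosen order. This is exactly where one leans on Lusztig's theory of PBW bases, and I would defer to \cite{Lu} for it. By contrast, the reduction to finite-dimensional weight spaces and the abstract existence/uniqueness step are routine.
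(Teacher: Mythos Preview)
The paper does not give a proof of this proposition at all: it is stated as a result of Lusztig with the citation ``(cf.\ \cite{Lu})'' and no argument is supplied. Your proposal is a correct sketch of the standard proof (reduce to finite-dimensional weight spaces, use Lusztig's unitriangularity of the bar involution on the PBW basis, then apply the abstract Kazhdan--Lusztig existence/uniqueness lemma), and indeed this is essentially how Lusztig proves it in \cite{Lu}. So your approach is fine, but there is nothing to compare it against in the paper itself.
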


Lusztig also gave a geometric description 
of his canonical basis in terms of 
the orbital varieties $\line{O}_{\a}$.

\begin{definition}
Let $\A$ be the group ring of $\line{\Q}^*_{\l}$ over $\Z$.
Let $\K_{\varphi}$ be the Grothendieck group 
over $\A$ of the category of constructible, $G_{\varphi}$-equivariant
$\Q_{\l}$ sheaves over $E_{\varphi}$,  considered as a variety over a 
finite field $\mathbb{F}_q$. 
\end{definition}

\begin{lemma}(cf. \cite{Lu})
The $\A$-module $K_{\varphi}$ admits  a basis  $\{\gamma_{\a}: \a\in S(\varphi)\}$
indexed by the $G_{\varphi}$ orbits of $E_{\varphi}$, where $\gamma_{\a}$ corresponds 
to the constant  sheaf $\line{\Q}_{\l}$ on the orbit $O_{\a}$, extending by $0$ to the complement.
\end{lemma}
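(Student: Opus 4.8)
The plan is to argue by dévissage along the stratification of $E_{\varphi}$ by the $G_{\varphi}$-orbits $O_{\a}$, which is finite and indexed by $S(\varphi)$ by proposition \ref{prop: 2.2.4}, with closure relations $\line{O}_{\b}=\coprod_{\a\geq\b}O_{\a}$. The one input I would take from the type-$A$ situation is that the stabilizer in $G_{\varphi}$ of any point $T\in E_{\varphi}$ is connected: it is the group of units of the endomorphism algebra $\End(T)$ of the corresponding quiver representation, which is Zariski-open in the affine space underlying $\End(T)$, hence connected. Consequently every $G_{\varphi}$-equivariant $\line{\Q}_{\l}$-local system $\mathcal{L}_{\a}$ on an orbit $O_{\a}$ is constant, i.e. isomorphic to $\line{\Q}_{\l}^{\oplus r}$ for some $r$, possibly tensored with a one-dimensional Weil sheaf on $\Spec\mathbb{F}_q$; in particular its class in $K_{\varphi}$ lies in $\A\,\gamma_{\a}$.

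To show that the $\gamma_{\a}$ generate $K_{\varphi}$ over $\A$, I would first note that any $G_{\varphi}$-equivariant constructible sheaf $\F$ on $E_{\varphi}$ is automatically locally constant on each orbit (its stalks are permuted transitively by $G_{\varphi}$), so $\F$ is constructible for the orbit stratification. Then I would run the usual dévissage: filtering $E_{\varphi}$ by the $G_{\varphi}$-stable closed subsets $Y_{n}=\bigcup_{\dim O_{\a}\leq n}O_{\a}$, and using at each step the short exact sequence of sheaves
\[
 0\to j_{!}j^{*}\F\to \F\to i_{*}i^{*}\F\to 0
\]
for the open immersion $j$ of a top-dimensional stratum and the closed complement $i$, one obtains in $K_{\varphi}$ an expression $[\F]=\sum_{\a}[j_{\a!}\mathcal{L}_{\a}]$ with each $\mathcal{L}_{\a}$ a $G_{\varphi}$-equivariant local system on $O_{\a}$ and $j_{\a}\colon O_{\a}\hookrightarrow E_{\varphi}$ the inclusion. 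By the connectedness remark each $[j_{\a!}\mathcal{L}_{\a}]\in\A\,\gamma_{\a}$, so $[\F]\in\bigoplus_{\a}\A\,\gamma_{\a}$, proving generation.

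For linear independence I would use that taking a stalk is exact. Fix for each $\a$ a point $x_{\a}\in O_{\a}(\mathbb{F}_q)$, available since $O_{\a}$ is a single orbit of the connected split group $G_{\varphi}$ and contains an $\mathbb{F}_q$-rational Jordan-form representative. The stalk functor $\F\mapsto\F_{x_{\a}}$ is exact and hence descends to an $\A$-linear map $K_{\varphi}\to\A$, the Grothendieck group of Weil sheaves on $\Spec\mathbb{F}_q$ being $\A$; it sends $\gamma_{\b}$ to $\delta_{\a\b}$, since $\gamma_{\b}$ is extended by zero off $O_{\b}$ and is the constant sheaf with trivial Frobenius on $O_{\b}$. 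Assembling these maps over all $\a$ gives $K_{\varphi}\to\bigoplus_{\a}\A$ carrying $\gamma_{\b}$ to the $\b$-th standard vector, which forces the $\gamma_{\a}$ to be $\A$-linearly independent; combined with generation this yields the asserted basis. The main obstacle — the only point where anything beyond formal dévissage is needed — is the connectedness of the stabilizers: without it the natural basis of $K_{\varphi}$ would be indexed by pairs consisting of an orbit and an irreducible equivariant local system, and it is precisely the type-$A$ phenomenon that collapses this index set to $S(\varphi)$.
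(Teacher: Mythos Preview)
Your argument is correct and complete. The paper does not supply its own proof of this lemma: it is stated with a citation to Lusztig \cite{Lu} and used as a black box. What you have written is the standard d\'evissage argument that underlies the result, and the one genuinely non-formal point --- connectedness of stabilizers, hence triviality of the equivariant local systems on each orbit --- is exactly the type-$A$ input needed to reduce the index set from pairs (orbit, irreducible local system) to $S(\varphi)$ alone. There is nothing to compare against in the paper itself; your proof is a faithful unpacking of the cited result.
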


\begin{definition}\label{def: 7.4.12}
Let $\varphi=\varphi_1+\varphi_2\in \mathcal{S}$.
We define a diagram of varieties
\addtocounter{theo}{1}
\begin{equation}\label{di: 7.3}
\xymatrix
{
E_{\varphi_1}\times E_{\varphi_2}& E'{\ar[l]_{\hspace{0.8cm}\beta}}\ar[r]^{\beta'}&E''\ar[r]^{\beta''}&E_{\varphi},
}
\end{equation}
where 
\begin{align*}
 E'':=&\{(T, W): W=\bigoplus W_i,~ W_i \subseteq V_{\varphi, i},~ T(W_i)\subseteq W_{i+1}, ~\dim(W_i)=\varphi_2(i)\},\\
 E':=& \{(T, W, \mu, \mu'): (T, W)\in E'',~ \mu: W\simeq V_{\varphi_2}, ~\mu': V_{\varphi}/W\simeq V_{\varphi_1}\},
\end{align*}
and 
\[
 \beta''((T, W))=W, ~ \beta'((T, W, \mu, \mu'))=(T, W), ~\beta((T, W, \mu, \mu'))=(T_1, T_2),
\]
such that 
\[
 T_1=\mu' \circ T\circ \mu'^{-1}, ~ T_2=\mu\circ T\circ \mu^{-1}.
\]
\end{definition}

\begin{prop}(cf. \cite{Lu})
The group $G_{\varphi}\times G_{\varphi_1}\times G_{\varphi_2}$ 
acts naturally on the varieties in the diagram (\ref{di: 7.3})
with $G_{\varphi}$ acting trivially on $E_{\varphi_1}\times E_{\varphi_2}$
and $G_{\varphi_1}\times G_{\varphi_2}$ acting trivially on $E_{\varphi}$.
And all the maps there are compatible with such actions. Moreover,
we have 
\begin{description}
 \item[(1)]The morphism $\beta'$ is 
 a principle $G_{\varphi_1}\times G_{\varphi_2}$-fibration.
 \item[(2)]The morphism $\beta$ is a locally trivial trivial 
 fibration with smooth connected fibers.
 \item[(3)]The morphism $\beta''$ is proper.
\end{description} 
\end{prop}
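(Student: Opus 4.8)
The statement to be proved asserts three structural properties of the correspondence diagram \eqref{di: 7.3}, namely that $G_\varphi\times G_{\varphi_1}\times G_{\varphi_2}$ acts naturally and the maps are equivariant, that $\beta'$ is a principal $G_{\varphi_1}\times G_{\varphi_2}$-bundle, that $\beta$ is a locally trivial fibration with smooth connected fibres, and that $\beta''$ is proper. My plan is to verify each of these in turn directly from the explicit descriptions of $E'$, $E''$ in Definition \ref{def: 7.4.12}, following Lusztig \cite{Lu}. First I would write down the group actions: $G_\varphi$ acts on $E''$ by $g\cdot(T,W)=(gTg^{-1},gW)$ (this makes sense because $gW$ is again a graded subspace stable under $gTg^{-1}$ of the right graded dimensions), and on $E'$ by additionally transporting the trivialisations $\mu,\mu'$ via $g$; the factor $G_{\varphi_1}\times G_{\varphi_2}$ acts on $E'$ only, by post-composing $\mu'$ with $G_{\varphi_1}$ and $\mu$ with $G_{\varphi_2}$, and acts on $E_{\varphi_1}\times E_{\varphi_2}$ by the conjugation action on each factor. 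Equivariance of $\beta,\beta',\beta''$ is then a routine check from the formulas $T_1=\mu'\circ T\circ\mu'^{-1}$, $T_2=\mu\circ T\circ\mu^{-1}$: conjugating $T$ by $g\in G_\varphi$ and replacing $\mu'$ by $\mu'\circ g^{-1}$ leaves $T_1$ unchanged when we also act on $E_{\varphi_1}$, etc.

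For (1), the point is that $\beta'$ forgets the pair of isomorphisms $(\mu,\mu')$; over a fixed $(T,W)\in E''$ the fibre is exactly $\mathrm{Isom}(W,V_{\varphi_2})\times\mathrm{Isom}(V_\varphi/W,V_{\varphi_1})$, which is a torsor under $G_{\varphi_2}\times G_{\varphi_1}$ acting simply transitively (since $W=\bigoplus W_i$ and $V_\varphi/W$ are graded of dimensions $\varphi_2$ and $\varphi_1$ respectively). Local triviality follows because the data of such trivialisations can be chosen Zariski-locally on $E''$ — one exhibits local sections by choosing graded bases of $W$ and of a graded complement — and a morphism with a free transitive group action admitting local sections is a principal bundle. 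For (3), $\beta''(T,W)=W$ lands in $E_\varphi$ — wait, more precisely $\beta''((T,W))$ should be read as the underlying $T\in E_\varphi$; the fibre over $T$ is the set of graded $T$-stable subspaces $W$ of the prescribed dimension vector, which is a closed subvariety of the product of Grassmannians $\prod_i \mathrm{Gr}(\varphi_2(i),V_{\varphi,i})$, hence projective; therefore $\beta''$ is proper as it factors as a closed immersion $E''\hookrightarrow E_\varphi\times\prod_i\mathrm{Gr}(\varphi_2(i),V_{\varphi,i})$ followed by the projection to $E_\varphi$, which is proper because the Grassmannian factor is projective.

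For (2), I would argue that $\beta$ is obtained by composing $\beta'^{-1}$ (locally) with the map sending $(T,W,\mu,\mu')$ to $(T_1,T_2)$; concretely one shows $\beta$ is surjective and that all its fibres are isomorphic to a fixed smooth connected variety. The fibre over a pair $(T_1,T_2)\in E_{\varphi_1}\times E_{\varphi_2}$ consists of the choices of a graded $T$-stable $W$ with $T|_W\cong T_2$, $T|_{V_\varphi/W}\cong T_1$, together with the identifying isomorphisms; up to the $G_{\varphi_1}\times G_{\varphi_2}$-action this is the affine space of extensions, i.e. it is an iterated vector-bundle-like object and in particular smooth, irreducible and connected. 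To make this precise I would use that $E'$ is itself smooth (it fibres over $E''$, which in turn fibres over $\prod_i\mathrm{Gr}(\varphi_2(i),V_{\varphi,i})$ with affine-space fibres, and the Grassmannians are smooth), and that $\beta$ is a smooth morphism of smooth varieties with irreducible fibres of constant dimension $\dim E'-\dim(E_{\varphi_1}\times E_{\varphi_2})$; local triviality then follows from generic smoothness together with homogeneity of the target under $G_{\varphi_1}\times G_{\varphi_2}$, or simply by citing the corresponding statement in \cite{Lu}. The main obstacle is the careful bookkeeping in (2): checking that the fibres of $\beta$ are connected and that $\beta$ is genuinely \emph{locally trivial} (not merely smooth with nice fibres) requires either an explicit local trivialisation or an appeal to the fact that a smooth surjective morphism onto a variety that is homogeneous under a connected group, with connected fibres, is locally trivial in the relevant topology — this is where I would lean most heavily on Lusztig's original treatment rather than reprove everything.
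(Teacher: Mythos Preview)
The paper does not give its own proof of this proposition: it is stated with the citation ``(cf.\ \cite{Lu})'' and immediately followed by an example and a corollary, so the intended proof is simply a reference to Lusztig's original treatment. Your outline is correct and is precisely the standard argument one finds in \cite{Lu}: the equivariance checks, the identification of the fibre of $\beta'$ with the product of graded isomorphism sets (hence a $G_{\varphi_1}\times G_{\varphi_2}$-torsor), the properness of $\beta''$ via the closed embedding $E''\hookrightarrow E_\varphi\times\prod_i\mathrm{Gr}(\varphi_2(i),V_{\varphi,i})$, and the smoothness/connectedness of the fibres of $\beta$ by the extension-space description. You also correctly caught that the paper's formula $\beta''((T,W))=W$ is a typo for $\beta''((T,W))=T$; your reading is the right one. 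The only place where your argument is genuinely sketchy is the passage from ``smooth surjective with connected fibres onto a homogeneous target'' to ``locally trivial'' in part (2), and you rightly flag this as the step where one leans on \cite{Lu} rather than reproving it; since the paper itself does exactly that, your treatment is at least as complete as the paper's.
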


\begin{example}\label{ex: 7.4.14}
Let $\varphi_1=\chi_{1}, ~\varphi_{2}=\chi_2$. Then $\varphi=\chi_1+\chi_2$
and 
\[
 E_{\varphi_1}=E_{\varphi_2}=0, ~ E_{\varphi}=\line{\mathbb{F}}_{q}.
\]
Moreover, we have 
\[
 E''=\{(T, W): W=V_{\varphi_2}, T\in \line{\mathbb{F}}_q\}\simeq \line{\mathbb{F}}_q, 
\]
and 
\[
 E'=\{(T, W, \mu, \mu'): (T, W)\in E'', \mu, \mu'\in \line{\mathbb{F}}_q^{\times}\}
 \simeq \line{\mathbb{F}}_q\times (\line{\mathbb{F}}_q^{\times})^2.
\]
\end{example}

\begin{cor}(cf. \cite{Lu})\label{cor: 7.4.15}
Let $\a\in \O(\varphi_1), \a'\in \O(\varphi_2)$.
There exists a simple perverse sheaf( up to shift )  $\mathcal{P}$ such that 
\[
  \beta^*(IC(\line{O}_{\a})\otimes IC(\line{O}_{\a'}))=\beta'^{*}(\mathcal{P}). 
\]
\end{cor}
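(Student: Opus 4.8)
The statement to be proven, Corollary \ref{cor: 7.4.15}, is a direct consequence of the geometric properties of the diagram (\ref{di: 7.3}) recalled just above, so the plan is to extract it from these properties rather than to reprove anything substantive. First I would form the complex $K := IC(\line O_{\a}) \boxtimes IC(\line O_{\a'})$ on $E_{\varphi_1}\times E_{\varphi_2}$; since $IC(\line O_{\a})$ and $IC(\line O_{\a'})$ are simple $G_{\varphi_1}$- and $G_{\varphi_2}$-equivariant perverse sheaves (up to shift), $K$ is a simple $(G_{\varphi_1}\times G_{\varphi_2})$-equivariant perverse sheaf on the product, again up to shift. Then I would pull it back along $\beta$. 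By part (2) of the proposition, $\beta$ is a locally trivial fibration with smooth connected fibers; pullback along such a morphism, up to the shift by the (constant) fiber dimension, sends simple perverse sheaves to simple perverse sheaves, so $\beta^{*}K$ is a simple perverse sheaf on $E'$, up to shift. Moreover it is equivariant for the whole group $G_{\varphi}\times G_{\varphi_1}\times G_{\varphi_2}$, since $\beta$ is equivariant and $G_{\varphi}$ acts trivially on the source.

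The crux is then the descent along $\beta'$. By part (1) of the proposition, $\beta'$ is a principal $G_{\varphi_1}\times G_{\varphi_2}$-bundle. Because $\beta^{*}K$ is $(G_{\varphi_1}\times G_{\varphi_2})$-equivariant — indeed it is pulled back from $E_{\varphi_1}\times E_{\varphi_2}$, on which $G_{\varphi}$ acts trivially and whose quotient stack maps to that of $E'$ — standard descent for equivariant sheaves along a principal bundle (equivalently, the equivalence between $(G_{\varphi_1}\times G_{\varphi_2})$-equivariant sheaves on $E'$ and sheaves on $E''$) produces a complex $\mathcal P$ on $E''$, unique up to isomorphism, with $\beta'^{*}\mathcal P \simeq \beta^{*}K$. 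Since $\beta'$ is smooth surjective with connected fibers of constant dimension, $\beta'^{*}$ (up to the appropriate shift) reflects the properties "perverse" and "simple": hence $\mathcal P$ is itself a simple perverse sheaf up to shift. This is exactly the assertion of the corollary, with $\mathcal P$ the descended object and the shift absorbed into the phrase "up to shift".

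\textbf{Main obstacle.} The only genuinely delicate point is the descent step: one must know that $\beta^{*}\bigl(IC(\line O_{\a})\boxtimes IC(\line O_{\a'})\bigr)$ carries a $(G_{\varphi_1}\times G_{\varphi_2})$-equivariant structure compatible with the $\beta'$-action, and that descent along the principal bundle $\beta'$ is effective on the (equivariant) derived category. Both facts are part of Lusztig's framework in \cite{Lu} — indeed the whole diagram (\ref{di: 7.3}) was set up precisely so that this descent goes through — so in the write-up I would simply cite \cite{Lu} for the existence and simplicity of $\mathcal P$, after spelling out that $\beta^{*}K$ is $\beta'$-equivariant because it is pulled back from a space on which the relevant group acts through its quotient. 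The remaining verifications (compatibility of shifts; that pullback along a smooth fibration with connected fibers preserves and reflects simple perversity) are routine and I would state them without detailed proof.
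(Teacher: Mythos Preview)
Your proposal is correct and follows exactly the standard argument behind this result. Note that the paper does not give its own proof here: the corollary is stated with a reference to \cite{Lu} and is followed only by an illustrative example, so there is no in-paper argument to compare against. What you have written is precisely the mechanism Lusztig uses --- simple perverse sheaf on the product, pullback along the smooth-with-connected-fibers map $\beta$ stays simple perverse up to shift, equivariance gives descent along the principal bundle $\beta'$, and smooth pullback reflects simplicity --- so your write-up would serve as an accurate expansion of the citation.
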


\begin{example}
As in example \ref{ex: 7.4.14}, 
let $\a=\{[1]\}, \a'=\{[2]\}$, then 
\[
 IC(\line{O}_{\a})=\line{\Q}_{\l}, ~IC(\line{O}_{\a'})=\line{\Q}_{\l}.
\]
Hence if we let 
\[
 \mathcal{P}=\line{\Q}_{\l},
\]
then 
\[
 \beta^{*}(IC(\line{O}_{\a})\otimes IC(\line{O}_{\a'}))=\beta''^{*}(\mathcal{P}).
\]

\end{example}

\begin{definition}
 We define a multiplication  
\[
  IC(\line{O}_{\a})\star IC(\line{O}_{\a'})=\beta''_{*}(\mathcal{P}).
 \]
\end{definition}

\begin{example}
As in the example \ref{ex: 7.4.14}, we have 
\[
 IC(\line{O}_{\a})\star IC(\line{O}_{\a'})=\beta''_{*}(\mathcal{P})=IC(E_{\varphi}),
\]
note that here $\beta''$ is an isomorphism. 
\end{example}

\begin{prop}(cf. \cite{Lu})
 Let $\a\in \O(\varphi_1), \a'\in \O(\varphi_2)$.
We associate to the intersection cohomology complex $IC(\line{O}_{\a})$
\[
 \tilde{\gamma}_{\a}=\sum_{\b\geq \a}p_{\b, \a}(q)\gamma_{\b},
\]
where $p_{\b, \a}(q)$ is the formal alternative sum 
of eigenvalues of the Frobenius map on the stalks of the cohomology sheaves 
of $IC(\line{O}_{\a})$ at any $\mathbb{F}_q$ rational point of $O_{\b}$. 
Moreover, the multiplication $\star$ gives a  $\A$-bilinear map 
\[
 \K_{\varphi_1}\times \K_{\varphi_2}\rightarrow \K_{\varphi},
\]
which defines an associative algebra structure over $\K=\bigoplus_{\varphi}\K_{\varphi}$. 
\end{prop}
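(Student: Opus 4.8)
\textit{Remark:~}The statement to be proved is Lusztig's theorem on the geometric realization of the canonical basis together with the fact that $\star$ defines an associative algebra structure on $\K=\bigoplus_\varphi \K_\varphi$. Since this is a recollection of Lusztig's work \cite{Lu}, the ``proof'' here is really a guided tour through the construction; I sketch how I would organise it.

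The plan is to proceed in three stages. First I would establish that the element $\tilde\gamma_\a$ is well defined: this amounts to the observation that the complex $IC(\line O_\a)$ is $G_\varphi$-equivariant and constructible with respect to the orbit stratification $\mathfrak H_\a=\{O_\b: \b\geq\a\}$, so its class in $\K_\varphi$ is a well-defined $\A$-linear combination $\sum_{\b\geq\a}p_{\b,\a}(q)\gamma_\b$ of the basis elements $\gamma_\b$, where $p_{\b,\a}$ records the trace of Frobenius on the stalk cohomology at an $\mathbb F_q$-point of $O_\b$ (this trace is independent of the chosen point because the cohomology sheaves are locally constant on each stratum, cf.\ the remark after the definition of $IC(Y,L)$). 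The triangularity $p_{\a,\a}=1$ and $p_{\b,\a}=0$ unless $\b\geq\a$ is immediate from the support condition $\line O_\a=\bigsqcup_{\b\geq\a}O_\b$.

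Second, I would construct the bilinear map $\K_{\varphi_1}\times\K_{\varphi_2}\to\K_\varphi$ and verify it is $\A$-bilinear. The input is the diagram (\ref{di: 7.3}). Given $G_{\varphi_i}$-equivariant complexes $\mathcal F_i$ on $E_{\varphi_i}$, the external tensor product $\mathcal F_1\boxtimes\mathcal F_2$ on $E_{\varphi_1}\times E_{\varphi_2}$ pulls back along $\beta$; since $\beta$ is a locally trivial fibration with smooth connected fibers (part (2) of the cited proposition) and $\beta'$ is a principal $G_{\varphi_1}\times G_{\varphi_2}$-bundle (part (1)), there is a unique (up to shift) complex $\mathcal P$ on $E''$ with $\beta^*(\mathcal F_1\boxtimes\mathcal F_2)=\beta'^*\mathcal P$ — this is exactly corollary \ref{cor: 7.4.15} in the perverse case, and the same descent works at the level of Grothendieck groups. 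Then $\beta''_*\mathcal P$ makes sense because $\beta''$ is proper (part (3)), and one checks that passing to classes in the respective $\K$'s is compatible with pullback (flat base change / smooth pullback) and proper pushforward, so the map $[\mathcal F_1]\otimes[\mathcal F_2]\mapsto[\beta''_*\mathcal P]$ is well defined and $\A$-bilinear. This gives the product $\star$ on $\K=\bigoplus_\varphi\K_\varphi$.

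Third, I would prove associativity. The standard device is to set up, for a three-fold decomposition $\varphi=\varphi_1+\varphi_2+\varphi_3$, the iterated flag variety $E'''$ parametrizing $(T,W'\subseteq W)$ with $W/W'$, $W'$, and $V_\varphi/W$ of the prescribed graded dimensions, together with its forgetful maps to the two ways of bracketing the diagram (\ref{di: 7.3}). One then shows both $(\mathcal F_1\star\mathcal F_2)\star\mathcal F_3$ and $\mathcal F_1\star(\mathcal F_2\star\mathcal F_3)$ are computed by pushing forward the ``same'' descended complex from $E'''$, using that the relevant squares are Cartesian with one leg a fibration (so smooth base change applies) and the other proper (so proper base change applies); the two composites of (smooth pullback, proper pushforward) then agree. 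The main obstacle — and the only genuinely delicate point — is this base-change bookkeeping: one must check carefully that each square occurring in the comparison is Cartesian and that in each the correct leg is proper or smooth so that the base-change isomorphisms are available, and that the descent defining $\mathcal P$ is functorial enough for the two bracketings to be identified. Everything else (equivariance, constructibility, the triangular shape of $\tilde\gamma_\a$) is formal. As all of this is carried out in \cite{Lu}, I would cite it for the full verification and only indicate the diagram $E'''$ and the base-change pattern here.
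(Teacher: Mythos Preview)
Your sketch is correct and, in fact, goes further than the paper does: this proposition is stated in the paper purely as a citation of Lusztig \cite{Lu} with no proof supplied, so there is no ``paper's own proof'' to compare against. The three-stage outline you give (well-definedness of $\tilde\gamma_\a$ from equivariance and constructibility along the orbit stratification; construction of the $\A$-bilinear product via smooth pullback, descent along the principal bundle $\beta'$, and proper pushforward along $\beta''$; associativity via the iterated flag variety $E'''$ and smooth/proper base change) is exactly Lusztig's argument, and is the appropriate way to unpack the citation.
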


To relate the algebra $K$ and $U^{\geq 0}$
\begin{prop}(\cite{Lu} Prop. 9.8, Thm. 9.13)\label{prop: 7.4.20}

\begin{itemize}
 \item
The elements $\gamma_{i}:=\gamma_{[i]}$ for all $i\in \Z$
generate the algebra $\K$ over $\A$.
 
 \item Let $U^{\geq 0}_{\A}=U_q^{\geq 0}\otimes_{\Z} \A$. 
 Then we have a unique $\A$-algebra morphism 
 $\Gamma: \K\rightarrow U_{\A}^{\geq 0}$ such that 
 \[
  \Gamma(\gamma_{j})=K_j^{-j}E_{j}; 
 \]
for all $j\in \Z$.  Moreover, for $\varphi\in \mathcal{S}$, let
\[
 S(\varphi)=\sum_{i\in \Z}(\varphi(i)-1)\varphi(i)/2- \sum_{i\in \Z}\varphi(i)\varphi(i+1).
\]
Then there is an $\A$-linear map $\Theta: K_{\varphi}\rightarrow U_{\A}^{+}$, such that 
\[
 \Gamma(\xi)=q^{1/2S(\varphi)}K(\varphi)\Theta(\xi),
\]
where $K(\varphi)=\prod_{i\in \Z}K_{i}^{-i\varphi(i)}$. 
 \item 
 We have 
 \[
 \Gamma(\gamma_{\c})=
 q^{1/2(r-\delta_{\c})}K(\varphi_{\c})E(\c), 
 \]
where 
\[
 r=\sum_{i}\varphi_{\c}(i)(\varphi_{\c}(i)-1)(2i-1)/2-\sum_{i}i\varphi_{\c}(i-1)\varphi_{\c}(i), 
\]
and $\delta_{\c}$ is the co-dimension of the orbit $O_{\c}$ in $E_{\varphi_{\c}}$.

 \item We have 
 \[
  \Theta(\gamma_{\a})=q^{1/2\dim(O_{\a})}E(\a),~ \Theta(\tilde{\gamma}_{\a})=q^{1/2\dim(O_{\a})}G(\a).
 \]
Hence 
\[
 G(\a)=\sum_{\b\geq \a}P_{\b, \a}(q)E(\b). 
\]

\end{itemize}
\end{prop}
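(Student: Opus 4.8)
The plan is to reduce everything to the convolution diagram of Definition \ref{def: 7.4.12} together with the decomposition theorem, following Lusztig \cite{Lu}.

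First I would establish that the classes $\gamma_i=\gamma_{[i]}$, $i\in\Z$, generate $\K$ over $\A$. Since $E_{\chi_i}$ is a point, $\gamma_i$ is the class of the constant sheaf on a point; fixing a well ordered form $\a=\{\Delta_1\preceq\cdots\preceq\Delta_r\}$ with $\Delta_s=[i_s,j_s]$ and iterating the product $\star$, the element $(\gamma_{j_1}\star\cdots\star\gamma_{i_1})\star\cdots\star(\gamma_{j_r}\star\cdots\star\gamma_{i_r})$ is the pushforward $\beta''_*$ of a shifted constant sheaf along the proper map $\beta''$. By the decomposition theorem this decomposes as a sum of shifted $IC$-sheaves of orbit closures; a dimension and support count shows that $IC(\line O_{\a})$ occurs with coefficient $1$ and every other summand is indexed by a $\b$ lying strictly above $\a$ in the closure order, i.e. strictly below $\a$ for the multisegment order. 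An induction on $|S(\a)|$ then expresses each $\gamma_\a$ as an $\A$-linear combination of iterated $\star$-products of the $\gamma_i$'s.

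Next I would check that the $\gamma_i$ satisfy, for $\star$, the defining relations of $U_q^{\geq 0}$: the commutation $\gamma_i\star\gamma_j=\gamma_j\star\gamma_i$ for $|i-j|>1$, where the relevant $E''$ is already smooth and $\beta''$ is an isomorphism or an affine bundle (as in Example \ref{ex: 7.4.14}), so the computation is immediate; and the quantum Serre relation for $|i-j|=1$, where $E''$ is a $\P^1$-bundle and the relation reduces to the Poincar\'e polynomial of $\P^1$, producing the factor $(q^{1/2}+q^{-1/2})$ after the correct Tate twist. Together with the generation statement this gives the existence and uniqueness of the $\A$-algebra morphism $\Gamma\colon\K\to U_{\A}^{\geq 0}$ with $\Gamma(\gamma_j)=K_j^{-j}E_j$. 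The map $\Theta$ is then extracted from $\Gamma$: as $\K$ is graded by weight ($\wt(\gamma_i)=\alpha_i$) and $\Gamma$ is homogeneous, on $\K_\varphi$ one has $\Gamma(\xi)=q^{1/2S(\varphi)}K(\varphi)\Theta(\xi)$ with $\Theta(\xi)\in U_{\A}^{+}$, the scalar $q^{1/2S(\varphi)}$ being forced by evaluating on a single monomial. The formula $\Gamma(\gamma_\c)=q^{1/2(r-\delta_\c)}K(\varphi_\c)E(\c)$ is then a bookkeeping of shifts between the geometric normalization of $\gamma_\c$ (constant sheaf on $O_\c$, no shift) and the divided-power PBW monomial $E(\c)$, done by induction from the degree-one case using associativity of $\star$; stripping the $K$-factor gives $\Theta(\gamma_\a)=q^{1/2\dim(O_\a)}E(\a)$.

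The delicate point, and the main obstacle, is the canonical-basis identity $\Theta(\tilde\gamma_\a)=q^{1/2\dim(O_\a)}G(\a)$. By definition $\tilde\gamma_\a=\sum_{\b\geq\a}p_{\b,\a}(q)\gamma_\b$ is the class of $IC(\line O_{\a})$, and I would verify the two properties characterizing $G(\a)$. Bar-invariance follows from Verdier self-duality of the intersection complex together with purity of its stalk cohomology --- available because, by Zelevinsky \cite{Z4}, $\line O_{\a}$ is locally isomorphic to a Schubert variety, so Theorem \ref{teo: 2.3.9} applies --- which translates, after the normalization $q^{1/2\dim(O_\a)}$, into invariance under $q^{1/2}\mapsto q^{-1/2}$. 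And $\tilde\gamma_\a\equiv\gamma_\a$ modulo $q^{1/2}\mathcal{L}$ because $IC(\line O_{\a})$ restricts to the constant sheaf on the open orbit $O_\a$, while the support and cosupport conditions put every contribution over a smaller stratum $O_\b$ ($\b>\a$) in strictly positive degree. By uniqueness of $G(\a)$ this pins down $\Theta(\tilde\gamma_\a)$, and re-expanding $IC(\line O_{\a})$ in the classes of constant sheaves of orbits yields $G(\a)=\sum_{\b\geq\a}P_{\b,\a}(q)E(\b)$, the $P_{\b,\a}(q)$ being genuine polynomials again by purity. Making the passage between the Frobenius-weight formalism over $\mathbb{F}_q$ and the shift conventions in the convolution diagram fully rigorous is exactly the content of \cite{Lu}, which I would cite for these last verifications.
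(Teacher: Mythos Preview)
The paper does not supply its own proof of this proposition: it is stated with the attribution ``(cf.\ \cite{Lu} Prop.~9.8, Thm.~9.13)'' and used as a black box, with no accompanying \texttt{proof} environment. Your outline is therefore not competing against an argument in the paper but rather sketching the content of the cited reference.

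That said, your sketch is a faithful summary of Lusztig's strategy: generation of $\K$ by the $\gamma_i$ via the decomposition theorem and induction on the orbit poset, verification of the quantum Serre relations through the $\P^1$-geometry in the $|i-j|=1$ case, extraction of $\Theta$ from $\Gamma$ by separating the $K$-factor, and the identification $\Theta(\tilde\gamma_\a)=q^{1/2\dim(O_\a)}G(\a)$ via the two characterizing properties of the canonical basis (bar-invariance from Verdier self-duality plus purity, and the congruence mod $q^{1/2}\mathcal{L}$ from the support conditions on $IC$). Your acknowledgment that the precise bookkeeping of Frobenius weights versus cohomological shifts is deferred to \cite{Lu} is appropriate, since that is where the actual work lies and the paper itself makes no attempt to reproduce it.
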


\begin{prop}\label{prop: 7.4.21}
The canonical basis of $U_q^+$ are almost orthogonal with 
respect to a scalar product introduced by Kashiwara \cite{K}, 
which are given by 
\[
 (E(\a), E(\b))=\frac{(1-q)^{\deg(\a)}}{\prod_{i\leq j}h_{a_{ij}}(q)}\delta_{\a, \b},
\]
where $\a=\sum_{i\leq j}a_{ij}[i,j]$,
$h_{k}(z)=(1-z)\cdots (1-z^k)$ and $\delta$ is the Kronecker symbol(\cite{LTV}).
And we have 
\[
 (G(\a), G(\b))=\delta_{\a, \b}\text{  mod }q^{1/2}\A.
\]

\end{prop}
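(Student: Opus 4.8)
The plan is to establish the first displayed formula on the geometric side, using Lusztig's category $\K_{\varphi}$ together with the isomorphism $\Theta$ of Proposition \ref{prop: 7.4.20}, and then to deduce the ``almost orthonormality'' of the canonical basis from triangularity and bar-invariance. Recall that Kashiwara's form is the unique symmetric bilinear form on $U_q^+$ with value $1$ on $1\otimes 1$ and satisfying the adjunction $(E_iu,v)=(u,\partial_i v)$, where $\partial_i$ is Kashiwara's $q$-skew derivation with $\partial_i(E_j)=\delta_{ij}$. By \cite{Lu} and \cite{LTV} this form corresponds, under $\Theta$ and up to the normalization twist $q^{1/2\dim O_{\a}}$ recorded in Proposition \ref{prop: 7.4.20}, to the natural pairing on $\K_{\varphi}$ obtained from alternating traces of Frobenius on $\mathrm{Ext}$-groups in the $G_{\varphi}$-equivariant derived category of $E_{\varphi}$; concretely, writing $\gamma_{\a}$ for the class of $(j_{\a})_!\line{\Q}_{\l}$ with $j_{\a}\colon O_{\a}\hookrightarrow E_{\varphi}$, this pairing sends $(\gamma_{\a},\gamma_{\b})$ to the (normalized) stacky point count of the quotient stack $[O_{\a}\cap O_{\b}/G_{\varphi}]$.

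First I would prove orthogonality. Since the orbits $O_{\a}$ form a stratification of $E_{\varphi}$, for $\a\neq\b$ with $\varphi_{\a}=\varphi_{\b}$ the intersection $O_{\a}\cap O_{\b}$ is empty, so the geometric pairing of $\gamma_{\a}$ and $\gamma_{\b}$ vanishes, and transporting this through $\Theta$ gives $(E(\a),E(\b))=0$, which accounts for the factor $\delta_{\a,\b}$. As an independent, purely algebraic route one can instead order the root vectors $E([i,j])$ by the total order $\unlhd$ of Definition \ref{def: 7.2.4}, invoke the Levendorskii--Soibelman straightening relations (the commutator of two root vectors, suitably $q$-deformed, is a $\Z[q^{\pm 1/2}]$-combination of ordered monomials in root vectors strictly between them), and conclude by induction on $\wt(\a)$ using the adjunction property of $(\cdot,\cdot)$ together with $\partial_i\big(E([i,j])\big)\in\Q(q^{1/2})\,E([i+1,j])$.

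Next I would compute the diagonal entries. For $\a=\sum_{i\leq j}a_{ij}[i,j]$ the stabilizer in $G_{\varphi}$ of a point $T\in O_{\a}$ has reductive quotient $\prod_{i\leq j}GL_{a_{ij}}$ (Proposition \ref{prop: 2.2.4}: the $a_{ij}$ Jordan blocks of type $[i,j]$ may be permuted), so the stacky count $(\gamma_{\a},\gamma_{\a})$ factors as $q^{\dim O_{\a}}$ times the product over $i\leq j$ of the mass $h_{a_{ij}}(q)^{-1}$ of a point modulo $GL_{a_{ij}}$, corrected by a power $(1-q)^{\deg(\a)}$ coming from the torus directions of the ambient space and from the normalizations of $\Theta$ and of Kashiwara's form. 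Unwinding the twists $q^{1/2S(\varphi)}$ and $q^{1/2\dim O_{\a}}$ of Proposition \ref{prop: 7.4.20} should then yield exactly
\[
(E(\a),E(\b))=\frac{(1-q)^{\deg(\a)}}{\prod_{i\leq j}h_{a_{ij}}(q)}\,\delta_{\a,\b}.
\]
Reconciling the geometric normalization with Kashiwara's is the step I expect to be the main obstacle; a useful cross-check is to verify by hand the two extreme cases $\a=[i,j]$ (one segment) and $\a=k[i]$, the latter reducing to the quantum-binomial identity $(E_i^k,E_i^k)=[k]_{q^{1/2}}!\cdot\prod_{m=1}^{k}(1-q^m)^{-1}$ up to a monomial, and then to quote \cite{LTV} for the precise normalization in general.

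Finally, for the second assertion I would combine the formula above with the triangularity $G(\a)=\sum_{\b\geq\a}P_{\b,\a}(q)E(\b)$ of Proposition \ref{prop: 7.4.20}, where $P_{\a,\a}=1$ and $P_{\b,\a}(q)\in q^{1/2}\Z[q^{1/2}]$ for $\b>\a$ by bar-invariance of $G(\a)$. Expanding $(G(\a),G(\b))=\sum_{\c}P_{\c,\a}(q)\,\overline{P_{\c,\b}(q)}\,(E(\c),E(\c))$ (only diagonal PBW terms contribute by orthogonality) and noting that each $h_{a_{ij}}(q)$ and each factor $(1-q)$ is congruent to $1$ modulo $q$, so that $(E(\c),E(\c))$ lies in $1+qA_0$ where $A_0$ is the subring of $\Q(q)$ of functions regular at $q=0$, one sees that modulo $q^{1/2}$ only the term $\c=\a=\b$ survives, with coefficient $1$; hence $(G(\a),G(\b))\equiv\delta_{\a,\b}$. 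This is precisely Kashiwara's lattice criterion \cite{K} for the global basis, which I would cite for the precise ring-theoretic formulation, the short computation above being the mechanism behind it.
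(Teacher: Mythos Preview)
The paper does not prove this proposition: it is stated with references to \cite{K} and \cite{LTV} and immediately followed by a notation introducing the dual bases. So there is no ``paper's proof'' to compare against; your sketch already goes well beyond what the text offers, and the overall strategy (orthogonality of the PBW basis from disjointness of orbits, diagonal norms from the stabilizer $\prod_{i\leq j}GL_{a_{ij}}$, then triangularity for the $G(\a)$) is the standard one and is sound.

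There is, however, one concrete slip in the last paragraph. Kashiwara's form is $\Q(q^{1/2})$-\emph{bilinear}, not sesquilinear with respect to the bar, so the expansion should read
\[
(G(\a),G(\b))=\sum_{\c}P_{\c,\a}(q)\,P_{\c,\b}(q)\,(E(\c),E(\c)),
\]
with no bar on the second factor. This is not cosmetic: with the bar your argument breaks, since for $\c>\b$ one has $P_{\c,\b}\in q^{1/2}\Z[q^{1/2}]$ but then $\overline{P_{\c,\b}}\in q^{-1/2}\Z[q^{-1/2}]$, and the product $P_{\c,\a}\overline{P_{\c,\b}}$ need not vanish modulo $q^{1/2}$. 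Once the bar is removed, your reasoning is correct: for $\a\neq\b$ every summand has at least one factor in $q^{1/2}\Z[q^{1/2}]$, and $(E(\c),E(\c))\in 1+q\Z[[q]]$ since $h_k(0)=1$, so the whole sum lies in $q^{1/2}$ times something regular at $0$.

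A smaller point of logic: the fact that $P_{\b,\a}\in q^{1/2}\Z[q^{1/2}]$ for $\b>\a$ is not ``by bar-invariance'' but is the \emph{lattice} condition $G(\a)\equiv E(\a)\bmod q^{1/2}\mathcal{L}$ in the definition of the canonical basis recalled just before Proposition~\ref{prop: 7.4.20}; bar-invariance is the other half of the characterization. Your hedge about the normalization constants in the diagonal computation is appropriate---that bookkeeping is exactly what \cite{LTV} carries out, and citing it there is the right move.
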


\begin{notation}
We denote by $\{E^*(\a)\}$ and $\{G^*(\a)\}$ 
the dual basis of $\{E(\a)\}$ and $\{G(\a)\}$ with 
respect to the Kashiwara scalar product.
\end{notation}

\begin{prop}(cf. \cite{LNT})
Let $\a=\sum_{i\leq j}a_{ij}[i,j]$. Then 
 \begin{itemize}
  \item We have 
 \[
  E^*(\a)=\frac{\prod\limits_{i\leq j}h_{a_{ij}}(q)}{(1-q)^{\deg(\a)}}E(\a)=
  \overrightarrow{\prod\limits_{ij}}q^{1/2\binom{a_{ij}}{2}}E^*([i,j])^{a_{ij}},
 \]
here the product is taken with respect to the order $\leq $. 

\item And 
  \[
   E^*(\a)=\sum_{\b\leq \a}P_{\a, \b}(q)G^*(\b).
  \]

 \end{itemize} 
\end{prop}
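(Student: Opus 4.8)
The plan is to deduce both identities from the structural results already recorded for the quantum algebra $U_q^+$, following \cite{LNT}; essentially everything is formal once Propositions \ref{prop: 7.4.20} and \ref{prop: 7.4.21} are in hand, and the only real work is bookkeeping with $q$-integers and with the order on segments. For the first equality of the first item, I would argue as follows. The Kashiwara scalar product is non-degenerate on each weight space of $U_q^+$, and by Proposition \ref{prop: 7.4.21} the PBW basis $\{E(\b)\}$ of that weight space is orthogonal, with $(E(\a),E(\a)) = (1-q)^{\deg(\a)}/\prod_{i\le j}h_{a_{ij}}(q)$. Hence the dual basis vector $E^*(\a)$, which is characterised by $(E^*(\a),E(\b))=\delta_{\a,\b}$, is forced to equal $(E(\a),E(\a))^{-1}E(\a)$, i.e. $E^*(\a)=\big(\prod_{i\le j}h_{a_{ij}}(q)\big)(1-q)^{-\deg(\a)}E(\a)$.

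For the second equality of the first item one expands $E(\a)$ through its defining PBW monomial. The key elementary input is the identity relating the two normalisations of the $q$-factorial: writing $[m]_{q^{1/2}}=q^{(1-m)/2}(1-q^m)/(1-q)$ gives $[k]_{q^{1/2}}!=q^{-\binom{k}{2}/2}h_k(q)(1-q)^{-k}$, and therefore $h_k(q)/[k]_{q^{1/2}}!=q^{\binom{k}{2}/2}(1-q)^{k}$. Applying this to each multiplicity $a_{ij}$, and using $\deg(\a)=\sum_{i\le j}a_{ij}(j-i+1)$ together with the case $a_{ij}=1$ of the first equality, namely $E^*([i,j])=(1-q)^{-(j-i)}E([i,j])$, one checks term by term that
\[
 \frac{\prod_{i\le j}h_{a_{ij}}(q)}{(1-q)^{\deg(\a)}}E(\a)\;=\;\overrightarrow{\prod_{ij}}\,q^{\binom{a_{ij}}{2}/2}\,E^*([i,j])^{a_{ij}},
\]
where the product on the right is taken in the order $\unlhd$ used to define $E(\a)$. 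To obtain the stated ordering it remains to observe that this ordered monomial is unchanged if segments sharing a common endpoint are listed by increasing rather than decreasing beginning; this is the only non-formal point, and it follows from the quasi-commutation relations among the $E^*(\Delta)$ (Levendorskii--Soibelman type), which for two segments with a common endpoint degenerate to commutation up to a power of $q^{1/2}$ absorbed by the prefactors. I would simply invoke the reordering lemma of \cite{LNT} here.

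For the second item I would dualise the triangular expansion of the canonical basis. By the last bullet of Proposition \ref{prop: 7.4.20}, $G(\a)=\sum_{\b\ge\a}P_{\b,\a}(q)E(\b)$, which is unitriangular (with $P_{\a,\a}=1$) on each weight space; write it as $G(\a)=\sum_\b C_{\a\b}E(\b)$ with $C_{\a\b}\ne 0$ only for $\b\ge\a$. Since $E(\gamma)=\sum_\delta (C^{-1})_{\gamma\delta}G(\delta)$ and $\{G^*\}$ is dual to $\{G\}$, one gets $(G^*(\b),E(\gamma))=(C^{-1})_{\gamma\b}$; imposing $(E^*(\a),E(\gamma))=\delta_{\a,\gamma}$ then yields $E^*(\a)=\sum_\b C_{\b\a}G^*(\b)$. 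Because $C_{\b\a}=P_{\a,\b}(q)$ and is nonzero only for $\b\le\a$, this is exactly $E^*(\a)=\sum_{\b\le\a}P_{\a,\b}(q)G^*(\b)$. The main obstacle in the whole argument is thus confined to the reordering step in the first item: one must know precisely that collecting equal segments into divided powers and transposing segments with equal endpoints only costs the recorded powers of $q^{1/2}$, which is the explicit quasi-commutation of the dual PBW generators from \cite{LNT}.
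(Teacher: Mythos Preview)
Your argument is correct; the paper itself gives no proof of this proposition, merely citing \cite{LNT}, so there is nothing to compare against beyond noting that your approach is the standard one and is what \cite{LNT} does.

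One clarification on what you flag as the ``main obstacle'': the reordering step is in fact a non-issue. In this paper the symbol ``$\leq$'' in the statement (and ``$<$'' in the analogous Proposition \ref{prop: 7.4.23}) is being used informally for the total order $\unlhd$ of Definition \ref{def: 7.2.4}, which coincides with the order $\preceq$ of Definition \ref{def: 11}. Since $E(\a)$ is \emph{defined} as the ordered product with respect to $\unlhd$, the monomial $\overrightarrow{\prod}q^{\binom{a_{ij}}{2}/2}E^*([i,j])^{a_{ij}}$ is already written in the same order, and no quasi-commutation relations are required. Your computation with $[k]_{q^{1/2}}!=q^{-\binom{k}{2}/2}h_k(q)(1-q)^{-k}$ and $E^*([i,j])=(1-q)^{-(j-i)}E([i,j])$ then gives the second equality of the first item directly. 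The second item is exactly the transpose argument you wrote: from $G(\b)=\sum_{\a\geq\b}P_{\a,\b}(q)E(\a)$ one has $E^*(\a)=\sum_{\b}C_{\b\a}G^*(\b)$ with $C_{\b\a}=P_{\a,\b}(q)$, nonzero only for $\b\leq\a$.
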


\begin{example}
 Let $\a=[1]+[2]$. Then 
 \[
  E^*(\a)=E([1])E([2])=E(\a),
 \]
 and 
 \[
  G^*([1,2])=E^*([1,2]),
 \]
\[
 G^*(\a)=E^*(\a)-q^{1/2}E^*([1,2]).
\]
\end{example}

Finally, we establish the relation of 
between the algebras $\cal{R}$ and $U^+$.

\begin{definition}
 Let $B$ be the polynomial algebra generated by 
 the set of coordinate functions $\{t_{ij}: i< j\}$.
 Following \cite{LNT}, we write $t_{ii}=1$, 
 $t_{ij}=0$ if $i>j$, and indexed the non-trivial $t_{i,j}$'s 
 by segments, namely, $t_{[ij]}=t_{i,j-1}$ for $i<j$. 
\end{definition}

Now by corollary \ref{cor: 1.2.3}, we have the following 

\begin{prop}\label{prop: 7.4.25}
 We have an algebra isomorphism $\phi: B\simeq \cal{R}$ by identifying 
 $t_{[ij]}$ with $L_{[ij]}$ for all $i<j$.  
\end{prop}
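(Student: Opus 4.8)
The plan is to read the statement off from the structure theorem for the representation ring recalled in Corollary~\ref{cor: 1.2.3}. That corollary asserts that $\mathcal{R}^{univ}$ is a polynomial ring on the family of indeterminates $\{L_{\Delta} : \Delta \in \Sigma^{univ}\}$; combined with the Bernstein decomposition $\mathcal{R}^{univ} = \prod_{\rho}\mathcal{R}(\rho)$ recalled just after it, the factor attached to $\rho = 1$ --- which is the algebra $\mathcal{R}$ we are working with in this part --- is the polynomial $\Z$-algebra on $\{L_{\Delta} : \Delta \in \Sigma\}$, one indeterminate for each segment $\Delta = [i,j]$ supported on the Zelevinsky line $\Pi$. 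So the first step is just to rephrase Corollary~\ref{cor: 1.2.3}: $\mathcal{R}$ is a free commutative polynomial algebra whose set of algebra generators is naturally in bijection with $\Sigma$.

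Next I would match the two generating sets. By definition $B$ is a polynomial algebra in the coordinate functions $\{t_{ij} : i < j\}$, and the conventions recalled just above ($t_{ii} = 1$, $t_{ij} = 0$ for $i > j$, and the reindexing $t_{[ij]} = t_{i,j-1}$) are set up precisely so that the nontrivial generators $\{t_{[ij]}\}$ are indexed bijectively by the segments $[i,j] \in \Sigma$, the values $t_{ii} = 1$ accounting for the unit and the vanishing $t_{ij}$, $i>j$, having no counterpart. With this dictionary in hand, the universal property of polynomial algebras produces a unique $\Z$-algebra homomorphism $\phi : B \to \mathcal{R}$ sending $t_{[ij]}$ to $L_{[ij]}$ for all $i<j$. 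Applying the same universal property to $\mathcal{R}$ --- which by Corollary~\ref{cor: 1.2.3} is \emph{itself} a polynomial algebra on the $L_{[ij]}$ --- produces a homomorphism $\psi : \mathcal{R} \to B$ with $\psi(L_{[ij]}) = t_{[ij]}$. Since $\psi \circ \phi$ and $\phi \circ \psi$ fix the respective sets of polynomial generators, both are the identity; hence $\phi$ is an isomorphism, which is the assertion.

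The only point that genuinely needs to be checked --- and hence the main obstacle, modest as it is --- is the bookkeeping in the second paragraph: one must verify that the reindexing conventions really put \emph{all} of $\Sigma$, in particular the length-one segments $[i] = [i,i]$, into bijection with the full family of nontrivial $t_{ij}$, and not with a proper subfamily, and that the degenerate values are consistent with the unit of $\mathcal{R}$ and with the absence of any generator for $i > j$. No representation-theoretic or geometric input beyond Corollary~\ref{cor: 1.2.3} is required here; once the indexing is pinned down the argument is purely formal. This isomorphism is exactly the bridge that lets one transport the quantum-algebra computations of the previous sections (the canonical basis of $U^{+}$, specialized at $q = 1$) into statements about $\mathcal{R}$.
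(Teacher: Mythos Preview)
Your proposal is correct and is exactly the paper's approach: the paper simply prefaces the proposition with ``Now by corollary~\ref{cor: 1.2.3}, we have the following'' and gives no further argument, so the whole content is precisely the step you spell out---$\mathcal{R}$ is a polynomial algebra on $\{L_{[i,j]}\}$, $B$ is by definition a polynomial algebra on the matching set of generators, and the identification of generators gives the isomorphism. Your careful remark about the indexing (in particular that the length-one segments are accounted for via the shift $t_{[i,j]} \leftrightarrow t_{i,j+1}$) is the only point with any content, and you have identified it correctly.
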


\begin{definition}
 Let $B_q$ be the quantum analogue of $B$ generate by 
 $\{T_{ij}: i<j\}$, where $T_{ij}$ is considered as 
 the $q$-analogue of $t_{ij}$. Also, we write $T_{ii}=1$
 and $T_{ij}=0$ if $i>j$. And we will indexed the non-trivial 
 $T_{ij}$ by $T_{[i,j-1]}$. The generators $T_s$'s satisfies 
 the following relations (cf. \cite{BZ2}). Let $s>s'$ be two segments.
 Then 
 \begin{displaymath}
 T_{s'}T_{s}=
 \left\{ \begin{array}{cc}
    q^{- 1/2(\wt(s'), \wt(s))}T_sT_{s'}+(q^{-1/2}-q^{1/2})T_{s\cap s'}T_{s\cup s'},
    &\text{  if } s \text{ and } s' \text{ are linked, }\\
    q^{-1/2(\wt(s'), \wt(s))}T_sT_{s'}, \text{ otherwise }.
   \end{array} \right.
\end{displaymath}
\end{definition}

\begin{prop}(cf. \cite{LNT} Section 3.5)\label{prop: 7.4.23}
There exist an algebra isomorphic morphism 
\[
 \iota: U_q^+\rightarrow B_q, 
\]
given by $\iota(E^*([i,j]))=T_{[i,j]}$. Moreover, for $\a=\sum_{i\leq j}a_{ij}[i,j]$,
we have 
\[
 \iota(E^*(\a))=\overrightarrow{\prod_{i\leq j}}q^{1/2\binom{a_{ij}}{2}}T_{[i,j]}^{a_{ij}},
\]
here the multiplication is taken with respect to the order $<$.
\end{prop}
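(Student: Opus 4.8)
The plan is to obtain $\iota$ as the inverse of an algebra homomorphism $B_q\to U_q^+$ sending $T_{[i,j]}$ to $E^*([i,j])$, once such a homomorphism is seen to be well defined and bijective. Two points make the homomorphism: first, the images $E^*([i,j])$ generate $U_q^+$ as a $\Q(q^{1/2})$-algebra, since by the normalisation recorded in the preceding proposition $E^*([i,j])$ is a nonzero scalar multiple of the iterated $q$-bracket $E([i,j])$ of the generators $E_i$ (and $E^*([i])=E_i$); second, one must check that the $E^*([i,j])$ satisfy in $U_q^+$ exactly the Berenstein--Zelevinsky quasi-commutation relations defining $B_q$: for segments $s>s'$,
\[
E^*(s')\,E^*(s)=q^{-1/2(\wt(s'),\wt(s))}E^*(s)\,E^*(s')+(q^{-1/2}-q^{1/2})\,E^*(s\cap s')\,E^*(s\cup s')
\]
when $s$ and $s'$ are linked, and $E^*(s')\,E^*(s)=q^{-1/2(\wt(s'),\wt(s))}E^*(s)\,E^*(s')$ otherwise.

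Verifying this relation is the step I expect to be the main obstacle. It rests on the multiplicativity of the dual canonical basis for a pair of compatible segments, namely that $E^*(s)\,E^*(s')$ is, up to an explicit power of $q$, the dual canonical basis vector attached to $\{s,s'\}$ when $s,s'$ are unlinked, and a sum of the two dual canonical basis vectors attached to $\{s,s'\}$ and to $\{s\cap s',\ s\cup s'\}$ when they are linked. I would establish this either by computing in the quantum shuffle realisation of $U_q^+$, in which each $E^*([i,j])$ is a single monomial word and the product of two such words is a concrete shuffle sum, or by lifting the classical identity $\Delta_1\times\Delta_2=L_{\a_1}+L_{\a_2}$ of proposition \ref{prop: 1.1.15} to the quantum level and fixing the coefficients and the powers of $q$ via the almost-orthogonality of the dual canonical basis with respect to the Kashiwara form (proposition \ref{prop: 7.4.21}); in either approach the segments that can occur on the right-hand side, and the $q$-exponents, are forced by weight reasons and by the leading-term condition $E^*(\a)\equiv E(\a)\ (\mathrm{mod}\ q^{1/2}\mathcal{L})$.

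Granting the relations, $T_{[i,j]}\mapsto E^*([i,j])$ extends to an algebra homomorphism $B_q\to U_q^+$, surjective by the generation statement. Both algebras are $\N^{(\Z)}$-graded, and I would show this homomorphism is bijective degree by degree: on the $U_q^+$ side the dual PBW vectors $\{E^*(\a)\}$ form a homogeneous basis whose part of weight $\alpha$ is indexed by the multisegments of weight $\alpha$, while on the $B_q$ side the straightening relations above show that the ordered monomials $\overrightarrow{\prod_{i\leq j}}T_{[i,j]}^{a_{ij}}$, the product taken in the order $<$ on segments, span $B_q$ and are again indexed by multisegments. By the expansion
\[
E^*(\a)=\overrightarrow{\prod_{i\leq j}}\ q^{1/2\binom{a_{ij}}{2}}E^*([i,j])^{a_{ij}}
\]
of the preceding proposition, our homomorphism carries these spanning monomials onto the basis $\{E^*(\a)\}$ up to invertible scalars, hence is bijective in every weight and so is an isomorphism. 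Let $\iota$ be its inverse; then $\iota(E^*([i,j]))=T_{[i,j]}$, and applying $\iota$ to the displayed expansion of $E^*(\a)$ and using multiplicativity gives
\[
\iota(E^*(\a))=\overrightarrow{\prod_{i\leq j}}\ q^{1/2\binom{a_{ij}}{2}}T_{[i,j]}^{a_{ij}},
\]
the product being taken in the order $<$, as asserted.
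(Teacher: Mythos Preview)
The paper does not supply its own proof of this proposition; it is quoted from \cite{LNT}, Section~3.5, as an established result. Your sketch is correct and follows the standard argument found there: one verifies that the $E^*([i,j])$ obey the Berenstein--Zelevinsky quasi-commutation relations defining $B_q$ (the only substantive step, as you correctly identify), deduces that $T_{[i,j]}\mapsto E^*([i,j])$ extends to a surjective algebra homomorphism, and concludes bijectivity by matching graded dimensions via the dual PBW basis on one side and the ordered monomials on the other; the formula for $\iota(E^*(\a))$ is then immediate from the preceding proposition upon applying $\iota$.
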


\begin{example}
Let $\a=[1]+[2]$, then 
\[
 \iota(E^*(\a))=T_{[1]}T_{[2]}.
\]

\end{example}

\begin{prop}
By specializing at $q=1$, the dual canonical basis $\{G^*(\a): \a\in \O\}$ gives rise
to a well defined basis for $B$, denoted by $\{g^*(\a): \a\in \O\}$. Moreover, 
the morphism $\phi$ sends $g^*(\a)$ to $L_{\a}$ for all $\a\in \O$. 
\footnote{ It is surprising 
that an isomorphism in the commutative world is governed by a non-commutative
one, such phenomenon also happens in the theory of periods, where a period 
be a 
complex number whose real and imaginary parts are values
of absolutely 
convergent integrals of rational functions with rational coefficients, over
domains in $\R_n$ given by polynomial inequalities with rational coefficients, then 
there is a conjecture saying that two rational functions give the same period if and 
only if they can be transformed to each other according to three simple rules, see 
\cite{KZ} chapter 1.} 
\end{prop}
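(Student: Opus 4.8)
The plan is to combine two facts established or recalled above: first, the characterization of the canonical basis $\{G(\a)\}$ and its dual $\{G^*(\a)\}$ via the PBW bases and the Kashiwara scalar product (Propositions \ref{prop: 7.4.20}, \ref{prop: 7.4.21}, and the formula $E^*(\a)=\sum_{\b\leq \a}P_{\a, \b}(q)G^*(\b)$); and second, Zelevinsky's $p$-adic Kazhdan--Lusztig theorem (Theorem \ref{teo: 4.1.5}) together with the identification $L_{\a}=\sum_{\b}\tilde m(\b,\a)\pi(\b)$ from equation (\ref{eq: m(b, a)1}). First I would verify that the elements $g^*(\a)$ obtained by specializing $G^*(\a)$ at $q^{1/2}=1$ are well defined: this amounts to checking that the transition coefficients between $\{G^*(\a)\}$ and $\{E^*(\a)\}$ lie in $\Z[q^{1/2}]$, which follows from the triangularity relation $E^*(\a)=\sum_{\b\leq \a}P_{\a,\b}(q)G^*(\b)$ by inverting the unitriangular matrix $(P_{\a,\b}(q))$ over $\Z[q^{1/2}]$ — its inverse has entries in $\Z[q]$ by the standard inversion formula for Kazhdan--Lusztig polynomials. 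Hence $\{g^*(\a)\}$ is a well-defined $\Z$-basis of $B\otimes\Z$ transforming unitriangularly to $\{t^*(\a)\}:=\{E^*(\a)|_{q=1}\}$.

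Next I would pin down what $\phi$ does on both bases in terms of multisegments. Under $\iota$ (Proposition \ref{prop: 7.4.23}) and then $\phi$ (Proposition \ref{prop: 7.4.25}), the image of $E^*(\a)$ at $q=1$ is, up to the normalizing binomial factors which all become $1$ at $q=1$, the product $\overrightarrow{\prod_{i\leq j}}L_{[i,j]}^{a_{ij}}=\pi(\a)$ for $\a=\sum_{i\leq j}a_{ij}[i,j]$ (the monomial ordering matching the well-ordered arrangement). So $\phi(t^*(\a))=\pi(\a)$. Then the specialization at $q=1$ of the expansion $E^*(\a)=\sum_{\b\leq\a}P_{\a,\b}(q)G^*(\b)$ gives $\pi(\a)=\sum_{\b\leq\a}P_{\a,\b}(1)\,\phi(g^*(\b))$. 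Comparing this with the defining relation $\pi(\a)=\sum_{\b}m(\b,\a)L_{\b}$ together with Theorem \ref{teo: 4.1.5}, which asserts $m(\b,\a)=P_{\b,\a}(1)$, we see that the base-change matrix from $\{\phi(g^*(\b))\}$ to $\{L_\b\}$ is unitriangular with $1$'s on the diagonal and satisfies the same triangular system as the identity matrix once one uses that $m(\a,\a)=1$. An induction on $\ell(\a)=\ell(\a_{\min},\a)$ (Definition \ref{def: 1.2.10}) then forces $\phi(g^*(\a))=L_{\a}$ for all $\a$: for $\a$ minimal both sides equal $\pi(\a)$, and the inductive step cancels the lower terms.

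The main obstacle I anticipate is \emph{bookkeeping of normalizations and orderings} rather than anything deep: one must match the order $<$ used to define $E^*(\a)$ (Definition \ref{def: 7.2.4} and Proposition \ref{prop: 7.4.23}) with the well-ordering of multisegments used to define $\pi(\a)$ (Definitions \ref{def: 6} and Notation \ref{nota: 1.1.15}), and check that the passage through $\iota$ and $\phi$ is compatible with multiplication on the nose — this uses that $\iota$ is an algebra isomorphism and $\phi$ is an algebra isomorphism, so it is really just transporting the monomial $\overrightarrow{\prod}T_{[i,j]}^{a_{ij}}$ correctly. A secondary subtlety is justifying the specialization $q^{1/2}\mapsto 1$: one should note that all the quantum binomials $[m]_{q^{1/2}}!$ and factors $q^{1/2\binom{a_{ij}}{2}}$ appearing in $E(\a)$ versus $E^*(\a)$ are units in $\Z[q^{1/2},q^{-1/2}]$ specializing to $1$, and that the Kashiwara pairing formula in Proposition \ref{prop: 7.4.21} is used only to guarantee that $G^*(\a)$ is the dual basis — which is already built into the statement. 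Given Theorem \ref{teo: 4.1.5}, no further geometry is needed; the whole argument is the translation of the $p$-adic KL identity into the dual-canonical-basis language of \cite{LNT}.
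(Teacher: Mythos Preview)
Your argument is correct and is precisely the intended one: the paper does not supply a proof of this proposition, treating it as an immediate consequence of the preceding formula $E^*(\a)=\sum_{\b\leq \a}P_{\a,\b}(q)G^*(\b)$ (cited from \cite{LNT}), the identification $\phi(t^*(\a))=\pi(\a)$, and Theorem~\ref{teo: 4.1.5}. Your unitriangularity/induction argument fills in exactly these steps; the only caveat is that the ordering concern you raise dissolves at $q=1$ since $B$ and $\mathcal{R}$ are commutative, so no further matching is needed there.
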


\section{Partial Derivatives and Poincar\'e's series}

In this section we will deduce a geometric description for the 
partial derivatives, using results of last section.

\begin{definition}
Kashiwara \cite{K} introduced some $q$-derivations $E'_i$ in $\End(U_q^{+})$
for all $i\in \Z$ satisfying 
\[
 E_i'(E_j)=\delta_{ij}, ~E_i'(uv)=E_i'(u)v+q^{-1/2(\alpha_i, \wt(u))}uE_i'(v). 
\]
\end{definition}

\begin{example}
Simple calculation shows that  
\[
E_i'(E([j,k]))=\delta_{i, k}(1-q)E([j, k-1]), 
\]
 by taking dual, we get 
 \[
  E_i'(E^*([j,k]))=\delta_{i, k}E^*([j, k-1]), 
 \] 
\end{example}

\begin{prop}
We have 
\[
 (E_i'(u), v)=(u, E_iv),
\]
where $( , )$ is the scalar product introduced in 
proposition \ref{prop: 7.4.25}. 
\end{prop}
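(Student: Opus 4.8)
The plan is to deduce the identity from the defining structural property of the Kashiwara pairing. First I would observe that both sides are $\Q(q^{1/2})$-bilinear in $(u,v)$, and that the pairing $(\,,\,)$ of proposition \ref{prop: 7.4.21} kills homogeneous elements of distinct weight; since $E_i$ raises weight by $\alpha_i$ while $E_i'$ lowers it by $\alpha_i$, it suffices to treat $u,v$ homogeneous with $\wt(v)=\wt(u)+\alpha_i$, and by bilinearity one may let $u,v$ run over the PBW basis $\{E(\a)\}$ (equivalently the dual basis $\{E^*(\a)\}$), so nothing is lost by working with explicit basis elements.

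The conceptual route I would take uses the twisted coproduct. Recall that $U_q^+$ carries a map $r\colon U_q^+\to U_q^+\otimes U_q^+$, an algebra homomorphism for the twisted multiplication on the target, determined by $r(E_i)=E_i\otimes 1+1\otimes E_i$, and that in the normalization of proposition \ref{prop: 7.4.21} the Kashiwara pairing satisfies the adjunction $(xy,\,w)=(x\otimes y,\,r(w))$, the pairing on $U_q^+\otimes U_q^+$ being $(a\otimes b,\,c\otimes d)=(a,c)(b,d)$; this is the standard compatibility recorded in \cite{K} and \cite{Lu}. Taking $x=E_i$ and using $(E_i,E_i)=(1-q)/h_1(q)=1$, one identifies the operator $w\mapsto\sum (E_i,w_{(1)})\,w_{(2)}$, extracting the weight-$\alpha_i$ part of the left tensor factor of $r(w)$, with Kashiwara's derivation $E_i'$: indeed both send $E_j$ to $\delta_{ij}$ and obey the same twisted Leibniz rule, which determines them on generators and products. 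Hence $(E_i u,\,w)=(u,\,E_i'(w))$ for all $u,w$, and by the symmetry of the pairing
\[
(u,\,E_i v)=(E_i v,\,u)=(v,\,E_i'(u))=(E_i'(u),\,v),
\]
which is the assertion.

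As a purely combinatorial alternative I would instead argue on the dual PBW basis directly: by the Example preceding the statement, $E_i'(E^*([j,k]))=\delta_{i,k}E^*([j,k-1])$, so the skew-Leibniz rule together with the product expansion $E^*(\a)=\overrightarrow{\prod}\,q^{\binom{a_{ij}}{2}/2}E^*([i,j])^{a_{ij}}$ of proposition \ref{prop: 7.4.23} computes $E_i'(E^*(\a))$ explicitly; on the other side, left multiplication by $E_i=E^*([i])$ is governed, under the isomorphism $\iota$ of proposition \ref{prop: 7.4.23}, by the $T_sT_{s'}$ relations, which expand $E_iE^*(\b)$ in the $E^*$-basis, and one matches the two expansions against the explicit values $(E^*(\a),E^*(\b))=\big(\prod h_{a_{ij}}(q)/(1-q)^{\deg\a}\big)\delta_{\a,\b}$ coming from proposition \ref{prop: 7.4.21}. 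The main obstacle in either approach is bookkeeping with the $q^{1/2}$-twists: in the conceptual route, pinning down that the pairing of proposition \ref{prop: 7.4.21} is precisely the one compatible with $r$ with no stray scalar; in the combinatorial route, checking that the twisting exponents in the Leibniz rule for $E_i'$ are dual to those appearing in the $T_sT_{s'}$ multiplication — which is exactly the symmetry the proposition records.
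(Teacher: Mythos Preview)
The paper does not actually prove this proposition: it is stated without a \texttt{proof} environment and immediately followed by the next paragraph. The result is a standard property of Kashiwara's bilinear form, and the paper is treating it as a known fact from \cite{K} (indeed, in Kashiwara's setup the form is \emph{characterized} by the adjunction $(E_i'u,v)=(u,E_iv)$ together with a normalization, so there is nothing to prove once one accepts that the pairing of proposition~\ref{prop: 7.4.21} is Kashiwara's form).

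Your conceptual route is exactly the right argument if one wants to supply a proof: the compatibility $(xy,w)=(x\otimes y,\,r(w))$ with the twisted coproduct, combined with $(E_i,E_j)=\delta_{ij}$, pins down the left-$\alpha_i$ component of $r$ as $E_i'$, and the claimed adjunction follows by symmetry. This is essentially how the identity is obtained in \cite{K} and \cite{Lu}. Your combinatorial alternative via the $E^*$-basis and the $B_q$ relations would also work but is heavier for what is, in the end, a defining property of the form. One small note: the paper's cross-reference to ``proposition~\ref{prop: 7.4.25}'' appears to be a typo for~\ref{prop: 7.4.21}, as you correctly inferred.
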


Note that by identifying 
the algebra $U_q^{+}$ and 
$B_q$ via $\iota$, we get a version of  
$q$-derivations in $\End(B_q)$. 

\begin{definition}
By specializing at $q=1$, 
the $q$ derivation $E_i'$ gives a derivation $e_i'$ of the algebra $B$ by  
 \[
  e_i'(t_{[jk]})=\delta_{ik}t_{[j,k-1]}, 
  ~ e_i'(uv)=e_i'(u)v+ue_i'(v). 
 \] 
\end{definition}

\begin{prop}\label{prop: 7.6.5}
Let 
\[
 D^i: =\sum_{n=0}^{\infty}\frac{1}{n!}{e_i'}^n.
\]
Then the morphism $D^i: B\rightarrow B$ is an 
algebraic morphism. Moreover, if we identify 
the algebras $\mathcal{R}$ and $B$ via $\phi$, then 
the morphism $D^i$ coincides with  the partial derivative
$\D^i$.  

\end{prop}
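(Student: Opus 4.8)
The plan is to prove Proposition \ref{prop: 7.6.5} in two stages: first, that $D^i$ is a well-defined algebra morphism $B \to B$; second, that under the isomorphism $\phi: B \simeq \mathcal{R}$ of Proposition \ref{prop: 7.4.25} it agrees with $\D^i$.

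For the first stage, I would begin by checking that $D^i = \sum_{n \geq 0} \frac{1}{n!} {e_i'}^n$ actually makes sense as an operator on $B$: although the sum is formally infinite, when applied to any fixed $f \in B$ only finitely many terms are nonzero, since $e_i'$ strictly decreases the total degree $\deg$ (it sends $t_{[j,k]}$ to $t_{[j,k-1]}$, which has degree one less, and kills $t_{[j]}$). So $D^i$ is well-defined. To see it is an algebra morphism, the key point is that $e_i'$ is a derivation of $B$ (given in Definition preceding the statement by the Leibniz rule $e_i'(uv) = e_i'(u)v + u e_i'(v)$), and the exponential of a locally nilpotent derivation is an algebra automorphism. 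Concretely, I would verify $D^i(uv) = D^i(u)D^i(v)$ by expanding both sides via the binomial identity ${e_i'}^n(uv) = \sum_{p+q=n} \binom{n}{p} {e_i'}^p(u) \, {e_i'}^q(v)$ — itself proved by induction on $n$ using that $e_i'$ is a derivation — and then resumming: $\sum_n \frac{1}{n!}\sum_{p+q=n}\binom{n}{p} {e_i'}^p(u){e_i'}^q(v) = \left(\sum_p \frac{1}{p!}{e_i'}^p(u)\right)\left(\sum_q \frac{1}{q!}{e_i'}^q(v)\right)$.

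For the second stage, since both $D^i$ (transported to $\mathcal{R}$ via $\phi$) and $\D^i$ are algebra morphisms $\mathcal{R} \to \mathcal{R}$, it suffices to check they agree on the algebra generators $L_{[j,k]}$, $j \leq k$ (using Corollary \ref{cor: 1.2.3} that $\mathcal{R}$ is a polynomial ring on these). On a generator $t_{[j,k]}$ with $j < k$ we compute: $e_i'(t_{[j,k]}) = \delta_{ik} t_{[j,k-1]}$, and ${e_i'}^2(t_{[j,k]}) = \delta_{ik} e_i'(t_{[j,k-1]}) = \delta_{ik}\delta_{i,k-1} t_{[j,k-2]} = 0$ since $i$ cannot equal both $k$ and $k-1$; hence $D^i(t_{[j,k]}) = t_{[j,k]} + \delta_{ik} t_{[j,k-1]}$, which matches $\D^i(L_{[j,k]}) = L_{[j,k]} + \delta_{i,k} L_{[j,k-1]}$ under $\phi$. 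On a point $t_{[j]}$ we have $e_i'(t_{[j]}) = \delta_{ij} \cdot t_{\emptyset} = \delta_{ij}$ (the constant, since $t_{[j,j-1]} = t_{jj-1}$; more precisely $e_i'$ applied to the cuspidal generator lands in the base ring), so $D^i(t_{[j]}) = t_{[j]} + \delta_{ij}$, matching $\D^i(L_{[j]}) = L_{[j]} + \delta_{[i],[j]}$. This establishes agreement on generators and hence everywhere.

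The main obstacle I anticipate is purely bookkeeping around the cuspidal/point case: one must be careful that the $q$-derivation $E_i'$ dualizes correctly (cf. the example $E_i'(E^*([j,k])) = \delta_{i,k} E^*([j,k-1])$) so that its specialization $e_i'$ has exactly the stated effect on $t_{[j]}$, and that the identification via $\iota$ of Proposition \ref{prop: 7.4.23} and the specialization $q=1$ are compatible with the derivation structure — in other words, that $e_i'$ really is the $q=1$ limit of the $\End(B_q)$-version of $E_i'$ and that $\phi$ intertwines $D^i$ with $\D^i$ via the previously established fact $\phi(g^*(\a)) = L_{\a}$. None of this is deep, but it requires tracking several isomorphisms ($\phi$, $\iota$, $\Theta$) simultaneously. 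Everything else is the formal computation that the exponential of a locally nilpotent derivation is an automorphism.
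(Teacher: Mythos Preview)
Your proposal is correct and follows essentially the same approach as the paper: verify that $D^i$ is multiplicative via the binomial expansion ${e_i'}^n(uv)=\sum_{r+s=n}\binom{n}{r}{e_i'}^r(u){e_i'}^s(v)$, then check agreement with $\D^i$ on the generators $t_{[j,k]}$. You are in fact slightly more careful than the paper, which omits the local-nilpotence remark and does not separate out the point case $t_{[j]}$.
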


\begin{proof}
For $n\in \N$, we have 
\[
 e'^n(uv)=\sum_{r+s=n}\binom{n}{r}e_{i}'^r(u)e_{i}'^s(v), 
\]
therefore 
\[
 D^i(uv)=\sum_{n=0}^{\infty}\frac{1}{n!}\sum_{r+s=n}\binom{n}{r}e_{i}'^r(u)e_{i}'^s(v)=D^i(u)D^i(v). 
\]
Finally, to show that $D^i$ and $\D^i$ coincides, 
it suffices to prove that 
\[
 \phi\circ D^i(t_{[j,k]})=\D^i\circ \phi(t_{[j,k]}),
\]
but we have 
\[
 D^i(t_{[j,k]}=t_{[j,k]}+\delta_{i,k}t_{[j, k-1]},
\]
 and 
 \[
  \D^i(L_{[j,k]})=L_{j,k}+\delta_{i,k}L_{[j, k-1]}.
 \]
Therefore, we have 
\[
 \phi\circ D^i(t_{[j,k]})=\D^i\circ \phi(t_{[j,k]}).
\]  
\end{proof}

\remk Without specializing at $q=1$, 
the operator $D^i$ is not an algebraic morphism.  
To get an algebraic morphism at the level of 
$U_q^+$, one should consider not only 
the summation of the iteration of $e_i'$'s
but all the derivations, which 
gives rise to an embedding into the  quantum shuffle algebras, cf. \cite{L}.

Next we show how to determine $\D^i(L_{\a})$
by the algebra $\K$ of Lusztig.

\begin{lemma}\label{lem: 7.6.6}
Let $n\in \N$, and $ \d\in \O$. Then we have  
\[
 E_i^nG(\b)=\sum_{\d}(E_i^nG(\b), E^*(\d))E(\d)=\sum_{\d}(G(\b), E_i'^nE^*(\d))E(\d),
\]
where $( , )$ is the Kashiwara scalar product. 
Moreover, for each $\b$ such that 
$(G(\b), E_i'^nE^*(\d))\neq 0$, we have 
\[
 \wt(\d)=\wt(\b)+n\alpha_i.
\]

\end{lemma}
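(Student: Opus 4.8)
\textbf{Proof strategy for Lemma \ref{lem: 7.6.6}.}

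The plan is to expand $E_i^n G(\b)$ in the PBW basis $\{E(\d)\}$ and use the near-orthogonality of the canonical basis to the PBW basis together with the adjointness of $E_i$ and $E_i'$. First I would write $E_i^n G(\b) = \sum_{\d} c_{\d} E(\d)$ for uniquely determined scalars $c_{\d}\in \Q(q^{1/2})$, since $\{E(\d): \d\in \O\}$ is a basis of $U_q^+$ (proposition of Lusztig recalled in the excerpt). To extract $c_{\d}$, I would pair with the dual PBW basis: by definition of $\{E^*(\d)\}$ as the dual basis with respect to the Kashiwara scalar product $(\,,\,)$, we have $c_{\d} = (E_i^n G(\b), E^*(\d))$. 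This gives the first equality. For the second equality I would invoke proposition \ref{prop: 7.6.5}'s companion statement, namely $(E_i'(u), v) = (u, E_i v)$ (the displayed identity just before Definition of $D^i$, i.e. the proposition stating $(E_i'(u),v)=(u,E_iv)$); iterating $n$ times yields $(E_i^n G(\b), E^*(\d)) = (G(\b), E_i'^n E^*(\d))$, whence
\[
E_i^n G(\b) = \sum_{\d}(G(\b), E_i'^n E^*(\d)) E(\d).
\]

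For the weight statement, I would use that $U_q^+$ is $\N^{(\Z)}$-graded with $\wt(E_i) = \alpha_i$, that $G(\b)$ is homogeneous of weight $\wt(\b) = \varphi_{\b}$ (it equals $E(\b)$ modulo $q^{1/2}\mathcal{L}$, and $E(\b)$ is homogeneous of that weight by the lemma identifying homogeneous components with multisegments), and that $E_i'$ is a homogeneous operator of degree $-\alpha_i$ — this follows from Kashiwara's defining relation $E_i'(E_j) = \delta_{ij}$, $E_i'(uv) = E_i'(u)v + q^{-1/2(\alpha_i,\wt(u))}uE_i'(v)$, which visibly lowers weight by $\alpha_i$ on generators and hence on all homogeneous elements. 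Since the scalar product $(\,,\,)$ pairs elements of equal weight non-trivially and vanishes on elements of different weights (the near-orthogonality formula $(E(\a),E(\b)) = \frac{(1-q)^{\deg(\a)}}{\prod h_{a_{ij}}(q)}\delta_{\a,\b}$ of proposition \ref{prop: 7.4.21} shows the pairing is diagonal on the PBW basis, so in particular weight-homogeneous), the term $(G(\b), E_i'^n E^*(\d))$ can be nonzero only when $\wt(G(\b)) = \wt(E_i'^n E^*(\d))$, i.e. $\wt(\b) = \wt(\d) - n\alpha_i$, which rearranges to $\wt(\d) = \wt(\b) + n\alpha_i$.

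The only mildly delicate point is bookkeeping the grading: one must be careful that $E^*(\d)$ is homogeneous of weight $\wt(\d)$ (clear, since it is a scalar multiple of $E(\d)$ up to the identification, or directly from proposition \ref{prop: 7.4.21}'s formula) and that $E_i'$ genuinely acts as stated on $B_q \simeq U_q^+$ under $\iota$, but this is exactly Kashiwara's construction as recorded in the excerpt. I expect no real obstacle here — the lemma is essentially a formal consequence of duality and homogeneity — so the proof is short: state the PBW expansion, apply adjointness $n$ times, and read off the weight constraint from homogeneity of $G(\b)$, $E_i'$, and $E^*(\d)$.
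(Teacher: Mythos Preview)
Your argument is correct and is exactly what the paper has in mind: the paper's own proof is the single line ``This is by definition,'' and you have simply spelled out the formalities (expand in the PBW basis, use the dual basis to extract coefficients, iterate the adjointness $(E_i'(u),v)=(u,E_iv)$, and read off the weight from homogeneity). There is nothing to add.
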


\begin{proof}
This is by definition. 
\end{proof}

\begin{cor}
Let $\b\preceq_k \d$ 
such that $wt(\d)=\wt(\b)+n\alpha_i$. 
Then $L_{\b}$ appears as a factor of $\frac{1}{r!}e_i'^r(\pi(\d))$
if and only if $r=n$. 

\end{cor}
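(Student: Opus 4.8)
The plan is to deduce this corollary from Lemma~\ref{lem: 7.6.6} by translating the vanishing/non-vanishing of the pairings $(G(\b), {E_i'}^rE^*(\d))$ into statements about which $L_{\b}$ occur in $\frac{1}{r!}{e_i'}^r(\pi(\d))$. First I would record the basic weight bookkeeping: since ${e_i'}$ lowers weight by $\alpha_i$ and $\deg$ by $1$, the operator $\frac{1}{r!}{e_i'}^r$ carries $\pi(\d)$ (which has weight $\wt(\d)$) to a sum of terms of weight $\wt(\d)-r\alpha_i$, so for $L_{\b}$ with $\wt(\d)=\wt(\b)+n\alpha_i$ to appear at all one must have $r=n$. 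That immediately gives the ``only if'' direction and isolates the content of the statement in the ``if'' direction: when $r=n$, the term $L_{\b}$ genuinely shows up.

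For the ``if'' direction I would proceed as follows. Under the identifications $\phi\colon B\simeq\mathcal{R}$ (Prop.~\ref{prop: 7.4.25}) and $\iota\colon U_q^+\simeq B_q$ (Prop.~\ref{prop: 7.4.23}), and their $q=1$ specializations, $\pi(\d)$ corresponds to $E^*(\d)$ (via Prop.~\ref{prop: 7.4.23} at $q=1$, $E^*(\d)$ maps to the monomial $\prod T_{[i,j]}^{a_{ij}}$, whose image under $\phi$ is $\pi(\d)$), the partial derivative $\D^i$ corresponds to $D^i=\sum_n\frac1{n!}{e_i'}^n$ (Prop.~\ref{prop: 7.6.5}), and ${e_i'}$ is the $q=1$ shadow of Kashiwara's $E_i'$. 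Now expand $E^*(\d)$ in the dual canonical basis, $E^*(\d)=\sum_{\b\le\d}P_{\d,\b}(q)G^*(\b)$; specializing $q=1$ and applying $\phi$ gives $\pi(\d)=\sum_{\b\le\d}P_{\d,\b}(1)L_{\b}=\sum_{\b\le\d}m(\b,\d)L_{\b}$, matching Theorem~\ref{teo: 4.1.5}. The key point is then that $\frac1{n!}{e_i'}^n$ is \emph{not} additive as an operator on irreducibles, but by Theorem~\ref{teo: 3} (positivity of $\D^i$, equivalently of $D^i$) each $\D^i(L_{\e})$ is a genuine non-negative sum of $L$'s, so no cancellation can occur between the contributions of different $\b\le\d$ in $\D^i(\pi(\d))$. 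Since $\b\preceq_k\d$, Corollary~\ref{cor: 7.4.3} gives $\D^k(\pi(\d))-\pi(\b)\ge0$, i.e. $L_{\b}$ occurs in $\D^i(\pi(\d))=\sum_{r\ge0}\frac1{r!}{e_i'}^r(\pi(\d))$; and by the weight count above the only summand that can contribute $L_{\b}$ is $r=n$. Hence $L_{\b}$ occurs in $\frac1{n!}{e_i'}^n(\pi(\d))$, which is exactly the claim.

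The main obstacle I anticipate is the non-additivity of $\frac1{r!}{e_i'}^r$ on irreducible representations: a priori $L_{\b}$ could fail to appear in $\frac1{n!}{e_i'}^n(\pi(\d))$ even though it appears in $\D^i(\pi(\d))$, if it were produced in some higher term $r>n$. The weight grading rules this out cleanly --- this is precisely why Lemma~\ref{lem: 7.6.6} records the weight identity $\wt(\d)=\wt(\b)+n\alpha_i$ --- but care is needed to make the grading argument at the level of $\mathcal{R}$ rather than only in $U_q^+$; one transports it via $\phi$ and the fact that $\deg$ and the supercuspidal-support grading on $\mathcal{R}$ correspond to the $\N^{(\Z)}$-grading of $U_q^+$. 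A secondary, purely bookkeeping point is to check that $E^*(\d)$ really specializes under $\iota$ and $q=1$ to $\pi(\d)$ and not merely to a scalar multiple; this follows from Prop.~\ref{prop: 7.4.23}, since the normalizing factor $q^{1/2\binom{a_{ij}}{2}}$ becomes $1$ at $q=1$. Once these identifications are in place, the corollary is a one-line consequence of Lemma~\ref{lem: 7.6.6}, Corollary~\ref{cor: 7.4.3}, and the positivity Theorem~\ref{teo: 3}.
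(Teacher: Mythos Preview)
Your proposal is correct and follows essentially the same route as the paper: the ``only if'' direction is the weight count (each $\frac{1}{r!}{e_i'}^r$ shifts weight by $-r\alpha_i$), and the ``if'' direction uses that $\b\preceq_k\d$ forces $L_{\b}$ to occur in $\D^i(\pi(\d))=\sum_r\frac{1}{r!}{e_i'}^r(\pi(\d))$, after which the weight grading pins down $r=n$. The paper's proof is exactly this, in two sentences, citing Proposition~\ref{prop: 7.4.2} (rather than Corollary~\ref{cor: 7.4.3}) for the appearance of $L_{\b}$, and Proposition~\ref{prop: 7.6.5} and Lemma~\ref{lem: 7.6.6} for the decomposition and weight shift.

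Your discussion of positivity (Theorem~\ref{teo: 3}) and of the dictionary $E^*(\d)\leftrightarrow\pi(\d)$ is not needed here: since the summands $\frac{1}{r!}{e_i'}^r(\pi(\d))$ for distinct $r$ live in distinct weight spaces, there is no possibility of cancellation among them, and the weight-$(\wt(\d)-n\alpha_i)$ component of $\D^i(\pi(\d))$ is automatically the $r=n$ term regardless of any positivity of $\D^i$ on irreducibles. So your anticipated ``main obstacle'' dissolves immediately, as you yourself note; the argument can be shortened to match the paper's.
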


\begin{proof}
We know that for each $\b\preceq_k \d$, 
the representation $L_{\b}$ is a factor of $\D^i(\pi(\d))$.  
Now by proposition \ref{prop: 7.6.5}, 
$\D^i=\sum_{r}\frac{1}{r!}e_i'^r$, moreover, 
by lemma \ref{lem: 7.6.6}, factors of $\frac{1}{r!}e_i'^r(\pi(\d))$
always have weight $\wt(\d)-r\alpha_i$. 
Therefore we are done. 
\end{proof}

\begin{prop}\label{prop: 7.3.8}
Let $\b\preceq_k \a$, then
there exists $\c\in S(\a)$ such that $\c=\b+\l[k]$. Then 
\[
 n_{\b, \a}=\sum_{i}\dim\mathcal{H}^{2i}( IC(\line{O}_{\l[k]})\star IC(\line{O}_{\b}))_{\a}.
\] 
\end{prop}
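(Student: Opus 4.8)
The plan is to convert the identity between the algebraic coefficient $n_{\b,\a}$ and the Poincar\'e series of the Lusztig product through the dictionary relating $\mathcal{R}$, $B$, $U_q^+$ and the geometric algebra $\K$ set up in the preceding section. First I would fix $\b\preceq_k\a$ and choose, using proposition \ref{prop: 7.3.5}, a multisegment $\c\in S(\a)$ together with $\Gamma\subseteq\c(k)$ with $\b=\c_\Gamma$; after replacing $\c$ by $\a$ in the rare case $\Gamma=\c(k)$ one reduces to the situation where $\b=\c^{(k)}$ for some $\c$ and the set of segments of $\c$ ending at $k$ that are actually shortened is recorded by $\l=|\Gamma|$, so that $\c=\b+\l[k]$ as a multiset; this is the statement ``there exists $\c\in S(\a)$ such that $\c=\b+\l[k]$'' that opens the proposition. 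The point of this reduction is that $\D^k(\pi(\c))$ has $\pi(\b)$ appearing, and one wants to extract the coefficient of $L_\b$.

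Next I would pass to the quantum picture. By proposition \ref{prop: 7.6.5} the partial derivative $\D^k$ corresponds under $\phi$ to $D^k=\sum_n\frac{1}{n!}{e_k'}^n$ on $B$, hence under $\iota$ to $\sum_n\frac1{n!}{E_k'}^n$ on $U_q^+$ specialised at $q=1$; and by lemma \ref{lem: 7.6.6} the only term that contributes $L_\b$ (of weight $\wt(\b)$) is the one with $n=\l$, since $\wt(\c)=\wt(\b)+\l\alpha_k$. Therefore $n_{\b,\a}$ equals, up to the $q\to1$ specialisation, the coefficient of $E^*(\b)$ in $\frac1{\l!}{E_k'}^{\l}E^*(\c)$, equivalently (by the adjunction $(E_k'u,v)=(u,E_kv)$ of proposition \ref{prop: 7.3.8}'s ambient scalar product, i.e. the Kashiwara pairing) the coefficient read off from $E_k^{\l}$ acting. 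The cleaner route is to note $E^*(\l[k])$ is proportional to $E_k^{\l}$ up to a $[\l]_{q^{1/2}}!$ factor which at $q=1$ becomes $\l!$, so that the relevant quantity is exactly the multiplicity of $G^*(\b)$, equivalently of $E^*(\b)$ modulo higher terms, in the product $E^*(\l[k])\cdot E^*(\b)$ in $B_q$ at $q=1$; and $E^*(\l[k])\cdot E^*(\b)=E^*(\c)+\sum_{\b'<\c}(\cdots)E^*(\b')$ with the coefficients being, by proposition \ref{prop: 7.4.20} and the Kashiwara-orthogonality of proposition \ref{prop: 7.4.21}, the values $P_{\b',\c}(q)$ of intersection cohomology. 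More directly: the product $\star$ on $\K$ corresponds under $\Theta$ (proposition \ref{prop: 7.4.20}) to the multiplication in $U^+$, and $\Theta(\gamma_\a)=q^{1/2\dim O_\a}E(\a)$, $\Theta(\tilde\gamma_\a)=q^{1/2\dim O_\a}G(\a)$; so the class of $IC(\line{O}_{\l[k]})\star IC(\line{O}_{\b})$ expanded in the basis $\{\gamma_\d\}$ has, as coefficient of $\gamma_\a$, precisely the Poincar\'e series $\sum_i q^{i}\dim\mathcal{H}^{2i}$ localised at a point of $O_\a$, and the $q=1$ value of that coefficient is $n_{\b,\a}$ by the chain of identifications above.

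Concretely the key steps, in order, are: (i) the combinatorial reduction to $\c=\b+\l[k]$ via proposition \ref{prop: 7.3.5}; (ii) rewrite $n_{\b,\a}$ as a coefficient in $\sum_\c \tilde m_{\c,\a}\D^k(\pi(\c))$ using $L_\a=\sum\tilde m\,\pi$ and linearity, then isolate the single weight-$\l$ term of $D^k$ via lemma \ref{lem: 7.6.6}; (iii) transport this to $U_q^+$ through $\phi,\iota$, identifying $\frac1{\l!}{e_k'}^\l$ at $q=1$ with the adjoint of multiplication by $E^*(\l[k])$ under the Kashiwara pairing; (iv) identify the resulting matrix coefficient with the structure constant of the Lusztig product $\star$ via $\Theta$ and proposition \ref{prop: 7.4.20}, and recognise that structure constant's $q=1$ value as $\sum_i\dim\mathcal{H}^{2i}(IC(\line{O}_{\l[k]})\star IC(\line{O}_{\b}))_\a$, using that $IC$-stalks in odd degree vanish (the remark after theorem \ref{teo: 4.1.5}, transported to orbital varieties). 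The main obstacle I expect is step (iii)--(iv): matching the normalisations precisely — the $[\l]_{q^{1/2}}!$ versus $\l!$ factors, the $q^{1/2S(\varphi)}K(\varphi)$ twists between $\Gamma$ and $\Theta$, and the $q^{1/2\dim O}$ shifts between $\gamma_\a$ and $E(\a)$ — so that after specialising $q=1$ everything collapses to the clean statement with no stray powers of $q$ or sign; all of these are available in proposition \ref{prop: 7.4.20} and proposition \ref{prop: 7.4.21}, but assembling them without an off-by-$q^{1/2}$ error is the delicate part. A secondary subtlety is checking that $\mathcal{P}$ in corollary \ref{cor: 7.4.15} is genuinely the $IC$ of $\line{O}_{\b}$ pulled up, so that $\beta''_*\mathcal{P}$ really computes the product of the two $IC$ sheaves named in the proposition and not some shift of it.
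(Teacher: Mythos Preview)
Your overall approach is correct and is essentially the same as the paper's: transport the identity through the dictionary $\mathcal{R}\leftrightarrow B\leftrightarrow U_q^+\leftrightarrow\K$, use the Kashiwara adjunction $(E_k'u,v)=(u,E_kv)$ to convert the $q$-derivative on $E^*(\a)$ into multiplication $E_k^{\l}G(\b)$, and then identify the latter with the Lusztig product $\tilde\gamma_{\l[k]}\star\tilde\gamma_{\b}$ via $\Gamma$ and $\Theta$ of proposition~\ref{prop: 7.4.20}. The paper carries out exactly the normalisation bookkeeping you anticipate in (iii)--(iv): it applies $\Gamma$ to both sides of $\tilde\gamma_{\l[k]}\star\tilde\gamma_{\b}=\sum_\d p_{\d,\b}(q)\gamma_\d$, matches this against lemma~\ref{lem: 7.6.6}, and reads off $n_{\b,\d}(q)=p_{\d,\b}(q)\cdot q^{1/2(\cdots)}$, which at $q=1$ gives $n_{\b,\a}=p_{\a,\b}(1)$.

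One small confusion to clean up: your step (ii) is not needed and as written is wrong. The coefficient $n_{\b,\a}$ is defined as the multiplicity of $L_\b$ in $\D^k(\pi(\a))$, not in $\D^k(L_\a)$, so the decomposition $L_\a=\sum_\c\tilde m_{\c,\a}\pi(\c)$ plays no role here. The paper simply identifies $\pi(\a)$ with $E^*(\a)$ and $L_\b$ with $G^*(\b)$ under $\phi\circ\iota$ specialised at $q=1$, so that $n_{\b,\a}$ is directly the $q=1$ value of the coefficient of $G^*(\b)$ in $\frac{1}{[\l]_{q^{1/2}}!}{E_k'}^{\l}E^*(\a)$; from there your steps (iii)--(iv) proceed as you describe.
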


\begin{proof}
Note that by proposition \ref{prop: 7.4.20}, we have  
\begin{align*}
 \Gamma(\tilde{\gamma}_{\l[k]}\star \tilde{\gamma}_{\b})&=
 \Gamma(\tilde{\gamma}_{\l[k]})\Gamma(\tilde{\gamma}_{\b})\\
 &=q^{1/2(S(\varphi_{\l[k]})+S(\varphi_{\b}))}K(\varphi_{\l[k]})K(\varphi_{\b})\Theta(\tilde{\gamma}_{\l[k]})
 \Theta(\tilde{\gamma}_{\b})\\
 &=q^{1/2(S(\varphi_{\l[k]})+S(\varphi_{\b}))}K(\varphi_{\l[k]}+\varphi_{\b})q^{1/2(\dim(O_{\l[k]})+\dim(O_{\b}))}G(\l[k])
 G(\b).
 \end{align*}
Since we have 
\[
S(\varphi_{\l[k]})=\dim(O_{\l[k]})=0, ~G(\l[k])=E(\l[k])=\frac{1}{[\l]_{q^{1/2}}!}E_{k}^{\l},
~\varphi_{\l[k]}+\varphi_{\b}=\varphi_{\a},
\]
so 
\[
 \Gamma(\tilde{\gamma}_{\l[k]}\star \tilde{\gamma}_{\b})=
 \frac{1}{[\l]_{q^{1/2}}!}q^{1/2(S(\varphi_{\b})+\dim(O_{\b}))}K(\varphi_{\a})E_{k}^{\l}G(\b).
\]
And 
\begin{align*}
 \Gamma(\gamma_{\d})&=q^{1/2S(\varphi_{\d})}K(\varphi_{\d})\Theta(\gamma_{\d})\\
 &=q^{1/2(S(\varphi_{\d})+\dim(O_{\d}))}K(\varphi_{\d})E(\d).
\end{align*}
Now write
\[
 \tilde{\gamma}_{\l[k]}\star \tilde{\gamma}_{\b}=\sum_{\b\preceq_k \d, \varphi_{\d}=\varphi_{\a}}p_{\d, \b}(q)\gamma_{\d},
\]
with 
\[
 p_{\d, \b}(q)=\sum_{i}q^{i}\mathcal{H}^{2i}(IC(\line{O}_{\l[k]})\star IC(\line{O}_{\b}))_{\d}.
\]
Applying $\Gamma$ gives 
\begin{multline*}
 \frac{1}{[\l]_{q^{1/2}}!}q^{1/2(S(\varphi_{\b})+\dim(O_{\b}))}K(\varphi_{\a})E_{k}^{\l}G(\b)= \\
 \sum_{\b\preceq_k \d, \varphi_{\d}=\varphi_{\a}}p_{\d, \b}(q)q^{1/2(S(\varphi_{\d})+\dim(O_{\d}))}K(\varphi_{\d})E(\d).
\end{multline*}
Hence 
\[
 E_{k}^{\l}G(\b)=[\l]_{q^{1/2}}!\sum_{\b\preceq_k \d, \varphi_{\d}=\varphi_{\a}}p_{\d, \b}(q)
 q^{1/2(S(\varphi_{\d})+\dim(O_{\d})-S(\varphi_{\b})-\dim(O_{\b}))}E(\d),
\]
now compare with lemma \ref{lem: 7.6.6},
we get 
\[
 (G(\b), E_i'^nE^*(\d))=[\l]_{q^{1/2}}!p_{\d, \b}(q)
 q^{1/2(S(\varphi_{\d})+\dim(O_{\d})-S(\varphi_{\b})-\dim(O_{\b}))}.
\] 
Finally, 
we write 
\[
 \frac{1}{[\l]_{q^{1/2}}!}E_i'^nE^*(\d)=\sum_{\b}n_{\b, \d}(q)G^{*}(\b),
\]
by applying the scalar product, we get 
\[
 n_{\b, \d}(q)=(G(\b), \frac{1}{[\l]_{q^{1/2}}!}E_i'^nE^*(\d))
 =p_{\d, \b}(q)
 q^{1/2(S(\varphi_{\d})+\dim(O_{\d})-S(\varphi_{\b})-\dim(O_{\b}))}.
\]
Hence, by specializing at $q=1$, we have 
\[
 n_{\b, \d}=p_{\d, \b}(1).
\]
Now take $\d=\a$, we get the formula 
in our proposition.
\end{proof}

\section{A formula for  Lusztig's product}

In this section we will find a geometric way to calculate Lusztig's product in special case, which allows us 
to determine the partial derivatives in the following sections.

\begin{definition}
Let $k\in \Z$.
We say that  $\a$ satisfies the assumption 
$(\A_k)$ if it satisfies the following conditions
\footnote{Since here we only work with the partial derivative 
$\D^k$ with $k\in \Z$, 
for every multisegment, we can always use the reduction 
method to increase the length of segments from the left, so 
that at some point we 
arrive at the situation of our assumption $(\A_k)$, therefore we do not lose the 
generality.}
\begin{description}
 \item[(1)] We have 
\[
 \max\{b(\Delta): \Delta\in \a\}+1<\min\{e(\Delta): \Delta\in \a\}.
\]
\item[(2)] 
Moreover, we have 
$\varphi_{e(\a)}(k)\neq 0$ and $\varphi_{e(\a)}(k+1)=0$.

\end{description}

\end{definition}

\begin{lemma}\label{lem: 7.6.1}
Let 
$\a$ be a multisegment satisfying 
the assumption $(\A_k)$. 
Then $\a$ is of parabolic type. Moreover, 
The set $S(\varphi_{\a})$ contains a unique maximal
element satisfying the assumption $(\A_k)$, denoted by $\a_{\Id}$. 
\end{lemma}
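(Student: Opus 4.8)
The plan is to prove the two assertions in sequence. First, that $\a$ is of parabolic type: by Definition \ref{def: 2.1.5} (and the parabolic generalization in \ref{def: 6.2.22}), a multisegment is parabolic if all its segments share a common point. Condition (1) of $(\A_k)$ says $\max\{b(\Delta):\Delta\in\a\}+1<\min\{e(\Delta):\Delta\in\a\}$, so in particular $\max\{b(\Delta):\Delta\in\a\}\le\min\{e(\Delta):\Delta\in\a\}$, which is precisely the symmetric/parabolic condition; hence every segment of $\a$ contains the interval between these two values, and $\a$ is of parabolic type. I would spell out that the common point (say $\max\{b(\Delta)\}$, or any point in the nonempty overlap) lies in every $\Delta\in\a$.

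Next, the existence and uniqueness of a maximal element $\a_{\Id}$ in $S(\varphi_\a)$ satisfying $(\A_k)$. Here I would use Proposition \ref{prop: 4.2.3}: within a fixed weight class $S(\varphi)$, the order is controlled by the rank functions $r_{ij}$, and the maximal element of $S(\varphi_\a)$ in the \emph{full} poset is $\a_{\max}$, the multisegment of weight $\varphi_\a$ all of whose segments are points $[i]$ (this is the open orbit). The point is to produce the largest element that still satisfies $(\A_k)$. Condition (1) of $(\A_k)$ concerns only the supports $b(\Delta)$ and $e(\Delta)$, hence only the weight $\varphi_\a$ and not the particular multisegment; since $\varphi_\a$ already forces $\max b +1<\min e$ over its support (this is inherited from $\a$ satisfying $(\A_k)$, using Lemma \ref{lem: 2.1.3} that $e(\b)\subseteq e(\a)$, $b(\b)\subseteq b(\a)$ for $\b\le\a$), condition (1) holds automatically for every element of $S(\varphi_\a)$. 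Condition (2) of $(\A_k)$ — that $\varphi_{e(\a)}(k)\ne0$ and $\varphi_{e(\a)}(k+1)=0$ — is a condition on the multiset of endpoints $e(\b)$, which by Lemma \ref{lem: 2.1.3} can only shrink as we move down the poset; so the set of $\b\in S(\varphi_\a)$ with $e(\b)=e(\a)$ as multisets is exactly the set of $\b$ satisfying $(\A_k)$ together with having the full endpoint multiset, and I claim this set is an upper set (a filter) in $S(\varphi_\a)$ containing the top.

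The key step, then, is to identify the unique maximum of this filter. I would argue: among multisegments of weight $\varphi_\a$ with prescribed endpoint multiset $e(\a)$, the largest (in the order $\le$, i.e. the most ``broken up'') is the one obtained by making each segment as short as possible subject to the endpoint and weight constraints — concretely, take $\a_{\Id}=\sum_{\Delta\in\a}[e(\Delta)]$ shifted/arranged so that the beginnings are forced by the weight $\varphi_\a$; equivalently it is the minimal-degree-preserving analogue of $\a_{\max}$ relative to the $e(\a)$ constraint. Uniqueness follows because $S(\varphi_\a)$ restricted to this filter is again a lattice-like interval (by the correspondence with a Bruhat interval in a double quotient $S_{J_1}\backslash S_m/S_{J_2}$ from Proposition \ref{prop: 6.2.4} / \ref{prop: 6.3.6}, $\a$ being parabolic), and the maximum of a finite interval in such a poset is unique. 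The main obstacle I expect is making the description of $\a_{\Id}$ genuinely explicit and checking that it does satisfy $(\A_k)$ — in particular verifying condition (2), $\varphi_{e(\a_{\Id})}(k+1)=0$, which requires that shortening segments from the left (to maximize) never creates a new endpoint at $k+1$; this is where I would invoke that elementary operations and their inverses only move endpoints within $e(\a)$, and that the construction of $\a_{\Id}$ uses only left-shortening, so endpoints are untouched. Once the explicit $\a_{\Id}$ is in hand, both ``maximal'' and ``satisfies $(\A_k)$'' are direct checks via $r_{ij}$.
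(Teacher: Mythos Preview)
Your argument for ``$\a$ is parabolic'' is fine and matches the paper. The second part, however, has a real error: you repeatedly conflate $S(\a)=\{\b:\b\le\a\}$ with $S(\varphi_\a)$, the set of \emph{all} multisegments of weight $\varphi_\a$. Lemma \ref{lem: 2.1.3} only applies to $\b\le\a$, so it says nothing about arbitrary $\b\in S(\varphi_\a)$. In particular your claim that ``condition (1) holds automatically for every element of $S(\varphi_\a)$'' is false: the top element of $S(\varphi_\a)$ is $\a_{\max}=\sum_i\varphi_\a(i)[i]$, a sum of points, for which $\max b=\max\supp\varphi_\a>\min\supp\varphi_\a=\min e$. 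Likewise the set of $\b$ satisfying $(\A_k)$ is not an upper set and certainly does not contain the top.

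What you are missing is the observation that condition (1) \emph{determines} $b(\b)$ and $e(\b)$ from $\varphi_\a$ (not the other way around): if $\max b(\b)+1<\min e(\b)$ then for every $i$ at most one of ``a segment begins at $i$'' and ``a segment ends at $i-1$'' can occur, so $f_{b(\b)}(i)=\max(\varphi_\a(i)-\varphi_\a(i-1),0)$ and $f_{e(\b)}(i)=\max(\varphi_\a(i)-\varphi_\a(i+1),0)$. Hence every $\b\in S(\varphi_\a)$ satisfying $(\A_k)$ has the \emph{same} multisets $b(\b)=b(\a)$ and $e(\b)=e(\a)$. The paper then writes $b(\a)=\{k_1\le\cdots\le k_r\}$, $e(\a)=\{\ell_1\le\cdots\le\ell_r\}$, sets
\[
\a_{\Id}=\sum_{j=1}^r [k_j,\ell_j],
\]
and invokes Propositions \ref{prop: 6.3.2} and \ref{prop: 6.2.4}: any such $\b$ corresponds to some $w\in S_r^{J_1(\a),J_2(\a)}$ via $\b=\sum_j[k_j,\ell_{w(j)}]$, and the identity permutation gives the maximal element. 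Your description of $\a_{\Id}$ as ``$\sum_{\Delta\in\a}[e(\Delta)]$ shifted/arranged'' is not this construction and would not have the right weight; the correct $\a_{\Id}$ is the identity pairing of sorted beginnings with sorted ends.
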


\begin{proof}
Let $b(\a)=\{k_1\leq \cdots\leq k_r\}$,
$e(\a)=\{\ell_1\leq \cdots \leq \ell_r\}$. 

Then by proposition \ref{prop: 6.3.2}, 
we know that there exists an element 
$w\in S_r^{J_1(\a), J_2(\a)}$, such that 
\[
 \a=\sum_{j}[k_j, \ell_{w(j)}].
\]
Let 
\[
 \a_{\Id}=\sum_{j}[k_j, \ell_j],
\]
now by proposition \ref{prop: 6.2.4}, we know that 
$\a\leq \a_{\Id}$. 
Finally, $\a_{\Id}$ depends only 
on $b(\a)$ and $e(\a)$, not on $\a$, 
which shows that $\a_{\Id}$ is the maximal element
in $S(\varphi_{\a})$ satisfying the assumption $(\A_k)$. 
\end{proof}

\begin{lemma}\label{lem: 7.6.2}
Suppose that $\a$ is a multisegment satisfying the hypothesis $(\A_k)$, then 
 
 \begin{description}
  \item[(1)] $\tilde{S}(\a)_k=S(\a)$;

\item[(2)] we have 
 \[
  X_{\a}^k=Y_{\a}=\coprod_{\c\in S(\a)}O_{\c}.
 \]
  \end{description}
\end{lemma}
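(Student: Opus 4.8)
The plan is to reduce both assertions to the single observation that under the hypothesis $(\A_k)$ every element of $S(\a)$ has exactly the same multiset of beginnings and of ends as $\a$, which makes the degree condition defining $\tilde{S}(\a)_{k}$ automatic.

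First I would record, using Lemma \ref{lem: 7.6.1}, that the first condition of $(\A_k)$ makes $\a$ parabolic: the set of integers $[\max\{b(\Delta):\Delta\in\a\},\ \min\{e(\Delta):\Delta\in\a\}]$ is nonempty (indeed $\max b(\a)\le\min e(\a)$) and is contained in every segment of $\a$; fix a point $x$ in it. Then I would show by induction along a chain of elementary operations $\a=\b_{0}>\b_{1}>\cdots>\b_{n}=\b$ realizing a given $\b\in S(\a)$ that every $\b_{i}$ is parabolic with $x$ lying in each of its segments. Indeed, if the passage from $\b_{i}$ to $\b_{i+1}$ applies the elementary operation to linked segments $\{\Delta_{1},\Delta_{2}\}$ of $\b_{i}$, then $x\in\Delta_{1}\cap\Delta_{2}$, so $\Delta_{1}\cup\Delta_{2}\ni x$ and $\Delta_{1}\cap\Delta_{2}\ni x$ is in particular nonempty; hence all segments of $\b_{i+1}$ still contain $x$. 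So no elementary operation used inside $S(\a)$ is ever applied to a juxtaposed (disjoint) pair.

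Next I would exploit that when $\Delta_{1}\cap\Delta_{2}\neq\emptyset$ the two multisets $\{e(\Delta_{1}),e(\Delta_{2})\}$ and $\{e(\Delta_{1}\cup\Delta_{2}),e(\Delta_{1}\cap\Delta_{2})\}$ coincide, and likewise for beginnings — this is exactly the computation carried out in the proof of Lemma \ref{lem: 2.1.3}. Combined with the induction of the previous step, this gives $e(\b)=e(\a)$ and $b(\b)=b(\a)$ as multisets for every $\b\in S(\a)$. In particular $\sharp\{\Delta\in\b:e(\Delta)=k\}=\sharp\{\Delta\in\a:e(\Delta)=k\}=\ell_{k}$. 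Since for any multisegment $\gamma$ one has $\deg(\gamma)=\deg(\gamma^{(k)})+\sharp\{\Delta\in\gamma:e(\Delta)=k\}$ (immediate from Definition \ref{def: 2.2.3}) and since $\deg(\b)=\deg(\a)$, we conclude $\deg(\b^{(k)})=\deg(\a^{(k)})$, i.e. $\b\in\tilde{S}(\a)_{k}$. The reverse inclusion $\tilde{S}(\a)_{k}\subseteq S(\a)$ holds by definition, so $\tilde{S}(\a)_{k}=S(\a)$, which is (1). Then (2) is pure unwinding of notation: $X_{\a}^{k}=\coprod_{\c\in\tilde{S}(\a)_{k}}O_{\c}$ by Definition \ref{def: 3.3.1}, and this equals $\coprod_{\c\in S(\a)}O_{\c}=Y_{\a}$ by part (1).

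There is no serious obstacle here; the only points requiring care are in the inductive step — checking that parabolicity (hence the invariance of $e(-)$ and $b(-)$) genuinely propagates down through every elementary operation and not merely the first — and making sure to use only the weak gap $\max b(\a)\le\min e(\a)$, since the stronger inequality $\max b(\a)+1<\min e(\a)$ built into $(\A_k)$, as well as its second condition, are needed only later in the section and not for this lemma.
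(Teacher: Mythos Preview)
Your proof is correct and follows the same line as the paper's: the key point in both is that condition~(1) of $(\A_k)$ forces every segment of every $\c\in S(\a)$ to contain a common point, so no elementary operation in $S(\a)$ ever applies to a juxtaposed pair, whence $e(\c)=e(\a)$ as multisets and in particular $\varphi_{e(\c)}(k)=\varphi_{e(\a)}(k)$, giving $\c\in\tilde{S}(\a)_k$. The paper's proof records this in two sentences while you spell out the induction along a chain and the degree bookkeeping explicitly; your closing remark that only the weak inequality $\max b(\a)\le\min e(\a)$ is actually used here is accurate and worth noting.
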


\begin{proof}
Note that by assumption  
\[
 \max\{b(\Delta): \Delta\in \a\}<\min\{e(\Delta): \Delta\in \a\}.
\]
This ensures that for any $\c\in S(\a)$, 
we have $\varphi_{e(\c)}(k)=\varphi_{e(\a)}(k)$,
hence by definition $\c\in \tilde{S}(\a)_{k}$. 
This proves (1), and (2) follows from (1).
\end{proof}

\begin{lemma}\label{lem: 7.6.3}
Let $\a$ be a multisegment 
satisfying the assumption $(\A_k)$
 and $\a=\a_{\Id}$. 
 Let $\ell\in \N$ such that $\ell\leq \varphi_{e(\a)}(k)$
 and   $\varphi\in \mathcal{S}$ such that 
 \[
  \varphi+\ell \chi_{[k]}=\varphi_{\a}.
 \]
Then for $\b\in S(\varphi)$, we have 
$\b\preceq_k \a$ if and only if $\b^{(k)}\leq \a^{(k)}$
and $\varphi_{e(\b)}(k-1)=\ell+\varphi_{e(\a)}(k-1)$.
 
\end{lemma}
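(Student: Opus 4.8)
The plan is to unwind the definition of $\preceq_k$ by means of proposition \ref{prop: 7.3.5}, which lets us replace the statement ``$\b\preceq_k\a$'' by ``$\b=\c_{\Gamma}$ for some $\c\in S(\a)$ and some $\Gamma\subseteq\c(k)$'', and then to read off both implications from bookkeeping on the beginning- and end-multisets of segments. Two consequences of the hypotheses will be used repeatedly. First, the first condition in $(\A_k)$ forces $\max\{b(\Delta):\Delta\in\a\}+1<\min\{e(\Delta):\Delta\in\a\}$; by lemma \ref{lem: 2.1.3} every $\c\in S(\a)$ then has $b(\c)\subseteq b(\a)$ and $e(\c)\subseteq e(\a)$, so no segment appearing in a chain of elementary operations inside $S(\a)$ (or inside $S(\a^{(k)})$) can be juxtaposed to another; hence these operations preserve the number of segments and, again by lemma \ref{lem: 2.1.3}, the multisets $b(\cdot)$ and $e(\cdot)$. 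In particular $b(\c)=b(\a)$ and $e(\c)=e(\a)$ for every $\c\in S(\a)$, and all such $\c$ have the same number $r$ of segments as $\a$. Second, by lemma \ref{lem: 7.6.1} the multisegment $\a=\a_{\Id}$ is of parabolic type, and by propositions \ref{prop: 6.2.4} and \ref{prop: 6.3.2} (together with the absence of juxtapositions just noted) the poset $S(\a_{\Id})$ consists of \emph{all} multisegments whose beginning-multiset and end-multiset equal those of $\a_{\Id}$.

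For the implication $\Rightarrow$, assume $\b\preceq_k\a$ and write $\b=\c_{\Gamma}$ as above. Each $\Delta\in\Gamma$ has $e(\Delta)=k$ and $b(\Delta)\le\max b(\a)\le k-2$ (since $k=e(\Delta)\ge\min e(\a)>\max b(\a)+1$), so replacing $\Delta$ by $\Delta^{-}$ lowers the degree by one; comparing $\deg(\b)=\deg(\c)-|\Gamma|=\deg(\a)-|\Gamma|$ with $\deg(\b)=\deg(\a)-\ell$ gives $|\Gamma|=\ell$. In $\c_{\Gamma}$ the segments ending at $k$ are exactly those of $\c(k)$ not in $\Gamma$, so $(\c_{\Gamma})^{(k)}$ and $\c^{(k)}$ each truncate the whole of $\c(k)$ and therefore coincide; thus $\b^{(k)}=\c^{(k)}$, and since $\c\in S(\a)=\tilde{S}(\a)_k$ by lemma \ref{lem: 7.6.2}, lemma \ref{lem: 3.0.7}(c) gives $\c^{(k)}\in S(\a^{(k)})$, i.e.\ $\b^{(k)}\le\a^{(k)}$. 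Finally the segments of $\c_{\Gamma}$ ending at $k-1$ are those of $\c$ ending at $k-1$ together with the $|\Gamma|=\ell$ truncated segments, whence $\varphi_{e(\b)}(k-1)=\varphi_{e(\c)}(k-1)+\ell=\varphi_{e(\a)}(k-1)+\ell$, using $e(\c)=e(\a)$.

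For the implication $\Leftarrow$, suppose $\b\in S(\varphi)$ satisfies $\b^{(k)}\le\a^{(k)}$ and $\varphi_{e(\b)}(k-1)=\ell+\varphi_{e(\a)}(k-1)$. Every segment occurring in a multisegment $\d$ with $\b^{(k)}\le\d\le\a^{(k)}$ has its end in $e(\a^{(k)})$, hence strictly larger than $\max b(\a)$, and its beginning in $b(\a^{(k)})=b(\a)$, hence at most $\max b(\a)$; so no such $\d$ has a juxtaposed pair, the number of segments is constant along any chain from $\a^{(k)}$, and the containments $e(\b^{(k)})\subseteq e(\a^{(k)})$ and $b(\b^{(k)})\subseteq b(\a^{(k)})$ are therefore equalities of multisets. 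From $b(\b)=b(\b^{(k)})$ we get $b(\b)=b(\a_{\Id})$; from $e(\b^{(k)})=e(\a^{(k)})$, comparing multiplicities at each integer and using the hypothesis on $\varphi_{e(\b)}(k-1)$, we see that $e(\b)$ is obtained from $e(\a_{\Id})$ by replacing $\ell$ copies of $k$ with $\ell$ copies of $k-1$, so $\b$ has at least $\ell$ segments ending at $k-1$. I would then extend $\ell$ of those segments (changing the end from $k-1$ to $k$) to form a multisegment $\c$ with $b(\c)=b(\a_{\Id})$ and $e(\c)=e(\a_{\Id})$; by the structural fact recalled above, $\c\in S(\a_{\Id})=S(\a)$, and taking $\Gamma$ to be the set of the $\ell$ extended segments (each now ending at $k$) one has $\c_{\Gamma}=\b$. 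Hence $\b\preceq_k\a$ by the converse part of proposition \ref{prop: 7.3.5}.

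The only real obstacle is the juxtaposition-and-segment-count argument in the $\Leftarrow$ direction: it is what upgrades the multiset containments supplied by $\b^{(k)}\le\a^{(k)}$ to the equalities $e(\b^{(k)})=e(\a^{(k)})$ and $b(\b)=b(\a_{\Id})$ on which everything else rests, and it is exactly where condition $(\A_k)$ enters. Once that is in place the two implications are formal, with the actual ``lifting'' of $\b^{(k)}$ back into $S(\a)$ carried out entirely by the parabolic description of $S(\a_{\Id})$ in proposition \ref{prop: 6.2.4}; I expect the remaining checks (e.g.\ the degenerate case $\varphi_{e(\a)}(k-1)=0$, and the fact that no segment of $\b$ begins at $k-1$) to be routine.
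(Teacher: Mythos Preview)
Your proof is correct and follows essentially the same strategy as the paper's: both directions rest on proposition \ref{prop: 7.3.5} together with the bookkeeping that $(\A_k)$ forces $b(\cdot)$ and $e(\cdot)$ to be preserved along chains in $S(\a)$ and $S(\a^{(k)})$, and the converse direction in both cases is closed by the parabolic description of $S(\a_{\Id})$ from proposition \ref{prop: 6.2.4}. The one cosmetic difference is in the $\Leftarrow$ step: the paper truncates $\a$ to a specific $\a'=\a_\Gamma$ (itself a parabolic identity multisegment with the same $b$- and $e$-multisets as $\b$) and concludes $\b\in S(\a')$, whereas you lift $\b$ to a $\c$ with $b(\c)=b(\a_{\Id})$, $e(\c)=e(\a_{\Id})$ and conclude $\c\in S(\a_{\Id})$; these are dual versions of the same construction and neither buys anything the other does not.
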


\begin{proof} 
Let 
$\b\in S(\varphi)$ such that 
$\b\preceq_k \a$, then 
by proposition \ref{prop: 7.3.5},
we know that 
$\b=\c_{\Gamma} $ for some 
$\c\in S(\a)$ and $\Gamma\subseteq \c(k)$.
Therefore 
\[
 \b^{(k)}=\c^{(k)}\leq \a^{(k)}
\]
by the lemma above. And by definition of 
$\c_{\Gamma}$, we know that 
\[
 \varphi_{e(\b)}(k-1)=\ell+\varphi_{e(\c)}(k-1).
\]
Now applying the fact that 
$\a$ satisfies the assumption $(\A_k)$, we deduce that 
\[
 \varphi_{e(\c)}(k-1)=\varphi_{e(\a)}(k-1).
\]
Conversely, let $\b\in S(\varphi)$ be a multisegment  such that 
$\b^{(k)}\leq \a^{(k)}$ and 
$\varphi_{e(\b)}(k-1)=\ell+\varphi_{e(\a)}(k-1)$.

We deduce from $\b^{(k)}\leq \a^{(k)}$ that 
\[
 \b\leq \a^{(k)}+\varphi_{e(\b)}(k)[k],
\]
from which we obtain
\[
 \varphi_{\b}=\varphi_{\a^{(k)}}+\varphi_{e(\b)}(k)\chi_{[k]}.
\]
By assumption, we know that 
\[
 \varphi_{\b}+\ell\chi_{[k]}=\varphi_{\a}.
\]
Combining with the formula
\[
 \varphi_{\a}=\varphi_{\a^{(k)}}+\varphi_{e(\a)}(k)\chi_{[k]},
\]
we have 
\[
 \varphi_{e(\a)}(k)=\varphi_{e(b)}(k)+\ell.
\]
Now that for any 
$\Delta\in \a$, if $e(\Delta)=k$, 
then $b(\a)\leq k-1$. Therefore we have 
\[
 \varphi_{e(\a^{(k)})}(k-1)=\varphi_{e(\a)}(k-1)+\varphi_{e(\a)}(k).
\]
Applying the formula
$\varphi_{e(\b)}(k-1)=\ell+\varphi_{e(\a)}(k-1), ~\varphi_{e(\b^{(k)})}(k-1)=\varphi_{e(\a^{(k)})}(k-1)$, we get 
\[
  \varphi_{e(\b^{(k)})}(k-1)=\varphi_{e(\b)}(k-1)+\varphi_{e(\b)}(k).
\]
Such a formula implies that for $\Delta\in \b$, 
if $e(\Delta)=k$, then 
$b(\Delta)\leq k-1$. 

Let $b(\a)=\{k_1\leq \cdots\leq k_r\}$,
$e(\a)=\{\ell_1\leq \cdots \leq \ell_r\}$. 
The assumption that $\a=\a_{\Id}$ implies that 
\[
 \a=\sum_i [k_i, \ell_i]
\]
Suppose that 
\[
 \a(k)=\{[k_i,\ell_i]: i_0\leq i\leq i_1\}.
\]
Take 
$\Gamma=\{[k_i, \ell_i]: i_0+\ell\leq i\leq i_1\}$ and 
\[
 \a'=\a_{\Gamma}.
\]
Then $\a'\preceq_k \a$.  
Note that $\a'$ is a multisegment of 
parabolic type which 
corresponds to the identify 
in some symmetric group, 
cf. notation \ref{nota: 6.2.21}. 
Finally, proposition \ref{prop: 6.2.4} 
implies that $\b\in S(\a')$.

\end{proof}

\begin{lemma}\label{lem: 7.7.2}
Assume that $\a$ is 
a multisegment satisfying $(\A_k)$.
Let $r\leq \varphi_{e(\a)}(k)$ and 
$\d=\a+r[k+1]$. Then 
we have 
$X_{\d}^{k+1}=Y_{\d}$ and for a 
fixed subspace $W$ of $V_{\varphi_{\d}, k+1}$
of dimension $r$,
the open immersion 
\[
 \tau_W: (X_{\d}^{k+1})_{W}\rightarrow (Z^{k+1, \d})_W\times \Hom(V_{\varphi_{\d}, k}, W)
\]
is an isomorphism.
   
\end{lemma}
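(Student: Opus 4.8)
The statement has two parts: the equality $X_{\d}^{k+1}=Y_{\d}$ of varieties, and the assertion that $\tau_{W}$ — which we already know to be an open immersion by Proposition \ref{prop: 4.6.14} — is in fact an isomorphism. The plan is to establish these in that order.

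For the first part, by Definition \ref{def: 3.3.1} it is equivalent to show $\tilde{S}(\d)_{k+1}=S(\d)$, i.e.\ that every $\c\in S(\d)$ satisfies $\deg(\c^{(k+1)})=\deg(\d^{(k+1)})$; by Lemma \ref{lem: 3.1.5} this reduces to the combinatorial claim $\sharp\{\Delta\in\c:e(\Delta)=k+1\}=\sharp\{\Delta\in\d:e(\Delta)=k+1\}=r$. I would prove this by re-running the argument of Lemma \ref{lem: 7.6.2}, tracking along a chain of elementary operations from $\d$ to $\c$ the multiset of endpoints $\geq k+1$. An operation on a pair $\{\Delta_1\prec\Delta_2\}$ with non-empty intersection leaves $\{e(\Delta_1),e(\Delta_2)\}$ unchanged, whereas a juxtaposition only deletes the value $e(\Delta_1)=b(\Delta_2)-1$; by Lemma \ref{lem: 2.1.3} every beginning occurring along the chain lies in $b(\d)$, and hypothesis $(\A_k)$ gives $\max b(\a)\leq k-2$ (from its condition (1) together with $\varphi_{e(\a)}(k)\neq 0$), hence $\max b(\d)=k+1$ when $r\geq 1$ (the case $r=0$ being immediate), so any deleted endpoint is $\leq k$. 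Thus the multiset of endpoints $\geq k+1$ is invariant along the chain; in $\d$ it consists exactly of the $r$ copies of $k+1$ coming from $r[k+1]$, since condition (2) of $(\A_k)$ gives $\varphi_{e(\a)}(k+1)=0$. This yields the claim, so $X_{\d}^{k+1}=\coprod_{\c\in S(\d)}O_{\c}=Y_{\d}$; in particular $\tilde{S}(\d)_{k+1}$ has $\d_{\min}$ as unique minimal element, so $(X_{\d}^{k+1})_{W}$ is irreducible and normal by Proposition \ref{prop: 3.3.13} and $\tau_{W}$ is an open immersion by Proposition \ref{prop: 4.6.14} (the target being normal by \cite{Z4}).

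It then remains to prove $\tau_{W}$ is surjective. Given $(T_{0},z)\in(Z^{k+1,\d})_{W}\times\Hom(V_{\varphi_{\d},k},W)$, I would construct a preimage exactly as in Definition \ref{def: 3.3.7} and the proof of Proposition \ref{prop: 3.2.11}: choose the splitting $V_{\varphi_{\d},k+1}=W\oplus V_{\varphi_{\d},k+1}/W$ and set $T|_{V_{\varphi_{\d},k}}=z\oplus T_{0}|_{V_{\varphi_{\d},k}}$, $T|_{V_{\varphi_{\d},k+1}}=T_{0}|_{V_{\varphi_{\d},k+1}/W}\circ p_{W}$, and $T|_{V_{\varphi_{\d},i}}=T_{0}|_{V_{\varphi_{\d},i}}$ for $i\neq k,k+1$; then $\tau_{W}(T)=(T_{0},z)$ is immediate. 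The point requiring work is that $T$ lies in $(X_{\d}^{k+1})_{W}$, i.e.\ that the multisegment $\c$ with $T\in O_{\c}$ lies in $S(\d)$. Here $\d^{(k+1)}=\a$, so $T_{0}\in Y_{\a}$ and the multisegment $\c_{0}$ with $T_{0}\in O_{\c_{0}}$ lies in $S(\a)$; since $\varphi_{e(\a)}(k+1)=0$ and $\c_0\in S(\a)$, the map $T_{0}$ restricted to $V_{\varphi_{\d},k+1}/W$ has zero kernel, so $\ker(T|_{V_{\varphi_{\d},k+1}})=W$ exactly and $T^{(k+1)}=T_{0}$. Because $T$ coincides with $T_{0}$ in degrees $\neq k,k+1$ and $W$ is annihilated by $T$, the Jordan type of $T$ is that of $T_{0}$ with $r$ new cells adjoined over $W$, of which some are merged with cells of $T_{0}$ ending at $V_{\varphi_{\d},k}$ according to the rank of $z$ on their tops. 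In multisegment language, $\c$ is obtained from $\c_{0}+r[k+1]$ by elementary operations absorbing points $[k+1]$ into segments ending at $k$; and $\c_{0}\leq\a$ forces $\c_{0}+r[k+1]\leq\a+r[k+1]=\d$ (the operations realizing $\c_{0}\leq\a$ can be performed verbatim in $\a+r[k+1]$), so $\c\leq\d$ and $T\in Y_{\d}$. Since an open immersion onto a normal irreducible target that is surjective is an isomorphism, this finishes the proof.

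The main obstacle I expect is exactly this last identification of the Jordan type of the constructed $T$ — in particular verifying that $z$ contributes nothing beyond the absorptions described, so that one stays inside $Y_{\d}$ rather than falling into a larger orbit closure. This is the step that uses hypothesis $(\A_k)$ in an essential way; once it is in place, the rest is a formal combination of the open immersion of Proposition \ref{prop: 4.6.14} with surjectivity.
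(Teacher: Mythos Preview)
Your proposal is correct and follows essentially the same route as the paper: first $\tilde S(\d)_{k+1}=S(\d)$, then surjectivity of $\tau_W$ via the explicit splitting construction. The paper's proof is simply terser --- it asserts $\d_{\min}\in\tilde S(\d)_{k+1}$ for the first part and $T'\in Y_{\d}$ for the second without further comment --- whereas you supply the combinatorial justifications; the ``main obstacle'' you flag is in fact harmless, since $r_{ij}(T)=r_{ij}(T_0)$ for all $(i,j)$ with $j\neq k+1$ and $r_{i,k+1}(T)\geq r_{i,k+1}(T_0)=r_{i,k+1}(\c_0+r[k+1])$, so Proposition~\ref{prop: 4.2.3} gives $\c\leq\c_0+r[k+1]\leq\d$ directly.
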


\begin{proof}
Note that our assumption on $\a$ 
ensures that $X_{\d}^{k+1}=Y_{\d}$ since 
we have $\d_{\min}\in \tilde{S}(\d)_{k+1}$. 
It suffices to show that $\tau_W$ is surjective.
Let $(T^{(k)}, T_0)\in (Z^{k+1, \d})_W\times \Hom(V_{\varphi_{\d}, k}, W)$,
by fixing a splitting $V_{\varphi_{\d}, k+1}=W\oplus V_{\varphi_{\d}, k+1}/W$, we define 
 \[
  T'|_{V_{\varphi_{\d}, k}}=T_{0}\oplus T^{(k+1)}|_{V_{\varphi_{\d}, k}},
 \]
\[
 T'|_{V_{\varphi_{\d}, k+1}}=T^{(k+1)}|_{V_{\varphi_{\d}, k+1}/W}\circ p_{W},
\]
\[
 T'|_{V_{\varphi_{\d}, i}}=T^{(k+1)}, \text{ for } i\neq k, k+1,
\]
where $p_w: V_{\varphi_{\d}}\rightarrow  V_{\varphi_{\d}, k}/W$ is the 
canonical projection.
Then we have $T'\in Y_{\d}$ hence $T'\in (X_{\d}^{k+1})_{W}$. 
Now since by construction we have $\tau_W(T')=(T^{(k)}, T_0)$, we are done.
\end{proof}

\begin{definition}\label{def: 7.4.6}
Assume that $\a$ is 
a multisegment satisfying $(\A_k)$
and $\d=\a+r[k+1]$ for some $r\leq \varphi_{e(\a)}(k)$.
Let $\mathfrak{X}_{\d}$ be the open sub-variety of 
$X_{\d}^{k+1}$ consisting of those orbits 
$O_{\c}$ with $\c\in S(\d)$, such that  
 $\varphi_{e(\c)}(k)+r=\varphi_{e(\a)}(k)$. 
\end{definition}

\begin{definition}
Let $V$ be a vector space and $\ell_1<\ell_2<\dim(V)$ be two integers.
We define 
\[
 Gr(\ell_1, \ell_2, V)=\{(U_1, U_2): U_1\subseteq U_2\subseteq V, \dim(U_1)=\ell_1, \dim(U_2)= \ell_2\}.
\]

\end{definition}

\begin{definition}
Let $\ell$ be an integer and 
$\a$ be a multisegment.
We let 
\[
E''_{\a}=\{(T', W'): T'\in Y_{\a}, W'\in Gr(\ell, \ker(T'|_{V_{\varphi_{\a}, k}}))\} .
\]
Note that we have a canonical morphism 
\[
 \alpha': E''_{\a}\rightarrow Gr(\ell, \varphi_{e(\a)}(k), V_{\varphi_{\a}, k})
\]
sending $(T',W')$ to $(W', \ker(T'|_{V_{\varphi_{\a}, k}}))$.

\end{definition}

\begin{prop}\label{prop: 7.7.6}
The morphism $\alpha'$ is a fibration.   
\end{prop}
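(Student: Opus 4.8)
The plan is to mimic verbatim the proof of Proposition~\ref{prop: 4.6.10}, replacing the Grassmannian $Gr(\ell_{k}, V_{\varphi})$ used there by the two-step flag variety $Gr(\ell,\varphi_{e(\a)}(k),V_{\varphi_{\a},k})$, and the relevant group by $GL(V_{\varphi_{\a},k})=GL_{\varphi(k)}$.

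First I would check that $\alpha'$ is well defined and surjective. Since $\a$ satisfies the assumption $(\A_k)$, Lemma~\ref{lem: 7.6.2} gives $Y_{\a}=\coprod_{\c\in S(\a)}O_{\c}$ with every $\c\in\tilde S(\a)_{k}$, so by Lemma~\ref{lem: 3.1.5} the integer $\dim\ker(T'|_{V_{\varphi_{\a},k}})$ is constant on $Y_{\a}$ and equals $\varphi_{e(\a)}(k)$; hence for $(T',W')\in E''_{\a}$ the pair $(W',\ker(T'|_{V_{\varphi_{\a},k}}))$ is genuinely a point of $Gr(\ell,\varphi_{e(\a)}(k),V_{\varphi_{\a},k})$. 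Because $Y_{\a}$ is $G_{\varphi_{\a}}$-stable, its factor $GL_{\varphi(k)}$ acts on $Y_{\a}$, and one computes $\ker((g.T')|_{V_{\varphi_{\a},k}})=g\bigl(\ker(T'|_{V_{\varphi_{\a},k}})\bigr)$, so $GL_{\varphi(k)}$ acts on $E''_{\a}$ over $Gr(\ell,\varphi_{e(\a)}(k),V_{\varphi_{\a},k})$, on which it acts transitively; surjectivity of $\alpha'$ follows from $Y_{\a}\neq\emptyset$.

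Next I would realize $E''_{\a}$ as an associated bundle. Fix $x_{0}=(W_{0},V_{0})\in Gr(\ell,\varphi_{e(\a)}(k),V_{\varphi_{\a},k})$ and let $P\subseteq GL_{\varphi(k)}$ be its stabilizer, a parabolic subgroup. Consider
\[
 \delta:\; GL_{\varphi(k)}\times_{P}(\alpha')^{-1}(x_{0})\longrightarrow E''_{\a},\qquad \delta([g,(T',W')])=g.(T',W').
\]
Exactly as in Proposition~\ref{prop: 4.6.10}, one checks that $\delta$ is well defined, surjective (transitivity of $GL_{\varphi(k)}$ on the flag variety lets one move $(W',\ker(T'|_{V_{\varphi_{\a},k}}))$ to $x_{0}$) and injective (if $g.(T',W')\in(\alpha')^{-1}(x_{0})$ then $g$ fixes $x_{0}$, i.e. $g\in P$), hence an isomorphism compatible with the projections down to $Gr(\ell,\varphi_{e(\a)}(k),V_{\varphi_{\a},k})$. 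Since $P$ is parabolic it is special in the sense of Serre \cite{S}, so the principal bundle $GL_{\varphi(k)}\to GL_{\varphi(k)}/P\simeq Gr(\ell,\varphi_{e(\a)}(k),V_{\varphi_{\a},k})$ is Zariski-locally trivial; hence so is the associated bundle $GL_{\varphi(k)}\times_{P}(\alpha')^{-1}(x_{0})$, and transporting this local triviality through $\delta$ shows $\alpha'$ is a Zariski-locally trivial fibration.

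The argument is formal once the template of Proposition~\ref{prop: 4.6.10} is in hand; the only points that need care are the constancy of $\dim\ker(T'|_{V_{\varphi_{\a},k}})$ on $Y_{\a}$ (this is exactly where the hypothesis $(\A_k)$ and Lemmas~\ref{lem: 3.1.5} and~\ref{lem: 7.6.2} are used) and the bookkeeping verifying that the $GL_{\varphi(k)}$-action really preserves all the incidence conditions defining $E''_{\a}$, so that $\delta$ lands in $E''_{\a}$ and is an isomorphism of varieties over the flag variety. I expect no genuine obstacle beyond this.
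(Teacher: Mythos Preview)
Your proposal is correct and follows essentially the same approach as the paper: the paper's proof also reduces directly to Proposition~\ref{prop: 4.6.10}, observing that $\alpha'$ is $GL(V_{\varphi_{\a},k})$-equivariant and that the stabilizer $P_{(U_1,U_2)}$ of a two-step flag is parabolic (hence special in Serre's sense), so a local section $s:\mathfrak{U}\to GL(V_{\varphi_{\a},k})$ exists and yields the trivialization $\gamma((T,W'))=[(g^{-1}T,g^{-1}W'),\alpha'((T,W'))]$ with $g=s(\alpha'((T,W')))$. Your write-up is in fact more careful than the paper's, since you spell out why $\dim\ker(T'|_{V_{\varphi_{\a},k}})$ is constant on $Y_{\a}$ (via Lemmas~\ref{lem: 3.1.5} and~\ref{lem: 7.6.2} under $(\A_k)$), a point the paper leaves implicit.
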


\begin{proof}
The morphism $\alpha'$ is equivariant under the action of $GL(V_{\varphi_{\a}, k})$.
The same proof as in proposition \ref{prop: 4.6.10} shows that 
the morphism $\alpha'$ is actually a $P_{(U_1, U_2)}$ bundle, where
$P_{(U_1, U_2)}$  is a subgroup of $GL(V_{\varphi_{\a}, k})$ which fixes the given 
element $(U_1, U_2)$.  
Now we take a Zariski neighborhood $\mathfrak{U}$ of 
$(U_1, U_2)$ over which we have the trivialization
\[
 \gamma: \alpha'^{-1}(\mathfrak{U})\simeq \alpha'^{-1}((U_1, U_2))\times \mathfrak{U}, 
\]
such an isomorphism comes from a section 
\[
 s: \mathfrak{U} \rightarrow GL(V_{\varphi_{\a}, k}),
 ~s((U_1, U_2))=Id,
\]
by $\gamma((T, W'))=[(g^{-1}T, g^{-1}W'), \alpha'((T, W'))]$, where $g=s(\alpha'((T, W')))$.
We remark that the existence of the section $s$
is guaranteed by local triviality of  $GL(V_{\varphi_{\a}, k})\rightarrow GL(V_{\varphi_{\a}, k})/P_{(U_1, U_2)}$,
cf.  \cite{S}, $\S$ 4.  
\end{proof}

\begin{prop}\label{prop: 7.7.7}
Assume that $\a$ is 
a multisegment satisfying $(\A_k)$
and $\d=\a+r[k+1]$ for some $r\leq \varphi_{e(\a)}(k)$.
Let $\ell\in \N$ such that $r+\ell=\varphi_{e(\a)}(k)$
and $W$ a subspace of 
$V_{\varphi_{\d}, k+1}$ 
such that $\dim(W)=r$.
We have a canonical projection
 \[
  p: (\mathfrak{X}_{\d})_W\rightarrow E''_{\a}
 \]
where for $T\in (\mathfrak{X}_{\d})_W$ with 
$\tau_W(T)=(T_1, T_0)\in (Z^{k+1, \d})_W\times \Hom(V_{\varphi_{\d}, k}, W)$, we define
$p(T)=(T_1, \ker(T_0|_{W_1}))$,  
where $W_1=\ker(T_1|_{V_{\varphi_{\d}, k}})$
(Note that here we identify $(Z^{k+1, \d})_W$ with $Y_{\a}$, see the remark after proposition \ref{prop: 3.3.13} ). 
Moreover, 
let $U_1\subseteq U_2\subseteq V_{\varphi_{\d}, k}$ be subspaces
such that $\dim(U_1)=\ell,~ \dim(U_2)=\varphi_{e(\a)}(k)$, then 
$p$ is a fibration 
with fiber 
$$\{T\in \Hom(V_{\varphi_{\d}, k}, W): \ker(T|_{U_2})=U_1 \}.$$
\end{prop}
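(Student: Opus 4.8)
The plan is to give $p$ an explicit description through the isomorphism $\tau_W$ of Lemma~\ref{lem: 7.7.2}, and then to obtain local triviality by exactly the group-theoretic argument already used for $\alpha$ in Proposition~\ref{prop: 4.6.10} and for $\alpha'$ in Proposition~\ref{prop: 7.7.6}. First I would verify that $p$ is well defined, i.e. that $p(T)\in E''_{\a}$. For $T\in(\mathfrak{X}_{\d})_W$ write $\tau_W(T)=(T_1,T_0)$ with $T_1=(\gamma_{k+1})_W(T)$ and $T_0=q_W\circ T|_{V_{\varphi_{\d},k}}$ as in Definition~\ref{def: 3.3.13}, and put $W_1=\ker(T_1|_{V_{\varphi_{\d},k}})$. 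A direct computation from Definitions~\ref{def: 3.3.7} and~\ref{def: 3.3.13} gives $W_1=(T|_{V_{\varphi_{\d},k}})^{-1}(W)$; hence for $v\in W_1$ one has $T(v)\in W$, so $q_W(T(v))=T(v)$ and therefore $T_0(v)=T(v)$. Consequently
\[
\ker(T_0|_{W_1})=\ker(T|_{V_{\varphi_{\d},k}}),
\]
which for $T\in O_{\c}$ has dimension $\varphi_{e(\c)}(k)$; by the defining condition of $\mathfrak{X}_{\d}$ (Definition~\ref{def: 7.4.6}) this equals $\ell$ and is constant on $(\mathfrak{X}_{\d})_W$. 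Combined with $T_1\in Y_{\a}$ under the identification $(Z^{k+1,\d})_W\simeq Y_{\a}$ and with $\ker(T|_{V_{\varphi_{\d},k}})\subseteq W_1$, this shows $p(T)=(T_1,\ker(T|_{V_{\varphi_{\d},k}}))\in E''_{\a}$. Here I would also record, using the assumption $(\A_k)$ together with Lemma~\ref{lem: 2.1.3}, that no two segments of any $\c'\in S(\a)$ can be juxtaposed, so $e(\c')=e(\a)$ for all such $\c'$ and $\dim W_1=\varphi_{e(\a)}(k)$ is genuinely constant over $Y_{\a}$ (the analogue of Lemma~\ref{lem: 7.6.2}); then $p$ is a morphism because $\tau_W$ is and the relevant kernels have constant rank on $(\mathfrak{X}_{\d})_W$.

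Next I would compute the fibres. Since $\tau_W$ is an isomorphism (Lemma~\ref{lem: 7.7.2}), for $(T',W')\in E''_{\a}$ with $W_1=\ker(T'|_{V_{\varphi_{\d},k}})$ the fibre $p^{-1}(T',W')$ is identified with
\[
\{\,T_0\in\Hom(V_{\varphi_{\d},k},W):\ \tau_W^{-1}(T',T_0)\in\mathfrak{X}_{\d}\ \text{and}\ \ker(T_0|_{W_1})=W'\,\};
\]
by the kernel identity above the first condition means $\dim\ker(T_0|_{W_1})=\ell$, which is implied by the second since $\dim W'=\ell$. Hence $p^{-1}(T',W')\simeq\{T_0\in\Hom(V_{\varphi_{\d},k},W):\ker(T_0|_{W_1})=W'\}$, and taking $(U_1,U_2)=(W',W_1)$ yields the fibre in the statement; because $GL(V_{\varphi_{\d},k})$ acts transitively on pairs of subspaces of the prescribed dimensions, this fibre is, up to isomorphism, independent of the chosen point.

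Finally, for local triviality I would reuse the argument of Propositions~\ref{prop: 4.6.10} and~\ref{prop: 7.7.6}: the group $GL(V_{\varphi_{\d},k})=GL(V_{\varphi_{\a},k})$ lies in $G_{\varphi_{\d}}$, acts trivially on $V_{\varphi_{\d},k+1}$ hence preserves $(\mathfrak{X}_{\d})_W$, and $p$ is equivariant for the induced action on $E''_{\a}$. Composing $p$ with the fibration $\alpha'$ of Proposition~\ref{prop: 7.7.6}, it suffices to trivialize $p$ over each $\alpha'^{-1}(\mathfrak{U})$ for $\mathfrak{U}$ a Zariski open of $Gr(\ell,\varphi_{e(\a)}(k),V_{\varphi_{\d},k})$ over which both $\alpha'$ and the parabolic bundle $GL(V_{\varphi_{\d},k})\to GL(V_{\varphi_{\d},k})/P_{(U_1,U_2)}$ trivialize (such $\mathfrak{U}$ exist since $P_{(U_1,U_2)}$ is special in the sense of Serre~\cite{S}, \S 4). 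A section $s\colon\mathfrak{U}\to GL(V_{\varphi_{\d},k})$ then straightens $p^{-1}(\alpha'^{-1}(\mathfrak{U}))\to\alpha'^{-1}(\mathfrak{U})$ to the projection $E''_{\a,(U_1,U_2)}\times\{T_0:\ker(T_0|_{U_2})=U_1\}\to E''_{\a,(U_1,U_2)}$, which is manifestly trivial because over $E''_{\a,(U_1,U_2)}$ the pair $(W',W_1)$ is frozen at $(U_1,U_2)$. The main obstacle is not a single deep step but the bookkeeping of the kernel identity $\ker(T_0|_{W_1})=\ker(T|_{V_{\varphi_{\d},k}})$ together with the constancy of $\dim W_1$ and of $\dim\ker(T|_{V_{\varphi_{\d},k}})$ on $(\mathfrak{X}_{\d})_W$; once these are established, the fibre computation and the local triviality follow formally from the results already proved.
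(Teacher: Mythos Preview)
Your proof is correct and follows essentially the same strategy as the paper: verify that $p$ lands in $E''_{\a}$ by checking $\dim\ker(T_0|_{W_1})=\ell$, read off the fibre from the isomorphism $\tau_W$, and obtain local triviality by pulling back a local section of $GL(V_{\varphi_{\d},k})\to GL(V_{\varphi_{\d},k})/P_{(U_1,U_2)}$ over the open sets $\alpha'^{-1}(\mathfrak{U})$ of Proposition~\ref{prop: 7.7.6}. The one genuine difference is your explicit kernel identity $\ker(T_0|_{W_1})=\ker(T|_{V_{\varphi_{\d},k}})$, which replaces the paper's rank--nullity style count ``$\dim(W)+\dim(\ker(T_0|_{W_1}))=\dim(W_1)$''; your route is slightly cleaner because it avoids having to argue that $T_0|_{W_1}\colon W_1\to W$ is surjective, and it also gives you the fibre description in one stroke.
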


\begin{proof}
We show that $p$ is well defined.
Since by definition of $(\mathfrak{X}_{\d})_W$ we know that 
\[
 \dim(W)+\dim(\ker(T_0|_{\ker(T_1|_{V_{\varphi_{\d}, k}})}))=\dim(\ker(T_1|_{V_{\varphi_{\d}, k}})),
\]
hence to see that 
\[
 \ell =\dim(\ker(T_0|_{\ker(T_1|_{V_{\varphi_{\d}, k}})})),
\]
it suffices to show that 
\[
 \varphi_{e(\a)}(k)=\dim(\ker(T_1|_{V_{\varphi_{\d}, k}})),
\]
this follows from the fact that $\a=\d^{(k+1)}$. 
Finally, we show that $p$
is a fibration. 
Note that by definition, the fiber of $p$
is isomorphic to $$\{T\in \Hom(V_{\varphi_{\d}, k}, W): \ker(T|_{U_2})=U_1 \}.$$

So it suffices to show that it is locally trivial.
To show this, we consider the open subset $\mathfrak{U}$ in $E''_{\a}$ as constructed 
in the proof of proposition \ref{prop: 7.7.6}.

Now we construct a trivialization 
for $p$
\[
 \varrho: p^{-1}(\alpha'^{-1}(\mathfrak{U}))\rightarrow \alpha'^{-1}(\mathfrak{U})\times 
 \{T\in \Hom(V_{\varphi_{\d}, k}, W): \ker(T|_{U_2})=U_1 \}
\]
with $\varrho(T)=[(T_1, W'), g^{-1}(T_0)]$,
where $g=s((W', W_1))$, $W_1=\ker(T_1|_{V_{\varphi_{\d}, k}})$.
Note that given 
$$[(T_1, W'), T_0]\in \alpha'^{-1}(\mathfrak{U})\times 
 \{T\in \Hom(V_{\varphi_{\d}, k}, W): \ker(T|_{U_2})=U_1 \},$$
take $W_1=\ker(T_1|_{V_{\varphi_{\d}, k}})$   
then $(W', W_1)\in \mathfrak{U}$, 
hence $g=s((W', W_1))$ exists.
Let $T_0'=gT_0$. Then $T=\tau_W^{-1}((T_1, T_0'))\in  p^{-1}(\alpha'^{-1}(\mathfrak{U}))$.

\end{proof}

\begin{definition}
Let $\ell+\dim(W)=\varphi_{e(\a)}(k)$.
We define $ \mathfrak{Y}_{\a}$ to be the set of pairs $(T, U)$ satisfying
 \begin{description}
  \item[(1)] $U\in Gr(\ell, V_{\varphi_{\a},k})$,
  $T\in \End(V_{\varphi_{\a}}/U) \text{ of degree } 1$;
  \item[(2)] $ T\in O_{\b}$ for some $\b\preceq_k \a$.
 \end{description}
 And we have a canonical 
 projection 
 \[
  \sigma: E''_{\a}\rightarrow  \mathfrak{Y}_{\a}
 \]
for $(T, U)\in  E''_{\a}$, we associate
\[
 \sigma((T, U))=(T', U)
\]
where 
 $T'\in \End(V_{\varphi_{\a}}/U)$
is the quotient of $T$. 
Also, we have a morphism 
\[
 \sigma': \mathfrak{Y}_{\a}\rightarrow Gr(\ell, \varphi_{e(\a)}(k), V_{\varphi_{\a}, k}),
\]
by 
\[
 \sigma'((T, U))=(U, \pi^{-1}(\ker(T|_{V_{\varphi_{\a},k}/U}))).
\]
where $\pi: V_{\varphi_{\a}, k}\rightarrow V_{\varphi_{\a},k}/U$
be the canonical projection.
\end{definition}

\begin{lemma}
 We have for $T\in \mathfrak{Y}_{\a}$,
\begin{description}
\item [(1)] $\gamma_k(T)\in Z^{k,\a}$;
\item[(2)] $T|_{V_{\varphi_{\a}, k-1}} \text{ is surjective }$;
\item[(3)] $\dim(\ker(T|_{V_{\varphi_{\a},k}/U}))=\dim(W)$; 
  
 \end{description}
 for $\gamma_k$, see definition \ref{def: 3.3.7}.
\end{lemma}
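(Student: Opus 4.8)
The plan is to translate each of the three assertions into a combinatorial statement about the multisegment $\b$ with $T\in O_{\b}$, and then read it off from the Jordan description of $O_{\b}$ (proposition \ref{prop: 2.2.4}) together with the facts about $\preceq_k$ already established. First I would collect the input data on $\b$. Since $(T,U)\in\mathfrak{Y}_{\a}$ we have $\b\preceq_k\a$, so by proposition \ref{prop: 7.3.5} there are $\c\in S(\a)$ and $\Gamma\subseteq\c(k)$ with $\b=\c_{\Gamma}$; passing from $\c$ to $\c_{\Gamma}$ only shortens segments ending in $k$ and never moves a beginning, so $b(\b)=b(\c)$ as multisets, and $b(\c)\subseteq b(\a)$ by lemma \ref{lem: 2.1.3}. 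Because $\a$ satisfies $(\A_k)$ and $\varphi_{e(\a)}(k)\neq0$, every segment of $\a$ begins at most at $\min\{e(\Delta):\Delta\in\a\}-2\le k-2$, hence the same holds for every segment of $\b$. Next I would do the weight bookkeeping: $V_{\varphi_{\a}}/U$ has graded dimension $\varphi_{\a}-\ell\chi_{[k]}$, so $\varphi_{\b}=\varphi_{\a}-\ell\chi_{[k]}$; combining $\varphi_{\a}=\varphi_{\a^{(k)}}+\varphi_{e(\a)}(k)\chi_{[k]}$ and $\varphi_{\b}=\varphi_{\b^{(k)}}+\varphi_{e(\b)}(k)\chi_{[k]}$ with $\varphi_{\b^{(k)}}=\varphi_{\a^{(k)}}$ (valid since $\b\preceq_k\a$ forces $\b^{(k)}\le\a^{(k)}$ by lemma \ref{lem: 7.6.3}) yields $\varphi_{e(\b)}(k)=\varphi_{e(\a)}(k)-\ell$, and then $\varphi_{e(\b)}(k)=\dim(W)$ by the defining relation $\ell+\dim(W)=\varphi_{e(\a)}(k)$.

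With these in hand the three claims are short. For (3): the degree $k$ part of $V_{\varphi_{\a}}/U$ is exactly $V_{\varphi_{\a},k}/U=V_{\varphi_{\b},k}$, and for $T\in O_{\b}$ the kernel of $T|_{V_{\varphi_{\b},k}}$ is spanned by the Jordan pieces coming from segments of $\b$ that end in $k$, so its dimension is $\varphi_{e(\b)}(k)=\dim(W)$. For (2): the cokernel of $T|_{V_{\varphi_{\b},k-1}}:V_{\varphi_{\b},k-1}\to V_{\varphi_{\b},k}$ has dimension equal to the number of segments of $\b$ beginning at $k$, but we showed every segment of $\b$ begins at most at $k-2$, so this number is $0$ and $T|_{V_{\varphi_{\a},k-1}}$ is surjective. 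For (1): applying the degree $k$ truncation of definition \ref{def: 3.3.7} to $T$ produces $T^{(k)}\in O_{\b^{(k)}}$, and $\b^{(k)}\le\a^{(k)}$ then puts $T^{(k)}$ in $\line{O}_{\a^{(k)}}=Y_{\a^{(k)}}$; together with the dimension count of (3) this identifies $\gamma_k(T)=(T^{(k)},\ker(T|_{V_{\varphi_{\b},k}}))$ with an element of the relevant $Z^{k,\a}$-stratum over $\line{O}_{\a^{(k)}}$.

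The step I expect to be delicate is not any single computation but making assertion (1) genuinely precise: the operator $T$ lives on the quotient $V_{\varphi_{\a}}/U$, whose graded dimension is shifted away from $\varphi_{\a}$ by $\ell\chi_{[k]}$, so one must check that the construction $\gamma_k$ of definition \ref{def: 3.3.7}, originally set up on $X_{\a}^{k}$, still applies here and that $\ker(T|_{V_{\varphi_{\b},k}})$ has the admissible dimension for the target $Z^{k}$-variety in this reduced setting; concretely one has to verify that $\b$ satisfies the analogue of the condition $\tilde{S}(\a)_k$ needed for $T^{(k)}$ to be well defined and to lie in $\line{O}_{\a^{(k)}}$, which is precisely where lemma \ref{lem: 7.6.3} and the facts of lemma \ref{lem: 3.0.7} do the work. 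Once this compatibility of ambient graded spaces is pinned down, everything above is the elementary linear algebra of Jordan normal forms combined with the single weight identity $\varphi_{e(\b)}(k)=\varphi_{e(\a)}(k)-\ell$.
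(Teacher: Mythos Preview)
Your argument is correct and matches the paper's: both write $\b=\c_{\Gamma}$ via proposition~\ref{prop: 7.3.5}, use assumption $(\A_k)$ to rule out segments of $\b$ beginning at $k$, and deduce $\b^{(k)}\le\a^{(k)}$ together with the kernel-dimension count $\varphi_{e(\b)}(k)=\varphi_{e(\a)}(k)-\ell=\dim(W)$. One small point: for the implication $\b\preceq_k\a\Rightarrow\b^{(k)}\le\a^{(k)}$ you cite lemma~\ref{lem: 7.6.3}, whose statement carries the extra hypothesis $\a=\a_{\Id}$; the paper instead invokes lemma~\ref{lem: 7.6.2} directly (which only needs $(\A_k)$) to obtain $\c\in\tilde S(\a)_k$ and hence $\b^{(k)}=\c^{(k)}\le\a^{(k)}$ --- this is exactly the content of the forward direction of lemma~\ref{lem: 7.6.3}, so your reasoning is sound but the reference should be adjusted.
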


\begin{proof}
(1)To show $\gamma_k(T)\in Z^{k, \a}$, it suffices to show that 
 for $\b\preceq_k \a$, we have $\b^{(k)}\leq \a^{(k)}$. 
 Note that by proposition \ref{prop: 7.3.5},  
 there exists $\c\in S(\a)$ and $\Gamma\subseteq \c(k)$,
such that 
 \[
  \b=\c_{\Gamma}.
 \]
Now by lemma \ref{lem: 7.6.2}, 
we have $\c\in \tilde{S}(\a)_k$, which
implies that 
\[
 \b^{(k)}=\c^{(k)}\leq \a^{(k)}.
\]
(2)By definition 
for any $T\in \mathfrak{Y}_{\a}$, we have $Y\in O_{\b}$
for some $\b\preceq_k \a$. 
By the fact that   
$\a$ satisfies the assumption $(\A_k)$, 
we know that any $\c\in S(\a)$
satisfies $(\A_k)$, hence 
\[
 \b=\c_{\Gamma}
\]
cannot contain a segment which starts at 
$k$, therefore $T|_{V_{\varphi_{\a}, k-1}}$ is surjective. 
(3) 
Note that from the definition of $\mathfrak{Y}$, 
we know that for  $T\in \mathfrak{Y}_{\a}$, we have 
$T\in O_{\b}$ for some $\b\preceq_k \a$. 
Now it follows 
\[
 \ker(T|_{V_{\varphi_{\a}, k}/U })=\varphi_{e(\a)}(k)-\ell=\dim(W).
\]

\end{proof}

\begin{prop}
Let $\a$ be a 
multisegment satisfying 
the assumption $(\A_k)$.
Then the morphism $\sigma'$ is a fibration. 
Moreover, if we assume that $\a=\a_{\Id}$, 
cf. lemma \ref{lem: 7.6.1},
 then the morphism $\sigma$ is also a
fibration.  
\end{prop}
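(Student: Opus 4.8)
The plan is to handle $\sigma'$ and $\sigma$ separately, in both cases reducing to the equivariance/local-triviality mechanism already used for $\alpha'$, $\alpha$ and $p$ in propositions \ref{prop: 7.7.6}, \ref{prop: 4.6.10} and \ref{prop: 7.7.7}. First I would record the elementary identity $\sigma'\circ\sigma=\alpha'$: for $(S,W)\in E''_{\a}$ the induced endomorphism $S'$ on $V_{\varphi_{\a}}/W$ satisfies $\ker(S'|_{V_{\varphi_{\a},k}/W})=\ker(S|_{V_{\varphi_{\a},k}})/W$, so that $\pi^{-1}$ of this kernel equals $\ker(S|_{V_{\varphi_{\a},k}})$. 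This gives a commutative triangle $E''_{\a}\xrightarrow{\sigma}\mathfrak{Y}_{\a}\xrightarrow{\sigma'}Gr(\ell,\varphi_{e(\a)}(k),V_{\varphi_{\a},k})$ sitting over the fibration $\alpha'$, which organises the argument and will be reused in the following section.

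For $\sigma'$: the group $GL(V_{\varphi_{\a},k})$ acts on $\mathfrak{Y}_{\a}$, an element $g$ sending $(T,U)$ to $(gU,\ \tilde g\,T\,\tilde g^{-1})$, where $\tilde g$ is $g$ on $V_{\varphi_{\a},k}$ and the identity on the other graded pieces; the induced isomorphism $V_{\varphi_{\a}}/U\simeq V_{\varphi_{\a}}/gU$ is graded, hence carries the orbit $O_{\b}$ to itself, so the action is well defined. The morphism $\sigma'$ is equivariant for the transitive $GL(V_{\varphi_{\a},k})$-action on $Gr(\ell,\varphi_{e(\a)}(k),V_{\varphi_{\a},k})\simeq GL(V_{\varphi_{\a},k})/P_{(U_1,U_2)}$. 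Since $P_{(U_1,U_2)}$ is parabolic, hence special in the sense of Serre (\cite{S} $\S 4$), the principal bundle $GL(V_{\varphi_{\a},k})\rightarrow GL(V_{\varphi_{\a},k})/P_{(U_1,U_2)}$ is Zariski-locally trivial; choosing a section $s$ over a Zariski-open $\mathfrak U$ and writing $\mathfrak{Y}_{\a}\simeq GL(V_{\varphi_{\a},k})\times_{P_{(U_1,U_2)}}\sigma'^{-1}((U_1,U_2))$, the rule $(T,U)\mapsto [\,s(\sigma'(T,U))^{-1}\cdot(T,U),\ \sigma'(T,U)\,]$ trivialises $\sigma'$ over $\mathfrak U$. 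This is verbatim the proof of proposition \ref{prop: 7.7.6}.

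For $\sigma$ (assuming $\a=\a_{\Id}$) I would compute the fibre. Over $(T',U)\in\mathfrak{Y}_{\a_{\Id}}$ the fibre $\sigma^{-1}((T',U))$ consists of the $S\in Y_{\a_{\Id}}$ whose component $S|_{V_{\varphi_{\a_{\Id}},j}}$ coincides with that of $T'$ for all $j\neq k-1$, while $S|_{V_{\varphi_{\a_{\Id}},k-1}}\colon V_{\varphi_{\a_{\Id}},k-1}\rightarrow V_{\varphi_{\a_{\Id}},k}$ is only required to lift $T'|_{V_{\varphi_{\a_{\Id}},k-1}}$ along $\pi_U$ (one checks $U\subseteq\ker(S|_{V_{\varphi_{\a_{\Id}},k}})$ automatically, and surjectivity of $\sigma$ follows by taking the canonical lift that routes the shortened segments back into $U$, as in proposition \ref{prop: 7.3.5}). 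Such lifts form a torsor under $\Hom(V_{\varphi_{\a_{\Id}},k-1},U)$. The place where $\a=\a_{\Id}$ is genuinely used is that every lift $S$ already lies in $Y_{\a_{\Id}}$: $S$ has weight $\varphi_{\a_{\Id}}$, so $S\in O_{\c_S}$ with $\varphi_{\c_S}=\varphi_{\a_{\Id}}$, and a short computation with the rank functions $r_{ij}$ — for $j\le k-1$ or $i\ge k$ one has $r_{ij}(\c_S)=r_{ij}(\c_{T'})$, and $\pi_U\circ S|_{k-1}=T'|_{k-1}$ controls the remaining ranges — gives $r_{ij}(\c_S)\le r_{ij}(\a_{\Id})$ for all $i\le j$; here one uses $\c_{T'}\preceq_k\a_{\Id}$, so $\c_{T'}=\c_\Gamma$ with $\c\le\a_{\Id}$ (proposition \ref{prop: 7.3.5}) and shortening segments decreases $r_{ij}$, together with the maximality of $\a_{\Id}$ and the description of $\preceq_k$ from lemmas \ref{lem: 7.6.1} and \ref{lem: 7.6.3}; then proposition \ref{prop: 4.2.3} yields $\c_S\le\a_{\Id}$. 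Consequently $\sigma$ is globally a torsor under the vector bundle $\underline{\Hom}(V_{\varphi_{\a_{\Id}},k-1},\mathcal U)$, where $\mathcal U$ is the pullback to $\mathfrak{Y}_{\a_{\Id}}$ of the tautological rank-$\ell$ subbundle on $Gr(\ell,V_{\varphi_{\a_{\Id}},k})$, the bundle acting on $S$ through its component $S|_{V_{\varphi_{\a_{\Id}},k-1}}$; an affine bundle is in particular a fibration. I expect the rank estimate in the mixed range $i\le k-1$, $j=k$ — bounding $r_{ik}(\c_S)$ between $r_{ik}(\c_{T'})$ and $r_{ik}(\c_{T'})+\ell$ and checking this stays $\le r_{ik}(\a_{\Id})$ — to be the main obstacle, since it is exactly there that the maximality of $\a_{\Id}$ is indispensable.
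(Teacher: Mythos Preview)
Your treatment of $\sigma'$ is correct and essentially identical to the paper's: equivariance under $GL(V_{\varphi_{\a},k})$, speciality of the parabolic $P_{(U_1,U_2)}$, and the section $s$ over a Zariski open $\mathfrak U$ give the local trivialisation. The commutative triangle $\alpha'=\sigma'\circ\sigma$ is also what the paper uses.

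Your argument for $\sigma$, however, contains a genuine error. The claim that ``every lift $S$ already lies in $Y_{\a_{\Id}}$'' is false, so $\sigma$ is \emph{not} an affine torsor under $\underline{\Hom}(V_{\varphi_{\a_{\Id}},k-1},\mathcal U)$. First, the rank inequality is in the wrong direction: by proposition~\ref{prop: 4.2.3}, $\c_S\le\a_{\Id}$ is equivalent to $r_{ij}(\c_S)\ge r_{ij}(\a_{\Id})$ for all $i\le j$, not $\le$. Second, even with the corrected direction the claim fails: take for instance $\a_{\Id}=\{[1,3],[2,4]\}$, $k=4$, $\ell=1$, and $\b=\{[1,3],[2,3]\}$; the lift with $q_1=0$ produces $S\in O_{\{[1,3],[2,3],[4]\}}$, which has $r_{34}=0<1=r_{34}(\a_{\Id})$ and is therefore not in $S(\a_{\Id})$. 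In general the fibre of $\sigma$ over $(T,U_1)$ is only a proper \emph{open} subset of the affine space $\Hom(V_{\varphi_{\a},k-1},U_1)$.

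The paper proceeds differently. Using the factorisation $\alpha'=\sigma'\circ\sigma$ and the trivialisation of $\sigma'$ just established, it reduces to showing that $\sigma$ restricted to $\alpha'^{-1}((U_1,U_2))\to\sigma'^{-1}((U_1,U_2))$ is a fibration. Writing both sides via $\tau_{U_2}$ and $\tau_{U_2/U_1}$, the fibre over $(T_0,q_0)$ is the set of $q_1\in\Hom(V_{\varphi_{\a},k-1},U_1)$ for which the resulting $T'$ lies in $(X_{\a}^{k})_{U_2}$. This is where $\a=\a_{\Id}$ is used, via lemma~\ref{lem: 7.6.3}: the condition becomes
\[
\dim\bigl(\ker(q_1)\cap W_1\bigr)=\varphi_{e(\a)}(k-1),\qquad W_1:=\ker\bigl(q_0\big|_{\ker(T_0|_{V_{\varphi_{\a},k-1}})}\bigr),
\]
an open condition on $q_1$. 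The key remaining point is that $\dim W_1=\varphi_{e(\a)}(k-1)+\ell$ and $\dim\ker(T_0|_{V_{\varphi_{\a},k-1}})=\varphi_{e(\a)}(k-1)+\varphi_{e(\a)}(k)$ are constant over $\mathfrak Y_{\a}$, so the fibre is ``the same'' open subvariety everywhere; this is what gives local triviality. Your rank approach does not see this uniform open condition, and the torsor description short-circuits exactly the place where the hypothesis $\a=\a_{\Id}$ enters.
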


\begin{proof}
We first show that 
$\sigma'$ is locally trivial.
We observe that the group 
$GL(V_{\varphi_{\a}, k})$ acts both
on the source and target of $\sigma'$ 
in such a way that $\sigma'$ is $GL(V_{\varphi_{\a}, k})$-equivariant.
As in the proof of  
proposition \ref{prop: 7.7.6}, 
let $\mathfrak{U}\subseteq Gr(\ell, \varphi_{e(\a)}(k), V_{\varphi_{\a}, k})$
be a neighborhood of a given element $(U_1, U_2)$
 such that we have a section 
\[
 s: \mathfrak{U}\rightarrow GL(V_{\varphi_{\a}, k}),~ s((U_1, U_2))=Id.
\]
 Note that in this case we have a natural trivialization of 
 $\sigma'$ by
 \[
  \sigma': \beta'^{-1}(\mathfrak{U})\simeq \mathfrak{U}\times \beta'^{-1}((U_1, U_2))
 \]
by $\sigma'((T, U))=[(U,  \pi^{-1}(\ker(T|_{V_{\varphi_{\a}, k}}))), g^{-1}((T, U))]$
with $g=s((U,  \pi^{-1}(\ker(T|_{V_{\varphi_{\a}, k}}))))$. 
Finally, we show that $\sigma$ 
 is surjective and locally trivial. 

 We observe that 
$\alpha'=\sigma'\sigma$ and $\sigma$ preserves fibers. 
 Now we fix a neighborhood $\mathfrak{U}$ as above and 
 get a commutative diagram
 \begin{displaymath}
  \xymatrix{
  \alpha'^{-1}(\mathfrak{U})\ar[d]\ar[r]^{\hspace{-1cm}\gamma }& \mathfrak{U}\times \alpha'^{-1}((U_1, U_2))\ar[d]^{\delta}\\
  \sigma'^{-1}(\mathfrak{U})\ar[r]^{\hspace{-1cm}\gamma'}& \mathfrak{U}\times \sigma'^{-1}((U_1, U_2))
  }
 \end{displaymath}
where $\delta([x, T])=[x, \sigma(T)]$ for any $x\in \mathfrak{U}$
and $T\in \alpha'^{-1}((U_1, U_2))$. Therefore to show that 
$\sigma $ is locally trivial , it suffices to show that it is locally trivial  
when restricted to the fiber $\alpha'^{-1}((U_1, U_2))$.
Note that we have 
\[
 \alpha'^{-1}((U_1, U_2))\simeq\{T\in Y_{\a}: \ker(T|_{V_{\varphi_{\a}, k}})=U_2\}\simeq (X_{\a}^{k})_{U_2}
 \hookrightarrow Y_{\a^{(k)}}\times \Hom(V_{\varphi_{\a}, k-1}, U_2)
\]
and 
\begin{align*}
\sigma'^{-1}((U_1, U_2))&\simeq \{T: T\in \End(V_{\varphi_{\a}}/U_1) \text{ of degree }1, 
\ker(T|_{V_{\varphi_{\a}, k}/U_1})=U_{2}/U_1, \\
   & T\in O_{\b}, \text{ for some } \b\preceq_k \a\} \hookrightarrow Y_{\a^{(k)}}\times \Hom(V_{\varphi_{\a}, k-1}, U_2/U_1).
\end{align*}
Note that the canonical morphism  
\[
 \Hom(V_{\varphi_{\a}, k-1}, U_2)\rightarrow \Hom(V_{\varphi_{\a}, k-1}, U_2/U_1)
\]
is a fibration. Hence to show that 
\[
 \alpha'^{-1}((U_1, U_2))\rightarrow \sigma'^{-1}((U_1, U_2))
\]
is a fibration, it suffices to show that $\sigma|_{ \alpha'^{-1}((U_1, U_2))}$ is surjective with isomorphic fibers everywhere . 
Let $(T, U_1)\in \sigma'^{-1}((U_1, U_2))$ with 
$$\tau_{U_2/U_1}(T)=(T_0, q_0)\in Y_{\a^{(k)}}\times \Hom(V_{\varphi_{\a}, k-1}, U_2/U_1),$$ 
where $\tau_{U_2/U_1}$ is the morphism in definition \ref{def: 3.3.13}.  
We fix a splitting $U_2\simeq U_2/U_1\oplus U_1$. Now to give $(T', U_1)\in \sigma^{-1}((T, U_1))$
amounts to give $q_1\in \Hom(V_{\varphi_{\a}, k-1}, U_1)$ such that 
\[
 \tau_{U_2}(T')=(T_0, q_0\oplus q_1).
\]
Note that by lemma \ref{lem: 7.6.3},
the condition $\a=\a_{\Id}$ implies that 
$T'$ lies in $(X_{\a}^{k})_{U_2}$ if and only if 
$q_1$ satisfies 
\begin{align*}
 \dim(\ker(q_0\oplus q_1|_{\ker(T_0|_{V_{\varphi_{\a}, k-1}})}))=\varphi_{e(\a)}(k-1),
\end{align*}
which is an open condition. Therefore $\sigma$
is surjective. By definition of $\mathfrak{Y}_{\a}$, we know that 
\[
 \dim(\ker(q_0|_{\ker(T_0|_{V_{\varphi_{\a}, k-1}})}))=\varphi_{e(\a)}(k-1)+\ell,
\]
therefore if we denote $W_1=\ker(q_0|_{\ker(T_0|_{V_{\varphi_{\a}, k-1}})})$, 
then $q_1$ satisfies that 
\[
 \dim(\ker(q_1|_{\ker(T_0|_{V_{\varphi_{\a}, k-1}})})\cap W_1)=\varphi_{e(\a)}(k-1).
\]
Such a condition is independent of the 
pair $(T_0, q_0)$ since we always have 
$\dim(\ker(T_0|_{V_{\varphi_{\a}, k-1}}))=\varphi_{e(\a)}(k-1)+\varphi_{e(\a)}(k)$
and $\dim(W_1)=\varphi_{e(\a)}(k-1)+\ell$.
\end{proof}

We return to the morphism $p$ and $\sigma$.

\begin{lemma}\label{lem: 7.7.10}
Note that an element of $G_{\varphi_d}$ stabilizes $(\mathfrak{X}_{\d})_W$
if and only if it stabilizes $W$. Let $G_{\varphi_{\d}, W}$ be the 
stabilizer of $W$, then for $\c\leq \d$, and $T\in O_{\c}\cap (\mathfrak{X}_{\d})_W$, we have 
\[
 O_{\c}\cap  (\mathfrak{X}_{\d})_W=G_{\varphi_{\d}, W}T.
\]
\end{lemma}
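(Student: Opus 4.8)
The plan is to derive both assertions from the $G_{\varphi_{\d}}$-equivariance of the kernel map
\[
\alpha\colon X_{\d}^{k+1}\longrightarrow Gr(r, V_{\varphi_{\d}}), \qquad \alpha(T)=\ker\bigl(T|_{V_{\varphi_{\d}, k+1}}\bigr),
\]
where $Gr(r, V_{\varphi_{\d}})$ denotes the Grassmannian of $r$-dimensional subspaces of $V_{\varphi_{\d}, k+1}$; note that $r=\varphi_{e(\d)}(k+1)$ since $\a$ satisfies $(\A_k)$, so this is the fibration $\alpha$ of (the analogue for the index $k+1$ of) proposition \ref{prop: 4.6.10}, and in particular $\alpha|_{O_{\c}}$ is surjective for every orbit $O_{\c}$ meeting $X_{\d}^{k+1}$. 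First I would check that $\alpha$ is equivariant for the action of $G_{\varphi_{\d}}=\prod_j GL(V_{\varphi_{\d}, j})$ on the target through its factor indexed by $k+1$: writing $g=(g_j)_j$, the relation $(gT)|_{V_{\varphi_{\d}, k+1}}=g_{k+2}\circ T|_{V_{\varphi_{\d}, k+1}}\circ g_{k+1}^{-1}$ together with the injectivity of $g_{k+2}$ gives $\alpha(gT)=g_{k+1}\cdot\alpha(T)$. Since $\mathfrak{X}_{\d}$ is a union of $G_{\varphi_{\d}}$-orbits it is $G_{\varphi_{\d}}$-stable, and one has $(\mathfrak{X}_{\d})_W=\mathfrak{X}_{\d}\cap\alpha^{-1}(W)$.

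Next I would treat the statement on stabilizers. Equivariance gives $g\cdot(\mathfrak{X}_{\d})_W=\mathfrak{X}_{\d}\cap\alpha^{-1}(g_{k+1}W)=(\mathfrak{X}_{\d})_{g_{k+1}W}$, so $g$ stabilizes $(\mathfrak{X}_{\d})_W$ as soon as it stabilizes $W$. Conversely, $(\mathfrak{X}_{\d})_W$ is non-empty (choose any orbit $O_{\c}$ inside $\mathfrak{X}_{\d}$ and use surjectivity of $\alpha|_{O_{\c}}$), hence the equality $(\mathfrak{X}_{\d})_{g_{k+1}W}=(\mathfrak{X}_{\d})_W$ forces $g_{k+1}W=W$, i.e. $g\in G_{\varphi_{\d}, W}$. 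This shows that the stabilizer of $(\mathfrak{X}_{\d})_W$ in $G_{\varphi_{\d}}$ is exactly $G_{\varphi_{\d}, W}$.

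Finally I would deduce the orbit description. Fix $\c\leq\d$ and $T\in O_{\c}\cap(\mathfrak{X}_{\d})_W$. As $\mathfrak{X}_{\d}$ is a union of orbits and $O_{\c}$ meets it, $O_{\c}\subseteq\mathfrak{X}_{\d}$, whence $O_{\c}\cap(\mathfrak{X}_{\d})_W=O_{\c}\cap\alpha^{-1}(W)$. The inclusion $G_{\varphi_{\d}, W}T\subseteq O_{\c}\cap(\mathfrak{X}_{\d})_W$ is immediate, since $G_{\varphi_{\d}, W}$ preserves the orbit $O_{\c}$ and, by the previous paragraph, preserves $(\mathfrak{X}_{\d})_W$. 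For the reverse inclusion, any $T'\in O_{\c}\cap\alpha^{-1}(W)$ may be written $T'=gT$ with $g\in G_{\varphi_{\d}}$ because $O_{\c}$ is a single $G_{\varphi_{\d}}$-orbit; then $W=\alpha(T')=g_{k+1}\cdot\alpha(T)=g_{k+1}W$, so $g\in G_{\varphi_{\d}, W}$ and $T'\in G_{\varphi_{\d}, W}T$. I do not expect a genuine obstacle here: the lemma is a formal consequence of equivariance together with $O_{\c}$ being a single orbit, and the only points meriting care are the identification of the $G_{\varphi_{\d}}$-action on $Gr(r, V_{\varphi_{\d}})$ (it factors through the single linear group indexed by $k+1$) and the non-emptiness of $(\mathfrak{X}_{\d})_W$, both of which follow from the fibration property of $\alpha$ as in proposition \ref{prop: 4.6.10}.
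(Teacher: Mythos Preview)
Your proof is correct and follows essentially the same approach as the paper: both rest on the $G_{\varphi_{\d}}$-equivariance of the fibration $\alpha$ from proposition \ref{prop: 4.6.10} (at index $k+1$) and the fact that $O_{\c}$ is a single $G_{\varphi_{\d}}$-orbit. The paper packages the orbit correspondence via the associated bundle isomorphism $\delta\colon GL_{\varphi_{\d}(k+1)}\times_{P_W}\alpha^{-1}(W)\to X_{\d}^{k+1}$, while you argue the same point directly from $\alpha(gT)=g_{k+1}\cdot\alpha(T)$; this is a cosmetic difference only.
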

\begin{proof}
 Recall that from proposition \ref{prop: 4.4.10}, we have 
 \begin{displaymath}
  \xymatrix{
 X_{\d}^{ k+1}\ar[d] & GL_{\varphi_{\d}(k+1)}\times_{P_{W}}\alpha^{-1}(W)\ar[l]_{\hspace{-1.5cm}\delta}\ar[dl]\\
  Gr(\ell_{k+1}, V_{\varphi_{\d}})&
  }
\end{displaymath}
 where $\ell_{k+1}=\varphi_{e(\d)}(k+1)$. Note that we have 
 \[
  G_{\varphi_{\d}, W}=\cdots\times G_{\varphi_{\d},k}\times P_{W}\times G_{\varphi_{\d},k+2}\times \cdots ,
 \]
where $G_{\varphi_{\d}, i}=GL(V_{\varphi_{\d}, i})$. 
From this diagram we observe that the orbits there is a one to one 
correspondance between the $G_{\varphi_{\d}}$ orbits 
on $X_{\d}^{k+1}$ and $G_{\varphi_{\d}, W}$ orbits on $\alpha^{-1}(W)$.  
Finally, since $\mathfrak{X}_{\d}$ is an open subvariety 
consisting of $G_{\varphi_{\d}}$ orbits, we are done.
\end{proof}

\begin{definition}
The canonical projection 
\[
 \pi: V_{\varphi_{\d}}\rightarrow V_{\varphi_{\d}}/W
\]
induces a projection 
\[
 \pi_{*}: G_{\varphi_{\d}, W}\rightarrow G_{\varphi_{\a}}, 
\]
where we identify $V_{\varphi_{\d}}/W$ with $V_{\varphi_{\a}}$.
\end{definition}

\begin{prop}
The morphism $p$ is equivariant under the action of 
$G_{\varphi_{\d}, W}$ and $G_{\varphi_{\a}}$ via $\pi_{*}$, i.e, 
\[
 p(gx)=\pi_{*}(g)p(x).
\]
Moreover, it induces a one to one correspondance between orbits.
\end{prop}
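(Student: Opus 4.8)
The proof has two parts: the equivariance identity, and the bijection on orbits. The plan is to first put $p$ in a form in which the auxiliary splitting defining $\tau_{W}$ (hence $q_{W}$) disappears. For $T\in(\mathfrak{X}_{\d})_{W}$ set $W_{1}=\ker\bigl(T^{(k+1)}|_{V_{\varphi_{\d},k}}\bigr)$. Unwinding Definition \ref{def: 3.3.7} one sees $W_{1}=T|_{V_{\varphi_{\d},k}}^{-1}(W)$, so $T(W_{1})\subseteq W$; since $q_{W}$ restricts to the identity on $W$, this gives $q_{W}\circ T|_{W_{1}}=T|_{W_{1}}$, and therefore
\[
 p(T)=\bigl(T^{(k+1)},\ \ker(T|_{W_{1}})\bigr),
\]
a description that no longer mentions $q_{W}$.

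Next I would run the equivariance computation with this description. Let $g\in G_{\varphi_{\d},W}$. Since $g$ stabilises $W=\ker(T|_{V_{\varphi_{\d},k+1}})$, it descends to $\pi_{*}(g)\in G_{\varphi_{\a}}$ acting on $V_{\varphi_{\d}}/W\cong V_{\varphi_{\a}}$, and tracking the components of Definition \ref{def: 3.3.7} shows $(gT)^{(k+1)}=\pi_{*}(g)\circ T^{(k+1)}\circ\pi_{*}(g)^{-1}$, because truncation at $k+1$ is nothing but the passage to the quotient $V_{\varphi_{\d}}/W$. Hence $\ker\bigl((gT)^{(k+1)}|_{V_{\varphi_{\d},k}}\bigr)=g_{k}(W_{1})$, where $g_{k}$ is the degree-$k$ component of $g$ and also the degree-$k$ part of $\pi_{*}(g)$. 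Finally, for $w\in W_{1}$ one has $(gT)(g_{k}w)=g_{k+1}(Tw)$ with $g_{k+1}$ invertible and $g_{k+1}(W)=W$, so that $\ker\bigl((gT)|_{g_{k}(W_{1})}\bigr)=g_{k}\bigl(\ker(T|_{W_{1}})\bigr)$. Comparing with the $G_{\varphi_{\a}}$-action $h\cdot(T',W')=(hT'h^{-1},h_{k}(W'))$ on $E''_{\a}$ gives exactly $p(gT)=\pi_{*}(g)\,p(T)$.

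For the orbit correspondence I would first note that $\pi_{*}$ is surjective: $G_{\varphi_{\d},W}$ differs from $G_{\varphi_{\d}}$ only in the degree-$(k+1)$ factor, where $GL(V_{\varphi_{\d},k+1})$ is replaced by the stabiliser $P_{W}$ of $W$, and the restriction $P_{W}\to GL(V_{\varphi_{\d},k+1}/W)$ is onto while every other factor maps isomorphically onto the corresponding factor of $G_{\varphi_{\a}}$. Combined with the equivariance this yields $p\bigl(G_{\varphi_{\d},W}\cdot T\bigr)=G_{\varphi_{\a}}\cdot p(T)$, so $p$ carries each $G_{\varphi_{\d},W}$-orbit onto a single $G_{\varphi_{\a}}$-orbit; since $p$ is surjective by Proposition \ref{prop: 7.7.7}, the induced map of orbit sets is onto. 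For injectivity, given $T_{1},T_{2}$ with $p(T_{1}),p(T_{2})$ in the same $G_{\varphi_{\a}}$-orbit, I would use surjectivity of $\pi_{*}$ to move $T_{2}$ inside its own $G_{\varphi_{\d},W}$-orbit so that $p(T_{1})=p(T_{2})$; it then remains to check that each fibre of $p$ sits inside one $G_{\varphi_{\d},W}$-orbit. By Lemma \ref{lem: 7.7.2} and Proposition \ref{prop: 7.7.7} that fibre is $\{T_{0}\in\Hom(V_{\varphi_{\d},k},W):\ker(T_{0}|_{W_{1}})=W'\}$ with $W_{1}$ fixed, and the stabiliser of the image point in $G_{\varphi_{\d},W}$ surjects onto $GL(W)$ through its $P_{W}$-component; transitivity on this affine set is a direct linear-algebra statement. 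Together with Lemma \ref{lem: 7.7.10}, which identifies the $G_{\varphi_{\d},W}$-orbits on $(\mathfrak{X}_{\d})_{W}$ with the $O_{\c}\cap(\mathfrak{X}_{\d})_{W}$, this gives the one-to-one correspondence.

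The main obstacle is this last point: showing the fibre of $p$ is a single orbit under the stabiliser. The subtlety is that the condition "$g$ fixes $T'$" couples the degree-$k$ and degree-$(k+1)$ components of $g$, so one cannot simply act by $GL(W)$ freely; here the hypothesis $(\A_{k})$ — which forces $T'|_{V_{\varphi_{\d},k-1}}$ to be surjective, no segment of $\a$ beginning at $k$ — is what makes the argument close. If the direct computation becomes unwieldy, the fallback is to describe the $G_{\varphi_{\a}}$-orbits on $E''_{\a}$ explicitly through the iterated fibrations $\alpha'$, $\sigma$, $\sigma'$ and to match them termwise with the $O_{\c}\cap(\mathfrak{X}_{\d})_{W}$ using the combinatorics of $\preceq_{k}$ together with Lemma \ref{lem: 7.7.10}.
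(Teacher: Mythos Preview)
Your approach is the paper's: prove the equivariance, then show that each fibre of $p$ lies in a single $G_{\varphi_{\d},W}$-orbit (in fact a single $\ker\pi_*$-orbit). Your splitting-free rewriting $p(T)=(T^{(k+1)},\ker(T|_{W_1}))$ is a clean addition the paper does not spell out.

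One index slip to fix in the fibre-transitivity step. The surjectivity you need from $(\A_k)$ is that of $T_1|_{V_{\varphi_{\d},k}}:V_{\varphi_{\d},k}\to V_{\varphi_{\d},k+1}/W$ (equivalently, no segment of $\a$ begins at $k+1$), not of $T'|_{V_{\varphi_{\d},k-1}}$. The paper's explicit element carrying $(T_1,T_0)$ to $(T_1,q)$ has $g_i=\Id$ for $i\neq k+1$ and
\[
 g_{k+1}=\begin{pmatrix} g_1 & g_{12}\\ 0 & \Id_{V_{\varphi_{\d},k+1}/W}\end{pmatrix}\in P_W,
\]
so $g\in\ker\pi_*$ and there is \emph{no} coupling with the degree-$k$ component; the action on the $W$-part is $T_0\mapsto g_1T_0+g_{12}\,T_1|_{V_{\varphi_{\d},k}}$. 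One first chooses $g_1\in GL(W)$ so that $g_1T_0=q$ on $W_1$ (both $T_0|_{W_1}$ and $q|_{W_1}$ are surjections $W_1\to W$ with common kernel $W'$), and then defines $g_{12}$ on $V_{\varphi_{\d},k+1}/W$ by $g_{12}(T_1v)=q(v)-g_1T_0(v)$; surjectivity of $T_1|_{V_{\varphi_{\d},k}}$ is exactly what is needed for this to be well defined. So the ``obstacle'' you flag dissolves once the correct grading is used, and your fallback via $\alpha',\sigma,\sigma'$ is unnecessary.
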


\begin{proof}
Note that for 
$T\in (\mathfrak{X}_{\d})_W$, such that 
$\tau_W(T)=(T_1, T_0)\in (Z^{k+1, \d})_W\times \Hom(V_{\varphi_{\d}, k}, W)$, 
let
\[
U_1=\ker(T_1|_{V_{\varphi_{\d}, k}}),~ U_0=\ker(T_0|_{U_1})   
\]
we have 
\[
 p(T)=(T_1,U_0).
\]
Now it follows from the definition that 
we have 
\[
 p(gT)=\pi_{*}(g)p(T).
\]
Hence $p$ sends orbits to orbits. 
It remains to show that the pre-image of an orbit
is an orbit instead of unions of orbits.

We proved in proposition \ref{prop: 7.7.7} that 
\[
 p^{-1}p(T)= \{(T_1, q): q\in \Hom(V_{\varphi_{\d}, k}, W), \ker(q|_{U_1})=U_0\},
\]
 note that here we identify elements of $(\mathfrak{X}_{\d})_{W}$ with 
 its image under $\tau_W$.
 Let $(T_1, q)\in p^{-1}p(T)$. Then 
 we want to find $g\in G_{\varphi_{\d}, W}$ such that $g(T_1, T_0)=(T_1, q)$. 
 Note that 
 by fixing a splitting $V_{\varphi_{\d},k+1}=W\oplus V_{\varphi_{\d},k+1}/W$,
 we can choose $g\in G_{\varphi_{\d}}$ such that 
 $g_{i}=Id\in GL(V_{\varphi_{\d}, i})$ for all $i\neq k+1$, and 
 \begin{displaymath}
  g_{k+1}=\begin{pmatrix}
     g_1 & g_{12}\\
     0& Id_{V_{\varphi_{\d},k+1}/W}
    \end{pmatrix}\in P_W,
 \end{displaymath}
where $g_1\in GL(W)$,  
and $g_{12}\in \Hom(V_{\varphi_{\d}, k+1}/W, W)$. 
By hypothesis, we know that the restrictions of $q$ 
and $T_0$ to 
$U_1$ are surjective with kernel $U_0$, so we 
can choose $g_1\in GL(W)$, such that 
\[
 g_1T_0(v)=q(v), \text{ for all } v\in U_1.
\]
Finally, for $v_1\in V_{\varphi_{\d},k+1}/W$, by our assumption 
at the beginning of this section on $\a$, we know that 
$T_1|V_{\varphi_{\d}, k}$ is surjective, hence there exists $v\in V_{\varphi_{\d}, k}$
such that $T_1(v)=v_1$.
Then we define  
 \[
  g_{12}(v_1)=q(v)-g_1T_0(v).
 \]
We check that this is well defined, i.e, for another $v'\in V_{\varphi_{\d}, k}$ such 
that $T_1(v')=v_1$, we have 
 \[
  q(v)-g_1T_0(v)=q(v')-g_1T_0(v'),
 \]
this is the same as to say that 
 \[
  q(v-v')=g_1T_0(v-v').
 \]
We observe that $T_1(v-v')=0$, hence $v-v'\in U_1$, now $ q(v-v')=g_1T_0(v-v')$
follows from our definition of $g_1$. Under such a choice, we have 
 \[
  g((T_1, T_0))=(T_0, q).
 \]
Hence we are done.
\end{proof}

\begin{prop}\label{prop: 7.6.17}
The morphism  $\sigma$ is equivariant under 
the action of $G_{\varphi_{\a}}$. 
Assume that $\a$ is a multisegment 
which satisfies the assumption $(\A_k)$. 
Let $\varphi\in \mathcal{S}$ such that 
\[
 \varphi+\l\chi_{[k]}=\varphi_{\a},
\]
where $\chi$ is the characteristic function.
Then there exists a one to one correspondance 
between the orbits of $\mathfrak{Y}_{\a}$ and 
the set 
\[
 S:=\{\b\in S(\varphi): \b\preceq_k \a\}.
\]
Moreover, for each orbit $\mathfrak{Y}(\b)$
indexed by $\b$, $\sigma^{-1}(\mathfrak{Y}(\b))$
is irreducible hence 
contains a unique orbit in $E''_{\a}$ as (Zariski) open subset.

\end{prop}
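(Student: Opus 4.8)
The plan is to prove the three assertions in turn: the $G_{\varphi_{\a}}$-equivariance of $\sigma$, the bijection between orbits of $\mathfrak{Y}_{\a}$ and the set $S$, and the irreducibility of the fibres $\sigma^{-1}(\mathfrak{Y}(\b))$. The equivariance is formal: $G_{\varphi_{\a}}$ acts on $V_{\varphi_{\a}}$, hence on the pairs $(T,W')$ and on the quotient spaces $V_{\varphi_{\a}}/U$, and passing to the quotient of a degree-$1$ operator commutes with conjugation. For the bijection, observe first that a point $(T,U)\in\mathfrak{Y}_{\a}$ makes $V_{\varphi_{\a}}/U$ have weight $\varphi_{\a}-\l\chi_{[k]}=\varphi$, so $T\in O_{\b}$ with $\varphi_{\b}=\varphi$ and $\b\preceq_{k}\a$, i.e. $\b\in S$; conversely every $\b\in S$ occurs, by choosing any $\l$-dimensional $U\subseteq V_{\varphi_{\a},k}$, identifying $V_{\varphi_{\a}}/U$ with $V_{\varphi}$, and taking $T\in O_{\b}$. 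Since the factor $GL(V_{\varphi_{\a},k})$ of $G_{\varphi_{\a}}$ acts transitively on $Gr(\l,V_{\varphi_{\a},k})$, every $G_{\varphi_{\a}}$-orbit of $\mathfrak{Y}_{\a}$ meets the fibre of $\mathfrak{Y}_{\a}\to Gr(\l,V_{\varphi_{\a},k})$ over a fixed $U$; the stabiliser of $U$ surjects onto $G_{\varphi}$ acting on $V_{\varphi_{\a}}/U\cong V_{\varphi}$, so two points of that fibre are $G_{\varphi_{\a}}$-conjugate exactly when the underlying operators are $G_{\varphi}$-conjugate, i.e. exactly when they have the same $\b$. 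This gives the claimed bijection.

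For the irreducibility of $\sigma^{-1}(\mathfrak{Y}(\b))$, I would first reduce to a single fibre. Fix $y_{0}=(T_{0},U_{1})\in\mathfrak{Y}(\b)$ with $T_{0}\in O_{\b}$, and let $H\subseteq G_{\varphi_{\a}}$ be its stabiliser. Since $\sigma$ is $G_{\varphi_{\a}}$-equivariant and $\mathfrak{Y}(\b)$ is the single orbit $G_{\varphi_{\a}}/H$, the variety $\sigma^{-1}(\mathfrak{Y}(\b))$ is the associated bundle $G_{\varphi_{\a}}\times_{H}\sigma^{-1}(y_{0})$ over the irreducible homogeneous space $\mathfrak{Y}(\b)$; hence it suffices to show that the single fibre $\sigma^{-1}(y_{0})$ is irreducible. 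A point of this fibre is an operator $T\in Y_{\a}$ of degree $1$ on $V_{\varphi_{\a}}$ whose $U_{1}$-quotient is $T_{0}$ (the second coordinate being then forced to $W'=U_{1}$); such a $T$ agrees with $T_{0}$ in every graded degree other than $k-1$, its degree-$k$ component is determined by $T_{0}$ via $V_{\varphi_{\a},k}\twoheadrightarrow V_{\varphi_{\a},k}/U_{1}$, and its degree-$(k-1)$ component ranges over a fixed coset of $\Hom(V_{\varphi_{\a},k-1},U_{1})$. Thus $\sigma^{-1}(y_{0})$ is identified with the locally closed subset of this affine space consisting of those lifts for which the resulting $T$ lies in $Y_{\a}=\coprod_{\c\le\a}O_{\c}$.

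It then remains to see that this subset is irreducible, and the plan is to show it is in fact open in the affine coset. Here the hypothesis $(\A_{k})$ enters decisively: by Lemma \ref{lem: 7.6.2} it gives $\tilde{S}(\a)_{k}=S(\a)$ and $X_{\a}^{k}=Y_{\a}$, and by Proposition \ref{prop: 4.6.14} the map $\tau_{U_{2}}$ is an open immersion of $(X_{\a}^{k})_{U_{2}}$ into $Y_{\a^{(k)}}\times\Hom(V_{\varphi_{\a},k-1},U_{2})$, where $U_{2}=\ker(T|_{V_{\varphi_{\a},k}})\supseteq U_{1}$; along $\sigma^{-1}(y_{0})$ the $Y_{\a^{(k)}}$-component of $\tau_{U_{2}}(T)$ is the fixed operator determined by $y_{0}$, so all the incidence and rank conditions defining $Y_{\a}$ that mix degrees $k-1$ and $k$ are absorbed into that fixed component, and the only surviving constraint on the lift is the open condition coming from the image of $\tau_{U_{2}}$. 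Consequently $\sigma^{-1}(y_{0})$ is a non-empty open subset of an affine space, hence irreducible. Alternatively one may reduce to $\a=\a_{\Id}$ using Lemma \ref{lem: 7.6.1} together with the fact that $Y_{\a}$ is open in $Y_{\a_{\Id}}$ — because $\coprod_{\c\le\a_{\Id},\,\c\not\le\a}O_{\c}$ is a union of orbits closed under passing to larger elements, since $\c'\ge\c\not\le\a$ forces $\c'\not\le\a$ — and then invoke the earlier proposition stating that $\sigma$ is a fibration with affine-open fibres when $\a=\a_{\Id}$.

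Finally, $\sigma^{-1}(\mathfrak{Y}(\b))$ is a finite union of $G_{\varphi_{\a}}$-orbits — finitely many because $Y_{\a}$ has finitely many orbits and the relative positions of $W'$ inside $\ker(T|_{V_{\varphi_{\a},k}})$ form a finite set — so, being irreducible, exactly one of these orbits is dense, and that one is open in $\sigma^{-1}(\mathfrak{Y}(\b))$ and is the unique orbit with this property. The step I expect to be the main obstacle is the middle one: showing that under $(\A_{k})$ the conditions defining $Y_{\a}$ cut out an \emph{open} subvariety of the affine coset rather than a merely locally closed one. This is precisely where the assumption $(\A_{k})$, together with the open-immersion results of \S 3.3 (Proposition \ref{prop: 4.6.14}) and the normality and irreducibility of the varieties $(X_{\a}^{k})_{W}$ (Proposition \ref{prop: 3.3.13}), is indispensable, and it is the computation I would carry out most carefully.
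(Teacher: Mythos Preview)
Your argument for equivariance and for the bijection between orbits and $S$ is essentially the paper's own (reduce to a fibre over a fixed $U$ via the transitive $GL(V_{\varphi_{\a},k})$-action, then use that the stabiliser surjects onto $G_{\varphi}$). Where you diverge from the paper is in the irreducibility of $\sigma^{-1}(\mathfrak{Y}(\b))$.

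The paper does \emph{not} argue geometrically here. It defers to Lemma~\ref{lem: 7.6.18}, which sets up a poset bijection between the $G_{\varphi_{\a}}$-orbits in $\sigma^{-1}(\mathfrak{Y}(\b))$ and the combinatorial set $Q(\a,\b)=\{\c\in S(\a):\b=\c_{\Gamma}\text{ for some }\Gamma\subseteq\c(k)\}$, and then proves by an induction on $\ell$ that $Q(\a,\b)$ has a \emph{unique minimal element}; irreducibility follows because the corresponding orbit's closure then contains every other orbit. Your route is genuinely different: you show directly that the fibre $\sigma^{-1}(y_0)$ is a nonempty open subset of an affine coset, either via the open immersion $\tau_{U_2}$ (using that $U_2=\ker(T|_{V_k})$ and $T^{(k)}$ are constant along the fibre, which is correct under $(\A_k)$) or by the cleaner reduction to $\a=\a_{\Id}$ using that $Y_{\a}$ is open in $Y_{\a_{\Id}}$ and the earlier fibration statement for $\sigma$. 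Both of your approaches are valid; the reduction to $\a_{\Id}$ is the one that works with least friction, and your justification of the openness of $Y_{\a}\subseteq Y_{\a_{\Id}}$ (the complement being upward-closed for $\leq$) is sound.

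What each approach buys: your geometric argument is shorter and avoids the combinatorial induction of Lemma~\ref{lem: 7.6.18}, but the paper's route does more --- it \emph{identifies} the unique open orbit as $E''_{\a}(\c^{\sharp})$ for the minimal $\c\in Q(\a,\b)$, and this explicit identification is exactly what is used immediately afterward in Corollary~\ref{cor: 7.4.19} to describe the Lusztig product $IC(\overline{O}_{\b})\star IC(\overline{O}_{\ell[k]})$. So while your proof of Proposition~\ref{prop: 7.6.17} stands on its own, you would still need the content of Lemma~\ref{lem: 7.6.18} (or an equivalent identification of the open orbit) to proceed further.
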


\begin{proof}
 The fact that 
 $\sigma$ is equivariant under the 
 action of $G_{\varphi_{\a}}$ follows 
 directly from the definition.
 To show that the orbits of 
 $\mathfrak{Y}$ under $G_{\varphi_{\a}}$
 is indexed by $S$, consider the morphism 
 \[
  p': \mathfrak{Y}_{\a}\rightarrow Gr(\ell, V_{\varphi_{\a}, k}), ~(T, U)\mapsto U
 \]

 As in the proposition \ref{prop: 4.4.10}, we have 
 the following diagram
 \begin{displaymath}
  \xymatrix{
\mathfrak{Y}_{\a} \ar[d]^{p'}& GL_{\varphi_{\a}(k)}\times_{P_{U}} p'^{-1}(U)\ar[l]_{\hspace{-1.5cm}\delta}\ar[dl]\\
  Gr(\ell, V_{\varphi_{\a}, k})&
  }
\end{displaymath}
 which shows that $p'$ is a $GL_{\varphi_{\a}, k}$ bundle. 
 Moreover, the same proof as in lemma \ref{lem: 7.7.10} shows 
 that the orbits of $\mathfrak{Y}$ are in 
 in one to one correspondance with that of 
 the fibers 
 \begin{align*}
  p'^{-1}(U)\simeq \{T\in \End(V_{\varphi_{\a}}/U): &T \text{ is of degree }1, 
  T\in O_{\b} \text{ for some } \b\preceq_k \a
  \},
 \end{align*}
under the action of stabilizer $G_{\varphi_{\a}, U}$ of $U$. 
 Let $\varphi\in \mathcal{S}$ be the 
 such that $\varphi+\ell \chi_{[k]}=\varphi_{\a}$. 
 Then by identifying $V_{\varphi}$ with 
 $V_{\varphi_{\a}}/U$, we can view 
 $p'^{-1}(U)$ as an open subvariety of $E_{\varphi}$. 
 Note that we are identifying orbits with orbits by the canonical projection 
 \[
  G_{\varphi_{\a}, U}\rightarrow G_{\varphi}. 
 \]
Now it follows that 
the fibers are parametrized by the set $S$. 
Finally, let $\b\in S$. We have to show that 
$\sigma^{-1}(\mathfrak{Y}(\b))$ is irreducible, which
is a consequence of the following lemma.

\end{proof}

\begin{lemma}\label{lem: 7.6.18}
Let $\a, \b$ be the  multisegments as above. Then 
there exists a 
bijection between the set  
\[
 Q(\a, \b)=\{\c\in S(\a): \b=\c_{\Gamma} \text{ for some } \Gamma\subseteq \c(k)\},
\]
and the orbits in $\sigma^{-1}(\mathfrak{Y}(\b))$
which respects the poset structure,  given by 
\[
 \c\mapsto E''_{\a}(\c^{\sharp}),
\]
where for $\b=\c_{\Gamma}$,
\[
  \c^{\sharp}=(\c\setminus \c(k))\cup \Gamma\cup \{\Delta^+: \Delta\in \c(k)\setminus \Gamma\},
\]
and $E''_{\a}(\c^{\sharp})$ is the orbit indexed by $\c^{\sharp}$.
Moreover, the set $Q(\a, \b)$
contains a unique minimal element.
\end{lemma}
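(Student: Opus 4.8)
\emph{Setup.} The plan is to transport the assertion, through the two maps $p\colon(\mathfrak{X}_{\d})_W\to E''_{\a}$ of Proposition~\ref{prop: 7.7.7} and $\sigma\colon E''_{\a}\to\mathfrak{Y}_{\a}$, into combinatorics of Jordan types, where throughout $\d=\a+r[k+1]$ with $r=\varphi_{e(\a)}(k)-\ell$ and $\dim W=r$. Recall that $p$ is $G_{\varphi_{\d},W}$-equivariant and induces a bijection between the $G_{\varphi_{\d},W}$-orbits of $(\mathfrak{X}_{\d})_W$ and the $G_{\varphi_{\a}}$-orbits of $E''_{\a}$; by Lemma~\ref{lem: 7.7.10} and Lemma~\ref{lem: 7.7.2} the former are exactly the sets $O_{\mu}\cap(\mathfrak{X}_{\d})_W$ for $\mu\in S(\d)$ with $\varphi_{e(\mu)}(k)=\ell$, and I write $E''_{\a}(\mu)$ for the associated orbit of $E''_{\a}$; similarly $\mathfrak{Y}(\b)$ denotes the orbit of $\mathfrak{Y}_{\a}$ attached to $\b$ by Proposition~\ref{prop: 7.6.17}, which exists since $\b\preceq_{k}\a$, and then $Q(\a,\b)\neq\emptyset$ by Proposition~\ref{prop: 7.3.5}.

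\emph{The map is well defined and injective.} For $\c\in Q(\a,\b)$ the sub-multiset $\Gamma\subseteq\c(k)$ with $\b=\c_{\Gamma}$ is forced: the segments of $\c_{\Gamma}$ ending at $k$ are exactly those of $\c(k)\setminus\Gamma$, so $\Gamma=\c(k)\setminus\b(k)$, and comparing weights at $k$ gives $|\Gamma|=\ell$; since $|\c(k)|=\varphi_{e(\a)}(k)$ by Lemma~\ref{lem: 7.6.2}, also $|\c(k)\setminus\Gamma|=r$. A direct inspection shows $\c^{\sharp}$ has weight $\varphi_{\d}$ and $\varphi_{e(\c^{\sharp})}(k)=\ell$; moreover $\c^{\sharp}\in S(\d)$, because $\c^{\sharp}\le\c+r[k+1]\le\d$ — first perform the elementary operations witnessing $\c\le\a$, then link each of the $r$ segments of $\c(k)\setminus\Gamma$ with a copy of $[k+1]$. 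Since $\c$ is recovered from $\c^{\sharp}$ as $(\c^{\sharp})^{(k+1)}$ (truncating the segments ending at $k+1$), the assignment $\c\mapsto E''_{\a}(\c^{\sharp})$ is a well-defined injection into the $G_{\varphi_{\a}}$-orbits of $E''_{\a}$.

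\emph{The core computation and surjectivity.} The heart is to compute, for $T\in O_{\mu}\cap(\mathfrak{X}_{\d})_W$, the Jordan type of $\sigma(p(T))$. Writing $\tau_W(T)=(T_{1},T_{0})$, one has $T_{1}\cong T^{(k+1)}$, of Jordan type $\mu^{(k+1)}\in S(\a)$, and $p(T)=(T_{1},\ker(T_{0}|_{W_{1}}))$ with $W_{1}=\ker(T_{1}|_{V_{\varphi_{\a},k}})$, so that $\sigma(p(T))$ is the quotient of $T_{1}$ by an $\ell$-dimensional subspace $U\subseteq W_{1}$. Here the hypothesis $(\A_{k})$ is used twice: it forces every segment of every element of $S(\a)$ to end at $\le k$, so $W_{1}$ collects exactly the ends at $k$ of the segments of $\mu^{(k+1)}$, and passing to the quotient shortens $\ell$ of those segments while leaving the rest; matching this against $\b$ one finds that $\sigma(p(T))$ has type $\b$ precisely when $\mu=\c^{\sharp}$ for the (unique) $\c\in Q(\a,\b)$ with $\mu^{(k+1)}=\c$, and that for fixed $T_{1}$ the subspaces $U$ realizing type $\b$ form a single orbit under the stabilizer of $T_{1}$ — which is exactly what makes $E''_{\a}(\c^{\sharp})$ a single orbit mapping into $\mathfrak{Y}(\b)$, and conversely accounts for all orbits of $\sigma^{-1}(\mathfrak{Y}(\b))$. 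Carrying this out rigorously — one must follow, uniformly in $\mu$, how a subspace $U$ in general or special position with respect to segments of several distinct lengths ending at $k$ degenerates the Jordan type, i.e. a Schubert-cell type analysis in the relevant Grassmannian of $U$'s — is the step I expect to be the main obstacle.

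\emph{Order and minimality.} Order-compatibility follows from monotonicity of $\c\mapsto\c^{\sharp}$ on $Q(\a,\b)$, checked by a case analysis on the elementary operation (using that $\Gamma'=\c'(k)\setminus\b(k)$ is again forced for $\c'\le\c$ in $Q(\a,\b)$), together with the fact that $p$ and $\sigma$ send orbit closures to orbit closures, being restrictions of fibrations with irreducible fibres (Propositions~\ref{prop: 7.7.6} and~\ref{prop: 7.7.7}). For the unique minimal element of $Q(\a,\b)$ I would not invoke irreducibility of $\sigma^{-1}(\mathfrak{Y}(\b))$ — that is deduced from the present lemma in Proposition~\ref{prop: 7.6.17} — but instead build $\c_{0}\in Q(\a,\b)$ directly by a greedy construction modelled on that of $\a_{\min}$ in \S1.3, namely repeatedly applying the elementary operation to a maximal linked pair so as to produce, among all $\c\in S(\a)$ with $\c_{\c(k)\setminus\b(k)}=\b$, the most merged one, and then verify $\c_{0}\le\c$ for all $\c\in Q(\a,\b)$ using Proposition~\ref{prop: 4.2.3} and the description of $\preceq_{k}$.
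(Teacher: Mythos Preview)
Your setup and the bijection argument are the paper's approach, but you make the ``core computation'' harder than it is. Unwind the definitions: for $T\in(O_{\c'})_W$ with $\c'\in S(\d)$, one has $W_1=\{v\in V_{\varphi_{\d},k}:T(v)\in W\}$ and $T_0(v)=q_W(T(v))$; for $v\in W_1$ this equals $T(v)$, so $U=\ker(T_0|_{W_1})=\ker(T|_{V_{\varphi_{\d},k}})$. Hence $\sigma(p(T))$ is the operator induced by $T$ on $V_{\varphi_{\d}}/(W\oplus U)$, which is literally $T^{(k,k+1)}$ and therefore has Jordan type $\c'^{(k,k+1)}$. There is no position-of-$U$ ambiguity and no Schubert-cell analysis to perform: the type depends only on $\c'$, and the orbit bijection via $p$ (already established) then gives that the orbits of $E''_{\a}$ lying over $\mathfrak{Y}(\b)$ are exactly those $E''_{\a}(\c')$ with $\c'^{(k,k+1)}=\b$, i.e.\ $\c'=\c^{\sharp}$ for $\c\in Q(\a,\b)$. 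This is what the paper does, tersely.

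Where you and the paper genuinely diverge is the unique minimal element. You propose a greedy merge modelled on $\a_{\min}$, to be verified via the rank criterion of Proposition~\ref{prop: 4.2.3}; but this is underspecified---elements of $Q(\a,\b)$ are obtained from $\b$ by promoting exactly $\ell$ segments ending at $k-1$ to end at $k$, and it is not evident which elementary operations keep you inside $Q(\a,\b)$ nor what ``most merged'' means in this constrained setting. The paper instead inducts on $\ell$. For $\ell=1$, listing $\{\Delta\in\b:e(\Delta)=k-1\}=\{\Delta_1\preceq\cdots\preceq\Delta_h\}$, the candidates are $\c_i=(\b\setminus\Delta_i)\cup\Delta_i^{+}$ and $\c_h$ is visibly minimal. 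For general $\ell$ one splits off one promotion: pick any $\c'\in Q_1$ (the $\ell'=\ell-1$ analogue) with $\b=\c'_{\Gamma\setminus\Gamma_1}$, apply the inductive hypothesis to get the minimal $\c_1\in Q_1$, and then the minimal element of $Q(\a,\c_1)$ (again by induction, now with $\ell-1$) is shown to dominate every $\e\in Q(\a,\b)$ by threading $\e$ through $Q_1$. This inductive peel is short, constructive, and avoids having to guess the minimal element in closed form; I would replace your greedy sketch with it.
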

\remk 
 
We remark that $S(\varphi)$ contains a 
unique maximal element.

\begin{proof}

Recall that we constructed in proposition 
\ref{prop: 7.7.7} a morphism $p$, consider the composition
\[
 (\mathfrak{X}_{\d})_W \xrightarrow{p} E_{\a}''\xrightarrow{\sigma} \mathfrak{Y}_{\a},
\]
which sends $(O_{\c})_W$ to $\mathfrak{Y}(\b)$, where 
$\b=\c^{(k, k+1)}$ for $\c \in S(\d)$. Hence we have 
\[
 \b=(\c^{(k+1)})_{\Gamma}
\]
for $\Gamma=\{\Delta\in \c: e(\Delta)=k\}$. Note that 
$\c\in S(\d)$ implies that  
 $\c^{(k+1)}\leq \a=\d^{(k+1)}$.
 Conversely, for $\c\in Q(\a, \b)$, 
 such that 
 \[
  \b=\c_{\Gamma},
 \]
there is a unique element 
\[
 \c'=\c^{\sharp}
\]
in $S(\d)$ such that $O_{\c'}\subseteq \mathfrak{X}_{\d}$ and $\c=\c'^{(k+1)}$.
Therefore we conclude that 
there is a bijection between 
the $G_{\varphi_{\a}}$-orbits in $\sigma^{-1}(\mathfrak{Y}(\b))$
and $Q(\a, \b)$.  

Finally, 
for 
\[
 \varphi_{\a}=\varphi_{\b}+\ell\chi_{[k]},
\]
we show
by induction on $\ell$
that the set 
$Q(\a, \b)$
contains a unique minimal element.

For case 
$\ell=1$, let 
\[
 \b(k): =\{\Delta\in \b: e(\Delta)=k\}=\{\Delta_1\preceq \cdots \preceq \Delta_h\}.
\]
and $\c_i=(\b\setminus \Delta_i)\cup \Delta_i^+$.
Then 
\[
 Q(\a, \b)\subseteq \{\c_i: i=1, \cdots, h\},
\]
and $\c_h$ is minimal in the latter, which 
implies that $\c_h\in Q(\a, \b)$ and is minimal. 
In general,
let 
\[
 \varphi=\varphi_{\b}+\chi_{[k]}.
\]
Note that there exists 
$\c'\in S(\varphi) $
satisfying the assumption $(\A_k)$
and $\Gamma'\subseteq \c'(k)$ 
such that 
\[
 \b=\c'_{\Gamma'}. 
\]
In fact, by assumption, we know that 
\[
 \b=\c_{\Gamma}
\]
for some $\c\in S(\a)$ and $\Gamma\subseteq \c(k)$. 
Let 
\[
 \Gamma\supseteq \Gamma_1,
\]
such that $\ell=\sharp \Gamma=\sharp \Gamma'+1$ and
\[
 \c'=\c_{\Gamma_1},   
\]
then we have 
\[
 \b=\c'_{\Gamma\setminus \Gamma_1}.
\]
Now we apply our induction to 
the case 
 \[
  Q_1: =\{\c\in S(\varphi): 
  \c \text{ satisfies the assumption } (\A_{k}),
  \b=\c_{\Gamma} \text{ for some } \Gamma\subseteq \c(k)\},
 \]
from which we know that there exists a unique minimal element $\c_1$ in $ Q_1$. 
Now by assumption, we know that 
\[
 \b_1\leq \c'\preceq_k \a,
\]
and by induction, we know that the set 
$Q(\a, \b_1)$ contains a unique element $\b_2$.
We claim that $\b_2$ is minimal in $Q(\a, \b)$. 
In fact, let $\e\in Q(\a, \b)$, then 
\[
 \b=\e_{\Gamma'}
\]
for some $\Gamma'\subseteq \e(k)$. 
Again let 
\[
 \Gamma_1'\subseteq \Gamma',~ \e'=\e_{\Gamma_1'}
\]
such that $\ell=\sharp \Gamma'=\sharp \Gamma_1'+1$.
Now we obtain 
\[
 \e'\in Q_1, ~ \b=\e'_{\Gamma'\setminus \Gamma_1'}.
\]
By minimality of $\c_1$, we know that 
\[
 \c_1\leq \e'.
\]
Note that this implies $\c_1\preceq \e'$, and 
by transitivity of poset relation, we get 
$\c_1\preceq_k \e$. Now we apply 
proposition \ref{prop: 7.3.5} to get 
\[
 \c_1=\f_{\Gamma''}, 
\]
for some $\f\in S(\e)$ and $\Gamma''\subseteq \f(k)$.
Again we deduce from induction that 
\[
 \f\geq \c_2.
\]
Hence $\c_2\leq \e$. 
\end{proof}

Now we return to the calculation
of product of perverse sheaves, cf. 
corollary \ref{cor: 7.4.15}.

\begin{cor}\label{cor: 7.4.19}
Let $\a$ be a multisegment 
satisfying the assumption $(\A_k)$
and $\b\preceq_k \a$ such that  
\[
 \varphi_{\a}=\varphi_{\b}+\ell\chi_{[k]}.
\]
Let $\c$ the minimal element 
in $Q(\a, \b)$ and $E''_{\a}(\c)$ be the 
$G_{\varphi_{\a}}$ orbit indexed by $\c$
in $E_{\a}''$.  
Then we have 
\[
 IC(\line{O}_\b)\star IC(\line{O}_{\l [k]})=\beta''_{*}(IC(\line{E_{\a}''(\c^{\sharp})})). 
\]

\end{cor}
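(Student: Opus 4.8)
The plan is to identify the Lusztig star product $IC(\line{O}_\b)\star IC(\line{O}_{\l[k]})$ with a concrete pushforward along the proper map $\beta''$ in the diagram (\ref{di: 7.3}), and then to recognise the relevant simple perverse sheaf $\mathcal{P}$ of corollary \ref{cor: 7.4.15} as the intersection complex $IC(\line{E_\a''(\c^\sharp)})$. First I would set $\varphi_1=\varphi_{\b}$, $\varphi_2=\ell\chi_{[k]}$, so $\varphi=\varphi_{\b}+\ell\chi_{[k]}=\varphi_{\a}$, and write out the diagram $E_{\varphi_1}\times E_{\varphi_2}\xleftarrow{\beta}E'\xrightarrow{\beta'}E''\xrightarrow{\beta''}E_{\varphi_{\a}}$ of definition \ref{def: 7.4.12}. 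Since $E_{\varphi_2}=E_{\ell\chi_{[k]}}$ is a point (an operator of degree $+1$ on a vector space concentrated in a single degree is zero), we have $E_{\varphi_1}\times E_{\varphi_2}\simeq E_{\varphi_{\b}}$, so by definition $IC(\line{O}_\b)\star IC(\line{O}_{\l[k]})=\beta''_*(\mathcal{P})$ where $\mathcal{P}$ is the unique (up to shift) simple perverse sheaf with $\beta'^*\mathcal{P}=\beta^*(IC(\line{O}_\b)\boxtimes IC(pt))=\beta^*IC(\line{O}_\b)$.

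The key geometric step is then to match $E''$ (with its $G_{\varphi_{\a}}$-action) to the variety $E_\a''$ of definition, or rather to $\mathfrak{X}_\d$ and the chain of fibrations $p$, $\sigma$ built in propositions \ref{prop: 7.7.6}, \ref{prop: 7.7.7}, \ref{prop: 7.6.17}. Concretely: $E''$ parametrises pairs $(T,W)$ with $W\subseteq V_{\varphi_{\a}}$ a graded subspace of dimension vector $\ell\chi_{[k]}$ (so $W\subseteq V_{\varphi_{\a},k}$, $\dim W=\ell$) with $T(W)\subseteq $ its image, i.e.\ $T|_{V_{\varphi_{\a},k-1}}$ need not preserve $W$ but $T$ descends to $V_{\varphi_{\a}}/W$; the condition that the quotient operator lies in $\line{O}_\b$ cuts out the relevant locus. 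I would then observe that $\beta''$ restricted to this locus is exactly the projection $(T,W)\mapsto \overline{T}$ onto $E_{\varphi_{\a}}$, and that $\mathcal{P}$, being the descent of $\beta^*IC(\line{O}_\b)$ along the fibration $\beta'$, is the intersection complex of the closure of the $G_{\varphi_{\a}}$-orbit whose image under the fibrations $p$ and $\sigma$ is $\mathfrak{Y}(\b)$. By lemma \ref{lem: 7.6.18}, the fibre $\sigma^{-1}(\mathfrak{Y}(\b))$ is irreducible and contains a unique open orbit, namely $E_\a''(\c^\sharp)$ for $\c$ the minimal element of $Q(\a,\b)$; since $p$ and $\sigma$ are fibrations with smooth connected fibres (propositions \ref{prop: 7.7.6}, \ref{prop: 7.7.7}, \ref{prop: 7.6.17}) and $\beta'$ likewise, pullback of $IC$ along them is $IC$ up to shift, so $\mathcal{P}=IC(\line{E_\a''(\c^\sharp)})$ up to a shift, which is absorbed into the normalisation of $\star$. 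Feeding this into $IC(\line{O}_\b)\star IC(\line{O}_{\l[k]})=\beta''_*(\mathcal{P})$ gives the claimed formula.

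The main obstacle I anticipate is the bookkeeping needed to check that the simple perverse sheaf $\mathcal P$ produced abstractly by corollary \ref{cor: 7.4.15} really is $IC$ of the closure of the orbit indexed by $\c^\sharp$ and not of some larger stratum, together with pinning down the shifts. This requires: (i) verifying that the composite $\mathfrak{X}_\d\xrightarrow{p} E_\a''\xrightarrow{\sigma}\mathfrak{Y}_\a$ and the abstract diagram (\ref{di: 7.3}) for the pair $(\varphi_{\b},\ell\chi_{[k]})$ are compatible, i.e.\ $E''\simeq E_\a''$ $G_{\varphi_{\a}}$-equivariantly (this is essentially definitional once one unwinds definition \ref{def: 7.4.6} and the identification of $(Z^{k+1,\d})_W$ with $Y_\a$); (ii) using the irreducibility and unique-open-orbit statement of lemma \ref{lem: 7.6.18} to conclude $\mathcal P=IC(\line{E''_\a(\c^\sharp)})$ since a simple equivariant perverse sheaf supported on an irreducible variety whose restriction to the open orbit is the constant sheaf is forced to be the IC of that closure; (iii) tracking the degree shifts through $\beta$, $\beta'$ (fibrations) so that the normalisations match — the assertion in the corollary is stated without explicit shift, so I would phrase the conclusion up to the same normalisation used to define $\star$ earlier. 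The remaining steps — smooth base change and the fibration property of $p$, $\sigma$, $\beta'$ — are then routine given the cited propositions.
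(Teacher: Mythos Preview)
Your proposal is essentially the paper's argument, and the core idea is right: identify the simple perverse sheaf $\mathcal{P}$ of corollary \ref{cor: 7.4.15} with $IC(\line{E''_\a(\c^\sharp)})$ by showing that $\beta^{-1}(O_\b)$, restricted to the relevant open piece, coincides with $\beta'^{-1}\sigma^{-1}(\mathfrak{Y}(\b))$, and then invoke proposition \ref{prop: 7.6.17} / lemma \ref{lem: 7.6.18} for the unique open orbit.

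Two small corrections. First, $E''_\a$ is not isomorphic to $E''$ but is the \emph{open} subset $\beta''^{-1}(Y_\a)\subseteq E''$; this is what makes the identification of $\mathcal{P}$ work, since the IC sheaf on the closure of an orbit in an open subvariety extends to the IC sheaf on its closure in the ambient $E''$. Second, you do not need to route through $\mathfrak{X}_\d$ and $p$ at all for this corollary: the paper works directly with $\beta$, $\beta'$, $\sigma$, showing $\beta^{-1}(O_\b)\cap\beta'^{-1}(E''_\a)=\beta'^{-1}\sigma^{-1}(\mathfrak{Y}(\b))$ by unwinding the definitions, and then reads off $\mathcal{P}$ from the unique open orbit in $\sigma^{-1}(\mathfrak{Y}(\b))$. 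Your invocation of $p$ and $\mathfrak{X}_\d$ is harmless but superfluous here; those constructions were used earlier to establish proposition \ref{prop: 7.6.17}, not in the present proof.
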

 
\begin{proof}

First of all, by
definition 
\[
 E''=\{(T, U): T\in E_{\varphi_{\a}}, ~T(U)=0, ~ \dim(U)=\ell \},
\]
therefore we have 
\[
 E''_{\a}\subseteq E''.
\]
Furthermore, the variety $ E''_{\a}$ is open in $E''$. 
In fact, consider the canonical
morphism 
\[
 \beta'': E''\rightarrow E_{\varphi_{\a}},
\]
then $E''_{\a}=\beta''^{-1}(Y_{\a})$. 
Since $Y_{\a}$ is open in $E_{\varphi_{\a}}$,
we know that $ E''_{\a}$ is open in $E''$.
Now we have two morphisms 
\begin{align*}
\sigma\beta': & \beta'^{-1}(E''_{\a})\rightarrow \mathfrak{Y}_{\a},\\
\beta: & E'\rightarrow E_{\varphi_{\b}}\times E_{\varphi_{\ell[k]}}\simeq E_{\varphi_{\b}}.
\end{align*}

We claim that $\beta^{-1}(O_{\b})\cap  \beta'^{-1}(E''_{\a})=\beta'^{-1}\sigma^{-1}(\mathfrak{Y}(\b))$,
where $\mathfrak{Y}(\b)$ is the orbit in $\mathfrak{Y}(\b)$ under the 
action of $G_{\varphi_{\a}}$. 

By definition of $\beta$, we know that 
\begin{align*}
 \beta^{-1}(O_{\b})\cap  \beta'^{-1}(E''_{\a})=\{(T, W, \mu, \mu'): &\mu: W\simeq V_{\varphi_{\ell[k]}}, ~\mu': V_{\varphi_{\a}}/W\simeq V_{\varphi_{\b}},\\
 &T\in O_{\f} \text{ for some } \f\in S(\a), \b\preceq_k \f
 \}.
\end{align*}
Now by definition of $\sigma$ and $\beta'$, 
we know that 
$\beta^{-1}(O_{\b})\cap  \beta'^{-1}(E''_{\a})=\beta'^{-1}\sigma^{-1}(\mathfrak{Y}(\b))$.
Now by proposition \ref{prop: 7.6.17},
$\sigma^{-1}(\mathfrak{Y}_{\b})$ 
contains $E_{\a}''(\c^{\sharp})$ as the unique open 
sub-orbit, where $\c$ is the minimal
element in $Q(\a, \b)$.
Therefore we conclude that 
\[
  \beta'^{*}(IC(\line{E''_{\a}(\c^{\sharp})}))=\beta^*(IC(\line{O_{\b}})\otimes IC(E_{\varphi_{\ell[k]}})).
\]
Now by definition
\[
  IC(\line{O}_\b)\star IC(\line{O}_{\l [k]})=\beta''_{*}(IC(\line{E_{\a}''(\c^{\sharp})})).
\]

\end{proof}

\section{Multisegments of Grassmanian Type}

In order to precisely describe the previous corollary
concerning Lusztig's product in the Grassmanian case 
in the next section, 
we  generalize the construction
in section 3.3 to get more general results concerning the 
the set $S(\a)$ for general multisegment $\a$.

Let $V$ a $\C$ vector space of dimension $r+\l$  and $Gr_r(V)$ be the variety of 
$r$-dimensional subspaces of $V$. 

\begin{definition}
By a partition of $ \l$, we mean a sequence $\lambda=(\ell_1, \cdots, \ell_r)$ for some $r$, where $\ell_i\in \N$
, $0\leq \ell_1\leq \cdots \ell_r\leq \ell$.
And for $\mu=(\mu_1,\cdots, \mu_s)$ be another partition, we say $\mu\leq \lambda$
if and only if $\mu_i\leq \lambda_i$ for all $i=1, \cdots, $. Let $\mathcal{P}(\ell, r)$
be the set of partitions of $\ell$ into $r$ parts.

\end{definition}

\begin{definition}
We fix a complete flag 
\[
 0=V^0\subset V^1\subset \cdots \subset V^{r+\ell}=V.
\]
This flag provides us a stratification of the variety $Gr_r(V)$ by Schubert varieties, labeling  by partitions 
, denoted by $\line{X}_{\lambda}$, 
\[
\line{X}_{\lambda}=\{U\in Gr_r(V): \dim(U\cap V^{\ell_i+i})\geq i, \text{ for all } i=1, \cdots, r \}.
\]
\end{definition}

\begin{lemma}(cf. \cite{Z5})
We have 
\[
 \mu\leq \lambda \Longleftrightarrow \line{X}_{\mu}\subseteq \line{X}_{\lambda}.
\]
And the Schubert cell 
\[
 X_{\lambda}=\line{X}_{\lambda}-\sum_{\mu<\lambda}\line{X}_{\mu}
\]
is open in $\line{X}_{\lambda}$.
\end{lemma}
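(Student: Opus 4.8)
The plan is to realize each Schubert variety $\line{X}_\lambda$ as the locus of subspaces whose jump pattern along the reference flag is coordinatewise dominated by $\lambda$. For $U\in Gr_r(V)$ the function $j\mapsto\dim(U\cap V^j)$ is nondecreasing, rises from $0$ to $r$, and increases by exactly $1$ at $r$ integers $d_1(U)<\cdots<d_r(U)$ (since consecutive $V^j$ differ in dimension by one); set $\lambda(U)=(d_1(U)-1,\cdots,d_r(U)-r)$, which lies in $\mathcal{P}(\ell,r)$ because $d_1(U)\geq1$, the sequence $d_i(U)-i$ is nondecreasing, and $d_r(U)\leq r+\ell$. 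The crucial translation is that $\dim(U\cap V^j)\geq i$ holds if and only if $j\geq d_i(U)$, so the defining inequality $\dim(U\cap V^{\ell_i+i})\geq i$ of $\line{X}_\lambda$ is equivalent to $\ell_i\geq d_i(U)-i=\lambda(U)_i$. Hence
\[
 \line{X}_\lambda=\{\,U\in Gr_r(V):\lambda(U)\leq\lambda\,\}.
\]
The first thing I would do is record this identity; it is the heart of the matter.

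Granting it, both assertions of the lemma follow quickly. For $\mu\leq\lambda\Rightarrow\line{X}_\mu\subseteq\line{X}_\lambda$: if $\lambda(U)\leq\mu$ and $\mu\leq\lambda$ then $\lambda(U)\leq\lambda$ by transitivity of the coordinatewise order, so $U\in\line{X}_\lambda$. (Equivalently, in the original formulation, $\mu_i\leq\ell_i$ gives $V^{\mu_i+i}\subseteq V^{\ell_i+i}$, whence $\dim(U\cap V^{\mu_i+i})\leq\dim(U\cap V^{\ell_i+i})$.) For the converse I would exhibit a witness point: fixing a basis $e_1,\cdots,e_{r+\ell}$ of $V$ with $V^j=\langle e_1,\cdots,e_j\rangle$, the coordinate subspace $U_\mu=\langle e_{\mu_1+1},e_{\mu_2+2},\cdots,e_{\mu_r+r}\rangle$ satisfies $\dim(U_\mu\cap V^j)=\sharp\{i:\mu_i+i\leq j\}$, so $d_i(U_\mu)=\mu_i+i$ and $\lambda(U_\mu)=\mu$; in particular $U_\mu\in\line{X}_\mu$. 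If $\line{X}_\mu\subseteq\line{X}_\lambda$, then $U_\mu\in\line{X}_\lambda$, so $\mu=\lambda(U_\mu)\leq\lambda$, as required.

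For the statement about cells, the same identity gives
\[
 X_\lambda=\line{X}_\lambda\setminus\bigcup_{\mu<\lambda}\line{X}_\mu=\{\,U:\lambda(U)\leq\lambda,\ \lambda(U)\not\leq\mu\text{ for all }\mu<\lambda\,\}=\{\,U:\lambda(U)=\lambda\,\},
\]
using that $\lambda(U)<\lambda$ would itself be an admissible such $\mu$. Openness of $X_\lambda$ in $\line{X}_\lambda$ then reduces to closedness of each $\line{X}_\mu$ in $Gr_r(V)$: the condition $\dim(U\cap V^j)\geq i$ is equivalent to $\rank\!\left(U\hookrightarrow V\twoheadrightarrow V/V^j\right)\leq r-i$, a determinantal (hence closed) condition on the tautological subbundle, so $\line{X}_\mu$ is closed, the finite union $\bigcup_{\mu<\lambda}\line{X}_\mu$ is closed, and its complement inside $\line{X}_\lambda$ is open.

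The argument is essentially bookkeeping; the one point needing care is the index shift relating the flag-jump positions $d_i(U)$ to the parts $\lambda(U)_i=d_i(U)-i$, together with the verification that the coordinatewise order on $\mathcal{P}(\ell,r)$ matches the inclusion order among the $\line{X}$, which is exactly what the coordinate subspaces $U_\mu$ certify. This is classical (cf. \cite{Z5}), and I expect no genuine obstacle beyond getting these indices straight.
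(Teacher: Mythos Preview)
Your proof is correct and follows the standard route: encode each $U\in Gr_r(V)$ by its jump sequence $d_1(U)<\cdots<d_r(U)$ along the fixed flag, translate the Schubert condition $\dim(U\cap V^{\ell_i+i})\geq i$ into the coordinatewise inequality $\lambda(U)_i\leq\ell_i$, and then read off both the order-preserving equivalence and the cell description $X_\lambda=\{U:\lambda(U)=\lambda\}$. The witness points $U_\mu$ and the determinantal closedness argument are exactly right.

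There is nothing to compare against: the paper does not supply its own proof of this lemma but simply cites it from \cite{Z5} (Zelevinsky's paper on small resolutions of Schubert varieties), treating it as a well-known fact about Grassmannian Schubert cells. Your argument is precisely the classical one that underlies that citation, so you have filled in what the paper leaves implicit.
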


\begin{definition}
Let $\Omega^{r, \ell}$ be the set 
\begin{align*}
 \Omega^{r, \l}=\{(a_1, \cdots, a_m; b_0, \cdots, b_{m-1}): &\sum_i a_i=r, \sum_j, b_j=\l,\\ 
 &\text{ for } 0<i<m, a_i>0, b_i>0\}.
\end{align*}
\end{definition}

\begin{lemma}(cf. \cite{Z5})\label{lem: 7.2.4}
There exists a bijection
\[
 \Omega^{r, \l}\rightarrow \mathcal{P}(\ell, r),
\]
which sends $(a_1, \cdots, a_m; b_0, \cdots, b_{m-1})$ to a partition of $\ell$
given by $~b_0, ~b_0+b_1, \cdots,~, b_0+\cdots+b_{m-1}$, and that the elements $b_0+\cdots+b_{i-1}$ figures in $\lambda$
with multiplicity $a_i$. 

\end{lemma}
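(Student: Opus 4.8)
\textbf{Proof plan for the bijection $\Omega^{r,\l}\to\mathcal P(\l,r)$ (Lemma \ref{lem: 7.2.4}).}
The plan is to exhibit the map explicitly and write down its inverse, so that the bijectivity is reduced to a purely combinatorial check. First I would describe the forward map precisely: given a tuple $(a_1,\dots,a_m;b_0,\dots,b_{m-1})\in\Omega^{r,\l}$, form the partial sums $c_i=b_0+b_1+\cdots+b_{i-1}$ for $i=1,\dots,m$, so $0\le c_1\le c_2\le\cdots\le c_m\le\l$ (with $c_1=b_0$ and $c_m=b_0+\cdots+b_{m-1}=\l$), and let $\lambda$ be the partition of $\l$ into $r$ parts in which the value $c_i$ occurs with multiplicity $a_i$. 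Concretely $\lambda=(\underbrace{c_1,\dots,c_1}_{a_1},\underbrace{c_2,\dots,c_2}_{a_2},\dots,\underbrace{c_m,\dots,c_m}_{a_m})$, written in weakly increasing order; since $\sum_i a_i=r$ this has exactly $r$ parts, and since $c_m=\l$ all parts lie in $[0,\l]$, so $\lambda\in\mathcal P(\l,r)$. I should note that the condition $b_i>0$ for $0<i<m$ guarantees $c_1<c_2<\cdots<c_m$ are \emph{strictly} increasing in the interior, hence the $c_i$ are exactly the distinct values appearing among the parts of $\lambda$, and the $a_i$ are exactly their multiplicities; the condition $a_i>0$ for $0<i<m$ ensures no spurious repeated value is recorded. (One must be slightly careful about the boundary indices $a_1,a_m,b_0,b_{m-1}$, which are allowed to be $0$: $b_0=0$ corresponds to $\lambda$ having some parts equal to $0$, and $b_{m-1}=0$ together with $a_m$ absorbing the top value $\l$; these edge cases should be spelled out so that the correspondence with the distinct-values/multiplicities data of $\lambda$ is exact.)

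Next I would construct the inverse. Given $\lambda\in\mathcal P(\l,r)$, list its distinct values $0\le c_1<c_2<\cdots<c_m\le\l$ — but to land in $\Omega^{r,\l}$ one must normalize so that $c_m=\l$: if the largest part of $\lambda$ is $<\l$, adjoin $\l$ as an extra value $c_m=\l$ with multiplicity $a_m=0$; similarly if the smallest part is $>0$, one records $b_0=c_1>0$ and $a_1>0$, while if $0$ is a part then $b_0=0$ and $a_1$ is its multiplicity. Set $a_i=$ multiplicity of $c_i$ in $\lambda$ (so $a_i>0$ for $0<i<m$ automatically, and $\sum a_i=r$), $b_0=c_1$, and $b_i=c_{i+1}-c_i$ for $1\le i\le m-1$ (so $b_i>0$ for $0<i<m$, and $\sum_{j=0}^{m-1}b_j=c_m=\l$). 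This produces a well-defined element of $\Omega^{r,\l}$, and I would then check that the two constructions are mutually inverse: starting from a tuple, passing to $\lambda$ and back recovers the partial sums and multiplicities; starting from $\lambda$, the intermediate tuple's partial sums $c_i$ are the distinct parts and the multiplicities are the $a_i$, so reassembling gives $\lambda$ again. Both checks are immediate from the definitions once the boundary conventions are fixed.

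\textbf{Main obstacle.} There is no deep content here; the one genuinely delicate point is pinning down the boundary behavior of the indices, i.e.\ reconciling the asymmetry in the definition of $\Omega^{r,\l}$ (where $a_1,a_m$ and $b_0,b_{m-1}$ may vanish but the interior $a_i,b_i$ may not) with the set $\mathcal P(\l,r)$ of partitions (where parts equal to $0$ are allowed and the largest part may be strictly less than $\l$). I would handle this by agreeing, as in \cite{Z5}, on the convention that $m$ is chosen minimally subject to $c_m=\l$ and $a_i>0$ for $1<i<m$, which makes the data $(m;c_1,\dots,c_m;a_1,\dots,a_m)$ canonical; then the bijection is a tautology. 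If one prefers to avoid case analysis altogether, an alternative is to encode a partition $\lambda\in\mathcal P(\l,r)$ by its Young-diagram boundary path inside the $r\times\l$ rectangle — a lattice path with $r$ vertical and $\l$ horizontal steps — and observe that $\Omega^{r,\l}$ is exactly the set of ways to record the successive run-lengths (vertical runs $a_1,\dots,a_m$, horizontal runs $b_0,\dots,b_{m-1}$) of such a path; this makes the bijection manifest and is the cleanest way to present it, with the inclusion-of-Schubert-varieties statement of the preceding lemma then matching the containment order on paths.
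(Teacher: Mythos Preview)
Your argument is correct and complete. The paper itself gives no proof of this lemma --- it simply cites \cite{Z5} --- so you are supplying exactly what is omitted, and your explicit-inverse approach together with the lattice-path interpretation is the standard one from Zelevinsky's paper.

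One minor clarification on the boundary conventions: in the paper's definition of $\Omega^{r,\l}$, the positivity condition ``for $0<i<m$, $a_i>0$, $b_i>0$'' applies to the index ranges $a_1,\dots,a_m$ and $b_0,\dots,b_{m-1}$, so it forces $a_1,\dots,a_{m-1}>0$ and $b_1,\dots,b_{m-1}>0$; only $a_m$ and $b_0$ are permitted to vanish (not $a_1$ or $b_{m-1}$ as you suggest). This actually simplifies your inverse construction: the partial sums $c_1<c_2<\cdots<c_m=\l$ are always strictly increasing, the only boundary adjustment needed is appending $c_m=\l$ with $a_m=0$ when the largest part of $\lambda$ is strictly less than $\l$, and $b_0=0$ occurs precisely when $\lambda$ has parts equal to $0$. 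With this reading there is no ambiguity and no case analysis beyond the single check on whether $\max\lambda=\l$.
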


\begin{notation}
 From now on, we will also write 
\[
 \lambda=(a_1, \cdots, a_m; b_0, \cdots, b_{m-1}),
\]
with notations as in the previous lemma.
\end{notation}

% We have the following duality for partitions
% 
% \begin{definition}
% Let 
% \[
%  t:\Omega^{k, \l}\rightarrow \Omega^{\l, k}
% \]
% where $\lambda=(a_1, \cdots, a_m; b_0, \cdots, b_{m-1})$
% is sent to 
% $\lambda^t=(b_{m-1}, \cdots, b_0; a_m, \cdots, a_1)$.
% \end{definition}

We introduce the formula in \cite{Z5} to calculate the 
Kazhdan Lusztig polynomials for Grassmannians. 

\begin{definition}
Let $\lambda=(a_1, \cdots, a_{m}; b_0, \cdots, b_{m-1})$ be a partition.
Following \cite{Z5}, we represent a partition as a broken line in the 
plane $(x, y)$, i.e, the graph of the piecewise-linear function $y=\lambda(x)$
which equals $|x|$ for large $|x|$, has everywhere slope $\pm 1$, and whose 
ascending and decreasing segments are precisely $ b_0, \cdots, b_{m-1}$ 
and $a_1, \cdots, a_m$, respectively. 
Moreover, we call the local maximum and minimum of the graph $y=\lambda(x)$ 
the peaks and depressions of $\lambda$.

\end{definition}

\begin{figure}[!ht]
\centering
\includegraphics{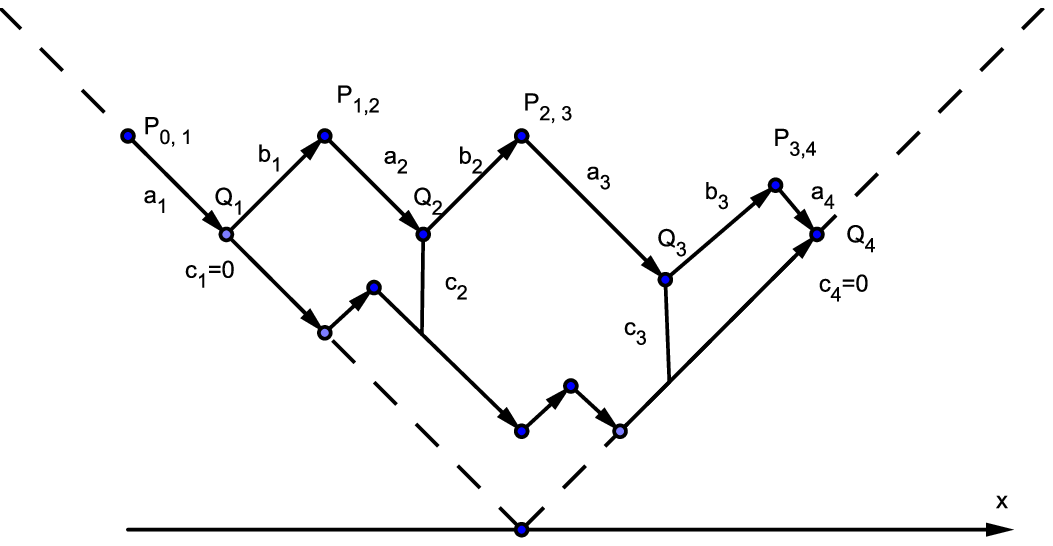}
\caption{\label{fig-multisegment15} }
\end{figure}

% 
% \begin{notation}
% We always denote by $P_{i, i+1}$ the peak  with ascending segment
% $b_i$ and decreasing segment $a_{i+1}$, 
% and by $Q_i$ the depression with decreasing segment $a_i$
% and ascending segment $b_i$. 
% \end{notation}
% 
% 
% \begin{example}
% We draw the graph of $y=\lambda(x)$
% with 
% \[
%  \lambda=(a_1, a_2, a_3, a_4; b_0, b_1, b_2, b_3)=(2, 2, 3,1; 0,2, 2, 2).
% \]
% Here $P_1, P_2, P_3, P_4$ are the peaks of $\lambda$, while $Q_1, Q_2, Q_3, Q_4$
% are the depressions of $\lambda$.
% \end{example}

\begin{lemma}(cf. \cite{Z5})
For $\lambda, \mu\in \Omega^{r, \l}$, then  
\[
 \lambda\geq \mu \Longleftrightarrow \lambda(x)\geq \mu(x), \text{ for all }x.
\]
\end{lemma}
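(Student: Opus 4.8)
The statement to prove is the characterization of the Bruhat-type order on $\Omega^{r,\l}$ via the pointwise comparison of the associated broken-line functions: for $\lambda,\mu\in\Omega^{r,\l}$, one has $\lambda\geq\mu$ if and only if $\lambda(x)\geq\mu(x)$ for all $x$. The plan is to reduce this to the already-established characterization of the Schubert-variety inclusion order, namely the lemma (attributed to \cite{Z5}) stating $\mu\leq\lambda\iff\line{X}_\mu\subseteq\line{X}_\lambda$, together with the defining description of $\line{X}_\lambda$ in terms of the dimension conditions $\dim(U\cap V^{\ell_i+i})\geq i$. The key is to translate both sides of the desired equivalence into statements about the partition entries $(\ell_1\leq\cdots\leq\ell_r)$, using the bijection of Lemma \ref{lem: 7.2.4} between $\Omega^{r,\l}$ and $\mathcal{P}(\ell,r)$ under which $(a_1,\dots,a_m;b_0,\dots,b_{m-1})$ corresponds to the partition whose value $b_0+\cdots+b_{i-1}$ appears with multiplicity $a_i$.

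\textbf{Key steps.} First I would make explicit the relationship between the broken-line function $\lambda(x)$ and the nondecreasing sequence $(\ell_1,\dots,\ell_r)$: the descending segments encode the parts, and a direct computation shows that the coordinates of the depressions (local minima) of the graph are exactly determined by the partial sums $\ell_i + (\text{index shift})$, while pointwise domination $\lambda(x)\geq\mu(x)$ for all $x$ is equivalent (since both functions are piecewise linear of slope $\pm1$ agreeing with $|x|$ near infinity) to domination at the depressions, hence to $\ell_i^\lambda\geq\ell_i^\mu$ for all $i=1,\dots,r$ — i.e.\ the componentwise order on $\mathcal{P}(\ell,r)$ appearing in the first definition of this section. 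Second, I would show $\ell_i^\mu\leq\ell_i^\lambda$ for all $i$ is equivalent to $\line{X}_\mu\subseteq\line{X}_\lambda$: the inclusion $\subseteq$ is immediate from the definition $\line{X}_\lambda=\{U:\dim(U\cap V^{\ell_i+i})\geq i\ \forall i\}$ since decreasing each $\ell_i$ only enlarges the spaces $V^{\ell_i+i}$, hence enlarges the locus; for the converse I would argue contrapositively, exhibiting for any index $i$ with $\ell_i^\mu>\ell_i^\lambda$ an explicit coordinate subspace $U$ (spanned by appropriate basis vectors of the fixed flag) lying in $\line{X}_\lambda$ but violating the $i$-th condition for $\line{X}_\mu$. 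Third, I would invoke the cited lemma $\mu\leq\lambda\iff\line{X}_\mu\subseteq\line{X}_\lambda$ to conclude, chaining the three equivalences: $\lambda\geq\mu\iff\line{X}_\mu\subseteq\line{X}_\lambda\iff(\ell_i^\mu\leq\ell_i^\lambda\ \forall i)\iff(\lambda(x)\geq\mu(x)\ \forall x)$.

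\textbf{Main obstacle.} I expect the principal technical point to be the careful bookkeeping in the first step: pinning down precisely how the $x$-coordinates of peaks and depressions of the graph $y=\lambda(x)$ relate to the partial sums $a_1+\cdots+a_j$ and $b_0+\cdots+b_{j-1}$, and then verifying that pointwise domination of two such sawtooth functions is controlled entirely by their values at the depressions of the smaller one (equivalently, at the peaks of the larger one). This is elementary but requires choosing a convention for where to center the graph and checking the slope-$\pm1$ interpolation argument does not introduce off-by-one errors; once this dictionary is set up correctly the reduction to the Schubert-inclusion lemma and to the componentwise order on parts is routine. A secondary, minor subtlety is ensuring the explicit coordinate subspace used in the contrapositive of step two genuinely satisfies all the $\line{X}_\lambda$-conditions simultaneously, not just the one being violated for $\mu$; this is handled by taking $U$ to be the ``extremal'' flag-adapted subspace associated to $\lambda$ itself, i.e.\ $U=\sum_i \C v_{\ell_i+i}$ for the fixed basis $(v_1,\dots,v_{r+\l})$ of $V$ compatible with the flag $V^\bullet$.
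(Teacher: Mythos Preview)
The paper does not prove this lemma: it is stated with ``(cf.\ \cite{Z5})'' and no proof is given, so there is nothing in the paper to compare against directly. Your plan would yield a correct proof, but it is considerably longer than necessary.

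Your Step~1 already finishes the argument. In the paper, the order on $\Omega^{r,\l}$ is the one transported from $\mathcal{P}(\ell,r)$ via the bijection of Lemma~\ref{lem: 7.2.4}, and the order on $\mathcal{P}(\ell,r)$ is \emph{by definition} componentwise: $\mu\leq\lambda$ means $\mu_i\leq\lambda_i$ for all $i$ (see the first definition of \S7.5). So once you establish in Step~1 that pointwise domination $\lambda(x)\geq\mu(x)$ of the broken-line functions is equivalent to $\ell_i^\lambda\geq\ell_i^\mu$ for all $i$, you are done --- that \emph{is} the relation $\lambda\geq\mu$. You even say so at the end of Step~1 (``i.e.\ the componentwise order on $\mathcal{P}(\ell,r)$ appearing in the first definition of this section''), but then proceed as if more were needed.

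Your Steps~2 and~3 form a detour: you pass from componentwise order to Schubert-variety inclusion (Step~2), then back from Schubert-variety inclusion to the order $\leq$ via the earlier cited lemma (Step~3). Since that cited lemma is precisely the statement $\mu\leq\lambda\iff\line{X}_\mu\subseteq\line{X}_\lambda$, and $\mu\leq\lambda$ already \emph{means} componentwise domination, your Step~2 is in fact a re-proof of that cited lemma rather than a new ingredient. There is nothing incorrect here, but you should recognize that the only content in the target lemma is the elementary sawtooth-function dictionary you sketch in Step~1.
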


From now on until the end of this section, we let 
\[
 J=\{\sigma_i: i=1, \cdots, r-1\}\cup \{\sigma_i: i=r+1\cdots, r+\ell-1\},
\]
and
\[
 \a: =\a_{\Id}^{J, \emptyset}=\{\Delta_1, \cdots, \Delta_r, \cdots, \Delta_{r+\l}\}
\]
be a multisegment of parabolic type $(J, \emptyset)$, where 
\[
e(\Delta_i)=k-1, \text{ for } i=1, \cdots, r,
\]
and 
\[
 e(\Delta_i)=k, \text{ for } i=r+1, \cdots, r+\l. 
\]

\begin{definition} 
Then to each partition $\lambda\in \Omega^{r_1, \l_1}$ such that 
$r_1\geq r$ and $r_1+\l_1=r+\l$, we associate 
\begin{align*}
 \a_{\lambda}&=\sum_{i=1}^{b_0}[b(\Delta_{i}), k]+\sum_{i=b_0+1}^{b_0+a_1}[b(\Delta_{i}), k-1]
 +\cdots\\
 &+\sum_{i=b_0+a_1\cdots+b_{j-1}+1}^{b_0+a_1\cdots+b_{j-1}+a_{j}}[b(\Delta_{i}), k-1]+
 \sum_{i=b_0+\cdots +b_{j-1}+a_{j}+1}^{b_0+a_1\cdots+b_j}[b(\Delta_{i}), k]+\cdots. 
\end{align*} 
 
\end{definition}

\begin{definition}
Let $r, n\in \N$ such that $r\leq n$. Let 
\[
 R_r(n)=\{(x_1, \cdots, x_r): 1\leq x_1<\cdots< x_r\leq n \}.
\]

\begin{description}
 \item[(1)] 
 Let $x=(x_1, \cdots, x_{r_1})\in R_{r_1}(n)$ and $x'=(x_1', \cdots, x_{r_2}')\in R_{r_2}(n)$
such that $r_1\geq r_2$. We say
$x\supseteq x'$ if 
$\{x_1, \cdots, x_{r_1}\}\supseteq \{x_1', \cdots, x_{r_2}'\}$.
\item[(2)]Let $x=(x_1, \cdots, x_{r})\in R_{r}(n)$ and $x'=(x_1', \cdots, x_{r}')\in R_{r}(n)$.
We say $x\geq x'$ if $x_i\geq x_i'$ for all $i=1, \cdots, r$.
\item[(3)] We define $x\succeq y$, if $x\geq y'\supseteq y$ for some $y'$.
\end{description}
\end{definition}

\remk The set $R_r(n)$ is a poset with respect to the relation $\geq$.
And the set $\cup_{r\leq n}R_r(n)$ is a poset with respect to the relation $\supseteq$.

\begin{prop}
For $J=\{\sigma_i: i=1, \cdots, r-1\}\cup \{\sigma_i: i=r+1\cdots, r+\l-1\}$,
we have an isomorphism of posets
$$\varsigma_1: S_{r+\l}^{J, \emptyset}\rightarrow R_r(r+\l),$$
by associating the element
$w$ with $x_w:=(w^{-1}(1), \cdots, w^{-1}(r))$.  
\end{prop}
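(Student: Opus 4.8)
The statement to prove is that the map $\varsigma_1 : S_{r+\l}^{J,\emptyset} \to R_r(r+\l)$, sending $w$ to $x_w = (w^{-1}(1), \dots, w^{-1}(r))$, is a well-defined isomorphism of posets, where $S_{r+\l}^{J,\emptyset}$ carries the (inverse) Bruhat order restricted to minimal-length coset representatives and $R_r(r+\l)$ carries the componentwise order $\geq$. The plan is to proceed in three stages: (i) show $\varsigma_1$ is a well-defined bijection of sets, (ii) show it is order-preserving, and (iii) show its inverse is order-preserving.

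First I would establish that $\varsigma_1$ lands in $R_r(r+\l)$ and is bijective. Recall that $S_{r+\l}^{J,\emptyset}$ consists of the $w \in S_{r+\l}$ with $sw > w$ for all $s \in J$ (using the convention $S_n^{J_1,J_2} = \{w : s_1 v s_2 > v\}$ from the excerpt, with $J_2 = \emptyset$), which is the standard set of minimal-length representatives for the left cosets $S_J \backslash S_{r+\l}$. Since $J = \{\sigma_1, \dots, \sigma_{r-1}\} \cup \{\sigma_{r+1}, \dots, \sigma_{r+\l-1}\}$, the condition $\sigma_i w > w$ for $i$ in these ranges translates (via \cite{BF} (1.26), the criterion $\sigma_i w > w \iff w^{-1}(i) < w^{-1}(i+1)$ already quoted in the excerpt) into $w^{-1}(1) < \dots < w^{-1}(r)$ and $w^{-1}(r+1) < \dots < w^{-1}(r+\l)$. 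The first chain of inequalities says exactly that $x_w \in R_r(r+\l)$. Bijectivity is then immediate: an element of $R_r(r+\l)$ is a choice of the $r$-subset $\{w^{-1}(1), \dots, w^{-1}(r)\}$ together with its increasing order, and the complementary values are forced to be placed in increasing order in positions $r+1, \dots, r+\l$; this reconstructs $w$ uniquely. This is the routine part.

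Next I would prove that $\varsigma_1$ translates the inverse Bruhat order on $S_{r+\l}^{J,\emptyset}$ into $\geq$ on $R_r(r+\l)$ — that is, $v \leq w$ in Bruhat order (with $v, w \in S^{J,\emptyset}$) if and only if $x_w \geq x_v$ componentwise. Here the cleanest route is to use the rank-function characterization of Bruhat order already invoked in the excerpt (the lemma $r_{ij}$ description and \cite{LM} Proposition 2.1.12): $v \leq w \iff r_{ij}(v) \leq r_{ij}(w)$ for all $i \leq j$, where $r_{ij}(w) = \#\{k \leq i : w(k) \geq j\}$. Restricted to the coset representatives in $S^{J,\emptyset}$, the relevant rank data collapses: because the values $\{w^{-1}(1), \dots, w^{-1}(r)\}$ appear in increasing order and the rest in increasing order, the full rank matrix of $w$ is determined by the single increasing tuple $x_w$. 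I would make this precise by expressing $r_{ij}(w)$ in terms of $x_w$ and checking that, for two such representatives, the inequalities $r_{ij}(v) \leq r_{ij}(w)$ for all $i,j$ are equivalent to $(x_v)_s \leq (x_w)_s$ for $s = 1, \dots, r$. Alternatively — and this may be the slicker argument — one identifies $S^{J,\emptyset} \simeq S_{r+\l}/S_J$ with Borel orbits in the Grassmannian $GL_{r+\l}/P_J = Gr_r(\C^{r+\l})$ (as noted in the remark after Proposition \ref{prop: 5.2.2}), and then $R_r(r+\l)$ with the standard indexing of Schubert cells by jump sequences; the Bruhat order on Schubert cells in a Grassmannian is the classical componentwise order on jump positions, a well-documented fact (cf. \cite{Z5}, Lemma \ref{lem: 7.2.4} and the surrounding discussion of $\Omega^{r,\l}$ and partitions).

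The main obstacle I anticipate is purely bookkeeping: pinning down the precise sign/direction conventions so that ``$v \le w$ in Bruhat order'' corresponds to ``$x_w \ge x_v$'' rather than the reverse, since the excerpt's maps $\Phi$ reverse Bruhat order, and here $\varsigma_1$ is asserted to be order-preserving for $\geq$ on $R_r$. Once the translation $r_{ij}(w) \leftrightarrow x_w$ is written down explicitly on the coset representatives, both implications ($v \le w \Rightarrow x_w \ge x_v$ and its converse) follow from Proposition \ref{prop: 4.2.3} and \cite{LM} Proposition 2.1.12 with no further geometry, so I would keep the proof short by citing those and just exhibiting the formula for $r_{ij}$ in terms of $x_w$.
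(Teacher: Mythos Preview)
Your proposal is correct and matches the paper's approach for bijectivity: both unpack the definition of $S_{r+\l}^{J,\emptyset}$ via the criterion $\sigma_i w > w \iff w^{-1}(i) < w^{-1}(i+1)$ to get exactly the two monotonicity chains $w^{-1}(1)<\cdots<w^{-1}(r)$ and $w^{-1}(r+1)<\cdots<w^{-1}(r+\l)$, whence the map to $R_r(r+\l)$ is a bijection. For the order-preserving part, the paper does not argue at all but simply cites \cite{BF} Proposition~2.4.8 (the standard tableau criterion for Bruhat order on minimal coset representatives for a maximal parabolic), whereas you propose to derive it from the rank-function characterization $r_{ij}$ already established in the paper via \cite{LM}; your route is more self-contained but amounts to reproving the cited result, so the difference is cosmetic rather than mathematical.
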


\begin{proof}
Note that 
by definition
\[
S_{r+\l}^{J, \emptyset}=\{w\in S_{r+\l}: w^{-1}(1)<\cdots< w^{-1}(r) \text{ and } w^{-1}(r+1)<\cdots <w^{-1}(r+\l)\}.
\]
Therefore,
$\varsigma$ is a bijection. This preserves the partial order, for a proof, 
see \cite{BF} proposition 2.4.8.
\end{proof}

\begin{definition}
For $\lambda\in \Omega^{r, \l}$ and $\lambda'\in \Omega^{r_1, \l_1}$ 
such that $r+\l=r_1+\l_1$.
We define 
$\lambda\supseteq \lambda'$ if and only if $x_{\lambda}\supseteq x_{\lambda'}$,
and $\lambda\succeq \lambda'$ if and only if $x_{\lambda}\succeq x_{\lambda'}$.

\end{definition}

\begin{definition}

Let $\lambda=(a_1, \cdots, a_m; b_0, \cdots, b_{m-1})$, 
consider the set 
$$\{b(\Delta): \Delta\in \a_{\lambda}, e(\Delta)=k-1\}=\{x_1<\cdots<x_r\},$$
here we have $r$ segments ending in $k-1$ since $\sum_ia_i=r$,
we associate $\lambda$ with the element 
\[
 x_{\lambda}: =(x_1,\cdots, x_r).
\]
This allows us to get a morphism $\varsigma_2: \Omega^{r, \l}\rightarrow R_r(r+\l)$
sending $\lambda$ to $x_{\lambda}$.
\end{definition}

\begin{lemma}
 The map $\varsigma_2$ is an isomorphism of posets. 
\end{lemma}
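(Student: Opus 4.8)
The plan is to show $\varsigma_2$ is a bijection and then to deduce order–compatibility by factoring it through the already–established poset isomorphism $\varsigma_1$ and the bijection $\Phi_{J,\emptyset}\colon S_{r+\ell}^{J,\emptyset}\to S(\a_{\Id}^{J,\emptyset})$ of Proposition \ref{prop: 6.2.4}.

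First I would describe $S(\a_{\Id}^{J,\emptyset})$ combinatorially. Since the beginnings satisfy $b(\Delta_1)<\dots<b(\Delta_{r+\ell})\le k-1$, a segment of $\a_{\Id}^{J,\emptyset}$ ending in $k-1$ and one ending in $k$ are linked exactly when the former has the strictly smaller beginning, and in that case the elementary operation merely interchanges the two ends $k-1\leftrightarrow k$: no \emph{juxtaposed} pair ever occurs, so the degenerate case of Proposition \ref{prop: 1.1.15} is avoided, the multiset $\{b(\Delta_i)\}$ of beginnings is preserved, and every $\b\in S(\a_{\Id}^{J,\emptyset})$ still has $r+\ell$ segments, $r$ of them ending in $k-1$. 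Thus $\b$ is determined by the set of those beginnings carrying a segment of $\b$ ending in $k-1$, i.e.\ by an $r$–element subset of $\{1,\dots,r+\ell\}$ once we identify $b(\Delta_i)$ with $i$; conversely the maximal–block decomposition of the associated $0/1$–word is precisely a tuple $(a_1,\dots,a_m;b_0,\dots,b_{m-1})\in\Omega^{r,\ell}$, and by performing the linked swaps in a suitable order one checks the corresponding multisegment is $\a_\lambda$ and lies in $S(\a_{\Id}^{J,\emptyset})$. Hence $\lambda\mapsto\a_\lambda$ is a bijection $\Omega^{r,\ell}\xrightarrow{\ \sim\ }S(\a_{\Id}^{J,\emptyset})$, and reading off the increasing list of beginnings of the $(k-1)$–segments of $\a_\lambda$ — which is exactly $\varsigma_2(\lambda)$ under the identification $b(\Delta_i)\leftrightarrow i$ — yields a bijection onto $R_r(r+\ell)$. (Surjectivity may also be read off the cardinalities $|\Omega^{r,\ell}|=|\mathcal P(\ell,r)|=\binom{r+\ell}{r}=|S_{r+\ell}^{J,\emptyset}|=|R_r(r+\ell)|$, using Lemma \ref{lem: 7.2.4}.)

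For the order structure, the key point is that $\varsigma_2$ agrees with the composite
\[
\Omega^{r,\ell}\xrightarrow{\ \lambda\mapsto\a_\lambda\ }S(\a_{\Id}^{J,\emptyset})\xrightarrow{\ \Phi_{J,\emptyset}^{-1}\ }S_{r+\ell}^{J,\emptyset}\xrightarrow{\ \varsigma_1\ }R_r(r+\ell):
\]
indeed the segments of $\a_w^{J,\emptyset}=\sum_i[b(\Delta_i),e(\Delta_{w(i)})]$ ending in $k-1$ are those with $w(i)\le r$, so their beginnings are indexed by $\{w^{-1}(1),\dots,w^{-1}(r)\}=\varsigma_1(w)$, and when $x_\lambda=\varsigma_1(w)$ one identifies $\a_w^{J,\emptyset}$ with $\a_\lambda$. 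As $\varsigma_1$ is a poset isomorphism and $\Phi_{J,\emptyset}$ translates the inverse Bruhat order into the order on $S(\a_{\Id}^{J,\emptyset})$ (Proposition \ref{prop: 6.2.4}), it remains only to check that $\lambda\mapsto\a_\lambda$ transports the order on $\Omega^{r,\ell}$ to the order on $S(\a_{\Id}^{J,\emptyset})$. This I would obtain from Proposition \ref{prop: 4.2.3}: the rank invariants $r_{ij}(\a_\lambda)$ are explicit monotone functions of the broken line $x\mapsto\lambda(x)$, so $\a_\mu\le\a_\lambda$ iff $\lambda(x)\ge\mu(x)$ for all $x$, which by the lemma preceding the statement is $\mu\le\lambda$, equivalently $x_\mu\le x_\lambda$ componentwise — the order on $R_r(r+\ell)$.

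The main obstacle is not any single hard step but the bookkeeping of orientations: one must keep straight the Bruhat versus inverse Bruhat order on $S^{J,\emptyset}$, the reversal built into $\overline O_\a=\coprod_{\b\ge\a}O_\b$, the broken–line order on partitions, and the componentwise order on $R_r(r+\ell)$, and confirm that the displayed composite is order–preserving rather than order–reversing. Once the dictionary $x_\lambda\mapsto\a_\lambda$ is verified on a single covering relation, the full statement follows formally from the results already cited.
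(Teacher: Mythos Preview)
Your factorization $\varsigma_2=\varsigma_1\circ\Phi_{J,\emptyset}^{-1}\circ(\lambda\mapsto\a_\lambda)$ is correct and gives a valid route, but it is not the one the paper takes. The paper argues much more directly: it first writes down an explicit inverse to $\varsigma_2$ by reading off a partition from any $x\in R_r(r+\ell)$ via the associated multisegment $\a_x$, and then simply observes that if $\lambda=(\ell_1,\dots,\ell_r)$ in the usual partition notation then $\varsigma_2(\lambda)=(\ell_1+1,\ell_2+2,\dots,\ell_r+r)$, from which $\mu\le\lambda\Leftrightarrow\varsigma_2(\mu)\le\varsigma_2(\lambda)$ is immediate. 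This avoids any appeal to $\Phi_{J,\emptyset}$, to rank invariants, or to the broken-line criterion; indeed the paper runs the logic in the opposite direction, \emph{using} the present lemma to obtain the order statement $S(\a_\lambda)=\{\a_\mu:\mu\ge\lambda\}$ in Proposition~\ref{prop: 7.3.3}, which you are in effect reproving along the way.

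Your approach does go through, but there is an orientation slip in the last step: the map $\lambda\mapsto\a_\lambda$ is order-\emph{reversing}, not order-preserving. The trivial partition $\lambda=0$ gives $x_\lambda=(1,\dots,r)$ and hence $\a_\lambda=\a_{\Id}^{J,\emptyset}$, the \emph{maximal} element of $S(\a_{\Id}^{J,\emptyset})$; an elementary operation pushes a $(k{-}1)$-end from position $i$ to a strictly larger position $j$, so as $x_\lambda$ (equivalently $\lambda$) increases, $\a_\lambda$ descends. Thus $\a_\mu\le\a_\lambda\iff\mu\ge\lambda$, the opposite of what you wrote. This is harmless for the final conclusion, since $\Phi_{J,\emptyset}^{-1}$ is also order-reversing and $\varsigma_1$ is order-preserving, so the composite $\varsigma_2$ is order-preserving as desired --- precisely the bookkeeping hazard you anticipated. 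The paper's explicit formula $\varsigma_2(\lambda)=(\ell_i+i)_i$ sidesteps this sign-tracking entirely.
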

 
\begin{proof}
To see that $\varsigma$ is a bijection, we only need to construct an inverse.
Given $x=(x_1, \cdots, x_r)\in R_r(r+\l)$, we have 
$y=(y_1, \cdots, y_{\l})\in R_{\l}(r+\l)$  such that $\{1, \cdots, r+\l\}=\{x_1, \cdots, x_r, y_1, \cdots, y_{\l}\}$.
We can associate a multisegment to $x$
\[
 \a_x=\sum_{j=1}^r[b(\Delta_{x_{j}}), k-1]+\sum_{j=1}^{\l}[b(\Delta_{y_j}), k].
\]
Note that this allows us to construct a 
partition $\lambda(x)\in \Omega^{r, \l}$ by counting the segments ending in 
$k$ and $k+1$ alternatively. 

A simple calculation shows that 
if we write $\lambda=(\l_1, \cdots, \l_r)$ with $0\leq \l_1\leq \cdots\leq \l_r$,
then 
\[
 \varsigma_2(\lambda)=(\l_1+1, \cdots, \l_r+r), 
\]
as described in \cite{Br}. This shows that 
\[
 \mu\geq \lambda\Leftrightarrow \varsigma_2(\mu)\geq \varsigma_2(\lambda).
\]
\end{proof}

\begin{prop}\label{prop: 7.3.3}
For $\lambda\in \Omega^{r, \l}$,
we have $\a_{\lambda}\in S(\a)$, moreover, all the elements in $S(\a)$ are of this form. 
Moreover, we have $S(\a_{\lambda})=\{a_{\mu}: \mu\geq \lambda\}$.
\end{prop}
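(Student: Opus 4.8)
The proposition to prove is Proposition~\ref{prop: 7.3.3}: for $\lambda \in \Omega^{r,\ell}$ one has $\a_\lambda \in S(\a)$, every element of $S(\a)$ arises this way, and $S(\a_\lambda) = \{\a_\mu : \mu \geq \lambda\}$. Here $\a = \a_{\Id}^{J,\emptyset}$ is a multisegment of parabolic type $(J,\emptyset)$ where $J$ forces the ends of the segments to take only the two values $k-1$ (with multiplicity $r$) and $k$ (with multiplicity $\ell$), while the beginnings $b(\Delta_1),\dots,b(\Delta_{r+\ell})$ are all distinct.

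\textbf{Plan of proof.} The idea is to transport everything through the two poset isomorphisms $\varsigma_1 : S_{r+\ell}^{J,\emptyset} \xrightarrow{\sim} R_r(r+\ell)$ and $\varsigma_2 : \Omega^{r,\ell} \xrightarrow{\sim} R_r(r+\ell)$ already established, and to compare them with the parametrization $\Phi_{J,\emptyset} : S_{r+\ell}^{J,\emptyset} \xrightarrow{\sim} S(\a_{\Id}^{J,\emptyset})$ of Proposition~\ref{prop: 6.2.4}. First I would show that for $w \in S_{r+\ell}^{J,\emptyset}$ the multisegment $\Phi_{J,\emptyset}(w) = \sum_j [b(\Delta_j), e(\Delta_{w(j)})]$ coincides with $\a_{\lambda}$ where $\lambda = \varsigma_2^{-1}(x_w)$ and $x_w = (w^{-1}(1),\dots,w^{-1}(r))$. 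This is a direct bookkeeping computation: since $e(\Delta_i) = k-1$ exactly for $i \le r$ and $e(\Delta_i)=k$ for $i > r$, the segment $[b(\Delta_j), e(\Delta_{w(j)})]$ ends in $k-1$ precisely when $w(j) \le r$, i.e.\ when $j \in \{w^{-1}(1),\dots,w^{-1}(r)\}$; collecting these and recalling the explicit recipe for $\a_\lambda$ (reading off ascending/descending runs via Lemma~\ref{lem: 7.2.4}), one matches the two multisegments term by term. In fact the cleanest route is to observe that the recipe defining $\a_x$ in the proof of the lemma preceding the proposition is literally $\Phi_{J,\emptyset}(w)$ for the $w$ with $x_w = x$, so $\a_\lambda = \Phi_{J,\emptyset}(\varsigma_1^{-1}(\varsigma_2(\lambda)))$.

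\textbf{Deriving the three claims.} Once the identity $\a_\lambda = \Phi_{J,\emptyset}(\varsigma_1^{-1}\varsigma_2(\lambda))$ is in place, all three assertions follow formally. Since $\Phi_{J,\emptyset}$ is a bijection onto $S(\a_{\Id}^{J,\emptyset}) = S(\a)$ and $\varsigma_1,\varsigma_2$ are bijections, the map $\lambda \mapsto \a_\lambda$ is a bijection $\Omega^{r,\ell} \to S(\a)$; this gives both $\a_\lambda \in S(\a)$ and the exhaustiveness. For the order statement: Proposition~\ref{prop: 6.2.4} says $\Phi_{J,\emptyset}$ translates the \emph{inverse} Bruhat order on $S_{r+\ell}^{J,\emptyset}$ to the order on $S(\a_{\Id}^{J,\emptyset})$, i.e.\ $w \le v \iff \Phi_{J,\emptyset}(w) \ge \Phi_{J,\emptyset}(v)$; meanwhile $\varsigma_1$ is an isomorphism of posets (Bruhat order $\leftrightarrow$ the relation $\geq$ on $R_r(r+\ell)$, citing \cite{BF} Prop.~2.4.8) and $\varsigma_2$ is an isomorphism of posets ($\geq$ on $\Omega^{r,\ell}$ $\leftrightarrow$ $\geq$ on $R_r(r+\ell)$). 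Chaining these: $\a_\lambda \ge \a_\mu$ in $S(\a)$ iff $\varsigma_1^{-1}\varsigma_2(\lambda) \le \varsigma_1^{-1}\varsigma_2(\mu)$ in Bruhat order iff $\varsigma_2(\lambda) \le \varsigma_2(\mu)$ in $R_r(r+\ell)$ iff $\lambda \le \mu$ in $\Omega^{r,\ell}$. Hence $S(\a_\lambda) = \{\b \in S(\a) : \b \le \a_\lambda\} = \{\a_\mu : \mu \ge \lambda\}$, which is exactly the last claim.

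\textbf{Main obstacle.} The routine but genuinely delicate point is the term-by-term matching in the first step: one must be careful that the convention in the definition of $\a_\lambda$ (the alternation of blocks of sizes $b_0, a_1, b_1, a_2, \dots$ among the ordered beginnings $b(\Delta_1) \prec \cdots \prec b(\Delta_{r+\ell})$, with the "depressions/peaks" dictionary of Lemma~\ref{lem: 7.2.4}) really does produce, after sorting, the same assignment of $k-1$'s and $k$'s to the $b(\Delta_j)$ as the permutation $w$ with $x_w = \varsigma_2(\lambda)$ does via $j \mapsto e(\Delta_{w(j)})$. I would verify this by checking that the positions $j$ with $e(\Delta_{w(j)}) = k-1$ are exactly the $r$ values $w^{-1}(1) < \cdots < w^{-1}(r)$, which equal the entries of $x_w = \varsigma_2(\lambda)$, and that $\varsigma_2$ was constructed in precisely this way (from $\a_\lambda$ one reads off $\{b(\Delta) : \Delta \in \a_\lambda,\ e(\Delta) = k-1\} = \{x_1 < \cdots < x_r\} = x_\lambda$). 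So the two constructions are mutually inverse essentially by definition, and the proof reduces to spelling out that the definition of $\varsigma_2$ and the recipe for $\a_\lambda$ are compatible — no deeper input is needed beyond Propositions~\ref{prop: 6.2.4}, \ref{prop: 6.3.2}, the poset isomorphisms $\varsigma_1,\varsigma_2$, and Lemma~\ref{lem: 7.2.4}.
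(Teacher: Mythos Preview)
Your proposal is correct and follows essentially the same route as the paper's proof: both identify $\a_\lambda$ with $\Phi_{J,\emptyset}(w)$ for $w=\varsigma_1^{-1}\varsigma_2(\lambda)$ via the explicit computation $\Phi_{J,\emptyset}(w)=\sum_j[b(\Delta_{w^{-1}(j)}),e(\Delta_j)]=\a_{\varsigma_2^{-1}(x_w)}$, and then conclude by chaining the known poset isomorphisms $\varsigma_1,\varsigma_2$ with the order-reversing bijection $\Phi_{J,\emptyset}$ (Proposition~\ref{prop: 6.2.10}/\ref{prop: 6.2.4}). Your write-up spells out the order chase more carefully than the paper does, but the argument is the same.
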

 
\begin{proof}
Let $w\in S^{J, \emptyset}$, by definition, we have 
\[
 w^{-1}(1)<\cdots<w^{-1}(r), ~w^{-1}(r+1)<\cdots <w^{-1}(r+\l).
\]
By definition, we have 
\begin{align*}
\Phi_{J, \emptyset}(w)=&\sum_{j}[b(\Delta_j), e(\Delta_{w(j)})]\\
=&\sum_{j}[b(\Delta_{w^{-1}(j)}), e(\Delta_{j})]\\
=&\sum_{j=1}^{r}[b(\Delta_{w^{-1}(j)}), k-1]+\sum_{j=r+1}^{r+\l}[b(\Delta_{w^{-1}(j)}), k]\\
=\a_{\varsigma_2^{-1}(x_w)}
\end{align*}

Now that $\varsigma_2^{-1}\circ\varsigma_1$ preserves
the partial order, we have 
\[
 S(\a_{\lambda})=\{\a_{\mu}: \mu\geq \lambda\}
\]
by proposition \ref{prop: 6.2.10}.

\end{proof}

\begin{example}

For example, for $r=1, \ell=3$, with 
$J=\{\sigma_2, \sigma_3\}$ and 
\[
 \a=\a_{\Id}^{J,\emptyset}=[1, 4]+[2, 5]+[3, 5]+[4, 5].
\]

Let $\lambda=(a_1, a_2; b_0, b_1)=(1, 0; 2, 1)$, then $\a_{\lambda}=[1, 5]+[2, 5]+[3, 4]+[4, 5]$.
This corresponds to the element $\varsigma_1^{-1}\circ \varsigma_2(\lambda)=\sigma_1\sigma_2$
in $S_4^{J, \emptyset}$.

\begin{figure}[!ht]
\centering
\includegraphics{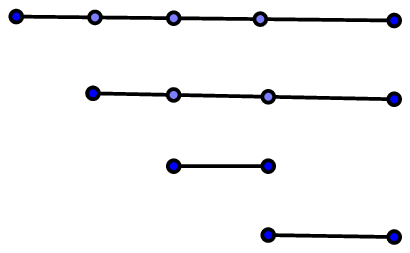}
\caption{\label{fig-multisegment14} }
\end{figure}

\end{example}

\begin{prop}
Let $\lambda, \mu\in \Omega^{r, \l}$ such that $\lambda<\mu$.
We have 
\[
 P_{\a_{\lambda}, \a_{\mu}}(q)=P_{\lambda, \mu}(q).
\] 
\end{prop}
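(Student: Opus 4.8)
The plan is to reduce the statement to the parabolic Kazhdan--Lusztig machinery of Chapter 6, applied to the maximal parabolic $P_J$ with $J=\{\sigma_1,\dots,\sigma_{r-1}\}\cup\{\sigma_{r+1},\dots,\sigma_{r+\l-1}\}=S\setminus\{\sigma_r\}$, for which $GL_{r+\l}/P_J$ is the Grassmannian $Gr_r(\C^{r+\l})$. First I would set up the combinatorial dictionary, where $\a=\a_{\Id}^{J,\emptyset}$ is the parabolic-type multisegment introduced at the beginning of this section. By Proposition \ref{prop: 7.3.3} and its proof, for $\nu\in\Omega^{r,\l}$ one has $\a_\nu=\Phi_{J,\emptyset}(w_\nu)$ with $w_\nu:=\varsigma_1^{-1}(\varsigma_2(\nu))\in S_{r+\l}^{J,\emptyset}$; since $\varsigma_1$ and $\varsigma_2$ are isomorphisms of posets, the hypothesis $\lambda<\mu$ gives $w_\lambda<w_\mu$ and $\a_\mu\in S(\a_\lambda)$, so that both sides of the asserted identity are defined.

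Next I would invoke the proposition of Chapter 6 that $P_{\Phi_{J_1,J_2}(w_1),\Phi_{J_1,J_2}(w_2)}(q)=P^{J_1,J_2}_{w_1,w_2}(q)$, with $(J_1,J_2)=(J,\emptyset)$, to obtain
\[
P_{\a_\lambda,\a_\mu}(q)=P^{J,\emptyset}_{w_\lambda,w_\mu}(q).
\]
By the lemma of Chapter 6 expressing parabolic KL polynomials through ordinary ones, the right-hand side equals $P_{\widetilde w_\lambda,\widetilde w_\mu}(q)$, where $\widetilde w_\nu$ is the longest element of $S_J w_\nu$; equivalently it is the Poincar\'e series of the localized intersection cohomology of the $P_J$-stable Schubert variety $\overline{P_J w_\mu B}$ in $GL_{r+\l}/B$. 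The remaining step is to recognize $P^{J,\emptyset}_{w_\lambda,w_\mu}(q)$ as the Grassmannian polynomial $P_{\lambda,\mu}(q)$: using the $P_J/B$-fibration $GL_{r+\l}/B\to GL_{r+\l}/P_J$, the transpose--inverse involution of $GL_{r+\l}$ that interchanges left and right quotients, and the remark of Chapter 6 that for $GL_n$ the parabolic case ``boils down to the Borel case'' (Deodhar \cite{D2}), the localized IC Poincar\'e series above is, by smooth base change, that of the corresponding Schubert variety $\overline{X}_\mu\subseteq Gr_r(\C^{r+\l})$ localized at $X_\lambda$; and under the identifications $\varsigma_1$, $\varsigma_2$ together with the partition bijection of Lemma \ref{lem: 7.2.4}, the pair $(w_\lambda,w_\mu)$ corresponds precisely to the pair of partitions $(\lambda,\mu)$ indexing those Schubert varieties. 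This yields $P^{J,\emptyset}_{w_\lambda,w_\mu}(q)=P_{\lambda,\mu}(q)$, which combined with the display finishes the proof.

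The main obstacle I anticipate is exactly this last matching of conventions: reconciling the left/right-quotient normalizations so that $P^{J,\emptyset}$, which is built from $P_J$-orbits on $GL/B$, literally computes the intersection cohomology of Grassmannian Schubert varieties, and checking that the three indexing bijections in play --- $\varsigma_1$ on $S_{r+\l}^{J,\emptyset}$, $\varsigma_2$ on $\Omega^{r,\l}$, and the partition description of Grassmannian Schubert cells from \cite{Z5} --- are mutually compatible, so that the equality holds on the nose and not merely up to a relabelling of partitions. Once this combinatorial bookkeeping is fixed, the geometric input (smoothness of $GL_{r+\l}/B\to GL_{r+\l}/P_J$ and smooth base change for intersection cohomology, both standard and already implicit in the constructions of Chapter 6) is routine.
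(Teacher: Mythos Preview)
Your proposal is correct and follows essentially the same route as the paper: translate $\lambda,\mu$ to $w,v\in S_{r+\l}^{J,\emptyset}$ via $\varsigma_1,\varsigma_2$, use the Chapter~6 result $P_{\Phi_{J,\emptyset}(w),\Phi_{J,\emptyset}(v)}=P^{J,\emptyset}_{w,v}$, and then identify the parabolic KL polynomial with the Grassmannian one via $P_J\backslash GL_{r+\l}\simeq Gr_r(\C^{r+\l})$ (the paper cites \cite{Br} for this last identification rather than spelling out the fibration argument). Your caution about left/right normalizations and the compatibility of $\varsigma_1,\varsigma_2$ with the Schubert-cell labelling is well placed---the paper's proof is terse on exactly this point---but the strategy is the same.
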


\begin{proof}
 We can also prove this proposition in the following way. 
Let $w, v\in S_{r+\l}^{J, \emptyset}$, 
such that 
\[
 \lambda=\varsigma_2^{-1}\varsigma_1(w), ~\mu=\varsigma_2^{-1}\varsigma_1(v).
\]
Let $P_{J}$ be the parabolic
subgroup of $GL_n$, then by fixing an element in $V_0\in Gr_r(\C^{r+\l})$,  
we can identify $P_J\backslash GL_n$ with $Gr_r(\C^{r+\l})$. 
Moreover, the $B$-orbits $P_J\backslash wB$ corresponds to the varieties
$X_{\lambda}$, see \cite{Br} for a precise description.
Hence we have 
\[
 P_{\lambda, \mu}(q)=P^{J, \emptyset}_{w, v}(q)=P_{a_{\lambda}, \a_{\mu}}(q).
\]
\end{proof}

\remk One can surely prove this result using 
the open immersion we constructed in section 3.3. 

\begin{definition}
Let $\lambda\in \Omega^{r, \l}$. 
\begin{description}
 \item[(1)]
We define  
\[
 \Gamma(\lambda)=\{\mu\in \Omega^{r_1, \l_1}: r_1+\l_1=r+\l, r_1\geq r,~
\mu\succeq \lambda\}.
\]
and 
\[
 \Gamma^{\mu}(\lambda)=\{\mu': \mu\geq \mu', \mu'\succeq \lambda\},
\]
\[
  \Gamma_1^{\mu}(\lambda)=\{\mu': \mu\geq \mu', \mu'\supseteq \lambda\}.
\]

\item[(2)] For $\mu\in \Gamma(\lambda)$, we define 
\[
 S^{\mu}(\lambda)=\{\lambda'\in \Omega^{r, \l}: \lambda'\geq \lambda, \mu\succeq\lambda'\},
\]
and let 
\[
 S_1^{\mu}(\lambda)=\{\lambda'\in \Omega^{r, \l}: \lambda'\geq \lambda, \mu\supseteq \lambda'\}.
\]
\end{description}

\end{definition}

\begin{prop}\label{prop: 7.3.9}
Let $\lambda\in \Omega^{r, \l}$ and $\mu\in \Omega^{r_1, \l_1}$ with $r_1\geq r$
and $r_1+\l_1=r+\l$.  
Then $\pi(\a_{\mu})$ appears as a summand of $\D^k(\pi(\a_{\lambda}))$
if and only if $\mu\in \Gamma(\lambda)$.  
\end{prop}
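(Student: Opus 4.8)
The plan is to unwind the definitions of the truncation operation $\D^k$ and the partition–multisegment dictionary $\varsigma_2$, and to reduce the claim to the combinatorial description of $\preceq_k$ already obtained in Proposition~\ref{prop: 7.3.5} together with the poset isomorphism of Proposition~\ref{prop: 7.3.3}. By definition (equation~(\ref{eq: 7.41})) $\D^k(\pi(\a_\lambda))=\pi(\a_\lambda)+\sum_{\Gamma\subseteq \a_\lambda(k),\,\Gamma\neq\emptyset}\pi((\a_\lambda)_\Gamma)$, so $\pi(\a_\mu)$ occurs as a summand of $\D^k(\pi(\a_\lambda))$ if and only if $\a_\mu=(\c)_\Gamma$ for some $\c\in S(\a_\lambda)$ and some $\Gamma\subseteq \c(k)$; this is exactly $\a_\mu\preceq_k\a_\lambda$, using Proposition~\ref{prop: 7.3.5} and Proposition~\ref{prop: 7.4.2}. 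So the content of the proposition is purely combinatorial: one must show that, under the identifications $S(\a)\cong\coprod S_{r+\l}^{J,\emptyset}\cong\coprod_{r_1\ge r}\Omega^{r_1,\l_1}$ of the previous pages, the relation $\a_\mu\preceq_k\a_\lambda$ translates into $\mu\in\Gamma(\lambda)$, i.e.\ $\mu\succeq\lambda$ with $r_1\ge r$ and $r_1+\l_1=r+\l$.

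First I would fix the book-keeping. Recall $\a=\a_{\Id}^{J,\emptyset}$ has all segments ending in $k-1$ or $k$, and $\varphi_{e(\a)}(k+1)=0$, so that $\a$ satisfies $(\A_k)$ (Lemma~\ref{lem: 7.6.1}); hence $\tilde S(\a)_k=S(\a)$ by Lemma~\ref{lem: 7.6.2}, and every $\c\in S(\a_\lambda)$ also has its segments ending only in $k-1$ or $k$. The operation $\c\mapsto \c_\Gamma$ replaces some segments $\Delta$ with $e(\Delta)=k$ by $\Delta^{(k)}=\Delta^-$, which ends in $k-1$; thus $\a_\mu=(\c)_\Gamma$ forces $\a_\mu$ to be of the form $\a_{\mu}$ for a partition $\mu\in\Omega^{r_1,\l_1}$ with $r_1=\#\{\Delta\in\a_\mu: e(\Delta)=k-1\}\ge r$ and total $r_1+\l_1=\deg$-count $=r+\l$ (the weight is preserved by elementary operations, only the end-multiset changes by moving $\ell_k:=|\Gamma|$ endpoints from $k$ to $k-1$). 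So the "only if" direction amounts to: (a) if $\c\in S(\a_\lambda)$ then $\c=\a_{\lambda'}$ with $\lambda'\ge\lambda$ (Proposition~\ref{prop: 7.3.3}), and (b) $\a_\mu=(\a_{\lambda'})_\Gamma$ implies $x_\mu\supseteq x_{\lambda'}$ (applying $\Gamma$ exactly turns a subset of the "ending in $k$" beginnings into "ending in $k-1$" beginnings, which in the $R_r(r+\l)$-picture adds coordinates, i.e.\ gives the relation $\supseteq$), hence $\mu\succeq\lambda$ by the definition $x\succeq y\Leftrightarrow x\ge y'\supseteq y$. Conversely, given $\mu\succeq\lambda$, choose $\lambda'\in\Omega^{r,\l}$ with $\lambda'\ge\lambda$ and $x_\mu\supseteq x_{\lambda'}$; then $\a_{\lambda'}\in S(\a_\lambda)$ by Proposition~\ref{prop: 7.3.3}, and the inclusion $x_\mu\supseteq x_{\lambda'}$ determines a subset $\Gamma\subseteq\a_{\lambda'}(k)$ (the segments whose beginnings are the extra coordinates) with $(\a_{\lambda'})_\Gamma=\a_\mu$, giving $\a_\mu\preceq_k\a_\lambda$ and hence $\pi(\a_\mu)$ in $\D^k(\pi(\a_\lambda))$ by~(\ref{eq: 7.41}).

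I expect the routine part to be the translation of the three relations $\geq$, $\supseteq$, $\succeq$ on $R_r(r+\l)$ into statements about which segments end in $k-1$ versus $k$, and the verification that $\Gamma$ is precisely the data of an inclusion $x_\mu\supseteq x_{\lambda'}$; this is a direct unwinding of the maps $\varsigma_1,\varsigma_2$ and Definition~\ref{nota: 6.2.21}. The main obstacle will be the bookkeeping in the "only if" direction showing that an arbitrary element $\a_\mu$ arising as $(\c)_\Gamma$ for $\c\in S(\a_\lambda)$ is genuinely of the form $\a_{\mu}$ for a partition $\mu$ with $r_1\ge r$ — i.e.\ that the end-multiset $e(\a_\mu)$ really has only values $k-1,k$ and that no segment of $\a_\mu$ ends in $k$ with beginning $\ge k$ (so that $(\A_k)$-type positivity of the partition picture applies) — and conversely that any $\Gamma$ one is tempted to apply to $\a_{\lambda'}$ lands inside $S(\a)$. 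These are handled by invoking Lemma~\ref{lem: 7.6.2}(1) ($\tilde S(\a)_k=S(\a)$) and the fact that $\a$ satisfies $(\A_k)$, so that all intermediate multisegments stay of parabolic type $(J,\emptyset)$ and fall under Proposition~\ref{prop: 7.3.3}; once that is in place the equivalence $\a_\mu\preceq_k\a_\lambda\iff\mu\in\Gamma(\lambda)$ is immediate, and combining with~(\ref{eq: 7.41}) and Proposition~\ref{prop: 7.4.2} finishes the proof.
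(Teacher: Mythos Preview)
Your strategy is sound and the dictionaries you invoke are the right ones, but the final identification has its wires crossed. Via Proposition~\ref{prop: 7.3.5} you obtain that $\a_\mu\preceq_k\a_\lambda$ iff $\a_\mu=(\a_{\lambda'})_\Gamma$ for some $\a_{\lambda'}\in S(\a_\lambda)$, and you correctly translate this into: there exists $\lambda'\in\Omega^{r,\ell}$ with $x_\mu\supseteq x_{\lambda'}$ and $\lambda'\ge\lambda$. But the \emph{definition} of $\succeq$ (Definition after $R_r(n)$) reads the other way: $\mu\succeq\lambda$ means there exists $\mu'$ with $x_\mu\ge x_{\mu'}\supseteq x_\lambda$. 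So the sentence ``hence $\mu\succeq\lambda$ by the definition $x\succeq y\Leftrightarrow x\ge y'\supseteq y$'' is a non-sequitur: you have $x_\mu\supseteq x_{\lambda'}\ge x_\lambda$, not $x_\mu\ge x_{\mu'}\supseteq x_\lambda$. The same swap recurs in your converse direction, where you ``choose $\lambda'\in\Omega^{r,\ell}$ with $\lambda'\ge\lambda$ and $x_\mu\supseteq x_{\lambda'}$'' --- the definition of $\succeq$ does not hand you such a $\lambda'$, only a $\mu'$.

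The paper avoids this by translating the \emph{definition} of $\preceq_k$ rather than its Proposition~\ref{prop: 7.3.5} reformulation: it writes $\D^k(\pi(\a_\lambda))=\sum_{y\supseteq x_\lambda}\pi(\a_{\varsigma_2^{-1}(y)})$ directly (the terms are $(\a_\lambda)_\Gamma$, indexed by $\mu'$ with $x_{\mu'}\supseteq x_\lambda$), and then invokes Lemma~\ref{lem: 3.0.8} to conclude that $\pi(\a_\mu)$ is a summand iff $\mu\ge\mu'$ for some such $\mu'$ --- which is literally the definition of $\succeq$. Your route through Proposition~\ref{prop: 7.3.5} reverses the order of ``apply $\Gamma$'' and ``descend by $\le$''; the two orders give the same relation precisely \emph{because} of Proposition~\ref{prop: 7.3.5}, but to close the argument you would need either to invoke that proposition a second time in the reverse direction, or to prove directly the combinatorial lemma that $(\exists\,\lambda':x_\mu\supseteq x_{\lambda'}\ge x_\lambda)\Leftrightarrow(\exists\,\mu':x_\mu\ge x_{\mu'}\supseteq x_\lambda)$ on $R_*(r+\ell)$.
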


\begin{proof}
Let $x_{\lambda}=(x_1^{\lambda}, \cdots, x_r^{\lambda})=\varsigma_2(\lambda)$
and $y_{\lambda}=(y_1^{\lambda}, \cdots, y_{\l}^{\lambda})\in R_r(r+\l)$ such that 
\[
 \{1, \cdots, r+\l\}=\{x_1^{\lambda}, \cdots, x_r^{\lambda}, y_1^{\lambda}, \cdots, y_{\l}^{\lambda}\}.
\]
As described in proposition \ref{prop: 7.3.3}, we have 
\[
 \a_{\lambda}=\sum_{j=1}^r[b(\Delta_{x_{j}^{\lambda}}), k-1]+\sum_{j=1}^{\l}[b(\Delta_{y_j^{\lambda}}), k].
\]
therefore
\[
 \D^k(\pi(\a_{\lambda}))=\pi(\a_{\lambda})+\sum_{y\supseteq x_{\lambda}}\pi(\a_{\varsigma_2^{-1}(y)}).
\]
Now by lemma \ref{lem: 3.0.8},
we know that $\pi(\mu)$ is a summand of 
$\D^k(\pi(\a_{\lambda}))$ if and only if 
$\mu\geq \varsigma_2^{-1}(y)$ for some $y\supseteq x_{\lambda}$, i.e,
$\mu\succeq \lambda$.
\end{proof}

\begin{cor}
We have $\mu\succeq \lambda$ if and 
only if $\a_{\mu}\preceq_k \a_{\lambda}$. 
 
\end{cor}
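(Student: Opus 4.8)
The plan is to read off the statement directly from Proposition \ref{prop: 7.3.9}, which has just been proved, together with Proposition \ref{prop: 7.4.2} characterizing the relation $\preceq_k$ in terms of the coefficients $n_{\b,\a}$. First I would recall that, by definition, $\a_\mu\preceq_k\a_\lambda$ holds precisely when there is a multisegment $\c\in S(\a_\lambda)$ and a subset $\Gamma\subseteq\c(k)$ with $\a_\mu\leq \c_\Gamma$; equivalently, by Proposition \ref{prop: 7.4.2} (applied with $\a=\a_\lambda$, $\b=\a_\mu$), $\a_\mu\preceq_k\a_\lambda$ is equivalent to $n_{\a_\mu,\a_\lambda}>0$, i.e. to $L_{\a_\mu}$ appearing in $\D^k(\pi(\a_\lambda))$. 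By Corollary \ref{cor: 7.4.3}, this is in turn equivalent to $\D^k(\pi(\a_\lambda))-\pi(\a_\mu)\geq 0$ in $\mathcal{R}$.

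Next I would match this up with the partition side. By Proposition \ref{prop: 7.3.9}, $\pi(\a_\mu)$ appears as a summand of $\D^k(\pi(\a_\lambda))$ if and only if $\mu\in\Gamma(\lambda)$, which by the definition of $\Gamma(\lambda)$ just before Proposition \ref{prop: 7.3.9} means exactly $\mu\succeq\lambda$ (with the bookkeeping $r_1\geq r$, $r_1+\l_1=r+\l$ automatically forced by the weight constraint $\varphi_{\a_\mu}=\varphi_{\a_\lambda}-\ell\chi_{[k]}$ for some $\ell\geq 0$, since $\D^k$ can only delete copies of $[k]$). So the only thing to verify is that ``$\pi(\a_\mu)$ appears as a summand of $\D^k(\pi(\a_\lambda))$'' is the same condition as ``$L_{\a_\mu}$ appears in $\D^k(\pi(\a_\lambda))$''. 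One direction is Lemma \ref{lem: 3.0.8}: writing $\D^k(\pi(\a_\lambda))$ via \eqref{eq: 7.41} as $\sum_\Gamma\pi((\a_\lambda)_\Gamma)$ and then each $\pi((\a_\lambda)_\Gamma)=\sum_{\b\leq(\a_\lambda)_\Gamma}m(\b,(\a_\lambda)_\Gamma)L_\b$, a summand $\pi(\a_\mu)$ forces $\a_\mu\leq(\a_\lambda)_\Gamma$ hence $m_{\a_\mu,(\a_\lambda)_\Gamma}>0$ hence $n_{\a_\mu,\a_\lambda}>0$; conversely, if $n_{\a_\mu,\a_\lambda}>0$ then by Proposition \ref{prop: 7.3.5} there is $\c\in S(\a_\lambda)$ and $\Gamma\subseteq\c(k)$ with $\a_\mu=\c_\Gamma$, and by Proposition \ref{prop: 7.3.3} every such $\c$ is again some $\a_{\lambda'}$ with $\lambda'\geq\lambda$, so $\c_\Gamma=\a_\mu$ is one of the explicit summands $\pi(\a_{\varsigma_2^{-1}(y)})$ appearing in $\D^k(\pi(\a_{\lambda'}))$, and monotonicity ($\D^k$ preserves $\geq$ in $\mathcal{R}$, Theorem \ref{teo: 3}) propagates it into $\D^k(\pi(\a_\lambda))$.

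I expect the main (very mild) obstacle to be purely administrative: keeping straight the distinction between ``$\pi(\a_\mu)$ occurs in the expansion \eqref{eq: 7.41} of $\D^k(\pi(\a_\lambda))$'' and ``$L_{\a_\mu}$ occurs'', i.e. confirming that the poset relation $\succeq$ on partitions computed combinatorially in Proposition \ref{prop: 7.3.9} is genuinely the relation $\preceq_k$ of Definition preceding it and not merely something implying it. Since Proposition \ref{prop: 7.3.9} already handles the $\pi$-level statement and Propositions \ref{prop: 7.4.2}, \ref{prop: 7.3.5} translate between the $\pi$-level and the $\preceq_k$-definition, no new geometry is needed; the corollary is essentially a restatement. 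I would therefore present the argument in two short lines: $\a_\mu\preceq_k\a_\lambda\iff n_{\a_\mu,\a_\lambda}>0\iff \pi(\a_\mu)$ is a summand of $\D^k(\pi(\a_\lambda))\iff\mu\in\Gamma(\lambda)\iff\mu\succeq\lambda$, citing \ref{prop: 7.4.2}, \ref{prop: 7.3.5}, \ref{prop: 7.3.3}, Lemma \ref{lem: 3.0.8} for the middle equivalences and \ref{prop: 7.3.9} for the last.
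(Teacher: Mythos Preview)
Your proposal is correct and follows essentially the same route as the paper: the paper's proof is precisely your final two-line chain, invoking Corollary~\ref{cor: 7.4.3} to get $\a_\mu\preceq_k\a_\lambda\iff \D^k(\pi(\a_\lambda))-\pi(\a_\mu)\geq 0$ in $\mathcal{R}$, and then Proposition~\ref{prop: 7.3.9} (whose proof already interprets ``summand'' via Lemma~\ref{lem: 3.0.8} as this positivity) to conclude $\mu\succeq\lambda$. Your intermediate worry about $\pi$-level versus $L$-level appearances is unnecessary, since Corollary~\ref{cor: 7.4.3} already hands you the $\pi$-level characterization of $\preceq_k$ directly; the detour through Propositions~\ref{prop: 7.3.5} and~\ref{prop: 7.3.3} can be dropped.
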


\begin{proof}
By corollary \ref{cor: 7.4.3}, we know that 
 $\a_{\mu}\preceq_k \a_{\lambda}$ if and only if $\D^k(\pi(\a_{\lambda}))-\pi(\a_{\mu})\geq 0$
 in $\mathcal{R}$, which is equivalent to say that $\mu\preceq \lambda$ by 
 the previous proposition. 
\end{proof}

\begin{prop}
Let $\lambda\in \Omega^{r, \ell}$ 
and $\mu\in \Omega^{r_1, \ell_1}$. Then we have  
$\a_{\mu}=(\a_{\lambda})_{\Gamma}$
 for some $\Gamma\subseteq \a_{\lambda}(k)$. 
if and only if we have $\mu\supseteq \lambda$. 
\end{prop}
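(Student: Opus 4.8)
The plan is to push the whole statement through the poset isomorphism $\varsigma_2$, under which it reduces to an elementary fact about finite subsets of $\{1,\dots,r+\l\}$. Recall that, since $\a=\a_{\Id}^{J,\emptyset}$, all the beginnings $b(\Delta_1)<\dots<b(\Delta_{r+\l})$ are distinct and $b(\Delta_{r+\l})\le e(\Delta_1)=k-1$; for $\nu\in\Omega^{r_1,\l_1}$ with $r_1\ge r$ and $r_1+\l_1=r+\l$ the multisegment $\a_\nu$ of Proposition \ref{prop: 7.3.3} has exactly this set of beginnings, with $r_1$ of its segments ending at $k-1$ and $\l_1$ ending at $k$, and $x_\nu=\varsigma_2(\nu)\subseteq\{1,\dots,r+\l\}$ is precisely the set of indices $i$ for which the segment of $\a_\nu$ with beginning $b(\Delta_i)$ ends at $k-1$. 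First I would compute the effect of the operation $\b\mapsto\b_\Gamma$ on $\a_\lambda$: by Definition \ref{def: 2.2.3}, for $e(\Delta)=k$ one has $\Delta^{(k)}=\Delta^-$, so the beginning is preserved and the endpoint drops from $k$ to $k-1$ (and no empty segment appears, by the inequality above). Hence for $\Gamma\subseteq\a_\lambda(k)$, writing $\mathrm{pos}(\Gamma)$ for the set of $i$ with $[b(\Delta_i),k]\in\Gamma$, the multisegment $(\a_\lambda)_\Gamma$ has the same beginnings as $\a_\lambda$ and its $k-1$-ending segments sit exactly at the indices $x_\lambda\sqcup\mathrm{pos}(\Gamma)$, the union being disjoint because the segments of $\Gamma$ end at $k$ in $\a_\lambda$, so $\mathrm{pos}(\Gamma)$ lies in the complement of $x_\lambda$. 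Therefore $(\a_\lambda)_\Gamma=\a_\nu$ with $\nu=\varsigma_2^{-1}(x_\lambda\sqcup\mathrm{pos}(\Gamma))\in\Omega^{r+|\Gamma|,\l-|\Gamma|}$.

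With this in hand both directions are immediate. If $\a_\mu=(\a_\lambda)_\Gamma$, then comparing which segments end at $k-1$ and using that $\varsigma_2$ is a bijection gives $x_\mu=x_\lambda\sqcup\mathrm{pos}(\Gamma)\supseteq x_\lambda$, which is exactly the definition of $\mu\supseteq\lambda$. Conversely, if $\mu\supseteq\lambda$, i.e. $x_\mu\supseteq x_\lambda$ (so in particular $r_1=|x_\mu|\ge r$), set $\Gamma$ equal to the set of segments of $\a_\lambda$ whose beginning is $b(\Delta_i)$ for some $i\in x_\mu\setminus x_\lambda$; each such segment ends at $k$ in $\a_\lambda$ because $i\notin x_\lambda$, hence $\Gamma\subseteq\a_\lambda(k)$, and by the computation of the previous paragraph $(\a_\lambda)_\Gamma=\a_{\varsigma_2^{-1}(x_\lambda\cup(x_\mu\setminus x_\lambda))}=\a_{\varsigma_2^{-1}(x_\mu)}=\a_\mu$. (Since $\a_\mu$ and the relation $\mu\supseteq\lambda$ only make sense when $\mu\in\Omega^{r_1,\l_1}$ with $r_1\ge r$ and $r_1+\l_1=r+\l$, no further case distinction is needed.)

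The argument is essentially bookkeeping, and the step that needs care is the compatibility assertion closing the first paragraph: that truncating a chosen subset of the $k$-ending segments of $\a_\lambda$ literally reproduces the multisegment $\a_\nu$ attached by $\varsigma_2^{-1}$ to the enlarged index set. I would verify it by matching the alternating block decomposition $[b(\Delta_\bullet),k],\dots,[b(\Delta_\bullet),k-1],\dots$ that defines $\a_\nu$ from $\nu=(a_1,\dots,a_m;b_0,\dots,b_{m-1})$ against the endpoint pattern of $(\a_\lambda)_\Gamma$, which is the same translation already carried out in the proof that $\varsigma_2$ is an isomorphism of posets; no new geometric input is required.
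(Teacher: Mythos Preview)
Your proof is correct and follows essentially the same approach as the paper: both translate the operation $(\a_\lambda)_\Gamma$ into a statement about the index sets $x_\lambda,x_\mu$ via $\varsigma_2$, observing that truncating a subset $\Gamma$ of the $k$-ending segments exactly enlarges $x_\lambda$ by the corresponding positions. The paper's version is terser (its converse is the single sentence ``The converse is also true''), while you spell out both directions and the compatibility step more carefully, but there is no substantive difference in method.
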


\begin{proof}
Let $x_{\lambda}=(x_1^{\lambda}, \cdots, x_r^{\lambda})=\varsigma_2(\lambda)$
and $y_{\lambda}=(y_1^{\lambda}, \cdots, y_{\l}^{\lambda})\in R_r(r+\l)$ such that 
\[
 \{1, \cdots, r+\l\}=\{x_1^{\lambda}, \cdots, x_r^{\lambda}, y_1^{\lambda}, \cdots, y_{\l}^{\lambda}\}.
\]
As described in proposition \ref{prop: 7.3.3}, we have 
\[
 \a_{\lambda}=\sum_{j=1}^r[b(\Delta_{x_{j}^{\lambda}}), k-1]+\sum_{j=1}^{\l}[b(\Delta_{y_j^{\lambda}}), k].
\] 
And we have 
\[
 \a_{\lambda}(k)=\sum_{j=1}^{\l}[b(\Delta_{y_j^{\lambda}}), k].
\]
Let 
$\Gamma=\sum_{m=1}^{t}[b(\Delta_{y_{j_m}^{\lambda}}), k]$. 
If $\a_{\mu}=(\a_{\lambda})_{\Gamma}$, then 
\[
 \a_{\mu}=\sum_{j=1}^r[b(\Delta_{x_{j}^{\lambda}}), k-1]+\sum_{m=1}^{t}[b(\Delta_{y_{j_m}^{\lambda}}), k-1]
 +\sum_{ j\notin \{j_1, \cdots, j_t\}}[b(\Delta_{y_j^{\lambda}}), k].
\]
Therefore 
\[
 x_{\mu}\supseteq x_{\lambda}
\]
as a set.
The converse is also true. 
\end{proof}

\section{Grassmanian case}

As before, let 
\[
 J=\{\sigma_i: i=1, \cdots, r-1\}\cup \{\sigma_i: i=r+1\cdots, r+\ell-1\},
\]
and
\[
 \a: =\a_{\Id}^{J, \emptyset}=\{\Delta_1, \cdots, \Delta_r, \cdots, \Delta_{r+\l}\}
\]
be a multisegment of parabolic type $(J, \emptyset)$, where 
\[
e(\Delta_i)=k-1, \text{ for } i=1, \cdots, r,
\]
and 
\[
 e(\Delta_i)=k, \text{ for } i=r+1, \cdots, r+\l. 
\]
Moreover, for $\lambda\in \mathcal{P}(\ell, r)$, let 
$x_{\lambda}=(x_1^{\lambda}, \cdots, x_r^{\lambda})=\varsigma_2(\lambda)\in R_r(r+\l)$
and $y_{\lambda}=(y_1^{\lambda}, \cdots, y_{\l}^{\lambda})\in R_\l(r+\l)$ such that 
\[
 \{1, \cdots, r+\l\}=\{x_1^{\lambda}, \cdots, x_r^{\lambda}, y_1^{\lambda}, \cdots, y_{\l}^{\lambda}\}.
\]
As described in proposition \ref{prop: 7.3.3}, we have 
\[
 \a_{\lambda}=\sum_{j=1}^r[b(\Delta_{x_{j}^{\lambda}}), k-1]+\sum_{j=1}^{\l}[b(\Delta_{y_j^{\lambda}}), k].
\]    
Let $0<r_0\leq \ell $ and $r_1=r+r_0, ~ \ell_1=\ell-r_0$.

\begin{prop}\label{prop: 7.6.1}
Let $\mu\in \mathcal{P}(\ell_1, r_1)$. Then 
there exists $\mu^{\flat}\in \mathcal{P}(\ell, r)$, such that 
\[
 \{\b\in S(\a): \a_{\mu}\preceq_k \b\}=\{\a_{\lambda}: \lambda\in \mathcal{P}(\ell, r),~ \lambda\leq \mu^{\flat}\}.
\]
More explicitly, if 
$x_{\mu}=(x_1^{\mu}, \cdots, x_{r_1}^{\mu})=\varsigma_2(\mu)$, then 
\[
 x_{\mu^{\flat}}=\varsigma_2(\mu^{\flat})=(x_{r_0+1}^{\mu}, \cdots, x_{r_1}^{\mu}).
\]

\end{prop}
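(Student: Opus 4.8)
<br>

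The plan is to unwind the definition of $\preceq_k$ using Proposition \ref{prop: 7.3.5} and the combinatorial translation between multisegments of Grassmannian type and partitions developed in Section 7.6. Recall that $\a_\mu \preceq_k \a_\lambda$ holds if and only if there exists $\c \in S(\a_\lambda)$ and $\Gamma \subseteq \c(k)$ with $\a_\mu = \c_\Gamma$; but to reach the full down-set $\{\b \in S(\a): \a_\mu \preceq_k \b\}$ I would combine this with the fact (proved in the last two propositions of Section 7.6) that $\a_\mu = (\a_\nu)_\Gamma$ for some $\Gamma \subseteq \a_\nu(k)$ precisely when $x_\mu \supseteq x_\nu$ as sets, and that $\a_\nu \preceq_k \a_\lambda$ for $\nu \in \Omega^{r,\l}$ precisely when $\nu \succeq \lambda$, i.e. $x_\nu \succeq x_\lambda$. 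So $\a_\mu \preceq_k \a_\lambda$ amounts to: there is a $\nu \in \Omega^{r,\l}$ with $x_\mu \supseteq x_\nu \supseteq$-compatible with $x_\lambda$ in the $\succeq$ sense; unwinding $\succeq$ as "$\geq$ then $\supseteq$" reduces the whole condition to a containment/inequality statement about the tuples $x_\mu, x_\lambda \in R_r(r+\l)$ (with $x_\mu$ of length $r_1 = r + r_0$).

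First I would make explicit the combinatorial heart of the claim: deleting the $r_0$ smallest entries of $x_\mu = (x_1^\mu, \dots, x_{r_1}^\mu)$ produces exactly the length-$r$ tuple $x_{\mu^\flat} := (x_{r_0+1}^\mu, \dots, x_{r_1}^\mu)$, and I must check (a) that this tuple genuinely lies in the image of $\varsigma_2$, i.e. corresponds to a valid $\mu^\flat \in \mathcal{P}(\l, r)$ — here I would use Lemma \ref{lem: 7.2.4} and the explicit formula $\varsigma_2(\lambda) = (\l_1 + 1, \dots, \l_r + r)$ to verify the inequalities $1 \le x_{r_0+1}^\mu < \cdots < x_{r_1}^\mu \le r + \l$ survive the shift; and (b) that for $\lambda \in \mathcal{P}(\l, r)$ one has $x_\lambda \succeq x_\mu$ (in the poset $\cup_{s} R_s(r+\l)$, meaning there is $y \supseteq x_\mu$ with $x_\lambda \geq y$) if and only if $x_\lambda \geq x_{\mu^\flat}$ componentwise. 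The "only if" direction of (b) is the combinatorial lemma: if $x_\lambda \geq y \supseteq x_\mu$ with $y$ of length $r$, then since $y$ is an $r$-element subset of the $r_1$-element set $\{x_1^\mu, \dots, x_{r_1}^\mu\}$, its $i$-th smallest entry is $\geq x_{r_0+i}^\mu$ (pigeonhole on how many elements can lie below), hence $x_\lambda^{(i)} \geq y^{(i)} \geq x_{r_0+i}^\mu = x_{\mu^\flat}^{(i)}$. The "if" direction is easier: take $y = x_{\mu^\flat} \subseteq x_\mu$ directly.

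Then I would assemble: by Proposition \ref{prop: 7.3.3} the elements of $S(\a)$ are exactly the $\a_\lambda$ with $\lambda \in \Omega^{r,\l}$, and $\a_\mu \preceq_k \a_\lambda$ translates (via the two cited propositions of Section 7.6, together with the corollary $\mu \succeq \lambda \Leftrightarrow \a_\mu \preceq_k \a_\lambda$ and its Grassmannian generalization) into $x_\lambda \succeq x_\mu$ in $\cup_s R_s(r+\l)$, which by step (b) is $x_\lambda \geq x_{\mu^\flat}$, i.e. $\lambda \leq \mu^\flat$ by the order-reversing nature of $\varsigma_2 \circ \varsigma_1^{-1}$ — wait, more carefully, $\varsigma_2$ is an isomorphism of posets so $\lambda \geq \mu^\flat \Leftrightarrow x_\lambda \geq x_{\mu^\flat}$; one must track that $\preceq_k$ here plays the role of "$\b$ larger", consistent with $S(\a) = \{\b \le \a\}$ and the convention that $\a_\lambda \geq \a_\mu \Leftrightarrow \lambda \le \mu$. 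This bookkeeping of which inequality points which way is where I expect to need the most care; the genuinely mathematical obstacle, though, is the pigeonhole argument in step (b) showing that the down-set under $\succeq$ of the length-$r_1$ tuple $x_\mu$, intersected with length-$r$ tuples, is exactly the down-set of $x_{\mu^\flat}$ — this requires checking that no "intermediate" partition $\nu$ with $x_\mu \supsetneq x_\nu \supsetneq$ something can produce a $\lambda$ outside $\{\lambda \le \mu^\flat\}$, which follows once the deletion-of-smallest-$r_0$-entries characterization is nailed down. Once that is in place, the rest is just quoting Section 7.6.
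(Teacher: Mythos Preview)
Your approach is genuinely different from the paper's and, once cleaned up, works. The paper does not argue combinatorially: it simply invokes Lemma~\ref{lem: 7.6.18}, which says that the set $Q(\a,\a_\mu)=\{\c\in S(\a):\a_\mu=\c_\Gamma\text{ for some }\Gamma\}$ has a unique minimal element, so its up-closure in $S(\a)$ is a principal filter; then it identifies that minimal element explicitly as $\a_{\mu^\flat}$ by writing down the multisegment. Your route bypasses Lemma~\ref{lem: 7.6.18} entirely and proves the statement by direct combinatorics on the tuples $x_\mu,x_\lambda$, which is more elementary and self-contained in the Grassmannian setting.

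That said, your directions are reversed throughout, and this is not just bookkeeping: as written, step~(b) is vacuous. The corollary after Proposition~\ref{prop: 7.3.9} reads $\a_\mu\preceq_k\a_\lambda\Leftrightarrow\mu\succeq\lambda$, i.e.\ $x_\mu\succeq x_\lambda$, meaning there exists $y$ of length $r_1$ with $x_\mu\geq y\supseteq x_\lambda$. You wrote $x_\lambda\succeq x_\mu$, which would require a length-$r$ tuple to contain a length-$r_1$ set with $r<r_1$. With the correct orientation your pigeonhole argument becomes: if $x_\mu\geq y\supseteq x_\lambda$, then $x_\lambda^{(i)}=y^{(j_i)}$ for indices $j_1<\cdots<j_r\leq r_1$, so $j_i\leq r_1-(r-i)=r_0+i$ and hence $x_\lambda^{(i)}\leq x_\mu^{(j_i)}\leq x_\mu^{(r_0+i)}=x_{\mu^\flat}^{(i)}$. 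That is exactly right.

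For the converse direction, ``take $y=x_{\mu^\flat}$'' does not literally work in the tuple formalism (wrong length), but the idea is correct once said in multisegment language: $x_{\mu^\flat}\subseteq x_\mu$ translates to $\a_\mu=(\a_{\mu^\flat})_\Gamma$ for the obvious $\Gamma$, and $\lambda\leq\mu^\flat$ gives $\a_{\mu^\flat}\in S(\a_\lambda)$, so Proposition~\ref{prop: 7.3.5} yields $\a_\mu\preceq_k\a_\lambda$. This is precisely how the paper identifies $\a_{\mu^\flat}$ explicitly. If you insist on staying in the tuple world, you need instead to construct a length-$r_1$ tuple $y$ with $x_\mu\geq y\supseteq x_\lambda$ (e.g.\ take $y$ to be $x_\lambda$ together with the $r_0$ smallest integers not in $x_\lambda$, and check $y^{(j)}\leq x_\mu^{(j)}$ by counting); this is a short extra step you did not flag.
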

     
\begin{proof}
By lemma \ref{lem: 7.6.18}, we know that the set  
\[
  \{\b\in S(\a): \a_{\mu}\preceq_k \b\}
\]
contains a unique minimal element 
$\a_{\mu^{\flat}}\in S(\a)$ for some $\mu^{\flat}\in \mathcal{P}(\ell, r)$. 
Therefore we have 
\[
 \{\b\in S(\a): \a_{\mu}\preceq_k \b\}=\{\a_{\lambda}: \lambda\in \mathcal{P}(\ell, r),~ \lambda\leq \mu^{\flat}\}.
\]
Note that if we write 
\[
 \a_{\mu}=\sum_{j=1}^{r_1}[b(\Delta_{x_j^{\mu}}), k-1]+\sum_{j=1}^{\l_1}[b(\Delta_{y_j^{\mu}}), k],
\]
then 
\[
 \a_{\mu^{\flat}}=\sum_{j=1}^{r_0}[b(\Delta_{x_j^{\mu}}), k]+\sum_{j=r_0+1}^{r_1}[b(\Delta_{x_j^{\mu}}), k-1]+\sum_{j=1}^{\l_1}[b(\Delta_{y_j^{\mu}}), k]
\]
is the minimal element in $S(\a)$ satisfying
\[
 \a_{\mu}=(\a_{\mu^{\flat}})_{\Gamma}
\]
for some $\Gamma\subseteq \a_{\mu^{\flat}}(k)$. 
\end{proof}

\begin{definition} 

Let 
\[
 J_1=\{\sigma_i: i=1, \cdots, r-1\}\cup \{\sigma_i: i=r+1, \cdots, r_1-1\}\cup \{\sigma_i: r_1+1, \cdots, r+\ell-1\},
\]
and 
\[
 \a_1=: \a_{\Id}^{J_1, \emptyset}=\{\Delta_1, \cdots, \Delta_{r_1}, \Delta_{r_1+1}^{+}, \cdots, \Delta_{r+\l}^+\},
\]
where $\a=\{\Delta_1, \cdots, \Delta_{r+\l}\}$ with $\Delta_1\unlhd \Delta_2\unlhd \cdots \unlhd \Delta_r$ (cf. Def. \ref{def: 7.2.4}). 
\end{definition}

\begin{lemma}
Let $\d=\a+\l_1[k+1]$, then  
\begin{itemize}
 \item we have $\a=\a_1^{(k+1)}$;
 \item and $\mathfrak{X}_{\d}=\coprod_{w\in S_{r+\l}^{J_1, \emptyset}}O_{\a_{w}}$, where 
 $\a_w=\a_{w}^{J_1, \emptyset}\in S(\a_1)$ is the element associated to $w$ by lemma \ref{lem: 6.2.3}.
\end{itemize} 
\end{lemma}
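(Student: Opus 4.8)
The claim has two bullet points, and I would treat them in the order stated. For the first point, that $\a = \a_1^{(k+1)}$: this is a purely combinatorial identity and I would verify it segment by segment. By the definition of $\a_1$, its segments are $\Delta_1,\dots,\Delta_{r_1}$ together with $\Delta_{r_1+1}^{+},\dots,\Delta_{r+\l}^{+}$, where the original $\a$ has $e(\Delta_i)=k-1$ for $i\le r$ and $e(\Delta_i)=k$ for $r<i\le r+\l$. Raising the end by one, the segments $\Delta_{r_1+1}^{+},\dots,\Delta_{r+\l}^{+}$ all end in $k+1$, while the segments $\Delta_1,\dots,\Delta_{r_1}$ end in $k-1$ or $k$, hence $\le k$. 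So applying the truncation functor $(-)^{(k+1)}$, which replaces precisely the segments ending in $k+1$ by their left-shrinking $\Delta^{-}$ (cf. Definition \ref{def: 2.2.3}), restores $\Delta_{r_1+j}^{+}$ to $\Delta_{r_1+j}$ and leaves the others untouched, giving back $\a$. (One should also check that $\a_1$ is genuinely of parabolic type $(J_1,\emptyset)$, i.e.\ that $b(\Delta_{r+\l})\le e(\Delta_1)=k-1$, which follows from the parabolic-type hypothesis on $\a$; raising the ends of some segments only enlarges the ``vertical span'' and does not affect beginnings.)

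For the second point, that $\mathfrak{X}_{\d}=\coprod_{w\in S_{r+\l}^{J_1,\emptyset}}O_{\a_w}$ with $\d=\a+\l_1[k+1]$: here I would first observe that $\a_1 = \d^{(k+1)} + (\text{segments ending in }k+1)$ in the sense that $\d$ and $\a_1$ have the same weight $\varphi_{\d}=\varphi_{\a_1}$ (both add $\l_1$ copies of the value at $k+1$), and indeed $\a_1$ satisfies the assumption $(\A_{k+1})$ since $\a$ satisfies $(\A_k)$ and raising ends preserves the gap condition. By Lemma \ref{lem: 7.6.2}(2) applied with the index $k+1$, $X_{\a_1}^{k+1} = Y_{\a_1} = \coprod_{\c\in S(\a_1)}O_{\c}$, and by Proposition \ref{prop: 6.2.4} (or \ref{prop: 6.2.10}) the poset $S(\a_1^{J_1,\emptyset})$ is in bijection, via $w\mapsto \a_w$, with $S_{r+\l}^{J_1,\emptyset}$ carrying the inverse Bruhat order. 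So $X_{\a_1}^{k+1}=\coprod_{w}O_{\a_w}$. It then remains to identify $\mathfrak{X}_{\d}$ — the open sub-variety of $X_{\d}^{k+1}$ consisting of orbits $O_\c$ with $\varphi_{e(\c)}(k)+\l_1=\varphi_{e(\a)}(k)$ (Definition \ref{def: 7.4.6}, with $r=\l_1$ there) — with this same union. One direction is that each $\a_w\in S(\a_1)$ satisfies the required end-multiplicity condition at $k$, which I would deduce from the fact that $\a_w$ arises from $\a_1$ by elementary operations that, by Lemma \ref{lem: 2.1.3}, only shrink $e(\c)$ inside $e(\a_1)$, combined with a count of how many segments of $\a_1$ end in $k$ versus $k+1$; the reverse inclusion is that any $\c\in S(\d)$ satisfying that condition lies in $S(\a_1)$, which is essentially the statement that $\c$ must come from some $\a_w$ under the truncation-lifting correspondence $\psi_{k+1}$, i.e.\ an application of Proposition \ref{prop: 4.6.15}.

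The main obstacle I anticipate is the bookkeeping in the second bullet: keeping straight the three relevant indices ($k-1$, $k$, $k+1$), the three relevant parabolic-type data ($J$, $J_1$, and the double-quotient conventions of Notation \ref{nota: 6.2.21}), and the precise normalization of the ``$(\A_k)$'' gap condition as it transforms under raising the ends of segments. In particular I would need to check carefully that $\a_1$ satisfies $(\A_{k+1})$ (not merely $(\A_k)$) so that Lemma \ref{lem: 7.6.2} and Lemma \ref{lem: 7.7.2} apply with the shifted index, and that the open condition defining $\mathfrak{X}_{\d}$ matches exactly the set of $w\in S_{r+\l}^{J_1,\emptyset}$ rather than a proper subset or a larger locus. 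Both of these are routine once the definitions are unwound, but they are the kind of step where an off-by-one error would propagate, so I would write them out explicitly rather than gesture at them.
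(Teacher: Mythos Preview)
Your argument for the first bullet is correct and is just an unpacking of the paper's ``by definition''.

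For the second bullet, your reverse inclusion via $\psi_{k+1}$ and Proposition~\ref{prop: 4.6.15} does not go through. The bijection $\psi_{k+1}\colon S(\d)_{k+1}\to S(\a)$ only controls elements satisfying hypothesis $H_{k+1}(\d)$, and the relevant multisegments typically do not: already $\a_1$ itself fails $H_{k+1}(\d)$ whenever $1\le r_0<\ell$, since $\Delta_{r_1}=[b(\Delta_{r_1}),k]$ and $\Delta_{r_1+1}^{+}=[b(\Delta_{r_1+1}),k+1]$ form a linked pair with ends $k,\,k+1$. So neither $\a_1$ nor a general $\c$ with $\varphi_{e(\c)}(k)=r_0$ can be located via $\psi_{k+1}$, and Proposition~\ref{prop: 4.6.15} (which characterizes $S(\a)_k$ inside $\tilde S(\a)_k$ geometrically) is not the right tool.

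The argument the paper has in mind is a direct count. Given $\c\in S(\d)$ with $\varphi_{e(\c)}(k)=r_0$, Lemma~\ref{lem: 2.1.3} gives $e(\c)\subseteq\{k-1,k,k+1\}$ and $b(\c)\subseteq\{b(\Delta_1),\dots,b(\Delta_{r+\ell})\}\cup\{k+1\}^{\times\ell_1}$. Comparing $\varphi_{\c}(k)=\ell$ with $\varphi_{\c}(k+1)=\ell_1$ shows that all $\ell_1$ segments of $\c$ ending at $k+1$ already pass through $k$, hence none of them is the point $[k+1]$. It follows that $\c$ has exactly $r+\ell$ segments, with the distinct beginnings $b(\Delta_1),\dots,b(\Delta_{r+\ell})$ and end-multiset $e(\a_1)$; Proposition~\ref{prop: 6.2.4} then forces $\c=\a_w^{J_1,\emptyset}$ for some $w\in S_{r+\ell}^{J_1,\emptyset}$. (For the forward inclusion you also need $\a_1\le\d$, which is immediate by merging each $[k+1]$ with a distinct $\Delta_j$, $j>r_1$.) The detour through Lemma~\ref{lem: 7.6.2} and the verification of $(\A_{k+1})$ for $\a_1$ is then unnecessary.
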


\begin{proof}
Note that by definition we have  
 \[
  \a=\a_1^{(k+1)}.
 \]
And by definition of $\mathfrak{X}_{\d}$, we know that 
$\mathfrak{X}_{\d}$ consists of the orbit $O_{\c}$ with $\c\in S(\d)$ 
such that $\varphi_{e(\c)}(k)+\l_1=\varphi_{e(\a)}(k)$, and 
the latter condition implies that there exists $w\in S_{r+\l}^{J_1, \emptyset}$
such that $\c=\a_{w}^{J_1, \emptyset}$. 
\end{proof}

\begin{prop}\label{prop: 7.6.4}
Let  $\d=\a+\l_1[k+1]$ and $W\subseteq V_{\varphi_{\d}, k+1}$ such 
that $\dim(W)=\l_1$ (which implies that $W=V_{\varphi_{\d}, k+1}$). 
Then the composition of morphisms
\[
 \mathfrak{X}_{\d}=(\mathfrak{X}_{\d})_W \xrightarrow{p} E_{\a}''\xrightarrow{\beta''} E_{\varphi_{\a}},
\]
sends $O_{\a_w}\cap (\mathfrak{X}_{\d})_W$ to $O_{\a_w^{(k+1)}}$. 

\end{prop}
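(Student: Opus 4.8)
The statement asserts that the composite $\beta'' \circ p$ restricted to $\mathfrak{X}_{\d} = (\mathfrak{X}_{\d})_W$ sends the orbit $O_{\a_w} \cap (\mathfrak{X}_{\d})_W$ onto $O_{\a_w^{(k+1)}}$. The plan is to unwind the definitions of the three maps involved (the open immersion $\tau_W$ of Definition \ref{def: 3.3.13}, the projection $p$ of Proposition \ref{prop: 7.7.6}, and the proper map $\beta''$ of Definition \ref{def: 7.4.12}) and track where a point of $O_{\a_w}$ lands. Since $\dim(W) = \l_1 = \varphi_{e(\d)}(k+1)$, we have $W = V_{\varphi_{\d}, k+1}$, so $(\mathfrak{X}_{\d})_W = \mathfrak{X}_{\d}$ and the fiber over $W$ for $\alpha$ is the whole space; this removes one layer of bookkeeping.

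\textbf{Key steps.} First I would recall that for $T \in (\mathfrak{X}_{\d})_W$ with $\tau_W(T) = (T_1, T_0) \in (Z^{k+1,\d})_W \times \Hom(V_{\varphi_{\d},k}, W)$, the map $p$ is $p(T) = (T_1, \ker(T_0|_{W_1}))$ where $W_1 = \ker(T_1|_{V_{\varphi_{\d},k}})$, and that under the identification $(Z^{k+1,\d})_W \simeq Y_{\a}$ (the remark after Proposition \ref{prop: 3.3.13}) the first component $T_1$ is exactly $T^{(k+1)}$. Then $\beta''$ of Definition \ref{def: 7.4.12} applied to the pair $(T_1, U) \in E''_{\a}$ simply forgets $U$, returning $T_1 \in E_{\varphi_{\a}}$. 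So $\beta'' \circ p (T) = T^{(k+1)}$. The second step is to identify the orbit containing $T^{(k+1)}$: by the equivariance results (Proposition \ref{prop: 7.7.7} and the construction of $p$ as a $G_{\varphi_{\d},W}$-equivariant map compatible with $\pi_*: G_{\varphi_{\d},W} \to G_{\varphi_{\a}}$), orbits go to orbits, so $O_{\a_w} \cap (\mathfrak{X}_{\d})_W$ maps into a single $G_{\varphi_{\a}}$-orbit; and by Proposition \ref{prop: 2.2.4} the multisegment indexing that orbit is determined by the Jordan type of $T^{(k+1)}$, which is the truncation $\a_w^{(k+1)}$ by Definition \ref{def: 3.3.7} and the remark after Definition \ref{def: 4.1.5} (the rank invariants $r_{ij}$ of $T^{(k+1)}$ are precisely those of $\a_w^{(k+1)}$). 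The third step is surjectivity onto $O_{\a_w^{(k+1)}}$: this follows either from Proposition \ref{prop: 3.2.11} (surjectivity of $\gamma_k$ onto $Z^{k,\a}(\c^{(k)})$, adapted to the present $k+1$ setting) together with Lemma \ref{lem: 7.7.2} ($\tau_W$ is an isomorphism here), or directly by exhibiting, given $T_0' \in O_{\a_w^{(k+1)}}$, a lift $T \in O_{\a_w} \cap (\mathfrak{X}_{\d})_W$ via the explicit splitting formula used in Lemma \ref{lem: 7.7.2}.

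\textbf{Main obstacle.} The routine part is the definition-chasing; the one genuinely delicate point is step two — verifying that the Jordan type of $T^{(k+1)}$ for $T \in O_{\a_w}$ is exactly $\a_w^{(k+1)}$ and not some strictly larger multisegment. This requires checking that passing from $T$ to $T^{(k+1)}$ (composing with the projection $p_{T,k+1}$ killing $\ker(T|_{V_{\varphi_{\d},k+1}})$) affects the rank data $r_{ij}$ in precisely the way recorded by the combinatorial truncation $\a_w \mapsto \a_w^{(k+1)}$, i.e. that no Jordan cell ending at $k+1$ gets unexpectedly shortened or merged. This is where the hypothesis that $\a$ satisfies $(\A_k)$ (hence $\d$ behaves well at $k+1$, with $\d_{\min} \in \tilde{S}(\d)_{k+1}$, cf. Lemma \ref{lem: 7.7.2}) does the work, ensuring $\tilde{S}(\d)_{k+1} = S(\d)$ so that the truncation is ``clean'' on every orbit in $\mathfrak{X}_{\d}$. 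I would isolate this as the key computation and reduce it to the rank formula in Proposition \ref{prop: 4.2.3}.
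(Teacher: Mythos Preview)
Your proposal is correct and is essentially the paper's approach written out in full --- the paper's entire proof is the single sentence ``This is by definition.'' The composite $\beta'' \circ p$ indeed sends $T \mapsto T^{(k+1)}$, and the claim follows immediately.

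That said, the ``main obstacle'' you flag in step two is illusory, and you are bringing in machinery the statement does not need. For any $T \in O_{\c}$, fix a Jordan basis adapted to $T$; quotienting $V_{\varphi_{\d}}$ by $\ker(T|_{V_{\varphi_{\d},k+1}})$ simply deletes the top vector of each Jordan chain ending at level $k+1$, turning each chain of type $[i,k+1]$ into one of type $[i,k]$ and leaving every other chain untouched. Hence $T^{(k+1)} \in O_{\c^{(k+1)}}$ exactly, with no possibility of landing in a strictly larger orbit --- there is nothing to check via rank invariants or Proposition \ref{prop: 4.2.3}, and the hypothesis $(\A_k)$ plays no role in this particular step (it is used elsewhere in the section to guarantee $\mathfrak{X}_{\d}$ has the right shape, not here). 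In the present special situation things are even simpler: since $W = V_{\varphi_{\d},k+1}$ is the entire graded piece, the quotient just erases level $k+1$ altogether.

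Your step three (surjectivity) is fine and does follow from Proposition \ref{prop: 3.2.11} as you indicate, though the paper does not spell this out either. In short: right idea, but you have overengineered a one-line observation.
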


\begin{proof}
This is by definition. 
\end{proof}

\begin{prop}
Let $\mu\in \mathcal{P}(\l_1, r_1)$ and 
 $x_{\mu}=\varsigma_2(\mu)=(x_1^{\mu}, \cdots, x_{r_1}^{\mu})$, 
 $y_{\mu}=(y_1^{\mu}, \cdots, y_{\l_1}^{\mu})$ such that 
\[
 \{1, \cdots, r_1+\l_1\}=\{x_1^{\mu}, \cdots, x_{r_1}^{\mu}, y_1^{\mu}, \cdots, y_{\l_1}^{\mu}\}.
\]
 Then 
\[
 (\a_{\mu^{\flat}})^{\sharp}=\sum_{j=1}^{r_0}[b(\Delta_{x_j^{\mu}}), k]+
 \sum_{j=r_0+1}^{r_1}[b(\Delta_{x_j^{\mu}}), k-1]+\sum_{j=1}^{\l_1}[b(\Delta_{y_j^{\mu}}), k+1],
\]
 for definition of $(\a_{\mu^{\flat}})^{\sharp}$, cf. lemma \ref{lem: 7.6.18}.
\end{prop}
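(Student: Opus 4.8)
The identity is a pure unwinding of definitions: $(\a_{\mu^\flat})^\sharp$ is introduced in lemma \ref{lem: 7.6.18} in terms of the minimal element $\c$ of $Q(\a,\b)$ and a subset $\Gamma\subseteq\c(k)$ with $\b=\c_\Gamma$, so the plan is to identify $\c=\a_{\mu^\flat}$, compute $\c(k)$, pin down the relevant $\Gamma$, and substitute. No geometry is needed here.

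First I would record that $\a_\mu\preceq_k\a$. Indeed $\a=\a_{\lambda_0}$ where $\lambda_0=(0,\dots,0)$ is the minimal partition in $\mathcal{P}(\l,r)$ (because $\varsigma_2^{-1}\varsigma_1(\Id)=\lambda_0$), and $x_\mu\succeq x_{\lambda_0}$ holds automatically, so proposition \ref{prop: 7.3.9} and the corollary following it give $\a_\mu\preceq_k\a$. Hence $Q(\a,\a_\mu)$ is nonempty, it has a unique minimal element by lemma \ref{lem: 7.6.18}, and by the proof of proposition \ref{prop: 7.6.1} this minimal element is $\a_{\mu^\flat}$, given explicitly there by
\[
\a_{\mu^\flat}=\sum_{j=1}^{r_0}[b(\Delta_{x_j^{\mu}}),k]+\sum_{j=r_0+1}^{r_1}[b(\Delta_{x_j^{\mu}}),k-1]+\sum_{j=1}^{\l_1}[b(\Delta_{y_j^{\mu}}),k].
\]
Reading off its segments ending at $k$ gives
\[
\a_{\mu^\flat}(k)=\{[b(\Delta_{x_j^{\mu}}),k]:1\leq j\leq r_0\}\cup\{[b(\Delta_{y_j^{\mu}}),k]:1\leq j\leq\l_1\}.
\]

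Next I would fix $\Gamma:=\{[b(\Delta_{x_j^{\mu}}),k]:1\leq j\leq r_0\}\subseteq\a_{\mu^\flat}(k)$ and check that $(\a_{\mu^\flat})_\Gamma=\a_\mu$: truncating these $r_0$ segments replaces each $[b(\Delta_{x_j^{\mu}}),k]$ by $[b(\Delta_{x_j^{\mu}}),k-1]$ and leaves the others untouched, so the result is $\sum_{j=1}^{r_1}[b(\Delta_{x_j^{\mu}}),k-1]+\sum_{j=1}^{\l_1}[b(\Delta_{y_j^{\mu}}),k]$, which is exactly $\a_\mu$ by the description in proposition \ref{prop: 7.3.3} applied to $\mu\in\mathcal{P}(\l_1,r_1)$. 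Since $\a_{\mu^\flat}$ is minimal in $Q(\a,\a_\mu)$, this is the data $(\c,\Gamma)$ entering the definition of $(\a_{\mu^\flat})^\sharp$. Finally I would substitute into $\c^\sharp=(\c\setminus\c(k))\cup\Gamma\cup\{\Delta^+:\Delta\in\c(k)\setminus\Gamma\}$ with $\c=\a_{\mu^\flat}$: here $\c\setminus\c(k)=\{[b(\Delta_{x_j^{\mu}}),k-1]:r_0<j\leq r_1\}$, and $\c(k)\setminus\Gamma=\{[b(\Delta_{y_j^{\mu}}),k]:1\leq j\leq\l_1\}$, whose right-extensions are the $[b(\Delta_{y_j^{\mu}}),k+1]$; collecting these three pieces produces exactly the claimed formula. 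The one point worth double-checking — and the main (mild) obstacle — is that the pair $(\c,\Gamma)$ with $\c$ minimal is essentially forced, so that $(\a_{\mu^\flat})^\sharp$ is well defined and agrees with what the $\Gamma$ above yields; this is guaranteed by the uniqueness of the minimal element of $Q(\a,\a_\mu)$ established in lemma \ref{lem: 7.6.18} together with its explicit description in terms of $x_\mu$ and $y_\mu$.
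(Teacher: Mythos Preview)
Your proof is correct and follows essentially the same approach as the paper: invoke proposition \ref{prop: 7.6.1} for the explicit formula of $\a_{\mu^{\flat}}$, identify $\Gamma=\{[b(\Delta_{x_j^{\mu}}),k]:1\leq j\leq r_0\}$ with $(\a_{\mu^{\flat}})_\Gamma=\a_\mu$, and then apply the definition of $\c^\sharp$ from lemma \ref{lem: 7.6.18}. The paper's version is terser and skips your preliminary verification that $\a_\mu\preceq_k\a$ (which is already implicit in the setup of proposition \ref{prop: 7.6.1}), but the substance is identical.
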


\begin{proof}
Note that by proposition \ref{prop: 7.6.1}, we know that 
\[
 \a_{\mu^{\flat}}=\sum_{j=1}^{r_0}[b(\Delta_{x_j^{\mu}}), k]+\sum_{j=r_0+1}^{r_1}[b(\Delta_{x_j^{\mu}}), k-1]+\sum_{j=1}^{\l_1}[b(\Delta_{y_j^{\mu}}), k]
\] 
 and 
 \[
  \a_{\mu}=(\a_{\mu^{\flat}})_{\Gamma}
 \]
for $\Gamma=\sum_{j=1}^{r_0}[b(\Delta_{x_j^{\mu}}), k]$. 
Now by construction in lemma \ref{lem: 7.6.18}, we know that  
\[
 (\a_{\mu^{\flat}})^{\sharp}=\sum_{j=1}^{r_0}[b(\Delta_{x_j^{\mu}}), k]+
 \sum_{j=r_0+1}^{r_1}[b(\Delta_{x_j^{\mu}}), k-1]+\sum_{j=1}^{\l_1}[b(\Delta_{y_j^{\mu}}), k+1].
\] 
 \end{proof}

\begin{prop}
We have  
 \[
  n(\a_{\mu}, \a_{\mu^{\flat}})=\sharp \{\c\in S(\a_1): \c^{(k+1)}=\a_{\mu^{\flat}}, \c\geq (\a_{\mu^{\flat}})^{\sharp}\}.
 \] 
\end{prop}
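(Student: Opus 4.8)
The plan is to combine two results that have already been set up: the geometric formula for $n(\b,\a)$ coming from Lusztig's product (Proposition \ref{prop: 7.3.8}) together with the explicit computation of that product in the Grassmanian situation (Corollary \ref{cor: 7.4.19}), and then translate the stalk of the relevant intersection cohomology complex into a counting statement about orbits. Concretely, set $\ell=\ell_1$, $\b=\a_{\mu^\flat}$ and $\a=\a_{\mu^\flat}$ in the roles of the earlier statements — wait, more precisely I apply Proposition \ref{prop: 7.3.8} with ``$\a$'' there equal to $\a_{\mu^\flat}$ and ``$\b$'' there equal to $\a_\mu$, so that the auxiliary multisegment ``$\c$'' of that proposition, which must satisfy $\c = \b + \ell[k]$ inside $S(\a)$, is exactly $\a_{\mu^\flat}$ seen appropriately; since $\a_\mu = (\a_{\mu^\flat})_\Gamma$ for a subset $\Gamma\subseteq \a_{\mu^\flat}(k)$ with $\sharp\Gamma = r_0 = \ell_1$ (Proposition \ref{prop: 7.6.1}), this is consistent. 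This gives
\[
 n(\a_\mu, \a_{\mu^\flat}) = \sum_i \dim \mathcal{H}^{2i}\bigl(IC(\line{O}_{\ell_1[k+1]}) \star IC(\line{O}_{\a_\mu})\bigr)_{\a_{\mu^\flat}},
\]
after checking that $\a_{\mu^\flat}$ satisfies assumption $(\A_k)$ (it does, by construction, being the identity-type parabolic multisegment attached to $J_1$, so that Lemma \ref{lem: 7.6.1} and hence Corollary \ref{cor: 7.4.19} apply).

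Next I would invoke Corollary \ref{cor: 7.4.19}, which identifies this Lusztig product with $\beta''_*\bigl(IC(\line{E''_{\a_{\mu^\flat}}((\a_{\mu^\flat})^\sharp)})\bigr)$, where $(\a_{\mu^\flat})^\sharp$ is the explicit multisegment written out in the previous proposition. So the sum $\sum_i \dim \mathcal{H}^{2i}$ localized at a point of $O_{\a_{\mu^\flat}}$ becomes the total dimension of the stalk cohomology of $R\beta''_* IC(\line{E''_{\a_{\mu^\flat}}((\a_{\mu^\flat})^\sharp)})$ at such a point. The morphism $\beta''$ restricted over $Y_{\a_{\mu^\flat}}$ is, via the factorization $\mathfrak{X}_\d = (\mathfrak{X}_\d)_W \xrightarrow{p} E''_{\a_{\mu^\flat}} \xrightarrow{\beta''} E_{\varphi_{\a_{\mu^\flat}}}$ of Proposition \ref{prop: 7.6.4} (using $W = V_{\varphi_\d,k+1}$ since $\dim W = \ell_1$), essentially the projection that forgets the last step of a partial flag, i.e. it is the restriction of a fibration $GL_n/P \to GL_n/P'$ with $P\subseteq P'$; in particular it is a proper birational morphism onto $\line{O}_{\a_{\mu^\flat}}$ when restricted to the relevant closure, and moreover $E''_{\a_{\mu^\flat}}((\a_{\mu^\flat})^\sharp)$ maps onto $O_{\a_{\mu^\flat}}$ (by the orbit correspondence established for $p$ and $\beta''$). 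The key point is that the fibers of $\beta''$ over $O_{\a_{\mu^\flat}}$ are smooth (being $GL_n/P$-type fibers, or rather products of a Grassmanian factor with an affine space), so $IC(\line{E''_{\a_{\mu^\flat}}((\a_{\mu^\flat})^\sharp)})$ restricted to $(\beta'')^{-1}(O_{\a_{\mu^\flat}})$ is just a (shifted) constant sheaf on that preimage, which is a union of the orbits $E''_{\a_{\mu^\flat}}(\c^\sharp)$ lying over $O_{\a_{\mu^\flat}}$. Since the intersection complex is the constant sheaf there and all these orbit strata are even-dimensional affine-space bundles over a point, the hypercohomology of the fiber just counts the number of such strata contained in $\overline{E''_{\a_{\mu^\flat}}((\a_{\mu^\flat})^\sharp)}$, and by the orbit correspondence of Proposition \ref{prop: 7.6.4} together with Lemma \ref{lem: 7.6.18} these are precisely indexed by $\{\c\in S(\a_1): \c^{(k+1)}=\a_{\mu^\flat},\ \c\geq (\a_{\mu^\flat})^\sharp\}$.

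So the skeleton of the argument is: (1) reduce $n(\a_\mu,\a_{\mu^\flat})$ to a stalk-dimension count via Propositions \ref{prop: 7.3.8} and \ref{cor: 7.4.19}; (2) identify the relevant fiber of $\beta''$ as a union of cells/affine bundles indexed by the stated set of multisegments, using the orbit correspondences of section 7.7 and Proposition \ref{prop: 7.6.4}; (3) observe that on this fiber the intersection complex is constant (because of the smoothness/local-triviality of the Grassmanian-type fibration), so that the Euler-characteristic-type sum degenerates into a plain count of strata, and cohomology is concentrated in even degrees so no cancellation occurs. I expect the main obstacle to be step (3): one has to argue carefully that $IC(\line{E''_{\a_{\mu^\flat}}((\a_{\mu^\flat})^\sharp)})$ restricted to $(\beta'')^{-1}(O_{\a_{\mu^\flat}})$ is genuinely a constant sheaf (equivalently, that $\overline{E''_{\a_{\mu^\flat}}((\a_{\mu^\flat})^\sharp)} \cap (\beta'')^{-1}(O_{\a_{\mu^\flat}})$ is smooth, or at least has trivial local intersection cohomology), which is where the explicit description of $\beta''$ as a map between partial flag varieties — and the fact that restricting to a single orbit $O_{\a_{\mu^\flat}}$ trivializes the base — is essential; the even-vanishing needed to rule out cancellation then follows from Theorem \ref{teo: 2.3.9} applied to the Schubert-variety models. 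The remaining bookkeeping — matching $(\a_{\mu^\flat})^\sharp$ with the combinatorics of $S(\a_1)$ and checking the condition $\c^{(k+1)}=\a_{\mu^\flat}$ — is routine given the formulas already displayed.
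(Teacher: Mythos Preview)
Your approach is essentially the same as the paper's: reduce $n(\a_\mu,\a_{\mu^\flat})$ to the stalk of $\beta''_*(IC(\overline{E''_{\a}((\a_{\mu^\flat})^\sharp)}))$ via Proposition~\ref{prop: 7.3.8} and Corollary~\ref{cor: 7.4.19}, identify the orbits in the fiber over $O_{\a_{\mu^\flat}}$ with the stated set using the composite $h=\beta''\circ p$ (Proposition~\ref{prop: 7.6.4}), and then use smoothness of $\beta''$ over $O_{\a_{\mu^\flat}}$ together with the Schubert-variety structure of the fiber to reduce to an orbit count. Your identification of step (3) as the crux is exactly right, and the paper's own justification at this point (``the fibers are open in some Schubert variety, therefore we are reduced to the counting of orbits'') is just as terse as yours; one small caution is that your phrasing ``IC is a constant sheaf on the preimage'' is not quite the mechanism (the fiber can be a genuine, possibly singular, Schubert variety in $Gr(r_0,\ell)$), but the even-vanishing and Schubert-cell structure you invoke are precisely what make the global intersection cohomology of the fiber count its cells.
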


\begin{proof}
Consider the composed morphism 
\[
 h: \mathfrak{X}_{\d}=(\mathfrak{X}_{\d})_W \xrightarrow{p} E_{\a}''\xrightarrow{\beta''} E_{\varphi_{\a}},
\]
then the orbits contained in $h^{-1}(O_{\a_{\mu^{\flat}}})$ is indexed by the set  
\[
 \{\c\in S(\a_1): \c^{(k+1)}=\a_{\mu^{\flat}}, \c\geq (\a_{\mu^{\flat}})^{\sharp}\}
\]
Note that 
by corollary \ref{cor: 7.4.19} and  proposition \ref{prop: 7.3.8}, 
the number  
 \[
  n(\a_{\mu}, \a_{\mu^{\flat}})=\sum_i \dim \mathcal{H}^{2i}(\beta''_*(IC(\line{E''_{\a}((\a_{\mu^{\flat}})^{\sharp}))}))_{x}
 \]
for some $x\in O_{\a_{\mu^{\flat}}}$. Finally, note that the morphism 
$\beta''$ is smooth when restricted to the variety $\beta''^{-1}(O_{\a_{\mu^{\flat}}})$.
Moreover, the fibers are open in some Schubert variety, therefore, we are reduced to the counting of orbits.
\end{proof}

More generally, we have 

\begin{definition}
 Let $w_{\mu}\in S_{r+\l}^{J_1, \emptyset}$
 be the element such that 
 \[
  \a_{w_{\mu}}=(\a_{\mu^{\flat}})^{\sharp}.
 \]

\end{definition}

\begin{prop}\label{prop: 7.6.8}
Let $P_J$ and $P_{J_1}$ be the parabolic subgroups 
corresponding to $J$, $J_1$ respectively. Consider the 
natural morphism 
\[
 \pi: P_{J_1}\backslash GL_{r+\l}\rightarrow P_{J}\backslash GL_{r+\l}.
\]
Then 
\[
 n(\a_{\mu}, \a_{\lambda})=\sum_i \dim \mathcal{H}^{2i}(\pi_*(IC(\line{P_{J_1}}w_{\mu}B)))_x
\]
for some $x\in P_Jt_{\lambda}B$, here 
$t_{\lambda}$ is the element in $S_{r+\l}^{J, \emptyset}$
associated to the partition $\lambda$.

\end{prop}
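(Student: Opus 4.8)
\textbf{Proof plan for Proposition \ref{prop: 7.6.8}.} The idea is to combine the already-established geometric dictionary between multisegments of Grassmannian type and Borel orbits in partial flag varieties (section 7.7) with the identification of Lusztig's product as a pushforward (corollary \ref{cor: 7.4.19}) and the computation of partial derivatives via that product (proposition \ref{prop: 7.3.8}). Concretely, I would start from the equality obtained in the previous proposition,
\[
n(\a_{\mu}, \a_{\mu^{\flat}})=\sum_i \dim \mathcal{H}^{2i}\bigl(\beta''_*(IC(\line{E''_{\a}((\a_{\mu^{\flat}})^{\sharp})}))\bigr)_{x},\qquad x\in O_{\a_{\mu^{\flat}}},
\]
and for a general $\lambda\leq \mu^{\flat}$ use the same argument at the level of the morphism $h=\beta''\circ p$ of proposition \ref{prop: 7.6.4}, which sends $O_{\a_w}\cap(\mathfrak{X}_{\d})_W$ to $O_{\a_w^{(k+1)}}$. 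The first step is thus to replace $\a_{\mu^{\flat}}$ by $\a_\lambda$: since $n(\a_\mu,\a_\lambda)=p_{\a_\lambda,\a_\mu}(1)$ with the Poincar\'e series localized at a point of $O_{\a_\lambda}$, and since $\a_{\mu^{\flat}}$ is the \emph{minimal} element of $\{\b\in S(\a):\a_\mu\preceq_k\b\}$ (proposition \ref{prop: 7.6.1}), I would invoke corollary \ref{cor: 4.6.16} (compatibility of the KL polynomials under the truncation functor, i.e. $P_{\a,\c}(q)=P_{\a^{(k)},\c^{(k)}}(q)$ applied on the $\mathfrak{X}_{\d}$ side) together with corollary \ref{cor: 4.6.17} to reduce the count at $x\in O_{\a_\lambda}$ to a count of orbits in $h^{-1}(O_{\a_\lambda})$ lying above $\a_{w_\mu}=(\a_{\mu^{\flat}})^{\sharp}$.

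The second step is to translate the map $h$ (equivalently $p$ composed with $\beta''$) into the natural projection $\pi\colon P_{J_1}\backslash GL_{r+\l}\to P_J\backslash GL_{r+\l}$. By proposition \ref{teo: 2.3.2}, proposition \ref{prop: 6.2.10}, and proposition \ref{prop: 6.2.4} the orbital variety $\line{O}_{\a_{\Id}^{J,\emptyset}}$ (resp. $\line{O}_{\a_{\Id}^{J_1,\emptyset}}$) is locally isomorphic to the Schubert variety $\line{P_J t_\lambda B}$ (resp. $\line{P_{J_1} w_\mu B}$) in $P_J\backslash GL_{r+\l}$ (resp. $P_{J_1}\backslash GL_{r+\l}$), and these isomorphisms match the stratification by orbits $O_{\a_w}$ with the stratification by Borel orbits $P_J w B$. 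Under the fibration $\alpha$ of section 3.3 (and its refinement in section 7.7, via $p$ in proposition \ref{prop: 7.7.7}) the morphism $h$ on fibers is exactly the $GL_{r+\l}$-equivariant forgetful map $P_{J_1}\backslash GL_{r+\l}\to P_J\backslash GL_{r+\l}$; here one uses that $J\subseteq J_1$ so $P_J\subseteq P_{J_1}$ and the fiber of $\pi$ is $P_{J_1}/P_J$, a partial flag variety inside a Levi. This is where I would carefully check that $\beta''$ restricted to $\beta''^{-1}(O_{\a_\lambda})$ is smooth with fibers open in a Schubert variety (already asserted in the previous proposition's proof) so that the smooth base change/Kunneth argument of corollary \ref{cor: 4.6.16} applies and the stalk of $\pi_*IC$ coincides with the stalk of $h_*IC$ on the orbital side.

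The third step is then purely formal: apply corollary \ref{cor: 7.4.19} to rewrite $IC(\line{O}_{\a_\lambda})\star IC(\line{O}_{\l_1[k+1]})$ as $\beta''_*(IC(\line{E''_{\a}(\a_{w_\mu})}))$, feed this into proposition \ref{prop: 7.3.8} to get $n(\a_\mu,\a_\lambda)=\sum_i\dim\mathcal{H}^{2i}(\beta''_*(\cdots))_x$, and use the identification of step two to rewrite the right-hand side as $\sum_i\dim\mathcal{H}^{2i}(\pi_*(IC(\line{P_{J_1}}w_\mu B)))_x$ for $x\in P_J t_\lambda B$. One subtlety to address is the intersection-cohomology shift/normalization: $IC(\line{E''_{\a}(\a_{w_\mu})})$ and $IC(\line{P_{J_1}w_\mu B})$ should be compared with consistent degree conventions so that only even cohomology contributes (which holds by theorem \ref{teo: 2.3.9} and the remark after theorem \ref{teo: 4.1.5}); I would fix this by always localizing and taking the sum over $i$ of $\dim\mathcal{H}^{2i}$, which is shift-insensitive after specializing $q=1$.

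\textbf{Main obstacle.} The delicate point is step two: making precise that the pushforward $\beta''_*$ (or $h_*$) on orbital varieties agrees, after the local isomorphisms of Zelevinsky, with $\pi_*$ on $P_J\backslash GL_{r+\l}$, including the matching of $\a_{w_\mu}=(\a_{\mu^{\flat}})^{\sharp}$ with $w_\mu\in S_{r+\l}^{J_1,\emptyset}$ and of $\a_\lambda$ with $t_\lambda\in S_{r+\l}^{J,\emptyset}$. One must check that the fiber bundle structures ($\alpha$, $p$, $\alpha'$, $\sigma$ of section 7.7) are compatible enough that the whole square is Cartesian up to smooth fibrations, so that smooth base change reduces the stalk computation to the genuinely Grassmannian (parabolic-flag) geometry. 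Everything else is bookkeeping with the posets $R_r(r+\l)$, $\Omega^{r,\l}$, and the bijections $\varsigma_1,\varsigma_2$ already set up.
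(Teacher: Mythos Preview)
Your proposal is correct and follows the same strategy as the paper, which simply says ``consider the composed morphism $h=\beta''\circ p$ and construct a fibration similar to the one in \S 2.5 for symmetric multisegments.'' Your step two and the ``main obstacle'' you identify are exactly what the paper leaves implicit; the heart of the argument is indeed to transport $\beta''_{*}$ on the orbital side to $\pi_{*}$ on $P_{J_1}\backslash GL_{r+\ell}\to P_J\backslash GL_{r+\ell}$ via a fibration whose fibers carry the Schubert stratification, just as in theorem \ref{teo: 4.3.7}.

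One small simplification: your step one is unnecessary. Proposition \ref{prop: 7.3.8} already gives
\[
n(\a_{\mu},\a_{\lambda})=\sum_i\dim\mathcal{H}^{2i}\bigl(IC(\line{O}_{\ell[k]})\star IC(\line{O}_{\a_{\mu}})\bigr)_{x},\qquad x\in O_{\a_{\lambda}},
\]
for \emph{any} $\lambda$ with $\a_{\mu}\preceq_k\a_{\lambda}$, not only $\lambda=\mu^{\flat}$; and corollary \ref{cor: 7.4.19} identifies the Lusztig product with $\beta''_{*}(IC(\line{E''_{\a}(\a_{w_{\mu}})}))$ independently of where you localize. So you can skip the reduction via corollaries \ref{cor: 4.6.16}--\ref{cor: 4.6.17} and go straight to your step two. (Also, in your step three you wrote $IC(\line{O}_{\a_{\lambda}})\star IC(\line{O}_{\ell_1[k+1]})$; the correct pair is $IC(\line{O}_{\ell[k]})\star IC(\line{O}_{\a_{\mu}})$, but this is just a slip of indexing.)
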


\begin{proof}
Consider the composed morphism 
\[
 h: \mathfrak{X}_{\d}=(\mathfrak{X}_{\d})_W \xrightarrow{p} E_{\a}''\xrightarrow{\beta''} E_{\varphi_{\a}}.
\]
This proposition can be deduced from 
a construction of fibration similar to the one we did 
in Chapter 2 for symmetric multisegments, cf.\S 2.5. 
 
\end{proof}

\section{Parabolic Case}

In this section, as in the Grassmannian case, we deduce a formula for calculating the coefficient $n(\b, \a)$. 

Let 
\[
J\subseteq S
\] 
be a subset of generators and 
$$
\a=\a_{\Id}^{J, \emptyset}
$$
be some multisegment of parabolic type $(J, \emptyset)$ associated to the identity,  satisfying $f_{e(\a)}(k)\neq 0, f_{e(\a)}(k+1)=0$.

%\begin{definition}
%Let $\b\in S(\a)$ be any multisegment. Then we define
%\[
%\b(k-1, k)=\{\Delta: e(\b)=k-1 \text{ or }k\}, 
%\]
%which is a  multisegment of Grassmannian type.
%\end{definition}
\begin{notation}
For $k\in \Z$, we let $\l_k=f_{e(\a)}(k)$.
\end{notation}

\begin{definition}
Let $\a(k)=\{\Delta_1, \cdots, \Delta_{\l_k}\}$ with $\Delta_1\unlhd \cdots \unlhd \Delta_{\l_k}$ and $r_0\in \N$ with $1\leq r_0\leq \l_k$. Then let 
\begin{align*}
 \a_1&=(\a\setminus \a(k))\cup \{\Delta\in \a(k):  \Delta\unlhd \Delta_{\l_k-r_0}\}\cup \{\Delta^+\in \a(k): \Delta\unrhd \Delta_{\l_k-r_0+1}\},\\
 \a_2&=(\a\setminus \a(k))\cup \{\Delta^-\in \a(k):  \Delta\unlhd \Delta_{r_0}\}\cup \{\Delta\in \a(k): \Delta\unrhd \Delta_{r_0+1}\}
\end{align*}
and $J_i(r_0, k)(i=1, 2)$ be a subset of $S$ such that 
 $\a_i$ is a multisegment of parabolic type $(J_i(r_0, k), \emptyset)$. Moreover, let 
\[
\a_{\Id}^{J_i(r_0, k), \emptyset}=\a_i,   \text{ for } i=1,2.
\]
\end{definition}

\begin{lemma}
Let $\l_1=\l_k-r_0$ and $\d=\a+\l_1[k+1]$, then  
\begin{itemize}
 \item we have $\a=\a_1^{(k+1)}$;
 \item and $\mathfrak{X}_{\d}=\coprod_{w\in S_{n}^{J_1(r_0, k), \emptyset}}O_{\a_{w}}$, where 
 $\a_w=\a_{w}^{J_1(r_0, k), \emptyset}\in S(\a_1)$ is the element associated to $w$ by lemma \ref{lem: 6.2.3}.
\end{itemize} 
\end{lemma}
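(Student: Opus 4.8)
The plan is to carry over, almost verbatim, the argument of the corresponding lemma in the Grassmanian case (the one preceding Proposition \ref{prop: 7.6.4}), the only substantive change being that the parabolic quotient $S_n^{J_1(r_0,k),\emptyset}$ replaces the Grassmanian $S_{r+\l}^{J_1,\emptyset}$, and that the combinatorics of the subsets $B_i(\varphi)$ is heavier so that the analogue of the ``obvious'' step in the Grassmanian proof now needs a short argument.

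First I would dispose of the identity $\a=\a_1^{(k+1)}$ by unwinding the definitions. By Definition \ref{def: 2.2.3} the truncation $(\cdot)^{(k+1)}$ sends a segment with end $k+1$ to $\Delta^-$ and fixes every other segment. The segments of $\a$ lying in $\a\setminus\a(k)$ have end different from $k$, hence a fortiori different from $k+1$ (here one uses $f_{e(\a)}(k+1)=0$), so they are untouched; the segments of $\a(k)$ retained in $\a_1$ still have end $k\neq k+1$, so they too are untouched; and each segment of $\a_1$ of the form $\Delta^+$ with $\Delta\in\a(k)$ has end $k+1$ and satisfies $(\Delta^+)^{(k+1)}=\Delta$. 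Reassembling these pieces recovers precisely $\a$.

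For the description of $\mathfrak{X}_\d$ I would argue in three steps. (i) Since $\a$ satisfies $(\A_k)$ and $\l_1=\l_k-r_0\le\l_k=\varphi_{e(\a)}(k)$, Lemma \ref{lem: 7.7.2} applies to $\d=\a+\l_1[k+1]$ and yields $X_\d^{k+1}=Y_\d=\coprod_{\c\in S(\d)}O_\c$. (ii) By Definition \ref{def: 7.4.6}, $\mathfrak{X}_\d$ is the union of those $O_\c$, $\c\in S(\d)$, for which $\varphi_{e(\c)}(k)+\l_1=\varphi_{e(\a)}(k)=\l_k$, i.e. $\varphi_{e(\c)}(k)=r_0$. (iii) It remains to prove
\[
 \{\c\in S(\d):\ \varphi_{e(\c)}(k)=r_0\}=\{\a_w^{J_1(r_0,k),\emptyset}:\ w\in S_n^{J_1(r_0,k),\emptyset}\}.
\]
For the inclusion $\subseteq$: because $\a$, hence $\a_1$ and $\d$, satisfies $(\A_k)$, every $\c\in S(\d)$ is of parabolic type (Lemma \ref{lem: 7.6.1}); by Lemma \ref{lem: 2.1.3} one has $b(\c)=b(\d)$ and $e(\c)\subseteq e(\d)$, and the single numerical constraint $\varphi_{e(\c)}(k)=r_0$ — together with the relations among $\varphi_{e(\c)}(k-1),\varphi_{e(\c)}(k),\varphi_{e(\c)}(k+1)$ forced exactly as in Lemma \ref{lem: 7.6.3} — pins the weight and the beginning- and end-multisets of $\c$ down to those of $\a_1$; since then $J_i(\c)=J_i(\a_1)$ for $i=1,2$, Proposition \ref{prop: 6.3.2} produces a unique $w\in S_n^{J_1(r_0,k),\emptyset}$ with $\c=\a_w^{J_1(r_0,k),\emptyset}$ (and such a $\c$ indeed lies in $S(\a_1)$ by Lemma \ref{lem: 6.2.3} and Proposition \ref{prop: 6.2.4}). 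For the inclusion $\supseteq$: each $\a_w^{J_1(r_0,k),\emptyset}\in S(\a_1)$ has $\varphi_{e(\cdot)}(k)=r_0$ by construction, and it lies in $S(\d)$ because $\a_1$ is obtained from $\d$ by a sequence of elementary operations, each pairing a point-segment $[k+1]$ of $\d$ with a suitable $k$-ending segment of $\a$ (these two being linked), so that $\a_1\le\d$ and hence $S(\a_1)\subseteq S(\d)$.

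The main obstacle will be the bookkeeping of step (iii): converting the lone constraint $\varphi_{e(\c)}(k)=r_0$ into a complete identification of the beginning- and end-multisets of an arbitrary $\c\in S(\d)$ with those of $\a_1$, and checking that the resulting parabolic type is exactly $(J_1(r_0,k),\emptyset)$ and not a coarser one — this is where the total ordering $\Delta_1\unlhd\cdots\unlhd\Delta_{\l_k}$ of $\a(k)$ and the precise definition of $J_1(r_0,k)$ (chosen so that $\a_1=\a_{\Id}^{J_1(r_0,k),\emptyset}$) are used. Everything else is a transcription of the Grassmanian proof, in which the analogue of step (iii) was immediate from the explicit order-preserving bijection $\varsigma_2$.
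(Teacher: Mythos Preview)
Your proposal is correct and follows essentially the same approach as the paper. The paper gives no separate proof for this parabolic lemma; in the Grassmanian analogue the entire argument is the two-line ``by definition $\a=\a_1^{(k+1)}$, and by definition of $\mathfrak{X}_\d$ the condition $\varphi_{e(\c)}(k)+\l_1=\varphi_{e(\a)}(k)$ forces $\c=\a_w^{J_1,\emptyset}$'', so your expanded step~(iii) is simply a more honest version of what the paper leaves implicit.
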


\begin{prop}\label{prop: 7.7.1}
Let $w\in S_{n}^{J_2(r_0, k), \emptyset}$. Then 
there exists $w^{\flat}\in S_{n}^{J, \emptyset}$, such that 
\[
 \{\b\in S(\a): \a_{w}\preceq_k \b\}=\{\a_{v}: v\in S_{n}^{J, \emptyset},~ v\leq w^{\flat}\}.
\]
More explicitly, if 
$\a_{w}(k-1)=\{\Delta_1, \cdots, \Delta_{\l_{k-1}}\}$ with $\Delta_1\unlhd \cdots \unlhd \Delta_{\l_{k-1}}$, then 
\[
 \a_{w^{\flat}}=(\a_{w}\setminus \a_{w}(k-1))\cup \{\Delta^+\in \a_{w}(k-1):  \Delta\unlhd \Delta_{r_0}\}\cup \{\Delta\in \a_{w}(k-1): \Delta\unrhd \Delta_{r_0+1}\}.
\]
\end{prop}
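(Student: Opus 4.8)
The statement is the parabolic analogue of Proposition \ref{prop: 7.6.1}, which treated the Grassmannian case. The plan is to follow exactly the same strategy, invoking the combinatorial machinery of Chapter 4 and Chapter 6 to translate everything into statements about parabolic quotients of the symmetric group. First I would observe that, since $\a = \a_{\Id}^{J, \emptyset}$ satisfies the assumption $(\A_k)$ (by Lemma \ref{lem: 7.6.1} it is of parabolic type and maximal with respect to that property), every $\b \in S(\a)$ is of the form $\a_v = \Phi_{J, \emptyset}(v)$ for a unique $v \in S_n^{J, \emptyset}$, by Proposition \ref{prop: 6.2.4}. The first key step is then to apply Lemma \ref{lem: 7.6.18} to the pair consisting of $\a$ and the multisegment $\a_w$ (with $\a_w$ playing the role of $\b$, and $\ell = \ell_k$ appropriately adjusted): that lemma guarantees that the set $Q(\a, \a_w) = \{\c \in S(\a) : \a_w = \c_\Gamma \text{ for some } \Gamma \subseteq \c(k)\}$ has a unique minimal element. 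I would name this minimal element $\a_{w^\flat}$ and verify, using the explicit form of the elementary operations involved, that $w^\flat \in S_n^{J, \emptyset}$.

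The second key step is to identify $w^\flat$ explicitly. Here I would use the description of $\a_w$ in terms of beginnings and ends of segments, together with the characterisation of $\preceq_k$ in terms of $\a_w = (\a_{w^\flat})_\Gamma$: the minimal $\c$ in $Q(\a, \a_w)$ is obtained by replacing the $r_0$ ``largest'' segments of $\a_w$ ending at $k-1$ (in the order $\unlhd$ of Definition \ref{def: 7.2.4}) by their extensions $\Delta^+$, exactly as in the Grassmannian computation. This gives the formula
\[
 \a_{w^{\flat}}=(\a_{w}\setminus \a_{w}(k-1))\cup \{\Delta^+\in \a_{w}(k-1):  \Delta\unlhd \Delta_{r_0}\}\cup \{\Delta\in \a_{w}(k-1): \Delta\unrhd \Delta_{r_0+1}\}.
\]
Once $\a_{w^\flat}$ is pinned down as the minimal element of $Q(\a, \a_w)$, the set equality
\[
 \{\b\in S(\a): \a_{w}\preceq_k \b\}=\{\a_{v}: v\in S_{n}^{J, \emptyset},~ v\leq w^{\flat}\}
\]
follows formally: if $\a_w \preceq_k \b$ then by Proposition \ref{prop: 7.3.5} there is $\c \in S(\a)$ with $\a_w = \c_\Gamma$, hence $\c \in Q(\a, \a_w)$, hence $\c \geq \a_{w^\flat}$ by minimality, and $\b \geq \c$ gives $\b \geq \a_{w^\flat}$, i.e.\ $\b = \a_v$ with $v \leq w^\flat$ under the inverse Bruhat order translated by $\Phi_{J, \emptyset}$ (Proposition \ref{prop: 6.2.4}). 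Conversely, if $\b = \a_v$ with $v \leq w^\flat$, then $\b \geq \a_{w^\flat} \succeq_k \a_w$... wait, one needs $\a_w \preceq_k \b$, which follows because $\a_w = (\a_{w^\flat})_\Gamma$ and $\a_{w^\flat} \leq \b$, so by the converse direction of Proposition \ref{prop: 7.3.5} (applied with $\c = \a_{w^\flat} \in S(\b)$... indeed $\a_{w^\flat} \leq \b$) we get $\a_w \preceq_k \b$.

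I expect the main obstacle to be the explicit identification of $w^\flat$ and the verification that it lies in $S_n^{J, \emptyset}$: in the Grassmannian case this was a transparent manipulation of the sequences $x_\mu, y_\mu \in R_r(r+\ell)$, but in the general parabolic case one must keep track of several blocks $B_i$ of equal beginnings and check that extending precisely the top $r_0$ segments (in the $\unlhd$-order) ending at $k-1$ does not disturb the defining inequalities of $S_n^{J, \emptyset}$. This amounts to showing that the combinatorial operation $\a_w \mapsto \a_{w^\flat}$ corresponds, on the level of permutations, to multiplication by a suitable minimal-length coset representative, much as $w_i$ appeared in Lemma \ref{lem: 6.2.8}. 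A secondary, more routine point is to confirm that $\a_{w^\flat}$ genuinely satisfies $\a_w = (\a_{w^\flat})_\Gamma$ for the natural choice $\Gamma = \{\Delta^+ : \Delta \in \a_w(k-1),\ \Delta \unlhd \Delta_{r_0}\}$ reinterpreted inside $\a_{w^\flat}(k)$, and that it is minimal for this property; both follow from the uniqueness part of Lemma \ref{lem: 7.6.18} once the assumption $(\A_k)$ is in force, so no new idea is needed there.
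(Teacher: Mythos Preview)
Your proposal is correct and is exactly the route the paper takes for the Grassmannian analogue (Proposition \ref{prop: 7.6.1}): invoke Lemma \ref{lem: 7.6.18} to obtain a unique minimal element of $Q(\a,\a_w)$, identify it explicitly, and read off the set equality via Proposition \ref{prop: 6.2.4}. The paper in fact states Proposition \ref{prop: 7.7.1} without proof, relying on the announced parallel with \S 7.6, so there is nothing further to compare against. One small slip to tighten: when you apply Proposition \ref{prop: 7.3.5} to $\a_w \preceq_k \b$, the element $\c$ you obtain lies in $S(\b)$, not merely in $S(\a)$; you need this to justify ``$\b \geq \c$'' a line later, so the argument is fine but the quantifier should be written as $\c\in S(\b)$.
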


\begin{prop}
Let $w\in S_{n}^{J_2(r_0, k), \emptyset}$.
 Then 
\[
 (\a_{\mu^{\flat}})^{\sharp}=(\a_{w^{\flat}}\setminus \a_{w}(k))\cup \{\Delta^+: \Delta\in \a_{w}(k)\}
\]
 for definition of $(\a_{\mu^{\flat}})^{\sharp}$, cf. lemma \ref{lem: 7.6.18}.
\end{prop}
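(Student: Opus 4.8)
The statement we must prove computes the $\sharp$-lift $(\a_{w^{\flat}})^{\sharp}$ of the minimal multisegment $\a_{w^{\flat}}\in S(\a)$ (in the sense of Proposition \ref{prop: 7.7.1}) explicitly in terms of the segments of $\a_{w}$. By Lemma \ref{lem: 7.6.18}, once we know that $\c=\a_{w^{\flat}}$ is the minimal element in the set $Q(\a,\b)$ with $\b=\a_{w}$ (which is exactly what Proposition \ref{prop: 7.7.1} asserts, via $\b=(\a_{w^{\flat}})_{\Gamma}$ for $\Gamma=\{\Delta\in \a_{w^{\flat}}:e(\Delta)=k\}=\a_{w^{\flat}}(k)$), the recipe for $\c^{\sharp}$ is forced:
\[
\c^{\sharp}=(\c\setminus \c(k))\cup \Gamma\cup \{\Delta^{+}:\Delta\in \c(k)\setminus \Gamma\}.
\]
So the plan is: first, identify precisely which segments of $\a_{w^{\flat}}$ end at $k$ and which of those lie in $\Gamma$; second, substitute into the formula above; third, simplify using the explicit description of $\a_{w^{\flat}}$ given in Proposition \ref{prop: 7.7.1}.

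First I would use Proposition \ref{prop: 7.7.1}: writing $\a_{w}(k-1)=\{\Delta_1\unlhd\cdots\unlhd\Delta_{\l_{k-1}}\}$, the multisegment $\a_{w^{\flat}}$ is obtained from $\a_{w}$ by replacing $\Delta_i$ by $\Delta_i^{+}$ for $i\le r_0$ (these have end $k$ in $\a_{w^{\flat}}$) and leaving $\Delta_i$ for $i>r_0$ (end still $k-1$), while keeping the segments of $\a_{w}$ that already end at $k$ — call them $\a_{w}(k)$ — unchanged. Thus the segments of $\a_{w^{\flat}}$ ending at $k$ are precisely $\a_{w}(k)$ together with $\{\Delta_1^{+},\dots,\Delta_{r_0}^{+}\}$; that is, $\a_{w^{\flat}}(k)=\a_{w}(k)\cup\{\Delta_i^{+}:i\le r_0\}$. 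Next I would determine $\Gamma$: by the compatibility noted after Proposition \ref{prop: 7.6.4} and the construction in the Grassmannian analogue (proof of the proposition preceding this one in \S 7.8, and Lemma \ref{lem: 7.6.18}), $\Gamma$ is exactly the set of "new" segments $\{\Delta_i^{+}:i\le r_0\}$, since $\a_{w}=(\a_{w^{\flat}})_{\Gamma}$ recovers $\a_{w}$ by undoing precisely those $r_0$ augmentations. Hence $\a_{w^{\flat}}(k)\setminus\Gamma=\a_{w}(k)$.

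Substituting into the $\sharp$-formula: $\c\setminus\c(k)=\a_{w^{\flat}}\setminus\a_{w^{\flat}}(k)$, which by the previous paragraph equals $\a_{w^{\flat}}\setminus(\a_{w}(k)\cup\{\Delta_i^{+}:i\le r_0\})$, i.e. the common part $\a_{w}\setminus(\a_{w}(k)\cup\a_{w}(k-1))$ enlarged by $\{\Delta_i:i>r_0\}$ — in other words $(\a_{w^{\flat}}\setminus \a_{w}(k))\setminus\{\Delta_i^{+}:i\le r_0\}$. Adding back $\Gamma=\{\Delta_i^{+}:i\le r_0\}$ reconstitutes $\a_{w^{\flat}}\setminus \a_{w}(k)$; and adding $\{\Delta^{+}:\Delta\in\c(k)\setminus\Gamma\}=\{\Delta^{+}:\Delta\in\a_{w}(k)\}$ gives exactly
\[
(\a_{w^{\flat}})^{\sharp}=(\a_{w^{\flat}}\setminus \a_{w}(k))\cup\{\Delta^{+}:\Delta\in \a_{w}(k)\},
\]
as claimed. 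I would then double-check the bookkeeping of multiplicities: since we work with multisets, I must verify that $\a_{w}(k)$ and the augmented segments interact correctly with the "common part," which follows because $\a$ is of parabolic type $(J,\emptyset)$ and the beginnings of its segments are distinct, so no accidental coincidences of segments occur.

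The main obstacle I anticipate is not the algebraic manipulation but pinning down that $\Gamma$ is exactly $\{\Delta_i^{+}:i\le r_0\}$ and that $\a_{w^{\flat}}$ is genuinely the \emph{minimal} element of $Q(\a,\a_{w})$ — i.e. fully importing Proposition \ref{prop: 7.7.1} and Lemma \ref{lem: 7.6.18} to the parabolic setting. This is where one must be careful that the reduction from the general parabolic case to the Grassmannian case (via the truncation functors $\psi_{k}$ and the identification of $\mathfrak{X}_{\d}$ with a union of $O_{\a_w}$) preserves the relevant poset structure; the excerpt asserts "the constructions and proofs are the same as the Grassmanian type," so I would spell out the dictionary $\lambda\leftrightarrow w$, $\mathcal{P}(\ell,r)\leftrightarrow S_n^{J,\emptyset}$, and then transcribe the Grassmannian proof of the analogous $(\a_{\mu^{\flat}})^{\sharp}$-formula verbatim, replacing the combinatorics of partitions with that of minimal coset representatives.
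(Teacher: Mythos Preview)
Your proposal is correct and follows essentially the same route as the paper: the paper gives no separate proof for this parabolic proposition but refers to the Grassmannian analogue, whose proof (immediately following Proposition \ref{prop: 7.6.4}) does exactly what you do—use the explicit description of $\a_{w^{\flat}}$ from Proposition \ref{prop: 7.7.1} (resp.\ \ref{prop: 7.6.1}), identify $\Gamma$ as the $r_0$ newly augmented segments so that $\c(k)\setminus\Gamma=\a_w(k)$, and then substitute into the $\sharp$-formula of Lemma \ref{lem: 7.6.18}. Your set-theoretic bookkeeping and your observation that distinct beginnings prevent multiset collisions are exactly the right checks.
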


\begin{definition}
Let $t_w\in S_{n}^{J_1(\ell _k-r_0, k), \emptyset}$ be the element such that 
\[
\a_{t_w}=(\a_{w^{\flat}})^{\sharp}.
\]
\end{definition}

\begin{prop}\label{prop: 7.7.8}
Let $P_J$ and $P_{J_1(\l_{k}-r_0, k)}$ be the parabolic subgroups 
corresponding to $J$, $J_1(\l_{k}-r_0, k)$ respectively. Consider the 
natural morphism 
\[
 \pi: P_{J_1(\l_{k}-r_0, k)}\backslash GL_{n}\rightarrow P_{J}\backslash GL_{n}.
\]
Then 
\[
 n(\a_{w}, \a_{v})=\sum_i \dim \mathcal{H}^{2i}(\pi_*(IC(\line{P_{J_1(\l_{k}-r_0, k)}t_{w}B})))_x
\]
for some $x\in P_JvB$.

\end{prop}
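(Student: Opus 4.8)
\textbf{Proof strategy for Proposition \ref{prop: 7.7.8}.}

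The plan is to reduce the parabolic case to the Grassmannian computation carried out in Proposition \ref{prop: 7.6.8}, by mimicking the sequence of geometric reductions already established in the preceding sections. Concretely, I would start from the multisegment $\a=\a_{\Id}^{J,\emptyset}$ satisfying $f_{e(\a)}(k)\neq 0$, $f_{e(\a)}(k+1)=0$, set $\l_1=\l_k-r_0$ and $\d=\a+\l_1[k+1]$, and invoke proposition \ref{prop: 7.3.8} together with corollary \ref{cor: 7.4.19}: for $\b=\a_w\preceq_k \a_v$ with $\varphi_{\a_v}=\varphi_{\a_w}+\ell\chi_{[k]}$, the coefficient $n(\a_w,\a_v)$ equals the value at $q=1$ of the Poincar\'e series of $\beta''_{*}(IC(\line{E''_{\a_v}((\a_w)^{\sharp})}))$ localized at a point of $O_{\a_v}$. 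Here $(\a_w)^{\sharp}$ is the minimal element of $Q(\a_v,\a_w)$ produced by lemma \ref{lem: 7.6.18}, which is identified explicitly in the statements preceding the proposition (namely $\a_{t_w}=(\a_{w^{\flat}})^{\sharp}$, using Proposition \ref{prop: 7.7.1} for the form of $w^{\flat}$).

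Next I would run the chain of fibrations from \S 7.6: the morphism $p: (\mathfrak{X}_{\d})_W\rightarrow E''_{\a_v}$ of proposition \ref{prop: 7.7.7} (which is a fibration with affine-space fibers, hence smooth with contractible fibers), composed with $\beta'': E''_{\a_v}\rightarrow E_{\varphi_{\a_v}}$, which is proper and, when restricted to $\beta''^{-1}(O_{\a_v})$, is smooth with fibers open in a Schubert variety. Lemma \ref{lem: 7.7.10} and the orbit-correspondence propositions (\ref{prop: 7.6.17} and the $G_{\varphi_{\d},W}$-equivariance of $p$) guarantee that the orbit stratification on $\mathfrak{X}_{\d}$ matches, under $h=\beta''\circ p$, the stratification of $E_{\varphi_{\a_v}}$ by the $O_{\a_v}$; in particular the stalk of $\beta''_{*}(IC(\line{E''_{\a_v}((\a_w)^{\sharp})}))$ at a point of $O_{\a_v}$ is computed by the intersection cohomology of the fiber $h^{-1}$ over that point, which via lemma \ref{lem: 6.2.3} and the identification $E''_{\a}\simeq\beta''^{-1}(Y_{\a})$ becomes the intersection cohomology of a fiber of the projection $\pi: P_{J_1(\l_k-r_0,k)}\backslash GL_n\rightarrow P_J\backslash GL_n$. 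Since $\a=\a_1^{(k+1)}$ and the ambient varieties are locally isomorphic to Schubert varieties by \cite{Z4}, smooth base change (exactly as in the corollary following Theorem \ref{teo: 4.3.7}) transports the computation to $P_J\backslash GL_n$, yielding $n(\a_w,\a_v)=\sum_i\dim\mathcal{H}^{2i}(\pi_{*}(IC(\line{P_{J_1(\l_k-r_0,k)}t_wB})))_x$ for $x\in P_JvB$, which is the assertion.

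The main obstacle I expect is verifying that the equivariant orbit bookkeeping goes through \emph{verbatim} in the parabolic setting: in \S 7.6 the set $S(\a)$ is parametrized by partitions $\lambda\in\mathcal{P}(\ell,r)$ and by elements of $S_{r+\l}^{J,\emptyset}$ via $\varsigma_1,\varsigma_2$, and the key combinatorial identities ($\a_{\mu^{\flat}}$, $(\a_{\mu^{\flat}})^{\sharp}$, the bijection $\rho$ of $Q(\a,\b)$) are read off cleanly from the Grassmannian geometry. In the general parabolic case one must instead work with $\a_w^{J_i(r_0,k),\emptyset}$ and the double quotients $S_n^{J_i,\emptyset}$, and check that lemma \ref{lem: 7.6.18}'s minimality statement, Proposition \ref{prop: 7.7.1}'s explicit description of $w^{\flat}$, and the fibration statements of \S 7.7 (lemma before \ref{prop: 7.7.1}, propositions \ref{prop: 7.7.6}, \ref{prop: 7.7.7}, the analogue of \ref{prop: 7.6.4}) assemble correctly — i.e. that the morphisms $\sigma,\sigma',p,\beta''$ restrict to the parabolic strata in a way that is still $G_{\varphi}$-equivariant with the orbit-to-orbit property. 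Since the excerpt explicitly asserts "the constructions and proofs are the same as the Grassmanian type," the real work is to trace each lemma of \S 7.6 to its parabolic counterpart and confirm no genericity hypothesis fails; once that is done the proof is a formal consequence of proposition \ref{prop: 7.3.8}, corollary \ref{cor: 7.4.19}, and smooth base change, exactly as in proposition \ref{prop: 7.6.8}.
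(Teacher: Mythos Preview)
Your proposal is correct and follows essentially the same approach as the paper: the paper's proof simply writes down the composed morphism $h:\mathfrak{X}_{\d}=(\mathfrak{X}_{\d})_W\xrightarrow{p}E''_{\a}\xrightarrow{\beta''}E_{\varphi_{\a}}$ and asserts that the result follows from a fibration construction analogous to that of \S 2.5 for symmetric multisegments. Your outline is a more detailed expansion of exactly this sketch, correctly invoking proposition \ref{prop: 7.3.8}, corollary \ref{cor: 7.4.19}, the fibrations of \S 7.4, and smooth base change as in \S 2.5; your identification of the ``main obstacle'' (checking that the orbit bookkeeping of \S 7.6 carries over verbatim to the parabolic setting) is precisely what the paper leaves implicit under the remark that ``the constructions and proofs are the same as the Grassmannian type.''
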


\begin{proof}
Consider the composed morphism 
\[
 h: \mathfrak{X}_{\d}=(\mathfrak{X}_{\d})_W \xrightarrow{p} E_{\a}''\xrightarrow{\beta''} E_{\varphi_{\a}}.
\]
This proposition can be deduced from 
a construction of fibration similar to the one we did 
in Chapter 2 for symmetric multisegments, cf.\S 2.5. 
 
\end{proof}  

\section{Calculation of Partial Derivatives}
Again, as previous section, we restrict ourselves to the case of multisegment of parabolic type.

\begin{definition}
Let $J_1\subseteq J_2\subseteq S$ be two subsets of generators of $S_n$. Let 
$v\in S^{J_1, \emptyset}_n, w\in S_n^{J_2, \emptyset}$, we define $\theta_{J_2}^{J_1}(w, v)$ to be the 
multiplicities of $IC(\line{P_{J_2}wB})$ in $\pi_*(IC(\line{P_{J_1}vB}))$, where   
\[
\pi: P_{J1}\backslash GL_n \rightarrow P_{J_2}\backslash GL_n
\]
be the canonical projection. 
\end{definition}

\remk
By proposition \ref{prop: 5.3.12}, we know that in case where $J_1=\emptyset, J_2=\{s_i\}$ we have 
$\theta_{J_2}^{J_1}(w, v)=\mu(s_iw, v)$ if $\ell(v)\leq \ell(s_iv)$, where $\mu(x, y)$ is the 
coefficient of degree $(\ell(y)- \ell(x)-1)/2$ in $P_{x, y}(q)$.

%
%First of all, we recall a version of relation (5) at the beginning of Chapter 5 for the parabolic KL polynomials.
%
%\begin{prop}(cf. \cite{D2} Proposition 3.9)
%Let $J\subset S$ be a subset of generators in $S_n$ and $\sigma\leq \tau$ be elements in $S^{J, \emptyset}$. 
%Let $s\in S$ be an element such that $s\sigma<\sigma$. Then
%\[
%P_{\tau, \sigma}^{J, \emptyset}=\tilde{P}-\sum_{\substack{\tau\leq \theta\leq s\sigma \\ 
%s\theta\leq \theta \text{ or } s\theta\notin S_n^{J}}} \mu(\theta, s\sigma)q^{\frac{1}{2}(\ell(\sigma)-\ell(\theta))}P_{\tau, \theta}^{J, \emptyset}
%\]
%with 
%\begin{displaymath}
%\tilde{P}=\left \{\begin{array}{cc}
%P_{s\tau, s\sigma}^{J, \emptyset}+qP_{\tau, s\sigma}^{J, \emptyset}, &\quad \text{ if } \ell(s\tau)\leq \ell(\tau)\\
%qP_{s\tau, s\sigma}^{J, \emptyset}+P_{\tau, s\sigma}^{J, \emptyset}, &\quad \text{ if } \ell(s\tau)\geq \ell(\tau), \quad  s\tau\in S_n^J\\
%(1+q)P_{\tau, s\sigma}^{J, \emptyset}, &\quad \text{ if } \ell(s\tau)\geq \ell(\tau), \quad s\tau \notin S_n^J .
%\end{array} \right.
%\end{displaymath}
%\end{prop}
%
%Geometrically, 

\begin{prop}
Let $J\subseteq S$ be a subset of generators in $S_n$. 
Let $k\in \Z$ and $\a$ be a multisegment satisfies all the assumptions in the beginning of section 7.7.
Then for any $w\in S_n^{J, \emptyset}$, we have 
\[
\D^{k}(L_{\Phi(w)})=\sum_{r_0=0}^{\ell_k}\sum_{v\in S_n^{J_2(r_0, k), \emptyset}}\theta_{J}^{J_{1}(\l_k-r_0, k)}(w, t_v)L_{\Phi(v)}.
\]
\end{prop}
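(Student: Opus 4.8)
The plan is to express the coefficients of $\D^{k}(L_{\Phi(w)})$ as multiplicities of simple perverse sheaves in a Lusztig product, and then to evaluate those multiplicities through the geometry of §7.7, reading them off the projection $P_{J_{1}(\l_k-r_0,k)}\backslash GL_{n}\to P_{J}\backslash GL_{n}$. Throughout we may assume $\a=\a_{\Id}^{J,\emptyset}$ satisfies $(\A_{k})$: lengthening its segments from the left affects neither the relation type of the multisegments involved nor the $\D^{k}$-coefficients (the footnote to $(\A_{k})$ together with Proposition \ref{prop: 3.2.17}), so this is harmless.

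First I would prove the $L$-version of Proposition \ref{prop: 7.3.8}: writing $\D^{k}(L_{\b})=\sum_{\c}n'(\c,\b)L_{\c}$, the integer $n'(\c,\b)$ equals the multiplicity of the simple perverse sheaf $IC(\line{O}_{\b})$ in the Lusztig product $IC(\line{O}_{\c})\star IC(\line{O}_{\l[k]})$, where $\l$ is forced by $\varphi_{\b}=\varphi_{\c}+\l\chi_{[k]}$ and the order of the factors is immaterial since $E_{\l\chi_{[k]}}$ is a point. This follows from the same canonical-basis computation as Proposition \ref{prop: 7.3.8}: under $\phi\colon B\simeq\mathcal{R}$ one has $L_{\a}\leftrightarrow G^{*}(\a)$, and by Proposition \ref{prop: 7.6.5} the operator $\D^{k}$ is $\sum_{n}(1/n!)(e_{k}')^{n}$, which is adjoint for the Kashiwara pairing to left multiplication by $\sum_{n}E(n[k])=\sum_{n}G(n[k])$ (using $E_{k}^{n}/n!=E(n[k])=G(n[k])$ at $q=1$); expanding $G(n[k])G(\c)$ in the canonical basis, Lusztig's description of its structure constants as $\star$-product multiplicities (§7.2, in particular Corollary \ref{cor: 7.4.15} and Proposition \ref{prop: 7.4.20}) together with the fact that $\{G^{*}(\a)\}$ is by definition the dual basis of $\{G(\a)\}$ gives the claim. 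Alternatively one can compare the two expansions of $n(\c,\b)$ — one from $\D^{k}$ being a ring homomorphism applied to $\pi(\b)=\sum_{\c}m(\c,\b)L_{\c}$, the other from Proposition \ref{prop: 7.3.8} together with the decomposition theorem and Theorem \ref{teo: 4.1.5} — and invert the unitriangular matrix $(m(\c,\b))$. Either way the one subtle point is parity: one needs the Lusztig products and the complexes $IC(\line{O}_{\c})$ to be even, which follows from Theorem \ref{teo: 2.3.9} and the fact (\cite{Z4}) that orbital varieties are locally Schubert varieties.

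Next I would specialize to $\a=\a_{\Id}^{J,\emptyset}$ and to $\c=\Phi_{J_{2}(r_{0},k),\emptyset}(v)$ with $v\in S_{n}^{J_2(r_0,k),\emptyset}$ and $\l=r_{0}$. By the parametrization of §7.7 (Proposition \ref{prop: 7.7.1} and the definitions of $\a_{2}$, $J_{2}(r_{0},k)$, $v^{\flat}$, $t_{v}$ preceding it), as $r_{0}$ runs over $\{0,\dots,\l_{k}\}$ and $v$ over $S_{n}^{J_2(r_0,k),\emptyset}$ these $\c$ exhaust the multisegments with $n'(\c,\a)\neq 0$, the minimal element of $Q(\a_{\Id}^{J,\emptyset},\c)$ is $\a_{v^{\flat}}$, and $(\a_{v^{\flat}})^{\sharp}=\a_{t_{v}}$ with $t_{v}\in S_{n}^{J_1(\l_k-r_0,k),\emptyset}$. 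Corollary \ref{cor: 7.4.19} identifies the Lusztig product $IC(\line{O}_{\c})\star IC(\line{O}_{r_{0}[k]})$ with $\beta''_{*}(IC(\line{E_{\a}''(\a_{t_{v}})}))$, and the fibration construction underlying Proposition \ref{prop: 7.7.8} — the $GL_{n}/P$-bundle over the orbital variety obtained as in §2.5, composed with $p$ — identifies $\beta''\circ p$, up to a smooth fibration with connected fibres, with the canonical projection $\pi\colon P_{J_1(\l_k-r_0,k)}\backslash GL_{n}\to P_{J}\backslash GL_{n}$. Since such a fibration preserves intersection-cohomology decomposition multiplicities, the multiplicity of $IC(\line{O}_{\Phi_{J,\emptyset}(w)})$ in $\beta''_{*}(IC(\line{E_{\a}''(\a_{t_{v}})}))$ equals the multiplicity of $IC(\line{P_{J}wB})$ in $\pi_{*}(IC(\line{P_{J_1(\l_k-r_0,k)}\,t_{v}\,B}))$, which is $\theta_{J}^{J_1(\l_k-r_0,k)}(w,t_{v})$ by definition. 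Combined with Step 1 this gives $n'(\Phi_{J_2(r_0,k),\emptyset}(v),\Phi_{J,\emptyset}(w))=\theta_{J}^{J_1(\l_k-r_0,k)}(w,t_{v})$.

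Summing over $r_{0}$ and $v$ yields the formula. I expect the main obstacle to be the geometric identification of the previous paragraph: extracting from the sketch "similar to §2.5" the explicit $GL_{n}/P$-bundle and verifying that the relevant intersection complex on $\mathfrak{X}_{\d}$ genuinely pushes forward, under $\beta''\circ p$, to $\pi_{*}(IC(\line{P_{J_1(\l_k-r_0,k)}t_{v}B}))$ — not merely that their stalk dimensions (the numbers $n(\a_{v},\a_{w})$ of Proposition \ref{prop: 7.7.8}) coincide — together with keeping straight the several order conventions ($\preceq_{k}$ versus $\leq$, the segment orders $\lhd$ and $\preceq$, the Bruhat-order reversal in $\Phi$, and the labelling of summands) so that the unitriangular inversions are applied to the correct matrices.
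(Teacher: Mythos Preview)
Your proposal is correct. Your alternative route---inverting the unitriangular matrix $(m(\c,\b))$---is precisely the paper's argument: apply $\D^{k}$ to $\pi(\Phi(w))=\sum_{u}P_{w,u}^{J,\emptyset}(1)L_{\Phi(u)}$, identify the coefficient of $L_{\Phi(v)}$ on the left as the stalk of $\pi_{*}(IC(\line{P_{J_{1}(\l_k-r_0,k)}t_{v}B}))$ at a point of $P_{J}wB$ (Proposition~\ref{prop: 7.7.8}), expand that stalk via the decomposition theorem as $\sum_{u}\theta(u,t_{v})P_{w,u}^{J,\emptyset}(1)$ (this is equation~(\ref{eq: 7.6}) at $q=1$), and run induction on $w$. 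Your main route, via an $L$-version of Proposition~\ref{prop: 7.3.8} obtained from the Kashiwara adjointness $(E'_{k}u,v)=(u,E_{k}v)$, is a genuine alternative: it reads the coefficient of $L_{\Phi(v)}$ in $\D^{k}(L_{\Phi(w)})$ directly as the canonical-basis structure constant of $G(\Phi(w))$ in $G(r_{0}[k])G(\Phi(v))$, hence as the multiplicity of $IC(\line{O}_{\Phi(w)})$ in the Lusztig product, and then matches that with $\theta$ via the fibration. This is cleaner and skips the induction, but it asks for the stronger geometric input you already flag---that the \S2.5-style fibration identifies the push-forward complexes $\beta''_{*}(\cdots)$ and $\pi_{*}(\cdots)$ as complexes, so that their decomposition multiplicities agree, not only their stalk dimensions. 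The paper's inductive route only needs the stalk formula; the inversion of $(P_{w,u}^{J,\emptyset}(1))$ then upgrades stalks to multiplicities for free.
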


\begin{proof}
Note that by proposition \ref{prop: 7.4.2}
\[
\D^k(\pi(\Phi(w)))=\sum_{\b\preceq_k \Phi(w)}n(\b, \a)L_{\b}.
\]
Note that by proposition proposition \ref{prop: 7.3.5}, we know that 
$\b\preceq_k \Phi(w)$ implies that 
\[
\b=\Phi(v), 
\]
for some $v\in J_2(\l_k-r_0, k)$. 
Moreover, according to the proposition  \ref{prop: 7.7.8}
\[
n(\Phi(v), \Phi(w))=\sum_i \dim \mathcal{H}^{2i}(\pi_*(IC(\line{P_{J_1(\l_{k}-r_0, k)}t_{v}B})))_x
\]
for some $x\in P_JwB$.
In fact, by the decomposition theorem, we have 
\addtocounter{theo}{1}
\begin{equation}\label{eq: 7.42}
\pi_*(IC(\line{P_{J_1(\l_{k}-r_0, k)}t_{v}B})=\bigoplus_{u\in S_n^J} \oplus_{i}IC(\line{P_JuB})^{h_i(u, t_v)}[d_u^i]
\end{equation}
therefore 
\[
\theta_{J}^{J_1(\l_k-r_0, k)}(u, t_v)=\sum_{i}h_i(u, t_v).
\]
Furthermore, if we denote by 
\[
\theta_{J}^{J_1(\l_k-r_0, k)}(u, t_v)(q)=\sum_{i}h_i(u, t_v)q^{-d_u^i/2}
\]
by localizing at a point of $P_JwB$ and applying proper base change, we get 
\addtocounter{theo}{1}
\begin{equation}\label{eq: 7.6}
\sum_{\rho\in S_{J/J_1(\l_k-r_0, k)}}q^{\ell(\rho)}P^{J_1(\l_k-r_0, k), \emptyset}_{\rho w, t_v}(q)=\sum_{u} \theta_{J}^{J_1(\l_k-r_0, k)}(u, t_v)(q) P^{J, \emptyset}_{w, u}(q).
\end{equation}
Now we return to the formula 
\addtocounter{theo}{1}
\begin{equation}\label{eq: 7.5}
\pi(\Phi(w))=\sum_{u} P^{J, \emptyset}_{w, u}(1)L_{\Phi(u)}.
\end{equation}
By induction, we can assume that for $u>w$, we have that $L_{\Phi(v)}$ appears in $\D^k(L_{\Phi(u)})$ with multiplicity $\theta_{J}^{J_1(\l_k-r_0, k)}(u, t_v)$. Then 
by applying the derivation $\D^k$ to equation (\ref{eq: 7.5}), 
on the right hand side we get the multiplicity of $L_{\Phi(v)}$ given by 
\[
x+\sum_{u>w}  \theta_{J}^{J_1(\l_k-r_0, k)}(u, t_v) P^{J, \emptyset}_{w, u}(1),
\]
where $x$ denotes the multiplicity of $L_{\Phi(v)}$ in the derivative $\D^k(L_{\Phi(w)})$.  And on the right hand side, 
applying corollary \ref{cor: 4.6.17},
we get
\[
\sum_{\rho\in S_{J/J_1(\l_k-r_0, k)}}P^{J_1(\l_k-r_0, k), \emptyset}_{\rho w, t_v}(1).
\]
Now compare with the equation (\ref{eq: 7.6}) to get $x=\theta_{J}^{J_1(\l_k-r_0, k)}(w, t_v)$
 \end{proof}

From now on we consider the derivative $\D^k(L_{\c})$ for a general multisegment $\c$ such that 
$f_{e(\c)}(k)>0$.   

\begin{prop}
There exists a multisegment $\c'$ which is of parabolic type $(J_1(\c), \emptyset)$( cf. definition \ref{def: 6.3.1})
and  a sequence of integers $k_1, \dots, k_r, k_{r+1}, \dots, k_{r+\l}$ such that $L_{\c}$ is the minimal degree term with multiplicity one in 
\[
{^{k_1}\D}\cdots {^{k_r}\D}\D^{k_{r+1}}\cdots \D^{k_{r+\l}}(L_{\c'}),
\]
and 
\begin{align*}
&f_{e(\c')}(i)=f_{e(\c)}(i), \quad \text{ if } i\leq k,  \\
&f_{e(\c')}(k+1)=0,\\
&k_i>k+1, \quad \text{if } i>r.
\end{align*}
\end{prop}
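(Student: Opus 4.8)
\emph{Strategy.} The plan is to build $\c'$ from $\c$ by two successive rounds of lengthening of segments --- one on the right, one on the left --- in the spirit of the reduction procedures of Proposition \ref{cor: 5} and Proposition \ref{prop: 4.2.2}, and then to recover $\c$ as the minimal degree term by undoing these lengthenings with the matching truncation (partial derivative) operators, invoking Proposition \ref{teo: 3.0.6} and its left--handed analogue, together with Proposition \ref{prop: 3.2.17}, to control the lowest degree part.

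\emph{Right lengthening; the indices $k_{r+1},\dots,k_{r+\l}$.} If $f_{e(\c)}(k+1)=0$ there is nothing to do and we set $\l=0$. Otherwise list the distinct values $\ge k+1$ occurring in $e(\c)$ as $v_1<\dots<v_p$ (so $v_1=k+1$) and put $a_i=f_{e(\c)}(v_i)$. Fix $V_1<V_2<\dots<V_p$ with $V_1\ge k+2$, $V_i\ge v_i$, and with the gaps $V_{i+1}-V_i$ as large as one wishes, and let $\c_1$ be obtained from $\c$ by lengthening, for each $i$, all $a_i$ segments of $\c$ ending at $v_i$ so that they now end at $V_i$ (segments ending at $\le k$ are untouched). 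Then $f_{e(\c_1)}(j)=f_{e(\c)}(j)$ for $j\le k$, $f_{e(\c_1)}(k+1)=0$, and the multiset $e(\c_1)$ has the same multiplicity pattern as $e(\c)$, so $J_1(\c_1)=J_1(\c)$ (Definition \ref{def: 6.3.1}). To undo these lengthenings, for $i=1,2,\dots,p$ in turn bring the block ending at $V_i$ back down to $v_i$ by the cascade $\D^{v_i+1}\circ\dots\circ\D^{V_i-1}\circ\D^{V_i}$ (applied from the top); reading off the $\l=\sum_i(V_i-v_i)$ indices in the order of application gives the list $k_{r+\l},\dots,k_{r+1}$, all $>k+1$. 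Since the $V_i$ are spread far apart, at every intermediate multisegment in this cascade the only segments ending at the current index are those of the block being moved (they all end there), and \emph{no} segment ends one step below it, so condition (2) of hypothesis $H_m$ (Definition \ref{def: 3.1.3}) is vacuous and condition (1) holds automatically. Applying Proposition \ref{teo: 3.0.6} successively (with the same induction on $\ell$ that rules out interference of lower degree terms), $\D^{k_{r+1}}\cdots\D^{k_{r+\l}}(L_{\c_1})$ has $L_{\c}$ as its unique minimal degree term, with multiplicity one; the same remains true after $\c_1$ is replaced by the $\c'$ constructed below, as the left lengthening alters only beginnings.

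\emph{Left lengthening; the indices $k_1,\dots,k_r$; parabolicity of $\c'$.} Apply to $\c_1$ the mirror image, for beginnings, of the construction in the proof of Proposition \ref{cor: 5}, again lengthening whole blocks of beginnings at a time and spacing the new beginnings far apart, and then continue lengthening segments on the left, in such blocks, until all segments contain a common point; call the outcome $\c'$. Left lengthening leaves the end multiset unchanged, so the three end conditions of the statement and the equality $J_1(\c')=J_1(\c)$ survive, while now $\c'$ has pairwise distinct beginnings (i.e.\ $J_2(\c')=\emptyset$) and pairwise overlapping segments. By Proposition \ref{prop: 6.3.2} this writes $\c'$ as $\sum_j[b(\Delta_j),e(\Delta_{w(j)})]$ for a unique $w\in S_n^{J_1(\c),\emptyset}$, so by Lemma \ref{lem: 6.2.3} and Proposition \ref{prop: 6.2.4} we have $\c'\in S(\a_{\Id}^{J_1(\c),\emptyset})$, i.e.\ $\c'$ is of parabolic type $(J_1(\c),\emptyset)$ (Definition \ref{def: 6.2.22}, Notation \ref{nota: 6.2.21}). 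Exactly as in the first round, the left lengthenings are undone by left partial derivatives ${^{k_1}\D},\dots,{^{k_r}\D}$ for which the hypothesis ${_{k_i}H}$ is vacuous at each step, so by the left--handed version of Proposition \ref{teo: 3.0.6} the operator ${^{k_1}\D}\cdots{^{k_r}\D}$ sends the minimal degree term $L_{\c_2}$ of $\D^{k_{r+1}}\cdots\D^{k_{r+\l}}(L_{\c'})$ --- where $\c_2$ is $\c$ with only the left lengthenings applied --- to $L_{\c}$ with multiplicity one, and sends everything of higher degree to higher degree. Composing the two rounds gives the assertion.

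\emph{Main obstacle.} The delicate point is not the construction of $\c'$ but the verification, at \emph{every} intermediate stage of the two compositions, that the relevant hypothesis ($H_{k_i}$, resp.\ ${_{k_i}H}$) holds --- this is precisely what forces the ``lengthen whole blocks, spaced far apart'' recipe --- together with the iterated form of Proposition \ref{teo: 3.0.6}: that the lower degree summands $L_{\b}$ ($\b<\c'$) in $\pi(\c')=L_{\c'}+\sum_{\b<\c'}m(\b,\c')L_{\b}$ never produce a term of the minimal degree after applying the derivative composition. The latter is established by induction on $\ell(\c')$, parallel to the proof of Proposition \ref{teo: 3.0.6}, using Proposition \ref{prop: 3.2.17}(3) to identify the image of $\pi(\c')$ with a sum $\sum m(\cdot,\c')L_{\cdot}$ over the appropriate truncations and thereby to pin down its lowest degree part.
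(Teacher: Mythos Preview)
Your argument is correct and follows essentially the same approach as the paper's: build $\c'$ from $\c$ by lengthening segments on both sides so that beginnings become distinct, the slot $k+1$ is emptied in the end multiset, and the hypotheses $H_m$ (resp.\ ${_mH}$) hold at every intermediate step, then recover $L_\c$ as the unique minimal degree term via Proposition~\ref{teo: 3.0.6} (and its left analogue) iterated as in Proposition~\ref{prop: 3.2.17}. The paper performs the left lengthening first (one segment at a time, exactly as in Proposition~\ref{cor: 5}) and then does a single uniform right shift of all ends $>k$ by one; you reverse the order and spread the right blocks far apart, which is more elaborate than necessary but certainly makes the vacuity of condition~(2) of $H_m$ transparent. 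You are also more explicit than the paper about forcing the overlap condition $\max b(\c')\le\min e(\c')$ needed for $\c'$ to lie in $S(\a_{\Id}^{J_1(\c),\emptyset})$; the paper's sketch glosses over this.
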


\begin{proof}
Let $i_0=\min \{i:  f_{b(\c)}(i)>1\}$ and $\Delta_0=\max_{\prec}\{\Delta\in \c:   b(\Delta)=i_0\}$.
Then replace all segments $\Delta\in \c$ with $b(\c)<i_0$ by ${^{+}\Delta}$ 
and $\Delta_0$ by ${^+\Delta}$ to get a new multisegment $\c_1$. Then if we let $\{i\in b(\c): i<i_0\}=\{j_1<\cdots<j_r\}$, we have 
$L_{\c}$ is the minimal degree terms in 
\[
{^{j_1-1}\D}\cdots {^{j_r-1}\D}(L_{\c_1}),
\]
Repeat this procedure to get $\c_0$ and a sequence of integers $k_1, \cdots, k_r$ such that 
$L_{\c}$ is the minimal degree term with multiplicity one in 
\[
{^{k_1}\D}\cdots {^{k_r}\D}(L_{\c_0}).
\]
Suppose that $f_{e(\c_0)}(k+1)>0$. Then replace all segments $\Delta$ in 
$\c_0$ with $e(\Delta)>k$ by $\Delta^+$ to obtain $\c'$, we are done.
\end{proof}

\begin{definition}
We define
\[
\Gamma^i(\a, k)=\{\b\in \Gamma(\a, k): \deg(\b)+i=\deg(\a)\},
\]
where $\l_k=f_{e(\a)}(k)$.
\end{definition}

\begin{definition}
Let $\a$ be a multisegment and $k, k_1\in \Z$. Then we define 
\begin{align*}
\Gamma^i(\a, k)_{k_1}&=\{\b\in \Gamma^i(\a, k):  \b\in S(\b)_{k_1}, \b^{(k_1)}\in \Gamma^i(\a^{(k_1)}, k)\}, \\
\Gamma(\a, k)_{k_1}&=\cup_i \Gamma^i(\a, k)_{k_1}.
\end{align*}
More generally for a sequence of integers $k_1, \cdots, k_r$, we define 
\[
\Gamma(\a, k)_{k_1, \cdots, k_r}=\{\b\preceq_k \a:  \b^{(k_1, \cdots, k_{i-1})}\in \Gamma(\a^{(k_1, \cdots, k_{i-1})}, k)_{k_{i}} \text{ for } 1\leq i\leq r\}.
\]
Similarly, we can define 
${_{k_1}\Gamma(\a, k)}$ and 
${_{k_1, \cdots, k_r}\Gamma(\a, k)}$.
\end{definition}

\remk 
We can also talk about the set $_{k_{r+1}, \cdots, k_{r+\l}}(\Gamma(\a, k)_{k_1, \cdots, k_r})$.

\begin{lemma}\label{lem: 7.8.6}
Let $k_1\neq k-1$, then the map
\begin{align*}
\psi_{k_1}: &\Gamma(\a, k)_{k_1}\rightarrow \Gamma(\a^{(k_1)}, k)\\
& \b\mapsto \b^{(k_1)}
\end{align*}
is bijective.
\end{lemma}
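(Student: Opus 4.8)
\textbf{Proof proposal for Lemma \ref{lem: 7.8.6}.}

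The plan is to exhibit an explicit two-sided inverse, using the machinery already developed for the truncation maps $\psi_k$, rather than proving injectivity and surjectivity separately from scratch. The key point to keep straight is that two different truncation operations are in play: $\b \mapsto \b^{(k_1)}$ (the one indexing the map in the statement) and the partial derivative $\D^k$ that defines the poset $\preceq_k$ through $\Gamma(\a,k)$ (recall Definition of $\Gamma(\a,k)$ and Proposition \ref{prop: 7.4.2}). The hypothesis $k_1 \neq k-1$ is exactly what decouples these two: truncating at the end $k_1$ does not interfere with which segments end at $k$ or at $k-1$, so $\D^k$ and $\psi_{k_1}$ essentially commute on the relevant strata.

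First I would check that $\psi_{k_1}$ is well defined, i.e. that $\b \in \Gamma(\a,k)_{k_1}$ indeed implies $\b^{(k_1)} \in \Gamma(\a^{(k_1)},k)$; but this is forced by the very definition of $\Gamma(\a,k)_{k_1}$, so nothing is needed there. Next I would produce the candidate inverse. Given $\d \in \Gamma(\a^{(k_1)},k)$, Proposition \ref{prop: 7.3.5} lets us write $\d = \c_\Gamma$ for some $\c \in S(\a^{(k_1)})$ and $\Gamma \subseteq \c(k)$; using the surjectivity and bijectivity of $\psi_{k_1}: S(\a)_{k_1} \to S(\a^{(k_1)})$ (Proposition \ref{cor: 3.2.3}), lift $\c$ to the unique $\tilde\c \in S(\a)_{k_1}$ with $\tilde\c^{(k_1)} = \c$, and then form $\tilde\c_{\Gamma'}$ with $\Gamma'$ the corresponding subset of $\tilde\c(k)$ (here $k_1 \neq k-1$ and the fact that $\c \in S(\a)_{k_1}$ does not change which segments end at $k$ guarantees $\tilde\c(k)$ and $\c(k)$ are in natural bijection, so $\Gamma'$ makes sense). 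Set $\beta_\d := \tilde\c_{\Gamma'}$; I would then verify $\beta_\d \in \Gamma(\a,k)_{k_1}$ (using Proposition \ref{prop: 7.3.5} again for $\beta_\d \preceq_k \a$, and the definition of $\Gamma(\a,k)_{k_1}$ together with $\beta_\d^{(k_1)} = \d$) and that $\beta_\d$ is independent of the choice of presentation $\d = \c_\Gamma$, arguing as in the proof of Proposition \ref{prop: 7.3.5} that the construction descends to a well-defined operation on multisegments once we fix the bijection via $b(\cdot)$ and $e(\cdot)$.

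Finally I would check the two composites are the identity. For $\psi_{k_1} \circ (\d \mapsto \beta_\d)$ this is immediate from $\beta_\d^{(k_1)} = \d$. For $(\cdot)\mapsto\beta_{(\cdot)}$ composed with $\psi_{k_1}$, take $\b \in \Gamma(\a,k)_{k_1}$, write $\b = \c_\Gamma$ with $\c \in S(\a)_{k_1}$ as allowed (one should note that $\b \preceq_k \a$ and $\b \in S(\b)_{k_1}$ force the witnessing $\c$ to lie in $S(\a)_{k_1}$, which needs a small compatibility argument between $\preceq_k$ and $(\cdot)^{(k_1)}$ — this is where I expect the only real friction), and then $\b^{(k_1)} = \c^{(k_1)}_{\Gamma}$, whose lift back up under the above recipe is $\c_\Gamma = \b$ by the uniqueness clause in Proposition \ref{cor: 3.2.3}. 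The main obstacle, then, is not the bijection itself but keeping the bookkeeping honest: showing that the operations $\D^k$-stratum (encoded by $\Gamma$ and $\Gamma'$) and the $k_1$-truncation genuinely commute on $\Gamma(\a,k)_{k_1}$ when $k_1 \neq k-1$, i.e. that passing to $\b^{(k_1)}$ neither creates nor destroys any of the linkedness relations governing membership in $\Gamma(\cdot,k)$. Once that commutation is established the lemma follows formally.
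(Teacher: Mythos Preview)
Your strategy of constructing an explicit inverse via presentations $\d=\c_\Gamma$ is more involved than necessary, and the paper takes a shorter route that dissolves both of your acknowledged friction points. The key observation you are missing is that each graded piece $\Gamma^i(\a,k)$ is not merely an abstract stratum but equals $S(\a_i)$ for the explicit multisegment
\[
\a_i=(\a\setminus\a(k))\cup\{\Delta_j^{-}:j\le i\}\cup\{\Delta_j:j>i\},
\]
where $\a(k)=\{\Delta_1\succeq\cdots\succeq\Delta_r\}$. With this identification, $\Gamma^i(\a,k)_{k_1}=S(\a_i)_{k_1}$, and Proposition~\ref{cor: 3.2.3} gives the bijection $\psi_{k_1}\colon S(\a_i)_{k_1}\to S(\a_i^{(k_1)})$ directly. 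For $k_1\neq k-1,k$ one checks $\a_i^{(k_1)}=(\a^{(k_1)})_i$, so the target is $\Gamma^i(\a^{(k_1)},k)$; the case $k_1=k$ is handled separately, where both sides collapse to $S(\a)_k\to S(\a^{(k)})$. Taking the union over $i$ finishes.

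Your approach, by contrast, genuinely needs the well-definedness of $\beta_\d$, and the gesture toward Proposition~\ref{prop: 7.3.5} does not obviously supply it: distinct witnesses $\c,\c'$ for $\d$ lift to distinct $\tilde\c,\tilde\c'\in S(\a)_{k_1}$, and showing their $\Gamma'$-truncations agree is close to the content of the lemma itself. There is also a case you have not flagged: when $k_1=k$ (permitted by the hypothesis $k_1\neq k-1$), truncation at $k_1$ \emph{does} alter which segments end at $k$, so your claimed natural bijection $\tilde\c(k)\leftrightarrow\c(k)$ fails and $\Gamma'$ is not defined as you describe. The paper's reduction to $S(\a_i)$ handles all cases uniformly by invoking the bijection already proved in Chapter~3, with no bookkeeping about presentations.
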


\begin{proof}
In fact we have $\Gamma^i(\a, k)=S(\a_i)$ where $\a_i$ is constructed in the following way:
let $\a(k)=\{\Delta_1\succeq \cdots \succeq \Delta_r\}$, then 
\[
\a_{i}=(\a\setminus \a(k))\cup \{\Delta_j^{-}: j\leq i\}\cup \{\Delta_j: j>i\}.
\]
Note that $\Gamma^i(\a, k)=S(\a_i)$, which implies that we have 
\[
\Gamma^i(\a, k)_{k_1}=S(\a_i)_{k_1}.
\]
Finally, note that  by proposition \ref{cor: 3.2.3}  we have a bijection
\[
\psi_{k_1}: S(\a_{i})_{k_1}\rightarrow S(\a_{i}^{(k_1)}).
\] 
Note that $k_1\neq k-1, k$ implies that $\a_i^{(k_1)}\in \Gamma^i(\a^{(k_1)}, k)$ and 
\[
\Gamma(\a^{(k_1)}, k)=\bigcup_{i}S(\a_i^{(k_1)}).
\]
And if $k_1=k$, then 
\[
\Gamma(\a, k)_k=S(\a)_k, \quad \Gamma(\a^{(k_1)}, k)=S(\a^{(k)}).
\]
Hence we are done.
\end{proof}

\begin{lemma}
Let $k_1, k\in \Z$ then the map
\begin{align*}
{_{k_1}\psi}: &{_{k_1}\Gamma(\a, k)}\rightarrow \Gamma({^{(k_1)}\a}, k)\\
& \b\mapsto {^{(k_1)}\b}
\end{align*}
is bijective.
\end{lemma}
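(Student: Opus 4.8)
The plan is to mirror the proof of Lemma~\ref{lem: 7.8.6}, transporting everything through the left-hand analogue of all the machinery developed for $\a^{(k)}$ and $H_k(\a)$. Recall the remark after the definition of $_kH(\a)$: every statement proved for the right truncation $\a^{(k)}$ and the hypothesis $H_k(\a)$ has a mirror version for the left truncation $^{(k)}\a$ and the hypothesis $_kH(\a)$, obtained by reversing the roles of beginnings and ends of segments. In particular there is a bijection $_{k_1}\psi\colon {_{k_1}S(\a)}\rightarrow S({^{(k_1)}\a})$ which preserves multiplicities and the partial order (the mirror of Proposition~\ref{cor: 3.2.3}), and a mirror of the decomposition $S(\a_i)$ used in Lemma~\ref{lem: 7.8.6}.

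The key steps, in order, would be as follows. First I would record the ``left'' analogue of the identity $\Gamma^i(\a,k)=S(\a_i)$: since the poset $\preceq_k$ and the sets $\Gamma(\a,k)$, $\Gamma^i(\a,k)$ are defined purely in terms of the end-index $k$ and do not interact with beginnings of segments, the multisegments $\a_i$ (formed by replacing the $i$ largest segments of $\a(k)$ by their $\Delta^-$) still satisfy $\Gamma^i(\a,k)=S(\a_i)$, and this description is insensitive to which side one later truncates on. Second, I would intersect with the condition $_{k_1}\Gamma(\a,k)$: by the very definition of $_{k_1}\Gamma(\a,k)$, an element $\b\in\Gamma^i(\a,k)$ lies in $_{k_1}\Gamma(\a,k)$ iff $\b\in{_{k_1}S(\b)}$ and $^{(k_1)}\b\in\Gamma^i({^{(k_1)}\a},k)$, so $_{k_1}\Gamma^i(\a,k)={_{k_1}S(\a_i)}$, at least once one checks that $^{(k_1)}\a_i={(^{(k_1)}\a)}_i$ — this is the compatibility of the operations $\b\mapsto\b_i$ (on ends) and $\b\mapsto{^{(k_1)}\b}$ (on beginnings), which commute because they act on disjoint data. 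Third, I would apply the mirror of Proposition~\ref{cor: 3.2.3} to get a bijection $_{k_1}\psi\colon {_{k_1}S(\a_i)}\rightarrow S({^{(k_1)}\a_i})$ for each $i$. Finally I would assemble: $\Gamma({^{(k_1)}\a},k)=\bigcup_i S({^{(k_1)}\a_i})=\bigcup_i S({(^{(k_1)}\a)}_i)$, and since the $_{k_1}\psi$'s are compatible on overlaps (they agree with the single map $\b\mapsto{^{(k_1)}\b}$ on the subset where they are simultaneously defined, exactly as in Lemma~\ref{lem: 7.8.6}), they glue to a bijection $_{k_1}\Gamma(\a,k)\rightarrow\Gamma({^{(k_1)}\a},k)$. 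The degenerate cases $k_1=k$ and $k_1=k-1$ would be handled separately: for $k_1=k$ one has $_k\Gamma(\a,k)$ and $\Gamma({^{(k)}\a},k)$, and the left truncation at $k$ does not touch ends equal to $k$, so the bijection is the mirror of the $k_1=k$ case in Lemma~\ref{lem: 7.8.6}; for $k_1=k-1$ the left truncation removes beginnings at $k-1$, which again is disjoint from the end-data defining $\Gamma$, so no hypothesis like $_{k-1}H$ obstructs it. Note that, unlike the right-hand Lemma~\ref{lem: 7.8.6} where $k_1=k-1$ was excluded, here the excluded value would be $k_1=k+1$ if anything — but since $\Gamma(\a,k)$ only constrains ends and $^{(k_1)}\a$ only alters beginnings, in fact \emph{all} values of $k_1$ are admissible, which is why the statement carries no restriction.

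The main obstacle I anticipate is the gluing/compatibility step: checking carefully that the maps $_{k_1}\psi$ defined on the various $_{k_1}S(\a_i)$ genuinely restrict to the same map on intersections, and that the union of their images is exactly $\Gamma({^{(k_1)}\a},k)$ rather than something larger or smaller. This is where one must use that $^{(k_1)}\a_i={(^{(k_1)}\a)}_i$ and that the $\a_i$ form a nested-enough family; the analogous point in Lemma~\ref{lem: 7.8.6} was dispatched quickly, but one should make sure the mirror argument does not secretly require $k_1\neq k\pm 1$. A secondary, more bookkeeping obstacle is simply verifying that the ``mirror'' of Proposition~\ref{cor: 3.2.3} has been legitimately established by the remark after $_kH(\a)$ in the strength needed here (bijectivity plus order-preservation), but the text explicitly asserts that all results remain valid after the substitution $\a^{(k)}\rightsquigarrow{^{(k)}\a}$, $H_k(\a)\rightsquigarrow{_kH(\a)}$, so I would invoke it directly.
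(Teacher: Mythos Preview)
Your overall plan is correct and matches the paper for $k_1\neq k$: the paper explicitly says that in this case the proof is the same as that of Lemma~\ref{lem: 7.8.6}, and your decomposition $\Gamma^i(\a,k)=S(\a_i)$, the identification $^{(k_1)}\a_i=({^{(k_1)}\a})_i$, and the left-hand bijection ${_{k_1}S(\a_i)}\to S({^{(k_1)}\a_i})$ are exactly what is used.

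However, your treatment of the case $k_1=k$ has a genuine gap. Your key claim ``left truncation at $k$ does not touch ends equal to $k$'' is false precisely for the point segment $[k]$, which begins and ends at $k$ and is annihilated by $^{(k)}(\cdot)$. This is not a harmless edge case: if one writes $\a(k)=\{\Delta_1\succeq\cdots\succeq\Delta_{r_0}\succ[k]=\cdots=[k]\}$ with $r_1$ copies of $[k]$, then $({^{(k)}\a})(k)=\{\Delta_1,\ldots,\Delta_{r_0}\}$ has only $r_0$ elements, so $\Gamma^i({^{(k)}\a},k)=\emptyset$ for $i>r_0$, while $\Gamma^i(\a,k)$ is nonempty for all $i\leq r_0+r_1$. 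Consequently your commutation $^{(k)}\a_i=({^{(k)}\a})_i$ is not even well-posed for $i>r_0$. The paper's proof handles this by first observing that membership in ${_k\Gamma^i(\a,k)}$ forces $^{(k)}\b\in\Gamma^i({^{(k)}\a},k)$, hence $i\leq r_0$; only then does it verify $^{(k)}\a_i=({^{(k)}\a})_i$ for these $i$ and invoke the bijection ${_kS(\a_i)}\to S({^{(k)}\a_i})$. This is \emph{not} the mirror of the $k_1=k$ case in Lemma~\ref{lem: 7.8.6} (there, right truncation at $k$ kills all of $\a(k)$ and only $i=0$ survives); here a whole range $0\leq i\leq r_0$ survives, and the argument is genuinely different.
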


\begin{proof}
If $k_1\neq k$, the proof  is the same as that of the previous lemma. Consider the case where $k_1=k$.
Let $\a(k)=\{\Delta_1\succeq \cdots \succeq \Delta_{r_0}\succ \underbrace{[k]=\cdots =[k]}_{r_1}\}$.
Then for $i\leq \l_k$, we have 
\[
\a_{i}=(\a\setminus \a(k))\cup \{\Delta_j^{-}: j\leq i\}\cup \{\Delta_j: j>i\},
\]
where $\Delta_j=[k]$ if $j> r_0$.
And we have $\Gamma^i(\a, k)=S(\a_i)$. By definition,  we have $\b\in {_k\Gamma^i(\a, k)}$ if and only if 
\[
\b\in {_kS(\b)}, \quad ^{(k)}\b\in \Gamma^{i}(^{(k)}\a, k).
\]
Since ${^{(k)}\a}(k)=\{\Delta_1, \cdots, \Delta_{r_0}\}$, we know that for $\b\in {_k\Gamma^i(\a, k)}$, we must have 
$i\leq r_0$. Also, let
\[
(^{(k)}\a)_{i}=(^{(k)}\a\setminus {^{(k)}\a}(k))\cup \{\Delta_j^{-}: j\leq i\}\cup \{\Delta_j: r_0\geq j>i\}.
\]
And we have $\Gamma^{i}(^{(k)}\a, k)=S((^{(k)}\a)_{i})$. Then we have 
\[
^{(k)}\a_i=(^{(k)}\a)_i.
\]
Finally, we conclude that $\b\in {_k\Gamma^i(\a, k)}$ if and only if $\b\in {_kS(\a_i)}$.
Since the map 
\[
{_kS(\a_i)}\rightarrow S(^{(k)}\a_i)
\]
is bijective, we are done.
\end{proof}

\begin{prop}
Let $\b, \c$ be two multisegments and $k_1\in \Z$ such that 
\[
\b=^{(k_1)}\c, \quad \c\in {{_{k_1}}S(\c)}.
\]
If we write
\addtocounter{theo}{1}
\begin{equation}\label{eq: 7.8.8}
\D^k(L_{\c})=L_{\c}+\sum_{\d\in \Gamma(\c, k)\setminus \{\c\}} \tilde{n}(\d, \c)L_{\d},
\end{equation}
then 
\[
\D^k(L_{\b})=L_{\b}+\sum_{\d\in {_{k_1}\Gamma(\c, k)}\setminus \{\c\}}\tilde{n}(\d, \c)L_{{^{(k_1)}\d}}.
\]
\end{prop}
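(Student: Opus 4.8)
The statement to prove relates the partial derivative $\D^k(L_{\b})$ of the representation attached to a truncated multisegment $\b = {^{(k_1)}\c}$ to that of $\D^k(L_{\c})$, and asserts that the coefficients $\tilde{n}(\d,\c)$ appearing in $\D^k(L_{\c})$ get ``transported'' to $\D^k(L_{\b})$ via the left-truncation operation ${^{(k_1)}(-)}$, but now indexed only over the subset ${_{k_1}\Gamma(\c,k)}$. My plan is to combine the compatibility of $\D^k$ with the poset structure $\preceq_k$ (Proposition \ref{prop: 7.4.2}, Corollary \ref{cor: 7.4.3}) with the bijectivity of the left-truncation map ${_{k_1}\psi}\colon {_{k_1}\Gamma(\c,k)}\rightarrow \Gamma({^{(k_1)}\c}, k)$ proved in the preceding lemma, and the multiplicity-preservation property $m(\c,\a)=m({^{(k_1)}\c}, {^{(k_1)}\a})$ for the reduction operation (the left-handed version of Proposition \ref{prop: 3.2.17}, valid by the remark following Definition \ref{def: 2.3.2}).

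\textbf{Key steps, in order.} First I would expand $L_{\c}$ in the basis $\{\pi(\e)\}$ using $L_{\c}=\sum_{\e} \tilde{m}(\e,\c)\pi(\e)$ (equation \ref{eq: m(b, a)1}), apply $\D^k$ to both sides, and use the explicit formula for $\D^k(\pi(\e))=\pi(\e)+\sum_{\Gamma\subseteq \e(k),\Gamma\neq\emptyset}\pi(\e_\Gamma)$ from equation \ref{eq: 7.41}. Second, I would do the analogous expansion for $L_{\b}=L_{{^{(k_1)}\c}}$; by the left-handed Corollary \ref{lem: 2.3.4} (the ${_{k_1}H}$-version), the map ${_{k_1}\psi}$ is a bijection from ${_{k_1}S(\c)}$ to $S({^{(k_1)}\c})$ preserving the coefficients $m(-,-)$, so one gets $\pi({^{(k_1)}\c}) = \sum_{\e\in {_{k_1}S(\c)}} m(\e,\c)L_{{^{(k_1)}\e}}$ and hence $L_{{^{(k_1)}\c}}$ is expressed through the $\pi({^{(k_1)}\e})$ with the same $\tilde{m}$-coefficients. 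Third, one has to check that the combinatorial operations $\e\mapsto \e_\Gamma$ (adding $[k]$-cuts) and $\e \mapsto {^{(k_1)}\e}$ (left-truncation at $k_1$) commute in the relevant range, i.e. ${^{(k_1)}(\e_\Gamma)} = ({^{(k_1)}\e})_{\Gamma'}$ for the corresponding $\Gamma'$, since $k_1 \neq k$ (the hypothesis $\c\in {_{k_1}S(\c)}$ forbids interference between the left endpoint $k_1$ and the right endpoint $k$ of the segments being cut — here one should invoke $k_1 \neq k$, or more precisely that the relevant segments have beginning different from $k_1$ after the setup). Fourth, matching the two expansions term by term, using that $\D^k(L_{\b})$ is a nonnegative combination of irreducibles (Theorem \ref{teo: 3}) and that the indexing set $\Gamma({^{(k_1)}\c},k)$ pulls back bijectively to ${_{k_1}\Gamma(\c,k)}$ under ${_{k_1}\psi}$, I can read off that $\D^k(L_{{^{(k_1)}\c}}) = \sum_{\d} \tilde{n}(\d,\c) L_{{^{(k_1)}\d}}$ where $\d$ ranges over ${_{k_1}\Gamma(\c,k)}$, exactly as claimed. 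The leading term $L_{\b}$ with coefficient $1$ comes from $m(\c,\c)=1$ together with the fact that $\c$ itself lies in ${_{k_1}\Gamma(\c,k)}$ and ${^{(k_1)}\c}=\b$.

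\textbf{Main obstacle.} The delicate point is the bookkeeping in Step 3: one must verify that the truncation ${^{(k_1)}(-)}$ interacts correctly not just with $S(\c)$ but with the enlarged poset $\Gamma(\c,k)$, which also involves the multisegments $\e_\Gamma$ of strictly smaller degree obtained by right-cutting at $k$. In particular I would need the identity ${_{k_1}\Gamma^i(\c,k)} = {_{k_1}S(\c_i)}$ (where $\c_i$ is the auxiliary multisegment from the proof of Lemma \ref{lem: 7.8.6}) together with the bijection ${_{k_1}S(\c_i)}\rightarrow S({^{(k_1)}\c_i})$ and the fact that ${^{(k_1)}\c_i} = ({^{(k_1)}\c})_i$, so that everything is reduced to finitely many applications of the already-established reduction theorems. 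Once these identifications are in place the proof is a diagram chase; the risk is purely in making sure the hypothesis $\c\in {_{k_1}S(\c)}$ (equivalently $k_1\neq k-1$ in the relevant cases, together with the degree condition on $e(\c)$) rules out all the degenerate interactions, exactly as in the proof of Lemma \ref{lem: 7.8.6} and the lemma immediately following it.
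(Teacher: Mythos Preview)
Your approach is genuinely different from the paper's, and there is a real gap in your Step~2.

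\textbf{The gap.} You claim that since ${_{k_1}\psi}\colon {_{k_1}S(\c)}\to S(\b)$ is an order-preserving bijection preserving the forward multiplicities $m(-,-)$, the inverse coefficients $\tilde m$ are also transported, i.e.\ $\tilde m({^{(k_1)}\e},\b)=\tilde m(\e,\c)$ for $\e\in{_{k_1}S(\c)}$. This does not follow. The entry $\tilde m(\e,\c)$ is obtained by M\"obius inversion over the \emph{entire} poset $S(\c)$, whereas $\tilde m({^{(k_1)}\e},\b)$ is computed over $S(\b)\cong{_{k_1}S(\c)}$. For the restriction of the inverse to equal the inverse of the restriction you would need ${_{k_1}S(\c)}$ to be convex (interval-closed) inside $S(\c)$, which is not established in general: an intermediate $\g$ with $\f\le\g\le\e$ can fail condition~(2) of ${_{k_1}H}$ even when $\f,\e$ satisfy it. Concretely, if you apply ${^{k_1}\D}$ to $L_\c=\sum_{\e}\tilde m(\e,\c)\pi(\e)$ and extract the minimal-degree piece, what you actually obtain is
\[
\tilde m(\e',\b)=\sum_{\substack{\e\in {_{k_1}\tilde S(\c)}\\ {^{(k_1)}\e}=\e'}}\tilde m(\e,\c),
\]
a sum over the whole $\psi$-fibre in ${_{k_1}\tilde S(\c)}$, not the single value $\tilde m(\e_0,\c)$ at the minimal element $\e_0\in{_{k_1}S(\c)}$. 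You give no argument that the non-minimal contributions cancel.

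\textbf{What the paper does instead.} The paper never touches the $\pi$-basis or $\tilde m$. It exploits the fact that $\D^k$ and ${^{k_1}\D}$ are \emph{commuting algebra endomorphisms} of $\mathcal R$, and then filters irreducibles by the count $f_{b(\d)}(k_1)$ of segments starting at~$k_1$. When $k_1\neq k$, this count is $\preceq_k$-monotone, so $\D^k(L_\b)$ can be read off as the piece of $\D^k({^{k_1}\D}(L_\c))={^{k_1}\D}(\D^k(L_\c))$ of the correct $k_1$-start count; applying ${^{k_1}\D}$ to \eqref{eq: 7.8.8} and picking out that piece yields exactly the claimed formula. The case $k_1=k$ (which you only acknowledge in passing) genuinely requires a separate argument: the paper handles it by manufacturing an auxiliary multisegment $\c'$ with one fewer segment starting at $k$, inducting on $f_{b(\c)}(k)$, and proving the combinatorial identity ${_{k,k-1,k}\Gamma(\c',k)}={_{k,k-1}\Gamma(\c',k)}$ by hand. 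Your sketch does not supply anything comparable for this case.
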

\begin{proof}
-Suppose that $\deg(\c)=\deg(\b)+1$.
In fact, by corollary \ref{cor: 3.5.2}, we have 
\[
^{k_1}\D(L_{\c})=L_{\c}+L_{\b}
\]
By applying the derivation $\D^k$ and using the fact $\D^k({^{k_1}\D})={^{k_1}\D}\D^k$, we have 
\[
\D^{k}(L_{\c})+\D^{k}(L_{\b})=L_{\c}+L_{\b}+\sum_{\d\in \Gamma(\c, k)\setminus \{\c\}}\tilde{n}(\d, \c){^{k_1}\D(L_{\d})}
\]
By assumption that $\deg(\b)+1=\deg(\c)$, we have 
\[
^{k_1}\D(L_{\d})=L_{\d}+L_{^{(k_1)}\d} \text{ or } L_{\d},
\]
where $^{k_1}\D(L_{\d})=L_{\d}+L_{^{(k_1)}\d}$ if and only if $\d\in {_{k_1}S(\d)}$ and $\deg(^{(k_1)}\d)=\deg(\d)-1$.
This is equivalent to say that $\d\in {_{k_1}\Gamma(\a, k)}$.

\bigskip
-For general case,  consider 
\[
\{\Delta\in \c:  b(\Delta)=k_1\}=\{\Delta_1\succeq \dots\succeq \Delta_r\}.
\]
Now by proposition \ref{teo: 3.0.6} and proposition \ref{prop: 7.4.2}, we know that 
\[
^{k_1}\D(L_{\c})=L_{\b}+\sum_{ f_{\d}(k_{1})>f_{\b}(k_1)}\tilde{n}(\d, \c)L_{\d},
\]
for some $\tilde{n}(\d, \c)\in \N$. 

If $k_1\neq k$, then 
We observe that for any $\d$ such that $f_{\d}(k_{1})>f_{\b}(k_1)$ and $\d'\preceq_k \d$, we have 
\[
f_{\d'}(k_1)>f_{\b}(k_1), 
\] 
which implies that $L_{\d'}$ can not be a summand of $\D^{k}(L_{\b})$.  Therefore
we know that 
\[
\D^k(L_{\b})
\]   
is the sum of all irreducible representations $L_{\d''}$ contained in $\D^k(^{k_1}\D)(L_{\c})$ satisfying 
\[
f_{\d''}(k_1)=f_{\b}(k_1).
\]  
Applying the derivation $^{k_1}\D$ to the equation (\ref{eq: 7.8.8}), we get 
\[
{^{k_1}\D} \D^k(L_{\c})={^{k_1}\D}(L_{\c})+\sum_{\d\preceq_k \c}\tilde{n}(\d, \c) ({^{k_1}\D})(L_{\d}).
\]
Note that in this case the sub-quotient of ${^{k_1}\D} \D^k(L_{\c})$ consisting of irreducible representations $L_{\d''}$ 
satisfying 
\[
f_{\d''}(k_1)=f_{\b}(k_1)
\] 
is given by 
\[
L_{\b}+\sum_{\d\in {_{k_1}\Gamma(\c, k)}\setminus \{\c\}}\tilde{n}(\d, \c)L_{{^{(k_1)}\d}}.
\]
Compare the equation ${^{k_1}\D} \D^k(L_{\c})=\D^k(^{k_1}\D)(L_{\c})$ gives the results.\\
If $k_1=k$,  consider 
\[
\{\Delta\in \c:  b(\Delta)=k_1\}=\{\Delta_1\succeq \dots\succeq \Delta_r\}.
\]
Let $\c'$ be the multisegment obtained by replacing all segments $\Delta$ in $\c$
such that $b(\Delta)<k_1$ by ${^{+}\Delta}$, and $\Delta_1$ by ${^{+}\Delta_1}$.
Then there exists 
\[
k_2=k_1-1>k_3>\cdots >k_r
\]
such that 
\[
\c={^{(k_r, \cdots, k_2)}\c'},
\]
and 
\[
\b={^{(k_r, \cdots, k_3, k_1, k_2, k_1)}\c'}.
\]
Let $\b'=^{(k_1)}\c'$, then by induction on $f_{b(\c)}(k)$, we can assume that 
\[
\D^k(L_{\b'})=L_{\b'}+\sum_{\d\in {_{k}\Gamma(\c', k)}\setminus \c'} \tilde{n}(\d, \c')L_{^{(k)}\d}.
\]
Applying what we have proved before, we get 
\[
\D^k(L_{\b})=L_{\b}+\sum_{\d\in _{k_r, \cdots, k_3, k, k_2, k}\Gamma(\c', k)\setminus \{\c'\}}\tilde{n}(\d, \c')L_{^{(k_r, \cdots, k_3, k, k_2, k)}\d}.
\]
Also, we have 
\[
\D^k(L_{\c})=L_{\c}+\sum_{\d\in _{k_r, \cdots, k_3, k_2}\Gamma(\c', k)\setminus \{\c'\}} \tilde{n}(\d, \c')L_{^{(k_r, \cdots, k_3, k_2)}\d}.
\]
Since for any multisegment $\d$, we have
\[
^{(k, k_r, \cdots, k_3, k_2)}\d=^{(k_r, \cdots, k_3, k, k_2, k)}\d,
\]
it remains to show that 
\[
_{k_r, \cdots, k_3, k, k_2, k}\Gamma(\c', k)=_{k, k_r, \cdots, k_3, k_2}\Gamma(\c', k).
\]
By definition and the following lemma, 
we can assume that $r=2$. In this case we argue by contradiction.
Suppose that $\d\in {_{k, k-1, k}\Gamma^i(\c', k)}$
and $\d\notin {_{k, k-1}\Gamma(\c', k)}$, which
is equivalent to say that $\d\notin {_{k,k-1}S(\d)}$.
 Note that $\d\notin {_{k, k-1}S(\d)}$ implies that 
there exists two linked segments $\{\Delta, \Delta'\}$, such that 
\[
b(\Delta)=k, \quad b(\Delta')=k-1.
\]
Then ${^{(k-1, k)}\d}$ contains the pair of segments $\{{^{-}\Delta}, {^{-}\Delta'}\}$. The fact that 
${^{(k-1, k)}\d}\in {_{k}S(^{(k-1, k)}\d)}$ implies  that  ${^{-}\Delta'}=\emptyset$, i.e. $\Delta'=[k-1]$. 
However, this implies that ${^{(k, k-1, k)}\d}\notin \Gamma^i(^{(k, k-1, k)}\c', k)$
since $\deg({^{(k, k-1, k)}\d})+i=\deg({^{(k, k-1, k)}\a})+1$,
which is a  contradiction.

Conversely, assume that $\d\in {_{k, k-1}\Gamma(\c', k)}$ and $\d \notin {_{k, k-1, k}\Gamma^i(\c', k)}$, 
which by definition is equivalent to $\d\notin {_{k, k-1, k}S(\d)}$. Note that $\d\notin {_{k, k-1, k}S(\d)}$ implies 
that $\d\notin {_{k}S(\d)}$, which contradicts to $\d\in {_{k, k-1}S(\d)}$.  
\end{proof}

\begin{lemma}
Let $k>k-1>k'$ be two integers. Then for any multisegment $\c$, we have 
\[
{_{k, k'}\Gamma(\c, k)}={_{k', k}\Gamma(\c, k)}.
\]

\end{lemma}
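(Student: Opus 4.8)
The plan is to establish the commutativity of the two truncation operations $^{(k')}(-)$ and $^{(k)}(-)$ at the level of the derived‐index sets $\Gamma(\c,k)$, exactly as was done in the closing lemmas of \S 7.8 for the pair of indices $(k,k-1,k)$, but now for the pair $(k,k')$ with $k'<k-1$. First I would unwind the definitions: $\b\in {_{k,k'}\Gamma(\c,k)}$ means $\b\in {_{k}S(\b)}$ and $^{(k)}\b\in {_{k'}\Gamma(^{(k)}\c,k)}$, which in turn means $^{(k)}\b\in {_{k'}S(^{(k)}\b)}$ and $^{(k',k)}\b\in \Gamma(^{(k',k)}\c,k)$; symmetrically $\b\in {_{k',k}\Gamma(\c,k)}$ unwinds to $\b\in {_{k'}S(\b)}$, $^{(k')}\b\in {_{k}S(^{(k')}\b)}$, and $^{(k,k')}\b\in\Gamma(^{(k,k')}\c,k)$. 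Since $k'<k-1$, the operations $^{(k')}(-)$ and $^{(k)}(-)$ act on disjoint parts of the segments (one shortens from the left at beginnings equal to $k'$, the other at beginnings equal to $k$, and these cannot interfere because $k'\neq k$ and $k'\neq k-1$), so $^{(k',k)}\b=^{(k,k')}\b$ for every multisegment; this handles the last clause.

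The core of the argument is then to show the equivalence
\[
\bigl(\b\in {_{k}S(\b)} \text{ and } ^{(k)}\b\in {_{k'}S(^{(k)}\b)}\bigr)\Longleftrightarrow \bigl(\b\in {_{k'}S(\b)} \text{ and } ^{(k')}\b\in {_{k}S(^{(k')}\b)}\bigr).
\]
Here I would use the characterization that $\b\in {_{k}S(\b)}$ fails precisely when $\b$ contains a pair of linked segments $\{\Delta,\Delta'\}$ with $b(\Delta)=k,\ b(\Delta')=k+1$, together with the condition $\deg(^{(k)}\b)=\deg(^{(k)}\b)$ (automatic) — i.e. essentially condition (2) of the hypothesis ${_{k}H(\b)}$. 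The key observation is that applying $^{(k')}(-)$ to $\b$ (which only affects segments beginning at $k'$) neither creates nor destroys any pair of linked segments with beginnings $k$ and $k+1$, because $k'\notin\{k,k+1\}$; and conversely applying $^{(k)}(-)$ to $\b$ only affects segments $\Delta$ with $b(\Delta)=k$, turning them into $^{-}\Delta$ with $b(^{-}\Delta)=k+1$, so it cannot interfere with linkedness data at beginnings $k'$ and $k'+1$ (again since $k'<k-1$, so $k'+1<k$). So the obstructions governing membership in ${_{k}S(\cdot)}$ and in ${_{k'}S(\cdot)}$ are "localized" at disjoint indices and are preserved under the other truncation. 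I would make this precise by the same case analysis on an elementary operation that appears in lemma \ref{lem: 3.0.7}(c) and in the final lemma of \S 7.8: a chain $\c={^{(k')}\d}$‐type reduction, checking that each elementary operation survives both orders of truncation, using notation \ref{nota: 3.1.2} for $^{-}\Delta$, $^{+}\Delta$.

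The main obstacle I anticipate is not the linkedness bookkeeping but making sure the \emph{degree} conditions hidden inside $\Gamma^i(\a,k)$ match up on both sides: recall $\Gamma^i(\a,k)=S(\a_i)$ for the explicit $\a_i$ built from $\a(k)$, and $\b\in{_{k'}\Gamma^i(\a,k)}$ forces $\deg(\b^{(k')})$‐type constraints. I would handle this by invoking the bijections of the two preceding lemmas (for $k_1\neq k-1$, $\psi_{k_1}\colon\Gamma(\a,k)_{k_1}\to\Gamma(\a^{(k_1)},k)$ is bijective, and the analogous statement for $^{(k_1)}$), which already encode that truncation at an index $\neq k-1$ commutes with passing through the $\Gamma^i$‐stratification; combining the $^{(k)}$‐bijection and the $^{(k')}$‐bijection and using associativity of composition of the $\psi$'s (as in proposition \ref{prop: 3.2.17}) reduces the claim to the set‐theoretic equivalence above. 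A clean way to organize the whole proof would be: reduce first to $\c=\a_{\Id}^{J,\emptyset}$‐type multisegments via the reduction method, then reduce to the case where $\b$ is obtained from $\c$ by a single elementary operation (as in the final lemma of \S 7.8), and finally dispatch that single case by the disjointness of the indices $\{k,k+1\}$ and $\{k',k'+1\}$. That last single‐operation case is where the only real verification lies, and it is short because $k'<k-1$ rules out every way the two truncations could collide.
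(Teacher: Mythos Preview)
Your core argument is correct and matches the paper's: you identify that $^{(k',k)}\b={^{(k,k')}\b}$ for every multisegment (since $k'<k-1$ means the two left-truncations act at non-adjacent positions), and that the remaining content is the equivalence
\[
\d\in {_{k,k'}S(\d)}\Longleftrightarrow \d\in {_{k',k}S(\d)},
\]
which holds because the obstructions for ${_{k}H}$ and ${_{k'}H}$ live at the disjoint index pairs $\{k,k+1\}$ and $\{k',k'+1\}$.

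The difference is only one of economy. The paper's proof is three lines: it states the commutation $^{(k',k)}\d={^{(k,k')}\d}$, observes that this reduces the lemma to the displayed $S$-equivalence, and asserts that equivalence holds for \emph{any} multisegment $\d$ when $k'<k-1$. Your proposed reductions---to parabolic multisegments $\a_{\Id}^{J,\emptyset}$, to a single elementary operation, and your worry about the degree constraints hidden in $\Gamma^i$---are all unnecessary here. The degree bookkeeping in particular is automatic: once $\d\in{_{k_1}S(\d)}$, the degree drop under $^{(k_1)}(-)$ is exactly $f_{b(\d)}(k_1)$, and since $k_1\in\{k,k'\}$ with $k'<k-1<k$, this drop is unaffected by the other truncation; so the $\Gamma^i$-index is preserved in either order without further argument. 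You may safely delete the last two paragraphs of your plan and keep only the disjointness observation.
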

\begin{proof}
Note that since for any multisegment $\d$
\[
{^{(k', k)}\d}={{^{k, k'}}\d}, 
\]
the fact 
\[
{_{k, k'}\Gamma(\c, k)}={_{k', k}\Gamma(\c, k)}
\]
is equivalent to 
\[
\d\in {_{k, k'}S(\d)}\Leftrightarrow \d\in {_{k', k}S(\d)}
\]
for all $\d\in {_{k, k'}\Gamma(\c, k)}$. But for any multisegment $\d$ and $k>k-1>k'$, we have 
\[
\d\in {_{k, k'}S(\d)}\Leftrightarrow \d\in {_{k', k}S(\d)}.
\]
Hence we are done.
\end{proof}

\begin{prop}\label{prop: 7.8.10}
Let $k_1\neq k-1, k, k+1$.
Let $\b, \c$ be two multisegments such that 
\[
\b=\c^{(k_1)}, \quad \c\in S(\c)_{k_1}.
\]
If we write
\addtocounter{theo}{1}
\begin{equation}
\D^k(L_{\c})=L_{\c}+\sum_{\d\in \Gamma(\c, k)\setminus \{\c\}} \tilde{n}(\d, \c)L_{\d},
\end{equation}
then 
\[
\D^k(L_{\b})=L_{\b}+\sum_{\d\in \Gamma(\c, k)_{k_1}\setminus \{\c\}}\tilde{n}(\d, \c)L_{d^{(k_1)}}.
\]
\end{prop}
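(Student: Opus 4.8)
The plan is to run the same argument as in the preceding proposition (its left‑derivation counterpart), transported to the right partial derivative; the role of the three exclusions $k_1\neq k-1,k,k+1$ is exactly to make this transport legitimate. First I would record that, under these hypotheses, $\D^{k}$ and $\D^{k_1}$ commute: both are algebra morphisms, so it suffices to compare them on a generator $L_{[j,m]}$, where $\D^k$ and $\D^{k_1}$ touch only the ends $m=k$ and $m=k_1$ and produce the ends $k-1$ and $k_1-1$; since $k_1\notin\{k-1,k+1\}$ neither operator can act on a segment created by the other, whence $\D^{k}\D^{k_1}=\D^{k_1}\D^{k}$.

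I would then treat the base case $\varphi_{e(\c)}(k_1)=1$, i.e. $\deg(\c)=\deg(\b)+1$. Here Corollary \ref{cor: 3.5.2} (applicable since $\c\in S(\c)_{k_1}$) gives $\D^{k_1}(L_{\c})=L_{\c}+L_{\b}$. Applying $\D^k$, commuting, and substituting the hypothesis $\D^k(L_{\c})=L_{\c}+\sum_{\d\in\Gamma(\c,k)\setminus\{\c\}}\tilde{n}(\d,\c)L_{\d}$ yields
\[
\D^k(L_{\b})=L_{\b}+\sum_{\d\in\Gamma(\c,k)\setminus\{\c\}}\tilde{n}(\d,\c)\bigl(\D^{k_1}(L_{\d})-L_{\d}\bigr).
\]
For each $\d$ occurring here one has $\varphi_{e(\d)}(k_1)\le\varphi_{e(\c)}(k_1)=1$ (because $k_1\neq k,k-1$, so neither an elementary operation nor the truncation at $k$ can create or increase the number of ends equal to $k_1$, by Lemma \ref{lem: 2.1.3}), so Corollary \ref{cor: 3.5.2} applies again and $\D^{k_1}(L_{\d})-L_{\d}$ equals $L_{\d^{(k_1)}}$ precisely when $\d\in S(\d)_{k_1}$ and is $0$ otherwise. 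Matching this condition with the definition of $\Gamma(\c,k)_{k_1}$ — and using Lemma \ref{lem: 7.8.6} to see that for such $\d$ one automatically has $\d^{(k_1)}\in\Gamma(\c^{(k_1)},k)$ — gives the stated formula.

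For the general case I would argue by induction on $\varphi_{e(\c)}(k_1)$. Writing $\D^{k_1}(L_{\c})=L_{\b}+\sum_{\e}c_{\e}L_{\e}$, all $\e$ satisfy $\varphi_{e(\e)}(k_1)>\varphi_{e(\b)}(k_1)=0$ by Proposition \ref{teo: 3.0.6} (since $\c\in S(\c)_{k_1}$ satisfies $H_{k_1}(\c)$) and Proposition \ref{prop: 7.4.2}. Applying $\D^k$ and commuting gives $\D^k(L_{\b})+\sum_{\e}c_{\e}\D^k(L_{\e})=\D^{k_1}(L_{\c})+\sum_{\d\in\Gamma(\c,k)\setminus\{\c\}}\tilde{n}(\d,\c)\D^{k_1}(L_{\d})$. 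Now I would pass to the sub‑quotient of $\mathcal R$ spanned by the $L_{\mu}$ with $\varphi_{e(\mu)}(k_1)=0$. On the right this sub‑quotient is $L_{\b}+\sum_{\d\in\Gamma(\c,k)_{k_1}\setminus\{\c\}}\tilde{n}(\d,\c)L_{\d^{(k_1)}}$ (the $\varphi_{e}(k_1)=0$ part of $\D^{k_1}$ of an irreducible being governed by Proposition \ref{teo: 3.0.6} and the definition of $\Gamma(\c,k)_{k_1}$, together with the inductive hypothesis applied to each $L_{\d}$); on the left, $\D^k(L_{\b})$ lies entirely in this sub‑quotient because $k_1\neq k-1$ forbids $\preceq_k$ from ever raising an end to $k_1$, while each $\D^k(L_{\e})$ contributes nothing there.

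The main obstacle will be that last claim — that $\sum_{\e}c_{\e}\D^k(L_{\e})$ contributes nothing to the $\varphi_{e}(k_1)=0$ part — equivalently, that the strict inequality $\varphi_{e(\cdot)}(k_1)>0$ is stable along the chains of elementary operations and $k$‑truncations occurring in $\preceq_k$ applied to the $\e$'s. This is the exact analogue of the observation ``$\d'\preceq_k\d$ and $f_{\d}(k_1)>f_{\b}(k_1)$ imply $f_{\d'}(k_1)>f_{\b}(k_1)$'' used in the preceding proposition, and it is precisely here that the exclusion $k_1\neq k+1$ enters: an elementary operation destroying an end equal to $k_1$ by juxtaposition requires a partner segment ending at $k_1-1$, and ruling out $k_1=k+1$ decouples this configuration from the truncation $\D^k$. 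I would isolate this stability statement as a standalone combinatorial lemma; once it is in hand, the rest is the same bookkeeping as in the preceding proposition.
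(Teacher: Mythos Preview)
Your overall strategy --- commute $\D^k$ with $\D^{k_1}$ and then isolate $\D^k(L_{\b})$ by passing to a suitable sub-quotient --- is exactly the paper's, and your base case is essentially right. The gap is in your choice of invariant in the general case. You filter by the number of \emph{ends} at $k_1$, i.e.\ by $\varphi_{e(\cdot)}(k_1)$, and claim that every $\e\neq\b$ occurring in $\D^{k_1}(L_{\c})$ has $\varphi_{e(\e)}(k_1)>0$. This is false. Take $\c=\{[1,3],[1,3],[4,6]\}$ and $k_1=3$: then $\c\in S(\c)_3$, $\b=\c^{(3)}=\{[1,2],[1,2],[4,6]\}$, and a short computation (using $m(\{[1,6],[1,3]\},\c)=1$, which follows by iterated truncation down to $m(\{[1,3],[1,3]\},\{[1,3],[1,3]\})=1$) gives
\[
\D^3(L_{\c})=L_{\c}+2L_{\{[1,3],[1,2],[4,6]\}}+L_{\{[1,6],[1,2]\}}+L_{\b},
\]
so $\e=\{[1,6],[1,2]\}$ occurs although $\varphi_{e(\e)}(3)=0$. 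Your proposed ``standalone combinatorial lemma'' therefore fails, and the $\varphi_{e}(k_1)=0$ sub-quotient does not cleanly isolate $\D^k(L_{\b})$ on the commuted side either (the $\varphi_{e}(k_1)=0$ part of $\D^{k_1}(L_{\d})$ can be strictly larger than $L_{\d^{(k_1)}}$).

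The fix --- and this is what the preceding proof actually does (read the paper's ``$f_{\d}(k_1)$'' there as the \emph{weight} $\varphi_{\d}(k_1)$) --- is to filter by weight at $k_1$. Proposition~\ref{teo: 3.0.6} gives $\varphi_{\e}(k_1)>\varphi_{\b}(k_1)$ for every $\e\neq\b$ in $\D^{k_1}(L_{\c})$; since $k\neq k_1$, the relation $\preceq_k$ leaves $\varphi_{\cdot}(k_1)$ unchanged, so the $\varphi(k_1)=\varphi_{\b}(k_1)$ part of $\D^k\D^{k_1}(L_{\c})$ is exactly $\D^k(L_{\b})$. On the commuted side, each $\d\in\Gamma(\c,k)$ has $\varphi_{\d}(k_1)=\varphi_{\c}(k_1)$ (again $k\neq k_1$), and the $\varphi(k_1)=\varphi_{\b}(k_1)$ part of $\D^{k_1}(L_{\d})$ is $L_{\d^{(k_1)}}$ precisely when $\varphi_{e(\d)}(k_1)=\varphi_{e(\c)}(k_1)$ and $\d$ satisfies $H_{k_1}(\d)$ --- that is, precisely when $\d\in\Gamma(\c,k)_{k_1}$ --- and is $0$ otherwise. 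No induction is needed; the three exclusions $k_1\neq k-1,k,k+1$ enter only to guarantee $\D^k\D^{k_1}=\D^{k_1}\D^k$, not (contrary to what you suggest) for the separation step.
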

\begin{proof}
The proof is the same as the proposition above.
\end{proof}

Now let $\c'=\Phi(w)$ for some $w\in S_n^{J, \emptyset}$.

\begin{cor} \label{coro-fornula-derivative}
We have 
\[
\D^k(L_{\a})=\sum_{r_0=0}^{\l_k}\sum_{v\in S_n^{J_2(r_0, k), \emptyset}, \Phi(v)\in _{k_1, \cdots, k_r}(\Gamma(\Phi(w), k)_{k_{r+1}, \cdots, k_{r+\l}})}\theta_{J}^{J_{1}(\l_k-r_0, k)}(w, t_v)L_{^{(k_1, \cdots, k_r)}\Phi(v)^{(k_{r+1}, \cdots, k_{r+\l})}}.
\]
\end{cor}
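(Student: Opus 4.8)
The plan is to derive the general formula from two facts proved just above: the explicit expression for $\D^k(L_{\Phi(w)})$ when $\Phi(w)$ is of parabolic type $(J,\emptyset)$, and the reduction proposition, which presents the general multisegment $\a$ (the $\c$ of that proposition) as an iterated left and right truncation of such a $\c'=\Phi(w)$, in such a way that $L_{\a}$ is the minimal-degree, multiplicity-one summand of ${}^{k_1}\D\cdots{}^{k_r}\D\,\D^{k_{r+1}}\cdots\D^{k_{r+\l}}(L_{\c'})$, with $k_i>k+1$ for $i>r$. I would transport the parabolic formula along this chain of truncations, step by step, using Proposition~\ref{prop: 7.8.10} at each right-truncation link and its left-truncation analogue (the proposition with $\b={}^{(k_1)}\c$, $\c\in{}_{k_1}S(\c)$) at each left-truncation link.

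First I would record the starting point. The parabolic-type proposition applied to $\c'=\Phi(w)$ gives
\[
\D^k(L_{\Phi(w)})=\sum_{r_0=0}^{\l_k}\ \sum_{v\in S_n^{J_2(r_0,k),\emptyset}}\theta_{J}^{J_1(\l_k-r_0,k)}(w,t_v)\,L_{\Phi(v)},
\]
and, by Proposition~\ref{prop: 7.4.2} together with Proposition~\ref{prop: 7.3.5}, the multisegments $\Phi(v)$ occurring here are precisely the elements of $\Gamma(\Phi(w),k)$, with $L_{\Phi(w)}$ the unique minimal-degree summand and of multiplicity one.

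Then comes the descent. For each right-truncation index $k_i$ ($i>r$) we have $k_i>k+1$, so $k_i\notin\{k-1,k,k+1\}$ and Proposition~\ref{prop: 7.8.10} applies at that link: it rewrites the current identity for $\D^k(L_\c)$ as the identity for $\D^k(L_{\c^{(k_i)}})$, replacing each summand $L_\d$ by $L_{\d^{(k_i)}}$, leaving the coefficient $\tilde{n}$ (which is the relevant $\theta$) unchanged, and restricting the index set to $\Gamma(\c,k)_{k_i}$; by Lemma~\ref{lem: 7.8.6} the map $\psi_{k_i}$ is a bijection from $\Gamma(\c,k)_{k_i}$ onto $\Gamma(\c^{(k_i)},k)$, so nothing is lost or created. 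Carrying this out for $i=r+\l,\dots,r+1$ and then applying the left-truncation version for $i=r,\dots,1$ — which replaces $L_\d$ by $L_{{}^{(k_i)}\d}$ and restricts to ${}_{k_i}\Gamma(\cdot,k)$ — one arrives at an identity in which the summands are $L_{{}^{(k_1,\cdots,k_r)}\Phi(v)^{(k_{r+1},\cdots,k_{r+\l})}}$, indexed by the pairs $(r_0,v)$ with $v\in S_n^{J_2(r_0,k),\emptyset}$ and $\Phi(v)\in{}_{k_1,\cdots,k_r}(\Gamma(\Phi(w),k)_{k_{r+1},\cdots,k_{r+\l}})$, carrying the coefficients $\theta_{J}^{J_1(\l_k-r_0,k)}(w,t_v)$. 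Because $L_{\a}$ is, throughout, the image of $L_{\Phi(w)}$ under these truncations and remains the multiplicity-one minimal-degree term, and because $\D^k$ commutes with each of the truncation derivations (a left and a right partial derivative always commute, and $\D^k$ commutes with $\D^{k_i}$ since $|k-k_i|\geq 2$), the identity produced is exactly the formula for $\D^k(L_{\a})$ claimed.

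The step I expect to be the main obstacle is the bookkeeping of this descent. One has to check that the hypotheses required at each link — $\c\in S(\c)_{k_i}$ for a right step, $\c\in{}_{k_i}S(\c)$ for a left step — hold all along the chain; this should follow from the explicit construction in the reduction proposition, where every truncation is introduced precisely so as to satisfy the relevant hypothesis $H_{k_i}(\c)$ (or its left analogue), together with part~(d) of Lemma~\ref{lem: 3.0.7} and the inductive definition of the iterated sets $S(\cdot)_{\cdots}$. One also has to verify that the successive restrictions $\Gamma(\cdot,k)_{k_i}$ and ${}_{k_i}\Gamma(\cdot,k)$ compose to the single set ${}_{k_1,\cdots,k_r}(\Gamma(\Phi(w),k)_{k_{r+1},\cdots,k_{r+\l}})$ appearing in the statement, for which the relevant inputs are the commutation identity ${}_{k,k'}\Gamma(\c,k)={}_{k',k}\Gamma(\c,k)$ for $k>k-1>k'$, the bijectivity of $\psi_{k_i}$ and ${}_{k_i}\psi$ on the $\Gamma$-sets, and the hypothesis $f_{e(\c')}(k+1)=0$, which keeps the indices $k_i$ in the range where these lemmas apply. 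No new geometric ingredient beyond those already established is needed.
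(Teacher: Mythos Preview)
Your approach is correct and is exactly what the paper intends: the corollary is stated without proof because it follows by iterating the two transport propositions (Proposition~\ref{prop: 7.8.10} for the right truncations with $k_i>k+1$, and its left analogue for the $k_1,\dots,k_r$) starting from the parabolic formula for $\D^k(L_{\Phi(w)})$, with the bijectivity lemmas on the $\Gamma$-sets tracking the index restrictions. The bookkeeping you flag is indeed the only content, and it is handled by the constructions already in place.
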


\begin{notation}\label{nota7-8-12}
For $\b\preceq_k \a$, we denote 
\[
\theta_k(\b, \a)=\theta_{J}^{J_{1}(\l_k-r_0, k)}(w, t_v)
\]
if $\b=^{(k_1, \cdots, k_r)}\Phi(v)^{(k_{r+1}, \cdots, k_{r+\l})}$. Otherwise, put $\theta_k(\b, \a)=0$.

\end{notation}
\remk 
The same way we define ${_{k}\theta}(\b, \a)$ by the formula
\[
(^{k}\D)(L_{\a})=\sum_{\b}{_{k}\theta}(\b, \a)L_{\b}.
\]
And let 
\[
\Gamma(k, \a)=\{\b: {_{k}\theta}(\b, \c)\neq 0 \text{ for some }\c\in S(\a)\},
\]
it shares similar properties with $\Gamma(\a, k)$.

\chapter{Multiplicities in induced representations: case of a segment}

In this chapter we will consider the multiplicities $m(\c, \b, \a)$ of irreducible components 
in the induced representation $L_{\a}\times L_{\b}$,
\[
L_{\a}\times L_{\b}=\sum m(\c, \b, \a)L_{\c}.
\]
Our goal in this chapter is then to determine a formula for the coefficient $m(\c, \b, \a)$
in case where $\b=[k-i_0+1, k+1](i_0\geq 0)$ is a segment. 
Roughly speaking, there are two major cases to discuss
\begin{description}
\item[(1)] $\max b(\a)\leq k-i_0+1$,
\item[(2)] $\max b(\a)>k-i_0+1$.
\end{description} 
In \S\ref{se: 8.1} we treat the first case, which is simpler to deal with.  We have an explicit formula for the case 
where $\b=[k+1]$ (cf. lemma \ref{lem: 8.1.4} and proposition \ref{prop: 8.1.5}), and then we deduce by induction the general case (cf. proposition \ref{prop: 8.1.7}).  
For example the formula of proposition \ref{prop: 8.1.5} looks like
\[
L_{\a}\times L_{\b}=L_{\a+\b}+\sum_{\c\in \Gamma^{\l_k-1}(\a, k)} \Bigl ( \theta_k(\c, \a)-\theta_k(\c^{[k+1]_1}, \a+\b) \Bigr ) L_{\c^{[k+1]_1[ k]_{\l_k-1}}}.
\]
where the $\theta_k(\c, \a)$ are defined thanks to partial derivative, cf. notation \ref{nota7-8-12}.

Here our main tool is the 
derivatives for which we have complete formulas, cf. proposition \ref{coro-fornula-derivative}. 
Note that even in the case where $\b=[k+1]$ is a point, we come across the difficulty that we have 
$\D^k(L_{\c})=L_{\c}$ for certain multisegments, cf. example \ref{ex: 8.1.6}, which prevents us from 
applying the partial derivations.  Our idea here is to first treat the case where $f_{e(\a)}(k-1)=0$, cf. proposition \ref{prop: 8.1.5},  and then 
reduce everything to such case.

In \S 8.2, we describe a procedure to compute $m(\c, \b, \a)$ for the second case, combining the first case 
and partial derivatives.

\bigskip 
Finally, we remark that our method could be used to deduce the general multiplicities for case where $\b$ is not a segment. We intend
to study this general case in some future work.

\section{When $\max b(\a)\leq k-i_0+1$}

\label{se: 8.1}
In this section we consider the case $L_{\a}\times L_{\b}$ where $\b=[k-i_0+1, k+1]$, with $i_0\geq 0$, is a segment
and $\a$ is a multisegment satisfying 
\[
\max b(\a)\leq k-i_0+1.
\]

\begin{definition}
Let $\b$ be a multisegment such that $f_{e(\b)}(k+1)=0$. Then 
we denote by $\b^{[k+1]_i}$ the unique element in $S(\b+i[k+1])_{k}$ such that 
\[
\c=(\c^{[k+1]_i})^{(k+1)}.
\] 
\end{definition}

\begin{prop}\label{prop: 8.1.6}
Let $\a$ be a multisegment satisfying the condition  
\[
f_{e(\a)}(k-i_0-1)\neq 0.
\]
 If we assume that 
 \[
 \{t\in e(\a): t\leq k-i_0-1\}=\sum_{i=1}^{r}\l_{k_i}[k_i]
 \]
with $k_1<\cdots <k_r=k-1$, then 
\[
m(\c^{[k_r]_{\l_{k_r}}[k_{r-1}]_{\l_{k_{r-1}}}\cdots [k_1]_{\l_1}}, \b, \a)=m(\c, \b, \a^{(k_1, \cdots, k_r)}).
\]
\end{prop}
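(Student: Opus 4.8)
The plan is to reduce the computation of $m(\c, \b, \a)$ with $\a$ to the same quantity with the smaller multisegment $\a^{(k_1,\dots,k_r)}$ by iteratively applying the truncation functors $\psi_{k_i}$ of Chapter 3 (via Proposition \ref{prop: 3.2.17}), exactly as is done for the coefficients $m(\b,\a)$ themselves, but now carrying the extra segment $L_\b$ along for the ride. The point is that since $\max b(\a)\le k-i_0+1$ while each $k_i\le k-1$, the indices $k_1,\dots,k_r$ all lie strictly below the beginnings of $\a$, so truncating at these places does not interfere with the segment $\b=[k-i_0+1,k+1]$ at all; in particular $\b^{(k_i)}=\b$ for every $i$, and the exponents $[k_i]_{\l_{k_i}}$ appearing in $\c^{[k_r]_{\l_{k_r}}\cdots[k_1]_{\l_1}}$ are precisely the inverse of the truncation data, reconstructing the removed points.

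First I would write $L_\a\times L_\b=\sum_{\c'}m(\c',\b,\a)L_{\c'}$ and expand each side using $\pi(\a)\times\pi(\b)=\sum m(\b,\a)m(\b',\b)\dots$ — or more cleanly, work in the Grothendieck ring and use the fact (Corollary \ref{cor: 1.2.3}) that $L_\a\times L_\b$ is a sum of $L_{\c'}$'s with nonnegative coefficients, then apply the iterated truncation $\psi_{k_1,\dots,k_r}$, which by Proposition \ref{prop: 3.2.17}(1),(3) sends $\pi(\d)\mapsto\pi(\d^{(k_1,\dots,k_r)})$ compatibly and preserves multiplicities $m(\d,\a)=m(\d^{(k_1,\dots,k_r)},\a^{(k_1,\dots,k_r)})$ for $\d\in S(\a)_{k_1,\dots,k_r}$. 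The key technical step is to show that the relevant $\c'$ that occur in $L_\a\times L_\b$ — which are of the shape $\c^{[k_r]_{\l_{k_r}}\cdots[k_1]_{\l_1}}$ for $\c\in S(\a^{(k_1,\dots,k_r)})$ or $\c\preceq$ something of that degree — all lie in the subset $S(\a+\b)_{k_1,\dots,k_r}$ where the truncation morphism is a bijection preserving multiplicities, and that truncation carries $L_\b$ to $L_\b$ unchanged (again because $\b$ has no endpoint among the $k_i$). I would prove this by the same counting-of-endpoints argument as in Lemma \ref{lem: 3.1.5} and the degree bookkeeping of Lemma \ref{lem: 3.0.7}: the hypothesis $f_{e(\a)}(k-i_0-1)\ne 0$ together with $\max b(\a)\le k-i_0+1$ guarantees that no segment of $\a$ (hence of any $\d\le\a$, by Lemma \ref{lem: 2.1.3}) begins at $k_i$, so the hypotheses $H_{k_i}$ are automatically satisfied and $\tilde S=S$ at each stage, as in Lemma \ref{lem: 7.6.2}.

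Concretely the induction is: for $r=1$, apply Proposition \ref{cor: 3.2.3} to the pair $(\a+\b,\c'+\b\text{-stuff})$ — noting $\b\in S(\b)_{k_1}$ trivially and $(\a+\b)^{(k_1)}=\a^{(k_1)}+\b$ since $\b$ is untouched — to get $m(\c^{[k_1]_{\l_1}},\b,\a)=m(\c,\b,\a^{(k_1)})$; for general $r$ factor $\psi_{k_1,\dots,k_r}=\psi_{k_r}\circ\cdots\circ\psi_{k_1}$ and feed the $r=1$ case the multisegment $\a^{(k_1,\dots,k_{r-1})}$, whose endpoint data below $k-i_0-1$ is exactly $\sum_{i=r}^r\l_{k_i}[k_i]$ restricted appropriately, using the induction hypothesis $m(\c^{[k_{r-1}]_{\l_{k_{r-1}}}\cdots[k_1]_{\l_1}},\b,\a)=m(\c,\b,\a^{(k_1,\dots,k_{r-1})})$ and then $m(\c^{[k_r]_{\l_{k_r}}},\b,\a^{(k_1,\dots,k_{r-1})})=m(\c,\b,\a^{(k_1,\dots,k_r)})$.

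\textbf{The main obstacle} I expect is verifying that the exponents match up correctly — i.e. that reconstructing the truncated points really produces $[k_r]_{\l_{k_r}}[k_{r-1}]_{\l_{k_{r-1}}}\cdots[k_1]_{\l_1}$ in that order and with those multiplicities, rather than some other interleaving — and, relatedly, checking that every $\c'$ with $m(\c',\b,\a)\ne 0$ is indeed of this form for a \emph{unique} $\c$ in $S(\a^{(k_1,\dots,k_r)})$, so that the stated equality is between well-defined quantities and not just an inequality. This amounts to showing the iterated $\psi$ restricts to a bijection on the relevant support of $L_\a\times L_\b$, which should follow from Proposition \ref{prop: 3.2.17} applied to $\a+\b$ once one checks $L_\a\times L_\b\le\pi(\a+\b)$ in $\mathcal R$ (an immediate consequence of $L_\a\le\pi(\a)$, $L_\b=\pi(\b)$, and multiplicativity) and that the terms of $\pi(\a+\b)$ appearing all satisfy $H_{k_i}$ — the latter being the endpoint-counting argument above. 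The rest is routine bookkeeping with degrees and the order $\preceq$.
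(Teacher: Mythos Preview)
Your overall intuition is right --- truncating at $k_1,\dots,k_r$ leaves $\b$ untouched and should preserve the multiplicities --- but the mechanism you propose has a genuine gap. Propositions \ref{cor: 3.2.3} and \ref{prop: 3.2.17} tell you that $\psi_k$ preserves $m(\c,\a)$, the multiplicity of $L_\c$ in $\pi(\a)$; they say nothing directly about $m(\c,\b,\a)$, the multiplicity of $L_\c$ in $L_\a\times L_\b$. The map $\psi_k$ is a bijection of posets, not an operator on $\mathcal{R}$, so there is no way to ``apply $\psi_{k_1,\dots,k_r}$'' to the identity $L_\a\times L_\b=\sum m(\c',\b,\a)L_{\c'}$ and simply read off the result. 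Knowing that every $\c'$ that occurs lies in $S(\a+\b)_{k_1,\dots,k_r}$ is necessary but not sufficient: you still have to explain why the specific linear combination $L_\a\times L_\b$ (as opposed to $\pi(\a+\b)$) behaves well under the passage $\c'\mapsto\c'^{(k_1,\dots,k_r)}$.

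The paper resolves this by replacing $\psi_{k_i}$ with the partial derivative $\D^{k_i}$, which \emph{is} an algebra endomorphism of $\mathcal{R}$. Since $e(\b)=k+1\neq k_1$ one has $\D^{k_1}(L_\b)=L_\b$, hence $\D^{k_1}(L_\a\times L_\b)=\D^{k_1}(L_\a)\times L_\b$. Proposition~\ref{teo: 3.0.6} gives that the unique minimal-degree term of $\D^{k_1}(L_\a)$ is $L_{\a^{(k_1)}}$ (the hypothesis $H_{k_1}(\a)$ holds because $k_1$ is the smallest endpoint, so nothing ends at $k_1-1$), and the same proposition applied termwise on the right picks out exactly the $\c\in S(\a+\b)_{k_1}$, each contributing $L_{\c^{(k_1)}}$. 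Comparing minimal-degree parts yields
\[
L_{\a^{(k_1)}}\times L_\b=\sum_{\c\in S(\a+\b)_{k_1}}m(\c,\b,\a)\,L_{\c^{(k_1)}},
\]
which is precisely the induction step you want; then iterate. The point you were missing is that the \emph{multiplicativity} of $\D^{k}$ is what carries the operation across the product $L_\a\times L_\b$ --- this is the bridge from $m(\c,\a)$-type statements to $m(\c,\b,\a)$-type statements, and $\psi_k$ alone does not provide it.
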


\begin{proof}
We prove by induction on $i$ that 
\[
m(\c^{[k_i]_{\l_{k_i}}[k_{i-1}]_{\l_{k_{i-1}}}\cdots [k_{1}]_{\l_{k_{1}}}}, \b, \a)=m(\c, \b, \a^{(k_1, \cdots, k_i)})
\]
For $i=1$, since $\a$ satisfies the hypothesis $H_{k_1}(\a)$, by proposition \ref{teo: 3.0.6}, 
$\D^{k_1}(L_{\a})$ contains a unique minimal degree term with multiplicity one, which is $L_{\a^{(k_1)}}$, 
now apply $\D^{k_1}$ to  
\[
L_{\a}\times L_{\b}=\sum_{\c}m(\c, \b, \a)L_{\c}
\]
and consider the minimal degree terms on both sides, we obtain
\[
L_{\a^{(k_1)}}\times L_{\b}=\sum_{\c\in S(\a+\b)_{k_1}}m(\c, \b, \a)L_{\c^{(k_1)}} 
\]
which gives the formula. Now for general $i<r$, assume that we have 
\[
m(\c^{[k_i]_{\l_{k_i}}[k_{i-1}]_{\l_{k_{i-1}}}\cdots [k_{1}]_{\l_{k_{1}}}}, \b, \a)=m(\c, \b, \a^{(k_1, \cdots, k_i)}),
\]
that is to say 
\[
L_{\a^{(k_1, \cdots, k_{i})}}\times L_{\b}=\sum_{\c\in S(\a+\b)_{k_1, \cdots, k_i}}m(\c, \b, \a)L_{\c^{(k_1, \cdots, k_i)}}.
\]
Now apply $\D^{k_{i+1}}$ and the same argument as in the case where $i=1$ gives 
\[
L_{\a^{(k_1, \cdots, k_{i+1})}}\times L_{\b}=\sum_{\c\in S(\a+\b)_{k_1, \cdots, k_{i+1}}}m(\c, \b, \a)L_{\c^{(k_1, \cdots, k_{i+1})}}.
\]

\end{proof}

\remk If we assume that $\a$ is of parabolic type, i.e
\[
\bigcap_{\Delta\in \a}\Delta\neq \emptyset
\]
then 
\[
S(\a)_{k_1, \cdots, k_r}=S(\a).
\]
Then by replacing $\a$ by $\a^{(k_1, \cdots, k_r)}$, we are reduced to the case where 
\[
f_{e(\a)}(k-i_0-1)=0.
\]

\begin{prop}\label{prop: 8.1.9}
Let $\a$ be a multisegment such that 
\[
f_{e(\a)}(k+1)\neq 0.
\]
And let 
\[
\{t\in e(\a): t\geq k+1\}=\sum_{i=1}^{s} \l_{k_i}[k_i]
\]
with $k_1<k_2<\cdots <k_s$. Then
\[
m(\c, \b, \a)=m(\c^{[k_r]_{\l_{k_r}}[k_{r-1}]_{\l_{k_{r-1}}}\cdots [k_1]_{\l_{k_1}}}, \b, \a^{[k_r]_{\l_{k_r}}[k_{r-1}]_{\l_{k_{r-1}}}\cdots [k_1]_{\l_{k_1}}})
\]
\end{prop}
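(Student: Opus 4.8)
\textbf{Proof proposal for Proposition \ref{prop: 8.1.9}.}

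The plan is to reduce, by a sequence of partial derivations from the right, the computation of $m(\c, \b, \a)$ to a computation with the multisegments $\a^{[k_s]_{\l_{k_s}}\cdots [k_1]_{\l_{k_1}}}$ and $\c^{[k_s]_{\l_{k_s}}\cdots [k_1]_{\l_{k_1}}}$. The point of passing to the superscripts $[k_i]_{\l_{k_i}}$ is that, since $\a$ has $\l_{k_i}$ segments ending at $k_i$ for each $i=1,\dots,s$, the multisegment $\a^{[k_s]_{\l_{k_s}}\cdots [k_1]_{\l_{k_1}}}$ lies (by the very definition of the $[k]_\ell$ operation, together with Proposition \ref{prop: 3.2.17}) in $S\bigl(\a^{[k_s]_{\l_{k_s}}\cdots [k_1]_{\l_{k_1}}}\bigr)_{k_1,\dots,k_s}$ and has $(\cdot)^{(k_1,\dots,k_s)}$ equal to $\a$. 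The inductive argument runs exactly parallel to the proof of Proposition \ref{prop: 8.1.6}, but with the roles of $\a$ and $\a^{(\cdots)}$ exchanged: there we pushed $\a$ down by $\D^{k_i}$, here we lift it up by choosing the preimages $\a^{[k_i]_{\l_{k_i}}}$ along $\psi_{k_i}$.

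First I would treat the case $s=1$. Write $\a' = \a^{[k_1]_{\l_{k_1}}}$, so that $\a'\in S(\a')_{k_1}$ and $\a'^{(k_1)}=\a$; the hypothesis $f_{e(\a)}(k+1)\neq 0$ with $k_1$ the smallest end $\geq k+1$ guarantees that $\a'$ satisfies the hypothesis $H_{k_1}(\a')$, so by Proposition \ref{teo: 3.0.6} the partial derivative $\D^{k_1}(L_{\a'})$ contains the unique minimal degree term $L_{\a'^{(k_1)}}=L_{\a}$ with multiplicity one. Applying $\D^{k_1}$ to
\[
L_{\a'}\times L_{\b}=\sum_{\c'} m(\c',\b,\a') L_{\c'}
\]
and comparing the terms of minimal degree, I would invoke Corollary \ref{lem: 2.3.4} (equation \eqref{equ: (2)}): the minimal-degree part of the left-hand side is $L_{\a}\times L_{\b}$ (using that $\D^{k_1}$ is an algebra morphism and $\D^{k_1}(L_\b)=L_\b$, since $e(\b)=k+1\neq k_1$ as $k_1>k+1$ would need checking — in fact $k_1\geq k+1$ and if $k_1=k+1$ one must note $\b$ has its end removed, handled below), while the minimal-degree part of the right-hand side is $\sum_{\c'\in S(\a'+\b)_{k_1}} m(\c',\b,\a') L_{\c'^{(k_1)}}$. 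Matching coefficients of $L_\c$ on both sides, and using that $\psi_{k_1}$ is a bijection with $(\c^{[k_1]_{\l_{k_1}}})^{(k_1)}=\c$, yields
\[
m(\c,\b,\a)=m\bigl(\c^{[k_1]_{\l_{k_1}}},\b,\a^{[k_1]_{\l_{k_1}}}\bigr).
\]

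For the inductive step I would assume the identity established after lifting through $k_1,\dots,k_i$, i.e.
\[
L_{\a^{[k_i]_{\l_{k_i}}\cdots[k_1]_{\l_{k_1}}}}\times L_\b=\sum_{\c\in S(\a+\b)} m(\c,\b,\a)\, L_{\c^{[k_i]_{\l_{k_i}}\cdots[k_1]_{\l_{k_1}}}},
\]
then apply $\D^{k_{i+1}}$ and repeat the minimal-degree comparison, exactly as in Proposition \ref{prop: 8.1.6}, to descend/ascend one more level. The main obstacle I anticipate is the bookkeeping at the boundary index: one has to be sure that at each stage the multisegment being lifted genuinely satisfies the hypothesis $H_{k_{i+1}}$ of the relevant Proposition \ref{teo: 3.0.6}, so that the minimal degree term is unique with multiplicity one, and that applying $\D^{k_{i+1}}$ to $L_\b$ does not spoil the minimal-degree identification — this is where the assumption $\max b(\a)\leq k-i_0+1$ from the ambient section, together with $k_{i+1}\geq k+1$, is used to control the interaction of the derivative with the segment $\b$. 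Once these positivity/uniqueness inputs are in place, the argument is a verbatim iteration of the one already given for Proposition \ref{prop: 8.1.6}, and the case distinction $k_1=k+1$ versus $k_1>k+1$ only affects whether $\b$ itself is truncated in the first step, which can be absorbed by replacing $\b$ by $\b^{(k+1)}=[k-i_0+1,k]$ and reindexing.
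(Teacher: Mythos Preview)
Your approach is the same as the paper's: its proof is the single line ``The proof is the same as that of the proposition above,'' meaning Proposition~\ref{prop: 8.1.6}, and the inductive scheme you describe --- lift, apply $\D^{(\cdot)}$, extract the minimal-degree term via Proposition~\ref{teo: 3.0.6}, iterate --- is exactly that template.

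Your hesitation about $\D^{k_1}(L_\b)$ when $k_1=k+1=e(\b)$, and the proposed fix of replacing $\b$ by $\b^{(k+1)}$, come from reading the truncation indices as $k_1,\dots,k_s$ themselves. That is not what happens: the lifted multisegment $\a'$ satisfies $f_{e(\a')}(k+1)=0$ (this is precisely the content of the remark following the proposition), so every end of $\a'$ lying above $k$ is $\geq k+2$, and the indices at which one applies $\D^{(\cdot)}$ to descend from $\a'$ back to $\a$ are therefore all strictly greater than $k+1=e(\b)$. Hence each partial derivative in the iteration fixes $L_\b$, and no modification of $\b$ is needed --- indeed the stated identity keeps $\b$ unchanged on both sides. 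With this correction the argument really is verbatim that of Proposition~\ref{prop: 8.1.6}.
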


\remk This proposition allows us to reduce to the case where 
\[
f_{e(\a)}(k+1)=0.
\]

\begin{proof}
The proof is the same as that of the proposition above.
\end{proof}

As usual, we reduce to the 
parabolic case by the following proposition. 

\begin{prop}\label{prop: 8.1.2}
Let $\a$ be a multisegment satisfying $\max b(\a)\leq k-i_0+1$, then there exists a sequence of integers 
$k_1, k_2, \cdots, k_r$  and a parabolic multisegments $\c$ of type $(J_1(\a), \emptyset)$such that 
\[
\a={^{(k_1, \cdots, k_r)}\c}, \quad \c\in {_{k_1, \cdots, k_r}S(\c)}
\]
and if 
\[
L_{\c}\times L_{\b}=\sum_{\d}m(\d, \c, \b)L_{\d}
\]
then 
\[
L_{\a}\times L_{\b}=\sum_{\d\in {_{k_1, \cdots, k_r}S(\c+\b)}}m(\d, \c, \b)L_{^{(k_1, \cdots, k_r)}\d}. 
\]
\end{prop}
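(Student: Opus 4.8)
The plan is to reduce, in two stages, to the parabolic case exactly as in the proof of Proposition \ref{prop: 8.1.6}, but using the \emph{left} truncation functors ${}^{(k)}(\cdot)$ and the \emph{left} partial derivations ${}^{k}\D$ instead of their right-hand counterparts. First I would construct the multisegment $\c$ together with the sequence $k_1,\ldots,k_r$. This is obtained by iterating two constructions that are already available: the reduction of an arbitrary multisegment to a parabolic one (the left analogue of Proposition \ref{cor: 5}, carried out in Proposition \ref{prop: 6.3.3}), which does not change $e(\a)$ and hence preserves $J_1(\a)$; followed by the beginnings-separation construction of Lemma \ref{lem: 6.2.7}, applied repeatedly to the coinciding beginnings so as to bring the second parabolic datum down to $\emptyset$. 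Separating beginnings only extends segments to the left, so it keeps the multisegment parabolic and again leaves $e(\a)$ (hence $J_1$) unchanged. Composing all the elementary left extensions used gives a parabolic multisegment $\c$ of type $(J_1(\a),\emptyset)$ and integers $k_1,\ldots,k_r$ with $\a={}^{(k_1,\ldots,k_r)}\c$ and $\c\in{}_{k_1,\ldots,k_r}S(\c)$. The point to record is that each left extension decreases a beginning, so the corresponding truncation index equals $b(\Delta)-1$ for a segment $\Delta$ of the multisegment being extended at that stage; since extensions only decrease beginnings, the maximal beginning of every intermediate multisegment is $\le\max b(\a)\le k-i_0+1=b(\b)$, whence every $k_i\le\max b(\a)-1\le k-i_0<b(\b)$. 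In particular $k_i\neq b(\b)$ and $k_i\neq e(\b)$, so ${}^{k_i}\D(L_{\b})=L_{\b}$ for all $i$.

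Next I would prove the displayed formula by induction on $r$, peeling off one truncation index at a time. For the inductive step, apply the algebra morphism ${}^{k_1}\D$ to the identity $L_{\c}\times L_{\b}=\sum_{\d}m(\d,\c,\b)L_{\d}$ in $\mathcal{R}$; since ${}^{k_1}\D(L_{\b})=L_{\b}$, this yields ${}^{k_1}\D(L_{\c})\times L_{\b}=\sum_{\d}m(\d,\c,\b)\,{}^{k_1}\D(L_{\d})$, and I then compare the homogeneous components of minimal degree. On the left, $\c$ satisfies ${}_{k_1}H(\c)$ (because $\c\in{}_{k_1}S(\c)$), so by the left analogue of Proposition \ref{teo: 3.0.6} the minimal-degree term of ${}^{k_1}\D(L_{\c})$ is $L_{{}^{(k_1)}\c}$ with multiplicity one, and after multiplying by $L_{\b}$ the minimal-degree part of the left side is $L_{{}^{(k_1)}\c}\times L_{\b}$. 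On the right, one first notes that in the commutative ring $\mathcal{R}$ the product $L_{\c}\times L_{\b}$ is dominated by $\pi(\c)\cdot\pi(\b)=\pi(\c+\b)$, so $m(\d,\c,\b)>0$ forces $\d\le\c+\b$; then, using the positivity of ${}^{k_1}\D$ (Theorem \ref{teo: 3}), the left analogue of Proposition \ref{teo: 3.0.6}(ii), and ${}^{(k_1)}(\c+\b)={}^{(k_1)}\c+\b$, exactly as in the proof of Proposition \ref{prop: 8.1.6}, the only $\d$ contributing a term of minimal degree are those in ${}_{k_1}S(\c+\b)$, each contributing $L_{{}^{(k_1)}\d}$ with multiplicity one, and these are pairwise distinct since $\psi_{k_1}$ is a bijection. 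Equating the minimal-degree parts gives
\[
L_{{}^{(k_1)}\c}\times L_{\b}=\sum_{\d\in{}_{k_1}S(\c+\b)}m(\d,\c,\b)\,L_{{}^{(k_1)}\d}.
\]
Re-indexing the right-hand side through the bijection $\psi_{k_1}\colon{}_{k_1}S(\c+\b)\to S({}^{(k_1)}\c+\b)$ (Proposition \ref{prop: 3.2.17}) puts this back into the standard shape $L_{{}^{(k_1)}\c}\times L_{\b}=\sum_{\e}(\text{non-negative coefficient})\,L_{\e}$; since ${}^{(k_1)}\c\in{}_{k_2,\ldots,k_r}S({}^{(k_1)}\c)$ and $k_2,\ldots,k_r<b(\b)$, the induction hypothesis applies to the pair $({}^{(k_1)}\c,\b)$, and composing the $r$ successive re-indexing bijections into $\psi_{k_1,\ldots,k_r}$ produces the asserted equality
\[
L_{\a}\times L_{\b}=\sum_{\d\in{}_{k_1,\ldots,k_r}S(\c+\b)}m(\d,\c,\b)\,L_{{}^{(k_1,\ldots,k_r)}\d}.
\]

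The genuinely delicate point is the first step: arranging \emph{simultaneously} that $\c$ carries the prescribed parabolic type $(J_1(\a),\emptyset)$, that $\c\in{}_{k_1,\ldots,k_r}S(\c)$, and that every truncation index stays strictly below $b(\b)$ (which is exactly where the hypothesis $\max b(\a)\le k-i_0+1$ is used and is what makes ${}^{k_i}\D(L_{\b})=L_{\b}$). Once that bookkeeping is in place, the second step is a routine minimal-degree comparison identical in structure to the proof of Proposition \ref{prop: 8.1.6}.
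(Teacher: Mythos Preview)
Your proof is correct and follows the same strategy as the paper's, which simply says: the existence of $\c$ comes from (the left analogue of) Proposition~\ref{prop: 6.3.3}, and the formula follows by applying ${}^{k_1}\D\cdots{}^{k_r}\D$ to $L_{\c}\times L_{\b}=\sum_{\d}m(\d,\c,\b)L_{\d}$ and invoking Proposition~\ref{teo: 3.0.6}. Your write-up is in fact more careful than the paper on two points: you observe that Proposition~\ref{prop: 6.3.3} alone only yields parabolic type $(J_1(\a),J_2(\a))$ and that a further beginnings-separation step (as in Lemma~\ref{lem: 6.2.7}) is needed to reach type $(J_1(\a),\emptyset)$ via left extensions, and you make explicit the bound $k_i\le\max b(\a)-1<b(\b)$ which guarantees ${}^{k_i}\D(L_{\b})=L_{\b}$---this is exactly where the hypothesis $\max b(\a)\le k-i_0+1$ enters, and the paper leaves it implicit.
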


\begin{proof}
The existence of $\c$ follows from proposition \ref{prop: 6.3.3}.
To deduce our result, it suffices to apply the derivation 
\[
(^{k_1}\D)(^{k_2}\D)\cdots (^{k_r}\D)
\]
to $L_{\c}\times L_{\b}=\sum_{\d}m(\d, \c, \b)L_{\d}$ and then apply 
proposition \ref{teo: 3.0.6}.
\end{proof}

\begin{prop}\label{prop: 8.1.5}
Assume that $\a$ is a \textbf{parabolic} multisegment such that 
\[
f_{e(\a)}(k-i+1)=0
\]
for some $1\leq i\leq i_0$. Then 
\[
m(\c, \b, \a)=m(\c^{(k-i+2, \cdots,k-1,  k)}, \b^{(k+1)}, \a^{(k-i+2, \cdots, k-1, k)}).
\]
\end{prop}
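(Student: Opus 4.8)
The statement concerns the multiplicities $m(\c,\b,\a)$ in $L_{\a}\times L_{\b}$ when $\b=[k-i_0+1,k+1]$ and $\a$ is parabolic with $f_{e(\a)}(k-i+1)=0$ for some $1\le i\le i_0$. The key observation is that the induced product $L_{\a}\times L_{\b}$ can be partially derived and, under the hypothesis $f_{e(\a)}(k-i+1)=0$, the successive partial derivatives $\D^{k-i+2}\circ \cdots \circ \D^{k-1}\circ \D^{k}$ applied to the segment $L_{\b}=L_{[k-i_0+1,k+1]}$ peel off exactly the tail $[k-i+2,\dots,k,k+1]$ of $\b$, turning $\b$ into $\b^{(k+1)}=[k-i_0+1,k]$ (since $f_{e(\b)}(k+1)=1$ and then repeatedly the new end drops by one, and the hypothesis guarantees no other segment of $\a$ interferes at those indices). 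Meanwhile the same composite of derivatives sends $\a$ to $\a^{(k-i+2,\dots,k-1,k)}$, and since $\a$ is parabolic we have $S(\a)_{k-i+2,\dots,k-1,k}=S(\a)$ by the remark following proposition \ref{prop: 8.1.6} (the intersection of all segments of $\a$ being nonempty), so no multisegment in $S(\a+\b)$ is lost in the reduction.

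First I would set $D=\D^{k}$ composed successively with $\D^{k-1},\dots,\D^{k-i+2}$ (in the appropriate order, matching the convention $\D^{[j,m]}=\D^m\circ\cdots\circ\D^j$ in the excerpt) and apply $D$ to the identity $L_{\a}\times L_{\b}=\sum_{\c}m(\c,\b,\a)L_{\c}$. Since $D$ is an algebra morphism, the left side becomes $D(L_{\a})\times D(L_{\b})$. Next I would analyze $D(L_{\b})$: because $\b$ is a single segment $[k-i_0+1,k+1]$ with $i\le i_0$, its derivative along the string $[k-i+2,\dots,k+1]$ is (by the definition of $\D^j$ on $L_{[j,m]}$ and the remark relating partial and Zelevinsky derivatives) a sum whose unique minimal-degree term is $L_{\b^{(k+1)}}=L_{[k-i_0+1,k]}$, appearing with multiplicity one; more precisely $D(L_{\b})$ will consist of $L_{\b}$ plus lower-degree terms, and I would identify which term contributes to the minimal degree on both sides. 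Then I would analyze $D(L_{\a})$ using proposition \ref{teo: 3.0.6}: since $\a$ satisfies the hypotheses $H_{k}(\a),H_{k-1}(\cdot),\dots$ along this string (here the parabolicity and $f_{e(\a)}(k-i+1)=0$ are used to verify $H_j$ at each stage), $D(L_{\a})$ has a unique minimal-degree irreducible summand $L_{\a^{(k-i+2,\dots,k-1,k)}}$ with multiplicity one.

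The heart of the argument is then to compare minimal-degree terms on the two sides of $D(L_{\a})\times D(L_{\b}) = \sum_{\c}m(\c,\b,\a)D(L_{\c})$. On the right, by theorem \ref{teo: 3} each $D(L_{\c})$ is a nonnegative sum of irreducibles, and by proposition \ref{teo: 3.0.6} (applied along the string to each $\c$) the minimal-degree summand of $D(L_{\c})$ is $L_{\c^{(k-i+2,\dots,k-1,k)}}$ with multiplicity one whenever $\c$ satisfies the relevant hypotheses $H_j$, and contributes only strictly-higher-degree terms otherwise; moreover the degrees of these minimal summands match the degree of $\c^{(\dots)}$. On the left, the minimal-degree part of $D(L_{\a})\times D(L_{\b})$ is exactly $L_{\a^{(k-i+2,\dots,k-1,k)}}\times L_{\b^{(k+1)}}$ because taking the product of two sums, the lowest total degree is achieved uniquely by the product of the two minimal terms (using that $L_{\a^{(\dots)}}$ and $L_{\b^{(k+1)}}$ both appear with multiplicity one). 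Equating the two, and using proposition \ref{prop: 3.2.17}(1) together with the bijectivity of $\psi_{k-i+2,\dots,k-1,k}$ on $S(\a+\b)_{k-i+2,\dots,k-1,k}=S(\a+\b)$ to match the multisegments $\c$ on the right with $\c^{(\dots)}$, I would read off $m(\c,\b,\a)=m(\c^{(k-i+2,\dots,k,k)},\b^{(k+1)},\a^{(k-i+2,\dots,k,k)})$, which is the claimed equality (writing the string as $(k-i+2,\dots,k-1,k)$ as in the statement).

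\textbf{Main obstacle.} The delicate point is verifying that the hypotheses $H_{k}(\a),H_{k-1}(\a^{(k)}),\dots,H_{k-i+2}(\a^{(k,k-1,\dots,k-i+3)})$ all hold at each stage of the descent, so that proposition \ref{teo: 3.0.6} applies and the minimal-degree term is genuinely unique with multiplicity one throughout; this is where $\a$ being parabolic and the vanishing $f_{e(\a)}(k-i+1)=0$ are essential — parabolicity forces the segment ends to be packed above $\max b(\a)$, ruling out the forbidden linked pairs $\{\Delta,\Delta'\}$ with $e(\Delta)=j-1,\ e(\Delta')=j$ for the relevant $j$, and the vanishing ensures the string of derivatives does not run past a missing end index and collapse prematurely. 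A secondary care-point is bookkeeping the degrees: one must check that no term of $D(L_{\c})$ for $\c\ne\c^{(\dots)}$ accidentally lands in the minimal degree, which again follows from proposition \ref{teo: 3.0.6}(ii) applied iteratively but deserves a careful induction on $\ell(\c,\a+\b)$.
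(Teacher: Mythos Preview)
Your overall strategy matches the paper's (whose proof just says ``same as proposition \ref{prop: 8.1.6}''): apply a string of partial derivatives to $L_{\a}\times L_{\b}$ and read off the minimal-degree terms via proposition \ref{teo: 3.0.6}. But there is a concrete error in your analysis of $D(L_{\b})$. By definition $\D^j(L_{[a,m]})=L_{[a,m]}$ unless $j=m$, and here $e(\b)=k+1$ while your operator $D=\D^{k-i+2}\circ\cdots\circ\D^{k}$ only involves indices $\le k$. Hence $D(L_{\b})=L_{\b}$ exactly; nothing is ``peeled off'' and there is no lower-degree term, let alone $L_{\b^{(k+1)}}$. Your proof as written therefore produces $m(\c,\b,\a)=m(\c^{(k-i+2,\dots,k)},\b,\a^{(k-i+2,\dots,k)})$, with $\b$ unchanged.

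The passage from $\b$ to $\b^{(k+1)}$ requires the additional derivative $\D^{k+1}$, applied \emph{after} the others: once $\a'=\a^{(k-i+2,\dots,k)}$ has $f_{e(\a')}(k)=0$, every $\c'\in S(\a'+\b)$ has no segment ending at $k$, so $H_{k+1}(\c')$ holds and corollary \ref{cor: 3.5.2} gives the last step (and $\c$ acquires one more truncation at $k+1$). Two further corrections: (i) the reason $H_{k-i+2}(\a)$ holds at the first step is precisely the hypothesis $f_{e(\a)}(k-i+1)=0$ (no segment ends at $k-i+1$), not parabolicity, which does \emph{not} rule out linked pairs with adjacent ends; (ii) parabolicity is instead what guarantees $S(\a+\b)_{k-i+2,\dots}=S(\a+\b)$ so that no $\c$ is lost in the reduction, as in the remark following proposition \ref{prop: 8.1.6}.
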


\begin{proof}
The proof is the same as that of proposition \ref{prop: 8.1.6}
\end{proof}

\remk Combining the proposition \ref{prop: 8.1.6}, \ref{prop: 8.1.2}, \ref{prop: 8.1.5}, and \ref{prop: 8.1.9},
the calculation of the coefficients $m(\c, \b, \a)$ for case (1) can be reduced to the case where $\a$ 
is a parabolic multisegment such that 
\[
f_{e(\a)}(k-i_0-1)=f_{e(\a)}(k+1)=0, \quad f_{e(\a)}(k-i+1)\neq 0, \text{ for all } 1\leq i\leq i_0+1.
\]

From now on until the end of the section, assume that 
\[
\a_{\Id}^{J, \emptyset}
\]
be a multisegment of type $(J, \emptyset)$ associated to the identity in $S_n$, which satisfies 
\[
f_{e(\a_{\Id}^{J, \emptyset})}(k-i_0-1)=f_{e(\a_{\Id}^{J, \emptyset})}(k+1)=0, \quad f_{e(\a_{\Id}^{J, \emptyset})}(i)>0 \text{ for } k-i_0\leq i\leq k,
\]
and fix a bijection
\[
\Phi^{i_0}: S_n^{J, \emptyset}\rightarrow S(\a_{\Id}^{J, \emptyset})
\]
and $\a_{i_0}=\Phi^{i_0}(w)$.
\begin{lemma}
Under the above assumption, we have 
\[
J_1(\l_{k-i_0}-r_0, k-i_0)=J_2(\l_{k-i_0}-r_0, k-i_0).
\]
\end{lemma}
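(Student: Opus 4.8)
The plan is to prove this by directly unwinding the definitions. Recall that, applying the definition of Section 7.7 with index $k-i_0$ and first argument $\l_{k-i_0}-r_0$, the multisegments $\a_1$ and $\a_2$ are obtained from $\a=\a_{\Id}^{J,\emptyset}$ by keeping, among the $\l_{k-i_0}$ segments of $\a$ ending at $k-i_0$ (listed in the order $\unlhd$), a prescribed initial segment of them and replacing the rest either by $\Delta^{+}$ (for $\a_1$) or by $\Delta^{-}$ (for $\a_2$); and $J_i(\l_{k-i_0}-r_0,k-i_0)$ is by definition the subset of $S$ for which $\a_i=\a_{\Id}^{J_i(\l_{k-i_0}-r_0,k-i_0),\emptyset}$. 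Since $\a$ is of parabolic type $(J,\emptyset)$, its beginnings, hence those of $\a_1$ and $\a_2$, are pairwise distinct; so the second component of the parabolic type is empty in every case and $J_i$ is governed entirely by the multiset $e(\a_i)$ of endpoints — concretely, a generator lies outside $J_i$ precisely at the ``block boundaries'' one reads off by listing the distinct values of $e(\a_i)$ in increasing order with their multiplicities. Thus the lemma reduces to the assertion that $e(\a_1)$ and $e(\a_2)$ carry the same coincidence pattern of endpoints.

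The next step is to compute these two endpoint multisets from the hypotheses $f_{e(\a)}(k-i_0-1)=f_{e(\a)}(k+1)=0$ and $f_{e(\a)}(i)>0$ for $k-i_0\le i\le k$. The $\l_{k-i_0}$ segments of $\a$ ending at $k-i_0$ are split, according to the $\unlhd$-order, into two pieces of the prescribed sizes; forming $\a_1$ sends the appropriate piece into the endpoint $k-i_0+1$, while forming $\a_2$ sends it into the endpoint $k-i_0-1$, which by hypothesis is empty and therefore receives a brand-new block. All the other segments of $\a$ — those ending in $[k-i_0+1,k]$ — are untouched, and the hypotheses force $e(\a)$ to be ``connected'' on $[k-i_0,k]$, so both multisets are completely pinned down; one then checks, by taking running partial sums, that the block-boundary data of $e(\a_1)$ and of $e(\a_2)$ coincide, and feeds this back through the characterisation of $J_1$ and $J_2$ from the first paragraph.

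I expect this last comparison to be the only real content, and also the main obstacle: the argument is a matter of combinatorial bookkeeping, but a delicate one, because one must keep straight (i) the exact $\unlhd$-ordering of the segments ending at $k-i_0$ and which of them get moved, (ii) how the fresh block appearing at the empty slot $k-i_0-1$ on the $\a_2$ side shifts all subsequent running partial sums relative to the $\a_1$ side, and (iii) the degenerate cases $r_0=0$ and $i_0=0$. It should be exactly the vanishing hypotheses $f_{e(\a)}(k-i_0-1)=f_{e(\a)}(k+1)=0$, together with $\a$ being parabolic, that make these adjustments match up and yield $J_1(\l_{k-i_0}-r_0,k-i_0)=J_2(\l_{k-i_0}-r_0,k-i_0)$; I would finish by writing out the two endpoint patterns explicitly and dispatching the identification with a short case analysis on $r_0$.
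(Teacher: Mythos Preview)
Your approach is exactly the paper's, whose entire proof reads ``This follows directly from the definition.'' The comparison of the two endpoint patterns is intended to be immediate rather than the delicate bookkeeping you anticipate, so once you actually write them down under the standing hypotheses $f_{e(\a)}(k-i_0-1)=f_{e(\a)}(k+1)=0$ the identification should fall out without any real case analysis.
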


\begin{proof}
This follows directly from the definition.
\end{proof}

\begin{lemma}\label{lem: 8.1.4}
Let $\b=[k+1]$ and $\l_k=f_{e(\a_0)}(k)$. Then  
\[
L_{\a_0}\times L_{\b}=L_{\a_0+\b}+\sum_{\c\in \Gamma^{\l_k-1}(\a_0, k)} \Bigl ( \theta_k(\c, \a_0)-\theta_k(\c^{[k+1]_1}, \a_0+\b) \Bigr ) 
L_{\c^{[k+1]_1[ k]_{\l_k-1}}}
\]
\end{lemma}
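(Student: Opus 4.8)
The plan is to argue by induction on $|S(\a_0)|$, applying a single partial derivative to the (a priori unknown) decomposition of $L_{\a_0}\times L_{[k+1]}$ so that the explicit derivative formula of Corollary \ref{coro-fornula-derivative} can be used. In the base case $\a_0=(\a_0)_{\min}$ the representation $\pi(\a_0)=L_{\a_0}$ is irreducible, so $L_{\a_0}\times L_{[k+1]}=\pi(\a_0+[k+1])$ and the formula follows by inspecting $S(\a_0+[k+1])$ (a chain-type interval under the standing parabolic hypotheses), together with the identification $\Gamma^{\l_k-1}(\a_0,k)=S(\a_1)$ for the appropriate truncation $\a_1$ of $\a_0$.

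First I would determine which $L_\c$ occur. Any such $\c$ satisfies $\c\leq\a_0+[k+1]$ and $\deg(\c)=\deg(\a_0)+1$; since $f_{e(\a_0)}(k+1)=0$, Lemma \ref{lem: 2.1.3} shows $\c$ has exactly one segment ending at $k+1$, hence $\c\in\tilde S(\a_0+[k+1])_{k+1}$, and by Lemma \ref{lem: 3.0.7}(c) together with the degree count $\c^{(k+1)}=\a_0$. Because $\a_0$ has pairwise distinct beginnings, this pins the extra $\c$'s down to the multisegments obtained from $\a_0$ by extending one segment ending at $k$ to end at $k+1$; these are precisely the $\c={\c'}^{[k+1]_1[k]_{\l_k-1}}$ with $\c'\in\Gamma^{\l_k-1}(\a_0,k)$ for which ${\c'}^{[k+1]_1}$ is defined, and the coefficient of $L_{\a_0+[k+1]}$ is $1$ since it is the unique irreducible submodule of $\pi(\a_0+[k+1])$. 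So one may write $L_{\a_0}\times L_{[k+1]}=L_{\a_0+[k+1]}+\sum_{\c'}x_{\c'}\,L_{{\c'}^{[k+1]_1[k]_{\l_k-1}}}$ with $x_{\c'}\geq 0$ to be determined.

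To compute the $x_{\c'}$ I would apply $\D^{k}$. As $\D^{k}$ is an algebra morphism with $\D^{k}(L_{[k+1]})=L_{[k+1]}$, the left-hand side becomes $\D^{k}(L_{\a_0})\times L_{[k+1]}=\bigl(\sum_{\d}\theta_k(\d,\a_0)L_\d\bigr)\times L_{[k+1]}$, where $\D^{k}(L_{\a_0})$ is given by Corollary \ref{coro-fornula-derivative}; re-expanding the products $L_\d\times L_{[k+1]}$ for $\d<\a_0$ by the inductive hypothesis (passing through the reductions of Propositions \ref{prop: 8.1.2}, \ref{prop: 8.1.5}, \ref{prop: 8.1.6}, \ref{prop: 8.1.9} when $\d$ no longer satisfies the standing hypotheses) rewrites everything in the $L$-basis. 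On the right-hand side, $\D^{k}(L_{\a_0+[k+1]})=\sum_{\d}\theta_k(\d,\a_0+[k+1])L_\d$ is again furnished by Corollary \ref{coro-fornula-derivative}, and $\D^{k}(L_{{\c'}^{[k+1]_1[k]_{\l_k-1}}})$ has $L_{{\c'}^{[k+1]_1}}$ as its minimal-degree constituent with multiplicity one by Proposition \ref{teo: 3.0.6} (the relevant hypothesis $H_k$ holding after the parabolic reduction). Matching the coefficient of $L_{{\c'}^{[k+1]_1}}$ on the two sides then isolates $x_{\c'}=\theta_k(\c',\a_0)-\theta_k({\c'}^{[k+1]_1},\a_0+[k+1])$, which is the asserted formula.

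The main obstacle is exactly this last coefficient comparison: one must check that $L_{{\c'}^{[k+1]_1}}$ receives no unexpected contribution from the terms $L_\d\times L_{[k+1]}$, $\d<\a_0$, produced by $\D^{k}(L_{\a_0})$, and that the minimal-degree constituent of $\D^{k}(L_{{\c'}^{[k+1]_1[k]_{\l_k-1}}})$ is $L_{{\c'}^{[k+1]_1}}$ with multiplicity one for every $\c'\in\Gamma^{\l_k-1}(\a_0,k)$. Both hinge on a close reading of the hypothesis $H_k$, of the parabolic-projection description of $\D^k$ from \S7.7, and of the combinatorics of the truncation operators $\c\mapsto\c^{(k)}$ and $\c\mapsto{\c}^{[k+1]_1}$ on the interval $\Gamma^{\l_k-1}(\a_0,k)$; this is where the real work of the argument is concentrated.
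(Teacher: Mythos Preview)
Your overall plan—parametrise the unknown constituents, apply $\D^k$, and match coefficients—is the paper's plan too; the differences are in how each step is carried out.

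For the first step, the paper does not argue via degrees and Lemma~\ref{lem: 3.0.7}(c); it applies the \emph{left} derivative $^{k+1}\D$. Since $^{k+1}\D(L_{\a_0})=L_{\a_0}$ and $^{k+1}\D(L_{[k+1]})=L_{[k+1]}+1$, one has $^{k+1}\D(L_{\a_0}\times L_{\b})=L_{\a_0}\times L_{\b}+L_{\a_0}$; on the right side any constituent $\c$ containing the point $[k+1]$ contributes an extra $L_{^{(k+1)}\c}$, and matching forces $\a_0+\b$ to be the sole such constituent. Your assertion that every constituent $\c$ satisfies $\c^{(k+1)}=\a_0$ is in fact the right target, but Lemma~\ref{lem: 2.1.3} only gives $f_{e(\c)}(k+1)\leq 1$ and Lemma~\ref{lem: 3.0.7}(c) only gives $\c^{(k+1)}\in S(\a_0)$, not equality; the $^{k+1}\D$ argument is what actually closes this.

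For the coefficient comparison—exactly the step you flag as the main obstacle—the paper does \emph{not} induct on $|S(\a_0)|$. The point is that every $\d\in\Gamma^{\ell_k-1}(\a_0,k)$ has $f_{e(\d)}(k)=1$ and $f_{e(\d)}(k+1)=0$, and for such $\d$ the paper proves, as a separate short lemma placed immediately after this one, that
\[
L_\d\times L_{[k+1]}=L_{\d+[k+1]}+L_{\d^{[k+1]_1}}
\]
outright (by applying $\D^{k+1}$ and $^{k+1}\D$ to that product). With this in hand, the only contribution to $L_{{\c'}^{[k+1]_1}}$ on the side $\D^k(L_{\a_0})\times L_{[k+1]}$ comes from $\d=\c'$ with coefficient $\theta_k(\c',\a_0)$, and the identity $m(\c',\b,\a_0)=\theta_k(\c',\a_0)-\theta_k({\c'}^{[k+1]_1},\a_0+\b)$ drops out directly. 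Your inductive route would, after pushing each $\d$ through the reductions (Proposition~\ref{prop: 8.1.6} in particular), land on exactly this $\ell_k=1$ situation anyway; the paper's shortcut is to isolate and prove that case first, which dissolves the cross-term bookkeeping you were worried about.
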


\begin{proof}
Note that 
\[
(^{k+1}\D)(L_{\a_0}\times L_{\b})=L_{\a_0}\times L_{\b}+L_{\a_0}.
\]
And for each $\c\in S(\a_0+\b)$ if $[k+1]\in \c$, then 
\[
(^{k+1}\D)L_{\c}=L_{\c}+L_{^{(k+1)}\c}.
\]
This implies that if $\c\neq \a_0+\b$ and $[k+1]\in \c$, then $L_{\c}$ can not be a direct summand of $L_{\a_0}\times L_{\b}$.
Furthermore, by assumption on $\a_0$, we know that for any  $\c\in S(\a_0+\b)$ and $[k+1]\notin \c$,
we have $\c\in S(\a_0+\b)_{k}$ and hence $\c\in S(\a_0+\b)_{k, k+1}$.   
Moreover, we know that $\c^{(k, k+1)}\in \Gamma^{\l_k-1}(\a_0, k)$. Therefore we have 
\[
L_{\a_0}\times L_{\b}=L_{\a_0+\b}+\sum_{\c\in \Gamma^{\l_k-1}(\a, k)}m(\c, \b, \a_0)L_{\c^{[k+1]_1[k]_{\l_k-1}}}.
\]
Now apply the derivation $\D^k$ to both sides of the equation to get 
\begin{align*}
\D^k(L_{\a_0}\times L_{\b})&=(\sum_{\c\preceq_{k}\a_0} \theta_k(\c, \a_0)L_{\c}) \times L_{\b}\\
&=\sum_{\c\in \Gamma^{\l_k-1}(\a_0, k)}\theta_k(\c, \a_0)L_{\c}\times L_{\b}+\text{ other degree terms }
\end{align*}
and the right hand side we get 
\[
\sum_{\c\in \Gamma^{\l_k-1}(\c, \a_0+\b)}\theta_k(\c, \a_0+\b)L_{\c}+\sum_{\c\in \Gamma^{\l_k-1}(\a_0, k)}m(\c, \b, \a_0)L_{\c^{[k+1]_1}}+\text{ other degree terms }.
\]
Now by the following lemma we know that for $\c\in \Gamma^{\l_k-1}(\a_0, k)$
\[
L_{\c}\times L_{\b}=L_{\c+\b}+L_{\c^{[k+1]_1}},
\]
therefore by comparing the two sides, we obtain that for $\c\in \Gamma^{\l_k-1}(\a_0, k)$
\[
m(\c, \b, \a_0)+\theta_k(\c^{[k+1]}, \a_0+\b)=\theta_k(\c, \a_0).
\]
Hence we are done.

\end{proof}

\begin{lemma}
Let $\a$ be a multisegment such that 
\[
\max b(\a)\leq k+1, \quad f_{e(\a)}(k)=1, \quad f_{e(\a)}(k+1)=0.
\]
Then we have 
\[
L_{\a}\times L_{[k+1]}=L_{\a+[k+1]}+L_{\a^{[k+1]_1}}.
\]
\end{lemma}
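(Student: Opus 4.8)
Write $\a' = \a+[k+1]$, so that $L_{\a}\times L_{[k+1]}=\sum_{\c\in S(\a')}m(\c,[k+1],\a)L_{\c}$, where the coefficients are nonnegative integers and $m(\a',[k+1],\a)=1$ (the latter by the usual socle argument: $L_{\a'}=\mathrm{soc}(L_{[k+1]}\times L_{\a})\le L_{[k+1]}\times\pi(\a)=\pi(\a')$, so its multiplicity is $\ge 1$ and $\le m(\a',\a')=1$). The plan is to apply the partial derivative $\D^{k+1}$ to this identity and exploit positivity. First I would record two elementary computations: $\D^{k+1}(L_{[k+1]})=L_{[k+1]}+1$ by the very definition of $\D^{k+1}$; and $\D^{k+1}(L_{\a})=L_{\a}$, because $f_{e(\a)}(k+1)=0$ and, by Lemma \ref{lem: 2.1.3}, $f_{e(\b)}(k+1)=0$ for every $\b\in S(\a)$, so $\a(k+1)=\emptyset$ for all of them and \eqref{eq: 7.41} gives $\D^{k+1}(\pi(\a))=\pi(\a)$; the upper–triangularity of $\pi(\a)=L_{\a}+\sum_{\b<\a}m(\b,\a)L_{\b}$ together with an induction on $|S(\a)|$ then yields $\D^{k+1}(L_{\a})=L_{\a}$.

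Next I would run the combinatorial bookkeeping on $S(\a')$. Since elementary operations never create a new endpoint value (the ends of $\Delta\cup\Delta'$ and $\Delta\cap\Delta'$ lie in $\{e(\Delta),e(\Delta')\}$) and never create a new beginning value either, and since $\max b(\a)\le k+1$, every $\c\in S(\a')$ has exactly one segment ending at $k+1$ and at most one ending at $k$; moreover the only segment of $\a'$ linked to $[k+1]$ is the unique $\Delta_0=[j_0,k]\in\a$ with $e(\Delta_0)=k$. Hence $f_{e(\c)}(k)=0$ precisely when the $[k+1]$–component has been merged with the $\Delta_0$–component, and in that case the hypothesis $H_{k+1}(\a')$ holds automatically, i.e. $\c\in S(\a')_{k+1}$; conversely, if $f_{e(\c)}(k)=1$ then $[k+1]$ is untouched, so $\c=\d+[k+1]$ with $\d\in S(\a)$ and $f_{e(\d)}(k)=1$, and the linked pair $\{[a,k],[k+1]\}$ shows $\c\notin S(\a')_{k+1}$. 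In particular $\a'=\a+[k+1]$ itself lies in the second class (so $\D^{k+1}(L_{\a'})=L_{\a'}$), and $\a^{[k+1]_1}$, the unique preimage of $\a$ under the bijection $\psi_{k+1}\colon S(\a')_{k+1}\to S(\a)$ (Proposition \ref{cor: 3.2.3}), lies in the first class. For a $\c$ in the first class Corollary \ref{cor: 3.5.2} gives $\D^{k+1}(L_{\c})=L_{\c}+L_{\c^{(k+1)}}$, and for a $\c$ in the second class $\D^{k+1}(L_{\c})=L_{\c}$.

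Applying $\D^{k+1}$ to the product identity, the left side is $L_{\a}\times L_{[k+1]}+L_{\a}$, while on the right the terms $\sum_{\c}m(\c,[k+1],\a)L_{\c}$ reconstitute $L_{\a}\times L_{[k+1]}$; cancelling yields
\[
L_{\a}=\sum_{\c\in S(\a')_{k+1}}m(\c,[k+1],\a)\,L_{\c^{(k+1)}}.
\]
Since $\psi_{k+1}$ is injective the $L_{\c^{(k+1)}}$ occurring here are pairwise distinct basis elements of $\mathcal R$ (Corollary \ref{cor: 1.2.3}) and all coefficients are $\ge 0$, so $m(\a^{[k+1]_1},[k+1],\a)=1$ and $m(\c,[k+1],\a)=0$ for every other $\c\in S(\a')_{k+1}$. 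Thus the surviving constituents of $L_{\a}\times L_{[k+1]}$ lie in $\{\,\a^{[k+1]_1}\,\}\cup\{\d+[k+1]:\d\le\a,\ f_{e(\d)}(k)=1\}$, with $m(\a^{[k+1]_1},[k+1],\a)=m(\a+[k+1],[k+1],\a)=1$.

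The remaining and genuinely hard step is to show that no proper $\d+[k+1]$ (with $\d<\a$, $f_{e(\d)}(k)=1$) occurs. I would do this by induction on $\ell(\a)$, using $L_{\a}\times L_{[k+1]}=\pi(\a')-\sum_{\b<\a}m(\b,\a)\,L_{\b}\times L_{[k+1]}$: each $\b<\a$ again satisfies $\max b(\b)\le k+1$ and $f_{e(\b)}(k+1)=0$, so by the inductive hypothesis (and, when $f_{e(\b)}(k)=0$, by Proposition \ref{prop: 1.1.15}/irreducibility of $L_{\b}\times L_{[k+1]}$) one knows $L_{\b}\times L_{[k+1]}=L_{\b+[k+1]}+L_{\b^{[k+1]_1}}$ or $L_{\b+[k+1]}$; comparing with the expansion of $\pi(\a')=\pi(\a)\times L_{[k+1]}$ and isolating the constituents of the form $\d+[k+1]$ with $[k+1]$ an isolated segment (whose multiplicity in $\pi(\a')$ I expect to equal $m(\d,\a)$, as such constituents can only be produced from the $L_{\d}$ in $\pi(\a)=\sum m(\d,\a)L_\d$), the contributions should cancel exactly, leaving only $L_{\a+[k+1]}$. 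I expect this cancellation argument — pinning down the multiplicity of the $[k+1]$–isolated constituents in $\pi(\a')$ and checking the telescoping — to be the main obstacle; everything before it is essentially formal once the derivative formulas and the structure of $S(\a')$ are in place. Combining all steps gives $L_{\a}\times L_{[k+1]}=L_{\a+[k+1]}+L_{\a^{[k+1]_1}}$.
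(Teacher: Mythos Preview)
Your application of $\D^{k+1}$ to isolate $L_{\a^{[k+1]_1}}$ as the unique constituent with $f_{e(\c)}(k)=0$ is correct and matches the paper's first step. (One small inaccuracy: your claim that $f_{e(\c)}(k)=0$ happens ``precisely when $[k+1]$ has merged with $\Delta_0$'' can fail if $\a$ itself contains a segment beginning at $k+1$, since then $\Delta_0$ may instead be absorbed by that segment while $[k+1]$ sits untouched; but your conclusion $\c\in S(\a')_{k+1}\Leftrightarrow f_{e(\c)}(k)=0$ survives, and the rest of that paragraph goes through.)

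For the remaining constituents $\d+[k+1]$ with $\d<\a$, the paper does something much shorter than your induction: it applies the \emph{left} partial derivative ${}^{k+1}\D$ symmetrically. From ${}^{k+1}\D(L_{\a}\times L_{[k+1]})=L_{\a}\times L_{[k+1]}+L_{\a}$ on one side and ${}^{k+1}\D(L_{\d+[k+1]})=L_{\d+[k+1]}+L_{\d}$ on the other (while constituents with $k+1\notin b(\c)$ are fixed by ${}^{k+1}\D$), one reads off $m(\d+[k+1])=[\d=\a]$ in one line. Your induction, by contrast, has a genuine gap. Unwinding it, the coefficient of $L_{\d+[k+1]}$ in $L_{\a}\times L_{[k+1]}$ equals $m(\d+[k+1],\a+[k+1])-m(\d,\a)$, so you need the identity $m(\d+[k+1],\a+[k+1])=m(\d,\a)$, which you only ``expect''. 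Your heuristic that ``such constituents can only be produced from the $L_{\d}$'' is circular: it presupposes that $L_{\a}\times L_{[k+1]}$ itself contributes no $L_{\d+[k+1]}$, which is the very thing you are proving. The identity is true and is most cleanly established via the left truncation ${}^{(k+1)}$ together with the left version of Proposition~\ref{cor: 3.2.3} --- i.e.\ exactly the tool the paper applies directly. So the inductive detour buys nothing and leaves the argument incomplete as written.
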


\begin{proof}
First of all, it is known  by Zelevinsky that $L_{\a+[k+1]}$ appears in $L_{\a}\times L_{[k+1]}$ with multiplicity one.
Also, since 
\[
\D^{k+1}(L_{\a}\times L_{[k+1]})=L_{\a}\times L_{[k+1]}+L_{\a},
\]
we know that $L_{\a^{[k+1]_1}}$ is the only element in $S(\a+[k+1])$ which appears as a subquotient
in $L_{\a}\times L_{[k+1]}$ and does not contain $[k+1]$ as a beginning.
Finally, since 
\[
{^{k+1}\D}(L_{\a}\times L_{[k+1]})=L_{\a}\times L_{[k+1]}+L_{\a},
\]
we conclude that $\a+[k+1]$ is the only multisegment in $S(\a+[k+1])$ which is a subquotient of $L_{\a}\times L_{[k+1]}$
and contains $[k+1]$ as a beginning. 
\end{proof}

In particular,gathering all the calculation in case where $\b=[k+1]$, we obtain the following formula.

\begin{cor}\label{prop: 8.1.1}
Let $\a$ be a parabolic multisegment satisfying the condition  
\[
f_{e(\a)}(k)\neq 0, \quad f_{e(\a)}(k-1)=f_{e(\a)}(k+1)=0,
\]
 and $\b=[k+1]$. Then 
\[
L_{\a}\times L_{\b}=L_{\a+\b}+\sum_{\c\in \Gamma^{\l_k-1}(\a, k)} \Bigl ( \theta_k(\c, \a)-\theta_k(\c^{[k+1]_1}, \a+\b) \Bigr ) L_{\c^{[k+1]_1[ k]_{\l_k-1}}}.
\]
\end{cor}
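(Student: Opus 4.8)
The statement is a corollary of Lemma \ref{lem: 8.1.4}, and the plan is to check that the hypotheses of that lemma are met and then simply invoke it. The hypothesis of the corollary is that $\a$ is a parabolic multisegment with $f_{e(\a)}(k)\neq 0$ and $f_{e(\a)}(k-1)=f_{e(\a)}(k+1)=0$, and $\b=[k+1]$. The case $i_0=0$ of the running assumptions means $\b=[k+1]$ is a point, so there is nothing to reduce using Proposition \ref{prop: 8.1.6} (since $f_{e(\a)}(k-i_0-1)=f_{e(\a)}(k-1)=0$) nor using Proposition \ref{prop: 8.1.9} (since $f_{e(\a)}(k+1)=0$). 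Thus $\a$ is already in the normal form described in the remark following Proposition \ref{prop: 8.1.5}: a parabolic multisegment with $f_{e(\a)}(k-1)=f_{e(\a)}(k+1)=0$ and $f_{e(\a)}(k)\neq 0$.

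First I would make precise that, because $\a$ is parabolic, one may realize it as $\a_0=\Phi^{0}(w)$ for a suitable $\a_{\Id}^{J,\emptyset}$ of parabolic type $(J,\emptyset)$ associated to the identity with $f_{e(\a_{\Id}^{J,\emptyset})}(k-1)=f_{e(\a_{\Id}^{J,\emptyset})}(k+1)=0$ and $f_{e(\a_{\Id}^{J,\emptyset})}(k)>0$; this is exactly the set-up fixed just before Lemma \ref{lem: 8.1.4} specialized to $i_0=0$. Concretely, $\a$ lies in $S(\a_{\Id}^{J,\emptyset})$ for the $J=J_1(\a)$ of Definition \ref{def: 6.3.1} by Proposition \ref{prop: 6.3.2}, so $\a=\Phi^0(w)$ for a unique $w\in S_n^{J,\emptyset}$, and then $\a=\a_0$ in the notation of that paragraph. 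Here the quantity $\l_k=f_{e(\a_0)}(k)=f_{e(\a)}(k)$ appearing in the statement is well-defined and nonzero.

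Next I would apply Lemma \ref{lem: 8.1.4} verbatim: it asserts, under precisely the hypotheses just verified (with $\b=[k+1]$ and $\l_k=f_{e(\a_0)}(k)$), that
\[
L_{\a_0}\times L_{\b}=L_{\a_0+\b}+\sum_{\c\in \Gamma^{\l_k-1}(\a_0, k)}\bigl(\theta_k(\c, \a_0)-\theta_k(\c^{[k+1]_1}, \a_0+\b)\bigr)L_{\c^{[k+1]_1[k]_{\l_k-1}}}.
\]
Substituting $\a_0=\a$ gives the formula in the statement of the corollary. The terms $\theta_k(\c,\a)$ are those of Notation \ref{nota7-8-12}, defined via the partial derivative formula of Corollary \ref{coro-fornula-derivative}, and the set $\Gamma^{\l_k-1}(\a,k)$ is as in the definition preceding Notation \ref{nota7-8-12}; all of this is already available, so no further work is needed at this point.

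The only genuine content is thus hidden inside Lemma \ref{lem: 8.1.4}, whose proof is already given; so the ``main obstacle'' for the corollary per se is essentially bookkeeping: confirming that the normalization hypotheses $f_{e(\a)}(k-1)=f_{e(\a)}(k+1)=0$ force $\a$ into the exact form the lemma requires, and that no reduction step (Propositions \ref{prop: 8.1.6}, \ref{prop: 8.1.2}, \ref{prop: 8.1.5}, \ref{prop: 8.1.9}) is triggered because $i_0=0$. If one wanted a self-contained argument rather than a citation, the substantive step to reproduce would be the double use of the left and right partial derivations $^{k+1}\D$ and $\D^k$ in the proof of Lemma \ref{lem: 8.1.4}: first $^{k+1}\D$ kills every summand $L_\c$ of $S(\a+\b)$ containing $[k+1]$ as a beginning except $L_{\a+\b}$ itself, pinning down which multisegments $\c^{[k+1]_1[k]_{\l_k-1}}$ can occur; then applying $\D^k$ and using Corollary \ref{coro-fornula-derivative} together with the auxiliary identity $L_\c\times L_{[k+1]}=L_{\c+[k+1]}+L_{\c^{[k+1]_1}}$ for $\c\in\Gamma^{\l_k-1}(\a,k)$ yields the linear relation $m(\c,\b,\a)+\theta_k(\c^{[k+1]_1},\a+\b)=\theta_k(\c,\a)$, which is the claimed coefficient.
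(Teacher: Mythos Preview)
Your proposal is correct and matches the paper's approach exactly. The paper gives no separate proof for this corollary; it simply writes ``In particular, gathering all the calculation in case where $\b=[k+1]$, we obtain the following formula,'' which is precisely your observation that the corollary is Lemma~\ref{lem: 8.1.4} specialized to $i_0=0$ with $\a_0=\a$. Your additional bookkeeping (checking that no reduction via Propositions~\ref{prop: 8.1.6}, \ref{prop: 8.1.2}, \ref{prop: 8.1.5}, \ref{prop: 8.1.9} is triggered, and that $\a$ fits the running set-up before Lemma~\ref{lem: 8.1.4}) is more explicit than what the paper provides, but entirely in the same spirit.
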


\remk
The proposition is no longer true if we remove the condition 
\[
f_{e(\a)}(k-1)=0.
\]
\begin{example}\label{ex: 8.1.6}
Let $\a=[0, 2]+[1, 3]+[2, 3]$ and $\b=[4]$, and 
\[
\c_1=[0, 3]+[1, 4]+[2], \quad \c_2=[0, 2]+[1, 4]+[2, 3], \quad \d=[0, 2]+[2]+[1, 3],
\]
then 
\[
L_{\a}\times L_{\b}=L_{\a+\b}+L_{\c_1}+L_{\c_2}
\] 
and 
\[
\D^{3}(L_{\a})=L_{\a}+L_{\d}, \quad \D^{3}(L_{\c_2})=L_{\c_2}.
\]
In this case we cannot compute the multiplicity of $L_{\c_2}$ using directly the partial derivatives.
\end{example}

\remk The proposition is also false if we remove the condition 
\[
f_{e(\a)}(k+1)=0
\]
\begin{example}
Let $\a=[1, 2]+[2, 3]$ and $\b=[3]$, then
\[
L_{\a}\times L_{\b}=L_{\a+\b}
\]
which contradicts our formula.
\end{example}

\begin{prop}\label{prop: 8.1.7}
Let $\a_{i_0}=\Phi^{i_0}(w)$ and $\b=[k-i_0, k+1]$. Then
\begin{align*}
&L_{\a_{i_0}}\times L_{\b}=\sum_{\e}m(\e, {^{(k-i_0+1)}\b}, \a)L_{^{[k-i_0+1]_1}\e}+\\
 &\sum_{\c}m(\c^{[k-i_0+1]_{\l_{k-i_0+1}}[k-i_0]_{\l_{k-i_0}-1}}, \b, \a_{i_0})L_{\c^{[k-i_0+1]_{\l_{k-i_0+1}}[k-i_0]_{\l_{k-i_0}-1}}},
\end{align*}
with 
\begin{align*}
m(\c^{[k-i_0+1]_{\l_{k-i_0+1}}[k-i_0]_{\l_{k-i_0}-1}}, \b, \a_{i_0})=&\sum_{\d \in \Gamma^{\l_{k-i_0}-1}(\a, k-i_0)_{k-i_0+1}}\theta_{k-i_0}(\d, \a)m(\c, \b, \d^{(k-i_0+1)})-\\
                                                                                                       &\sum_{\e}\theta_{k-i_0}(\c^{[k-i_0+1]_{\l_{k-i_0+1}}}, ^{[k-i_0+1]_1}\e)m(\e, {^{(k-i_0+1)}\b}, \a_{i_0})
\end{align*}
where $\c$ runs through all the terms such that 
$m(\c, \b, \d^{(k-i_0+1)})\neq 0$ for some $\d$ and $f_{b(\c)}(k-i_0+1)=0$, $\e$ runs through all the terms such 
that  $m(\e, {^{(k-i_0+1)}\b}, \a_{i_0})\neq 0$ .

\end{prop}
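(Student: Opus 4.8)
The plan is to prove the formula by induction on $i_0$, the base case $i_0=0$ being Corollary~\ref{prop: 8.1.1}, by applying partial derivatives to the defining identity
\[
L_{\a_{i_0}}\times L_{\b}=\sum_{\c}m(\c,\b,\a_{i_0})L_{\c}
\]
and separating the two families of terms on the right exactly as was done for $\b=[k+1]$ in Lemma~\ref{lem: 8.1.4} and Corollary~\ref{prop: 8.1.1}. First I would normalize: by Propositions~\ref{prop: 8.1.6}, \ref{prop: 8.1.9}, \ref{prop: 8.1.2} and \ref{prop: 8.1.5}, which propagate the coefficients $m(\c,\b,\a)$ along the truncation maps, one reduces to the situation in which $\a_{i_0}=\Phi^{i_0}(w)$ is parabolic with $f_{e(\a_{i_0})}(k-i_0-1)=f_{e(\a_{i_0})}(k+1)=0$ and $f_{e(\a_{i_0})}(i)>0$ for $k-i_0\le i\le k$ --- precisely the standing hypothesis of the statement.

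The first sum is obtained by applying the \emph{left} partial derivative ${^{k-i_0+1}\D}$, which lowers the multiplicity of the beginning $k-i_0+1$. Since $\b$ is a segment beginning at $k-i_0+1$ one has ${^{k-i_0+1}\D}(L_\b)=L_\b+L_{{^{(k-i_0+1)}\b}}$, and as ${^{k-i_0+1}\D}$ is an algebra morphism the part of ${^{k-i_0+1}\D}(L_{\a_{i_0}}\times L_\b)$ with strictly smaller value of $f(k-i_0+1)$ equals $L_{\a_{i_0}}\times L_{{^{(k-i_0+1)}\b}}=\sum_\e m(\e,{^{(k-i_0+1)}\b},\a_{i_0})L_\e$; here ${^{(k-i_0+1)}\b}$ is the shorter segment, so this product is furnished by the induction hypothesis for $i_0-1$ (after re-normalizing as above). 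On the other hand, by positivity of partial derivatives (Theorem~\ref{teo: 3}) and the behaviour of ${^{k-i_0+1}\D}$ on the $L_\c$ governed by the hypotheses $H$ (cf. Corollary~\ref{cor: 3.5.2} and Proposition~\ref{prop: 7.8.10}), the only $L_\c$ in $L_{\a_{i_0}}\times L_\b$ moved to strictly smaller $f(k-i_0+1)$ are those with $\c={^{[k-i_0+1]_1}\e}$, for which ${^{k-i_0+1}\D}(L_{{^{[k-i_0+1]_1}\e}})=L_{{^{[k-i_0+1]_1}\e}}+L_\e$. Matching the two expansions in this lowest degree identifies the multiplicity of $L_{{^{[k-i_0+1]_1}\e}}$ with $m(\e,{^{(k-i_0+1)}\b},\a_{i_0})$, giving the first sum.

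For the remaining terms, which are of the form $L_{\c^{[k-i_0+1]_{\l_{k-i_0+1}}[k-i_0]_{\l_{k-i_0}-1}}}$ with $f_{b(\c)}(k-i_0+1)=0$, I would apply the \emph{right} derivative $\D^{k-i_0}$ to the defining identity. On the left, $\D^{k-i_0}(L_{\a_{i_0}})=\sum_{\d\preceq_{k-i_0}\a_{i_0}}\theta_{k-i_0}(\d,\a_{i_0})L_\d$ by Corollary~\ref{coro-fornula-derivative} and Notation~\ref{nota7-8-12}; keeping only the part of $\sum_\d\theta_{k-i_0}(\d,\a_{i_0})\,L_\d\times L_\b$ of the appropriate degree leaves the contribution of $\d\in\Gamma^{\l_{k-i_0}-1}(\a_{i_0},k-i_0)_{k-i_0+1}$, and for these $\d$ the products $L_{\d^{(k-i_0+1)}}\times L_\b$ are already known by the induction hypothesis (ultimately resting on Corollary~\ref{prop: 8.1.1} once $i_0$ has been lowered to $0$; the base products are two-term formulas proved directly with ${^{k-i_0+1}\D}$ and $\D^{k+1}$, the analogue for a segment of the two-term lemma preceding Corollary~\ref{prop: 8.1.1}). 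On the right, $\D^{k-i_0}$ produces the terms $\sum\theta_{k-i_0}(\c^{[k-i_0+1]_{\l_{k-i_0+1}}},{^{[k-i_0+1]_1}\e})\,L_{{^{[k-i_0+1]_1}\e}}$ coming from the first sum, together with the sought multiplicities $m(\c^{[k-i_0+1]_{\l_{k-i_0+1}}[k-i_0]_{\l_{k-i_0}-1}},\b,\a_{i_0})$; solving for the latter yields the displayed recursion.

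The main obstacle will be the degree bookkeeping under the two derivatives: one must show, using the hypotheses $H_\bullet$ and the poset $\Gamma(\a,k)$, that ${^{k-i_0+1}\D}$ and $\D^{k-i_0}$ genuinely separate the two families of $L_\c$ and that together they count each $\c$ with $m(\c,\b,\a_{i_0})\ne 0$ exactly once, with correct coefficients. The pitfall is the existence of multisegments $\c$ with $\D^{k-i_0}(L_\c)=L_\c$, as in Example~\ref{ex: 8.1.6}, which is exactly why the normalization to $f_{e(\a_{i_0})}(k-i_0-1)=0$ at the outset is indispensable; once that is in place the rest is a routine matching of coefficients in the two graded expansions.
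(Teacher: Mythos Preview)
Your overall strategy coincides with the paper's: split the constituents $L_\c$ of $L_{\a_{i_0}}\times L_\b$ according to whether $k-i_0+1\in b(\c)$, identify the first family via the left truncation at $k-i_0+1$ (the paper states this directly as $m(\c,\b,\a)=m({^{(k-i_0+1)}\c},{^{(k-i_0+1)}\b},\a)$ for $\c\in{_{k-i_0+1}S(\a+\b)}$, which is exactly what your application of ${^{k-i_0+1}\D}$ gives), and compute the second family by applying right derivatives and matching coefficients in the correct degree.

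There is however a genuine slip in your second step. You apply only $\D^{k-i_0}$, and then pass without justification from $L_\d\times L_\b$ to $L_{\d^{(k-i_0+1)}}\times L_\b$; but $\D^{k-i_0}$ does not perform any truncation at $k-i_0+1$, so this passage is unwarranted. The target formula has $m(\c,\b,\d^{(k-i_0+1)})$ on the left and $\theta_{k-i_0}(\c^{[k-i_0+1]_{\l_{k-i_0+1}}},{^{[k-i_0+1]_1}\e)}$ on the right, both of which involve a lift or truncation at the index $k-i_0+1$: these can only appear once you have also applied $\D^{k-i_0+1}$. The paper accordingly applies the composite $\D^{k-i_0+1}\circ\D^{k-i_0}$ to the identity and isolates the component of degree $\deg(\c^{(k-i_0,k-i_0+1)})$. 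On the left this gives
\[
\sum_{\d\in\Gamma^{\l_{k-i_0}-1}(\a,k-i_0)_{k-i_0+1}}\theta_{k-i_0}(\d,\a)\,m(\c,\b,\d^{(k-i_0+1)}),
\]
(the subscript $k-i_0+1$ on $\Gamma$ being precisely the condition that $\d$ survives the second derivative), and on the right it produces, from the first family $L_{{^{[k-i_0+1]_1}\e}}$, the contribution $\sum_\e\theta_{k-i_0}(\c^{[k-i_0+1]_{\l_{k-i_0+1}}},{^{[k-i_0+1]_1}\e)}\,m(\e,{^{(k-i_0+1)}\b},\a)$ plus the unknown $m(\c^{[k-i_0+1]_{\l_{k-i_0+1}}[k-i_0]_{\l_{k-i_0}-1}},\b,\a_{i_0})$ from the second family. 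Solving gives the stated recursion. Once you insert the missing $\D^{k-i_0+1}$, your argument and the paper's are the same.
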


\begin{proof}
Consider the formula
\addtocounter{theo}{1}
\begin{equation}\label{eq8.1}
L_{\a_{i_0}}\times L_{\b}=\sum_{\c}m(\c, \b, \a_{i_0})L_{\c}.
\end{equation}
In case $k-i_0+1\in b(\c)$, we know that $\c\in {_{k-i_0+1}S(\a+\b)}$, and moreover
\[
m(\c, \b, \a)=m({^{(k-i_0+1)}\c}, {^{(k-i_0+1)}\b}, \a),
\]
this gives the first part of the formula in our proposition. Now if $k-i_0+1\notin b(\c)$,
then we have 
\[
f_{e(\c)}(i)=f_{e(\a)}(i), \text{ for all } k-i_0+1\leq i\leq k, \quad f_{e(\c)}(k-i_0)=f_{e(\a)}(k-i_0)-1.
\]
In this case, we apply the derivative $\D^{k-i_0+1}\D^{k-i_0}$ to the equation (\ref{eq8.1}) and consider 
terms of degree equal to $\deg(\c^{(k-i_0, k-i_0+1)})$.
On the left hand side we find 
\[
\sum_{\c}\sum_{\d \in \Gamma^{\l_{k-i_0}-1}(\a, k-i_0)_{k-i_0+1}}\theta_{k-i_0}(\d, \a)m(\c, \b, \d^{(k-i_0+1)})L_{\c}.
\]
While for fix $\c$, on the right hand side we find 
\begin{multline*}
(\sum_{\e}\theta_{k-i_0}(\c^{[k-i_0+1]_{\l_{k-i_0+1}}}, ^{[k-i_0+1]_1}\e)m(\e, {^{(k-i_0+1)}\b}, \a) \\
+m(\c^{[k-i_0+1]_{\l_{k-i_0+1}}[k-i_0]_{\l_{k-i_0}-1}}, \b, \a_{i_0}))L_{\c}
\end{multline*}
here $\e$ runs through all the terms such 
that  $m(\e, {^{(k-i_0+1)}\b}, \a)\neq 0$. The first part in the coefficient comes from the part 
\[
\sum_{\e}m(\e, {^{(k-i_0+1)}\b}, \a)L_{^{[k-i_0+1]_1}\e}
\]
in the induction $L_{\a_{i_0}}\times L_{\b}$ so that by taking the difference, we get our results.

\end{proof}

\remk In general the multisegment $\d^{(k-i_0+1)}$ in the the formula does not satisfies 
the condition 
\[
f_{e(\d^{(k-i_0+1)})}(i)=0, \text{ for all } k-i_0\leq i\leq k.
\]
In order to proceed our calculation, we have to apply proposition \ref{prop: 8.1.5}.

%
%
%
%\begin{proof}
%The case where $\a$ satisfies furthermore the following condition 
%\[
%f_{e(\a)}(k+1)=0
%\]
%is already treated in the previous lemma.  Now we show that the general case can be deduce from this special case. 
%Note that we have a sequence of integers $k_1<\cdots< k_r$  and a multisegment $\a'$ which is of the same parabolic type as $\a$
%such that 
%\[
%k_i>k+1, \text{ for all } i=1, \cdots, r
%\]
%and 
%\[
%\a=\a'^{(k_1, \cdots, k_r)}, \quad \a'\in S(\a')_{k_1, \cdots, k_r}, \quad f_{e(\a')}(k+1)=0.
%\]
%By previous lemma we have 
%\[
%L_{\a'}\times L_{\b}=L_{\a'+\b}+\sum_{\c\in \Gamma^{\l_k-1}(\a', k)}(\theta_k(\c, \a')-\theta_k(\c^{[k+1]_1}, \a'+\b))L_{\c^{[k+1]_1[ k]_{\l_k-1}}}.
%\]
%By applying $\D^{k_r}\D^{k_{r-1}}\cdots \D^{k_1}$, we get 
%\[
%L_{\a}\times L_{\b}=L_{\a+\b}+\sum_{\c\in \Gamma^{\l_k-1}(\a', k)}(\theta_k(\c, \a')-\theta_k(\c^{[k+1]_1}, \a'+\b))L_{\c^{[k+1]_1[ k]_{\l_k-1}(k_1, \cdots, k_r)}}.
%\]
%We remark that we  have $\Gamma^{\l_k-1}(\a', k)=\Gamma^{\l_k-1}(\a', k)_{k_1, \cdots, k_r}$ and by lemma \ref{lem: 7.8.6} it is in bijection 
%with $\Gamma^{\l_k-1}(\a, k)$ by $\c\mapsto \c^{(k_1, \cdots, k_r)}$ and by proposition \ref{prop: 7.8.10}
%\[
%\theta_k(\c^{(k_1, \cdots, k_r)}, \a)=\theta_k(\c, \a'), \quad \theta_k(\c^{[k+1]_1(k_1, \cdots, k_r)}, \a+\b)=\theta_k(\c^{[k+1]_1}, \a'+\b).
%\]
%\end{proof}
%

\remk 
Combining all the propositions above, we finish the 
computation of $m(\c, \b, \a)$ in case where 
\[
\b=[k-i_0+1, k+1], \quad \max b(\a)\leq k-i_0+1.
\]

\section{General case}

Now we consider the case (2) in the introduction of this chapter. 

\begin{prop}
Let $k\in \Z$ and $\a$ be a multisegment. Then there exists a multisegment $\a'$ and 
a sequence of integers $k_1, \cdots, k_r$ such that 
\[
\a={^{(k_1, \cdots, k_r)}\a'}, \quad \a'\in {_{k_1, \cdots, k_r}S(\a')},
\]
and for any $1\leq i\leq r$,
\[
\deg({^{(k_i, \cdots, k_r)}\a})=\deg(^{(k_{i+1}, \cdots, k_r)}\a)-1, \quad \max b(\a')\leq k.
\]
\end{prop}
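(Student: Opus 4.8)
The plan is to mimic, for the left-hand (beginning) side, the construction already used repeatedly for the right-hand (end) side in the proof of Proposition \ref{cor: 5} and of Proposition \ref{prop: 6.3.3}. The statement is exactly a ``symmetrization from the left, one point at a time'' assertion: we want to lengthen segments on the left so that all beginnings eventually satisfy $\max b(\a')\le k$, and we want this to be done by a sequence of left-truncation operations $^{(k_i)}$, each of which strips away a single point (so that each degree drops by exactly one), and with $\a'$ lying in the iterated set $_{k_1,\cdots,k_r}S(\a')$ (equivalently, each intermediate multisegment satisfies the left analogue $_{k_i}H$ of the hypothesis needed to apply $\psi$). Throughout I will use the ``left'' versions of the constructions of Chapter 3, which the excerpt explicitly says are valid by the remark following the definition of $_kH(\a)$.

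First I would reduce to the case where the obstruction is a single beginning. Let $m_0=\max b(\a)$ and suppose $m_0>k$; I must produce from $\a$ a multisegment $\a_1$ with $\a=\,^{(k_1')}\a_1$ for some integer $k_1'$, with $\deg(\a_1)=\deg(\a)+1$, with $\a_1\in\,_{k_1'}S(\a_1)$, and with the left-obstruction measured by either a smaller $\max b$ or the same $\max b$ appearing with smaller multiplicity. Concretely, letting $\Delta_0$ be a $\prec$-smallest segment of $\a$ with $b(\Delta_0)=m_0$, I would choose $\ell$ minimal such that for every $m$ with $\ell\le m\le m_0$ there is a segment of $\a$ with beginning $m$ (so $[\ell,m_0-1]$ is a ``saturated run'' of beginnings, exactly the recipe in Proposition \ref{prop: 4.2.2} and Proposition \ref{prop: 6.3.3}), set $\Delta^{1}=[\ell,m_0-1]$, and let $\a_1$ be obtained from $\a$ by replacing every segment $\Delta$ of $\a$ with $b(\Delta)\in[\ell,m_0]$ by $^{+}\Delta$. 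Then by construction $^{(\ell,\ell+1,\cdots,m_0-1)}\a_1=\a$; I would then check, using the left analogue of Lemma \ref{lem: 3.0.7}(c) and the argument in the proof of Proposition \ref{cor: 5}/Lemma \ref{lem: 6.2.7} that each successive single-point truncation in the list $\ell,\ell+1,\cdots,m_0-1$ preserves the hypothesis $_{k_i}H$, so that $\a_1\in\,_{\ell,\ell+1,\cdots,m_0-1}S(\a_1)$, and that after this step $m_0$ appears in $b(\a_1)$ with strictly smaller multiplicity (or not at all). Iterating, after finitely many rounds I reach $\a'$ with $\max b(\a')\le k$; concatenating the lists of truncation indices gives the sequence $k_1,\cdots,k_r$, and transitivity of the iterated constructions (Notation \ref{nota: 4.1.8} and the formula $\psi_{k_1,\cdots,k_r}=\psi_{k_r}\circ\cdots\circ\psi_{k_1}$ from Proposition \ref{prop: 3.2.17}) gives $\a'\in\,_{k_1,\cdots,k_r}S(\a')$ and $\a=\,^{(k_1,\cdots,k_r)}\a'$.

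The degree condition is then automatic: at each step we replaced $^{+}\Delta$'s back by $\Delta$'s, i.e. each single index $k_i$ in the list removes exactly one point from exactly one segment, so $\deg(^{(k_i,\cdots,k_r)}\a)=\deg(^{(k_{i+1},\cdots,k_r)}\a)-1$ for every $i$; this is what the statement asks. I expect the main obstacle to be the bookkeeping needed to verify, at each single-point truncation, that the left hypothesis $_{k_i}H$ holds --- i.e. that there is no pair of linked segments $\{\Delta,\Delta'\}$ with $b(\Delta)=k_i,\ b(\Delta')=k_i+1$ after the partial replacement --- which is where the minimality of $\ell$ (the ``saturated run'' choice) is essential; this is the exact left-mirror of the verification in Lemma \ref{lem: 6.2.7}, so no genuinely new idea is required, only care in transcribing the argument and invoking the ``left-sided'' versions of Lemma \ref{lem: 3.0.7} and Corollary \ref{cor: 3.1.11}. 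A secondary but routine point is to confirm that the construction terminates: the pair $(\max b(\a^{(i)}),\ \varphi_{b(\a^{(i)})}(\max b(\a^{(i)})))$ strictly decreases in lexicographic order at each round, so after finitely many rounds $\max b\le k$.
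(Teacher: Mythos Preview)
Your strategy is exactly the paper's: its entire proof is the single sentence ``This is proved by applying successively the truncation functor, which is the same as the proof of proposition~\ref{prop: 6.3.3},'' and you are spelling out that iterative extension on the beginning side. So at the level of approach there is nothing to add.

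There is, however, a genuine gap in your verification of the degree-by-one condition. In one round you replace \emph{every} segment with $b(\Delta)\in[\ell,m_0]$ by $^{+}\Delta$ and then claim that each index in the recovery list strips exactly one segment. This fails as soon as some beginning in $[\ell,m_0]$ has multiplicity greater than one: if two segments of $\a$ share $b=m$, then after your extension two segments of $\a_1$ share $b=m-1$, and the single operation $^{(m-1)}$ strips both of them. Concretely, for $\a=\{[5,6],[5,7]\}$ and $k=3$ your recipe (with $\ell=m_0=5$) produces $\a_1=\{[4,6],[4,7]\}$; the recovery list has one index but $\deg(\a_1)-\deg(\a)=2$. (There is also an off-by-one: your list $(\ell,\ell+1,\dots,m_0-1)$ is empty in this example; it should start at $\ell-1$.) The constructions of Propositions~\ref{cor: 5}, \ref{prop: 4.2.2} and \ref{prop: 6.3.3} that you invoke verify only the iterated $_{k_i}H$-hypotheses; none of them asserts degree-by-one, so a bare appeal to them does not close this gap --- and the subsequent propositions in \S8.2 genuinely use the degree-by-one property (via the left analogue of Corollary~\ref{cor: 3.5.2}).

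The repair is to extend \emph{one segment at a time}, chosen so that the target value $m-1$ is currently unoccupied and so that the moved segment contains (rather than links with) any remaining segments at $m$. One workable scheme: among the segments with $b>k$, always extend one whose current beginning $m$ is minimal among those with $m-1$ unoccupied, and among the candidates at that $m$ take the one of largest end; if no such $m$ exists because the run $[\ell,m_0]$ is saturated, first move the segment at $\ell$ (largest end) to $\ell-1$, then the one at $\ell+1$, and so on, interleaving rounds when multiplicities exceed one. In the example above this yields $\a'=\{[3,6],[2,7]\}$ with the list $(4,3,4,2,3)$, each step dropping the degree by one and each intermediate satisfying the left hypothesis. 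Your lexicographic termination argument then carries over unchanged.
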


\begin{proof}
This is proved by applying successively the truncation functor, which is the same as 
the proof of proposition \ref{prop: 6.3.3}.
\end{proof}

\begin{prop}
Let $\a$ be a multisegment such that 
\[
\a\in {_{k-i_0+1}S(\a)}, \quad f_{e(\a)}(k-i_0+1)=1.
\]
If we assume that $\b=[k-i_0+1, k+1](i_0\geq 0)$ and 
\[
L_{\a}\times L_{\b}=\sum_{\c}m(\c, \b, \a)L_{\c},
\]
then 
\[
m(\d, \b, ^{(k-i_0+1)}\a)=\sum_{\c}m(\c, \b, \a)({_{k-i_0+1}\theta}(\d, \c))-m(\d, ^{(k-i_0+1)}\b, \a).
\]
\end{prop}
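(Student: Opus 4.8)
The plan is to exploit the left partial derivative $^{k-i_0+1}\D$ exactly as Lemma~\ref{lem: 8.1.4} (and its descendants in \S\ref{se: 8.1}) exploit the right partial derivative, but now acting on the \emph{left} factor of $L_{\a}\times L_{\b}$ so as to ``peel off'' the integer $k-i_0+1$ from the beginnings of the segments in $\a$. First I would apply the algebra morphism $^{k-i_0+1}\D$ to the identity
\[
L_{\a}\times L_{\b}=\sum_{\c}m(\c,\b,\a)L_{\c}.
\]
Since $^{k-i_0+1}\D$ is a ring homomorphism, the left-hand side becomes $(^{k-i_0+1}\D\,L_{\a})\times(^{k-i_0+1}\D\,L_{\b})$. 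The hypothesis $\a\in{_{k-i_0+1}S(\a)}$ together with $f_{e(\a)}(k-i_0+1)=1$ is precisely the ${_{k}H(\a)}$-type condition (in the $b$-variable) that makes Corollary~\ref{cor: 3.5.2} applicable: its ``left'' analogue (the \remk\ after Corollary~\ref{cor: 3.5.2} records that the positivity, and hence the whole statement, holds for $^i\D$) gives $^{k-i_0+1}\D\,L_{\a}=L_{\a}+L_{^{(k-i_0+1)}\a}$, with the two terms of distinct degrees $\deg(\a)$ and $\deg(\a)-1$. Likewise, since $\b=[k-i_0+1,k+1]$ begins at $k-i_0+1$ and $i_0\geq 0$, one has $^{k-i_0+1}\D\,L_{\b}=L_{\b}+L_{^{(k-i_0+1)}\b}$. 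So the left-hand side is
\[
L_{\a}\times L_{\b}+L_{\a}\times L_{^{(k-i_0+1)}\b}+L_{^{(k-i_0+1)}\a}\times L_{\b}+L_{^{(k-i_0+1)}\a}\times L_{^{(k-i_0+1)}\b},
\]
and the only summand of its total degree minus one that contributes to multisegments $\d$ with $f_{b(\d)}(k-i_0+1)$ one less than $f_{b(\a)}(k-i_0+1)$ and not arising from further derivation of $L_{\a}\times L_{\b}$ is $L_{^{(k-i_0+1)}\a}\times L_{\b}=\sum_{\d}m(\d,\b,{^{(k-i_0+1)}\a})L_{\d}$.

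Next I would expand the right-hand side: $^{k-i_0+1}\D\,L_{\c}=\sum_{\d}\,({_{k-i_0+1}\theta}(\d,\c))\,L_{\d}$ by Notation~\ref{nota7-8-12} (its left version), so that
\[
\sum_{\c}m(\c,\b,\a)\,{^{k-i_0+1}\D}\,L_{\c}=\sum_{\d}\Bigl(\sum_{\c}m(\c,\b,\a)\,{_{k-i_0+1}\theta}(\d,\c)\Bigr)L_{\d}.
\]
Now I would isolate, on both sides, the coefficient of a fixed $L_{\d}$ where $\d$ runs over the multisegments appearing in $L_{^{(k-i_0+1)}\a}\times L_{\b}$ (equivalently $\deg\d=\deg\a+\deg\b-1$ and $f_{b(\d)}(k-i_0+1)=f_{b(\a)}(k-i_0+1)-1$, with $k-i_0+1$ not a beginning of ``too many'' segments — the precise degree/weight bookkeeping is exactly the one carried out in the proof of Lemma~\ref{lem: 8.1.4}). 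On the left this coefficient is $m(\d,\b,{^{(k-i_0+1)}\a})$ coming from the third summand, plus the contribution of the \emph{second} summand $L_{\a}\times L_{^{(k-i_0+1)}\b}=\sum_{\d}m(\d,{^{(k-i_0+1)}\b},\a)L_{\d}$, while the first and fourth summands contribute terms of the wrong degree or wrong beginning-multiplicity and hence drop out. On the right it is $\sum_{\c}m(\c,\b,\a)\,{_{k-i_0+1}\theta}(\d,\c)$. Equating and solving for $m(\d,\b,{^{(k-i_0+1)}\a})$ yields precisely
\[
m(\d,\b,{^{(k-i_0+1)}\a})=\sum_{\c}m(\c,\b,\a)\,({_{k-i_0+1}\theta}(\d,\c))-m(\d,{^{(k-i_0+1)}\b},\a),
\]
which is the asserted formula.

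The main obstacle, as in Lemma~\ref{lem: 8.1.4}, is the degree-and-weight separation argument: one must verify carefully that among the four cross terms produced by expanding $^{k-i_0+1}\D(L_{\a}\times L_{\b})$, exactly the terms $L_{^{(k-i_0+1)}\a}\times L_{\b}$ and $L_{\a}\times L_{^{(k-i_0+1)}\b}$ contribute to the relevant graded piece, and that no irreducible $L_{\d}$ with $f_{b(\d)}(k-i_0+1)=f_{b(\a)}(k-i_0+1)$ sneaks into the comparison (this is where $f_{e(\a)}(k-i_0+1)=1$ is used, to guarantee that $^{k-i_0+1}\D$ peels off the integer cleanly and that $^{(k-i_0+1)}\a$, $^{(k-i_0+1)}\b$ are the \emph{unique} lower-degree terms). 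This is the left-handed mirror of the bookkeeping already done in \S\ref{se: 8.1}, so I expect it to go through by the same reasoning; the positivity of $^{k-i_0+1}\D$ (the \remk\ after Theorem~\ref{teo: 3}) ensures all coefficients stay non-negative throughout, so no cancellations are hidden. A minor secondary point is to check that ${_{k-i_0+1}\theta}(\d,\c)$ is well defined for all $\c$ occurring, i.e.\ that each such $\c$ falls in the range where Notation~\ref{nota7-8-12} assigns a value (otherwise the convention $\theta=0$ applies and the term simply vanishes), which again follows from the hypothesis that $\a$, hence every $\c\leq\a$ in the relevant support, is ``left-truncatable'' at $k-i_0+1$.
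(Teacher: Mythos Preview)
Your proof is correct and follows the same strategy as the paper: apply $^{k-i_0+1}\D$ to the identity $L_{\a}\times L_{\b}=\sum_{\c}m(\c,\b,\a)L_{\c}$ and compare coefficients in the degree-$(\deg\a+\deg\b-1)$ piece. Your four-term expansion of the left-hand side is in fact more accurate than the paper's written version, which records only $L_{\a}\times L_{\b}+L_{^{(k-i_0+1)}\a}\times L_{\b}$ and omits the action of $^{k-i_0+1}\D$ on $L_{\b}$; the extra cross term $L_{\a}\times L_{^{(k-i_0+1)}\b}$ that you correctly include is precisely what produces the subtracted $m(\d,{^{(k-i_0+1)}\b},\a)$ in the stated formula, so your bookkeeping is the one that actually yields the claimed identity.
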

\begin{proof}
Note that by assumption we have 
\[
{^{k-i_0+1}\D}L_{\a}=L_{\a}+L_{^{(k-i_0+1)\a}}.
\]
If we apply $^{k-i_0+1}\D$ to 
\[
L_{\a}\times L_{\b}=\sum_{\c}m(\c, \b, \a)L_{\c},
\]
on the left hand side get 
\[
L_{\a}\times L_{\b}+L_{^{(k-i_0+1)}\a}\times L_{\b},
\]
while on the right hand side we get 
\[
\sum_{\d}\sum_{\c}m(\c, \b, \a)({_{k-i_0+1}\theta}(\d, \c))L_{\d}
\]
by comparing the two hand side, we get our result.
\end{proof}

\begin{prop}
Let $k_1\neq k-i_0+1$ and $\a$ be a multisegment such that 
\[
\a\in {_{k}S(\a)}, \quad f_{e(\a)}(k)=1.
\]
If we assume that $\b=[k-i_0+1, k+1](i_0\geq 0)$ and 
\[
L_{\a}\times L_{\b}=\sum_{\c}m(\c, \b, \a)L_{\c},
\]
then 
\[
m(\d, \b, {^{(k_1)}\a})=m(^{[k_1]_1}\d, \b, \a).
\]
\end{prop}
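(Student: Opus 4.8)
The plan is to apply the left partial derivative $^{k_1}\D$ to the identity $L_\a\times L_\b=\sum_\c m(\c,\b,\a)L_\c$ and extract the homogeneous component of a single degree. Since $^{k_1}\D$ is an algebra morphism, the left side becomes $(^{k_1}\D L_\a)\times(^{k_1}\D L_\b)$. As $k_1\neq k-i_0+1=b(\b)$, the definition of $^{k_1}\D$ on the generator $L_\b=L_{[k-i_0+1,k+1]}$ gives $^{k_1}\D(L_\b)=L_\b$, and the standing hypotheses on $\a$ ensure, via the left analogue of Corollary \ref{cor: 3.5.2}, that $^{k_1}\D(L_\a)=L_\a+L_{^{(k_1)}\a}$ — in particular that the beginning $k_1$ occurs in $\a$ exactly once, so $\deg({^{(k_1)}\a})=\deg(\a)-1$. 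Thus the left side equals $L_\a\times L_\b+L_{^{(k_1)}\a}\times L_\b$. On the right side $L_\c$ occurs in $^{k_1}\D(L_\c)$ with coefficient one, so cancelling $L_\a\times L_\b=\sum_\c m(\c,\b,\a)L_\c$ from both sides yields
\[
L_{^{(k_1)}\a}\times L_\b=\sum_\c m(\c,\b,\a)\bigl(^{k_1}\D(L_\c)-L_\c\bigr)
\]
in $\mathcal R$ (both sides being genuine non-negative combinations by Theorem \ref{teo: 3}).

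Next I would take the component of degree $\deg(\a)+\deg(\b)-1$ on both sides. The left side is already homogeneous of that degree and its coefficient of $L_\d$ is $m(\d,\b,{^{(k_1)}\a})$. Writing $^{k_1}\D(L_\c)=\sum_\e{_{k_1}\theta(\e,\c)}L_\e$, the coefficient of $L_\d$ on the right is $\sum_\c m(\c,\b,\a)\,{_{k_1}\theta(\d,\c)}$, the sum over $\c$ with $\deg(\c)=\deg(\d)+1$. So everything comes down to showing that this sum reduces to the single term $\c={^{[k_1]_1}\d}$ with coefficient one. For the control of which $\c$ can contribute, note that every constituent $L_\c$ of $L_\a\times L_\b$ is a constituent of $\pi(\a\cup\b)$, because $L_\a\times L_\b$ embeds in $\pi(\a)\times\pi(\b)=\pi(\a\cup\b)$; hence $\c\leq\a\cup\b$, and so by Lemma \ref{lem: 2.1.3} the beginning $k_1$ occurs in $\c$ at most once (it occurs at most once in $\a$ and not at all in the segment $\b$). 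If $k_1$ does not occur as a beginning of $\c$, then $^{k_1}\D(L_\c)=L_\c$ has no term of degree $\deg(\c)-1$. If it occurs exactly once, the minimal degree of $^{k_1}\D(L_\c)$ is $\deg(\c)-1$, and by the left analogue of Proposition \ref{teo: 3.0.6}: when $\c$ satisfies the hypothesis ${_{k_1}H(\c)}$ the unique minimal-degree term is $L_{^{(k_1)}\c}$ with multiplicity one and every other term has strictly larger degree, so $L_\d$ occurs at degree $\deg(\c)-1$ only if ${^{(k_1)}\c}=\d$, i.e. $\c={^{[k_1]_1}\d}$ by the defining uniqueness of ${^{[k_1]_1}\d}$; and when $\c$ fails ${_{k_1}H(\c)}$, every constituent of $^{k_1}\D(L_\c)$ has degree $>\deg(\c)-1$, so $\c$ cannot contribute.

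It remains to check the one term $\c={^{[k_1]_1}\d}$. By construction ${^{[k_1]_1}\d}$ satisfies ${_{k_1}H}$ and has $k_1$ as a beginning with multiplicity one, so Proposition \ref{teo: 3.0.6} gives $^{k_1}\D(L_{^{[k_1]_1}\d})=L_\d+(\text{terms of larger degree})$, i.e. ${_{k_1}\theta(\d,{^{[k_1]_1}\d})}=1$. Hence the right-hand coefficient of $L_\d$ collapses to $m({^{[k_1]_1}\d},\b,\a)$, which is the asserted formula. I expect the main obstacle to be making the left-hand versions of Corollary \ref{cor: 3.5.2}, of Proposition \ref{teo: 3.0.6}, and of the definition of $\b^{[k+1]_i}$ fully precise — these are guaranteed by the remark following the definition of ${_kH(\a)}$, but one must keep track of beginnings versus ends throughout — together with the verification that among the constituents of $L_\a\times L_\b$ the derivative $^{k_1}\D$ reaches degree $\deg(\c)-1$ (with a nonzero $L_\d$-term) only for $\c={^{[k_1]_1}\d}$; the rest is routine manipulation in the Grothendieck group.
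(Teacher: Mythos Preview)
Your approach is essentially the same as the paper's: the paper's entire proof is the one-line reference ``The proof is the same as that of proposition \ref{prop: 8.1.2}'', and that proposition is proved precisely by applying the left partial derivative $^{k_1}\D$ to both sides and invoking the minimal-degree statement of Proposition \ref{teo: 3.0.6}. Your write-up simply unpacks this in full detail (including the role of the left analogues of Corollary \ref{cor: 3.5.2} and of the operator $\b\mapsto {}^{[k_1]_1}\b$), so the two arguments coincide.
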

\begin{proof}
The proof is the same as that of proposition \ref{prop: 8.1.2}.
\end{proof}

\remk Combining the three proposition we get the computation of $m(\c, \b, \a)$ for any $\a$
and $\b$ a segment.

\bibliographystyle{plain}
\bibliography{biblio}
 
 \end{document}